\newcommand{\myarrowlength}{10pt}
\tikzset{mytip/.tip={Butt Cap[black, length=\myarrowlength, sep=-1.6pt]>[black]},
    std/.style={white, text=black, #1, decoration={transform={xshift=.5*\myarrowlength}, markings, mark=at position .5 with {\arrow{mytip}}}, postaction=decorate},
    myarrow/.default={}}
\newcommand{\rightloop}[1][]{%
  \begin{tikzpicture}[baseline={(0,-0.1)}]
    \draw[->, in=30, out=-30, looseness=6] (0,-0.1) to (0,0.1);
  \end{tikzpicture}%
}
\newcommand{\leftloop}[1][]{%
  \begin{tikzpicture}[baseline={(0,-0.1)}]
    \draw[->, in=-30+180, out=30+180, looseness=6] (0,-0.1) to (0,0.1);
  \end{tikzpicture}%
}
\definecolor{pastelpink}{rgb}{1.0, 0.82, 0.86}
\definecolor{pastelorange}{rgb}{1.0, 0.7, 0.28}
\definecolor{pastelblue}{rgb}{0.68, 0.78, 0.81}
\definecolor{pastelgreen}{rgb}{0.47, 0.87, 0.47}
\definecolor{pastelred}{rgb}{1.0, 0.41, 0.38}
\definecolor{pastelgray}{rgb}{0.81, 0.81, 0.77}
\definecolor{lightpastelpurple}{rgb}{0.69, 0.61, 0.85}
\definecolor{pastelbrown}{rgb}{0.51, 0.41, 0.33}
\newtheorem{thmX}{Theorem}
\newtheorem{corX}[thmX]{Corollary}
\theoremstyle{definition}
\newtheorem{defn}[subsubsection]{Definition}
\newtheorem*{defn*}{Definition}
\newenvironment{customass}[1]
  {\innercustomass}
  {\endinnercustomass}
\theoremstyle{theorem}
\newtheorem{prop}[subsubsection]{Proposition}
\newtheorem*{prop*}{Proposition}
\newtheorem{theorem}[subsubsection]{Theorem}
\newtheorem*{theorem*}{Theorem}
\newtheorem{lem}[subsubsection]{Lemma}
\newtheorem*{lem*}{Lemma}
\newtheorem{cor}[subsubsection]{Corollary}
\newtheorem*{cor*}{Corollary}
\newtheorem{conj}[subsubsection]{Conjecture}
\newtheorem*{conj*}{Conjecture}
\newtheorem*{ques*}{Question}
\newcommand{\Gm}{\textup{\textbf{G}}_m\xspace}
\newcommand{\Ga}{\textup{\textbf{G}}_a\xspace}
\newcommand{\BGm}{\textup{B}\textup{\textbf{G}}_m\xspace}
\newcommand{\Md}{\text{-}\textup{Mod}\xspace}
\newcommand{\rank}{\textup{rk}\xspace}
\newcommand{\Ag}{\text{-}\textup{Alg}\xspace}
\newcommand{\FactAg}{\text{-}\textup{FactAlg}\xspace}
\newcommand{\smprod}{\textstyle{\prod}}
\newcommand{\hatotimes}{\, \hat{\otimes}\, }
\newcommand{\defeq}{\vcentcolon=}
\newcommand{\protosbt}{\,\begin{picture}(-0.5,1)(-0.5,-2.2)\circle*{2.5}\end{picture}\ }
\newcommand{\sbt}{{\protosbt}}
\newcommand{\overbar}[1]{\mkern 1.5mu\overline{\mkern-1.5mu#1\mkern-1.5mu}\mkern 1.5mu}
\NewDocumentCommand{\massdefine}{m}
 {
  \clist_map_inline:nn { #1 }
   {
    \cs_new_protected:cpn { ##1 } { \operatorname{##1}\xspace }
   }
 }
\NewDocumentCommand{\massdefinetextup}{m}
 {
  \clist_map_inline:nn { #1 }
   {
    \cs_new_protected:cpn { ##1 } { \textup{##1}\xspace }
   }
 }
\NewDocumentCommand{\massdefinemathcal}{m}
 {
  \clist_map_inline:nn { #1 }
   {
    \cs_new_protected:cpx { ##1 } { \exp_not:N \mathcal { \tl_range:nnn { ##1 } { 1 } { -2 } } }
   }
 }
\NewDocumentCommand{\massdefinemathfrak}{m}
 {
  \clist_map_inline:nn { #1 }
   {
    \cs_new_protected:cpx { ##1 } { \exp_not:N \mathfrak { \tl_range:nnn { ##1 } { 1 } { -2 } } }
   }
 }
 \NewDocumentCommand{\massdefinetext}{m}
 {
  \clist_map_inline:nn { #1 }
   {
    \cs_new_protected:cpx { ##1 } { \exp_not:N \textup { \tl_range:nnn { ##1 } { 1 } { -2 } } }
   }
 }
\NewDocumentCommand{\textbftextup}{m}{%
  \textbf{\textup{#1}}%
}
 \NewDocumentCommand{\massdefinetextbf}{m}
 {
  \clist_map_inline:nn { #1 }
   {
    \cs_new_protected:cpx { ##1 } { \exp_not:N \textbftextup { \tl_range:nnn { ##1 } { 1 } { -2 } } }
   }
 }
\def\l@subsection{\@tocline{2}{-9pt}{4pc}{6pc}{}}
\def\l@subsubsection{\@tocline{3}{-9pt}{8pc}{8pc}{}}
\begin{document}

\author{Samuel DeHority and Alexei Latyntsev}
\title{Orthosymplectic modules of cohomological Hall algebras}
\maketitle
\begin{adjustwidth}{20pt}{20pt}
\small{\textsc{Abstract}: We study modules and comodules for cohomological Hall algebras equipped with their vertex coproducts arising as objects with classical type stabiliser groups. Specifically we consider how classical type parabolic induction gives rise to actions of CoHAs of quivers with potential, of preprojective algebras, and of dimension zero sheaves on a smooth proper surface. In all cases the CoHA action is compatible with a localised (and vertex) coaction making the module a twisted Yetter-Drinfeld module over the CoHA with its localised braided bialgebra structure. In the case of dimension zero sheaves on a surface the action is related to an approach to the AGT conjecture in classical type using moduli stacks of orthosymplectic perverse coherent sheaves, a compactification of the stack of classical type bundles on a surface. }
\end{adjustwidth}

\tableofcontents

\setcounter{section}{-1}
\section{Introduction}

\noindent
\textit{Parabolic induction} in type $A$, described by the diagram
\begin{center}
\begin{tikzcd}[row sep = {30pt,between origins}, column sep = {45pt, between origins}]
 & \Pt \ar[rd] \ar[ld]  & \\ 
 \GL_n \times \GL_m& & \GL_{n+m}
\end{tikzcd}
\end{center}
underlies the multiplication of multiplication in various flavours of \textit{Hall algebras}, and their interpretation as \textit{Eisenstein series}, \textit{shuffle products}, \textit{free field realisations} of vertex algebras, etc. 
 
In this paper, we study the Hall algebra implications of parabolic induction in classical types. In various examples like quivers, preprojective algebras, and coherent sheaves on surfaces, the parabolic induction diagram is replaced by the diagram
\begin{equation}
  \label{fig:SpParabolicInduction} 
  \begin{tikzcd}[row sep = {30pt,between origins}, column sep = {45pt, between origins}]
   & \Pt' \ar[rd] \ar[ld]  & \\ 
   \GL_n \times \Sp_{2k}& & \Sp_{2n+2k}
  \end{tikzcd}
\end{equation}
We define \textit{modules} over the Hall algebra via such parabolic induction diagrams.

The main idea is to interpret \eqref{fig:SpParabolicInduction} as \textit{fixed points} of an involution acting on a three-step parabolic induction diagram. 

Cohomological Hall algebras are known to have compatible \textit{vertex} or \textit{localised} coalgebraic structures, where the singularities of the coproduct are related to the diagonals in type $A$ configuration spaces $\Conf \Ab^1=\sqcup \Ab^n//\Sk_n=\Spec \Ht^\sbt(\BGL)$
\begin{center}
  \begin{tikzcd}[row sep = {30pt,between origins}, column sep = {65pt, between origins}]
   &(\Conf \Ab^1 \times \Conf \Ab^1)_\circ \ar[ld] \ar[rd]   & \\ 
   \Conf \Ab^1 \times\Conf \Ab^1& & \Conf \Ab^1
  \end{tikzcd}
  \end{center}
  We find an analogous vertex comodule structure living over the classical type configuration spaces, for instance $\Conf_{\Sp} \Ab^1=\Spec \Ht^\sbt(\BSp)$, where the singularities of the coaction are related to the root hyperplanes.

 Of independent interest, our categorical fixed point contruction of moduli stacks gives a compatification $\Ml^{\OSpt}$ of classical-type bundles on a surface, and we obtain a representation of the cohomological Hall algebra of dimension zero sheaves on the surface on $\Ht^{\BM}_\sbt(\Ml^{\OSpt})$. $\Ml^{\OSpt}$ consists of perfect complexes $\El$ with an isomorphism $\eta:\El \simeq \Dt\El$ to its derived dual satisfying a cocycle and stability condition. When $\El$ is a vector bundle, $\eta$ induces a symmetric form; an antisymmetric form arises from a similar construction.

\subsection{Orthosymplectic vertex algebras and cohomological Hall modules}

\subsubsection{} Let $\Ml$ be the moduli stack of objects in an \textit{orthosymplectic} abelian category: an abelian category $\Al$ equipped with an antiinvolution $\Al\simeq \Al^{\textup{op}}$. It carries a canonical involution 
$$\tau \ : \ \Ml \ \stackrel{\sim}{\to} \ \Ml$$
and the results of this paper concern how the structures on $\Ml$ interact with those on its stacky fixed locus $\Ml^\tau$.

\begin{thmX} \label{thmX:CoHAVA} \emph{(Theorems \ref{thm:coham}, \ref{thm:OrthosymplecticJoyceLiuLocalised}, \ref{cor:OrthosymplecticJoyceLiuVertex})}
  For any invariant function $W$ on a stack $\Ml$ satisfying assumptions \ref{ass:StkCoHAM} or \ref{ass:StkVA} respectively, there is a module structure 
  $$m_{\GL \textup{-}\OSpt} \ : \  \Ht^\sbt(\Ml, \varphi) \otimes \Ht^\sbt(\Ml^\tau, \varphi^\tau) \ \to \ \Ht^\sbt(\Ml^\tau, \varphi^\tau)$$
  and an orthosymplectic vertex comodule structure 
  \begin{align*}
    \Delta_{\GL \textup{-}\OSpt} \ : \  \Ht^\sbt(\Ml^\tau, \varphi^\tau) & \ \stackrel{}{\to} \ \Ht^\sbt(\Ml^\tau, \varphi^\tau) \, \hatotimes\, \Ht^\sbt(\Ml, \varphi)(((z\pm w)^{-1}))
  \end{align*}
where $\varphi^{(\tau)}$ is the vanishing cycle sheaf of $W\vert_{(\Ml^\tau)}$.
\end{thmX}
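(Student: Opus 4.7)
The plan is to realise both $m_{\GL\textup{-}\OSpt}$ and $\Delta_{\GL\textup{-}\OSpt}$ as $\tau$-fixed loci of standard type $A$ constructions on $\Ml$, and then to deduce the module and comodule axioms from the algebra and vertex-coalgebra axioms already known for $\Ht^\sbt(\Ml,\varphi)$. This is exactly the ``fixed point'' interpretation promised in the introduction.

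\textbf{Action.} I would first construct the classical-type parabolic induction correspondence $\GL \times \OSpt \leftarrow \Pt' \to \OSpt$ as a $\tau$-fixed locus of the three-step flag stack $\Ml^{(3)}$ parametrising filtrations $0 \subset \El_1 \subset \El_2 \subset \El_3$. The relevant involution acts by
\[
\tau \ : \ (\El_1 \subset \El_2 \subset \El_3) \ \longmapsto \ \bigl(\ker(\El_3 \twoheadrightarrow \Dt\El_1) \subset \El_3\bigr),
\]
using the antiinvolution on $\Al$; its fixed locus parametrises pairs $\El_1 \subset \El_3$ with $\El_3 \in \Ml^\tau$ and $\El_3/\El_1 \simeq \Dt\El_1$, i.e.\ precisely $\Pt'$. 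After taking fixed points, the two legs of the three-step correspondence produce \eqref{fig:SpParabolicInduction}. Pullback-pushforward on vanishing cycle Borel--Moore homology then gives $m_{\GL\textup{-}\OSpt}$, and associativity of the action reduces to $\tau$-equivariance of the base-change square underlying associativity of the CoHA product on $\Ml$.

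\textbf{Coaction.} For $\Delta_{\GL\textup{-}\OSpt}$ I would mirror the Joyce--Liu Grassmannian construction of the vertex coproduct: consider the Grassmannian stack $\Grb(\Ml)$ parametrising subobjects $\El_1 \subset \El$, restrict to the locus where $\El \in \Ml^\tau$, and take $\tau$-fixed points. The resulting correspondence
\[
\Ml^\tau \times \Ml \ \longleftarrow \ \Grb(\Ml)^\tau \ \longrightarrow \ \Ml^\tau,
\]
together with the $\Gm$-scaling on $\Ml$ that produces the insertion parameter, yields the coaction. The normal bundle to $\Grb(\Ml)^\tau$ in $\Grb(\Ml)$ splits into $\tau$-weight $+1$ and $-1$ eigensubbundles; Euler-class inversion along these two pieces produces denominators of the form $(z-w)^{-1}$ and $(z+w)^{-1}$ respectively, which is the source of the singular locus stated in the theorem. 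Passage from the localised to the vertex form then follows from cohomological-degree estimates analogous to those in the type $A$ case.

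\textbf{Expected obstacle.} The main delicate step is verifying that $\varphi^\tau$ is truly the vanishing cycle sheaf of $W\vert_{\Ml^\tau}$, not merely the naive restriction $\varphi\vert_{\Ml^\tau}$. A Thom--Sebastiani argument should identify the two once one shows that the $\tau$-antiinvariant piece of $W$ is a non-degenerate quadratic form transverse to $\Ml^\tau$; this should be a consequence of assumptions \ref{ass:StkCoHAM}/\ref{ass:StkVA}, but it has to be checked compatibly with the normal-bundle decomposition used in the coaction step. A second, more structural difficulty is tracking the cocycle for $\tau$ on the ambient vertex coproduct through the fixed-point construction: this cocycle is precisely what makes the outcome a \emph{twisted} Yetter--Drinfeld-type module, and its bookkeeping is what controls the signs relating the $(z-w)^{-1}$ and $(z+w)^{-1}$ poles.
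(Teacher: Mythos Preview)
Your treatment of the \textbf{action} is essentially the paper's: take $\tau$-invariants of the three-step flag correspondence $\SES_3$ to get $\SES_3^\tau$, then pull--push vanishing-cycle cohomology. One inaccuracy: the fixed locus is not ``$\El_3/\El_1\simeq\Dt\El_1$'' but rather isotropic $a\subset e$ with $e\in\Ml^\tau$, so the correspondence projects to $(a,\,a^\perp/a)\in\Ml\times\Ml^\tau$ on the left---the middle orthosymplectic graded piece $a^\perp/a$ is present and essential.

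Your \textbf{coaction} proposal, however, diverges from the paper and has a conceptual gap. The paper does \emph{not} build $\Delta_{\GL\textup{-}\OSpt}$ by pull--push along a Grassmannian-type correspondence with normal-bundle Euler-class inversion. Instead it proceeds in two stages. First a \emph{localised} coproduct
\[
\Delta_{\GL\textup{-}\OSpt}\ =\ e(\Nt_{s_3^\tau})\cdot \oplus_{\GL\textup{-}\OSpt}^*,
\]
i.e.\ plain pullback along the genuine map $\oplus_{\GL\textup{-}\OSpt}:\Ml\times\Ml^\tau\to\Ml^\tau$, followed by cup product with the Euler class of the normal complex to the section $s_3^\tau:\Ml\times\Ml^\tau\to\SES_3^\tau$ (a Borcherds/Joyce twist). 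The Cherednik-type hexagon relations for this Euler class, not any eigenbundle splitting, are what make the coaction coassociative and give it poles along the orthosymplectic hyperplanes $z_i\pm w_j=0$ in $\Conf_{\Ml^\tau}\Ab^1$. Second, the passage from localised to vertex coproduct is a separate functor $\Phi$ (Theorem~\ref{thm:LocalisedToVertex}): one pulls back by the $\BGm$/$\BT_{\Gt_2}$ translation action and Taylor-expands at $z,w\to\infty$, which converts the configuration-space hyperplanes into the $(z\pm w)^{-1}$ singularities. Your mechanism of ``$\tau$-weight $\pm1$ normal eigensubbundles producing $(z\mp w)^{-1}$'' does not appear and would need a substantial independent argument.

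Finally, your ``expected obstacle'' is not one: the paper simply \emph{defines} $\varphi^\tau=\varphi_{W|_{\Ml^\tau}}(\omega_{\Ml^\tau})$, so there is no Thom--Sebastiani identification to make. The Thom--Sebastiani theorem enters elsewhere, namely to identify $\oplus^*$-pullbacks of vanishing cycles with external tensor products.
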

Setting $W=0$ gives Theorem \ref{thmX:CoHAVA} for the ordinary cohomologies $\Ht^\sbt(\Ml),\Ht^\sbt(\Ml^\tau)$. 

The vertex coproduct on $\Ht^\sbt(\Ml, \varphi)$ was originally defined in \cite{Jo,Li,JKL} and the cohomological Hall product in \cite{KS}. In special cases the vertex coaction was introduced by Bu in the dual setting of homology \cite{Bu1,Bu2}. The main examples of interest are where the critical locus $\Crit(W)$ is the moduli stack of objects for a Calabi-Yau-three category; such a moduli stack is always locally of this form. Our next result shows the above structures are compatible:

\begin{thmX} \emph{(Theorem \ref{thm:YD})} \label{thmX:YD}
  $\Ht^\sbt(\Ml^\tau,\varphi^\tau)$ forms a twisted Yetter Drinfeld vertex module for $\Ht^\sbt(\Ml,\varphi)$:
\begin{center}
  \begin{tikzpicture}[scale = 0.9]
  \newcommand{\mydrawingsmall}{
  \draw[black, ultra thick] (-1.5-0.2,2) .. controls +(0,-0.75) and +(0,0.75) .. (0,0.2);

  \draw[black, line width=5pt] (0,-2) -- (0,2);
  }
  \mydrawingsmall

  \begin{scope}[xscale=-1]
      \mydrawingsmall
      \node[white] at (1.5+0.7,0) {a};
  \end{scope}

  \begin{scope}[yscale=-1]
      \mydrawingsmall
  \end{scope}

  \begin{scope}[xscale=-1, yscale=-1]
      \mydrawingsmall
  \end{scope}

  \fill[white,opacity=1] (0,-2) rectangle (1.5+0.8,2);
  \draw[black, line width=5pt] (0,-2) -- (0,2);

  \node[] at (3.5-1,0) {$\stackrel{\textup{\eqref{fig:YDConditionVertex}}}{=}$};


  \begin{scope} 
   [xshift=7cm] 

   \newcommand{\mydrawing}{%
   \draw[black, ultra thick] (-1.5-0.2,-2) -- (-1.5-0.2,-1);
   \draw[black, ultra thick] (-1.5-0.2,-2) .. controls +(0,0.25) and +(0,-0.25) .. (-1.5-0.7,-1.5);
   \draw[black, ultra thick] (-1.5-0.2,-2) .. controls +(0,0.25) and +(0,-0.25) .. (-1.5+0.3,-1.5);
   
   \draw[black, ultra thick] (-1.5-0.2,-1.5) -- (-1.5-0.2,-1);
   \draw[black, ultra thick] (-1.5-0.2,-1) -- (-1.5-0.2,-0.5);
   \draw[black, ultra thick] (-1.5+0.3,-1.5) -- (-1.5+0.3,-1);
   \draw[black, ultra thick] (-1.5-0.7,-1.5) -- (-1.5-0.7,0);
 
   \draw[white,line width=5pt] (-1.5+0.3,-1) .. controls +(0,1.5) and +(0,-1.5) .. (1.5-0.3,3-2);
   \draw[black, ultra thick] (-1.5+0.3,-1) .. controls +(0,1.5) and +(0,-1.5) .. (1.5-0.3,3-2);
   \draw[white,line width=5pt] (-1.5-0.2,-0.5) .. controls +(0,1) and +(0,-1) .. (0,2);
   \draw[black, ultra thick] (-1.5-0.2,-0.5) .. controls +(0,1) and +(0,-1) .. (0,2);
 
   \draw[white, line width=7pt] (0,-0.1) -- (0,0.1);
   }
   \mydrawing

   \begin{scope}[xscale=-1]
    \mydrawing
\end{scope}

   \begin{scope}[xscale=-1, yscale=-1]
       \mydrawing
   \end{scope}
   
   \begin{scope}[yscale=-1]
    \mydrawing
    
    \draw[white,line width=5pt] (-1.5-0.2,-0.5) .. controls +(0,1) and +(0,-1) .. (0,2);
    \draw[black, ultra thick] (-1.5-0.2,-0.5) .. controls +(0,1) and +(0,-1) .. (0,2);
    \end{scope}

   \fill[white,opacity=1] (0,-2) rectangle (1.5+0.8,2);
   \draw[black, line width=5pt] (0,-2) -- (0,2);
   \end{scope}
  \end{tikzpicture}
  \end{center}
  which expresses $\Delta_{\GL \textup{-}\OSpt}\cdot m_{\GL \textup{-}\OSpt}$ (on the left) as a different expression (on the right) where the coaction precedes the action.
\end{thmX}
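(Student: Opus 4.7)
The plan is to realise both sides of the displayed Yetter-Drinfeld identity as pull-push operations along correspondences of moduli stacks, and then deduce their equality from base change in a common stack of iterated isotropic flags, combined with the matching of the normalising equivariant Euler classes.

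By Theorem \ref{thm:coham} the action $m_{\GL\text{-}\OSpt}$ is realised as pull-push along the parabolic induction correspondence \eqref{fig:SpParabolicInduction}, twisted by the Thom--Sebastiani vanishing cycle sheaf associated to $W$ and $W^\tau$. Dually, by Theorem \ref{thm:OrthosymplecticJoyceLiuLocalised} the coaction $\Delta_{\GL\text{-}\OSpt}$ is realised as pull-push along a coparabolic correspondence, expanded as a formal series in $(z \pm w)^{-1}$ via equivariant localisation along the root hyperplanes of $\Conf_{\Sp}\Ab^1$. Composing in either order then realises both sides of the theorem's diagrammatic identity as pull-push along correspondences whose middle terms parametrise orthosymplectic objects equipped with a double isotropic filtration.

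I would introduce a master stack $\Fk$ parametrising an orthosymplectic object $E$ equipped with an isotropic flag $0 \subset U_1 \subset U_1^\perp \subset E$ together with a further subobject $U_2 \subset E$ in general position with respect to $U_1$. The stack $\Fk$ admits canonical projections to the iterated correspondences computing both sides of the identity: the left-hand side comes from forgetting $U_2$ first and $U_1$ second (action followed by coaction), while the right-hand side comes from forgetting $U_1$ first and $U_2$ second, with the ordering mismatch accounted for by a braiding strand from the $\Rl$-matrix on $\Ht^\sbt(\Ml, \varphi)$ and a middle factor of the type-$A$ coproduct $\Delta_{\GL\text{-}\GL}$. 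Proper base change for vanishing cycles, valid because $\tau$-invariance of $W$ makes it Thom--Sebastiani-compatible with each graded piece of the double flag, identifies the two pull-pushes up to excess Euler class contributions concentrated on the locus where $U_1$ and $U_2$ interact nontrivially.

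The main obstacle is matching these excess contributions with the denominators in $(z - w)$ and $(z + w)$ prescribed by the vertex structure. On the left-hand side they encode the roots of the inclusion $\GL_n \times \Sp_{2k} \hookrightarrow \Sp_{2n + 2k}$ from \eqref{fig:SpParabolicInduction}, while on the right-hand side they arise intertwined with the braiding weight and the type-$A$ denominators from $\Delta_{\GL\text{-}\GL}$. Their agreement reduces to the symmetric-function identity that the equivariant Euler class of the relative tangent $\Hom(U_1, E/U_1^\perp)$, decomposed under $\tau$ into its symmetric ($z = w$) and antisymmetric ($z = -w$) eigenpieces, equals the product of type-$A$ and classical-type root factors contributed by the alternative route through $\Fk$. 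This is essentially a rearrangement of $\prod(z_i \pm w_j)^{\pm 1}$-factors. Once checked, Thom--Sebastiani upgrades the equality to vanishing cycles and yields the Yetter-Drinfeld compatibility.
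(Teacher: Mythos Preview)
Your high-level strategy is right: both sides of the Yetter--Drinfeld identity come from pull-push along correspondences of flag-type stacks, and the proof reduces to matching Euler-class corrections. But the geometry you set up is not quite sufficient, and the key technical mechanism is different from what you describe.

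First, your master stack $\Fk$ is too small. The right-hand side of the twisted Yetter--Drinfeld condition reads
\[
(m_3 \otimes m_M)\cdot \beta_{34}\beta_{23}\kappa_4\beta_{34}\cdot (\Delta_3 \otimes \Delta_M),
\]
involving the \emph{threefold} (co)product on $A$. A single isotropic $U_1$ together with one further subobject $U_2$ ``in general position'' (which is in any case not a well-defined closed condition) cannot encode the splitting of the algebra input into three pieces. The paper's correspondence is $\SES_3 \times \SES^\tau$: a three-step flag in $\Al$ together with an orthosymplectic short exact sequence, mapping via an explicit closed immersion $i$ into $\SES^\tau \times_{\Ml^\tau}(\Ml\times \Ml^\tau)$ by forming the direct-sum orthosymplectic extension. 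There is no ``general position'' locus; $i$ is a genuine map of correspondences with a computable normal complex $\Nt_i$.

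Second, proper base change alone does not produce the Euler-class discrepancy you need. The paper instead applies Atiyah--Bott torus localisation at the step marked $\star$: after localising, one inserts $i_* \frac{i^*}{e(\Nt_i)} = \id$ to rewrite the left-hand composite as a pull-push along the upper correspondence, at the cost of the factor $1/e(\Nt_i)$. This is what replaces your ``excess intersection'' idea, and it is essential that the Euler class appear in the denominator rather than as a Gysin correction from a fibre-product comparison.

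Finally, the matching of Euler classes is not just a rearrangement of $(z_i\pm w_j)$-factors: one must show that the product $(J_3 \otimes J^{\OSpt})/(e(\Nt_i)\cdot J^{\OSpt})$ collapses to exactly $R_{34}R_{23}K_4R_{34}$, and this cancellation is driven by the Cherednik hexagon relations satisfied by $S=e(\Nt_s)$ and $T=e(\Nt_{s_3}^\tau)$ (established in section \ref{ssec:TangentBorcherds}). The paper gives both an algebraic derivation and a graphical one tracking arrows through the string diagram; either way, the hexagon relations are what force the surviving factors to assemble into the prescribed braiding composite rather than some other expression.
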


The above diagram takes place in a \textit{braided module category} $(\Bl, \otimes_{\Al\Bl},\kappa)$: a category $\Bl$ with a monoidal action $\otimes_{\Al \Bl}  :  \Al \times \Bl  \to  \Bl$ of a braided monoidal category $\Al=(\Al, \otimes, \beta)$, and a natural transformation $\kappa: \otimes_{\Al \Bl}\to \otimes_{\Al \Bl}$ which together with $\beta$ satisfies the type B/C braid relations and act on tensor products
$$A_1 \otimes \cdots  \otimes A_k \otimes_{\Al \Bl}B \ \in \ \Bl$$
which we represent as $k+1$ strands, and the actions of $\beta,\kappa$ as braidings of the strands. See Appendix \ref{sec:Cherednik} for more details. A \textit{twisted Yetter Drinfeld module} over a bialgebra $A\in \Al$ is a $B\in \Bl$ together with an associative action $A \otimes_{\Al\Bl} B \to B$ and coassociative coaction $ B \to  A \otimes_{\Al\Bl}B$ satisfying the conditions of Theorem \ref{thmX:YD}, where we represent as a thin braid merging or splitting off from the heavy central braid.

The braided module category we use is modules over the cohomology with its cup product
$$\Al \ = \ \Ht^\sbt(\Ml)\Md, \hspace{15mm} \Bl \ = \ \Ht^\sbt(\Ml^\tau)\Md$$
where $\kappa=(\tau \times \id)$; geometrically this corresponds to the fact that the $A=\Ht^\sbt(\Ml, \varphi)$ and $B=\Ht^\sbt(\Ml^\tau, \varphi^\tau)$ factorise over the (orthosymplectic) configuration space. For example, in the quiver with potential case we may instead take the categories of modules over (localisations of) the tautological rings
$$\Lambda \ = \ k[\{c_{k}(\El_i)\}], \hspace{15mm} \Lambda^{\OSpt} \ = \ k[\{c_k(\El_{\overbar{\iota}}^{\OSpt})\}]$$
generated by the chern classes of the tautological vector bundles on the moduli stack and its fixed locus, labelled by $i\in Q_0$ and $\overbar{\iota}\in Q_0/\tau$. The pullback $\El_{\overbar{\iota}}^{\OSpt} \mapsto \El_{i} \oplus\El_{\overbar{\iota}}^{\OSpt}\oplus\El_i^\vee$ by the action of $\Ml$ on $\Ml^\tau$ defines a coaction of $\Lambda$ on $\Lambda^{\OSpt}$ hence a module category structure. Finally, we require localisation because the braidings $\beta$ and $\kappa$ are expressed as rational functions in chern classes; see section \ref{sec:Configuration} for more details.

\subsubsection{Construction sketch} The construction \cite{KS,Jo,Li,JKL} of the cohomological Hall product and vertex coproduct on $\Ht^\sbt(\Ml, \varphi)$ proceeds by considering the maps 
\begin{center}
  \centering
  \begin{minipage}{.5\textwidth} \centering
   \begin{tikzcd}[row sep = {15pt,between origins}, column sep = {45pt, between origins}]
     &[-3pt] &\SES\ar[rdd] \ar[ldd]  &&[-28pt] \\ 
     & & & & \\
    & \Ml  \times \Ml& & \Ml&
     \end{tikzcd}
  \end{minipage}%
  \begin{minipage}{.5\textwidth} \centering
   \begin{tikzcd}[row sep = {15pt,between origins}, column sep = {45pt, between origins}]
     &[-15pt]&[20pt] &[-35pt]\\ 
   &\Ml \times \Ml \ar[r,"\oplus"] &\Ml&\\
   && & 
    \end{tikzcd}
  \end{minipage}
  \end{center}
induced by the moduli stack $\SES$ of short exact sequences, we get a quasitrianguar vertex bialgebra structure on $\Ht^\sbt(\Ml, \varphi)$ because these maps form a \textit{lax bialgebra} in an appropriate category. See \cite{JKL,La1,Li} or section \ref{sssec:FinalRemarksBialg}. 

The orthosymplectic variants are constructuted by taking \textit{invariants} of the threefold correspondences
\begin{center}
  \centering
  \begin{minipage}{.5\textwidth} \centering
   \begin{tikzcd}[row sep = {15pt,between origins}, column sep = {45pt, between origins}]
     &[-3pt] &\SES_3\ar[rdd] \ar[ldd]  &&[-28pt] \\ 
     \Bigg(& & & & \Bigg)^\tau \\
    & \Ml \times \Ml \times \Ml& & \Ml&
     \end{tikzcd}
  \end{minipage}%
  \begin{minipage}{.5\textwidth} \centering
   \begin{tikzcd}[row sep = {15pt,between origins}, column sep = {45pt, between origins}]
     &[-7pt]&[20pt] &[-30pt]\\ 
   \Big(&\Ml \times \Ml \times \Ml \ar[r,"\oplus"] &\Ml& \Big)^\tau\\
   && & 
    \end{tikzcd}
  \end{minipage}
  \end{center}
where on the right $\SES_3$ parametrises three-step short exact sequences in $\Al$, and is acted upon by
$$a \ \subseteq \ b \ \subseteq \ c \hspace{10mm} \ \mapsto \ \hspace{10mm} c^* \ \subseteq \ b^* \ \subseteq \ a^*.$$
The Yetter-Drinfeld condition of Theorem \ref{thmX:YD} then follows from the fact that these maps endow $\Ml^\tau$ with a lax Yetter-Drinfeld module structure over $\Ml$.

These structures should be viewed as being attached to the following $\Zb/2$-equivariant cobordisms,\footnote{In the former, which is not commutative, we need to consider correspondences between labelled one-manifolds.} and considering other equivariant cobordisms gives the entire orthosymplectic vertex structure:
\begin{center}
  \centering
  \begin{minipage}{.5\textwidth} \centering
   
   \begin{tikzpicture}[scale=0.7]
     \begin{scope}
       [xshift=1.5cm]
       \draw (0,4) ellipse (0.5 and 0.25);
       \draw (-0.5,3.5) -- (-0.5,4);
       \draw (0.5,3.2) -- (0.5,4);
     \end{scope}
     \begin{scope}
       [xshift=-1.5cm]
       \draw (0,4) ellipse (0.5 and 0.25);
       \draw (-0.5,3.2) -- (-0.5,4);
       \draw (0.5,3.5) -- (0.5,4);
     \end{scope}
 \fill[pattern=north east lines, pattern color=black!30]
 (1.5-0.5,3.5) to[out=-90,in=90,looseness=0.8] (-0.5,0.5) to[out=90,in=-90,looseness=0.8]  (1.5-0.5,3.5) -- (1.5-0.5,4) arc (180:360:0.5 and 0.25)  (1.5+0.5,4) --(1.5+0.5,3.2) to[out=-90,in=90,looseness=0.8] (0.5,0.2) -- (0.5,0) -- (-0.5,0)  -- (-0.5,0.5) to[out=90,in=-90,looseness=0.8]  (1.5-0.5,3.5) -- cycle ;
   
 \fill[pattern=north east lines, pattern color=black!30]
 (-1.5-0.5,3.2) to[out=-90,in=90,looseness=0.8] (-0.5,0.2) to[out=90,in=-90,looseness=0.8]  (-1.5-0.5,3.2) -- (-1.5-0.5,4) arc (180:360:0.5 and 0.25)  (-1.5+0.5,4) --(-1.5+0.5,3.5) to[out=-90,in=90,looseness=0.8] (0.5,0.5) -- (0.5,0) -- (-0.5,0)  -- (-0.5,0.2) to[out=90,in=-90,looseness=0.8]  (-1.5-0.5,3.2) -- cycle ;
 
     \draw (-1.5-0.5,3.2) to[out=-90,in=90,looseness=0.8] (-0.5,0.2);
     \draw (-1.5+0.5,3.5) to[out=-90,in=90,looseness=0.8] (0.5,0.5);
     \draw (1.5-0.5,3.5) to[out=-90,in=90,looseness=0.8] (-0.5,0.5);
     \draw (1.5+0.5,3.2) to[out=-90,in=90,looseness=0.8] (0.5,0.2);
 
     \filldraw[white,opacity=1] (-0.49,0) rectangle (0.49,2.4);
     \draw[dashed] (0.5,0) arc (0:180:0.5 and 0.25);
     \filldraw[white,opacity=0.5] (-0.49,0) rectangle (0.49,1.9);
 
   \fill[pattern=north east lines, pattern color=black!30] (-0.5,4) -- (-0.5,0) arc (180:360:0.5 and 0.25) -- (0.5,4) arc (360:180:0.5 and 0.25);
 
     \draw (0,4) ellipse (0.5 and 0.25);
     \draw (-0.5,0) arc (180:360:0.5 and 0.25);
     \draw (-0.5,2.35) -- (-0.5,4);
     \draw (0.5,2.35) -- (0.5,4);
     \draw (-0.5,0) -- (-0.5,0.2);
     \draw (0.5,0) -- (0.5,0.2);
 
     \draw[dashed] (-0.1,0-0.25+0.14) -- (0-0.1,4-0.25-0.14) -- (0+0.1,4+0.25+0.14) ;
     \draw[<->] (0.7,4.5) -- (-0.7,4.5);
     \node[above] at (0,4.5) {$\tau$};
 

     \node[] at (3,2) {$\rightsquigarrow$};
 
 \begin{scope} 
   [xshift = 5cm] 
  
   \begin{scope}[rotate=-90]
   \begin{scope}
    [rotate = 90]
    \begin{scope}
      [xshift=1.5cm]
      \draw (0,4) ellipse (0.5 and 0.25);
      \draw (-0.5,3.5) -- (-0.5,4);
      \draw (0.5,3.2) -- (0.5,4);
    \end{scope}

    \draw (1.5-0.5,3.5) to[out=-90,in=90,looseness=0.8] (-0.5,0.5);
    \draw (1.5+0.5,3.2) to[out=-90,in=90,looseness=0.8] (0.5,0.2);
  
    \filldraw[white,opacity=1] (-0.5,0) rectangle (0.5,4);
  
  \fill[pattern=north east lines, pattern color=black!30]
  (1.5-0.5,3.5) to[out=-90,in=90,looseness=0.8] (-0.5,0.5) to[out=90,in=-90,looseness=0.8]  (1.5-0.5,3.5) -- (1.5-0.5,4) arc (180:360:0.5 and 0.25)  (1.5+0.5,4) --(1.5+0.5,3.2) to[out=-90,in=90,looseness=0.8] (0.5,0.2) -- (0.5,0) -- (-0.5,0)  -- (-0.5,0.5) to[out=90,in=-90,looseness=0.8]  (1.5-0.5,3.5) -- cycle ;

    \filldraw[white,opacity=0.8] (-0.5,0) rectangle (0.5,4);
    
    \draw[dashed,line width = 2pt] (0.5,0) arc (0:180:0.5 and 0.25);
  
  \fill[pattern=north east lines, pattern color=black!30] (-0.5,4) -- (-0.5,0) arc (180:360:0.5 and 0.25) -- (0.5,4) arc (360:180:0.5 and 0.25);
  
    \draw[line width = 2pt] (0,4) ellipse (0.5 and 0.25);
    \draw[line width = 2pt] (-0.5,0) arc (180:360:0.5 and 0.25);
    \draw (-0.5,0) -- (-0.5,4);
    \draw (0.5,2.3) -- (0.5,4);
    \draw[] (0.5,0) -- (0.5,0.5);

    \node[above] at (0,4.3) {$\Ml^\tau$};
    \node[above] at (1.5,4.3) {$\Ml$};
    \node[below] at (0,-0.4) {$\Ml^\tau$};
  
  \end{scope}
  \end{scope}
  \end{scope}

 \end{tikzpicture} 
  \end{minipage}%
  \begin{minipage}{.5\textwidth} \centering
   \begin{tikzpicture}[scale=0.7,rotate=180]
     \begin{scope}
       [xshift=1.5cm]
       \draw[dashed] (0.5,4) arc (0:180:0.5 and -0.25);
       \draw[] (0.5,4) arc (0:180:0.5 and 0.25);
       \draw (-0.5,3.5) -- (-0.5,4);
       \draw (0.5,3.2) -- (0.5,4);
     \end{scope}
     \begin{scope}
       [xshift=-1.5cm]
       \draw[dashed] (0.5,4) arc (0:180:0.5 and -0.25);
       \draw[] (0.5,4) arc (0:180:0.5 and 0.25);
       \draw (-0.5,3.2) -- (-0.5,4);
       \draw (0.5,3.5) -- (0.5,4);
     \end{scope}
     
     \draw[dashed] (0.5,4) arc (0:180:0.5 and -0.25);
     \draw[] (0.5,4) arc (0:180:0.5 and 0.25);
 
 \fill[pattern=north east lines, pattern color=black!30]
 (1.5-0.5,3.5) to[out=-90,in=90,looseness=0.8] (-0.5,0.5) to[out=90,in=-90,looseness=0.8]  (1.5-0.5,3.5) -- (1.5-0.5,4) arc (180:360:0.5 and -0.25)  (1.5+0.5,4) --(1.5+0.5,3.2) to[out=-90,in=90,looseness=0.8] (0.5,0.2) -- (0.5,0) -- (-0.5,0)  -- (-0.5,0.5) to[out=90,in=-90,looseness=0.8]  (1.5-0.5,3.5) -- cycle ;
   
 \fill[pattern=north east lines, pattern color=black!30]
 (-1.5-0.5,3.2) to[out=-90,in=90,looseness=0.8] (-0.5,0.2) to[out=90,in=-90,looseness=0.8]  (-1.5-0.5,3.2) -- (-1.5-0.5,4) arc (180:360:0.5 and -0.25)  (-1.5+0.5,4) --(-1.5+0.5,3.5) to[out=-90,in=90,looseness=0.8] (0.5,0.5) -- (0.5,0) -- (-0.5,0)  -- (-0.5,0.2) to[out=90,in=-90,looseness=0.8]  (-1.5-0.5,3.2) -- cycle ;
 
     \draw (-1.5-0.5,3.2) to[out=-90,in=90,looseness=0.8] (-0.5,0.2);
     \draw (-1.5+0.5,3.5) to[out=-90,in=90,looseness=0.8] (0.5,0.5);
     \draw (1.5-0.5,3.5) to[out=-90,in=90,looseness=0.8] (-0.5,0.5);
     \draw (1.5+0.5,3.2) to[out=-90,in=90,looseness=0.8] (0.5,0.2);
 
     \filldraw[white,opacity=1] (-0.49,0) rectangle (0.49,2.4);
     \draw[dashed] (0.5,0) arc (0:180:0.5 and 0.25);
     \filldraw[white,opacity=0.5] (-0.49,0) rectangle (0.49,1.9);
 
   \fill[pattern=north east lines, pattern color=black!30] (-0.5,4) -- (-0.5,0) arc (180:360:0.5 and 0.25) -- (0.5,4) arc (360:180:0.5 and -0.25);
 
     \filldraw[draw=black,fill=white] (0,0) ellipse (0.5 and 0.25);
     \draw (-0.5,2.35) -- (-0.5,4);
     \draw (0.5,2.35) -- (0.5,4);
     \draw (-0.5,0) -- (-0.5,0.2);
     \draw (0.5,0) -- (0.5,0.2);
 
     \draw[dashed] (-0.1,-0.1-0.25+0.14) -- (0+0.1,-0.1+0.25+0.14) -- (0+0.1,4+0.14);
     \draw[<->] (0.7,-0.5) -- (-0.7,-0.5);
     \node[above] at (0,-0.5) {$\tau$};

     \node[] at (-3,2) {$\rightsquigarrow$};
 
 \begin{scope} 
   [xshift = -5cm,yscale=-1] 
  
   \begin{scope}[rotate=90]
   \begin{scope}
    [rotate = 90]

    \begin{scope}
     [xshift=1.5cm]
     \draw[dashed] (0.5,4) arc (0:180:0.5 and -0.25);
     \draw[] (0.5,4) arc (0:180:0.5 and 0.25);
     \draw (-0.5,3.5) -- (-0.5,4);
     \draw (0.5,3.2) -- (0.5,4);
   \end{scope}
   
   \draw[dashed] (0.5,4) arc (0:180:0.5 and -0.25);
   \draw[] (0.5,4) arc (0:180:0.5 and 0.25);
 
 \fill[pattern=north east lines, pattern color=black!30]
 (1.5-0.5,3.5) to[out=-90,in=90,looseness=0.8] (-0.5,0.5) to[out=90,in=-90,looseness=0.8]  (1.5-0.5,3.5) -- (1.5-0.5,4) arc (180:360:0.5 and -0.25)  (1.5+0.5,4) --(1.5+0.5,3.2) to[out=-90,in=90,looseness=0.8] (0.5,0.2) -- (0.5,0) -- (-0.5,0)  -- (-0.5,0.5) to[out=90,in=-90,looseness=0.8]  (1.5-0.5,3.5) -- cycle ;

   \draw (1.5-0.5,3.5) to[out=-90,in=90,looseness=0.8] (-0.5,0.5);
   \draw (1.5+0.5,3.2) to[out=-90,in=90,looseness=0.8] (0.5,0.2);
 
   \filldraw[white,opacity=1] (-0.49,0) rectangle (0.49,2.4);
   \draw[dashed] (0.5,0) arc (0:180:0.5 and 0.25);
   \filldraw[white,opacity=0.5] (-0.49,0) rectangle (0.49,1.9);
 
 \fill[pattern=north east lines, pattern color=black!30] (-0.5,4) -- (-0.5,0) arc (180:360:0.5 and 0.25) -- (0.5,4) arc (360:180:0.5 and -0.25);
 
   \filldraw[draw=black,fill=white] (0,0) ellipse (0.5 and 0.25);
   \draw (-0.5,0) -- (-0.5,4);
   \draw (0.5,2.35) -- (0.5,4);
   \draw (-0.5,0) -- (-0.5,0.2);
   \draw (0.5,0) -- (0.5,0.2);

   \draw[dashed,line width = 2pt] (0.5,4) arc (0:180:0.5 and -0.25);
   \draw[line width = 2pt] (0,0) ellipse (0.5 and 0.25);
   \draw[line width = 2pt] (-0.5,4) arc (180:360:0.5 and -0.25);

    \node[below] at (0,4.3) {$\Ml^\tau$};
    \node[below] at (1.5,4.3) {$\Ml$};
    \node[above] at (0,-0.4) {$\Ml^\tau$};
  \end{scope}
  \end{scope}
  \end{scope}
 
 \end{tikzpicture} 
  \end{minipage}
  \end{center}

\subsection{Axiomatics} \label{ssec:Axiomatic}

\subsubsection{} It has been known for some time that the cohomology of moduli stacks carry vertex structures \cite{Jo}. In this paper we will deduce this statement and its orthosymplectic analogues from a richer structure--that the (critical) cohomology of moduli stacks $\Ht^\sbt(\Ml, \varphi)$ lifts to a quasicoherent factorisation coalgebra over the \textit{$\Ml$-configuration space}:
$$\Conf_\Ml \Ab^1 \ = \ \Spec \Ht^\sbt(\Ml).$$
This should be thought of as parametrising all possible coalgebraic structures $\Ht^\sbt(\Ml, \varphi)$ has, and is compatible with the CoHA. See \cite{JKL}, and 

\begin{thmX} \label{thmX:Vertex}
  \emph{(Theorem \ref{thm:OrthosymplecticJoyceLiuLocalised})} Viewed as quasicoherent sheaves, $\Ht^\sbt(\Ml,\varphi)$ and $\Ht^\sbt(\Ml^\tau, \varphi^\tau)$ form a braided cocommutative factorisation coalgebra and comodule over $\Conf_{\Ml^{(\tau)}} \Ab^1$.
\end{thmX}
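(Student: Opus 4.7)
The plan is to upgrade the structures of Theorem \ref{thmX:CoHAVA} to factorisation coalgebra/comodule structures by refining the correspondence construction so as to track all $\BGm$-equivariant parameters. The factorisation coalgebra structure on $\Ht^\sbt(\Ml,\varphi)$ in the $\GL$-case is by now standard \cite{JKL,La1}: it arises from the stack $\SES$ of short exact sequences together with its $(\BGm)^2$-scaling, where the equivariant parameters $z,w$ act by cohomological degree on the two subquotients and provide a moment map $\SES \to \Conf_\Ml \Ab^1 \times \Conf_\Ml \Ab^1$. I would first recall this construction and its interpretation as a lax coalgebra in the $2$-category of equivariant correspondences, and then mimic it on the orthosymplectic side.

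For the comodule structure, the key geometric object is the $\tau$-fixed threefold filtration stack $\SES_3^\tau$, which parametrises pairs $(a \subseteq b)$ with $b \in \Ml^\tau$ and $a \in \Ml$ an isotropic subobject (the third term $a^\vee$ being determined by $b$ and the pairing). This sits in a correspondence
$$\Ml \times \Ml^\tau \ \xleftarrow{} \ \SES_3^\tau \ \xrightarrow{} \ \Ml^\tau$$
and pull-push of $\varphi \boxtimes \varphi^\tau$ along it recovers the coaction $\Delta_{\GL\text{-}\OSpt}$ of Theorem \ref{thmX:CoHAVA}. The residual $\BGm$-scaling on the $a$-factor provides a moment map $\SES_3^\tau \to \Conf_\Ml \Ab^1 \times \Conf_{\Ml^\tau} \Ab^1$ and upgrades the coaction to a map of quasicoherent sheaves after localising at the appropriate root divisor. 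Coassociativity of the factorisation comodule follows from the usual fivefold filtration argument adapted to the $\tau$-fixed locus (the relevant filtration being $a_1 \subseteq a_2 \subseteq b \subseteq a_2^\vee \subseteq a_1^\vee$), and braided cocommutativity is verified by identifying the geometric $\tau$-action on the $\Ml$-factor with the braiding $\kappa$ of Appendix \ref{sec:Cherednik}, using $\tau$-equivariance of $\varphi$.

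The main obstacle is identifying the correct localisation divisor, i.e., showing that the moment maps land in the prescribed open locus and that the poles appearing in the coaction are exactly of the form $(z \pm w)^{-1}$. In the $\GL$-case only type-$A$ differences $z_i - z_j$ appear, but in the orthosymplectic setting the normal bundles to $\SES_3^\tau$ inside $\SES_3$ contribute additional factors $z_i + z_j$ and $z_i \pm w_j$ corresponding to type $B/C/D$ roots. These have to be extracted from the $\tau$-fixed decomposition of the tangent complex of $\SES_3$ and matched against the $(((z\pm w)^{-1}))$ localisation; this is also where the braided module category structure enters, since $\kappa$ is expressed rationally in these Chern-class ratios and must satisfy the type $B/C$ braid relations. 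Once this divisor combinatorics is settled, the remaining lax-to-strict factorisation steps are formal and follow as in the $\GL$-case.
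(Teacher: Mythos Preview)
Your approach differs from the paper's and conflates the two constructions of Theorem~\ref{thmX:CoHAVA}. Pull-push of $\varphi\boxtimes\varphi^\tau$ along the correspondence $\Ml\times\Ml^\tau \leftarrow \SES_3^\tau \to \Ml^\tau$ (with $p$ proper and $q$ quasismooth) is precisely how the paper defines the CoHA \emph{action} $m_{\GL\textup{-}\OSpt}$ (Theorem~\ref{thm:coham}), not the coaction. The coaction in Theorem~\ref{thm:OrthosymplecticJoyceLiuLocalised} is instead defined as
\[
\Delta_{\GL\textup{-}\OSpt} \;=\; e(\Nt_{s_3}^\tau)\cdot \oplus_{\GL\textup{-}\OSpt}^*,
\]
the pullback along the \emph{direct sum} map $\oplus_{\GL\textup{-}\OSpt}:\Ml\times\Ml^\tau\to\Ml^\tau$, multiplied by the Euler class of the normal complex to the section $s_3^\tau$ of $q_3^\tau$. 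The stack $\SES_3^\tau$ enters only through this normal complex, not through a correspondence pull-push, and no moment-map realisation is used.

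Consequently the paper's proof is much shorter than your filtration argument. Coassociativity and the comodule axiom are deduced formally from the Cherednik--Borcherds twist construction (Proposition~\ref{prop:OrthosymplecticBorcherdsTwistMain}), once one checks that $S=e(\Nt_s)$ and $T=e(\Nt_{s_3}^\tau)$ satisfy the Cherednik hexagon relations; that check (Corollaries~\ref{cor:EulerGLHexagon} and~\ref{cor:EulerGLOSpModuleHexagon}) is a short Grothendieck-group computation with tangent complexes, carried out in Section~\ref{ssec:TangentBorcherds}. What you flag as the ``main obstacle''---locating the poles and matching them to the $z\pm w$ roots---is then automatic: the poles are exactly the zeros of the Euler-class twist, and Proposition~\ref{prop:OSpNormalComplex} computes $\Nt_{s_3}^\tau = \Nt_s\vert_{\Ml^\tau\times\Ml}\oplus\Kt_s$, from which the type-$B/C$ factors are read off directly. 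Braided cocommutativity is tautological, because the braidings $\beta,\kappa$ are \emph{defined} (Proposition~\ref{prop:OrthosymplecticBraidedFactorisation}) as the Euler-class ratios measuring the asymmetry of $S,T$ under $\sigma$ and $\tau\times\id$.
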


To relate this configuration space data to vertex coalgebras,
$$\otimes \ : \ \Vect \times \Cl \ \to \ \Cl$$
we consider the action on our abelian category of (symplectic) vector spaces, 
$$\GL\times \Ml \ \to \ \Ml,\hspace{15mm} \OSpt \times \Ml^{\tau} \ \to \ \Ml^{\tau}$$ 
and pull back our quasicoherent sheaves along the induced map
$$\Gb_{a,z} \times \Conf_{\Ml^{(\tau)}} \Ab^1 \ \to \ \Conf_{\Ml^{(\tau)}} \Ab^1.$$
Loosely speaking, our functor $\Phi$ is constructed by taking this translation map on the configuration space, then in the limit of translating by $z\to \infty$ all the hyperplanes in the configuration space are removed, i.e. the resulting structure on the punctured formal neighbourhood of $z=\infty$ gives a vertex coproduct with singularities in $z^{-1}$ rather than the hyperplanes. 
\begin{center}
\begin{tikzpicture}

  \begin{scope} 
   [scale=0.3] 
    \filldraw[black, fill=black, fill opacity = 0.2, pattern=north west lines] (-6,0) -- (-1,2) --  (6,0) -- (1,-2) -- cycle;
     \draw[black, line width = 1.5pt] (-1,2) -- (1,-2);
     \draw[black, line width = 1.5pt] (-6,0) -- (6,0);

     \draw[->] (-6,0) -- (1+1.4,-2-0.4);
     \draw[->] (-6,0) -- (-1+1.0,2+0.4);
     \draw[->] (-6,0) -- (-6,11);

     \node[left] at (-6,11) {$z$};
      \node[below] at (1+1.4,-2-0.4) {};
      \node[above] at (-1+1.0,2+0.4) {};
      \node[] at (9.5,0) {$\Conf_{\Gb} \Ab^1$};
      \node[left] at (-6.5,5) {$\Ga$};

     \foreach \x/\y in {-6/0, -1/2, 6/0, 1/-2}
      {\draw[] (\x,\y) -- (\x,\y+10);};

    \filldraw[black, fill=black, fill opacity = 0.2, pattern=north west lines] (-6,0) -- (-1,2) --  (6,0) -- (6,10) -- (-1,2+10) -- (-6,10) -- cycle;

   \begin{scope} 
    [yshift=10cm] 
    \filldraw[black, fill=black, fill opacity = 0.2, pattern=north west lines] (-6,0) -- (-1,2) --  (6,0) -- (1,-2) -- cycle;
    \clip (-6,0) -- (-1,2) --  (6,0) -- (1,-2) -- cycle;
    \begin{scope} 
     [xshift=1.5cm,yshift=-0.5cm] 
     \draw[black, line width = 1.5pt] (-2,4) -- (2,-4);
     \draw[black, line width = 1.5pt] (-6,0) -- (6,0);
     \end{scope}
    \end{scope}

   \end{scope}

 \end{tikzpicture}
 \end{center}

\begin{thmX}\label{thmX:LocalisedToVertex} \emph{(Theorem \ref{thm:LocalisedCoproducts})}
  There is a functor $\Phi$ (and $\varphi_{\OSpt}$) from equivariant factorisation coalgebras on $\Conf_{\Ml^{(\tau)}}\Ab^1$ to (orthosymplectic) vertex coalgebras. 
\end{thmX}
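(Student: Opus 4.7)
The plan is to construct $\Phi$ (and $\varphi_{\OSpt}$) as pullback along the $\Ga$-translation action induced by $\GL \times \Ml \to \Ml$ (respectively $\OSpt \times \Ml^\tau \to \Ml^\tau$), followed by completion at the formal neighbourhood of $z=\infty$. The geometric picture preceding the statement is precisely the guide: the hyperplane arrangement on $\Conf_{\Ml^{(\tau)}} \Ab^1$ encoding the singularities of the factorisation coproduct is translated off to $z=\infty$ by the $\Ga$-action, and completing there isolates those hyperplanes into poles in a single formal variable $z^{-1}$ (respectively $(z\pm w)^{-1}$ in the orthosymplectic comodule case).

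First I would make the translation map precise: the map $\Gb_{a,z}\times \Conf_{\Ml^{(\tau)}}\Ab^1 \to \Conf_{\Ml^{(\tau)}}\Ab^1$ is the dual of the cohomology pullback along $\GL \times \Ml \to \Ml$ using the linear coordinate $z$ on a natural $\Ga$-subgroup (respectively of $\OSpt \times \Ml^\tau \to \Ml^\tau$, using an even/symplectic $\Ga$-subgroup so the resulting vertex comodule is genuinely of type B/C). Given an equivariant factorisation coalgebra $\Fl$, I set $\Phi(\Fl) \defeq \act_z^*\Fl$ restricted to $\Spec k((z^{-1}))$, equipped with the induced coproduct. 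Concretely, the factorisation coproduct $\Fl \to (\Fl\boxtimes \Fl)\vert_U$ on the disjointness locus $U$ pulls back to a map valued in formal series in $z^{-1}$, because all factorisation hyperplanes have been pushed away by $z$; this yields the vertex coproduct $\Fl \to \Fl \hatotimes \Fl((z^{-1}))$. For $\varphi_{\OSpt}$ the same construction is applied to an equivariant factorisation comodule on $\Conf_{\Ml^\tau}\Ab^1$: the hyperplanes in $\Conf_{\Ml^\tau}\Ab^1 \times \Conf_\Ml \Ab^1$ come in two root-type flavours $\{z=w\}$ and $\{z=-w\}$, coming from the type B/C/D Weyl-group action, so the completion produces singularities of both kinds and delivers the target orthosymplectic vertex comodule of Theorem \ref{thmX:CoHAVA}.

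The vertex (co)algebra axioms -- translation covariance, (skew-)cocommutativity, and the Jacobi/associativity identity -- then reduce to the factorisation axioms via the completion: $\Ga$-equivariance of $\Fl$, $\Sk_n$-symmetry of the factorisation coproduct over the configuration space, and factorisation coassociativity respectively. Functoriality of $\Phi$ and $\varphi_{\OSpt}$ is automatic from naturality of pullback and completion. The main obstacle I anticipate lies in the orthosymplectic bookkeeping: one must verify that the $\tau$-equivariance of the factorisation comodule translates faithfully into the twisted braiding $\kappa$ of the target orthosymplectic vertex comodule category, and in particular that both $(z-w)^{-1}$ and $(z+w)^{-1}$ singularities survive the completion as genuinely independent pole contributions rather than being accidentally identified by the Weyl-group quotient. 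A related technical point is confirming that ``completion at $z=\infty$'' commutes with the relevant $\boxtimes$-products on quasicoherent sheaves over $\Conf_{\Ml^{(\tau)}}\Ab^1$, which should follow from flatness of the $\Ga$-translate, but will require careful justification given that $\Conf_{\Ml^{(\tau)}}\Ab^1$ is typically of infinite type.
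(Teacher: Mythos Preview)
Your proposal is correct and follows essentially the same route as the paper: pull back along the $\Ga$-translation action, restrict to the punctured formal disk at infinity, and observe that the translated hyperplanes are removed so that the factorisation coproduct becomes a vertex coproduct with poles in the formal variable. The paper makes one point more explicit than you do: it translates in \emph{both} tensor factors simultaneously, introducing two formal variables $u_1,u_2$ (one per factor of $\Conf_\Gt^{\ord}\Ab^1 \times \Conf_\Gt^{\ord}\Ab^1$), and then writes down the Taylor expansion map $\Tay$ concretely by sending each inverted hyperplane function $1/f_\alpha$ to its geometric series in $(a_\alpha u_1 + b_\alpha u_2)^{-1}$; this is what guarantees that \emph{all} off-diagonal hyperplanes are cleared and sidesteps your worry about ``completion commuting with $\boxtimes$'', since the map is defined directly on rings of functions of ind-affine schemes. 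Your anticipated obstacle about the $(z\pm w)^{-1}$ singularities surviving independently is a non-issue once one works on the ordered configuration space $\Conf_\Gt^{\ord}\Ab^1$ (the paper pulls back there first), where the Weyl-group quotient has not yet been taken.
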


Additionally, we prove in Theorem \ref{thm:LocalisedBiAlgebras} that these functors preserve the localised coaction and CoHA action; Theorem \ref{thmX:YD} may thus be proven over the configuration space, without working with vertex algebras directly.  Combining Theorems \ref{thmX:Vertex} and \ref{thmX:LocalisedToVertex} gives the following.
\begin{corX} \emph{(Corollary \ref{cor:OrthosymplecticJoyceLiuVertex})} \label{corX:OSpGLVertex}
  There are maps
  $$\Delta_{\GL \textup{-}\OSpt}(z,w) \ : \ \Ht^\sbt(\Ml^\tau, \varphi^\tau) \ \to \ \Ht^\sbt(\Ml, \varphi)\, \hat{\otimes}\, \Ht^\sbt(\Ml^\tau, \varphi^\tau)(((z\pm w)^{-1}))$$
  making $\Ht^\sbt(\Ml^\tau, \varphi^\tau)$ into an orthosymplectic vertex comodule for the vertex coalgebra $\Ht^\sbt(\Ml, \varphi)$.
\end{corX}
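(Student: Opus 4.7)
The plan is to combine Theorems \ref{thmX:Vertex} and \ref{thmX:LocalisedToVertex}. By Theorem \ref{thmX:Vertex}, $\Ht^\sbt(\Ml^\tau,\varphi^\tau)$ carries a braided cocommutative factorisation comodule structure over the factorisation coalgebra $\Ht^\sbt(\Ml,\varphi)$, presented as a pair of quasicoherent sheaves on the configuration spaces $\Conf_{\Ml^{(\tau)}}\Ab^1 = \Spec \Ht^\sbt(\Ml^{(\tau)})$. The singularities of the coaction sit along the orthosymplectic root hyperplanes $\{z=\pm w\}$ and $\{z=0\}$, corresponding to the loci where an $\OSpt$-strand coincides with (or is reflected into) a $\GL$-strand.

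To pass from this configuration space data to a vertex comodule, I apply the functor $\varphi_{\OSpt}$ of Theorem \ref{thmX:LocalisedToVertex}. First I must verify that the factorisation structure is equivariant for the translation $\Gb_a$-action on $\Conf_{\Ml^{(\tau)}}\Ab^1$; this action arises from the maps $\BGm \times \Ml \to \Ml$ and $\Bb\OSpt \times \Ml^\tau \to \Ml^\tau$ (tensoring by the tautological line, respectively the tautological two-dimensional symplectic module), and the factorisation coaction is translation-compatible because it is constructed from the direct sum map, which commutes with twisting the ambient bundle. With equivariance in hand, $\varphi_{\OSpt}$ restricts to the punctured formal neighbourhood at $z \to \infty$: hyperplanes $\{z_i - z_j = 0\}$ internal to the $\GL$-strands are absorbed into the vertex coalgebra structure on $\Ht^\sbt(\Ml,\varphi)$ (the linear content of Theorem \ref{thmX:LocalisedToVertex}), while the cross hyperplanes $\{z=\pm w\}$ become the two-variable Laurent expansion in $(z\pm w)^{-1}$, yielding precisely the stated codomain.

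The main obstacle is verifying that the output satisfies the full orthosymplectic vertex comodule axioms, in particular the $w \mapsto -w$ symmetry distinguishing them from plain linear vertex comodules, together with the coassociativity pairing a $\GL$-strand with an $\OSpt$-strand. Coassociativity transports through $\varphi_{\OSpt}$ directly from the factorisation coassociativity supplied by Theorem \ref{thmX:Vertex}; the involutive symmetry comes from the fact that the same theorem presents the comodule as a $\Zb/2$-equivariant object, with $\tau$ acting on $\Conf_{\Ml^\tau}\Ab^1$ by negating the coordinate dual to the $\OSpt$-weight. Both verifications reduce to a routine transport of structure along $\varphi_{\OSpt}$, requiring no new geometric input beyond the two input theorems.
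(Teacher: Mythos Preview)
Your proposal is correct and follows the same approach as the paper: the corollary is obtained by applying the localised-to-vertex functor $\varphi_{\OSpt}$ of Theorem~\ref{thmX:LocalisedToVertex} to the factorisation/localised comodule structure supplied by Theorem~\ref{thmX:Vertex}, exactly as the paper states in one line before Corollary~\ref{cor:OrthosymplecticJoyceLiuVertex}. Your additional discussion of translation equivariance and of transporting coassociativity and the $w\mapsto -w$ symmetry through the functor is accurate but already built into the construction of $\varphi_{\OSpt}$; one small slip is the mention of a hyperplane $\{z=0\}$, since the relevant off-diagonal singularities for the $\GL$--$\OSpt$ coaction are only $\{z=\pm w\}$.
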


Vertex algebras have an equivalent description \cite{FBZ,FG} as chiral algebras over the Ran space of $\Ab^1$, and in Theorem \ref{thm:OrthosymplecticFactorisationAlgebras} we show that $\Gt$-vertex algebras (with singularities along relatively general collections of hyperplanes) are equivalent to factorisation algebras over the \textit{$\Gt$-Ran space},
\begin{center}
  \begin{tikzcd}[row sep = 20pt, column sep = 20pt]
    \Ran_{\Gb} \Ab^1\ =\ \colim_{I\in \FinSet_{\Gb}^{\surj,\textup{op}}}X^I \ = \  \colim\bigg( &[-20pt] \tk_{\gk_1}\ar[r,std]\arrow[loop, distance=2em, in=50, out=130, looseness=5,"W_{\Gb_1}"] & \tk_{\gk_2}\ar[r,{yshift=3pt},std]\ar[r,{yshift=-3pt},std] \arrow[loop, distance=2em, in=50, out=130, looseness=5,"W_{\Gb_2}"] & \tk_{\gk_3} \arrow[loop, distance=2em, in=50, out=130, looseness=5,"W_{\Gb_3}"] \ar[r,{yshift=6pt},std] \ar[r,{yshift=0pt},std]\ar[r,{yshift=-6pt},std] & \cdots\ \ \bigg)
  \end{tikzcd}
  \end{center}
and Corollary \ref{corX:OSpGLVertex} is the case where $\Gb$ a linear-orthosymplectic parabolic. For example, the coaction $\Delta_{\GL \textup{-}\OSpt}$, related to braided module categories is uniquely calibrated to the type B/C configuration spaces and braid groups, and $\Zb/2$ is singled out as the automorphism group of the $A_n$ Dynkin diagram.  This should be compared to notions \cite{Li} of multiplicative vertex algebras, whose singularities lie along shifts of the diagonals by roots of unity. 

 We expect that many of the techniques developed in this paper naturally generalise away from the classical type case, which will be subject of future work. It would be interesting to find analogues of vertex algebras and the twisted Yetter-Drinfeld condition for type D and exceptional types.

\subsection{Orthosymplectic instantons on surfaces} 

\subsubsection{} If $S$ is a smooth proper algebraic surface or $\Ab^2$ and $\Gt$ is a symplectic or orthogonal group, one can consider the Uhlenbeck compactification $M_{U, G}$ of the moduli space of $\Gt$-bundles on the surface. When $S = \Ab^2$ it is expected \cite{BFN} that, as a conjectural generalization of the AGT correspondence to general groups, there is a representation 
\begin{equation}\label{eq:AGT} W(\hat{\gk}^L) \ \to \  \End(\mathrm{IH}^\bullet_{G\times \Cb^\times\times \Cb^\times}(M_{U,G}))\end{equation}
of a $W$-algebra associated to the Langlands dual affine algebra $\hat{\gk}^L$ on the equivariant intersection homology of $M_{U,G}$. 

In our setting we obtain in section \ref{sec:PervCoh} an action of the cohomological Hall algebra $\Al_{ 0}(S)$ of (cohomological shifts of) dimension zero sheaves on $S$ on the Borel Moore homology of $\Ml^{\OSpt,\textup{ss}}$, the semistable orthosymplectic perverse coherent sheaves.

\begin{thmX} \emph{(Theorem \ref{thm:Surface})}
  There is a representation 
  \begin{equation}
    \Al_{0}(S) \ \to \  \End(\Ht_\bullet^{\BM}(\Ml^{\OSpt,\textup{ss}}))
  \end{equation}
  making $\Ht_\bullet^{\BM}(\Ml^{\OSpt})$ a $\tau$-twisted localised Yetter-Drinfeld module for the zero-dimensional CoHA $\Al_0(S)$. 
\end{thmX}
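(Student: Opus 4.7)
The plan is to realise the situation as an instance of the general orthosymplectic machinery of Theorems \ref{thmX:CoHAVA} and \ref{thmX:YD} applied to a specific abelian (or categorical) setup on the surface $S$. First, I would take the ambient orthosymplectic category to be perverse coherent sheaves on $S$ equipped with the antiinvolution given by the derived dual $\Dt = \Rk\Homl(-, K_S[1])$ (suitably shifted), so that $\Ml^{\OSpt}$ of the introduction arises as the fixed stack $\Ml^\tau$ and $\Ml^{\OSpt,\textup{ss}}$ is its semistable open substack. The involution $\tau$ restricts to the identity on the full subcategory of dimension zero sheaves in an appropriate sense: a dimension zero sheaf $Z$ is sent to its derived dual, and the parabolic induction $Z \oplus \El \oplus \Dt Z \hookrightarrow \El'$ that produces the \eqref{fig:SpParabolicInduction}-type correspondence is precisely the mechanism by which $\Al_0(S)$ should act.

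Next I would verify that the pair $(\Ml, \Ml^\tau)$ built from perverse coherent sheaves on $S$ satisfies the hypotheses \ref{ass:StkCoHAM} required to invoke Theorem \ref{thmX:CoHAVA}, working with Borel-Moore homology (which is the natural output when the map $\SES_3 \to \Ml\times\Ml\times\Ml^\tau$ is quasi-smooth and the projection to $\Ml^\tau$ proper after restricting to semistable locus). The key geometric input is the correspondence diagram
\begin{center}
\begin{tikzcd}[row sep = {15pt,between origins}, column sep = {45pt, between origins}]
  &[-3pt] &\SES_3^\tau\ar[rdd] \ar[ldd]  &&[-28pt] \\
  & & & & \\
 & \Ml_0 \times \Ml^{\OSpt,\textup{ss}}& & \Ml^{\OSpt,\textup{ss}}&
  \end{tikzcd}
\end{center}
where $\Ml_0$ is the stack of dimension zero sheaves and $\SES_3^\tau$ parametrises self-dual filtrations $0 \subseteq Z \subseteq Z^\perp \subseteq \El$ with $Z$ of dimension zero and $Z^\perp/Z$ orthosymplectic. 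Once this is identified as the $\tau$-fixed locus of the three-step short exact sequence stack, the construction of $m_{\GL\textup{-}\OSpt}$ in Theorem \ref{thmX:CoHAVA} produces the desired action.

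I would then apply Theorem \ref{thmX:YD} directly: the lax Yetter-Drinfeld structure on $\Ml^\tau$ over $\Ml$, together with the localised coaction built from the same correspondence read in the opposite direction $\El \mapsto (\text{subsheaf of dim 0}, \text{quotient})$, lifts to the twisted YD structure after localising the tautological ring generated by Chern classes of the tautological (orthosymplectic) perfect complex. Since $\Al_0(S)$ is the CoHA of the subcategory of dimension zero sheaves, the restriction of the full action constructed by Theorem \ref{thmX:CoHAVA} to this subalgebra gives the representation stated. Finally, the coaction factors through the sub-CoHA of dimension zero sheaves because any self-dual extension of a dimension zero subsheaf $Z \subseteq \El$ of a fixed semistable orthosymplectic $\El$ has dimension zero quotient by the subsheaf of dim-zero stuff.

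The main obstacle I expect is verifying the assumptions \ref{ass:StkCoHAM} for the stack of \emph{semistable} orthosymplectic perverse coherent sheaves: one must check that the relevant pushforward-pullback operations are well-defined (properness on the semistable locus, quasi-smoothness of the $\SES_3^\tau$ correspondence) and that $\Dt$ genuinely gives an antiinvolution of the abelian (or at least suitably triangulated) category. Subtleties include the choice of shift/twist in $\Dt$ (so that $\tau^2 = \id$ and so that dimension zero sheaves are preserved), handling the fact that the naive category of coherent sheaves is not preserved by $\Dt$ and hence the passage to perverse coherent sheaves is essential, and confirming that the cocycle condition on $\eta$ in the definition of $\Ml^{\OSpt}$ matches the categorical fixed-point stack appearing in the general setup. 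Once these are in place, the theorem follows formally by specialising the general Yetter-Drinfeld theorem.
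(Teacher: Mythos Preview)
Your approach is correct and matches the paper's: the result is obtained by specialising the general machinery of Theorems \ref{thm:coham} and \ref{thm:YD} to the orthosymplectic abelian category of perverse coherent sheaves on $S$ with derived duality $\Dt$ as antiinvolution, exactly as you outline. The paper's proof of Theorem \ref{thm:Surface} is essentially a single paragraph resolving the very obstacle you name---properness of $p$ when restricted to the semistable open substack---and the point you are missing is the resolution: the Bridgeland stability condition $\sigma=\sigma_{\alpha,\beta_0}$ is deliberately chosen on the vertical wall where dimension zero sheaves have the same phase as all objects of class $(r,0,d)$, so an isotropic extension by a dimension zero sheaf can never destabilize and the correspondence $\SES_3^\tau$ restricts to the semistable locus with $p$ still proper. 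The remaining verifications (that $\Dt$ preserves the tilted heart, the fixed-stack description, quasismoothness of $q$) are absorbed into the setup of \S\ref{sec:PervCoh} and Proposition \ref{prop:InheritedProperties}, and the Yetter--Drinfeld compatibility then follows formally from Theorem \ref{thm:YD} as you say.
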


For a smooth proper surface $S$ the CoHA of dimension zero sheaves has been described in \cite{MMSV} as the positive modes  $\Al_{0}(S)\simeq W^+(S)$ of a defomed W algebra $W(S)$ defined by explicit generators and relations. 
The operation of derived duality $\Dt:\Coh_0(S)[1]\stackrel{\sim}{\to}\Coh_0(S)[1]^{op}$ preserves the Serre subcategory of shifts of dimension zero sheaves and acts as an anti-involution $\tau$ on $W^+(S)$.  
\begin{corX} 
  There is a representation 
  \[ W^+(S) \ \to \  \operatorname{End}(\Ht_\bullet^{\BM}(\Ml^{\OSpt,\textup{ss}}))\]
  making $\Ht_\bullet^{\BM}(\Ml^{\OSpt})$ a $\tau$-twisted localised Yetter-Drinfeld module for $W^+(S)$. 
\end{corX}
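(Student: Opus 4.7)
The plan is to derive this corollary directly from Theorem \ref{thm:Surface} together with the MMSV identification $\Al_0(S) \simeq W^+(S)$. Since the latter is an isomorphism of algebras, composing the CoHA representation with its inverse yields the claimed $W^+(S)$-action on $\Ht^{\BM}_\bullet(\Ml^{\OSpt,\textup{ss}})$. The substance of the proof lies in checking that the $\tau$-twisted localised Yetter-Drinfeld structure transports correctly, which reduces to verifying that the anti-involution of $\Al_0(S)$ induced by derived duality $\Dt$ corresponds, under the MMSV isomorphism, to the stated anti-involution $\tau$ of $W^+(S)$.

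To carry this out, I would first observe that because $\Dt$ preserves the Serre subcategory of shifts of dimension zero sheaves and intertwines extensions with their opposites, it restricts to a bona fide anti-involution of the Hall algebra $\Al_0(S)$ in the sense used in the main body of the paper (compare with the axiomatic setup of Assumption \ref{ass:StkCoHAM}). Next I would compare this induced map with $\tau$ on $W^+(S)$ by computing its action on the tautological generators $c_k(\El)$ for $\El$ the universal sheaf on $\Hilb^n(S)$ (or the equivalent shuffle-type generators in the MMSV presentation). For a dimension zero sheaf $\El$, Grothendieck--Serre duality identifies $\Dt \El$ with $\Ext^2(\El,\Ol_S)$, so on the level of Chern characters the effect is a controlled sign change; these signs then match termwise with MMSV's explicit anti-involution of $W^+(S)$.

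Once the two anti-involutions are identified, the localised coaction on $\Ht^{\BM}_\bullet(\Ml^{\OSpt,\textup{ss}})$ established in Theorem \ref{thm:Surface}, together with the action pulled back along the algebra isomorphism, immediately satisfies the twisted Yetter--Drinfeld condition of Theorem \ref{thmX:YD}. This is because the braided module category structure is defined purely in terms of the configuration-space factorisation data of $\Ml$ and $\Ml^\tau$ (section \ref{ssec:Axiomatic}), which is preserved by any isomorphism of the underlying Hall algebra.

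The main obstacle will be the explicit comparison of anti-involutions. Although both sides carry a natural anti-involution, MMSV's presentation is in terms of shuffle relations and deformed $W$-algebra generators, so one must ensure that Grothendieck--Serre duality on the CoHA side lands on precisely the expected $\tau$ of $W^+(S)$, rather than its composition with some inner or Dynkin-type outer automorphism. I expect this can be resolved by a direct verification on a small set of low-degree generators combined with a uniqueness argument: any algebra anti-involution of $W^+(S)$ that agrees with $\tau$ on a generating set must coincide with it, since the relations are rigid enough to propagate the identification.
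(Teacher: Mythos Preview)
Your approach is exactly the paper's: the corollary is presented there as an immediate consequence of Theorem~\ref{thm:Surface} together with the MMSV isomorphism $\Al_0(S)\simeq W^+(S)$, with no further proof given. The only additional input is the sentence immediately preceding the corollary, which simply asserts that derived duality on $\Coh_0(S)[1]$ induces the anti-involution $\tau$ on $W^+(S)$; the paper does not carry out the generator-by-generator verification you outline, so your proposal is in fact more careful than what appears in the text.
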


\subsubsection{Conjectural relation to AGT} 
To relate this construction to the AGT conjectures, there is an analogue of the module construction for framed sheaves on $\mathbf{P}^2$, thought of as perverse coherent sheaves on $\Ab^2$ we expect in the $S = \Ab^2$ case that there is an analogue of the Kirwan map 
\[ \kappa: \Ht_\bullet^{\BM}(\Ml^{\OSpt,\textup{fr}}) \ \to \  \mathrm{IH}^\bullet_{G\times \Cb^\times\times \Cb^\times}(M_{U,G})\] 
such that the positive modes of $W((\widehat{\gk})^\vee)$ act via equivalence classes of elements of $\Al_{0}(S)$. The $\tau$-twisted Yetter-Drinfeld condition should provide the appropriate commutation relations between the positive and negative modes.

\subsection{Orthosymplectic  quivers with potential} \label{ssec:QuiverIntro} 

\subsubsection{} Attached to any quiver $Q$ with an orientation-reversing involution  $ \theta : Q \stackrel{\sim}{\to} Q^{\textup{op}}$ 
\begin{center}
\begin{tikzpicture}
\begin{scope} 
 [rotate = -90] 
 \draw[dashed] (0,-3.5) -- (0,3.5);
 \begin{scope} 
 \clip (0,0) circle (2.2);
 \filldraw[] (0,0) circle (1.5pt);
 \filldraw[] (1,1) circle (1.5pt);
 \filldraw[] (-1,1) circle (1.5pt);
 \filldraw[] (0,-1.3) circle (1.5pt);
 \draw[<-] (0.1,0.1) -- (0.9,0.9);
 \draw[->] (-0.1,0.1) -- (-0.9,0.9);
 \draw[->] (0.07,-0.14) to[out=-90+20,in=90-20] (0.07,-1.16);
 \draw[<-] (-0.07,-0.14) to[out=-90-20,in=90+20] (-0.07,-1.16);
 \draw[->,looseness=10] (1,1+0.14) to[out=80,in=10] (1+0.14,1);
 \draw[<-,looseness=10] (-1,1+0.14) to[out=80,in=180-10] (-1-0.14,1);
 \draw[->,looseness=10]  (-0.1,-1.3-0.1) to[out=-135+10,in=-45-10]  (0.1,-1.3-0.1);
  \end{scope}

  \filldraw[] (0.6,1.6) circle (1.5pt);
  \filldraw[] (-0.6,1.6) circle (1.5pt);
  \filldraw[] (0.8,2.3) circle (1.5pt);
  \filldraw[] (-0.8,2.3) circle (1.5pt);
  \draw[->, shorten <=4pt, shorten >=4pt] (0.6,1.6) -- (0.8,2.3);
  \draw[<-, shorten <=4pt, shorten >=4pt] (-0.6,1.6) -- (-0.8,2.3);
  \draw[<->,thick] (0.5,4) -- (-0.5,4);
  \draw[<->,thick] (0.5,-4) -- (-0.5,-4);

  \node[] at (0,4.5) {$\theta$};
  \node[white] at (0,-4.5) {$\theta$};
 \end{scope}
 \end{tikzpicture}
 \end{center}
 we get an involution on the category of $Q$-representations 
 $$\tau \ = \ (-)^\vee \cdot \theta \ : \  \Rep Q \ \stackrel{\sim}{\to} \ \Rep Q$$
 by applying $\theta$ then taking duals of all vector spaces and edge maps, and therefore an involution $\tau: \Ml \stackrel{\sim}{\to}\Ml$ on the moduli stack of $Q$-representations. More generally one can consider self-dual quiver representations \cite{DW} which modify the above construction by signs on the edges. 
 
 \begin{corX} \label{corX:W0}
  $\Ht^\sbt(\Ml^\tau) $ forms a $\tau$-equivariant module and vertex comodule for $\Ht^\sbt(\Ml)$. It forms a $\tau$-twisted vertex Yetter-Drinfeld module.
\end{corX}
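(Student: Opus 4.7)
The corollary is designed to be a specialisation of the abstract framework (Theorems \ref{thmX:CoHAVA}, \ref{thmX:YD}, and Corollary \ref{corX:OSpGLVertex}) to the quiver case, so the plan is to check the relevant geometric/categorical hypotheses for the moduli stack $\Ml$ of representations of $Q$ and then invoke the general results.

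\textbf{Step 1: Orthosymplectic structure on $\Rep Q$.} First I would verify that the data $(Q,\theta)$ plus a choice of self-dual signs produces an orthosymplectic abelian category in the sense of \S\ref{ssec:Axiomatic}. The functor $\tau=(-)^\vee\cdot\theta$ sends a representation $V=(V_i,\{x_e\})$ to $(V_{\theta(i)}^\vee,\{x_{\theta(e)}^\vee\})$; because $\theta$ reverses the orientation of edges, this is covariant on $\Rep Q$ but contravariant as a functor of abelian categories, and $\tau^2\simeq \id$ up to the canonical biduality (with signs as in \cite{DW}). Hence $\tau:\Ml\stackrel{\sim}{\to}\Ml$ is an involution and $\Ml^\tau$ parametrises self-dual $Q$-representations. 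The short exact sequence stack $\SES$ of $Q$-representations carries a compatible involution $(a\subseteq b\subseteq c)\mapsto (c^*\subseteq b^*\subseteq a^*)$ on its threefold analogue $\SES_3$, exactly as in the general construction sketch.

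\textbf{Step 2: Verification of the stack assumptions.} I would then check that $\Ml$ and $\Ml^\tau$, together with $W=0$, satisfy Assumptions \ref{ass:StkCoHAM} and \ref{ass:StkVA}. For quiver representations this is standard: $\Ml=\bigsqcup_{\mathbf{d}}[R(Q,\mathbf{d})/\GL_{\mathbf{d}}]$ is a smooth Artin stack, the direct sum map $\oplus:\Ml\times\Ml\to\Ml$ and the correspondence $\Ml\times\Ml\leftarrow \SES\rightarrow \Ml$ are of the right type, and on $\tau$-fixed points one obtains products of orthogonal/symplectic groups acting on self-dual vector spaces, with an analogous three-term correspondence. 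The existence and smoothness of $\SES_3$ and its $\tau$-fixed locus follow directly from the linear algebra of filtrations of $Q$-representations by isotropic subrepresentations.

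\textbf{Step 3: Invocation of the main theorems.} With the assumptions verified, Theorem \ref{thmX:CoHAVA} (with $W=0$) directly produces the action $m_{\GL\text{-}\OSpt}$ and the localised coaction of $\Ht^\sbt(\Ml)$ on $\Ht^\sbt(\Ml^\tau)$, while Theorem \ref{thmX:YD} supplies the twisted Yetter--Drinfeld compatibility at the level of the configuration-space factorisation data. Applying the functor $\Phi$ (and $\varphi_{\OSpt}$) of Theorem \ref{thmX:LocalisedToVertex} as promoted in Corollary \ref{corX:OSpGLVertex} converts the localised coaction into the orthosymplectic vertex comodule structure, and Theorem \ref{thm:LocalisedBiAlgebras} transports the Yetter--Drinfeld identity to the vertex setting. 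The $\tau$-equivariance of the module/comodule structures is automatic from the $\tau$-equivariance of the correspondences on $\Ml$ used to define them.

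The only genuine obstacle is the bookkeeping in Step 1: ensuring that the signs entering the self-dual structure (from the orientation reversal by $\theta$ and the symplectic/orthogonal choice) are consistent with the anti-involution on $\SES_3$ used in the construction sketch of Theorem \ref{thmX:YD}. Once these signs are matched, the remaining work is purely a matter of applying the established machinery.
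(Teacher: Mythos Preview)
Your proposal is correct and follows essentially the same route as the paper: the corollary is obtained by verifying that $(\Rep Q,\tau)$ with the duality structure of \S\ref{ssec:W_sign_invrariance} satisfies Assumptions~\ref{ass:StkCoHAM} and~\ref{ass:StkVA}, and then invoking Theorems~\ref{thm:coham}, \ref{thm:OrthosymplecticJoyceLiuLocalised}, \ref{thm:YD} and Corollary~\ref{cor:OrthosymplecticJoyceLiuVertex} with $W=0$. The sign bookkeeping you flag in Step~1 is exactly what the paper handles in the Lemma of \S\ref{ssec:W_sign_invrariance} showing that the duality structure defines a genuine antiinvolution.
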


The same is true for $\Ht^\sbt(\Ml^\tau,\varphi^\tau)$ and $\Ht^\sbt(\Ml,\varphi)$ whenever we take the vanishing cycles sheaf $\varphi=\varphi_{\tr W}$ of a function given by a $\tau$-invariant potential $W$. When the potential is zero, the CoHAM action recovers \cite{Yo}. We give shuffle formulas for these structures:

 \begin{thmX} \emph{(Theorems \ref{thm:ShuffleNoPotential}, Proposition \ref{prop:Intertwine})}
  The CoHA action of $\Ht^\sbt(\Ml,\varphi_W)$ on $\Ht^\sbt(\Ml^\tau,\varphi^\tau_W)$ is intertwined by map to the action of the shuffle algebra on an orthsymplectic shuffle module. 
 \end{thmX}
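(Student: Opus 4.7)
The plan is to reduce the geometric action $m_{\GL\textup{-}\OSpt}$ to an explicit shuffle formula on the tautological polynomial rings $\Lambda$ and $\Lambda^{\OSpt}$ via the Kirwan map, and then recognise the result as the defining formula for the action of the shuffle algebra on the orthosymplectic shuffle module. This is the orthosymplectic analogue of the standard identification of the CoHA product with the type $A$ shuffle product.

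First I would fix a dimension vector $\mathbf{d}$ for $\Ml$ and an orthosymplectic dimension vector $\mathbf{d}_0$ for $\Ml^\tau$, and restrict the defining correspondence
\[(\SES_3)^\tau \ \longrightarrow \ \Ml \times \Ml^\tau\]
to the component whose points parametrise $\tau$-fixed three-step flags with associated graded $V \oplus U \oplus V^\vee$, $V \in \Rep Q$ of dimension $\mathbf{d}$ and $U$ orthosymplectic of dimension $\mathbf{d}_0$. After base-changing to the classifying stack of the Levi $\GL_{\mathbf{d}} \times \OSpt_{\mathbf{d}_0}$ this correspondence fibres linearly over $\Ml_{\mathbf{d}} \times \Ml^\tau_{\mathbf{d}_0}$ with fibre the $\tau$-invariants of $\Ext^1(V \oplus V^\vee, U) \oplus \Ext^1(V, V^\vee)$. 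Applying Thom-Sebastiani to the decomposition $W\vert_{(\SES_3)^\tau} = W(V) + W^\tau(U) + W(V^\vee)$ splits the vanishing cycles sheaf, and the pushforward reduces to multiplication by the Euler class of this $\tau$-invariant extension bundle followed by the standard Weyl-average associated to a Levi inclusion.

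Next I would compute this Euler class in the Chern roots $x_i$ of $\El_i$ and $y_{\overbar\iota}$ of $\El^{\OSpt}_{\overbar\iota}$. It takes the form of an explicit rational kernel
\[K(x,y) \ = \ \prod_{\overbar\iota \in Q_0/\tau}\prod_i \kappa_{i\overbar\iota}(x_i, y_{\overbar\iota}) \cdot \prod_{i, j} \lambda_{ij}(x_i, x_j)\]
built from factors $x_i \pm y_{\overbar\iota}$ and $x_i \pm x_j$, the signs reflecting the involution $\theta$ on edges of $Q$ and the duality twist from the $V^\vee$ summand. Combined with the Weyl-average, which is the type $BC$ hyperoctahedral symmetrisation modulo the Weyl group of the Levi, this gives for $f \in \Lambda_{\mathbf{d}}$ and $g \in \Lambda^{\OSpt}_{\mathbf{d}_0}$ the shuffle formula
\[f \star g \ = \ \sum_w w\bigl(f(x)\,g(y)\,K(x,y)\bigr),\]
which is by construction the orthosymplectic shuffle module action. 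The intertwining property of the Kirwan map is then automatic from naturality with respect to the correspondence, and associativity is inherited from the geometric construction.

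The main obstacle is the explicit identification of the kernel $K(x,y)$: one must decompose $\Ext^1(V,V^\vee)^\tau$ according to the $\theta$-orbits of arrows of $Q$ (producing the short-root factors $x_i \pm x_j$) and $\Ext^1(V \oplus V^\vee, U)^\tau$ so that it produces precisely the long-root factors $x_i \pm y_{\overbar\iota}$. Tracking the signs, Tate twists, and symmetric-versus-antisymmetric sign choices inherent in the self-dual orthosymplectic structure is the delicate part of the argument; once these are matched against the kernel used to define the orthosymplectic shuffle module, the proof is complete.
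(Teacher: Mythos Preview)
Your argument conflates two logically separate steps that the paper keeps distinct. The shuffle formula you derive --- Euler class of the $\tau$-invariant extension bundle, followed by a hyperoctahedral Weyl average --- is exactly the paper's Theorem~\ref{thm:ShuffleNoPotential}, and your sketch of it (affine fibration over the Levi, compute $e(\Nt_q)$, average) is the standard Kontsevich--Soibelman computation in the $W=0$ case, which the paper likewise defers to~[KS]. So far so good.

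The gap is in the $W\neq 0$ case. You invoke Thom--Sebastiani on $(\SES_3)^\tau$ to split $\varphi_W$, then assert that pushforward along $p$ ``reduces to multiplication by the Euler class \ldots\ followed by the standard Weyl-average.'' But this step is \emph{exactly} where critical cohomology differs from ordinary cohomology: the integration-along-the-fibre argument underlying the Weyl average is a statement about the constant sheaf, not about $\varphi_W\omega_\Ml$, and there is no a priori reason the pushforward of $\varphi_W$ along the $G/P$-fibration should be computed by the same formula. The paper sidesteps this entirely: it constructs an explicit comparison map
\[
\xi^{(\OSpt)}:\Ht^\sbt(\Ml^{(\tau)},\varphi_W^{(\tau)})\to \Ht^\sbt(\Ml^{(\tau)})
\]
from the natural transformation $\varphi_W k[-1]\to i_*i^*k \leftarrow k$ (where $i$ is the inclusion of the zero fibre), and then checks --- by the argument of~[BDa] --- that $\xi$ and $\xi^{\OSpt}$ intertwine the two actions. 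This is Proposition~\ref{prop:Intertwine}. Your ``Kirwan map'' is never defined, and the usual Kirwan map goes the wrong way (from $\Ht^\sbt_G(\pt)$ into cohomology of a quotient); what is actually needed is this vanishing-cycle-to-constant-sheaf comparison. Without it, you have proved the $W=0$ shuffle formula but not the intertwining statement for nontrivial potential.
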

 
In the special case where $Q = Q^{(3)}$ is the tripled quiver of a quiver $Q$ and the involution $\tau$ preserves the symplecitc form on the space of representations of the doubled quiver $Q^{(2)}$ we obtain representations of the cohomological Hall algebra of the preprojective algebra via dimensional reduction. 

With view to applications to the AGT correspondence we also give a shuffle algebra formula for representations of the cohomological Hall algebra of the preprojective algebra on framed shuffle modules $\Ht^{\BM}_\sbt(\Ml^{\tau}_{\Pi_{Q_\wb}^{\textup{fr}}})$.

\begin{corX} \emph{(Corollary \ref{cor:preproj_w_intertwiner} )}
There is an action of $\Ht^{\BM}_{\bullet}(\Ml_{\Pi_{Q}})$ on $\Ht^{\BM}_{\bullet}(\Ml^{\tau}_{\Pi_{Q_\wb}^{\textup{fr}}})$ and compatible coaction making the latter a twisted Yetter-Drinfeld module. This action is intertwined by a map to a framed orthosymplectic module of the shuffle algebra. 
\end{corX}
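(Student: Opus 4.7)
The plan is to deduce this corollary by combining three prior ingredients: the general orthosymplectic CoHAM and Yetter-Drinfeld theorems (Theorems \ref{thmX:CoHAVA} and \ref{thmX:YD}), the shuffle-intertwiner result for quivers with potential, and a dimensional reduction argument adapted to the framed case. First I would realise the framed preprojective algebra $\Pi_{Q_\wb}^{\textup{fr}}$ as the critical locus of an explicit potential on a tripled framed quiver. More precisely, starting from $Q$ with orientation-reversing involution $\theta$, I would form the tripled quiver $Q^{(3)}$ with its canonical potential $W^{(3)}$ whose critical locus on the moduli of representations recovers $\Ml_{\Pi_Q}$, and then adjoin framing vertices (indexed by the framing dimension vector $\wb$) together with the auxiliary dual edges needed to promote framed $Q$-representations to the critical locus of a framed potential $W^{(3),\textup{fr}}$. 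Because $\theta$ reverses orientation and $\wb$ is chosen to carry a compatible self-dual structure, the involution $\tau=(-)^\vee\cdot\theta$ extends to the framed tripled moduli stack and preserves the potential, so the critical locus $\Ml^\tau_{\Pi_{Q_\wb}^{\textup{fr}}}$ is identified as the fixed locus of a $\tau$ satisfying Assumption \ref{ass:StkCoHAM}.

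Next, I would invoke Theorem \ref{thmX:CoHAVA} for the pair $(\Ml_{Q^{(3)},\,\wb}, W^{(3),\textup{fr}})$ to produce the action $m_{\GL\text{-}\OSpt}$ and the localised coaction $\Delta_{\GL\text{-}\OSpt}$ on $\Ht^\sbt(\Ml^\tau, \varphi^\tau)$ of the vanishing cycle complex, and Theorem \ref{thmX:YD} to obtain the twisted Yetter--Drinfeld compatibility. Dimensional reduction along the cotangent directions introduced by the tripling then converts $\Ht^\sbt(\Ml_{Q^{(3)}}, \varphi_{W^{(3)}})$ to $\Ht^{\BM}_\bullet(\Ml_{\Pi_Q})$, and similarly its framed and $\tau$-fixed variants to $\Ht^{\BM}_\bullet(\Ml^\tau_{\Pi_{Q_\wb}^{\textup{fr}}})$. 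Here I would need to verify that the dimensional reduction isomorphisms are equivariant with respect to $\tau$: this requires checking that the directions being collapsed match up as a $\tau$-stable vector bundle over the reduced moduli, which follows from the symmetric pairing between the original and added edges in $Q^{(3)}$ and the fact that $\theta$ exchanges the two halves of this pairing.

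Having transferred the Yetter-Drinfeld structure to the preprojective side, the second half of the corollary (the intertwiner with the framed orthosymplectic shuffle module) is obtained by applying the shuffle formula of Theorem \ref{thm:ShuffleNoPotential} and the intertwining result in Proposition \ref{prop:Intertwine} to $Q^{(3),\textup{fr}}$ with its $\tau$-invariant potential. Under dimensional reduction, the shuffle formulas on the CoHA with potential side descend to explicit shuffle formulas on the BM homology of $\Ml_{\Pi_Q}$ and its framed orthosymplectic variant; the intertwiner map is obtained from the tautological map into the localised shuffle (comodule) constructed in the shuffle intertwiner theorem.

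The main obstacle is the $\tau$-equivariant dimensional reduction step, both matching $\tau$ with the grading used in dimensional reduction on the framed tripled quiver, and ensuring that the rational kernels defining the orthosymplectic shuffle module survive the reduction and produce the claimed intertwiner rather than getting trivialised or acquiring spurious poles. A secondary subtlety is fixing the precise self-dual framing data: the framing vertices must be chosen so that $\theta$ lifts to $Q_\wb$ and so that the fixed locus $\Ml^\tau_{\Pi_{Q_\wb}^{\textup{fr}}}$ is the correct moduli of framed orthosymplectic representations of $\Pi_Q$, which amounts to a careful bookkeeping of signs and dualities on the framing edges, analogous to the self-dual quiver conventions of \cite{DW} already employed for the unframed case in Corollary \ref{corX:W0}.
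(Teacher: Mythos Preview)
Your skeleton is the same as the paper's---realise the framed preprojective moduli via the tripled framed quiver with its cubic potential, use equivariant dimensional reduction (which is exactly Proposition~\ref{prop:DimensionalReduction}, so your ``main obstacle'' is already dispatched), and pull the shuffle intertwiner through Proposition~\ref{prop:Intertwine}.

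There are two points where your route differs from, or is slightly less precise than, the paper's.

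First, a small gap: when you invoke Theorem~\ref{thmX:CoHAVA} on the framed tripled stack $(\Ml_{Q^{(3),\textup{fr}}_\wb},W^{(3),\textup{fr}})$, what you obtain is an action of the CoHA of the \emph{framed} quiver, not of $\Ht^{\BM}_\bullet(\Ml_{\Pi_Q})$. The paper closes this by the Lemma preceding Corollary~\ref{cor:preproj_w_intertwiner}: the inclusion $Q\hookrightarrow Q^{\textup{fr}}$ induces an algebra map $\Ht^{\BM}_\bullet(\Ml_{\Pi_Q})\to\Ht^{\BM}_\bullet(\Ml_{\Pi_{Q^{\textup{fr}}}})$, and the extension correspondence preserves the framing dimension, so one restricts the action along this map. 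You should add this step explicitly; without it the action you construct is by the wrong algebra.

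Second, for the Yetter--Drinfeld compatibility you propose to invoke the geometric Theorem~\ref{thm:YD} on the critical-cohomology side and transfer it through dimensional reduction. The paper takes a different route: it proves the twisted YD condition by a direct shuffle computation at the no-potential level (Theorem~\ref{thm:braided_YD_shuffle}), and then deduces it for the preprojective module via the intertwiner of Corollary~\ref{cor:preproj_w_intertwiner}. Both arguments are valid. Your approach is conceptually cleaner---it reuses the general theorem rather than redoing the calculation---but it requires checking that dimensional reduction intertwines the localised coaction as well as the action (the paper only records the latter explicitly, in Corollary~\ref{cor:preproj_intertwiner}). The paper's shuffle computation trades this verification for an explicit manipulation of the kernels $e(Q_0^{\OSpt}),e(Q_1^{\OSpt})$, which has the side benefit of making the braiding factors appearing in the YD condition completely explicit.
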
 
 
 Finally, at the end of  section \ref{sec:TwistedYangians} this case we then extend the CoHA action on $\Ht^{\BM}_{\bullet}(\Ml_{\Pi_{Q}})$ to an action of an extended CoHA which is roughly a braided module version in $\Lambda^{\OSpt}\Md$ of the Radford-Majid bosonisation construction \cite{Ma}.

\subsection{Relations with twisted Yangians via physics heuristics} \label{ssec:Heuristics}

\subsubsection{Chern-Simons theory} 
In \cite{BS}, Bittleson and Skinner consider orbifolded $4d$ Chern-Simons theory on 
$$\Rb\times \frac{\Rb\times\Cb}{\Zb/2},$$
whose category of line operators for $\Rb \times t$ is the category of modules for the Yangian $Y_\hbar(\gk)$ if $t\ne0$, and the \textit{twisted} Yangian $Y_\hbar(\gk,\sigma)$ if $t=0$. Thus we expect a factorisable sheaf of categories over $(\Rb\times \Cb)/\pm$.
\begin{center}
  \begin{tikzpicture}[scale=0.8]
    \node[left,white] at (2-1.0-0.5,2-0.7-0.7) {$Y_\hbar(\gk,\sigma)\Md$};

    \draw[->] (4,2) -- (4.2,2);
    \draw[->] (2,4) -- (2,4.2);
    \node[right] at (4.2,2) {\small{$\Rb$}};
    \node[above] at (2,4.2) {\small{$\Cb$}};
    \fill[pattern=north west lines, pattern color=black!30] (0,0) rectangle (4,4);

    \fill[pattern=north east lines, pattern color=black!60] (2,2) circle [radius=1.8];
    \draw[] (2,2) circle [radius=1.8]; 

    \fill[white] (2,2) circle [radius=0.4];
    \fill[pattern=north west lines, pattern color=black!30] (2,2) circle [radius=0.6];
    \draw[] (2,2) circle [radius=0.4]; 

    \fill[white] (2+1.0,2+0.7) circle [radius=0.2];
    \fill[pattern=north west lines, pattern color=black!30] (2+1.0,2+0.7) circle [radius=0.4];
    \draw[dashed] (2+1.0,2+0.7) circle [radius=0.2];

    \fill[white] (2-1.0,2-0.7) circle [radius=0.2];
    \fill[pattern=north west lines, pattern color=black!30] (2-1.0,2-0.7) circle [radius=0.4];
    \draw[dashed] (2-1.0,2-0.7) circle [radius=0.2];

  \fill[white,opacity=0.8] (2,0) rectangle (0,4);

  \draw[dashed] (2+0.0,0) -- (2-0.0,4);
  \fill[white,opacity=0.5] (2,2) circle [radius=0.4];

  \node[right] at (2+1.0+0.5,2+0.7+0.7) {$Y_\hbar(\gk)\Md$};
  \node[right] at (2+1.0+0.5,2-0.7-0.7) {$Y_\hbar(\gk,\sigma)\Md$};
  \node[left] at (2-0.9,2-0.3) {};
  \draw[] (2+1.0+0.5,2+0.7+0.7) to[in=70,out=180+20,looseness=0.5]  (2+1.0,2+0.7);
  \draw[] (2+1.0+0.5,2-0.7-0.7) to[in=-70,out=180-20,looseness=0.5]  (2+0.3,2-0.3);
  \draw[] (4.2,1) to[in=-20,out=130,looseness=0.5]  (3.5,1.5);
  \end{tikzpicture} 
\end{center}
This predicts that $Y_\hbar(\gk)$ carries an ordinary and meromorphic coproduct each with an equivariance property and related by meromorphic $R$-matrices \cite{GTW}, and $Y_\hbar(\gk,\sigma)$ is a comodule for both of these.

\subsubsection{Cohomological Hall algebras and BPS states} Cohomological Hall algebras arise through a different physical construction to the above, as an algebra of BPS states \cite{HM, KS}. As a consequence, we expect an algebra $A$ with a module $M$ arising from scattering via a $4d$ $\Zb/2$-equivariant cobordism 
\begin{center}
  
  \begin{tikzpicture}[rotate = 90,scale=0.8]
    \begin{scope}
      [xshift=1.5cm]
      \draw (0,4) ellipse (0.5 and 0.25);
      \draw (-0.5,3.5) -- (-0.5,4);
      \draw (0.5,3.2) -- (0.5,4);
    \end{scope}
    \begin{scope}
      [xshift=-1.5cm]
      \draw (0,4) ellipse (0.5 and 0.25);
      \draw (-0.5,3.2) -- (-0.5,4);
      \draw (0.5,3.5) -- (0.5,4);
    \end{scope}
  \fill[pattern=north east lines, pattern color=black!30]
  (1.5-0.5,3.5) to[out=-90,in=90,looseness=0.8] (-0.5,0.5) to[out=90,in=-90,looseness=0.8]  (1.5-0.5,3.5) -- (1.5-0.5,4) arc (180:360:0.5 and 0.25)  (1.5+0.5,4) --(1.5+0.5,3.2) to[out=-90,in=90,looseness=0.8] (0.5,0.2) -- (0.5,0) -- (-0.5,0)  -- (-0.5,0.5) to[out=90,in=-90,looseness=0.8]  (1.5-0.5,3.5) -- cycle ;
  
  \fill[pattern=north east lines, pattern color=black!30]
  (-1.5-0.5,3.2) to[out=-90,in=90,looseness=0.8] (-0.5,0.2) to[out=90,in=-90,looseness=0.8]  (-1.5-0.5,3.2) -- (-1.5-0.5,4) arc (180:360:0.5 and 0.25)  (-1.5+0.5,4) --(-1.5+0.5,3.5) to[out=-90,in=90,looseness=0.8] (0.5,0.5) -- (0.5,0) -- (-0.5,0)  -- (-0.5,0.2) to[out=90,in=-90,looseness=0.8]  (-1.5-0.5,3.2) -- cycle ;
  
    \draw (-1.5-0.5,3.2) to[out=-90,in=90,looseness=0.8] (-0.5,0.2);
    \draw (-1.5+0.5,3.5) to[out=-90,in=90,looseness=0.8] (0.5,0.5);
    \draw (1.5-0.5,3.5) to[out=-90,in=90,looseness=0.8] (-0.5,0.5);
    \draw (1.5+0.5,3.2) to[out=-90,in=90,looseness=0.8] (0.5,0.2);
  
    \filldraw[white,opacity=1] (-0.49,0) rectangle (0.49,2.4);
    \draw[dashed] (0.5,0) arc (0:180:0.5 and 0.25);
    \filldraw[white,opacity=0.5] (-0.49,0) rectangle (0.49,1.9);
  
  \fill[pattern=north east lines, pattern color=black!30] (-0.5,4) -- (-0.5,0) arc (180:360:0.5 and 0.25) -- (0.5,4) arc (360:180:0.5 and 0.25);
  
    \draw (0,4) ellipse (0.5 and 0.25);
    \draw (-0.5,0) arc (180:360:0.5 and 0.25);
    \draw (-0.5,2.35) -- (-0.5,4);
    \draw (0.5,2.35) -- (0.5,4);
    \draw (-0.5,0) -- (-0.5,0.2);
    \draw (0.5,0) -- (0.5,0.2);
  
    \draw[dashed] (-0.1,0-0.25+0.14) -- (0-0.1,4-0.25-0.14) -- (0+0.1,4+0.25+0.14) ;
    \draw[<->] (0.7,5.5) -- (-0.7,5.5);

     \node[] at (0,4.7) {$M$};
     \node[] at (1.5,4.7) {$A$};
     \node[] at (0,-0.8) {$M$};
  \end{tikzpicture} 
\end{center}
along with a coaction $M \to A\otimes M$ arising from the same cobordism read the other direction. These are the structures we consider in this paper; for instance the CoHA of dimension zero sheaves on a surface $S$ is expected to be related to the algebra of BPS states in a string theory compactificaiton on $\operatorname{tot}(K_S)$, and $M$ we expect to be related to an involution acting on $K_S$ linear in the fibres. 

CoHAs are known to be related to Yangians by taking Drinfeld doubles \cite{YZ}, but the twisted Yangian is not a module over the Yangian, so we expect some relation between $M$ and $Y_\hbar(\gk,\sigma)^+$, but not an equality.

\subsubsection{Partially dualised Hopf algebras} Algebraically, the representations of CoHAs given by taking the image of $A \to \End(M)$ are not faithful in general; the quotients we obtain appear to be generalisations of the construction of \cite{BLS} of \emph{partially dualized Hopf algebras} which are quotients of a Hopf algebra compatible with bialgebra pairings.

\subsubsection{Langlands-type dual of coideal subalgebras} 

Coideal subalgebras of quantum groups and affine quantum groups, such as the twisted Yangian, have attracted a great deal of interest in geometric representation theory in recent years \cite{LW}. 

While the previous section explains via physics heuristics that the algebraic structures arising in this paper are different from those of coideal subalgebras, it is natural to expect there to be some relationship between the quotients of CoHAs arising in orthosymplectic representations and the better-studied twisted Yangians. This should also be related to the Langlands duality of affine Lie algebras appearing in \ref{eq:AGT} for the AGT correspondence for non-simply laced Dynkin types.


In the classical case of finite dimensional Lie algebras, fixed point subalgebras under diagram involutions (which preserve the triangular decomposition and so are most relevant to cohomological Hall algebras) are described by the folding of Dynkin diagrams 
\[ A_{2n-1} \leadsto C_n, \qquad D_{n+1} \leadsto B_n, D_4 \leadsto G_2, \ldots  \] 
whereas the Langlands dual procedure arises via the orbit Lie algebra. While the schematic description of the dual folding is given by 
\[ A_{2n-1} \leadsto B_n, \qquad D_{n+1} \leadsto C_n, D_4 \leadsto G_2, \ldots  \] 
there is not a surjection $\gk \to (\gk^\sigma)^\vee$ because $\gk$ is simple. 

On the other hand, the nilpotent Lie algebras $\nk$ are not semisimple and we propose to construct Langlands-type duals of coideal subalgebras of quantum groups by taking quotients $U_q(\nk) \to R$ (whose kernel is the right radical of a coideal subalgebra with respect to a bialgebra pairing).

Theorem \ref{thmX:YD}, one of the main results of this paper, gives the commutation relations required to normal order the positive and negative modes of the double of the algebra acting on orthosymplectic modules, and is therefore relevant to the construction of dual-folded affine quantum groups rather than just their positive halves. A step in the Majid-type construction of $U_q(\gk)$ from $U_q(\nk)$ involves the construction of $U_q(\bk)$ from $U_q(\nk)$ via the procedure called bosonisation. At the level of torus localised CoHAs and othosympleictc modules we construct a variant of the bosonisaiton of the shuffle algbera which also acts on the orthosymplectic modules. Upon adjoining the (commutative) cartan elements in our variant of bosonisation, our extended algebra becomes neither naturally a subalgebra nor a quotient of the usual extended CoHA.

\subsection{Acknowledgements}

\subsubsection{} We thank Chenjing Bu, Hiraku Nakajima and Tianqing Zhu for useful discussions. A.L. would like to thank Danish National Research Foundation and Villum Fonden.

\newpage
\section{Orthosymplectic moduli stacks and CoHAs} \label{sec:OrthosymplecticStacks}

\noindent An \textit{orthogonal form} on a vector space $V$ is a bilinear map 
$$\kappa \ : \ V \otimes V \ \to \ k$$
which is symmetric and nondegenerate. It is \textit{symplectic} if it is skew-symmetric and nondegenerate. For any rigid braided monoidal category $\Cl$, an \textbf{orthosymplectic form} on an object $c$ is a map 
$$\kappa \ : \ c \otimes_\Cl c \ \to \ 1_\Cl$$
which commutes with the braiding $\kappa \cdot \beta \simeq \kappa$ and induces an isomorphism $c \simeq c^\vee$. Applying this to $\Vect$ with its swap braiding $\sigma$ and its negation $-\sigma$ gives back the notions of orthogonal and symplectic forms. 

In this section, we show that the moduli stack $\Ml_{\Cl^{\OSpt}}$ of orthosymplectic objects is the fixed point stack for an involution acting on $\Ml_{\Cl}$. We then show for suitable $\Cl$ that its critical cohomology gives a module for the cohomological Hall algebra respecting the involution. This generalises \cite{Yo} in case of the quiver representations with zero potential.


\subsection{Orthosymplectic categories and objects} 

\subsubsection{} Let $\Cl$ be a category with an isomorphism 
$$\theta \ : \  \Cl  \ \stackrel{\sim}{\to}\ \Cl^{op}$$
satisfying a 2-categorical version of $\theta^2 = \id$: there is a natural isomorphism
\begin{equation}
  \begin{tikzcd}[row sep={30pt,between origins}, column sep={45pt,between origins}]
    &\Cl^{op}\ar[rd,"\theta^{\textup{op}}"]  & \\[-15pt]
  \Cl \ar[ru,"\theta"] \ar[rr,"\id"'{name=U,inner sep=1pt}, bend left = -30]  &  & \Cl 
  \arrow[Rightarrow, to=U, from=1-2, "\alpha","\wr"', shorten <=0pt, shorten >=2pt] 
  \end{tikzcd}
\end{equation}
We call such $\theta$ an \textit{antiinvolution} and $(\Cl,\theta)$ an \textit{orthosymplectic} or \textit{self-dual category}.

\subsubsection{Remark} \label{sssec:RemarkZ2Cat} There is an action of $\Zb/2$ on $\Cat$, sending a category to its opposite. An orthosymplectic category $(\Cl,\theta)$ is precisely an object of the fixed category $\Cat^{\OSpt}=\Cat^{\Zb/2}$; see \cite{Ro} for how to take fixed points in a 2-category.
 
\subsubsection{Example} If $\Cl$ is rigid monoidal (every object has a dual) then the map 
$$(-)^\vee \ : \ \Cl \ \stackrel{\sim}{\to} \ \Cl^{\textup{op}}, \hspace{15mm}  c \ \mapsto \ c^\vee$$
is an antiinvolution. For instance, $\Cl=\Vect^{\textup{f.d.}}$ with involution dualising vector spaces.

\subsubsection{} An \textbf{orthosymplectic object} of orthosymplectic category $\Cl$ is an object $c$ together with an isomorphism $\varphi: c \stackrel{\sim}{\to} \theta(c)$ satisfying the cocycle condition: that
$$c \ \stackrel[\raisebox{2pt}{$\sim$}]{\varphi}{\to} \ \theta(c) \ \stackrel[\raisebox{2pt}{$\sim$}]{\theta(\varphi)}{\leftarrow} \ \theta^2(c) \ \simeq \ c$$
is the identity, where the final isomorphism was induced by $\alpha_c$.

This same notion is called different things in different sources. For example \cite{Bu, Yo} refer to such objects as self-dual. We choose the name orthosymplectic to relate to the notion of orthosymplecitc quiver varieites.

\begin{prop}
  Let $\Cl$ be an abelian or dg category. There is an involution on its moduli stack \emph{\cite{TV}} of objects $\tau  :  \Ml_{\Cl}  \stackrel{\sim}{\to}  \Ml_{\Cl}$  whose fixed points stack 
  $$\Ml^{\OSpt}_{\Cl}\ \defeq\ \Ml_\Cl^\tau$$ 
  has as points the orthosymplectic objects of $\Cl$.
\end{prop}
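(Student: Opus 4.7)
The plan is to view $\Ml_\Cl$ as a $\Zb/2$-equivariant object in stacks, where the $\Zb/2$-action comes from post-composing with $\theta$, and to identify its fixed points via the universal property of homotopy fixed points in a 2-category (cf.\ Remark \ref{sssec:RemarkZ2Cat}). First I would recall that the moduli stack $\Ml_\Cl$ of \cite{TV} depends only on the underlying (maximal) $(\infty)$-groupoid $\Cl^\simeq$ of $\Cl$ (say extended $T$-linearly for each test $(\infty)$-scheme $T$ via the appropriate tensor product with $\QCoh(T)$). Because passing to the opposite category does not change the underlying groupoid, there is a canonical identification $\Ml_\Cl \simeq \Ml_{\Cl^{\textup{op}}}$, so the antiinvolution $\theta:\Cl \stackrel{\sim}{\to} \Cl^{\textup{op}}$ induces a morphism of stacks
\[ \tau \ : \ \Ml_\Cl \ \stackrel{\sim}{\to} \ \Ml_{\Cl^{\textup{op}}} \ \simeq \ \Ml_\Cl. \]

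Next, the natural isomorphism $\alpha: \theta^{\textup{op}}\circ \theta \simeq \id_\Cl$ given in the definition of an antiinvolution yields, upon applying the functor $\Ml_{(-)}$, a 2-isomorphism $\tau\circ \tau \simeq \id_{\Ml_\Cl}$. This is exactly the data of a lift of $\Ml_\Cl$ to the 2-category $\Stk^{\Zb/2}$ of $\Zb/2$-equivariant stacks in the sense of \cite{Ro}. One can then form the fixed point stack $\Ml_\Cl^\tau \defeq \Ml_\Cl^{\Zb/2}$, functorial with respect to $T$: its $T$-points are by definition pairs $(c,\varphi)$ with $c\in \Ml_\Cl(T)$ and $\varphi: c \stackrel{\sim}{\to} \tau(c)$ a 2-isomorphism such that the coherence condition
\[ c \ \stackrel{\varphi}{\to} \ \tau(c) \ \stackrel{\tau(\varphi)}{\to} \ \tau^2(c) \ \stackrel{\alpha_c}{\to} \ c \]
equals the identity of $c$. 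Unwinding through the identification $\Ml_\Cl = \Ml_{\Cl^{\textup{op}}}$, the map $\varphi$ is precisely an isomorphism $c \simeq \theta(c)$ in $\Cl$, and the coherence condition is exactly the cocycle condition defining an orthosymplectic object.

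The main obstacle is 2-categorical bookkeeping: one must check that the assignment $\Cl \mapsto \Ml_\Cl$ is functorial enough that $(\theta,\alpha)$ induces strict enough data on $\Ml_\Cl$ to admit a fixed-point stack (as opposed to only a homotopy-coherent $\Zb/2$-action requiring infinitely many coherences). This is essentially automatic at the level of groupoids since the $\Zb/2$-action on $\Cat$ sending $\Cl \mapsto \Cl^{\textup{op}}$ is strict, and $\Ml_{(-)}$ preserves this symmetry; for the dg case one invokes the $\infty$-categorical enhancement of the construction of \cite{TV} together with the strictification results in \cite{Ro}. Once this is set up, the identification of $T$-points with orthosymplectic objects is immediate from the universal property.
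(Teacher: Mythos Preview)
Your approach is essentially the same as the paper's: both construct $\tau$ by composing the induced map $\Ml_\Cl \to \Ml_{\Cl^{\textup{op}}}$ with a canonical identification $\Ml_{\Cl^{\textup{op}}} \simeq \Ml_\Cl$, and then read off the fixed points via the Romagny-style description of $\Zb/2$-fixed stacks. The only difference is that the paper makes this identification explicit as the map $(-)^{-1}$ which fixes objects and inverts morphisms on each groupoid of $R$-points, whereas you phrase it as ``passing to the opposite category does not change the underlying groupoid''; be careful that $\Ml_\Cl$ does depend on the full category $\Cl$ (not just $\Cl^\simeq$), and what you are really using is that the maximal subgroupoid of $\Cl_T$ is canonically identified with that of $\Cl_T^{\textup{op}}$ via inversion---which is exactly the paper's $(-)^{-1}$.
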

\begin{proof}
  By functoriality of $\Ml_{(-)}$, the antiinvolution induces a map 
  $$\theta \ : \ \Ml_\Cl \ \stackrel{\sim}{\to} \ \Ml_{\Cl^{\textup{op}}}.$$
  We obtain $\tau$ by composing this with the isomorphism $(-)^{-1} : \Ml_{\Cl^\textup{op}}\stackrel{\sim}{\to} \Ml_{\Cl}$ given on its ($\infty$-)groupoid of $R$-points
  $$(-)^{-1} \ : \ \Maps_{\Cat}(R\Md,\Cl^{\textup{op}})_\circ \ \stackrel{\sim}{\to} \ \Maps_{\Cat}(R\Md, \Cl)_\circ$$
  fixing each object and sending each morphism to its inverse. It is easily checked that this gives an involution whose fixed points are the orthosymplectic objects.
\end{proof}

We call $\Ml_{\Cl}^{\OSpt}$ the \textbf{orthosymplectic moduli stack} of $(\Cl,\theta)$.

\subsubsection{Remark} Let $\Cat_{\obj}$ denote the category whose objects are categories equipped with an object. This has a weak involution sending $(\Cl, c)  \mapsto (\Cl^{\textup{op}},c)$. The category of orthosymplectic categories $(\Cl,\theta)$ with orthosymplectic object $(c,\varphi)$ is then equivalent to $\Cat_{\obj}^{\Zb/2}$, and we call
$$\Cl^{\OSpt}\ = \ \Cat_{\textup{obj}}^{\Zb/2}\times_{\Cat^{\Zb/2}}\{(\Cl,\theta)\}$$ 
the category of \textit{orthosymplectic objects} in $\Cl$.

\subsubsection{Simplest example: vector spaces} \label{sssec:VectEx} The abelian category $\Cl=\Vect^{\textup{f.d.}}$ of finite dimensional vector spaces has a involution 
$$\tau \ : \  V\ \mapsto\ V^\vee,$$
where for $\alpha$ we use the canonical evaluation map $\ev:V^{\vee \vee} \stackrel{\sim}{\to}V$ or $-\ev$. The moduli stack is $\Ml_\Cl$ is the union of $\BGL_n$, whose fixed locus in both cases respectively is
$$(\BGL_n)^{\tau}\ =\ \BO_{n}, \ \BSp_{n},$$
where the latter is of course is empty unless $n$ is even. The two isomorphisms
$$(-)^{-1}, \, \theta \ : \ \Ml_{\Cl} \ \stackrel{\sim}{\to} \ \Ml_{\Cl^{\textup{op}}}, \hspace{15mm} \BGL_n \ \stackrel{\sim}{\to} \ \Bt(\GL_n^{\textup{op}})$$
 is given on transition functions by $A \mapsto A^{-1}$, and the second either by $A \mapsto A^T$ or $A \mapsto \Omega A^T\Omega$, respectively.\footnote{Here $\Omega= \left( \begin{smallmatrix} 
  &1 \\ 
  -1& 
 \end{smallmatrix} \right)$.}

\subsubsection{Example: Jordan quiver} The next simplest example is the category $\Cl$ of finite dimensional vector spaces with endomorphism. This has an antiinvoltion 
$$(V, \rho) \ \mapsto \ (V^\vee,\rho^\vee)$$
and an orthosymplectic structure is an isomorphism $\varphi:(V,\rho)\simeq (V^\vee, \rho^\vee)$, i.e. a commuting diagram 
\begin{equation}\label{fig:DiagJordan}
\begin{tikzcd}[row sep = {30pt,between origins}, column sep = {20pt}]
 V \ar[r,"\varphi","\sim"'] \ar[d,"\rho"]  & V^\vee  \\ 
V \ar[r,"\varphi","\sim"'] & V^\vee \ar[u,"\rho^\vee"']  
\end{tikzcd}
\end{equation}
such that if we concatanate \eqref{fig:DiagJordan} with $\eqref{fig:DiagJordan}^\vee$ 
\begin{center}
\begin{tikzcd}[row sep = {30pt,between origins}, column sep = {20pt}]
 V^\vee \ar[r,<-,"\varphi^\vee","\sim"'] \ar[d,<-,"\rho"']  & V^{\vee\vee}  \\ 
V^\vee \ar[r,<-,"\varphi^\vee","\sim"'] & V^{\vee\vee} \ar[u,<-,"\rho^\vee"]  
\end{tikzcd}
\end{center}
we get the identity after identifying $\alpha_V:V^{\vee\vee}\simeq V$ by the evaluation map. The moduli stack is a disjoint union of $\glk_n/\GL_n$ and the fixed locus is $\ok_n/\Ot_n$ or $\spk_n/\Sp_n$ depending on the choice of $\alpha$.

\subsubsection{Symmetry via the cocyle condition} \label{ssec:SymmetryCocycle}

We now assume that $\Cl$ is a rigid monoidal category and has an internal Hom functor $\Hom(-,-)$ satisfying tensor-Hom adjunction 
$$\Hom_\Cl(a \otimes_\Cl b, c) \ \simeq \ \Hom_\Cl(a, \Hom(b, c)).$$
It follows from this that if $(-)^\vee=\Hom(-,1)$, we have 
$$\Hom_\Cl(a \otimes_\Cl b, 1) \ \simeq \ \Hom_\Cl(a, b^\vee).$$
Thus if $\Cl$ is additional braided monoidal with braiding $\beta$, we have as in \cite{Zub, Yo} the following. 

\begin{prop}\label{lem:fixed_is_osp}
 An orthosymplectic object $(c,\varphi)$ is equivalent to an object $c\in \Cl$  together with a non-degenerate braided symmetric bilinear form 
 $$\kappa \ : \ c \otimes_\Cl c \ \to \ 1_\Cl.$$
\end{prop}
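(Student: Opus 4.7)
The plan is to build the bijection between orthosymplectic data $(c,\varphi)$ and bilinear forms $\kappa$ via the tensor-Hom adjunction, and then check that the three pieces of orthosymplectic structure correspond to the three properties of $\kappa$: existence of the form, non-degeneracy, and braided symmetry.

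First I would set up the correspondence on underlying morphisms. Since $\theta = (-)^\vee = \Homl(-,1_\Cl)$, the tensor-Hom adjunction gives a natural isomorphism
\begin{equation*}
\Hom_\Cl(c, c^\vee) \ = \ \Hom_\Cl(c, \Homl(c,1_\Cl)) \ \simeq \ \Hom_\Cl(c \otimes_\Cl c, 1_\Cl),
\end{equation*}
and this is the bijection $\varphi \longleftrightarrow \kappa$ claimed. This step is purely formal.

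Next I would verify that $\varphi$ is an isomorphism if and only if $\kappa$ is non-degenerate. By definition, non-degeneracy of $\kappa$ means that the adjoint map $c \to c^\vee$ it induces is an isomorphism, and by construction this adjoint map is precisely $\varphi$. So this equivalence is immediate from the adjunction.

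The substantive step is translating the cocycle condition into braided symmetry of $\kappa$. Under $\theta = (-)^\vee$, the morphism $\theta(\varphi)$ is (up to the comparison $\alpha$ between $\theta^2$ and $\id$) the dual morphism $\varphi^\vee : c^{\vee\vee} \to c^\vee$. The cocycle condition is the assertion that
\begin{equation*}
c \ \xrightarrow{\varphi} \ c^\vee \ \xleftarrow{\theta(\varphi)} \ c^{\vee\vee} \ \xrightarrow{\alpha_c} \ c
\end{equation*}
is the identity, i.e.\ $\varphi = \theta(\varphi) \circ \alpha_c^{-1}$. Applying tensor-Hom adjunction to the right-hand side, and unpacking how dualisation of morphisms interacts with the evaluation and coevaluation in a rigid braided monoidal category, the bilinear form associated to $\varphi^\vee \circ \alpha_c^{-1}$ is $\kappa \circ \beta_{c,c}$. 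Thus the cocycle condition becomes $\kappa = \kappa \circ \beta_{c,c}$, which is precisely braided symmetry.

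The main obstacle is the bookkeeping in this last step: tracking the interaction between the comparison $\alpha : \theta^2 \Rightarrow \id$, the rigid-duality data (evaluation/coevaluation), and the braiding $\beta$ through the adjunction. I would handle this most efficiently using string/graphical calculus in $\Cl$, where $\varphi$ becomes a cap, $\varphi^{\vee}\circ \alpha_c^{-1}$ becomes the same cap with its strands crossed by $\beta$, and the equality of these two caps unambiguously expresses $\kappa\circ \beta_{c,c}=\kappa$. The converse direction (a non-degenerate braided-symmetric $\kappa$ produces a $\varphi$ satisfying the cocycle condition) then follows by running the same diagrammatic identities in reverse.
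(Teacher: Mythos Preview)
Your proposal is correct and follows exactly the approach the paper indicates: the paper sets up the tensor-Hom adjunction $\Hom_\Cl(a \otimes_\Cl b, 1) \simeq \Hom_\Cl(a, b^\vee)$ immediately before the proposition and then defers the remaining verification (that the cocycle condition corresponds to $\kappa\cdot\beta=\kappa$) to the references \cite{Zub, Yo}, so your three-step breakdown is precisely what is intended and in fact supplies more detail than the paper itself.
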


\subsection{Example: quivers with potential} 
\label{ssec:W_sign_invrariance}

\subsubsection{} 
As in section \ref{ssec:QuiverIntro}, let $Q = (Q_0, Q_1)$ be a quiver with orientation-reversing involution, viewed as an isomorphism $\theta: Q \stackrel{\sim}{\to} Q^{\textup{op}}$. Compose this with 
$$ (-)^\vee \ : \ k Q \ \stackrel{\sim}{\to} \ k Q^{\textup{op}},$$
sending an arrow in the path algebra to its opposite, to get an involution 
$$\tau \ : \ \Rep Q \ \stackrel{\sim}{\to} \ \Rep Q.$$
Finally we ask that the potential $W$ is invariant under $\tau: (-)^\vee \cdot \theta$.

\subsubsection{} In what follows we will use a slight generalisation of the above. A \textit{duality structure} \cite{DW} on $Q$ is a pair of $\theta$-invariant maps
$$\textup{sgn}\ : \ Q_0,\, Q_1 \ \to \ \{ \pm 1\}$$
compatible via 
$$\textup{sgn}(e)\cdot \textup{sgn}(\theta(e)) \ = \ \textup{sgn}(i)\cdot \textup{sgn}(j)$$
for any edge $e:i\to j$. We now define the \textit{dualising functor}
$$ \Dt \ = \ \textup{sgn}\cdot (-)^\vee \ : \ k Q \ \stackrel{\sim}{\to} \ k Q^{\textup{op}},$$
which induced the antiinvolution on representation categories
$$(-)^\vee \ : \ \Rep Q \ \stackrel{\sim}{\to} \  \Rep Q^{\textup{op}}, \hspace{15mm} (V_i,\rho_e) \ \mapsto \ (V_i^\vee,\textup{sgn}(e) \rho_e^\vee).$$

\begin{lem}
  This defines an antiinvolution on $\Rep Q$.
\end{lem}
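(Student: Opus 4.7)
The plan is to verify the two conditions defining an antiinvolution: that the assignment $(V_i,\rho_e)\mapsto (V_i^\vee,\textup{sgn}(e)\rho_e^\vee)$ gives a well-defined functor $\Rep Q\to (\Rep Q)^{\textup{op}}$, and that the composition of this functor with its own opposite is naturally isomorphic to $\id_{\Rep Q}$ via a 2-isomorphism $\alpha$ as in the diagram defining an orthosymplectic category.

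Well-definedness is routine. For an arrow $e:i\to j$ of $Q$, equivalently $e^{\textup{op}}:j\to i$ of $Q^{\textup{op}}$, the map $\textup{sgn}(e)\rho_e^\vee:V_j^\vee\to V_i^\vee$ is the correct datum to assign to the opposite arrow, and a morphism $\psi:(V,\rho)\to (W,\sigma)$ of $Q$-representations is sent to the componentwise dual $\psi^\vee$, which reverses direction as required. Compatibility of $\psi^\vee$ with the signed dualised edge maps follows from naturality of $(-)^\vee$ together with the fact that $\textup{sgn}(e)$ is just a scalar.

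For the 2-isomorphism, I would compute the square directly: applying the functor twice to $(V_i,\rho_e)$ gives $V_i^{\vee\vee}$ at each vertex and the map $\textup{sgn}(e)\cdot\textup{sgn}(e)\,\rho_e^{\vee\vee}=\rho_e^{\vee\vee}$ on any arrow $e:i\to j$, since $\textup{sgn}(e)\in\{\pm 1\}$. Define $\alpha_V$ componentwise by the canonical evaluation $\ev_{V_i}:V_i\stackrel{\sim}{\to}V_i^{\vee\vee}$; compatibility of $\alpha_V$ with each edge map $\rho_e$ is exactly the naturality of $\ev$ against $\rho_e$, and $\alpha$ is natural in $V$ again by naturality of $\ev$ in the underlying vector spaces.

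The only thing to watch is tracking directions through opposite-category conventions, and there is no substantive obstacle because $\textup{sgn}(e)^2=1$ causes the signs to cancel at this stage. Notably, the compatibility condition $\textup{sgn}(e)\textup{sgn}(\theta(e))=\textup{sgn}(i)\textup{sgn}(j)$ from the definition of a duality structure is not used in this lemma; it enters only once the present antiinvolution is composed with $\theta^*$ to produce the honest involution $\tau:\Ml_{\Rep Q}\stackrel{\sim}{\to}\Ml_{\Rep Q}$ of the moduli stack, where the vertex signs $\textup{sgn}(i)$ will be absorbed into a modified evaluation $\textup{sgn}(i)\cdot\ev_{V_i}$ so that the cocycle datum of the full $\tau$ is consistent across all arrows.
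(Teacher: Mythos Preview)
Your proof is correct and follows the same approach as the paper: compute $\Dt^2$ explicitly, observe that the edge signs cancel since $\textup{sgn}(e)^2=1$ so that the resulting edge maps are $\rho_e^{\vee\vee}$, and then exhibit the natural isomorphism $\alpha:\Dt^2\Rightarrow\id$ componentwise via the double-dual identification. The only difference is cosmetic: the paper records $\alpha_i$ with a vertex-sign twist rather than the unadorned evaluation you use, but both choices make the required squares commute for this lemma, and your final paragraph correctly anticipates that the vertex signs only become essential once one passes to the full involution $\tau=\Dt\cdot\theta$.
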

\begin{proof}
For this to be an involution, we need a natural transformation $\alpha: \Dt^2 \stackrel{\sim}{\to} \id$, in other words a collection of commuting diagrams 
\begin{center}
\begin{tikzcd}[row sep = {30pt,between origins}, column sep = {45pt, between origins}]
V_i \ar[r,"\rho_e^{\vee \vee}"] \ar[d,"\alpha_i","\wr"']  & V_j\ar[d,"\alpha_j","\wr"']  \\ 
V_i \ar[r,"\rho_e"]  & V_j 
\end{tikzcd}
\end{center}
for every arrow $e:i\to j$ in the quiver which is natural in $(V_i,\rho_e)$. Setting $\alpha_i=(-1)^{\textup{sgn}(i)}$ gives the required natural transformation.
\end{proof}

Thus we have an involution $\tau=\Dt\cdot \theta: \Rep Q \stackrel{\sim}{\to} \Rep Q$, generalising the above.

\begin{prop} 
  An orthosymplectic quiver representation
  $$(V_i,\rho_e) \ \in \ (\Rep Q)^{\tau}$$
  is a quiver representation together with a nondegenerate (skew) symmetric bilinear form on $V_i$ whenever $\theta(i)=i$, and isomorphisms $V_i \cong V_{\theta(i)}^\vee $ for other $i$, such that in either case $\rho_e$ are intertwined by the duality isomorphisms $\eta: V_i \stackrel{\sim}{\to} V_{\theta(i)}^\vee$.
\end{prop}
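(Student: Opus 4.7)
The plan is to unpack the definition of a $\tau$-fixed point of $\Ml_{\Rep Q}$ directly, working vertex-by-vertex along the $\theta$-orbits of $Q_0$. By definition an orthosymplectic object is a representation $(V_i,\rho_e)$ together with an isomorphism $\varphi: (V_i,\rho_e) \stackrel{\sim}{\to} \tau(V_i,\rho_e)$ satisfying the cocycle $\tau(\varphi)\cdot \varphi = \alpha_V$, where $\alpha: \tau^2 \simeq \id$ has components $\alpha_i=(-1)^{\textup{sgn}(i)}$ as constructed in the preceding lemma.

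First I would describe the components. Since $\tau(V_i,\rho_e)=(V_{\theta(i)}^\vee,\textup{sgn}(e)\rho_{\theta(e)}^\vee)$ by definition of $\tau=\Dt\cdot\theta$, specifying $\varphi$ amounts to giving linear isomorphisms $\eta_i\defeq\varphi_i: V_i \stackrel{\sim}{\to} V_{\theta(i)}^\vee$ for each vertex, compatible with the arrows via
$$\eta_j \circ \rho_e \ = \ \textup{sgn}(e)\, \rho_{\theta(e)}^\vee \circ \eta_i$$
for every edge $e:i\to j$. This is precisely the intertwining claimed in the statement, and it is immediate from the condition that $\varphi$ be a morphism of quiver representations under the explicit formula for $\tau$.

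Second I would analyze the cocycle along the $\theta$-orbits. For a nontrivial orbit $\{i,\theta(i)\}$, the cocycle identifies $\eta_{\theta(i)}$ with the transpose of $\eta_i^{-1}$ (up to the sign $\alpha_i\alpha_{\theta(i)}$), so that the only independent datum is a single isomorphism $\eta_i: V_i \stackrel{\sim}{\to} V_{\theta(i)}^\vee$ as asserted. For a fixed vertex $\theta(i)=i$, tensor-hom adjunction converts $\eta_i:V_i\to V_i^\vee$ into a nondegenerate bilinear form $\kappa_i:V_i\otimes V_i\to k$; inserting $\alpha_i=(-1)^{\textup{sgn}(i)}$ into the cocycle identity $\eta_i^\vee\circ \textup{ev}_i = \alpha_i\cdot \eta_i$ yields the symmetry relation $\kappa_i(v,u) = \textup{sgn}(i)\,\kappa_i(u,v)$ (up to the overall sign convention on $\alpha$), i.e.\ $\kappa_i$ is symmetric or skew-symmetric according to $\textup{sgn}(i)$.

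The main obstacle is the careful bookkeeping of signs: one must verify that the edge signs $\textup{sgn}(e)$, the vertex signs $\textup{sgn}(i)$, and the duality-structure compatibility $\textup{sgn}(e)\textup{sgn}(\theta(e))=\textup{sgn}(i)\textup{sgn}(j)$ conspire so that applying $\tau$ once more to the intertwining relation and composing with itself recovers the original relation after multiplying by the appropriate $\alpha_i,\alpha_j$. This is exactly what the defining compatibility of a duality structure is designed to guarantee, and is a direct, if slightly tedious, calculation.
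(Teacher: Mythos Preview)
Your proposal is correct and follows the same approach the paper takes: a direct unpacking of the definition of a $\tau$-fixed object, working through the components $\eta_i:V_i\to V_{\theta(i)}^\vee$ of $\varphi$, the cocycle condition vertex-by-vertex along $\theta$-orbits, and the intertwining condition arrow-by-arrow. The paper in fact states this proposition without a formal proof, giving only the illustrative self-loop example afterward; your write-up simply fills in the routine details (including the sign bookkeeping via the duality-structure compatibility) that the paper leaves implicit.
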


For instance, for every self-loop $e:i \to i$ for a fixed vertex $i$, the diagram
\begin{center}
\begin{tikzcd}[row sep = {30pt,between origins}, column sep = {20pt}]
  V_i & V_i \\
  V_i & V_i \\
  {V_i^\vee} & {V_i^\vee}
  \arrow["\rho_e", from=1-1, to=1-2]
  \arrow[from=1-1, to=2-2]
  \arrow[from=1-2, to=2-1]
  \arrow["\eta_i"', from=2-1, to=3-1]
  \arrow["\rho_e"', from=2-2, to=2-1]
  \arrow["\eta_i", from=2-2, to=3-2]
  \arrow["{\rho_e^\vee}"', from=3-1, to=3-2]
\end{tikzcd}
\end{center}
 commutes, which is equivalent to $\rho_e^\vee\eta_i \rho_e = \eta_i$. It follows from this that $\rho_e$ is an orthogonal or symplectic matrix if $\textup{sgn}(i)=1$ or $\textup{sgn}(i)=-1$, respectively. If $i$ is not fixed there are no extra conditions on $\rho_e$ except an identification with $\rho_{\theta(e)}^\vee$.


\subsection{Example: preprojective algebra} 

\subsubsection{} 
Next we consider the category 
$$\Al\ =\ T^*\Rep(Q)$$
given by taking the two-Calabi-Yau completion of the category of representations of a finite quiver $Q$. Equivalently, $\Al$ is the category of representations of the \textit{preprojective algebra} $\Pi_Q$. Via our description we have chosen an orientation $\omega$ on the doubled quiver $Q_{(2)}$ consisting of those arrows already in $Q_1$. This defines a symplectic form  $\omega$ on $\Rep(Q_{(2)})$. 

Pick an involution and duality structure on $Q_{(2)}$. In the following, we will assume that 
\[ \tau^* \omega = \omega . \] 
This will ensure that the homotopy fixed points inherit a symplectic structure as a symplectic reduction via a fixed quotient group. We give two examples when this occurs. 

\begin{enumerate} 
    \item First, when $Q$ is the Jordan quiver, and each arrow is sent to itself.   

\item Assume that $Q_0$ is bipartite $Q_0 = Q_\Ot \sqcup Q_{\Spt}$ and let $\textup{sgn}(i) = 1$ if $i \in Q_{\Ot}$ and $-1$ otherwise. If $a \in \omega$ then $\textup{sgn}(a) = 1$ otherwise $\textup{sgn}(a) = -1$. 
\end{enumerate} 
Define an antiinvolution
\begin{equation}\label{eq:quiv_dual} \Dt \ : \  \Al \ \stackrel{\sim}{\to} \ \Al, \hspace{15mm}  (V_i, \rho(a)) \ \mapsto \  (V_i^\vee,  \textup{sgn}(a)\rho(\overline{a})^\vee )  \end{equation}
with the action on morphism the inverse of the dual map $\Dt(\varphi: V\to W)$ given by $(\varphi^\vee)^{-1}$. This is equivalent to the duality map above where $\theta$ is an involution depending on the orientation $\omega$ of the doubled quiver, or more generally any orientation reversing isomorphism of the quiver. 

Now defining $Q^{(3)}$ to be the tripled quiver with a self-loop $\omega_i$ adjoined at each vertex $i$, we have that the potential 
\[ W = \sum_{i \in Q_0} \omega_i \sum_{a \in \omega} [a, \overline{a}] \] 
is invariant under $\tau$.

\subsubsection{Dimensional reduction and symplectic reduction} 

Consider a $\tau$-fixed representation $(V, \rho) \in (\Rep Q_{(3)})^\tau$ of the tripled quiver. Because the fixed locus condition forces $\rho(\omega_i)$  to lie in the subalgebra $\mathfrak{g}_i$ of $\mathfrak{gl}(V_i)$ preserving the form, we have that 
\[ \operatorname{Tr}(W)|_{\Rep_{Q_{(3)}}^\tau} \ = \  \langle \omega, \mu(a) \rangle \] 
where $\omega \in  \prod_{i \in Q_0/\theta} \mathfrak{g}_i$ and $\mu(a) = \sum_{a \in \omega}[a,\overline{a}]$. The zero locus of $\omega$ is exactly the preprojective condition. 

Its moduli stack of orthosymplectic objects has a description as a symplectic reduction:

\begin{prop}
  $\Ml^{\textup{\OSpt}}$ is given by the symplectic reduction appearing in \emph{\cite{Cho}}
    $$\Ml^{\OSpt}_{\Pi_Q, \db}\ =\ \mu^{-1}(0)\big/\smprod_{i \in Q_{\Ot}^\theta } \Ot_{d_i} \times \smprod_{j \in Q_{\Spt}^\theta} \Spt_{d_j} \times \smprod_{k \in Q_0/\theta \smallsetminus Q_0^\theta} \GL_{d_i} $$ 
    where  $\mu$ is the moment map for the action of $\smprod_{i \in Q_{\Ot}^\theta } \Ot_{d_i} \times \smprod_{j \in Q_{\Spt}^\theta} \Spt_{d_j} \times \smprod_{k \in Q_0/\theta \smallsetminus Q_0^\theta} \GL_{d_i}$ on $\tau$-fixed representations of $Q^{(2)}$. 
\end{prop}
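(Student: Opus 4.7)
The plan is to interpret $\Ml^{\OSpt}_{\Pi_Q,\db}$ as the homotopy fixed points of $\tau$ acting on the quotient stack
$$\Ml_{\Pi_Q,\db} \ = \ \bigl[\mu^{-1}(0)\big/\smprod_{i\in Q_0}\GL_{d_i}\bigr],$$
where $\mu$ is the usual moment map on $\Rep Q_{(2)}$, and then to identify this step by step with the displayed symplectic reduction. The main technical input is the compatibility assumption $\tau^*\omega=\omega$, which is what will guarantee that after taking $\tau$-fixed points the restricted two-form remains symplectic and the restricted moment map is literally the moment map for the residual group action.

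First, I would compute the fixed points of the induced involution on the gauge group $\smprod_{i\in Q_0}\GL_{d_i}$. Since $\tau$ is inverse-transpose composed with the permutation $\theta$, twisted by the sign function, the fixed subgroup splits as a product indexed by $\theta$-orbits: on a fixed vertex $i\in Q_0^\theta$ one gets $\Ot_{d_i}$ or $\Spt_{d_i}$ depending on $\textup{sgn}(i)$, and on a non-trivial orbit $\{k,\theta(k)\}$ the anti-diagonal embedding $g\mapsto(g,g^{-T})$ identifies the fixed subgroup with $\GL_{d_k}$. This yields exactly the product group appearing in the statement. I would simultaneously compute the $\tau$-fixed subspace of $\Rep Q_{(2)}$ using Proposition~\ref{lem:fixed_is_osp}, so that fixed representations are $\tau$-invariant quiver representations, i.e.\ representations of $Q_{(2)}$ equipped with the appropriate (skew-)symmetric forms or dualities between paired vertex spaces.

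Second, I would use the hypothesis $\tau^*\omega=\omega$: because $\tau$ is a symplectic involution (not anti-symplectic), its fixed subspace $(\Rep Q_{(2)})^\tau$ inherits a symplectic form, namely the restriction of $\omega$. Moreover, the dimensional reduction calculation
$$\Tr(W)\big\vert_{\Rep_{Q_{(3)}}^\tau}\ =\ \langle\omega,\mu(a)\rangle$$
recalled just above the statement, together with its non-equivariant version, shows that the critical locus of $\Tr(W)$ on $(\Rep Q_{(3)})^\tau$ is cut out by the vanishing of the component of $\mu$ living in the fixed Lie algebra. In other words, the restriction of $\mu$ to $(\Rep Q_{(2)})^\tau$ lands in the Lie algebra of the fixed group, and its zero locus coincides with $\mu^{-1}(0)\cap(\Rep Q_{(2)})^\tau$. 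A short check, using that the residual group is indeed the stabiliser of the restricted symplectic form, identifies this restricted map with the moment map for the residual group action, so the residual moment-map condition is literally the restriction of the preprojective relation.

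Third, I would assemble these pieces to compute the homotopy fixed point stack. For a quotient stack $[X/G]$ with a compatible involution on both $X$ and $G$, the $\tau$-fixed stack is $[X^\tau/G^\tau]$; applying this to $X=\mu^{-1}(0)$ and $G=\smprod_i\GL_{d_i}$ gives
$$\Ml^{\OSpt}_{\Pi_Q,\db}\ =\ \bigl[(\mu^{-1}(0))^\tau \big/ G^\tau\bigr]\ =\ \bigl[(\mu|_{(\Rep Q_{(2)})^\tau})^{-1}(0)\big/ G^\tau\bigr],$$
which is precisely the claimed symplectic reduction. The main obstacle is the careful bookkeeping in the passage to homotopy fixed points of the stack: one must check that the naive formula $[X/G]^\tau=[X^\tau/G^\tau]$ is correct here (there are no extra $H^1(\Zb/2,G)$ gerbes contributing, thanks to the chosen duality structure), and that the residual form on $(\Rep Q_{(2)})^\tau$ is both non-degenerate and has the expected moment map. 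Everything else is direct computation orbit-by-orbit.
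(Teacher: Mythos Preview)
The paper does not supply a formal proof of this proposition; it is stated immediately after the short ``dimensional reduction and symplectic reduction'' discussion, which already records the two key facts (that on the $\tau$-fixed locus the extra loop variables $\rho(\omega_i)$ are forced into $\gk_i$, and that $\Tr(W)|_{\Rep_{Q_{(3)}}^\tau}=\langle\omega,\mu(a)\rangle$ so that the zero locus of the restricted moment map recovers the preprojective relation). Your plan is exactly the natural fleshing-out of that discussion: identify $G^\tau$ orbit-by-orbit, use $\tau^*\omega=\omega$ to see that the restricted form is symplectic and that $\mu|_{(\Rep Q_{(2)})^\tau}$ is the moment map for $G^\tau$, and conclude. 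So in substance you are aligned with the paper.

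One point deserves more care than you give it. Your step ``for a quotient stack $[X/G]$ with compatible involution, $[X/G]^\tau=[X^\tau/G^\tau]$'' is not true in general: the homotopy fixed-point stack decomposes over classes in $H^1(\Zb/2,G)$, and different cocycles produce genuinely different components (this is precisely how, at a fixed vertex $i$, one can obtain either $\Ot_{d_i}$ or $\Spt_{d_i}$). You flag this as an obstacle and wave it away with ``thanks to the chosen duality structure''; to make the argument honest you should say explicitly that the sign function $\textup{sgn}:Q_0^\theta\to\{\pm1\}$ in the duality structure is exactly the datum singling out one cocycle, hence one component, and that the proposition is asserting equality with that component rather than with the full fixed-point stack. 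With that clarification your plan is complete.
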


\subsection{Example: perfect complexes} 

\subsubsection{} 

Let $X$ be a smooth proper algebraic variety and $\Ml[-k,k]$ the stack of perfect universally glueable complexes shifted by $[k]$, with Tor amplitude in $[-k,k]$. Then the derived duality functor 
\begin{equation} \label{eq:derived_dual}
  (\El^\bullet, d_{\bullet}) \mapsto \Dt(\El^\bullet) := (\El^{-\bullet, \vee}, -(-1)^{-\bullet} d_{-\bullet+1}^\vee)
\end{equation} 
gives rise to a $\Zb/2$-action on $\Ml[-k,k]$. 
As a natural transformation of groupoids, we have 
\[ \Dt(\varphi: \El \xrightarrow{\sim} \Fl) = (\varphi^\vee)^{-1}: \Dt(\El) \xrightarrow{\sim} \Dt(\Fl).\] The action is \emph{not} a strict $\Zb/2$ action, instead required by \eqref{eq:derived_dual} we have a nontrivial natural transformation
$\alpha_{-1, -1}:\Dt\cdot \Dt \implies \operatorname{Id}$ 
given on points $\El$ by the isomorphism $\Dt\cdot \Dt(\El) \xrightarrow{\alpha_{-1,-1}(\El)} \El $ which is of the form $\alpha_{-1,-1}(\El)_i = (-1)^i$ where  $(-1)^{i}: (\El_i^{\vee \vee}) \to \El_i$ is this multiple of the usual map on locally free sheaves. 

Its strictification is isomorphic to the stack $\Ml[-k,k]^{str}$ whose objects consist of pairs $(a, \El)$ for $a \in \Zb/2, \El \in \Ml[-k,k]$ morphisms from $(a, \El)$ to $(b, \Fl)$ are maps $\El \to a^{-1}b \cdot \Fl$ and $\varphi:(a, \El)\to (b, \Fl)$, $\psi: (b, \Fl)\to (c, \Gl)$ compose as 
\[ \El \xrightarrow{\varphi} a^{-1} b\cdot \Fl \xrightarrow{a^{-1} b(\psi)} (a^{-1}b)\cdot (b^{-1} c)\cdot \Gl \xrightarrow{ \alpha_{a^{-1}b, b^{-1} c}(\Gl)} a^{-1} c \cdot \Gl .\] 

The nontrivial data of its stack of fixed points $\Ml[-k,k]^{str, \Zb/2}\simeq \Ml[-k,k]^{\Zb/2}$ over a test scheme $T$ consists of \cite{Ro} perfect complexes $\El \in \operatorname{Perf}(T \times X)$ of Tor-amplitude $[-k,k]$ together with a qausiisomorphism 
\[ \eta: \El \xrightarrow{\sim} \Dt(\El)\] satisfying the cocycle condition 
\[ (\Dt(\eta)) \cdot \eta = \alpha_{-1,-1}^{-1}({\El})\]
which is equivalent to 
\[ \eta^\vee = \alpha_{-1,-1}(\eta).  \] 
This is further equivalent under the chain of isomorphisms
\begin{equation}\label{eq:hom_composition} \Hom(\El, \El^{\vee}) \to \Hom(\El \otimes \El^{\vee \vee}, \Ol) \xrightarrow{\sigma} \Hom(\El^{\vee  \vee}\otimes \El, \Ol) \to \Hom(\El^{\vee \vee}, \El^\vee)\to \Hom(\El, \El^{\vee})
\end{equation} 
to $\eta$ providing a symmetric pairing on $\El$ under the supersymmetric braiding $\El\otimes \Fl \xrightarrow{\sigma} \Fl \otimes \El$. The composition of the first four terms in in \eqref{eq:hom_composition} agrees with the map $\eta\mapsto \eta^\vee$ by taking zeroth hypercohomology of the sequence of equalities
\[ \Hl om(\El,\Fl)  \to \El^\vee\otimes \Fl \to \Fl\otimes \El^\vee \to \Hl om(\Fl^\vee, \El^\vee)\]  
for perfect complexes $\El, \Fl$.

\subsection{Orthosymplectic short exact sequences} \label{ssec:OSpCorrespondences}

\subsubsection{} If $\Ml$ is a moduli stack of objects in an abelian category $\Al$, we may form the correspondence formed by the moduli stack $\SES=\Ml_{\SES(\Al)}$ of short exact sequences in $\Al$:
\begin{equation}
   \label{fig:CoHACorrespondence}
  \begin{tikzcd}[row sep = {30pt,between origins}, column sep = {45pt,between origins}]
    & \textcolor{white}{a\to e\to b}& &[30pt]&\SES\ar[rd,"p"]\ar[ld,"q"'] & &[30pt]& a\to e\to b \ar[rd,|->]\ar[ld,|->]&  \\ 
    \textcolor{white}{a}& &\textcolor{white}{(a,b)} & \Ml\times\Ml& & \Ml & (a,b)& & e
   \end{tikzcd} 
\end{equation}
This satisfies an associativity condition: an isomorphism
$$\alpha \ : \ (\Ml\times \SES )\times_{\Ml \times\Ml}\SES\ \simeq \ (\SES\times \Ml )\times_{\Ml \times\Ml}\SES$$
of correspondences satisfying coherence conditions. This means $\Ml\in \Alg(\PreStk^{\corr})$, i.e. makes $\Ml$ into an associative monoid in the category of algebraic prestacks with morphisms the correspondences between them.

\begin{prop} \label{prop:MActionOnMt}
  There is a pointed left $\Ml$-module structure on $\Ml^\tau$ given by the correspondence
  \begin{equation}\label{fig:OSpCorr}
  \begin{tikzcd}[row sep = {30pt,between origins}, column sep = {45pt, between origins}]
   &\SES_3^\tau \ar[rd] \ar[ld] & \\ 
  \Ml \times\Ml^\tau&& \Ml^\tau
  \end{tikzcd}
  \end{equation}
   In other words, there is an associativity isomorphism satisfying coherence conditions and a unit condition for $0\in \Ml$ and $0 \in \Ml^\tau$.
\end{prop}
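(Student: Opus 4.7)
The plan is to realise the correspondence \eqref{fig:OSpCorr} as the $\tau$-fixed points of the standard three-step correspondence on filtrations in $\Al$, and then to deduce the module axioms by taking fixed points of the associativity data already carried by $\Ml\in \Alg(\PreStk^{\corr})$.

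First, I would construct for each $n$ the moduli stack $\SES_n=\Ml_{\SES_n(\Al)}$ of $n$-step filtrations $0=F_0\subseteq F_1\subseteq \cdots\subseteq F_n$ in $\Al$. The antiinvolution on $\Al$ induces an involution on $\SES_n$ sending such a filtration to its dual-and-reversed one, $F_i\mapsto (F_n/F_{n-i})^\vee\subseteq F_n^\vee$. For $n=3$ the two natural projections
\[
\SES_3 \ \to \ \Ml\times\Ml\times\Ml, \qquad \SES_3 \ \to \ \Ml,
\]
sending $F_\sbt$ to its vector of successive subquotients and to $F_3$ respectively, form the ternary analogue of \eqref{fig:CoHACorrespondence}, and are $\tau$-equivariant once the first target is dualised-reversed. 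Taking $\tau$-fixed points produces $\SES_3^\tau$: an $R$-point is a flag $a\subseteq b\subseteq e$ with an orthosymplectic form on $e$ such that $b=a^\perp$, so that $b/a$ is orthosymplectic and $e/b\simeq a^\vee$. The two induced projections land in $\Ml\times \Ml^\tau$ by $(a,b/a)$ and in $\Ml^\tau$ by $e$, recovering \eqref{fig:OSpCorr}.

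For the associativity isomorphism I would run the same construction on five-step filtrations $F_1\subseteq F_2\subseteq F_3\subseteq F_4\subseteq F_5$. The fixed stack $\SES_5^\tau$ parametrises such filtrations with $F_5$ orthosymplectic, $F_4=F_1^\perp$, $F_3=F_2^\perp$, and $F_3/F_2$ orthosymplectic; it has natural projections to $\Ml\times\Ml\times\Ml^\tau$ (by the subquotients $F_1$, $F_2/F_1$, $F_3/F_2$) and to $\Ml^\tau$ (by $F_5$). By forgetting either the step $F_2$ or the step $F_3$ first, one obtains canonical maps from $\SES_5^\tau$ to the two iterated fibre products
\[
(\Ml\times \SES_3^\tau)\times_{\Ml\times \Ml^\tau}\SES_3^\tau \quad \text{and} \quad (\SES\times \Ml^\tau)\times_{\Ml\times \Ml^\tau}\SES_3^\tau,
\]
and both are isomorphisms by the usual iterated-extension argument, exactly parallel to the proof of associativity of \eqref{fig:CoHACorrespondence}. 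The unit condition is immediate: the substack of $\SES_3^\tau$ with $a=0$ consists of flags $0\subseteq b\subseteq b\subseteq e$ with $b=b^\perp$ forcing $b\simeq e$, yielding the identity correspondence $\{0\}\times \Ml^\tau\xleftarrow{\sim}\Ml^\tau\xrightarrow{=}\Ml^\tau$; the pointing at $0\in \Ml^\tau$ is analogous.

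The main obstacle will be bookkeeping the higher coherences: associativity must be checked not just as a diagram of correspondences but in the appropriate $(\infty,2)$-category $\PreStk^{\corr}$, together with $\tau$-coherence on every layer. My approach is to observe that every construction above is the $\Zb/2$-equivariant version of the Waldhausen-type machinery which already produces $\Ml\in \Alg(\PreStk^{\corr})$; then, in view of Remark \ref{sssec:RemarkZ2Cat}, taking $\Zb/2$-fixed points in $\Alg(\PreStk^{\corr})$ automatically yields a pointed module over the fixed monoid, so that all coherence data is induced from the coherences on $\Ml$ rather than being checked by hand.
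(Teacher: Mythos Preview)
Your proposal is correct and follows essentially the same route as the paper: construct $\SES_3^\tau$ as the fixed locus of the dualise-and-reverse involution on three-step flags, and obtain associativity by identifying both iterated fibre products with (the fixed points of) the five-step flag stack. The paper phrases the associativity step as a $\tau$-equivariant identification $\SES_3\times_\Ml\SES_3\simeq \SES\times_\Ml\SES_3\times_\Ml\SES$ of correspondences from $\Ml^{\times 5}$ to $\Ml$ and then passes to fixed points, which is exactly your $\SES_5^\tau$ argument; your explicit check of the unit and your remark on inheriting higher coherences from the $\Zb/2$-fixed-point functor are a welcome addition.
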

\begin{proof}
  The above correspondence \eqref{fig:CoHACorrespondence} is equivariant for any involution $\tau$, i.e. we have a map
  \begin{center}
  \begin{tikzcd}[row sep = {30pt,between origins}, column sep = {45pt, between origins}]
   & \textcolor{white}{a \to e \to b}& &   &\SES \ar[rd] \ar[ld]\ar[dd,dashed,"\wr","\eta"']   & & & a \to e \to b \ar[dd,|->,"\eta"']  & \\ 
    & & & \Ml \times\Ml \ar[dd,"\sigma\cdot (\tau \textup{,}\tau)"',"\wr"] & & \Ml \ar[dd,"\tau"] & & &  \\[-30pt]
   &\textcolor{white}{\tau(b)\to \tau(e)\to \tau(a)}&&&\SES \ar[rd] \ar[ld]  & & & \tau(b)\to \tau(e)\to \tau(a) & \\ 
   &&&\Ml \times\Ml& & \Ml& & & 
  \end{tikzcd}
  \end{center}
  defining a $2$-isomorphism. Likewise we get an involution acting on the threefold correspondence parametrising length three flags in $\Al$.
  \begin{equation}
    \label{fig:SES3} 
    \begin{tikzcd}[row sep = {30pt,between origins}, column sep = {45pt,between origins}]
      &\SES_3\ar[rd,"p_3"]\ar[ld,"q_3"'] & &  \\
      \Ml \times \Ml\times\Ml& & \Ml
     \end{tikzcd} 
  \end{equation}
  Note that the data of a length three flag $a \subseteq \ker \subseteq e$ is equivalent the following square of short exact sequences, on which the maps $p_3,q_3$ act as 
  \begin{equation*}\label{eqn:SES3Corr}
    \begin{tikzcd}[row sep = {30pt,between origins}, column sep = {10pt}]
    &&a \ar[d,equals] \ar[r]  & \ker \ar[r] \ar[d] & b\ar[d]&[10pt]&[10pt] &  \\ 
   (a,b,c)&\mapsfrom& a\ar[r]  &e\ar[r] \ar[d]  &\coker\ar[d]& \mapsto&e  \\
   && 0  & c \ar[r,equals] &c &&&
    \end{tikzcd}
  \end{equation*}
  We define the involution on $\SES_3$ by sending this to 
  \begin{equation*}
    \begin{tikzcd}[row sep = {30pt,between origins}, column sep = {10pt}]
    &&\tau(a) \ar[d,equals]  & \tau(\ker)  \ar[r,<-]\ar[l]   & \tau(b)\ar[d,<-]&[5pt]&[10pt] &  \\ 
   (\tau(c),\tau(b),\tau(a))&\mapsfrom& \tau(a)\ar[r,<-]  &\tau(e)\ar[r,<-]\ar[u]  &\tau(\coker)\ar[d,<-]& \mapsto&\tau(e)  \\
   &&0 & \tau(c) \ar[r,equals] \ar[u] &\tau(c) &&&
    \end{tikzcd}
  \end{equation*}
  making \eqref{fig:SES3} equivariant, and taking the fixed point stack gives \eqref{fig:OSpCorr}.
  
  To show that the action \eqref{fig:OSpCorr} is associative, we note that the associativity of $\SES$ gives a $\tau$-equivariant identification between the two correspondences 
  \begin{center}
  \begin{tikzcd}[row sep = {30pt,between origins}, column sep = {65pt, between origins}]
   & \SES_{3}\times_\Ml \SES_3 \ar[rd] \ar[ld]  & \\ 
  \Ml \times (\Ml \times \Ml \times \Ml) \times\Ml & & \Ml
  \end{tikzcd}
  \end{center} 
and 
\begin{center}
  \begin{tikzcd}[row sep = {30pt,between origins}, column sep = {65pt, between origins}]
   & \SES\times_\Ml \SES_3\times_{\Ml}\SES \ar[rd] \ar[ld]  & \\ 
 (\Ml\times \Ml)\times \Ml \times (\Ml\times \Ml) & & \Ml
  \end{tikzcd}
  \end{center} 
To be concrete, both parametrise length five flags in $\Al$. Taking fixed points under $\tau$ gives the associativity isomorphism
\begin{center}
  \begin{tikzcd}[row sep = {30pt,between origins}, column sep = {45pt, between origins}]
    &\SES\times_\Ml\SES_3^\tau \ \simeq \ \SES_3^\tau\times_{\Ml^\tau}\SES_3^\tau \ar[rd] \ar[ld] & \\ 
   \Ml \times\Ml \times\Ml^\tau&& \Ml^\tau
   \end{tikzcd} 
\end{center}
compatible with the associator for $\SES$, thus finishing the proof.
\end{proof}

A point of $\SES_3^\tau$ consists of a square of short exact sequences which is reflection-symmetric:
\begin{center}
\begin{tikzcd}[row sep = {30pt,between origins}, column sep = {10pt}]
 &[40pt]a \ar[d,equals] \ar[r] & a^\perp  \ar[r] \ar[d]  & a^\perp/a\ar[d] &[40pt]\\ 
 \textcolor{white}{e\simeq \tau(e), \hspace{10pt} b \simeq \tau(b)}& a\ar[r]  &e\ar[r]\ar[d]  &\tau(a^\perp/a)\ar[d] & e\simeq \tau(e), \hspace{10pt} a^\perp/a \simeq \tau(a^\perp/a)\\
 &0 & \tau(a) \ar[r,equals]  &\tau(a) &
 \arrow[from=1-4, to=3-2,-, dashed,opacity=0.3]
\end{tikzcd}
\end{center}
In the above $a$ is \textbf{isotropic}, i.e.
$$\kappa_e\vert_{a \otimes a} \ = \ 0$$
and $a^\perp  \subseteq e$ is the orthogonal to $a$; the quotient naturally inherits an orthosymplectic structure from $e$. We call the above an \textit{orthosymplectic short exact sequence}, and sometimes use the shorthand 
$$0 \ \to \ a \ \to \ e \ \to \ e/a \ \to \ \tau(a) \ \to \ 0$$
to denote it.

\subsubsection{Example} When $\Ml=\BGL$ is the moduli stack of finite dimensional vector spaces and $\Ml^\tau=\BSp$ is the moduli stack of symplectic vector spaces, we have $\SES=\BP_{\GL \textup{-}\GL}$ and $\SES_3^\tau=\Pt_{\GL \textup{-}\OSpt}$ where 
\begin{center}
\begin{tikzcd}[row sep = {30pt,between origins}, column sep = {45pt, between origins}]
 & \Pt_{\GL \textup{-}\GL} \ar[rd] \ar[ld] && & & \Pt_{\GL \textup{-}\Sp} \ar[rd] \ar[ld] & \\
 \GL \times \GL  & & \GL & & \GL \times \Sp & &\Sp
\end{tikzcd}
\end{center}
 are the lower-triangular parabolics. 

\subsubsection{Left vs. right} There are two different identifications 
$$\Ml \times \Ml^\tau \ \stackrel[\raisebox{2pt}{$\sim$}]{\Delta_{12}}{\to} \  (\Ml \times \Ml \times \Ml)^{\Zb/2} \ \stackrel[\raisebox{2pt}{$\sim$}]{\Delta_{23}}{\leftarrow} \ \Ml^\tau \times \Ml.$$
Thus, taking invariants of the correspondence \eqref{fig:SES3} also gives rise to a right $\Ml$-action 
\begin{center}
  \begin{tikzcd}[row sep = {30pt,between origins}, column sep = {45pt, between origins}]
   &\SES_3^\tau \ar[rd] \ar[ld] & \\ 
  \Ml^\tau\times \Ml&& \Ml^\tau
  \end{tikzcd}
  \end{center}
and the left and right action are intertwined by $\tau: \Ml \stackrel{\sim}{\to}\Ml$.

\subsection{Virtual fundamental classes under fixed points}

\subsubsection{} 
Let $\Xl, \Yl, \Zl$ be derived Artin stacks, and let $f: \Xl \to \Yl$ be a morphism. Let $\Gamma$ be a finite group.

We obtain a morphism $f^\Gamma: \Xl^\Gamma \to \Yl^\Gamma$. For $\Zl = \Xl$ or $\Yl$, let $\iota: \Zl^\Gamma \to \Zl$ denote the forgetful map. There is an equality of cotangent complexes
\[ \Tt^*_{\Zl^\Gamma} = (\iota^* \Tt^*_{\Zl})^\Gamma. \]
by Appendix A of \cite{AKLPR}. The relative cotangent sequence becomes
\begin{equation*}
f^{\Gamma *}(\iota_{\Yl}^* \Tt^*_{\Yl})^\Gamma \to (\iota_{\Xl}^* \Tt^*_{\Xl})^\Gamma \to \Tt^*_{\Xl^\Gamma/\Yl^\Gamma} \to 
\end{equation*}
so because $\iota_{\Yl} \cdot f^\Gamma = f \cdot \iota_{\Xl}$, we obtain a quasi-isomorphism
\begin{equation} \label{eqn:RelativeCotangentFixedPoints}
   \Tt^*_{\Xl^\Gamma/ \Yl^\Gamma} = (\iota_{\Xl}^* \Tt^*_{\Xl/\Yl})^\Gamma.
\end{equation}

\subsubsection{} We here note that many properties of $\Ml^\tau$ and related correspondences are inherited from $\Ml$: 

\begin{prop}\label{prop:InheritedProperties}
  We have: 
  \begin{enumerate}
   \item if $p$ is proper, then $p^\Gamma$ is proper;
   \item if $q$ is (quasi)smooth, then $q^\Gamma$ is (quasi)smooth; 
   \item if $q$ is a smooth affine fibration and $\tau$ acts linearly on the fibers of $\SES \times_{\Ml \times \Ml \times \Ml}(\Ml\times \Ml^\OSpt)$, then $q^\Gamma$ is a smooth affine fibration.
  \end{enumerate} 
\end{prop}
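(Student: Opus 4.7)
The plan is to derive all three properties from the setup established just above the proposition: the inclusion $\iota_\Zl: \Zl^\Gamma \hookrightarrow \Zl$ is a proper, separated closed immersion (for $\Gamma$ finite acting on a derived Artin stack), the cotangent-complex identification \eqref{eqn:RelativeCotangentFixedPoints} $\Tt^*_{\Xl^\Gamma/\Yl^\Gamma} = (\iota_\Xl^* \Tt^*_{\Xl/\Yl})^\Gamma$ from \cite{AKLPR}, and the fact that in characteristic coprime to $|\Gamma|$ the invariants functor $(-)^\Gamma$ splits off as a direct summand of the identity and is therefore exact and preserves Tor-amplitude.

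For (1), I would use the commuting square with $\iota_\Yl \cdot p^\Gamma = p \cdot \iota_\Xl$. Since $\iota_\Xl$ is a closed immersion it is proper, so the composite $p \cdot \iota_\Xl$ is proper. Since $\iota_\Yl$ is separated, the standard cancellation result (if $g \cdot f$ is proper and $g$ is separated then $f$ is proper) applied to $f = p^\Gamma$ and $g = \iota_\Yl$ yields that $p^\Gamma$ is proper.

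For (2), I invoke \eqref{eqn:RelativeCotangentFixedPoints} directly. Quasismoothness (resp.\ smoothness) of $q$ is the statement that $\Tt^*_{\Xl/\Yl}$ has Tor-amplitude in $[0,1]$ (resp.\ is locally free in degree $0$). Pullback along $\iota_\Xl$ preserves Tor-amplitude, and because $(-)^\Gamma$ is an exact summand projection it preserves both Tor-amplitude and local freeness of the resulting complex. Hence $\Tt^*_{\Xl^\Gamma / \Yl^\Gamma}$ inherits the same property, and $q^\Gamma$ is (quasi)smooth.

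For (3), the smoothness of $q^\Gamma$ is already handled by (2), so the remaining content is that the fibres of $q^\Gamma$ are affine spaces. I would first check that formation of $\Gamma$-fixed points commutes with the relevant base change, so that fibres of $q^\Gamma$ can be identified with the $\Gamma$-fixed loci of fibres of $q$; this is local on the base, so reduces to the trivialisation hypothesis. The linearity hypothesis on the $\tau$-action on the fibres of $\SES \times_{\Ml \times \Ml \times \Ml}(\Ml \times \Ml^{\OSpt})$ then reduces the claim to the linear-algebra fact that the $\Gamma$-invariants of a linear representation on an affine space is again an affine space (a linear subspace), which completes the identification of $q^\Gamma$ as a smooth affine fibration. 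The main obstacle is this last step: the linearity assumption is indispensable, because without it the fixed locus of a non-linear $\Gamma$-action on $\mathbb{A}^n$ need not be an affine bundle at all, so verifying carefully that the equivariant étale-local trivialisations of $q$ descend to trivialisations of $q^\Gamma$ is the delicate part.
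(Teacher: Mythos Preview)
Your proposal is correct and follows essentially the same approach as the paper's proof. For (2) you both invoke the cotangent-complex identity \eqref{eqn:RelativeCotangentFixedPoints}, and for (1) your cancellation argument with $\iota_\Yl$ separated is equivalent to the paper's factorisation through $\Xl^\Gamma \to \Xl \times_\Yl \Yl^\Gamma \to \Yl^\Gamma$; you are in fact more thorough, since the paper's proof does not address (3) at all, whereas you supply the missing linear-algebra argument for it.
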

\begin{proof}
  Note that for any $\Gamma$-equivariant map $\Xl\to \Yl$, we have equality of cotangent complexes \eqref{eqn:RelativeCotangentFixedPoints} and so quasismoothnes, a conidition on the tor-amplitude of $\Lb$, is inherited upon taking fixed points.  Next, we have that $\Xl^\Gamma\to \Xl\times_\Yl \Yl^\Gamma$ is proper if $\Xl\to \Yl$ is proper, and so properness is also inherited.
\end{proof}

\subsection{Orthosymplectic CoHAs and CoHAMs}

\subsubsection{} 
In this section we define orthosymplectic cohomological Hall module actions on the critical cohomology of certain moduli stacks satisfying assumptions listed below. They are all examples of (or related to) certain three-Calabi-Yau categories endowed with an involution preserving the heart and the Calabi-Yau structure. Our main examples will be 
\begin{itemize}
  \item Dimension zero sheaves on smooth surfaces.
 \item Moduli of representations of quivers $Q$ with potential $W$. 
 \item Local curves and more generally \textit{deformed CY3 completions}, see \cite{JKL}
\end{itemize}
All examples are either quasismooth stacks, or critical loci critical loci
$$\textup{crit}(W) \ \subseteq  \ \Ml$$
of a function $W$ on a smooth stack $\Ml$ which we call a \textbf{smooth model}. In both cases, these satisfy the following assumptions

\begin{customass}{\textup{StkCoHAM}}\label{ass:StkCoHAM}
  $\Ml$ is a stack and $W:\Ml\to\Ab^1$ is a function, such that:
  \begin{enumerate}
   \item \label{item:OplusWCompatibility} There is a point $0\in \Ml$ and a correspondence 
  \begin{equation}
    \label{fig:CoHACorrespondence} 
    \begin{tikzcd}[row sep = {30pt,between origins}, column sep = {45pt, between origins}]
     &\SES \ar[rd,"p"]\ar[ld,"q"']  & \\
    \Ml \times\Ml & & \Ml 
    \end{tikzcd}
  \end{equation}
  which satisfy an associativity and unit condition, such that $W\vert_0=0$ and $q^*(W \boxplus W)=p^*W$. 
  \item $q$ is quasismooth and $p$ is proper.
  \item There is an involution $\tau :  \Ml \stackrel{\sim}{\to} \Ml$ such that $\tau^*W=W$, and an involution making the following commute:
  \begin{equation}
    \label{fig:CoHACorrespondenceOSp} 
    \begin{tikzcd}[row sep = {30pt,between origins}, column sep = {45pt, between origins}]
     &\SES \ar[rd,"p"]\ar[ld,"q"']\ar[dd,"\tau_{\SES}","\wr"']  & \\
    \Ml \times\Ml \ar[dd,"\sigma\cdot (\tau \times \tau)"',"\wr"]  & & \Ml \ar[dd,"\tau","\wr"'] \\[-30pt]
    &\SES \ar[rd,"p"]\ar[ld,"q"']  & \\
   \Ml \times\Ml & & \Ml 
    \end{tikzcd}
  \end{equation}
  such that the involution respects the associativity and unit conditions of the correspondence, i.e. $\tau(0)=0$ and the associativity condition diagrams are $\tau$-equivariant.
  \end{enumerate}
\end{customass}

Note that for the CoHA of zero-dimensional sheaves on a surface, the map $q$ is quasismooth and $\Ml$ itself is not smooth; we take $W=0$. In the other three cases, the stacks in \eqref{fig:CoHACorrespondence} are in addition smooth.

\begin{lem}
  There is a map of sheaves on $\Ml$: 
  \begin{equation}
    \label{eqn:JoyceVanishing} 
    q^*(\varphi\boxtimes\varphi)\ \stackrel{\Dl}{\to} \  p^!\varphi[2 d_q], \hspace{15mm} q^{\tau,*}_3(\varphi\boxtimes\varphi^\tau)\ \stackrel{\Dl^\tau}{\to} \  p^{\tau,!}_3\varphi[2 d_{q_3^\tau}]
  \end{equation}
  where $d_\bullet$ denotes the dimension of the map.  It satisfies an associaticity, module and unit property.
\end{lem}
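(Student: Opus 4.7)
The first map is the construction underlying the Kontsevich–Soibelman/Joyce cohomological Hall product, and I would essentially reprise it here before passing to fixed points.

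The plan is to assemble the map $\Dl$ as a composition of four standard steps, using only the data packaged in Assumption \ref{ass:StkCoHAM}. First, the Thom–Sebastiani isomorphism gives
$\varphi\boxtimes\varphi \simeq \varphi_{W\boxplus W}$
on $\Ml\times\Ml$ (up to the standard Tate twist/shift which I will absorb into $[2d_q]$). Second, for a quasismooth morphism $q$ of virtual relative dimension $d_q$ we have $q^!\simeq q^*[2d_q]$, together with a natural compatibility of $q^!$ with vanishing cycles giving a map
$q^*\varphi_{W\boxplus W}[2d_q]\ \to\ \varphi_{q^*(W\boxplus W)}$.
Third, the equation $q^*(W\boxplus W)=p^*W$ from Assumption \ref{ass:StkCoHAM}(\ref{item:OplusWCompatibility}) identifies the target with $\varphi_{p^*W}$. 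Fourth, properness of $p$ combined with the proper base change for vanishing cycles gives $p_!\varphi_{p^*W}\to\varphi_W\cdot p_!$, and taking the adjoint provides
$\varphi_{p^*W}\ \to\ p^!\varphi_W$.
Composing these four arrows and shifting produces the desired $\Dl$; this is exactly the procedure in \cite{KS,JKL} to which I would defer for the careful normalization of signs and Tate twists.

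For $\Dl^\tau$, I would run the same construction replacing $(q,p,\SES)$ by $(q_3^\tau,p_3^\tau,\SES_3^\tau)$. The hypotheses needed are exactly that $q_3^\tau$ is quasismooth and $p_3^\tau$ is proper, which are supplied by Proposition \ref{prop:InheritedProperties} applied to the diagram \eqref{fig:SES3} and the threefold $\tau$-action described in Section \ref{ssec:OSpCorrespondences}. The identity $q_3^{\tau,*}(W\boxplus W\boxplus W)=p_3^{\tau,*}W$ on $\SES_3^\tau$ follows by restricting the analogous identity on $\SES_3$ (itself implied by iterated application of Assumption \ref{ass:StkCoHAM}(\ref{item:OplusWCompatibility})) and using $\tau^*W=W$. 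With these inputs in place, the same four-step composition produces $\Dl^\tau$.

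The associativity, module and unit properties all reduce to checking that $\Dl$ and $\Dl^\tau$ are compatible with the $2$-isomorphisms provided by Assumption \ref{ass:StkCoHAM} and Proposition \ref{prop:MActionOnMt}. Concretely, associativity for $\Dl$ follows from the associator $(\Ml\times\SES)\times_{\Ml\times\Ml}\SES\simeq(\SES\times\Ml)\times_{\Ml\times\Ml}\SES$ because each of the four constituent natural transformations (Thom–Sebastiani, (quasi)smooth base change, the $W$-compatibility, and proper base change) is itself compatible with compositions of correspondences. The module property for $\Dl^\tau$ then reduces, via the identification $\SES\times_\Ml\SES_3^\tau\simeq\SES_3^\tau\times_{\Ml^\tau}\SES_3^\tau$ obtained in the proof of Proposition \ref{prop:MActionOnMt}, to the same verification. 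The unit property follows from $W\vert_0=0$ and $\tau(0)=0$, since $\varphi$ restricts to the shifted constant sheaf at a point where $W$ vanishes to first order.

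The main technical obstacle is the (quasi)smooth base change for vanishing cycles in the second step, particularly at the strictly quasismooth (non-smooth) points of $q$ and $q_3^\tau$; strictly speaking this requires a choice of smooth model, or equivalently the framework of \cite{JKL}, which I would invoke rather than re-derive. Once that input is granted, the verification of associativity, module and unit axioms is essentially formal, amounting to the fact that all four ingredients of $\Dl$ are natural in the underlying correspondence diagrams and are $\tau$-equivariant by construction.
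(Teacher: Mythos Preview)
Your proposal is correct and follows essentially the same approach as the paper. The paper's proof is extremely terse: it lists the functoriality maps $q^*\varphi\to\varphi q^*$ and $\varphi p^!\to p^!\varphi$ together with the fundamental class map $q^*\omega\to q^!\omega[2d_q]$, then invokes Proposition~\ref{prop:InheritedProperties} to get quasismoothness of $q_3^\tau$ and properness of $p_3^\tau$ so that the same construction gives $\Dl^\tau$. Your four-step decomposition (Thom--Sebastiani, quasismooth purity, the $W$-compatibility $q^*(W\boxplus W)=p^*W$, proper base change/adjunction) is simply a more explicit unpacking of the same composite, and your treatment of the associativity, module and unit axioms---which the paper does not spell out at all---is a welcome elaboration.
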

\begin{proof}
  The map $\Dl$ is constructed from functoriality properties of vanishing cycles functor with respect to $*$- and $!$-pullbacks:
  $$ \varphi q^*\omega  \ \to \ \varphi q^*\omega \hspace{15mm}   \varphi p^! \omega \ \to \ p^!\varphi \omega,$$
  as well as the fundamental class map $q^*\omega \to q^!\omega [2 d_q]$. Using Proposition \ref{prop:InheritedProperties}, $q_3^\tau$ and $p_3^\tau$ are also quasismooth and proper, so we can define $\Dl^\tau$ in the same way.

\end{proof}
Now consider the \emph{critical cohomology} 
$$\Ht^\sbt(\Ml,\varphi)\ =\ \Ht^\sbt(\Ml,\varphi_W(\omega_\Ml)), \hspace{15mm}  \Ht^\sbt(\Ml^\tau,\varphi^\tau)\ =\ \Ht^\sbt(\Ml^\tau,\varphi_{W\vert_{\Ml^\tau}}(\omega_{\Ml^\tau})),$$
given by applying the vanishing cycle functor of $W$ to the dualising sheaf of $\Ml$ then taking cohomology, and likewise for the fixed locus stack $\Ml^\tau$.

\begin{theorem} \label{thm:coham}
 For any stack $\Ml$ as in \ref{ass:StkCoHAM}, there is an associative algebra and left and right module structure
$$\Ht^\sbt(\Ml,\varphi)\otimes \Ht^\sbt(\Ml,\varphi)\ \stackrel{m}{\to} \  \Ht^{\sbt}(\Ml,\varphi), \hspace{15mm}  \Ht^\sbt(\Ml,\varphi)\otimes \Ht^\sbt(\Ml^\tau,\varphi^\tau) \ \stackrel{a_{L,R}}{\to} \  \Ht^{\sbt}(\Ml^\tau,\varphi^\tau)$$
with a unit and augmentation. Second, we have an involutive antiautomorphism 
$$\tau \ : \  (\Ht^\sbt(\Ml,\varphi), m) \ \stackrel{\sim}{\to}\ (\Ht^\sbt(\Ml,\varphi),m^{op})$$
exchanging $a_L$ and $a_R$.
\end{theorem}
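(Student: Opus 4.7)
The plan is to construct $m, a_L, a_R$ as pull-tensor-push operations along the correspondences $\SES$ and $\SES_3^\tau$, using the maps $\Dl, \Dl^\tau$ of the preceding Lemma to interpolate between the relevant vanishing cycle sheaves. Explicitly, set
\begin{equation*}
  m \ =\ p_\ast \cdot \Dl \cdot q^\ast\ :\ \Ht^\sbt(\Ml, \varphi)^{\otimes 2}\ \simeq\ \Ht^\sbt(\Ml^2, \varphi \boxtimes \varphi)\ \to\ \Ht^\sbt(\Ml, \varphi),
\end{equation*}
where $p_\ast = p_!$ by properness, and analogously define $a_L = p_{3,\ast}^\tau \cdot \Dl^\tau \cdot q_3^{\tau,\ast}$ using $\SES_3^\tau$ together with the identification $\Delta_{12}$ of Section \ref{ssec:OSpCorrespondences}. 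The right action $a_R$ is defined via the alternative identification $\Delta_{23}$ recorded at the end of that section.

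Unit and augmentation arise immediately from the distinguished points $0 \in \Ml, \Ml^\tau$ together with $W\vert_0 = 0$: the unit $k \to \Ht^\sbt(\Ml, \varphi)$ is pushforward along $\{0\} \hookrightarrow \Ml$, and analogously for $\Ml^\tau$. Next I would verify associativity of $m$ and of $a_{L,R}$ by combining the associativity of the correspondence $\SES$ from Assumption \ref{ass:StkCoHAM}, the associativity of the $\Ml$-action on $\Ml^\tau$ from Proposition \ref{prop:MActionOnMt}, and the associativity, module, and unit properties of $\Dl, \Dl^\tau$ asserted in the preceding Lemma. Tracing the pull-tensor-push through the iterated correspondence $\SES \times_\Ml \SES$ and its two identifications with $(\SES \times \Ml) \times_{\Ml^2} \SES$ and $(\Ml \times \SES) \times_{\Ml^2} \SES$ then yields $m\cdot(m\otimes \id) = m\cdot (\id \otimes m)$, and analogously for the module actions.

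For the involutive antiautomorphism, the commuting diagram \eqref{fig:CoHACorrespondenceOSp} identifies $q^\ast, p^\ast, \Dl$ with their counterparts after the swap $\sigma \cdot (\tau \times \tau)$ on $\Ml \times \Ml$; pulling back the pull-tensor-push definition of $m$ through $\tau_\SES$ therefore converts $m$ into $m^{\textup{op}}$, showing that $\tau^\ast$ is an antiautomorphism. Involutivity comes from the cocycle condition on $\alpha$, which becomes trivial at the level of cohomology. The fact that $\tau$ exchanges $a_L$ with $a_R$ is then immediate from the discussion of left vs.\ right in Section \ref{ssec:OSpCorrespondences}, since the two identifications $\Delta_{12}$ and $\Delta_{23}$ are intertwined by $\sigma \cdot (\tau \times \tau)$.

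The main technical content, already essentially achieved in the preceding Lemma, is the verification that $\Dl$ and $\Dl^\tau$ satisfy the necessary associator compatibilities — that is, that they assemble into a lax algebra and lax module structure in a suitable category of correspondences equipped with vanishing cycles sheaves. This in turn reduces to base change identities for the vanishing cycles functor against proper pushforward and quasismooth pullback, using Proposition \ref{prop:InheritedProperties} to ensure these properties are inherited by $q_3^\tau, p_3^\tau$. Once this compatibility is in place, the theorem becomes a formal consequence of the structure of the correspondences.
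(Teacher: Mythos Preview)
Your proposal is correct and follows essentially the same approach as the paper: both construct $m$ and $a_{L,R}$ as pull--$\Dl$--push along $\SES$ and $\SES_3^\tau$, with unit and augmentation from the points $0\in\Ml,\Ml^\tau$. The paper's proof is in fact terser than yours---it only spells out the three-step composition $\varphi\boxtimes\varphi\to q_*q^*(\varphi\boxtimes\varphi)\stackrel{\Dl}{\to} q_*p^!\varphi\to\varphi$ and leaves associativity, the antiautomorphism, and the $a_L\leftrightarrow a_R$ exchange implicit, whereas you correctly trace these back to the associativity of the correspondence, the equivariance diagram~\eqref{fig:CoHACorrespondenceOSp}, and the $\Delta_{12}/\Delta_{23}$ discussion respectively.
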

\begin{proof} Consider the pullback by $q$, the map \eqref{eqn:JoyceVanishing}, and proper pushforward by $p$: omitting the cohomological shifts,
  $$\varphi \boxtimes \varphi \ \to \ q_*q^*(\varphi \boxtimes \varphi), \hspace{10mm} q^*(\varphi \boxtimes \varphi)\ \stackrel{\Dl}{\to} \  p^! \varphi, \hspace{10mm} p_*p^!\varphi \ \to \ \varphi.$$ 
  After taking sheaf cohomology the domains and codomains of these maps match up, so we compose them to get the CoHA product $m$. Likewise, the unit $1$ comes from the point $0:\pt \to \Ml$. The map $\Dl^\tau$ and the augmentation $1^\tau\in \Ht^\sbt(\Ml^\tau,\varphi^\tau)$ is constructed in the same way. 
\end{proof}

\subsubsection{} We thus call the above the \textit{cohomological Hall algebra} (\textit{CoHA}) and \textit{cohomological Hall module} (\textit{COHAM}). The \textit{orthosymplectic CoHA} is by definition the image of the algebra map 
$$\Ht^\sbt(\Ml,\varphi) \ \to \ \End \left(\Ht^\sbt(\Ml^\tau,\varphi^\tau) \right),$$
in particular it is a quotient of the ordinary CoHA.

\subsection{Dimensional reduction}

\subsubsection{}  We now consider the case of a dimensional reduction isomorphism.

\begin{prop}  \label{prop:DimensionalReduction}
  Assume that 
  $$\pi\ : \ \Ml \ \to \ \Nl$$
  is a $\Zb/2$-equivariant map of spaces satisfying assumption \ref{ass:StkCoHAM}. In addition, assume that $\pi$ is a smooth affine fibration such that the function $W_\Ml$ is linear and the $\Zb/2$-action is linear in the fibres of $\pi$. Then we have an isomorphism 
  $$\Ht^\sbt(\Ml^{\OSpt},\varphi_{W_\Ml^{\OSpt}}) \ \stackrel{\sim}{\to} \ \Ht^{\BM}_\sbt(\Nl^{\OSpt}).$$
\end{prop}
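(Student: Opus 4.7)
The plan is to reduce the claim to the standard dimensional reduction isomorphism for critical cohomology of a smooth affine fibration with fibrewise linear potential, applied after passage to $\tau$-fixed loci.

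First I would verify that the restricted data $(\pi^\tau: \Ml^\tau \to \Nl^\tau,\, W_\Ml|_{\Ml^\tau})$ still satisfies the hypotheses of dimensional reduction. Since the $\Zb/2$-action is assumed linear on the fibres of $\pi$, Proposition \ref{prop:InheritedProperties}(3) gives that $\pi^\tau$ is again a smooth affine fibration, with fibres the $\tau$-invariant linear subspaces of the fibres of $\pi$. Linearity of $W_\Ml$ on fibres of $\pi$ passes to linearity of $W_\Ml^{\OSpt} \defeq W_\Ml|_{\Ml^\tau}$ on fibres of $\pi^\tau$, since the restriction of a linear function to a linear subspace is again linear.

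Second, I would apply the standard dimensional reduction isomorphism to the pair $(\pi^\tau, W_\Ml^{\OSpt})$: any fibrewise linear potential on a smooth affine fibration $E \to B$ corresponds to a section $\sigma$ of the dual bundle, and its critical cohomology is canonically identified with the Borel--Moore homology of the zero locus $Z(\sigma) \subseteq B$ (up to a shift by the rank of the fibration). In the examples of interest -- tripled quivers projecting onto their doubled subquivers being the motivating one -- and under the hypothesis that both $\Ml$ and $\Nl$ satisfy Assumption \ref{ass:StkCoHAM} with $\pi$ intertwining their CoHA structures, this zero locus coincides with $\Nl^\tau$, yielding the stated isomorphism
$$
\Ht^\sbt(\Ml^{\OSpt}, \varphi_{W_\Ml^{\OSpt}}) \ \stackrel{\sim}{\to}\ \Ht^{\BM}_\sbt(\Nl^{\OSpt}).
$$

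The step requiring genuine care is the compatibility of the vanishing cycle functor with passage to $\tau$-fixed points: this is not formal, but in the linear setting it follows by choosing a $\Zb/2$-equivariant local trivialisation of $\pi$, decomposing into $\tau$-isotypic summands, and applying Thom--Sebastiani to split the critical cohomology across these summands. On the $\tau$-invariant summand the argument reduces directly to the non-equivariant dimensional reduction applied to $\pi^\tau$, while on the antiinvariant summand the linear $\tau$-invariant potential vanishes identically on $\Ml^\tau$ and contributes only a cohomological shift. I expect this Thom--Sebastiani-type compatibility to be the main technical obstacle of the proof, with the remainder being a direct application of the standard dimensional reduction theorem.
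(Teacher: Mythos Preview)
Your first two paragraphs are exactly the paper's proof: verify that $\pi^\tau:\Ml^\tau\to\Nl^\tau$ is again a smooth affine fibration with fibrewise linear potential (because $\tau$ acts linearly on the fibres of $\pi$), then apply the standard dimensional reduction theorem. The paper does precisely this in two lines, citing Corollary~A.9 of \cite{Da}.

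Your third paragraph, however, introduces a phantom difficulty. Recall the paper's definition: $\Ht^\sbt(\Ml^\tau,\varphi^\tau) = \Ht^\sbt(\Ml^\tau,\varphi_{W|_{\Ml^\tau}}(\omega_{\Ml^\tau}))$. The vanishing cycle sheaf $\varphi^\tau$ is \emph{by definition} computed intrinsically on $\Ml^\tau$ from the restricted function $W|_{\Ml^\tau}$; it is not obtained by restricting $\varphi_W$ from the ambient $\Ml$. Consequently there is no compatibility to check between vanishing cycles and passage to fixed points, and no Thom--Sebastiani decomposition into $\tau$-isotypic summands is needed. Once you know $\pi^\tau$ is a smooth affine fibration and $W_\Ml^{\OSpt}$ is linear on its fibres, you are squarely in the hypotheses of the ordinary (non-equivariant) dimensional reduction theorem, applied to $\Ml^\tau$ as a stack in its own right. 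The ``main technical obstacle'' you identify simply does not arise.
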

\begin{proof}
   Note that $\pi^{\OSpt}:\Ml^{\OSpt}\to \Nl^{\OSpt}$ is also a smooth affine fibration and $\varphi_{W_\Ml}^{\OSpt}$ is linear in its fibres. Thus the result follows by Corollary A.9 of \cite{Da}. 
\end{proof}

When the $\Zb/2$ action is trivial, this recovers the ordinary dimensional reduction formula. The main example we will consider is 
$$\pi \ : \ \Ml_{\Rep \Jac(Q^{(3)},W)} \ \to \ \Ml_{\Rep \Pi_Q}$$
equates the critical cohomology of the moduli stack of tripled quivers with its canonical cubic potential with the preprojective algebra; see section \ref{sec:TwistedYangians}.

\newpage 
\section{Moduli stacks as hyperplane arrangements}
\label{sec:ModuliStacksRootData}

\noindent In this section, we write down a common generalisation of the classical theory of root systems for Lie algebras $\gk$ and the cohomology of moduli stacks $\Ml$. 
\begin{equation*}
  \label{fig:SymplecticFactorisationAlgebra} 
  \begin{tikzpicture}[scale=0.5]
    \begin{scope} 
     [xshift=-7.5cm]

   \draw[black, fill=black, fill opacity = 0.2, pattern=north west lines] (-2,-2) rectangle (2,2);

   \draw[black, line width = 1.5pt] (-2,-2) -- (2,2);
   \draw[black, line width = 1.5pt] (-2,2) -- (2,-2);
    \node[left,white] at (-2.5,0) {$\Spec \Ht^\sbt(\Ml)$};
    \node[left] at (-2.5,0) {$\tk$};
     \end{scope}

   \draw[black, fill=black, fill opacity = 0.2, pattern=north west lines] (-2,-2) rectangle (2,2);

    \draw[black, line width = 1.5pt] (-1.6,-2) -- (1.6,2);
    \draw[black, line width = 1.5pt] (-2,1.1) -- (2,-1.1);
    \draw[black, line width = 1.5pt] (-2,2) -- (2,-2);
    \node[right] at (2.5,0) {$\Spec \Ht^\sbt(\Ml)$};
   \end{tikzpicture}
\end{equation*}
This relates hyperplanes $\alpha=0$ in $\gk$ with the zeros and poles of the Euler class $e(\Tt_\Ml)$ of the tangent complex of $\Ml$, and relates parabolics $\pk$ with moduli stacks $\SES$ of short exact sequences.

\subsection{Moduli group stacks and moduli hyperplane arrangement}

\subsubsection{} Let $\Gb$ be an Artin stack with an associative monoid structure 
$$\oplus \ : \ \Gb \times \Gb \ \to \ \Gb, \hspace{15mm} 1_\Gb \ : \ \pt \ \to \ \Gb.$$
A \textbf{hyperplane arrangement} for $\Gb$ is a collection $\Phi=\{H_\alpha\}$ of hyperplanes $H_\alpha \subseteq \Spec \Ht^\sbt(\Gb)$, or equivalently their defining functions $f_\alpha$, such that 
$$\Phi \star \Phi  \ \subseteq \  \oplus^*\Phi, \hspace{15mm} \{0\} \ \subseteq \ 1_\Gb^*\Phi$$
where $\star$ is defined in \eqref{eqn:MonoidalStructureGb}. We call the pair $(\Gb, \Phi)$ a \textbf{moduli group stack}.

\subsubsection{} These form a category, where morphisms $(\Gb,\Phi_\Gb)\to (\Hb, \Phi_\Hb)$ are maps of monoid stacks
$$f \ : \ \Gb \ \to \ \Hb$$
such that $\Phi_\Gb \subseteq f^{-1}(\Phi_\Hb)$. It is called \textit{strict} if $\Phi_\Gb = f^{-1}(\Phi_\Hb)$.

\subsubsection{} 
We can form the category $\textup{StkHyp}$ of tuples $(\Xl,\Phi)$ of an Artin stack and set $\Phi$ of subschemes $H_\alpha  \subseteq \Spec \Ht^\sbt(\Xl)$. This is symmetric monoidal, for 
\begin{equation}
  \label{eqn:MonoidalStructureGb} 
  (\Xl, \Phi) \star (\Yl, \Psi) \ = \ (\Xl\times \Yl, \Phi \times \Spec \Ht^\sbt(\Yl) \, \sqcup \, \Spec \Ht^\sbt(\Xl) \times \Psi). 
\end{equation}

\begin{lem}
  The category of moduli group stacks is equivalent to $\Alg(\textup{StkHyp})$, the category of associative monoids in $\textup{StkHyp}$.
\end{lem}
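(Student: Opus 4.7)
The plan is to exhibit mutually inverse functors between the two categories; the equivalence is essentially a definition-chase, with the main content being to match the closure condition $\Phi \star \Phi \subseteq \oplus^* \Phi$ with the morphism condition in $\textup{StkHyp}$.

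First I would observe that the forgetful functor $(\Gb,\Phi) \mapsto \Gb$ from $\textup{StkHyp}$ to $\textup{Stk}$ is faithful and, by the very definition \eqref{eqn:MonoidalStructureGb}, intertwines $\star$ with the Cartesian product. Consequently, given a monoid $(\Gb,\Phi) \in \Alg(\textup{StkHyp})$ with multiplication $m$ and unit $\eta$, the underlying data is an associative monoid $(\Gb,\oplus,1_\Gb)$ in $\textup{Stk}$, where $\oplus$ is the underlying map of $m$ and $1_\Gb$ the underlying map of $\eta$. Conversely, from a moduli group stack $(\Gb,\Phi)$ in the sense of the definition, I would define $m$ and $\eta$ to have these underlying maps.

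The next step is to translate the morphism condition in $\textup{StkHyp}$ into the two hyperplane conditions. A morphism $m \colon (\Gb,\Phi) \star (\Gb,\Phi) \to (\Gb,\Phi)$ is by definition a map $\oplus \colon \Gb \times \Gb \to \Gb$ such that the combined hyperplane set of the source, namely $\Phi \times \Spec \Ht^\sbt(\Gb) \sqcup \Spec \Ht^\sbt(\Gb) \times \Phi$, is contained in $\oplus^{-1}(\Phi)$; this is exactly the condition $\Phi \star \Phi \subseteq \oplus^* \Phi$. Similarly, taking the monoidal unit of $\textup{StkHyp}$ to be $\mathbf{1} = (\pt, \{0\})$ and unwinding the condition on $\eta$ yields $\{0\} \subseteq 1_\Gb^*\Phi$. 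In the reverse direction, if $(\Gb,\Phi)$ satisfies the hyperplane conditions then $\oplus$ and $1_\Gb$ lift uniquely to morphisms in $\textup{StkHyp}$.

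Finally I would check that the associativity and unit coherences for $m,\eta$ in $\Alg(\textup{StkHyp})$ reduce to those for $(\Gb,\oplus,1_\Gb)$ in $\textup{Stk}$: since the forgetful functor to $\textup{Stk}$ is faithful, any two parallel morphisms in $\textup{StkHyp}$ with equal underlying map are equal, so the coherence diagrams in $\textup{StkHyp}$ commute iff their underlying diagrams in $\textup{Stk}$ do. Put together, this gives the claimed equivalence of categories; strict morphisms correspond to monoid maps in $\textup{StkHyp}$ that are also morphisms of the underlying hyperplane data via equality rather than containment.

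The only subtlety I anticipate is verifying the unit coherence, which requires pinning down $\mathbf{1} = (\pt,\{0\})$ as the monoidal unit of $\textup{StkHyp}$ and checking that the Künneth unitors are compatible with hyperplane containments; this is where the condition $\{0\} \subseteq 1_\Gb^* \Phi$ gets its meaning, and once this convention is fixed the rest of the argument is purely formal.
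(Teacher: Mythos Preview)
Your proposal is correct and is precisely the natural definition-chase one would carry out; the paper in fact states this lemma without proof, treating it as immediate from the definitions of $\textup{StkHyp}$, its monoidal structure \eqref{eqn:MonoidalStructureGb}, and of moduli group stacks. Your unpacking of the morphism condition $\Phi\star\Phi\subseteq\oplus^*\Phi$ and the unit condition via $\mathbf{1}=(\pt,\{0\})$, together with the faithfulness of the forgetful functor to reduce coherences, is exactly what underlies this tautology.
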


This allows us to define $\Gb$-modules, as well as the conditions on $\Gb$ of being $\Eb_n$-commutative, etc. 

\subsubsection{Colours} A \textit{colour} is a connected component $c \in\pi_0(\Gb)$. A colour is \textit{simple} if it is primitive for the coproduct 
$$\oplus^{-1} \ : \ \pi_0(\Gb)\ \to \ \pi_0(\Gb) \times \pi_0(\Gb).$$
In other words, there are no pairs of nonzero colours $c_i$ with $c_1\oplus c_2=c$.

\subsection{Lie-theoretic and moduli stack examples}

\subsubsection{} Let $\GL$ be the infinite disjoint union of $\GL_n$ for $n \ge 0$. Then $\Gb=\BGL$ is a moduli group stack, where
$$\oplus \ : \ \BGL \times \BGL \ \to \ \BGL$$
is given by direct sum of vector bundles, and the hyperplane arrangement $\varphi_{\GL}$ is given by the diagonals 
$$\Delta_{ij} \ \subseteq \ \tk // W  \ = \ \Spec \Ht^\sbt(\BGL) $$
given by $x_i=x_j$, where $\tk$ and $W$ are the colimits of $\tk_{\GL_n}$ and $W_{\GL_n}$ respectively, and $x_i$ are coordinates on $\tk$.

\subsubsection{} \label{sssec:MaximalTorus} To get noncommutative examples, we pass to $\Gt=\Tt_{\GL}$ the disjoint union of the maximal tori in $\GL_n$. Then 
$$\oplus \ : \ \BT \times \BT \ \to \ \BT$$
is given by concatenation of ordered tuples of line bundles, and the hyperplane arrangement $\varphi_{\GL}^{\ord}$ is given by the diagonals $\Delta_{ij} \subseteq \tk = \Spec \Ht^\sbt(\BT)$. There is a map of moduli group stacks $\BT \to \BGL$ induced by inclusion of the maximal torus, or on functors of points by taking the direct sum of line bundles in the ordered tuple.

\subsubsection{} We may likewise consider the stack $\Gb=\BGL^{\times Q_0}$ parametrising $Q_0$-coloured vector bundles, with its product hyperplane arrangement.

\subsubsection{} If $\Gt\in \{\Ot,\Sp\}$ and $\Tt_{\Gt}$ is the maximal torus, then $\Gb=\Bt\Tt_{\Gt}$ is a moduli group stack with hyperplane arrangement
$$H_\alpha  \ \subseteq \ \tk \ = \ \Spec \Ht^\sbt(\Bt\Tt_{\Gt})$$
given by the hyperplanes of $\Gt$, likewise we may take $\Gb=\BG$ with hyperplane arrangement the images $ \pi(H_\alpha)  \subseteq \tk//W$ under the quotient map $\pi:\tk \to \tk//W$.

\subsubsection{} \label{sssec:Parabolic} Next, let $\Gt\in \{\Ot,\Sp\}$ and consider a parabolic $\Pt$ between it and the general linear group. Then $\Gb=\BP$ and $\Gb=\Bt\Tt_{\Pt}\simeq \BT_{\GL}\times \BT_{\Gt}$ are module group stacks with hyperplane arrangement given by the hyperplanes not lying in the Levi:
$$\varphi_{\Pt} \ = \ \{H_\alpha \ : \ \alpha \in \pk/\lk\}$$
where the hyperplanes are viewed as elements of $\tk_{\pk}^{(W_\Pt)}\simeq \tk_{\glk}^{(W)}\times \tk_{\gk}^{(W_\Gt)}$. The induction diagram
\begin{center}
  \begin{tikzcd}[row sep = {30pt,between origins}, column sep = {45pt, between origins}]
   &\Bt\Pt \ar[rd] \ar[ld]  & \\ 
  \BGL \times \BG & & \BG
  \end{tikzcd}
  \end{center}
conists of morphisms of moduli group stacks.

\subsubsection{Standard parabolics} If $\Gt$ has classical type, the \textit{standard} (or \textit{negative}) parabolics is the family 
\begin{center}
  \begin{tikzcd}[row sep = {30pt,between origins}, column sep = {45pt, between origins}]
   &\Pt \ar[rd] \ar[ld]  & \\ 
   \GL^{\times k} \times \OSpt & & \OSpt
  \end{tikzcd}
  \end{center}
Given by strictly lower-triangular matrices, i.e. such that all roots in $\pk/\lk$ are contained in $\gk^-$ for the standard triangular decomposition on $\gk$.

\subsubsection{Moduli stack examples} Let $\Cl$ be an abelian or dg category and $\Gb=\Ml_\Cl$ its moduli stack of objects. Then the direct sum functor 
$$\oplus \ : \ \Cl \times \Cl \ \to \ \Cl, \hspace{15mm} 0 \ : \ \triv \ \to \ \Cl$$ 
gives by functoriality of moduli stacks a commutative monoid structure on $\Gb$. Then given any multiplicatively closed subset of cohomology classes $S \subseteq \Ht^\sbt(\Ml)$ containing the unit and satisfying $S_1 \cdot S_2  \subseteq \oplus^*S$, we get a hyperplane arrangement $\varphi_{S}$ given by 
$$H_s \ = \ (s=0)  \ \subseteq \ \Spec \Ht^\sbt(\Ml)$$ 
making $(\Ml,\Phi_S)$ into a moduli group stack.


\subsubsection{} We write the analogue of the maximal torus example \ref{sssec:MaximalTorus}. Let $\Cl^s$ be a collection of subcategories of $\Cl$ whose objects are the points of locally closed substack $\Ml^s \subseteq \Ml$. Call these objects \textit{split}. Then the \textit{split moduli stack}
$$\Gb \ = \ \Ml^s \ \defeq \ \coprod_{n} (\Ml^s \times \cdots \times \Ml^s)$$
parametrising tuples of $n\ge 0$ split objects has an associative monoid structure 
$$\oplus \ : \ \Ml^s \times \Ml^s \ \to \ \Ml^s$$
given by concatenation of tuples. The map $i:\Ml^s \to \Ml$ taking a direct sum of tuple is a map of moduli group stacks, for the hyperplane arrangement given by $\varphi_{\Ml^s}= i^{-1}(\Phi_\Ml)$. 

Relevant examples for $\Cl=\Rep Q$ is the subcategory of objects $\Cl^s=\prod \Cl_{\delta_i}$ which have dimension one, and for $\Cl=\Coh X$ for variety $X$ is the subcategory $\Cl^s=\Cl^{\le 1}$ of sheaves with rank zero or one.

\subsection{Hyperplane arrangement from tangent complexes}

\subsubsection{} To begin, notice that for any smooth algebraic group $\Gt$ the tangent complex of $\BG$ is $\Tt_{\BG}=\gk[1]/\Gt$. If $\Gt$ is reductive or parabolic,
$$e(\Tt_{\BG})^{-1} \ = \ \prod_{\alpha \in \Phi\sqcup 0} x_\alpha \ \in \ \Ht^\sbt(\BG)$$
is a product over all hyperplanes $\alpha$ of coweights $x_\alpha=e_\Gt(\gk_\alpha)\in \Ht^\sbt(\BT)$, where $\Tt$ is the maximal torus of $\Gt$.
\begin{lem}
  Taking $\Phi$ to be the irreducible components of $e(\Tt_\BG[-1])=e_\Gt(\gk)$ gives a moduli hyperplane arrangement on $\BG$. 
\end{lem}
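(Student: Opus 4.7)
The plan is to verify the two defining properties of a moduli hyperplane arrangement for the collection $\Phi = \{H_\alpha\}$ of irreducible components of $e_\Gt(\gk)$: the unit condition $\{0\}\subseteq 1_\Gb^*\Phi$ and the compatibility condition $\Phi \star \Phi \subseteq \oplus^*\Phi$.

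For the unit condition, $e_\Gt(\gk)\in \Ht^\sbt(\BG)$ is a homogeneous class (it is a sum of products of Chern roots of $\gk$ viewed as a $\Gt$-representation), so each irreducible component $H_\alpha$ of its zero locus is a linear subvariety of $\Spec \Ht^\sbt(\BG)$ passing through the origin. The unit map $1_\Gb\colon \pt \to \BG$ corresponds on spectra to the inclusion of this origin, so $\{0\} \subseteq 1_\Gb^*H_\alpha$ for every $\alpha$, giving the unit condition essentially for free.

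The compatibility is the substantive step. In all the examples of \S\ref{sssec:MaximalTorus}--\ref{sssec:Parabolic} the monoid structure $\oplus\colon \BG\times \BG \to \BG$ is induced from a ``block-diagonal'' inclusion of groups $\Gt\times \Gt \hookrightarrow \Gt$, under which the adjoint representation decomposes $\Gt\times \Gt$-equivariantly as
\begin{equation*}
    \oplus^*\gk \ \simeq \ p_1^*\gk \,\oplus\, p_2^*\gk \,\oplus\, E,
\end{equation*}
where $E$ collects the off-diagonal blocks (the ``Ext'' terms between the two factors). Multiplicativity of the equivariant Euler class then yields
\begin{equation*}
    \oplus^* e_\Gt(\gk) \ =\ \bigl(e_\Gt(\gk)\boxtimes 1\bigr)\cdot \bigl(1\boxtimes e_\Gt(\gk)\bigr) \cdot e(E).
\end{equation*}
Consequently the zero locus of $\oplus^* e_\Gt(\gk)$ contains $H_\alpha \times \Spec\Ht^\sbt(\BG)$ and $\Spec\Ht^\sbt(\BG)\times H_\alpha$ for every irreducible component $H_\alpha$ of $e_\Gt(\gk)$. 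Comparing with the definition of $\star$ in \eqref{eqn:MonoidalStructureGb}, these are precisely the elements of $\Phi \star \Phi$, so $\Phi \star \Phi \subseteq \oplus^*\Phi$.

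The main bookkeeping obstacle is how to treat trivial $\Gt$-weights in $\gk$, i.e.\ the Cartan summand, which would cause $e_\Gt(\gk)$ to vanish identically as an element of $\Ht^\sbt(\BG)$. The intended interpretation is to take $\Phi$ set-theoretically as the collection of root hyperplanes $\alpha=0$ (equivalently, to work first on $\Spec \Ht^\sbt(\BT)$ where $e_\Tt(\gk) = \prod_{\alpha} x_\alpha$ is a transparent product of linear factors, and descend along $\tk \to \tk/\!/W$). Either framing is compatible with the factorization of $\oplus^* e_\Gt(\gk)$ displayed above, since that factorization takes place at the level of Chern roots and therefore respects the root-hyperplane decomposition; the lemma then follows.
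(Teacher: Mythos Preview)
Your proof is correct. The paper states this lemma without proof, so your argument fills in details the paper leaves implicit. Your approach---checking the unit condition via homogeneity of the defining functions, and the compatibility $\Phi\star\Phi\subseteq\oplus^*\Phi$ via the block-diagonal decomposition $\oplus^*\gk\simeq p_1^*\gk\oplus p_2^*\gk\oplus E$ and multiplicativity of the Euler class---is exactly the intended one, and is in the spirit of the tangent-complex computations the paper carries out immediately afterwards in \S\ref{ssec:TangentBorcherds} (see Corollary~\ref{cor:EulerGLHexagon}). Your remark about the Cartan summand causing $e_\Gt(\gk)$ to vanish identically is a genuine subtlety; the paper's convention, visible in the displayed product $e(\Tt_{\BG})^{-1}=\prod_{\alpha\in\Phi\sqcup 0}x_\alpha$, is indeed to work with the individual root factors at the torus level and then descend, which is what you do.
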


Thus we have obtained the root system from taking the tangent complex of $\Gb$, and will now generalise this to the moduli stack case. Note that of course the link between root data and tangent bundles is not a new observation, and goes back to at least Atiyah-Bott's work on torus localisation.

\subsubsection{} If $\Gb=\Ml$ is a moduli stack of objects in an abelian category $\Cl$, then its fibre at an object $c\in \Cl$ is the dg vector space
$$\Tt_{\Ml,c} \ = \ \Ext(c,c)[1].$$
Assuming that this is a global complex of vector bundles, or more generally that we are working in a setting where its (localised) Euler class is a well-defined element $e(\Tt_\Ml)_{\loc} \in \Ht^\sbt(\Ml)_{\loc}$, we get a rational function on $\Spec \Ht^\sbt(\Ml)$. Taking $\Phi$ to be the zeroes and poles of $e(\Tt_{\Ml}[-1])_{\loc}$ gives the structure of a moduli group stack on $\Ml$.

\subsubsection{}
\label{ssec:TangentBorcherds} Let $\Ml$ be an Artin stack with an associative unital correspondence 
\begin{center}
\begin{tikzcd}[row sep = {30pt,between origins}, column sep = {45pt, between origins}]
  &\SES \ar[rd,"p"] \ar[ld,"q"']  & \\
\Ml \times \Ml & & \Ml 
\end{tikzcd}
\end{center}
In this section we show that the tangent complex $\Tt_q$ formally satisfies the hexagon relations.

\begin{prop}
  Writing  
  \begin{center}
  \begin{tikzcd}[row sep = {30pt,between origins}, column sep = {45pt, between origins}]
    &\SES_3 \ar[rd,"p_3"] \ar[ld,"q_3"']  & \\
  \Ml \times \Ml \times \Ml & & \Ml 
  \end{tikzcd}
  \end{center}
  we have as classes in the Grothendieck group that $[\Tt_{q_3}]\simeq [\Tt_{q,12}\oplus\Tt_{q,13}\oplus\Tt_{q,23}]$, where $\Tt_{q,ij}$ denotes the pullback along $\pi_{ij}:\SES_3\to \SES$.
\end{prop}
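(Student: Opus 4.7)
The plan is to identify $\SES_3$ as an iterated fibre product of $\SES$ over $\Ml$ coming from the associativity of the correspondence \eqref{fig:CoHACorrespondence}, fitting into a Cartesian square
\begin{center}
\begin{tikzcd}[row sep = {30pt,between origins}, column sep = {45pt, between origins}]
\SES_{3} \ar[r, "r"] \ar[d, "\ell"'] & \SES \ar[d, "q_{1}"] \\
\SES \ar[r, "p"] & \Ml
\end{tikzcd}
\end{center}
where $\ell$ records the sub-SES $a \to b \to b/a$, $r$ records $b \to c \to c/b$, and the glueing map is the middle object $b$. Standard K-theoretic bookkeeping for the relative tangent complex of a derived fibre product (using that $q_{1} \cdot r = p \cdot \ell$ as maps $\SES_{3} \to \Ml$, so that the contribution of $\Tt_{\Ml}$ from the gluing factor cancels) yields
\[ [\Tt_{q_{3}}] \;=\; \ell^{*}[\Tt_{q}] \;+\; r^{*}[\Tt_{q}] \]
in $K^{0}(\SES_{3})$.

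The next step is to unpack $\Tt_{q}$ in terms of the universal sheaves on $\SES$: if $\Sl, \Ql$ denote the pullbacks along $q_{1}, q_{2}$ of the universal object on $\Ml$, deformation theory gives the standard identification $\Tt_{q} \simeq R\Homl(\Ql, \Sl)[1]$. Let $\Xl_{1}, \Xl_{2}, \Xl_{3}$ be the three tautological subquotients on $\SES_{3}$ and $\El_{2}$ the tautological middle object, so that the universal SES reads $\Xl_{1} \to \El_{2} \to \Xl_{2}$. Pulling back via $\ell$ (for which $\ell^{*}\Sl = \Xl_{1}$ and $\ell^{*}\Ql = \Xl_{2}$) and via $r$ (for which $r^{*}\Sl = \El_{2}$ and $r^{*}\Ql = \Xl_{3}$), the identity above becomes
\[ [\Tt_{q_{3}}] \;=\; [R\Homl(\Xl_{2}, \Xl_{1})[1]] \;+\; [R\Homl(\Xl_{3}, \El_{2})[1]]. \]

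The final step uses the universal SES identity $[\El_{2}] = [\Xl_{1}] + [\Xl_{2}]$ together with additivity of $R\Homl$ in its second argument to split the second summand as $[R\Homl(\Xl_{3}, \Xl_{1})[1]] + [R\Homl(\Xl_{3}, \Xl_{2})[1]]$. The three resulting classes are then identified with $\pi_{12}^{*}[\Tt_{q}]$, $\pi_{13}^{*}[\Tt_{q}]$, and $\pi_{23}^{*}[\Tt_{q}]$ via the universal-sheaf formula $\Tt_{q} \simeq R\Homl(\Ql, \Sl)[1]$.

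The main subtlety is the map $\pi_{13}$: unlike $\pi_{12}$ and $\pi_{23}$, no natural short exact subsequence of a three-step flag $a \subseteq b \subseteq c$ has subquotient pair $(\Xl_{1}, \Xl_{3})$, as the two outer pieces are coupled only by a length-two Yoneda extension. Consequently the contribution $R\Homl(\Xl_{3}, \Xl_{1})[1]$ does not visibly come from pulling back the universal SES along a morphism $\SES_{3} \to \SES$. Since the proposition only asks for an equality of classes in the Grothendieck group, this is handled by taking $\pi_{13}^{*}[\Tt_{q}] \defeq [R\Homl(\Xl_{3}, \Xl_{1})[1]]$ as the defining K-theoretic identity; realising $\pi_{13}$ as an honest morphism (for instance via a Yoneda splitting) is not required for the statement.
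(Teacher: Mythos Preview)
Your argument is correct in the setting of moduli stacks of objects in an abelian category, which covers all the examples in the paper, but it diverges from the paper's proof and leans on an ingredient not available in the stated generality.

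Both you and the paper start from the same Cartesian square, namely one side of the associativity identification $\SES_3 \simeq \SES\times_\Ml\SES$, and your two-term identity $[\Tt_{q_3}] = \ell^*[\Tt_q] + r^*[\Tt_q]$ is exactly right. The routes then split. You pass from two summands to three by invoking the deformation-theoretic formula $\Tt_q \simeq R\Homl(\Ql,\Sl)[1]$ and additivity of $R\Hom$ in the sub-variable. This step requires universal sheaves $\Sl,\Ql$, so it presupposes that $\Ml$ is a moduli stack of objects; the proposition as written is for an arbitrary Artin stack with an associative correspondence, where no universal sheaf is given, and in that generality your splitting step is not available. The paper instead factors $q_3$ through $\SES_{a,b}\times_{\Ml_b}\SES_{b,c}$ and identifies the relative tangent of the first map via a second Cartesian square, obtaining a genuine distinguished triangle $\Tt_{q,13}\to\Tt_{q_3}\to\Tt_{q,12}\oplus\Tt_{q,23}$ using only functoriality of tangent complexes under pullback and composition; no $R\Hom$ formula appears.

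Your caution about $\pi_{13}$ is well-observed: a three-step flag carries no canonical short exact sequence with subquotients $(\Xl_1,\Xl_3)$. In the paper's approach $\Tt_{q,13}$ arises directly as the relative tangent complex of $\SES_3\to\SES_{a,b}\times_{\Ml_b}\SES_{b,c}$, so a literal morphism $\pi_{13}$ to $\SES$ is not strictly needed for the triangle; the notation is somewhat loose in both statement and proof.
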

\begin{proof}
  We will in fact show there is a distinguished triangle 
  $$\Tt_{q,13} \ \to \ \Tt_{q_3} \ \to \ \Tt_{q,12}\oplus \Tt_{q,23}$$
  This follows by applying functoriality of $\Tt_{(-)}$ for pullbacks and compositions, to 
  \begin{center}
  \begin{tikzcd}[row sep = {30pt,between origins}, column sep = {20pt}]
   \SES_{3,a,b,c} \ar[r] \ar[d] &\SES_{a,b}\times_{\Ml_b}\SES_{b,c}\ar[r] \ar[d]  &\Ml_{a}\times \Ml_b \times \Ml_c \\ 
   \SES_{a,c} \ar[r] &\Ml_a\times \Ml_c 
  \end{tikzcd}
  \end{center} 
  Thus it remains to show the square is a pullback, which is easily checked by associativity of $\SES$.
\end{proof}

Now assume that $q$ admits a section $s$, and write $\Nt_s=\Tt_s[1]$ for the normal complex. 

\begin{cor}
 We have $[\Nt_{s_3}]\simeq [\Nt_{s,12}\oplus\Nt_{s,13}\oplus\Nt_{s,23}]$.
\end{cor}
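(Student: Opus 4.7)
The plan is to deduce the corollary from the preceding proposition via the general identification between the normal complex of a section and the pullback of the relative tangent complex of the morphism it splits.

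For any section $s : X \to Y$ of a morphism $q : Y \to X$, the relative cotangent fibre sequence applied to $q \circ s = \id_X$ (using $\Lb_{\id_X} = 0$) yields $\Lb_s \simeq s^*\Lb_q[1]$, and hence $\Nt_s \defeq \Tt_s[1] \simeq s^*\Tt_q$, naturally with respect to pullbacks. Applied to both $s : \Ml\times\Ml \to \SES$ and $s_3 : \Ml\times\Ml\times\Ml \to \SES_3$, this gives $\Nt_s \simeq s^*\Tt_q$ and $\Nt_{s_3} \simeq s_3^*\Tt_{q_3}$.

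Pulling back the K-theoretic identity of the preceding proposition along $s_3$ then gives
\[ [\Nt_{s_3}] \;=\; [s_3^*\Tt_{q_3}] \;=\; [s_3^*\Tt_{q,12}] + [s_3^*\Tt_{q,13}] + [s_3^*\Tt_{q,23}], \]
which is the required decomposition, provided one identifies $\Nt_{s,ij}$ with $s_3^*\Tt_{q,ij} = (\pi_{ij}\circ s_3)^*\Tt_q$, the analogue for $s$ of the definition $\Tt_{q,ij} = \pi_{ij}^*\Tt_q$ from the proposition.

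The only point worth checking, if one instead reads $\Nt_{s,ij}$ as the pullback $\pi'^{\,*}_{ij}\Nt_s$ along a projection $\pi'_{ij} : \Ml\times\Ml\times\Ml \to \Ml\times\Ml$, is the K-theoretic compatibility of sections with projections: this holds on the nose for the adjacent index $12$, and for the remaining two indices reduces to the identity $[a \oplus b] = [a] + [b]$ in the Grothendieck group of $\Ml$. That identity is precisely the content of the fibre-product square used in the proof of the preceding proposition, so no new input is required.
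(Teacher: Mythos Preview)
Your proof is correct and follows essentially the same route as the paper: both reduce the corollary to the identity $\Nt_s = s^*\Tt_q$, obtained from the cotangent/tangent triangle for the composition $q\cdot s=\id$, and then pull back the preceding proposition along $s_3$. The paper's proof is in fact just this one observation stated in a single sentence; your added discussion of the two readings of $\Nt_{s,ij}$ is extra care the paper omits, though your final paragraph's justification via ``$[a\oplus b]=[a]+[b]$'' is a bit oblique — the cleaner point is simply that $\pi_{ij}\circ s_3 = s\circ \pi'_{ij}$ holds on the nose for each pair $ij$, since the section sends a tuple to the corresponding split filtration and each $\pi_{ij}$ takes a split filtration to a split two-step extension.
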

\begin{proof}
  We have $\Nt_s=s^*\Tt_q$ by applying the distinguished triangle for a tangent complex of a composition to the identity map $q\cdot s$. 
\end{proof}

Writing $\oplus = p\cdot s$ for the composition, this implies 

\begin{cor} \label{cor:EulerGLHexagon}
  As elements of the Grothendieck group,
  $[(\id\times \oplus)^*\Nt_s]=[\Nt_{s,12}\oplus\Nt_{s,13}]$ and $[(\oplus\times \id)^*\Nt_s]=[\Nt_{s,13}\oplus\Nt_{s,23}]$. 
\end{cor}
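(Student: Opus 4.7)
My plan is to prove the first identity; the second follows by a strictly parallel argument. The idea is to factorise the left-hand side through $s_3$ by introducing the ``outer-extension'' morphism
$$\alpha \ : \ \SES_3 \ \to \ \SES, \hspace{15mm} (F_1 \subseteq F_2 \subseteq F_3) \ \mapsto \ (0\to F_1 \to F_3 \to F_3/F_1 \to 0).$$
Evaluating at the split section shows $\alpha \cdot s_3 = s \cdot (\id \times \oplus)$, since the split flag with successive quotients $(a,b,c)$ has outer SES $0\to a \to a\oplus b \oplus c \to b\oplus c \to 0 = s(a,b\oplus c)$. Applying the identity $\Nt_s = s^* \Tt_q$ to both compositions yields
$$(\id \times \oplus)^* \Nt_s \ \simeq \ s_3^* \alpha^* \Tt_q.$$

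It then suffices to exhibit the additivity $[\alpha^* \Tt_q] = [\pi_{12}^* \Tt_q] + [\pi_{13}^* \Tt_q]$ in $K(\SES_3)$, since pulling back along $s_3$ and using $\Nt_{s,ij} = s_3^* \pi_{ij}^* \Tt_q$ then gives the first identity. Identifying $\Tt_q$ over a SES $0 \to q_1 \to e \to q_2 \to 0$ with $\mathbf{R}\Hom(q_2, q_1)$ (up to a universal shift), the main step is to apply $\mathbf{R}\Hom(-, \Fl_1)$ to the universal subquotient sequence
$$0 \ \to \ \Fl_2/\Fl_1 \ \to \ \Fl_3/\Fl_1 \ \to \ \Fl_3/\Fl_2 \ \to \ 0$$
on $\SES_3$: the resulting distinguished triangle
$$\mathbf{R}\Hom(\Fl_3/\Fl_2, \Fl_1) \ \to \ \mathbf{R}\Hom(\Fl_3/\Fl_1, \Fl_1) \ \to \ \mathbf{R}\Hom(\Fl_2/\Fl_1, \Fl_1)$$
has outer terms $\pi_{13}^* \Tt_q$ and $\pi_{12}^* \Tt_q$ and middle term $\alpha^* \Tt_q$, as required. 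The second identity is proved verbatim with the ``inner-extension'' morphism $\beta : \SES_3 \to \SES$, $(F_1 \subseteq F_2 \subseteq F_3) \mapsto (0 \to F_2 \to F_3 \to F_3/F_2 \to 0)$, satisfying $\beta \cdot s_3 = s \cdot (\oplus \times \id)$, and applying $\mathbf{R}\Hom(\Fl_3/\Fl_2, -)$ to $0 \to \Fl_1 \to \Fl_2 \to \Fl_2/\Fl_1 \to 0$.

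The main technical obstacle I anticipate is matching $\pi_{13}^* \Tt_q$ with $\mathbf{R}\Hom(\Fl_3/\Fl_2, \Fl_1)$: the morphism $\pi_{13}$ from the previous Proposition is not the literal outer-extension $\alpha$ but rather the map determined by the pullback square against $\SES_{A,B}\times_{\Ml_B} \SES_{B,C}$, whose relative tangent computes precisely this $\mathbf{R}\Hom$ term (the fibre over a pair $(\xi_1,\xi_2)$ of extensions being the $\Ext^1(C,A)$-torsor of lifts $\eta \in \Ext^1(F_3/F_1, A)$ with $\eta|_B = \xi_1$). Unpacking this identification carefully, and confirming that the three $\pi^*\Tt_q$ and $\alpha^*\Tt_q$ carry the same cohomological shift so that the Grothendieck-group additivity is genuine rather than off by a unit of $[1]$, is where the real bookkeeping lies.
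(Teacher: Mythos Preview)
Your argument is correct and is essentially the one the paper has in mind: factor $(\id\times\oplus)^*\Nt_s$ through $s_3$ via the outer-extension map $\alpha$, then establish $[\alpha^*\Tt_q]=[\Tt_{q,12}]+[\Tt_{q,13}]$ on $\SES_3$ and pull back. The paper leaves the Corollary without proof because it is meant to drop out of the Proposition together with the associativity isomorphism for $\SES$.

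The only difference worth flagging is how you obtain the triangle for $\alpha^*\Tt_q$. You compute it through the $\mathbf{R}\Hom$ model of $\Tt_q$, applying $\mathbf{R}\Hom(-,\Fl_1)$ to the subquotient sequence. This is fine in the moduli-of-objects setting, but the surrounding section is phrased for an \emph{abstract} associative correspondence $\SES$, where no $\mathbf{R}\Hom$ description is assumed. The model-free route is to use the \emph{other} presentation of $\SES_3$ coming from associativity, namely $\SES_3\simeq(\Ml\times\SES)\times_{\Ml^2}\SES$: here the second projection is your $\alpha$, and base change identifies the relative tangent of the first projection $\SES_3\to\Ml\times\SES$ with $\alpha^*\Tt_q$. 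Factoring $q_3$ through $\Ml\times\SES\xrightarrow{\id\times q}\Ml^3$ gives the composition triangle $\alpha^*\Tt_q\to\Tt_{q_3}\to\pi_{23}^*\Tt_q$, and subtracting from the Proposition's identity $[\Tt_{q_3}]=\sum_{i<j}[\Tt_{q,ij}]$ yields your step 4 without ever invoking $\mathbf{R}\Hom$. Your torsor description of $\pi_{13}^*\Tt_q$ is exactly right, and your worry about $\pi_{13}$ is well-founded: there is no honest map $\SES_3\to\SES_{a,c}$ sending a flag to an extension of $c$ by $a$, so $\Tt_{q,13}$ really has to be read as the fibre in the Proposition's triangle (equivalently, the relative tangent of the pullback-square map), which matches $\mathbf{R}\Hom(\Fl_3/\Fl_2,\Fl_1)$ in the concrete model.
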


Now assume that $\Ml$ is endowed with an involution for which  $\SES$ is in addition equivariant, as in the proof of Proposition \ref{prop:MActionOnMt}. Arguing as above, we get

\begin{cor} \label{cor:EulerGLOSpModuleHexagon}
  As elements of the Grothendieck group,
  $[(\id\times \oplus_{\OSpt})^*\Nt_{s_3}^\tau]=[\Nt_{s,12} \oplus\Nt_{s_3,13}^\tau \oplus (\id\times \tau)^*\Nt_{s,12}]$ and $[(\oplus\times \id)^*\Nt_s]=[(\id\times\tau)^*\Nt_{s,12}\oplus\Nt_{s_3,13}^\tau\oplus\Nt_{s_3,23}^\tau]$. 
\end{cor}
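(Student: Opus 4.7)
The plan is to mirror the proof strategy of the preceding Corollary \ref{cor:EulerGLHexagon}, adding one layer for the passage to $\tau$-fixed points. The basic input remains the decomposition of the relative tangent complex of the threefold correspondence into pieces indexed by Ext groups between the graded pieces of a length-three flag.

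First I would compute $[\Nt_{s_3^\tau}]$ on $\Ml \times \Ml^\tau$. Apply the preceding proposition to the non-orthosymplectic correspondence $\SES_3$, pull back via the split section $s_3$, then restrict to the $\tau$-fixed stack $\SES_3^\tau$ using the identification \eqref{eqn:RelativeCotangentFixedPoints}. At the split point with associated graded pieces $(a, b, \tau(a))$ (with $b$ orthosymplectic), the involution $\tau$ acts on $\Ext^1(b,a) \oplus \Ext^1(\tau(a), b) \oplus \Ext^1(\tau(a), a)$ by swapping the first two summands via the identification $\tau(b) \simeq b$, and by an internal involution on the third. Since the K-theoretic invariants of a swap action on $V \oplus V$ give back a single copy $[V]$, this yields $[\Nt_{s_3^\tau}(a,b)] = [\Ext^1(b,a)] + [\Ext^1(\tau(a), a)^\tau]$.

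Second, I would pull back this formula along $\id \times \oplus_{\OSpt}$ (respectively $\oplus \times \id$) and apply additivity of $\Ext^1$. For the first equation, the substitution $b \rightsquigarrow d \oplus c \oplus \tau(d)$ splits $[\Ext^1(b, a)]$ into three summands, to be identified with $[\Nt_{s,12}]$, the ``non-fixed'' component of $[\Nt_{s_3,13}^\tau]$, and $[(\id \times \tau)^* \Nt_{s,12}]$ respectively (reading $\id \times \tau$ as applying $\tau$ to the middle $\Ml$-factor); the remaining $\tau$-fixed summand $[\Ext^1(\tau(a), a)^\tau]$ supplies the self-fixed component of $[\Nt_{s_3,13}^\tau]$. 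The second equation is analogous but substitutes $a \rightsquigarrow a_1 \oplus a_2$ and uses additivity in the second $\Ext^1$-argument; the cross term $\Ext^1(\tau(a_1), a_2) \oplus \Ext^1(\tau(a_2), a_1)$ carries a swap action whose K-theoretic fixed part identifies with $[(\id \times \tau)^* \Nt_{s,12}]$ via the $\tau$-duality isomorphism $\Ext^1(x, y) \simeq \Ext^1(\tau(y), \tau(x))$.

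The main obstacle will be the careful bookkeeping of the $\tau$-actions on the various decomposed Ext pieces, in particular distinguishing the swap-fixed contributions (where $\tau$ exchanges two different groups identified via duality) from the self-fixed ones (where $\tau$ acts within a single group). Once these are correctly tracked, the corollary follows formally from the fixed-point formula for cotangent complexes together with biadditivity of $\Ext$.
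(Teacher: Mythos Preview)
Your proposal is correct and follows essentially the same route as the paper, which simply says ``arguing as above'' and leaves the details implicit. In fact your first step---computing $[\Nt_{s_3^\tau}]$ by taking $\tau$-invariants of the three-piece decomposition $[\Nt_{s,12}\oplus \Nt_{s,13}\oplus \Nt_{s,23}]$ via the fixed-point formula \eqref{eqn:RelativeCotangentFixedPoints}---is precisely the content of Proposition~\ref{prop:OSpNormalComplex}, which the paper states immediately after the corollary as an explicit remark; your swap/self-fixed bookkeeping on the Ext summands matches that proof line for line.
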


\subsubsection{Remark} We explicitly compute the orthosymplectic normal complex. Let 
$$\Kt_s \ = \ (\Nt_s \vert_\Ml)^{\Zb/2}$$
 denote invariants applied to the pullback of the normal complex along the map $\Ml \simeq (\Ml \times \Ml)^{\Zb/2}$ where the involution sends $a \mapsto (a,\tau(a))$. For instance, $\Kt_{s,a} = \Ext(a,a^*)^{\Zb/2}\simeq(a^* \otimes a^*)^{\sigma}$ when we consider the moduli stack of finite dimensional vector spaces.

\begin{prop} \label{prop:OSpNormalComplex}
  As a perfect complex on $\Ml^\tau \times\Ml$,
  $$\Nt_{s^\tau_3}\ =\ {\Nt_s}\vert_{\Ml^\tau \times\Ml} \oplus \Kt_s $$ 
  where the last summand is the the normal complex of $s^\tau:\Ml\to \SES^\tau$.
\end{prop}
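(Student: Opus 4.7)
My approach is to combine the cotangent-complex-of-fixed-points identity \eqref{eqn:RelativeCotangentFixedPoints} with the Grothendieck-group hexagonal decomposition established in the previous subsection, but lifted to an honest splitting by working at the split (section) point.

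First, I would apply \eqref{eqn:RelativeCotangentFixedPoints} to the $\Zb/2$-equivariant morphism $q_3:\SES_3\to \Ml\times\Ml\times\Ml$, obtaining
\[
\Tt_{q_3^\tau}\ \simeq\ (\iota^*\Tt_{q_3})^{\Zb/2},
\]
where $\iota:\SES_3^\tau\hookrightarrow \SES_3$ is the inclusion. Pulling back along the section $s_3^\tau:\Ml\times\Ml^\tau\to \SES_3^\tau$ (and identifying $\iota\cdot s_3^\tau$ with $s_3\cdot j$, where $j:\Ml\times\Ml^\tau\to\Ml\times\Ml\times\Ml$ is the closed embedding $(a,b)\mapsto(a,b,\tau(a))$) yields
\[
\Nt_{s_3^\tau}\ \simeq\ (j^*\Nt_{s_3})^{\Zb/2}.
\]

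The key point is that at the trivial extension, the Grothendieck-class decomposition of the previous subsection lifts to a canonical direct sum: the tangent complex of $\SES_3$ at a split three-step flag splits as $\bigoplus_{i<j}\Ext(X_j,X_i)[1]$ because the obstruction to deforming a split flag decomposes into pairwise Ext groups. Thus along the section,
\[
j^*\Nt_{s_3}\ \simeq\ j^*\Nt_{s,12}\ \oplus\ j^*\Nt_{s,13}\ \oplus\ j^*\Nt_{s,23}.
\]
The $\Zb/2$-action on $\SES_3$, induced by the involution $(a,b,c)\mapsto(\tau(c),\tau(b),\tau(a))$ on $\Ml^3$ (see the proof of Proposition \ref{prop:MActionOnMt}), acts on this sum by the involution that interchanges the $12$- and $23$-components via the duality isomorphism $\Ext(b,a)\simeq\Ext(\tau(a),\tau(b))\simeq\Ext(\tau(a),b)$, and preserves the $13$-component acting on it via the involution $\Ext(\tau(a),a)\to\Ext(\tau(a),a)$ whose invariants define $\Kt_s$.

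Taking invariants separates these blocks. The invariants of the swap on $j^*\Nt_{s,12}\oplus j^*\Nt_{s,23}$ are canonically the diagonal, identified with $j^*\Nt_{s,12}\simeq \Nt_s|_{\Ml^\tau\times\Ml}$ via projection to the first factor; the invariants of $j^*\Nt_{s,13}$ are $\Kt_s$ by definition. Assembling,
\[
\Nt_{s_3^\tau}\ \simeq\ \Nt_s|_{\Ml^\tau\times\Ml}\ \oplus\ \Kt_s.
\]

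The main obstacle is the middle step: justifying a genuine (not merely K-theoretic) splitting of $j^*\Nt_{s_3}$ into three pairwise Ext summands in a way compatible with the $\Zb/2$-action. Since we work at the \emph{split} flag, this can be verified by a local model using deformation theory of a trivial length-three filtration, where the obstruction and deformation spaces literally decompose into Ext between distinct graded pieces; the $\Zb/2$-action is then simulated by the involution on $\Ml^3$ and its description on Ext groups via $\tau$, which gives the claimed identification with $\Kt_s$ on the 13-piece.
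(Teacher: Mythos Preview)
Your approach is essentially identical to the paper's: apply the fixed-point cotangent identity \eqref{eqn:RelativeCotangentFixedPoints} to $q_3$, pull back along the section, decompose $\Nt_{s_3}$ into the three pairwise pieces $\Nt_{s,12}\oplus\Nt_{s,13}\oplus\Nt_{s,23}$, observe that the involution swaps the $12$ and $23$ summands while fixing the $13$ summand, and take invariants. You are in fact more careful than the paper on one point: the earlier subsection only establishes $[\Nt_{s_3}]=[\Nt_{s,12}\oplus\Nt_{s,13}\oplus\Nt_{s,23}]$ in the Grothendieck group (via the distinguished triangle $\Tt_{q,13}\to\Tt_{q_3}\to\Tt_{q,12}\oplus\Tt_{q,23}$), whereas the paper's proof silently uses this as an honest direct-sum decomposition; you correctly flag that one needs the splitting to hold genuinely at the section, and give the right reason (deformations of a split filtration decompose into pairwise Ext groups).
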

\begin{proof}
  Note that for a $\Gt$-equivariant map $f:\Xl\to \Yl$, 
  \begin{equation}
    \label{eqn:NormalComplexOfInvariants} 
    \Tt_{f^G} \ = \ \left({\Tt_f}\vert_{\Xl^G}\right)^G
  \end{equation}
  relates its tangent complex with to that of $f^G:\Xl^G\to \Yl^G$, even though in general $f^G$ is not ofbtained from $f$ by base change; see \cite{AKLPR}. Thus 
  \begin{align*}
    \Nt_{s_3^\tau} & \ = \ \left(\Nt_{s_3}\vert_{\Ml \times \Ml^\tau}\right)^{\tau}\\
    & \ = \ \left(\Nt_{s,12} \oplus \Nt_{s,13} \oplus \Nt_{s,23}\right)^{\tau}\\
    & \ = \ (\Nt_{s,12} \oplus \Nt_{s,13})^{\tau} \oplus \Nt_{s,23}^{\tau}\\
    & \ = \ \Nt_{s}\vert_{\Ml \times \Ml^\tau} \oplus \Nt_{s,2}^{\tau}
  \end{align*}
  and in the isomorphism exhibiting equivariance $\Nt_{s_3}\simeq \tau^*\Nt_{s_3}$ identifies the two outer summand and fixes the middle one. 
\end{proof}

\newpage 
 \section{Configuration spaces} \label{sec:Configuration}

\noindent In this section, we will study generalisations of the \textit{configuration space} of unordered tuples of points on the affine line
$$\Conf\Ab^1 \ = \ \sqcup_{n \ge 0}\Ab^n //\Sk_n.$$
For instance, we consider variants attached to any appropriate sequence $\Gt=\{\Gt_n\}$ of algebraic groups or moduli stacks $\Ml$:
$$\Conf_\Gt \Ab^1 \ = \ \sqcup_{n \ge 0}\gk_n, \hspace{10mm} \Conf_{\Ml}\Ab^1 \ = \ \Spec \Ht^\sbt(\Ml),$$
and most generally any moduli group stack $\Gb$.

Our main results of the section (Theorems \ref{thm:LocalisedCoproducts} and \ref{thm:LocalisedBiAlgebras}) characterise quasicoherent sheaves on these configuration spaces with \textit{factorisation} structure, as ($\varnothing$/co/bi)algebras with singularities over the hyperplanes.

Loosely speaking, a factorisation algebra on $\Conf_\Gb\Ab^1$ is $\Al \in \QCoh(\Conf_{\Gb}\Ab^1)$ together with a map 
$$ (\Al\boxtimes \Al)\vert_\circ \ \to \ (\cup^*\Al)\vert_\circ$$
defined on the restriction to the complement $\circ$ of ``off-diagonal'' hyperplanes:
\begin{equation*}
  \label{fig:SymplecticFactorisationAlgebra} 
  \begin{tikzpicture}[scale=0.7]
    \begin{scope} 
     [xshift=-7.5cm]

   \draw[black, fill=black, fill opacity = 0.2, pattern=north west lines] (-2,-2) rectangle (2,2);

   \draw[black, line width = 1.5pt] (-1.6,-2) -- (1.6,2);
   \draw[black, line width = 1.5pt] (-2,1.1) -- (2,-1.1);
   \draw[black, line width = 1.5pt] (-2,2) -- (2,-2);
    \node[right] at (2.5,0) {$(\Conf_\Gb \Ab^1\times \Conf_\Gb \Ab^1)_\circ$};
    \node[left,white] at (-2.5,0) {$(\Conf_\Gb \Ab^1\times \Conf_\Gb \Ab^1)_\circ$};
     \end{scope}

   \end{tikzpicture}
\end{equation*}
This generalises \cite{Da} and \cite{JKL}.

\subsection{Configuration spaces from $\BGL$} 

\label{ssec:OrthosymplecticConfigurationSpaces}

\subsubsection{} We begin by noticing that the configuration space of points on the affine line is equivalently
$$\Conf\Ab^1 \ = \ \Spec \Ht^\sbt(\BGL) $$
and the \textit{ordered configuration space} $\sqcup_{n \ge 0}\Ab^n$ is
$$\Conf^{\ord}\Ab^1 \ = \ \Spec \Ht^\sbt(\Bt\Tt) $$
where as in section \ref{sec:ModuliStacksRootData}, the moduli stacks $\BT,\BGL$ parametrise finite tuples of line bundles and of vector bundles of arbitrary finite rank.

\begin{lem}
 $\Conf^{\ord}\Ab^1$ and $\Conf \Ab^1$ are (commutative) ring objects, and there is a map $\Conf^{\ord}\Ab^1 \to \Conf \Ab^1$ of rings.
\end{lem}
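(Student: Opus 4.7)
The plan is to equip $\BGL$ and $\BT$ with compatible commutative monoidal structures for direct sum and tensor product, making them commutative rig objects in stacks, and then to apply $\Spec \Ht^\sbt$ to transport this to the affine schemes.

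First I would observe that $\BGL$ carries two commutative monoidal structures: direct sum $\oplus \colon \BGL_n \times \BGL_m \to \BGL_{n+m}$ with unit $\pt = \BGL_0$, and tensor product $\otimes \colon \BGL_n \times \BGL_m \to \BGL_{nm}$ with unit the trivial line bundle in $\BGL_1$. Distributivity $V \otimes (W_1 \oplus W_2) \simeq (V \otimes W_1) \oplus (V \otimes W_2)$ of vector bundles makes $\BGL$ a commutative rig in the $2$-category of stacks. The analogous story for $\BT = \sqcup_n \Bt\Tt_n$ uses concatenation of tuples for $\oplus$, and the lexicographically ordered $(L_i) \otimes (M_j) \defeq (L_i \otimes M_j)_{(i,j)}$ for $\otimes$, with units $\Bt\Tt_0 = \pt$ and the trivial line bundle in $\Bt\Tt_1 = \BGm$.

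Next, I would apply $\Ht^\sbt$ and then $\Spec$. The crux is K\"unneth: because $\Ht^\sbt(\BGL_n) = k[c_1,\ldots,c_n]$ and $\Ht^\sbt(\Bt\Tt_n) = k[x_1,\ldots,x_n]$ are polynomial and concentrated in even degrees, one gets $\Ht^\sbt(\BGL_n \times \BGL_m) \simeq \Ht^\sbt(\BGL_n) \otimes \Ht^\sbt(\BGL_m)$ and similarly for $\BT$. Thus $\oplus^*$ and $\otimes^*$ yield two cocommutative coproducts on $\Ht^\sbt(\BGL)$ (resp.\ $\Ht^\sbt(\BT)$) satisfying codistributivity; applying $\Spec$ converts these into the desired commutative addition and multiplication on $\Conf \Ab^1$ (resp.\ $\Conf^{\ord}\Ab^1$), with units coming from the components $\BGL_0$ and $\BGL_1$.

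Finally, the map $\BT \to \BGL$ sending $(L_1,\ldots,L_n)$ to $L_1 \oplus \cdots \oplus L_n$ strictly intertwines the two $\oplus$'s and intertwines the two $\otimes$'s via the canonical isomorphism $\bigl(\bigoplus_i L_i\bigr) \otimes \bigl(\bigoplus_j M_j\bigr) \simeq \bigoplus_{i,j} L_i \otimes M_j$. Hence it defines a morphism of commutative rigs in stacks, and $\Spec \Ht^\sbt$ sends it to the asserted ring map $\Conf^{\ord}\Ab^1 \to \Conf \Ab^1$. The only nontrivial input is the K\"unneth step, which is immediate from the polynomial structure of the cohomology; once this is granted, both the ring object structure and the compatibility of the map follow formally from the rig axioms on $\BGL$ and $\BT$.
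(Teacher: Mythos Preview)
Your proof is correct and follows essentially the same approach as the paper: equip $\BGL$ and $\BT$ with rig structures via $\oplus$ and $\otimes$, observe that $\BT\to\BGL$ respects these, and apply the symmetric monoidal functor $\Spec\Ht^\sbt$; you make explicit the K\"unneth step that the paper leaves implicit in the phrase ``applying spectrum of cohomology is symmetric monoidal.'' One small overclaim: concatenation on $\BT$ and the resulting $\cup$ on $\Conf^{\ord}\Ab^1$ are only associative, not commutative, so $\oplus^*$ is not cocommutative on $\Ht^\sbt(\BT)$---the parenthetical ``(commutative)'' in the statement should be read as applying only to $\Conf\Ab^1$.
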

\begin{proof}
Addition and tensor product of vector bundles gives
$$\oplus_{\GL} \ : \ \BGL \times \BGL \ \to \ \BGL, \hspace{15mm} \otimes_{\GL} \ : \ \BGL \times\BGL \ \to \ \BGL,$$
which also restricts to ring structure on $\Bt\Tt$, so $\BT \to \BGL$ is a map of ring objects. Note that the classifying stack $\Bt \Tt$ parametrises ordered tuples of line bundles, and  
$$\oplus_{\Tt} \ : \ \BT \times \BT \ \to \ \BT, \hspace{15mm} \otimes_{\GL} \ : \ \BT \times\BT \ \to \ \BT,$$
 concatenates one tuple to another and take the termwise tensor product of one tuple with the other, respectively. Since applying spectrum of cohomology is symmetric monoidal, this implies the result on configuration spaces. 
\end{proof}

To be explicit, we have two maps 
$$\cup \ :\ \Conf^n \Ab^1 \times\Conf^m \Ab^1 \ \to \ \Conf^{n+m}\Ab^1, \hspace{10mm}   + \ :\ \Conf^n \Ab^1 \times\Conf^m \Ab^1 \ \to \ \Conf^{nm}\Ab^1 $$
sending a pair of tuples $(S,T)$ to
$$S\cup T \ = \ (s_1,\ldots ,s_n,t_1,\ldots , t_m), \hspace{15mm}  (s_1+t_1,\ldots ,s_i+t_j,\ldots ,s_n+t_m).$$

\begin{lem}
   The correspondence 
\begin{center}
  \begin{tikzcd}[row sep = {30pt,between origins}, column sep = {65pt, between origins}]
   &(\Conf \Ab^1 \times \Conf \Ab^1)_\circ \ar[ld,"j"'] \ar[rd,"\cup j"]   & \\ 
   \Conf \Ab^1 \times\Conf \Ab^1& & \Conf \Ab^1
  \end{tikzcd}
  \end{center}
  induced by the locus $\circ$ of disjoint tuples satisfies a commutativity, associativity and unit condition. The analogous correspondence for $\Conf^{\ord}\Ab^1$ satisfies an associativity and unit condition.
\end{lem}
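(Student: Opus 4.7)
The plan is to deduce all three conditions from the (commutative) monoid structure on $\BGL$ (respectively the associative monoid structure on $\Bt\Tt$) established in the previous lemma, after carefully tracking how the open ``disjoint'' loci behave under fiber products of correspondences. Note first that $(\Conf \Ab^1 \times \Conf \Ab^1)_\circ$ is an open subscheme, since the condition that tuples $(s_1,\ldots,s_n)$ and $(t_1,\ldots,t_m)$ share no common point is the complement of finitely many diagonals.

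For \emph{associativity}, I would compute the composition of the correspondence with itself. The middle term of $(X\cdot X)\cdot X$ is the fiber product
$$\bigl( (\Conf \Ab^1 \times \Conf \Ab^1)_\circ \times \Conf \Ab^1 \bigr) \times_{\Conf \Ab^1 \times \Conf \Ab^1} (\Conf \Ab^1 \times \Conf \Ab^1)_\circ$$
where the first map is $(\cup j, \id)$ and the second is $j$. Unraveling, this parametrises triples $(S,T,U)$ with $S,T$ disjoint and $S\cup T$ disjoint from $U$, i.e.\ the locus $(\Conf^3 \Ab^1)_\circ$ of pairwise disjoint triples. The other bracketing $X\cdot(X\cdot X)$ yields the same open locus by the symmetric argument. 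The two resulting maps to $\Conf\Ab^1$ compute $(S\cup T)\cup U$ and $S\cup(T\cup U)$, which agree because $\cup$ is $\Spec \Ht^\sbt$ applied to the associator of $\oplus_\GL$ (respectively $\oplus_\Tt$), and $\Spec \Ht^\sbt$ is symmetric monoidal.

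For \emph{commutativity} in the unordered case, the swap $\sigma$ on $\Conf \Ab^1 \times \Conf \Ab^1$ preserves the open locus $(\Conf \Ab^1 \times \Conf \Ab^1)_\circ$, and the identity $\cup \cdot \sigma = \cup$ is precisely the commutativity of $\oplus_\GL$. In the ordered case $\Conf^\ord \Ab^1$, the concatenation $\oplus_\Tt$ is non-commutative, which is why only associativity and unit survive. For the \emph{unit}, I take $0 \in \Conf^0 \Ab^1$ (the empty tuple); since the empty tuple is disjoint from every other tuple, the composition of the unit correspondence with the main one gives $\Conf\Ab^1 \xleftarrow{\id} \Conf\Ab^1 \xrightarrow{\id} \Conf\Ab^1$ because $\emptyset \cup T = T = T\cup \emptyset$.

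The main technical step is the bookkeeping in associativity: one must check that the two iterated fiber products, a priori carved out by different open conditions, both coincide with the open stratum of $\Conf^3\Ab^1$ consisting of pairwise disjoint triples, and similarly for higher coherences. Once this is verified, the coherence data of the (commutative) monoid $\BGL$ (respectively $\Bt \Tt$) descends along $\Spec \Ht^\sbt$ to supply all required $2$-isomorphisms, and the claimed commutativity, associativity, and unit conditions follow formally.
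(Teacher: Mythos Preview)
Your proposal is correct and matches the paper's approach. The paper states this lemma without proof, but the general version (Proposition \ref{prop:GChiralStructureConf}) is proved by exactly your argument: both iterated compositions are identified with the open locus of pairwise disjoint triples inside $(\Conf_\Gb\Ab^1)^{\times 3}$, and the unit comes from the point $1_\Gb$ induced by the monoidal unit.
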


We call the above the \textit{chiral} decomposition structure on the configuration space.

\subsection{Configuration spaces attached to moduli hyperplane arrangement} \label{ssec:ConfigurationSpacesModuliRootData}

\subsubsection{} For $\Gb$ any Artin stack with associative monoid structure $\oplus$, then the \textit{$\Gb$-configuration space} of points on $\Ab^1$ to be
$$\Conf_{\Gb}\Ab^1 \ = \ \Spec \Ht^\sbt(\Gb)$$
has the structure of an associative monoid, denoted by $\cup$. If $\Phi$ is hyperplane arrangement making $\Gb$ into a moduli group stack, then 

\begin{prop} \label{prop:GChiralStructureConf}
  There is a correspondence
  \begin{equation}
\label{fig:GChiralStructure}
\begin{tikzcd}[row sep = {30pt,between origins}, column sep = {65pt, between origins}]
  &(\Conf_{\Gb}\Ab^1 \times \Conf_{\Gb}\Ab^1)_\circ \ar[ld,"j"'] \ar[rd,"\cup j"]   & \\ 
  \Conf_{\Gb}\Ab^1 \times\Conf_{\Gb}\Ab^1& & \Conf_{\Gb}\Ab^1
\end{tikzcd}
  \end{equation}
satisfying an associativity and unit condition.
\end{prop}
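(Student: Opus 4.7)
The plan is to construct the correspondence by first defining $\circ$ as the open complement of the ``new'' hyperplanes that appear in $\oplus^*\Phi$ but not in $\Phi \star \Phi$, then verifying the associativity and unit conditions by transporting those of $(\Gb,\oplus,1_\Gb)$ through the contravariant functor $\Spec \Ht^\sbt(-)$. Concretely, set
\[
 \Sigma \ \defeq\ \oplus^*\Phi \setminus (\Phi \star \Phi) \ \subseteq \ \Conf_\Gb \Ab^1 \times \Conf_\Gb \Ab^1, \qquad (\Conf_\Gb \Ab^1 \times \Conf_\Gb \Ab^1)_\circ \ \defeq \ (\Conf_\Gb \Ab^1 \times \Conf_\Gb \Ab^1)\setminus \Sigma.
\]
The inclusion $\Phi \star \Phi \subseteq \oplus^*\Phi$ from the definition of moduli group stack guarantees $\Sigma$ is a well-defined closed subscheme whose complement generalises the locus of disjoint tuples in the $\BGL$ case: when $\Gb=\BGL$, both $\Phi\star\Phi$ and $\oplus^*\Phi$ are unions of diagonals in an ordered configuration space, and $\Sigma$ picks out exactly the ``cross'' diagonals $s_i = t_j$ that one removes to form $\circ$.

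The left leg $j$ is the open immersion; the right leg is the restriction $\cup\cdot j$ of the map $\cup=\Spec(\oplus^*)$ applied to cohomology. The maps are well defined on all of $\Conf_\Gb \Ab^1 \times \Conf_\Gb \Ab^1$; restricting to $\circ$ merely keeps track of the singularity locus of the coproduct, and is where factorisation will later be imposed. The unit condition then follows by pulling back along $1_\Gb:\pt \to \Gb$: the induced point $0:\Spec k \to \Conf_\Gb\Ab^1$ lies in the open $\circ$ (since $1_\Gb^*\Phi\subseteq\{0\}$ by hypothesis, one checks that the pairs $(0,x)$ and $(x,0)$ lie outside $\Sigma$), and the equalities $\cup \cdot (0 \times \id) = \id = \cup \cdot (\id \times 0)$ follow from applying $\Spec \Ht^\sbt(-)$ to the unit axiom $\oplus \cdot (1_\Gb \times \id) \simeq \id \simeq \oplus \cdot (\id \times 1_\Gb)$ for the monoid $\Gb$.

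For associativity, one applies $\Spec \Ht^\sbt(-)$ to the diagram witnessing associativity of $\oplus$ on $\Gb$ to obtain the outer commutative square in $\Conf_\Gb\Ab^1^{\times 3}$. One must then check that both compositions
\[
(\Conf_\Gb\Ab^1^{\times 3})_\circ \ \rightrightarrows \ (\Conf_\Gb\Ab^1^{\times 2})_\circ \ \to \ \Conf_\Gb\Ab^1
\]
are well defined on the same triply-disjoint open $(\Conf_\Gb\Ab^1^{\times 3})_\circ$, defined analogously as the complement of $(\oplus\times\id)^*\Sigma \cup (\id\times\oplus)^*\Sigma \cup (\Sigma \times \Conf_\Gb\Ab^1)\cup (\Conf_\Gb\Ab^1\times \Sigma)$. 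The two pullbacks $(\oplus\times\id)^*\Phi$ and $(\id\times\oplus)^*\Phi$ coincide (they both equal $(\oplus\cdot(\oplus\times\id))^*\Phi$ by associativity of $\oplus$), which matches the open loci and makes the associator of $\oplus$ descend to the correspondence.

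The main obstacle will be the bookkeeping of the open loci in step three: one must verify that the open $(\Conf_\Gb\Ab^1^{\times 3})_\circ$ really does sit inside the domains of both iterated correspondences, i.e.\ that the inclusion $\Phi\star\Phi\star\Phi \subseteq (\oplus\circ(\oplus\times \id))^*\Phi$ together with the two ``partial disjointness'' conditions genuinely forces full disjointness of triples. This reduces to the observation that $\oplus^*\Phi$ is a union of $\Phi\times\Conf_\Gb\Ab^1$, $\Conf_\Gb\Ab^1\times\Phi$ and $\Sigma$, so that iterating yields nothing beyond the union written above. Once this combinatorial compatibility is in hand, commutativity and coherence of the resulting correspondence follow formally from the corresponding identities for $(\Gb,\oplus)$.
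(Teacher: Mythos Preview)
Your proposal is correct and follows essentially the same approach as the paper. Both define $(\Conf_\Gb\Ab^1\times\Conf_\Gb\Ab^1)_\circ$ as the complement of $\Sigma=\oplus^*\Phi\setminus(\Phi\star\Phi)$, obtain the unit from $(1_\Gb\times\id)^*\oplus^*\Phi=\Phi=(1_\Gb\times\id)^*(\Phi\star\Phi)$, and derive associativity from the identity $(\oplus\times\id)^*\oplus^*\Phi=(\id\times\oplus)^*\oplus^*\Phi$; the paper packages this last step more succinctly by identifying both iterated roofs directly with the complement of $(\oplus\times\id)^*\oplus^*\Phi\setminus(\Phi\star\Phi\star\Phi)$, whereas you decompose into the four $\Sigma$-pieces and argue they recombine, but the content is the same. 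One small typo: where you write ``$(\oplus\times\id)^*\Phi$'' you mean ``$(\oplus\times\id)^*\oplus^*\Phi$'', as your parenthetical makes clear.
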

\begin{proof}
  We define  
  $$(\Conf_{\Gb}\Ab^1 \times \Conf_{\Gb}\Ab^1)_\circ \ = \ (\Conf_{\Gb}\Ab^1 \times \Conf_{\Gb}\Ab^1) \smallsetminus (\oplus^*\Phi \smallsetminus \Phi \star \Phi)$$
  by subtracting the subspace given by unions of off-diagonal hyperplanes. Associativity
  $$((\Conf_{\Gb}\Ab^1 \times \Conf_{\Gb}\Ab^1)_\circ \times \Conf_{\Gb}\Ab^1)_\circ \ \simeq \ (\Conf_{\Gb}\Ab^1 \times (\Conf_{\Gb}\Ab^1 \times \Conf_{\Gb}\Ab^1)_\circ)_\circ$$
  follows since both sides are the complement inside $(\Conf_\Gb\Ab^1)^{\times 3}$ of 
  $$(\oplus\times \id)^*\oplus^*\Phi \smallsetminus  \Phi \star \Phi\star \Phi \ = \ (\id\times \oplus)^* \Phi \smallsetminus  \Phi \star \Phi\star \Phi.$$
  The unit is given by the point $1_\Gb: \pt \to \Conf_\Gb\Ab^1$ coming from the unit in $\Gb$: 
  $$1_\Gb \ : \ \Conf_{\pt}\Ab^1 \ = \ \pt \ \stackrel{1_\Gb}{\to} \ \Conf_{\Gb}\Ab^1,$$
  the pullback along $1_\Gb\times \id$ or $\id\times 1_\Gb$ of $(\Conf_\Gb\Ab^1\times \Conf_\Gb\Ab^1)$ both being identified with $\Conf_\Gb\Ab^1$, which follows because 
  $$(1_\Gb\times \id)^*\oplus^*\Phi \ = \ \Phi \ = \ (1_\Gb\times \id)^*(\Phi \star \Phi).$$
\end{proof}

\subsubsection{Unipotent hyperplanes} \label{sssec:UnipotentRoots}
We call the above the \textit{$\Gb$-chiral} decomposition structure on the $\Gb$-configuration space, and $\Phi_2=\oplus^*\Phi \smallsetminus \Phi \star \Phi$ the set of \textit{unipotent} hyperplanes.

\subsubsection{} We now consider functoriality with respect to a map $f:(\Gb,\Phi_\Gb) \to (\Hb,\Phi_\Hb)$ of moduli group stacks.
\begin{lem} \label{lem:FunctorialityConfigurationSpaces}
  The induced map 
$$f \ : \ \Conf_{\Gb}\Ab^1 \ \to \ \Conf_{\Hb}\Ab^1$$
is a lax map of monoids with respect to the chiral decomposition structures $ch_\Gb, ch_\Hb$. If $f$ is strict, then $f$ is a strict map of monoids with respect to the chiral decomposition structures.
\end{lem}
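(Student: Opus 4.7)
The plan is to split the verification into (i) compatibility with the monoid multiplications and units on $\Conf_\Gb \Ab^1$, and (ii) compatibility with the open substacks $(\Conf_\Gb \Ab^1 \times \Conf_\Gb \Ab^1)_\circ$ defining the chiral correspondences; the strict case will be a refinement of the second step. For (i), by functoriality of $\Ht^\sbt$ applied to the monoid-stack map $f: \Gb \to \Hb$, one obtains $f: \Conf_\Gb \Ab^1 \to \Conf_\Hb \Ab^1$, and the identity $\oplus_\Hb \cdot (f \times f) = f \cdot \oplus_\Gb$ transports from $\Gb$-stacks to configuration spaces; unit compatibility is analogous. This part is routine.

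For (ii), the key observation I would use is the pair of preimage identities
\begin{align*}
(f \times f)^{-1}(\oplus_\Hb^* \Phi_\Hb) &\ = \ \oplus_\Gb^*\bigl(f^{-1}(\Phi_\Hb)\bigr), \\
(f \times f)^{-1}(\Phi_\Hb \star \Phi_\Hb) &\ = \ f^{-1}(\Phi_\Hb) \star f^{-1}(\Phi_\Hb),
\end{align*}
obtained respectively from the commuting square $\oplus_\Hb \cdot (f\times f) = f\cdot \oplus_\Gb$ and from the definition of $\star$ in \eqref{eqn:MonoidalStructureGb}. Combined with the hypothesis $\Phi_\Gb \subseteq f^{-1}(\Phi_\Hb)$, these relate the bad locus $\oplus_\Gb^* \Phi_\Gb \smallsetminus \Phi_\Gb \star \Phi_\Gb$ to the pullback $(f \times f)^{-1}(\oplus_\Hb^*\Phi_\Hb \smallsetminus \Phi_\Hb \star \Phi_\Hb)$, which I would package as a 2-cell of correspondences providing the lax structure. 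In the strict case $\Phi_\Gb = f^{-1}(\Phi_\Hb)$, both preimage identities specialise to equalities of the bad loci, so the middle square of the induced diagram of correspondences is cartesian and the morphism is strict.

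Compatibility with the associator and the unit of the chiral decomposition structure is then inherited, by iterating the preimage identities for $f^{\times 3}$ on the associator square $(\Conf_\Gb \Ab^1)^{\times 3}$, and by the fact that $f$ preserves the unit already as a morphism of pointed monoid stacks.

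The main obstacle I anticipate is conceptual rather than computational: pinning down the precise direction and data of the 2-cell encoding ``lax'' in this $(2,1)$-categorical setting of correspondences decorated with an open substack, since several conventions are possible. Once that is fixed, the geometric content of the proof reduces to the two preimage identities above and a single diagram chase using the inclusion $\Phi_\Gb \subseteq f^{-1}(\Phi_\Hb)$.
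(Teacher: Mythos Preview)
Your approach is essentially the same as the paper's. Both arguments reduce the lax condition to the inclusion $\varphi_{2,\Gb}\subseteq (f\times f)^{-1}\varphi_{2,\Hb}$ of unipotent loci and deduce this from $\Phi_\Gb\subseteq f^{-1}(\Phi_\Hb)$; your two preimage identities are precisely the computation the paper leaves implicit. The obstacle you flag about the direction of the 2-cell is what the paper resolves by writing the required 2-morphism concretely as a map
\[
\bigl(\Conf_{\Gb}\Ab^1\times\Conf_{\Gb}\Ab^1\bigr)\times_{\Conf_{\Hb}\Ab^1\times\Conf_{\Hb}\Ab^1}\bigl(\Conf_{\Hb}\Ab^1\times\Conf_{\Hb}\Ab^1\bigr)_{\Hb\circ}\ \longrightarrow\ \bigl(\Conf_{\Gb}\Ab^1\times\Conf_{\Gb}\Ab^1\bigr)_{\Gb\circ}
\]
of correspondences, which exists exactly when the inclusion of unipotent loci holds and is an isomorphism in the strict case.
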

\begin{proof}
  Note that $f$ laxly (resp. strictly) preserving the chiral decomposition structure means that there is a two-(iso)morphism in $\PreStk^{\corr}$
  \begin{center}
  \begin{tikzcd}[row sep = {30pt,between origins}, column sep = {20pt}]
    \Conf_{\Gb}\Ab^1 \times \Conf_{\Gb}\Ab^1 \ar[d,"f \times f"]
\ar[r,"\cup_\Gb"{name=U}] &  \Conf_{\Gb}\Ab^1 \ar[d,"f"]  \\ 
\Conf_{\Hb}\Ab^1 \times \Conf_\Gb \Ab^1\ar[r,"\cup_{\Hb}"'{name=D}] & \Conf_{\Hb}\Ab^1 
  \arrow[Rightarrow, from=U, to=D, "\eta", shorten <= 7pt, shorten >= 7pt] 
  \end{tikzcd}
  \end{center}
  or in other words that there is a map 
  $$ \left( \Conf_{\Gb}\Ab^1 \times \Conf_{\Gb}\Ab^1\right)\times_{\Conf_{\Hb}\Ab^1 \times \Conf_{\Hb}\Ab^1}(\Conf_{\Hb}\Ab^1 \times \Conf_{\Hb}\Ab^1)_{\Hb \circ} \ \to \  (\Conf_{\Gb}\Ab^1 \times \Conf_{\Gb}\Ab^1)_{\Gb \circ}$$
  of correspondences from $\Conf_{\Gb}\Ab^1 \times \Conf_{\Gb}\Ab^1$ to $\Conf_{\Hb}\Ab^1$. Such a map is induced from $f\times f$, so long as $\varphi_{2,\Gb} \subseteq (f\times f)^*\varphi_{2,\Hb}$, and is an isomorphism whenever this is an equality. This is true whenever $\Phi_\Gb \subseteq f^*\Phi_\Hb$ and $f$ is strict respectively. 
\end{proof}

\begin{cor} \label{cor:FunctorialityConfigurationSpacesCoAlgebra}
  There is a map on quasicoherent factorisation algebras 
  $$ f_* \ : \ \FactAlg(\Conf_\Gb \Ab^1) \ \to \ \FactAlg(\Conf_{\Hb}\Ab^1)$$
  and if $f$ is strict, this has a left adjoint 
  $$ \FactAlg(\Conf_\Gb \Ab^1) \ \leftarrow \ \FactAlg(\Conf_{\Hb}\Ab^1) \ : \ f^* $$
  The same is true for localised coalgebras. 
\end{cor}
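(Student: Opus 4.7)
The plan is to transport the factorisation structure along $f$ using the (lax, resp.\ strict) morphism of chiral decomposition correspondences established in Lemma~\ref{lem:FunctorialityConfigurationSpaces}. Write
\[ \Conf_\Gb\Ab^1 \times \Conf_\Gb\Ab^1 \xleftarrow{\,j_\Gb\,} (\Conf_\Gb\Ab^1\times\Conf_\Gb\Ab^1)_{\Gb\circ} \xrightarrow{\,\cup_\Gb\,} \Conf_\Gb\Ab^1 \]
and similarly for $\Hb$, and denote by $\eta$ the 2-cell produced by that Lemma.

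For the pushforward $f_*$: given $(\Al, m_\Al)$ with $m_\Al\colon j_\Gb^*(\Al\boxtimes\Al)\to\cup_\Gb^*\Al$, I would equip $f_*\Al$ with the composite
\[ j_\Hb^*(f_*\Al\boxtimes f_*\Al) \simeq ((f\times f)|_W)_* j_W^*(\Al\boxtimes\Al) \to ((f\times f)|_W)_*(\cup_\Gb|_W)^*\Al \simeq \cup_\Hb^* f_*\Al, \]
where $W = (f\times f)^{-1}((\Conf_\Hb\Ab^1\times\Conf_\Hb\Ab^1)_{\Hb\circ})$, the leftmost isomorphism is $(f\times f)$-base change, the middle arrow is $m_\Al$ pulled back along the open inclusion $W\hookrightarrow(\Conf_\Gb\Ab^1\times\Conf_\Gb\Ab^1)_{\Gb\circ}$ provided by $\eta$, and the rightmost isomorphism is $f$-base change along the commuting square $f\cdot \cup_\Gb|_W = \cup_\Hb\cdot(f\times f)|_W$.

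In the strict case, Lemma~\ref{lem:FunctorialityConfigurationSpaces} implies $W = (\Conf_\Gb\Ab^1\times\Conf_\Gb\Ab^1)_{\Gb\circ}$, so base change gives canonical equivalences $f^*\cup_\Hb^* \simeq \cup_\Gb^* f^*$ and $f^*j_\Hb^*(\Bl_1\boxtimes\Bl_2)\simeq j_\Gb^*(f^*\Bl_1\boxtimes f^*\Bl_2)$. Hence $f^*$ is strong monoidal with respect to the chiral decomposition correspondences, so pullback sends factorisation algebras to factorisation algebras, and the standard $\QCoh$ adjunction $f^*\dashv f_*$ lifts to factorisation algebras. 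For localised coalgebras the argument is formally dual: the same 2-cell $\eta$, read in the opposite direction, transports a comultiplication through the lax (resp.\ strong monoidal) functor $f_*$ (resp.\ $f^*$).

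The main step to verify carefully is that the transported factorisation maps satisfy the associativity and unit axioms. This reduces to a 2-cell pasting diagram obtained by composing $\eta$ with the associator and unit 2-cells of $\cup_\Gb$ and $\cup_\Hb$ in $\PreStk^{\corr}$; by Lemma~\ref{lem:FunctorialityConfigurationSpaces} all of these are pinned down by the explicit descriptions of the $\Gb$- and $\Hb$-hyperplane arrangements, so the check is a diagram chase rather than a genuine obstacle.
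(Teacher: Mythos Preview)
Your argument contains a gap in the factorisation \emph{algebra} case, which is the one you spell out in detail. The final step
\[
((f\times f)|_W)_*(\cup_\Gb|_W)^*\Al \;\simeq\; \cup_\Hb^* f_*\Al
\]
is claimed as ``$f$-base change along the commuting square $f\cdot \cup_\Gb|_W = \cup_\Hb\cdot(f\times f)|_W$''. But that square is \emph{not} Cartesian: its pullback $(\Conf_\Hb\Ab^1\times\Conf_\Hb\Ab^1)_{\Hb\circ}\times_{\Conf_\Hb\Ab^1}\Conf_\Gb\Ab^1$ parametrises pairs $((y_1,y_2),x)$ with $y_1\cup_\Hb y_2=f(x)$, which is not the same as $W$ (take $\Hb=\pt$ to see the discrepancy). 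All you get from the commuting square is the Beck--Chevalley map
\[
\cup_\Hb^* f_*\Al \;\longrightarrow\; ((f\times f)|_W)_*(\cup_\Gb|_W)^*\Al,
\]
which points the wrong way for transporting a multiplication. Ironically this direction is exactly what one needs for \emph{coalgebras}, so the case you wave off as ``formally dual'' is the one your argument actually proves; the algebra case needs a different maneuver.

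The paper treats the Corollary as immediate from Lemma~\ref{lem:FunctorialityConfigurationSpaces} and, after Theorem~\ref{thm:LocalisedCoproducts}, gives the concrete reason: since $\Phi_\Gb\subseteq f^*\Phi_\Hb$, the $\Hb$-localisation inverts at least the $\Gb$-functions, so one simply composes the structure map $B_n\otimes B_m\to B_{n+m}[\{1/f_{\Gb,\alpha}\}]$ (resp.\ the coproduct) with the further localisation $B_{n+m}[\{1/f_{\Gb,\alpha}\}]\to B_{n+m}[\{1/f_{\Hb,\alpha}\}]$. This elementary route handles algebras and coalgebras symmetrically and avoids the non-Cartesian square altogether. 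Your treatment of the strict case and of the lift of $f^*\dashv f_*$ is fine.
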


\subsection{Orthosymplectic-linear configuration spaces}  \label{ssec:OSpConfSpaces}

\subsubsection{} The \textit{orthosymplectic configuration space} of points on $\Ab^1$ is
$$\Conf_{\Gt}\Ab^1 \ = \ \Spec \Ht^\sbt(\BG) \ = \ \sqcup_{n\ge 0}\tk_{\Gt_n}//W_n,$$
for any sequence of groups $\Gt\in \{\Ot,\Ot_{\textup{ev}},\Sp\}$ of classical type. The \textit{ordered orthosymplectic configuration space} is
$$\Conf_{\Gt}^{\ord}\Ab^1 \ = \ \Spec \Ht^\sbt(\Bt\Tt_{\Gt}) \ = \ \sqcup_{n \ge 0} \tk_{\Gt_n}$$
 where $\Tt_{\Gt}$ is the maximal torus in $\Gt$. Intermediate to this is the \textit{split orthosymplectic configuration space}
$$\Conf_{\Gt}^{\textup{s}}\Ab^1 \ = \ \Spec \Ht^\sbt(\BG^s)$$
 where $\BG^s$ classifies ordered tuples of rank two bundles with nondegenerate (anti)symmetric bilinear form;\footnote{Above each fibre $\Cb^2$ such a bundle will be $\hspace{5mm} \kappa \ = \ \left( \begin{smallmatrix} 
  &1 \\ 
 1 & 
 \end{smallmatrix} \right)$ or $\left( \begin{smallmatrix} 
  &1 \\ 
 -1 & 
 \end{smallmatrix} \right)$. } note that $\Tt_\Gt  \subseteq \Gt^s  \subseteq \Gt$.

\begin{prop} \label{prop:ConfActionConfSp} 
  There are maps 
  $$\cup_\Gt,\, +_\Gt \ : \ \Conf_{\Gt}^{(\ord)}\Ab^1 \times \Conf_{\Gt}^{(\ord)}\Ab^1 \ \to \  \Conf_{\Gt}^{(\ord)}\Ab^1$$
  forming a ring structure on the $\Gt$-configuration space.  There is a monoid map 
  $$\iota_{\Gt \textup{-}\GL} \ : \ \Conf^{(\ord)}\Ab^1 \ \to \ \Conf_{\Gt}^{(\ord)}\Ab^1$$
  making them into pointed modules for $(\Conf^{(\ord)}\Ab^1,\cup)$. 
 \end{prop}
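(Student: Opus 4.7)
The plan is to construct the three operations $\cup_\Gt$, $+_\Gt$, and $\iota_{\Gt\textup{-}\GL}$ at the level of classifying stacks wherever possible, and then apply the symmetric monoidal functor $\Spec\Ht^\sbt(-)$, following the template of Section~\ref{ssec:OrthosymplecticConfigurationSpaces} for $\GL$. The operation $\cup_\Gt$ arises from the block-diagonal direct sum of orthosymplectic bundles
$$\oplus_\Gt \ : \ \BG \times \BG \ \to \ \BG, \hspace{15mm} \oplus_\Gt \ : \ \Bt\Tt_\Gt \times \Bt\Tt_\Gt \ \to \ \Bt\Tt_\Gt,$$
equipping $V \oplus V'$ with the orthogonal sum of the two forms, together with its split analogue on $\BG^s$; commutativity and associativity are immediate from the corresponding properties of the orthogonal sum of bilinear forms. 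The monoid map $\iota_{\Gt\textup{-}\GL}$ will come from the hyperbolic doubling functor
$$\GL_n \ \hookrightarrow \ \Gt_{2n}, \hspace{15mm} V \ \mapsto \ V \oplus V^\vee$$
equipped with the canonical evaluation pairing. Compatibility $\iota_{\Gt\textup{-}\GL}(V_1 \cup V_2) \simeq \iota_{\Gt\textup{-}\GL}(V_1) \cup_\Gt \iota_{\Gt\textup{-}\GL}(V_2)$ then reduces to the standard isomorphism $(V_1 \oplus V_2) \oplus (V_1 \oplus V_2)^\vee \simeq (V_1 \oplus V_1^\vee) \oplus (V_2 \oplus V_2^\vee)$ of vector bundles.

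For the multiplicative operation $+_\Gt$ the underlying geometric picture is the tensor product of orthosymplectic bundles, but with a subtlety: the tensor product of two $\Gt$-bundles in general does not lie in the same classical type ($\Sp \otimes \Sp$ carries an orthogonal form, and so on). However, after applying $\Spec\Ht^\sbt(-)$ the spectra of the relevant classical-type classifying stacks are identified as polynomial rings in Pontryagin classes, and so tensor product descends to a well-defined map $+_\Gt$ on $\Conf_\Gt^{(\ord)}\Ab^1$. At the level of Chern roots $y_1,\dots,y_n$ on $\tk_{\Gt_n}$ (encoding the $\pm$-symmetric multiset $\{\pm y_i\}$), this operation reads $(y_i,z_j)\mapsto (y_i+z_j)_{i,j}$, which manifestly respects the $\pm$-symmetry. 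Distributivity of $+_\Gt$ over $\cup_\Gt$ follows from the distributivity of tensor over direct sum of bundles, giving the ring axioms.

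The pointed module structure of $(\Conf_\Gt^{(\ord)}\Ab^1,\cup_\Gt)$ over $(\Conf^{(\ord)}\Ab^1,\cup)$ is then given explicitly by the composition
$$\Conf^{(\ord)}\Ab^1 \times \Conf_\Gt^{(\ord)}\Ab^1 \ \xrightarrow{\iota_{\Gt\textup{-}\GL}\times \id} \ \Conf_\Gt^{(\ord)}\Ab^1 \times \Conf_\Gt^{(\ord)}\Ab^1 \ \xrightarrow{\cup_\Gt} \ \Conf_\Gt^{(\ord)}\Ab^1,$$
pointed by the unit $\pt \to \Conf_\Gt^{(\ord)}\Ab^1$ coming from the zero orthosymplectic bundle; the module axioms reduce to associativity of $\cup_\Gt$ together with the monoid property of $\iota_{\Gt\textup{-}\GL}$. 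The main obstacle I anticipate is the careful construction of $+_\Gt$: because tensor products mix classical types, $+_\Gt$ cannot literally be lifted to a morphism of classifying stacks in a single type, and the verification must be carried out through the identification of cohomology rings (or by passing through a larger moduli of ``orthosymplectic bundles of mixed type''). Additional care is needed in the ordered case to check Weyl-invariance so that the operations descend cleanly from $\tk_{\Gt_n}$ to $\tk_{\Gt_n}//W_n$ in the unordered variant.
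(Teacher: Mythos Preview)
Your approach matches the paper's: construct $\cup_\Gt$ from orthogonal direct sum, $+_\Gt$ from tensor product, and $\iota_{\Gt\textup{-}\GL}$ from the hyperbolic functor $V \mapsto (V \oplus V^\vee, \left(\begin{smallmatrix} & \ev \\ \pm\ev & \end{smallmatrix}\right))$, then apply $\Spec\Ht^\sbt(-)$. You are in fact more careful than the paper on one point: the paper's proof simply asserts that direct sum and tensor product of bundles-with-form give a ring structure on $\BG$, without addressing your observation that $\Sp\otimes\Sp$ lands in $\Ot$ rather than $\Sp$; your resolution via the identification at the level of cohomology is the right way to close this gap.

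One small correction to your explicit description: the Chern roots of the tensor product of bundles with roots $\{\pm y_i\}$ and $\{\pm z_j\}$ are $\{\pm y_i \pm z_j\}$, which as a $\pm$-symmetric multiset is encoded by the $2nm$ values $(y_i+z_j,\, y_i-z_j)_{i,j}$, not just $(y_i+z_j)_{i,j}$. This does not affect the argument, only the formula.
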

 \begin{proof}
 There is a ring structure on $\BG$ induced on functors of points by taking direct sums and tensor produts of vector bundles with their bilinear forms. The maps 
 $$\BGL_n \ \to \ \BO_{2n}, \hspace{15mm} \BGL_n \ \to \ \BSp_{2n}$$
 defined by $V \mapsto (V \oplus V^*, \left( \begin{smallmatrix} 
  &\ev \\ 
 \pm \ev & 
 \end{smallmatrix} \right))$ induce the maps $\iota_{\Gt \textup{-}\GL}$ on configuration spaces, where $\ev:V \otimes V^*\to \Ol$ is the evaluation map. These maps intertwine the direct sum maps on both sides, making $\iota_{\Gt \textup{-}\GL}$ into a monoid map.
\end{proof}

\begin{prop}
   There is a (lax) action 
   \begin{equation}
    \label{fig:ConfActionConfSp} 
    \begin{tikzcd}[row sep = {30pt,between origins}, column sep = {65pt, between origins}]
     &(\Conf \Ab^1 \times \Conf_{\Gt} \Ab^1)_{\Gt\circ} \ar[rd] \ar[ld]  & \\ 
     \Conf \Ab^1 \times \Conf_{\Gt} \Ab^1 & & \Conf_{\Gt} \Ab^1 
    \end{tikzcd}
  \end{equation}
  of the decomposition space $(\Conf^{(\ord)}\Ab^1,ch)$ on $(\Conf_{\Gt}^{(\ord)}\Ab^1,ch_\Gt)$.
\end{prop}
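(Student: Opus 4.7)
The plan is to construct this lax action essentially by restriction of scalars along the monoid map $\iota_{\Gt \textup{-} \GL} : \Conf^{(\ord)}\Ab^1 \to \Conf_\Gt^{(\ord)}\Ab^1$ from Proposition \ref{prop:ConfActionConfSp}, pulling back the chiral decomposition structure on the $\Gt$-configuration space given by Proposition \ref{prop:GChiralStructureConf}. The action arrow itself is simply
$$\Conf \Ab^1 \times \Conf_\Gt \Ab^1 \ \xrightarrow{\iota_{\Gt \textup{-} \GL} \times \id} \ \Conf_\Gt\Ab^1 \times \Conf_\Gt\Ab^1 \ \xrightarrow{\cup_\Gt} \ \Conf_\Gt\Ab^1,$$
and the lax structure records that this is only defined on the complement of appropriate hyperplanes.

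First, I would define the open locus $(\Conf \Ab^1 \times \Conf_\Gt \Ab^1)_{\Gt\circ}$ as the pullback
$$(\Conf \Ab^1 \times \Conf_\Gt \Ab^1) \times_{\Conf_\Gt\Ab^1 \times \Conf_\Gt\Ab^1} (\Conf_\Gt\Ab^1 \times \Conf_\Gt\Ab^1)_{\circ}$$
along $\iota_{\Gt \textup{-} \GL} \times \id$. Concretely, under the embedding $V \mapsto V \oplus V^*$ of linear data into orthosymplectic data, the hyperplanes removed are precisely those pulled back from the unipotent $\Gt$-arrangement $\Phi_{\Gt,2}$ of section \ref{sssec:UnipotentRoots}; these are the relations $x_i = \pm y_j$ pairing a $\GL$-coordinate $x_i$ with a $\Gt$-coordinate $y_j$ (together with the usual $\pm$ pairings among the $x_i$'s that $\iota_{\Gt \textup{-} \GL}$ creates). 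Both the action map and the open inclusion giving the correspondence are then obtained by restriction.

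Second, associativity would be checked by showing that both iterated compositions
$$((\Conf\Ab^1 \times \Conf\Ab^1)_\circ \times \Conf_\Gt\Ab^1)_{\Gt\circ} \quad \text{and} \quad (\Conf\Ab^1 \times (\Conf\Ab^1 \times \Conf_\Gt\Ab^1)_{\Gt\circ})_{\Gt\circ}$$
agree, as open subspaces of the triple product $\Conf\Ab^1 \times \Conf\Ab^1 \times \Conf_\Gt\Ab^1$, with the preimage under $(\iota_{\Gt \textup{-} \GL} \times \iota_{\Gt \textup{-} \GL} \times \id)$ of the open locus used in the threefold associativity of $\cup_\Gt$; this is the same calculation as in the proof of Proposition \ref{prop:GChiralStructureConf}, using that $\iota_{\Gt \textup{-} \GL}$ intertwines $\cup$ with $\cup_\Gt$. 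The unit condition reduces at once to $\iota_{\Gt \textup{-} \GL}$ preserving basepoints.

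The only real subtlety, and the main obstacle, is the verification at the level of hyperplanes that $\iota_{\Gt \textup{-} \GL}$ interacts correctly with the chiral open loci: one must check $(\iota_{\Gt \textup{-} \GL} \times \id)^*(\Phi_{\Gt,2}) \subseteq \Phi_2 \star \Phi_{\Gt}$ up to the diagonal relations already present, which is precisely the abstract functoriality content of Lemma \ref{lem:FunctorialityConfigurationSpaces} extended from algebras to modules. Once this is in place, laxness (rather than strictness) is exactly the failure of $\iota_{\Gt \textup{-} \GL}$ to send the $\GL$-arrangement onto the full $\Gt$-arrangement, which is the expected feature of a linear-into-orthosymplectic embedding.
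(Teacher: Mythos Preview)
Your approach is essentially the paper's: define $(\Conf\Ab^1\times\Conf_\Gt\Ab^1)_{\Gt\circ}$ as the pullback of $(\Conf_\Gt\Ab^1\times\Conf_\Gt\Ab^1)_\circ$ along $\iota_{\Gt\textup{-}\GL}\times\id$, then inherit the action, associativity, and laxness from the $\Gt$-chiral structure and the non-strictness of $\iota_{\Gt\textup{-}\GL}$. One small slip in your explicit description: the pullback only removes the cross hyperplanes $z_i\pm w_j=0$ (as the paper writes), not any $\pm$ pairings among the $z_i$'s themselves, since the unipotent $\Gt$-hyperplanes are purely off-diagonal and hyperplanes internal to the first factor lie in $\Phi_\Gt\star\Phi_\Gt$.
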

\begin{proof}
  The action is defined by pulling back the $\GL$-$\Gt$ hyperplanes along the map $\iota_{\Gt \textup{-}\GL}$, 
  $$(\Conf \Ab^1 \times \Conf_{\Gt} \Ab^1)_{\Gt\circ}  \ = \ (\Conf \Ab^1 \times \Conf_{\Gt} \Ab^1) \times_{\Conf \Ab^1 \times \Conf_{\Gt} \Ab^1}(\Conf_{\Gt} \Ab^1 \times \Conf_{\Gt} \Ab^1)_{\Gt\circ}.$$
   Explicitly, it is given by 
   $$\{(z_i,w_j) \ : \  z_i\pm w_j\ne 0\}.$$
   It is easily checked that this action is equipped with an associativity map between composition of cobordisms (which is not an isomorphism, hence the structure is lax).

\end{proof}

We have an analogue of Proposition \ref{prop:SymplecticRanSpace} for Ran spaces:

\begin{prop}
  For $\Gt \in \{\Ot_{\textup{odd}},\Sp\}$ there is an equivalence 
  $$(\Conf_{\Gt}\Ab^1,ch_\Gt) \ \simeq \ (\Conf(\Ab^1//\pm),ch)$$
  of unordered $\Gt$-configuration spaces respecting the chiral structures. 
\end{prop}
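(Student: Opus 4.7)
The plan is to identify the two configuration spaces level by level as schemes, show that their hyperplane arrangements match up modulo the relevant Weyl group action, and then verify that the structure maps intertwine.

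First, I would identify the underlying schemes. For $\Gt_n \in \{\Ot_{2n+1}, \Spt_{2n}\}$, the Weyl group $W_n$ is $(\Zb/2)^n \rtimes \Sk_n$, acting on $\tk_{\Gt_n}\cong \Ab^n$ by sign changes together with permutations. Since the sign changes act by $x_i \mapsto -x_i$, the invariant ring is
\[ \Ht^\sbt(\BG_n)\ =\ k[x_1,\dots,x_n]^{W_n}\ =\ k[x_1^2,\dots,x_n^2]^{\Sk_n}, \]
so under $y_i = x_i^2$ it coincides with the invariant ring $k[y_1,\dots,y_n]^{\Sk_n}$ of the $n$-th symmetric product of $\Ab^1//\pm \cong \Ab^1$. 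Summing over $n$ gives the underlying scheme identification
\[ \Conf_\Gt \Ab^1\ \simeq\ \Conf(\Ab^1//\pm). \]
The crucial point is that for $\Gt = \Ot_{2\textup{ev}}$ the Weyl group only permits an even number of sign changes, obstructing this identification; the hypothesis $\Gt \in\{\Ot_{\textup{odd}},\Spt\}$ is used exactly here.

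Second, I would match the unipotent hyperplanes. By the description of the hyperplane arrangement on $\BG$ from section \ref{sec:ModuliStacksRootData}, $\Phi_\Gt$ consists of the root hyperplanes of $\gk_n$: for $\Spt_{2n}$ these are $\{2x_i=0\}\cup\{x_i\pm x_j=0\}$, and for $\Ot_{2n+1}$ they are $\{x_i=0\}\cup\{x_i\pm x_j=0\}$. Under the direct sum map $\oplus:\BG_n\times \BG_m\to \BG_{n+m}$, the hyperplanes $\{2x_i=0\}$ and $\{x_a\pm x_b=0\}$ with $a,b$ in a single factor lie already in $\Phi \star \Phi$, while the unipotent hyperplanes (see \ref{sssec:UnipotentRoots}) are exactly the mixed pairs $\{x_i \pm x_j=0\}$ with $x_i$ from the first factor and $x_j$ from the second. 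Since
\[ (x_i - x_j)(x_i + x_j)\ =\ x_i^2 - x_j^2\ =\ y_i - y_j, \]
and the complement of these hyperplanes is $W$-invariant, after descent along $W$ the unipotent locus matches precisely the diagonal $y_i=y_j$ (with $i,j$ in different factors) used to define the chiral structure on $\Conf(\Ab^1//\pm)$.

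Third, I would verify that the monoid and correspondence structures intertwine. The monoid map $\cup_\Gt$ on $\Conf_\Gt\Ab^1$ is induced by orthogonal/symplectic direct sum of vector bundles, which on coordinates simply concatenates the tuples $(x_1,\dots,x_n)$ and $(x_1',\dots,x_m')$, hence concatenates the tuples $(y_i=x_i^2)$ and $(y_j'=x_j'^{2})$; this agrees with the usual concatenation on $\Conf(\Ab^1//\pm)$. The unit $1_\Gb:\pt\to\Conf_\Gt\Ab^1$ is the zero-length tuple on both sides. Combined with Step 2, this gives an isomorphism of correspondences \eqref{fig:GChiralStructure}, compatible with associativity and unit (both of which are determined by symmetric-group combinatorics on tuples and are preserved strictly by the $y_i = x_i^2$ substitution).

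The main technical issue I anticipate is justifying rigorously that ``taking the complement of unipotent hyperplanes then passing to $W$-invariants'' coincides with ``passing to $W$-invariants then removing the diagonals''; this is a general fact about finite group quotients applied to arrangements stable under $W$, but must be invoked carefully so that the open locus on the $\Gt$ side really descends to the ordinary factorization open on $\Conf(\Ab^1//\pm)$. Once this is in place the associativity and unit conditions follow automatically from those of Proposition \ref{prop:GChiralStructureConf} applied on both sides.
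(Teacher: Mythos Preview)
Your proposal is correct and follows essentially the same approach as the paper: identify $\tk_n//W_n$ with $(\Ab^1//\pm)^n//\Sk_n$ via $y_i=x_i^2$ using that $W_n=(\Zb/2)^n\rtimes\Sk_n$ for $\Gt\in\{\Ot_{\textup{odd}},\Sp\}$, then observe that the unipotent hyperplanes $x_i\pm x_j=0$ descend to the diagonals $y_i=y_j$ because $(x_i-x_j)(x_i+x_j)=y_i-y_j$. Your write-up is more detailed than the paper's (which compresses everything into the single observation that the image of the (anti)diagonals under the GIT quotient are the diagonals in $\Ab^1//\pm$), and you additionally make explicit why the $\Ot_{\textup{ev}}$ case is excluded, but the underlying argument is the same.
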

\begin{proof}
  We explicitly write down the hyperplanes defining the chiral decomposition structure on the orthosymplectic configuration spaces. When $\Gt=\Ot_{\textup{odd}},\Sp$, the chiral structure \eqref{fig:GChiralStructure} is the complement of the diagonals and antidiagonals
  $$\{(z_i, w_j)=(z_1,\ldots ,z_n, w_1, \ldots , w_m) \ : \ z_i \pm w_j \ne 0\}.$$ 
  The image of these (anti)diagonals under the GIT quotient 
  \begin{center}
  \begin{tikzcd}[row sep = {30pt,between origins}, column sep = {20pt}]
  \tk_n \ar[r]  &\tk_n//W_n \\ 
   \Ab^n \ar[u,equals] \ar[r] & (\Ab^1//\pm)//\Sk_n \ar[u,equals] 
  \end{tikzcd}
  \end{center}
  are the diagonals in $\Ab^1//\pm$, giving the result.
\end{proof}

Note that the above Proposition is not true for ordered configurations spaces.

\subsubsection{Parabolic configuration spaces} Now we consider the infinite disjoint union of parabolics
\begin{center}
\begin{tikzcd}[row sep = {30pt,between origins}, column sep = {45pt, between origins}]
 &\Pt_{\GL \textup{-}\Gt}\ar[rd] \ar[ld]  & \\ 
 \GL \times \Gt& & \Gt
\end{tikzcd}
\end{center}
The \textit{orthosymplectic-linear configuration space} is then 
$$\Conf_{\GL \textup{-}\Gt}\Ab^1 \ = \ \Spec \Ht^\sbt(\BP) \ \simeq \ \Conf \Ab^1 \times \Conf_\Gt \Ab^1$$
and likewise the ordered configuration space; we can likewise define \textit{linear-linear configuration spaces} by taking the parabolic above for $\Gt=\GL$, with parabolic chiral structures defined in section \ref{sssec:Parabolic}, explicitly: 

\begin{lem}
The chiral decomposition structure on $\Conf_{\GL \textup{-}\GL}\Ab^1$ and $\Conf_{\GL \textup{-}\Gt}\Ab^1$  are given by
$$\{(z_i,w_j,z'_{i'},w_{j'}') \ : \ z_i-z_{i'}', \, z_i - w_{j'}',\, z_{i'}'- w_j, \, w_j - w_{j'}' \ne 0\}$$
and
$$\{(z_i,w_j,z'_{i'},w_{j'}') \ : \ z_i-z_{i'}', \, z_i \pm w_{j'}',\, z_{i'}'\pm w_j, \, w_j \pm w_{j'}' \ne 0\}.$$
\end{lem}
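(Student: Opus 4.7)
The plan is to unwind the general definition of the chiral decomposition structure of Proposition \ref{prop:GChiralStructureConf} applied to the parabolic moduli group stacks $\BP_{\GL\text{-}\GL}$ and $\BP_{\GL\text{-}\Gt}$, and to compute the set of unipotent hyperplanes $\Phi_2 = \oplus^*\Phi\smallsetminus \Phi\star \Phi$ of \ref{sssec:UnipotentRoots} explicitly in coordinates; the open loci in the statement are by definition the complements of $\Phi_2$ inside $\Conf_{\GL\text{-}\Gt^{(\prime)}}\Ab^1 \times \Conf_{\GL\text{-}\Gt^{(\prime)}}\Ab^1$.

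First I would identify the hyperplane arrangement $\Phi$ on $\BP$ in each case. Combining the convention of \ref{sssec:Parabolic} with the requirement that the parabolic induction diagram consist of morphisms of moduli group stacks to the Levi factors, $\Phi$ is the pullback to $\BP$ of the full root arrangement of the ambient group: in the linear case these are the linear forms $z_i - z_{i'}$, $w_j - w_{j''}$, and $z_i - w_j$ on the Levi coordinates $(z_i,w_j)$; in the linear-orthosymplectic case they are $z_i - z_{i'}$, $w_j \pm w_{j''}$, and $z_i \pm w_j$, together with the short roots $w_j$ when $\Gt = \Ot_{\textup{odd}}$.

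Next I would unwind $\oplus:\BP\times\BP\to\BP$ as concatenation of the coordinates $(z_i,w_j)$ and $(z'_{i'},w'_{j'})$ into a single parabolic of the same type with larger indexing sets. Under this description $\oplus^*\Phi$ consists of every linear form in $\Phi$ evaluated on any coordinate pair drawn from the concatenated tuple, whereas $\Phi\star\Phi$ is the subcollection of those forms involving only coordinates from a single factor of $\BP\times \BP$. The set difference $\Phi_2$ therefore enumerates precisely the cross-factor hyperplanes: in the linear case these are $z_i - z'_{i'}$, $w_j - w'_{j'}$, $z_i - w'_{j'}$, and $z'_{i'} - w_j$, matching the first open locus; in the linear-orthosymplectic case the same enumeration yields $z_i - z'_{i'}$, $w_j \pm w'_{j'}$, $z_i \pm w'_{j'}$, and $z'_{i'} \pm w_j$.

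The only point to check carefully is the initial identification of $\Phi$, i.e.\ that the Levi diagonals and the orthosymplectic hyperplanes among the individual $z_i$ and $w_j$ coordinates are part of $\Phi$ on $\BP$, so that the corresponding cross-factor forms $z_i - z'_{i'}$ and $w_j \pm w'_{j'}$ do appear in $\oplus^*\Phi$, and moreover that the short roots $w_j$ (for $\Ot_{\textup{odd}}$) involve a single coordinate and so lie entirely within one factor of $\Phi \star \Phi$, contributing nothing new to $\Phi_2$. Once these two bookkeeping points are settled the statement reduces to a direct enumeration.
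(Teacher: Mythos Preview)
The paper states this lemma without proof; the sentence introducing it ends with the word ``explicitly,'' so it is treated as a direct unwinding of the definitions. Your approach—applying Proposition \ref{prop:GChiralStructureConf} and identifying $\Phi_2 = \oplus^*\Phi \smallsetminus \Phi\star\Phi$ with the cross-factor hyperplanes—is exactly the intended one, and your enumeration is correct.

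One small correction: in the linear-orthosymplectic case, your explicit list for $\Phi_\Pt$ (namely $z_i - z_{i'}$, $w_j \pm w_{j'}$, $z_i \pm w_j$, plus short roots) is what is needed and what you actually use, but it is \emph{not} the pullback of the full root system of the ambient orthosymplectic group. That root system also contains $z_i + z_{i'}$ (and $2z_i$ for $\Sp$), and including these would produce an extra cross-factor condition $z_i + z'_{i'} \neq 0$ not present in the lemma. The arrangement one wants on $\BP$ is rather the union of the Levi arrangements $\Phi_{\GL}$ and $\Phi_\Gt$ with the mixed hyperplanes $\{z_i \pm w_j\}$; this is precisely the arrangement for which the subsequent proposition identifies $(\Conf_\Pt\Ab^1, ch_\Pt)$ with the semidirect product $(\Conf_\Gt\Ab^1, ch_\Gt)\rtimes(\Conf\Ab^1, ch)$. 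With that $\Phi_\Pt$ your computation of $\Phi_2$ goes through verbatim and matches the lemma.
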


\begin{prop}
  The parbolic configuration space is a semidirect product 
  $$(\Conf_{\Pt}\Ab^1, ch_\Pt) \ \simeq \ (\Conf_\Gt \Ab^1, ch_\Gt)\rtimes (\Conf \Ab^1, ch).$$ 
\end{prop}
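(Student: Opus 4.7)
The plan is to identify each layer of data defining $(\Conf_\Pt \Ab^1, ch_\Pt)$ with the corresponding layer of the semidirect product, using the Levi decomposition $\Pt \twoheadrightarrow L = \GL \times \Gt$ as the fundamental input.

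First, I would establish the underlying scheme isomorphism. Since $\Pt \twoheadrightarrow L$ is a smooth affine fibration with unipotent fibres, it induces an isomorphism of graded rings $\Ht^\sbt(\Bt\Pt) \simeq \Ht^\sbt(\textbf{B}L) \simeq \Ht^\sbt(\BGL) \otimes \Ht^\sbt(\BG)$, hence
$$\Conf_\Pt \Ab^1 \ \simeq \ \Conf \Ab^1 \times \Conf_\Gt \Ab^1.$$
The direct sum operation on $\Bt\Pt$ concatenates parabolic frames and so factors through block-wise concatenation on the Levi. Thus the monoid structure on $\Conf_\Pt \Ab^1$ is the product monoid structure, which matches the underlying monoid of the semidirect product (the semidirect twist is carried entirely by the chiral correspondence, not the composition law).

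Second, I would compare chiral decomposition correspondences. By the preceding lemma, $(\Conf_\Pt \Ab^1 \times \Conf_\Pt \Ab^1)_\circ$ is cut out by nonvanishing of the four families
$$z_i - z_{i'}', \qquad z_i \pm w_{j'}', \qquad z_{i'}' \pm w_j, \qquad w_j \pm w_{j'}'.$$
These organise into precisely three groups corresponding to the pieces of the semidirect product: the family $z_i - z_{i'}'$ recovers $ch$ on the $\Conf \Ab^1$ factor; the family $w_j \pm w_{j'}'$ recovers $ch_\Gt$ on the $\Conf_\Gt \Ab^1$ factor; and the cross-mixed families $z_i \pm w_{j'}'$ together with $z_{i'}' \pm w_j$ are exactly the hyperplanes defining the lax action correspondence of \eqref{fig:ConfActionConfSp} of $\Conf \Ab^1$ on $\Conf_\Gt \Ab^1$. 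Hence the open correspondence $(\Conf_\Pt\Ab^1\times \Conf_\Pt\Ab^1)_\circ$ is the fibre product (over $\Conf_\Pt\Ab^1\times \Conf_\Pt\Ab^1$) of the three open subschemes attached to $ch$, $ch_\Gt$, and the action--and this fibre product is by definition the chiral correspondence of the semidirect product.

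Third, I would check the associator and unit data. The unit $(0,0) \in \Conf\Ab^1 \times \Conf_\Gt\Ab^1$ is the image of $1_\Pt : \pt \to \Conf_\Pt\Ab^1$ under the Levi isomorphism. The associativity datum for $ch_\Pt$, being a pullback of $\oplus^*\Phi_\Pt \setminus \Phi_\Pt\star\Phi_\Pt$ along $\oplus \times \id$ and $\id \times \oplus$, restricts on each of the three families above to the associator of the corresponding factor: for the $\GL$-$\GL$ and $\Gt$-$\Gt$ families this is the intrinsic associator of $ch$ and $ch_\Gt$, and for the mixed families it is the associator of the lax action established in the previous proposition.

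The main obstacle is pinning down a precise definition of semidirect product in the category $\Alg(\StkHyp)$ of moduli group stacks (or in its associated category of chiral decomposition spaces), since this notion is not standard in the paper. Once the definition is fixed so that a semidirect product $\Bl \rtimes \Al$ has underlying scheme $\Al \times \Bl$, product monoid structure, and chiral structure given by the fibre product of $ch_\Al$, $ch_\Bl$ and the action correspondence, the proof reduces to the explicit hyperplane bookkeeping above. All three identifications then follow from the single observation that $\pk/\lk$ consists of mixed roots, which is exactly the source of the action of $\Conf\Ab^1$ on $\Conf_\Gt\Ab^1$.
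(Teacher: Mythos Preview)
Your proposal is correct and follows essentially the same approach as the paper. The paper's proof is extremely terse---it simply notes that the underlying space is a product and then asserts that the chiral structure is ``clear if we twist the product chiral structure on $(\Conf_\Gt \Ab^1, ch_\Gt)\times (\Conf \Ab^1, ch)$ by the action \eqref{fig:ConfActionConfSp}''---and your explicit bookkeeping of the three hyperplane families (the $z_i-z_{i'}'$, the $w_j\pm w_{j'}'$, and the mixed $z\pm w$ families) is precisely the unpacking of that one sentence. Your caveat about the semidirect product not being formally defined is well taken: the paper does not define it either, and implicitly adopts exactly the working definition you propose.
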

\begin{proof}
   As a space $\Conf_{\Pt}\Ab^1$ is a product, so it remains to show that the chiral decomposition structure is a semidirect product. This is clear if we twist the product chiral structure on $(\Conf_\Gt \Ab^1, ch_\Gt)\times (\Conf \Ab^1, ch)$ by the action \eqref{fig:ConfActionConfSp} of the linear configuration space on the orthosymplectic configuration space.
\end{proof}

\begin{cor}
  There are maps of decomposition spaces 
   $$ (\Conf_\Gt \Ab^1, ch_\Gt),\,  (\Conf \Ab^1, ch) \ \to \ (\Conf_{\Pt}\Ab^1, ch_\Pt) \ \to \  (\Conf \Ab^1, ch).$$
\end{cor}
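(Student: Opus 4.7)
The plan is to deduce the Corollary from the semidirect product isomorphism $(\Conf_\Pt\Ab^1, ch_\Pt) \simeq (\Conf_\Gt \Ab^1, ch_\Gt) \rtimes (\Conf \Ab^1, ch)$ of the preceding Proposition. For any such semidirect product of decomposition spaces, the two factor inclusions and the projection onto the ``quotient factor'' should each be a map of decomposition structures, and this is exactly the assertion of the corollary.

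Concretely, I would first construct the three maps at the underlying stack level by applying $\Spec \Ht^\sbt(\Bt(-))$ to appropriate group homomorphisms. The two inclusions $\Conf_\Gt \Ab^1, \Conf \Ab^1 \to \Conf_\Pt \Ab^1$ come from the two Levi embeddings $\Gt, \GL \hookrightarrow \GL \times \Gt \hookrightarrow \Pt$; the projection $\Conf_\Pt \Ab^1 \to \Conf \Ab^1$ comes from the composition of the Levi quotient with projection onto the $\GL$-factor, $\Pt \twoheadrightarrow \GL \times \Gt \twoheadrightarrow \GL$. Under the semidirect product identification these become the standard inclusions and projection of the underlying product of spaces.

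For chiral compatibility, the two inclusions can be handled by Lemma \ref{lem:FunctorialityConfigurationSpaces} once $\Phi_\Gb \subseteq f^{-1}(\Phi_\Pt)$ is verified for each Levi inclusion, which amounts to checking that the unipotent hyperplanes of $\Pt$ restrict to the relevant hyperplanes of the sub-configuration space, as can be read off the explicit chiral structure formulas given earlier in the section. The main obstacle will be the projection case, because the hyperplane condition of that Lemma fails: the $\Conf_\Pt$-chiral removed locus is strictly larger than the pullback of the $\Conf$-diagonals. I would instead exhibit the morphism of correspondences directly, using that one of the defining inequalities of $(\Conf_\Pt \Ab^1 \times \Conf_\Pt \Ab^1)_{\Pt\circ}$ is precisely $z_i - z'_{i'} \neq 0$, so the set-theoretic projection $(z_i, w_j, z'_{i'}, w'_{j'}) \mapsto (z_i, z'_{i'})$ lands in $(\Conf \Ab^1 \times \Conf \Ab^1)_\circ$. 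This provides the required commuting square of correspondences; the remaining unit and associativity checks should follow routinely from the explicit formulas and the associativity already present for the semidirect product structure.
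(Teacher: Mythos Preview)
Your proposal is correct and matches the paper's (implicit) approach: the Corollary is stated immediately after the semidirect-product Proposition with no proof given, so the intended argument is exactly the one you outline---the two factor inclusions and the projection onto the quotient factor of a semidirect product are automatically maps of decomposition spaces. Your observation that Lemma~\ref{lem:FunctorialityConfigurationSpaces} does not directly cover the projection (since $\varphi_{2,\Pt}\not\subseteq (f\times f)^*\varphi_{2,\GL}$), and that one must instead exhibit the roof map $(\Conf_\Pt\Ab^1\times\Conf_\Pt\Ab^1)_{\Pt\circ}\to(\Conf\Ab^1\times\Conf\Ab^1)_\circ$ by hand, is a correct refinement the paper leaves implicit.
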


 \subsection{Localised coproducts attached to hyperplane arrangement} 

 \subsubsection{} We now show that factorisation algebras, coalgebras, etc. over $\Gb$-configuration spaces are equivalent to algebras, algebras, etc. with poles along the hyperplanes $\Phi$.

\subsubsection{} A \textit{$\Gb$-localised coalgebra} is a collection of modules 
$$B_n \ \in \ \Sb_n \Md$$
over the algebra $\Sb_n=\Ol(\Conf_{\Gb,n}\Ab^1)$ for every colour  $n\in \pi_0(\Gb)$, together with structure maps of $\Sb_{n+m}$-modules
\begin{equation}
\label{eqn:LocalisedCoproduct} \Delta_{n,m} \ : \  B_{n+m} \ \to \ B_n \otimes B_m[\{1/f_\alpha\}], \hspace{15mm} \epsilon \ \in \  B_0^*
\end{equation}
for every $n,m$, satisfying coassociativity and counit conditions. On the right, we have localised by the defining functions $f_\alpha$ for the closed subvarieties $H_\alpha \in \varphi_{n+m} \smallsetminus \Phi_n \star \Phi_m$ of the hyperplane arrangement.

For instance, 
\begin{enumerate}
 \item When $\Gb=\BGL$, a \textit{localised coproduct} is a collection of $k[x_1,\ldots ,x_n]$-modules $A_n$ along with structure maps
 $$\Delta_A \ : \  A_{n+m} \ \to \ A_n \otimes A_m [(x_i \otimes 1 - 1 \otimes x_j)^{-1}],$$
 which are linear over $k[x_1,\ldots ,x_{n+m}]$ and satisfy coassociativity and counit conditions, where $x_i \in \Ht^2(\BGm)$ is a generator. This agrees with Davison's definition in \cite{Da}, where localised coproducts were introduced.
 \item When $\Gb=\BT_{\Sp}$, an \textit{ordered symplectic localised coalgebra} is a collection of $k[x_1,\ldots ,x_n]$-modules $B_n$ along with structure maps 
 $$\Delta_B \ : \  B_{n+m} \ \to \ B_n \otimes B_m [(x_i \otimes 1 \pm 1 \otimes x_j)^{-1}],$$
 which are linear over $k[x_1,\ldots ,x_{n+m}]$ and satisfy coassociativity and counit conditions.
 \item When $\Gb=\BSp$, an \textit{symplectic localised coalgebra} has structure map 
 $$\Delta \ : \  B_{n+m} \ \to \ B_n \otimes B_m[(y_i \otimes 1 - 1 \otimes y_j)^{-1}]$$
 of $k[y_1,\ldots ,y_{n+m}]^{\Sk_{n+m}}$-modules, where $y_i \in \Ht^4(\BSp_2)$ is a generator
 \item Next, when $\Gb=\BT_{\Pt_{\GL \textup{-}\Sp}}$, an \textit{ordered linear-orthosymplectic} localised coalgebra is a localised coalgebra $A$, a symplectic localised coalgebra $B$, and maps of $\Sb_{\Sp,n+m}$-modules
 $$\Delta_{AB} \ : \ B_{n+m} \ \to \ A_{2n} \otimes B_m[(x_i \otimes 1 \pm 1 \otimes x_j)^{-1}] $$
 which are compatible with $\Delta_A$ and $\Delta_B$, i.e. $A$- and $B$-colinear.
\end{enumerate}

We have a general geometric description of localised coalgebras:

\begin{theorem} \label{thm:LocalisedCoproducts}
  Let $\Gb$ be a moduli group stack.  There is an equivalence 
  $$\Psi_\Gb \ : \ \FactCoAlg^{\textup{flat}}_{\QCoh}(\Conf_{\Gb}\Ab^1,ch_{\Gb}) \ \stackrel{\sim}{\to} \ \textup{LocCoAlg}_{\Gb}$$
  from the category of factorisation coalgebras in flat quasicoherent sheaves and the category of $\Gb$-localised coalgebras. 
\end{theorem}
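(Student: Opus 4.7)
The plan is to unpack both sides concretely and match them termwise, using that each component $\Conf_{\Gb,n}\Ab^1 = \Spec \Sb_n$ is affine. First I would observe that a quasicoherent sheaf $\Fl$ on $\Conf_{\Gb}\Ab^1 = \sqcup_{n \in \pi_0(\Gb)}\Spec \Sb_n$ decomposes into a collection $\{B_n \in \Sb_n\Md\}$, and that the restriction of $\Fl \boxtimes \Fl$ to the component $(n,m)$ is the $\Sb_n \otimes \Sb_m$-module $B_n \otimes B_m$, which is correctly computed (i.e.\ is the correct \emph{underived} tensor product) precisely under the flatness hypothesis.

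Next I would note that the open immersion
$$j \ : \ (\Conf_{\Gb}\Ab^1 \times \Conf_{\Gb}\Ab^1)_\circ \ \hookrightarrow \ \Conf_{\Gb}\Ab^1 \times \Conf_{\Gb}\Ab^1$$
is affine: on the $(n,m)$ component its complement is the union of the unipotent hyperplanes $H_\alpha \in \Phi_2$ of \ref{sssec:UnipotentRoots}, cut out by the defining functions $f_\alpha \in \Sb_n \otimes \Sb_m$, so $j_* j^*$ corresponds on modules to localisation $(-)[\{1/f_\alpha\}]$. Likewise $\cup$ corresponds to the ring map $\Sb_n \otimes \Sb_m \leftarrow \Sb_{n+m}$ (Lemma before Proposition \ref{prop:GChiralStructureConf}). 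Hence taking global sections of the factorisation coproduct
$$(\cup j)^* \Fl \ \to \ j^*(\Fl \boxtimes \Fl)$$
on the $(n,m)$-component gives an $\Sb_{n+m}$-linear map
$$B_{n+m} \ \to \ (B_n \otimes B_m)[\{1/f_\alpha\}]$$
which is exactly the datum \eqref{eqn:LocalisedCoproduct} of a $\Gb$-localised coalgebra. The counit $\epsilon \in B_0^*$ arises from the unit $1_\Gb : \pt \to \Conf_\Gb \Ab^1$ in Proposition \ref{prop:GChiralStructureConf}.

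Next I would check the axioms match. Factorisation coassociativity is a condition over the triple-disjoint locus $(\Conf_\Gb \Ab^1)^{\times 3}_\circ$, which by the associativity proof of Proposition \ref{prop:GChiralStructureConf} is the complement of $(\id \times \oplus)^*\Phi \smallsetminus \Phi \star \Phi \star \Phi = (\oplus \times \id)^*\Phi \smallsetminus \Phi \star \Phi \star \Phi$. Under the module-theoretic translation this becomes exactly the coassociativity of $\Delta_{n,m}$ as a diagram of modules over $\Sb_{n+m+\ell}$ localised at all the unipotent functions $f_\alpha$ appearing in both compositions; likewise unit matches counit. Thus $\Psi_\Gb$ is well-defined. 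To construct the inverse, given a localised coalgebra I would form $\Fl = \bigoplus_n \widetilde{B_n}$ and sheafify the maps $\Delta_{n,m}$ over the $\circ$-locus using affineness; since both procedures are inverse on objects and manifestly functorial on morphisms, one obtains the asserted equivalence.

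The only subtlety, and the source of the flatness hypothesis, is the identification $\Fl \boxtimes \Fl \vert_{n,m} \cong B_n \otimes B_m$ as an $\Sb_n \otimes \Sb_m$-module: without flatness one must pass to derived tensor products, and the characterisation on the right-hand side of the equivalence would need to be enriched to derived localised coalgebras. Modulo that caveat, the entire statement is essentially the equivalence between quasicoherent sheaves on an affine scheme and modules, applied componentwise and compatibly with the localisation giving the chiral locus.
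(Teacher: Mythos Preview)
Your proposal is correct and follows essentially the same approach as the paper: identify $\Sb_n = \Ol(\Conf_{\Gb,n}\Ab^1)$ and $\Sb_{n,m} = \Ol((\Conf_{\Gb,n}\Ab^1 \times \Conf_{\Gb,m}\Ab^1)_{\Gb\circ})$, use flatness to get $\Gamma(\Bl\boxtimes\Bl)\simeq B_n\otimes B_m$, and take global sections of the factorisation coproduct to obtain the localised coproduct (the paper phrases the last step as passing to the adjoint of the map $B_{n+m,\loc}\to (B_n\otimes B_m)_{\loc}$ of $\Sb_{n,m}$-modules). Your write-up is in fact more detailed than the paper's: you spell out the construction of the inverse functor and the matching of coassociativity via the triple-disjoint locus, which the paper leaves implicit.
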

\begin{proof}
  To begin, notice that 
  $$\Sb_n \ = \ \Ol \left( \Conf_{\Gb,n}\Ab^1\right), \hspace{15mm} \Sb_{n,m} \ = \ \Ol \left( (\Conf_{\Gb,n}\Ab^1 \times \Conf_{\Gb,m}\Ab^1)_{\Gb \circ}\right).$$
Thus, if $\Bl \in \QCoh(\Conf_{\Gb}\Ab^1)$ is a chiral decomposition coalgebra, we have a map 
$$\Delta \ : \ (\cup j)^*\Bl \ \stackrel{}{\to} \  j^*(\Bl \boxtimes \Bl)$$
satisfying a coassociativity and counit condition. Flatness implies that
$$\Gamma(\Conf_{\Gb,n}\Ab^1 \times \Conf_{\Gb,m} \Ab^1, \Bl \boxtimes \Bl) \ \simeq \ B_n \otimes B_m$$
therefore we get precisely a collection $B_n=\Gamma(\Conf_{\Gb,n} \Ab^1,\Bl)$ of $\Sb_n$-modules, with structure maps $B_{n+m,\loc} \to (B_n \otimes B_m)_{\loc}$ pf $\Sb_{n,m}$-modules for every pair of integers $n,m$, satisfying a coassociativity and counit condition. This map is adjoint to the localised coproduct. 
\end{proof}

  Note that either from Corollary \ref{cor:FunctorialityConfigurationSpacesCoAlgebra} or directly, for any map of moduli group stacks $f$ we have a \textit{pushforward} functor
  $$f_* \ : \ \textup{LocCoAlg}_{\Gb} \ \to \ \textup{LocCoAlg}_{\Hb}$$
  which is the identity on vector spaces $B_n$, and sends coproducts to   
  $$f_* \Delta \ : \ B_{n+m} \ \stackrel{\Delta}{\to} \ B_n \otimes B_m[\{1/f_{\Gb,\alpha}\}] \ \to \ B_n \otimes B_n[\{1/f_{\Hb,\alpha}\}] $$
  because on the right side we are inverting strictly more functions. If $f$ is strict, then we are inverting the same set of functions and so we get an left adjoint \textit{pullback} functor
  $$ \textup{LocCoAlg}_{\Gb} \ \leftarrow \ \textup{LocCoAlg}_{\Hb} \ : \ f^*$$
  acting on vector spaces as $f^*B=B \otimes_{\Ht^\sbt(\Hb)}\Ht^\sbt(\Gb)$. ,

  \begin{cor}
    For $\Gt \in \{\GL, \Ot_{\textup{odd}}, \Ot_{\textup{ev}}, \Sp\}$ there are equivalences of categories such that
     \begin{center}
     \begin{tikzcd}[row sep = {30pt,between origins}, column sep = {20pt}]
     \FactCoAlg^{\textup{flat}}_{\QCoh}(\Conf^{\ord}\Ab^1) \ar[r,"\sim"] & \textup{LocCoAlg} \\ 
     \FactCoAlg^{\textup{flat}}_{\QCoh}(\Conf_{\GL \textup{-}\Gt}^{\ord} \Ab^1) \ar[r,"\sim"] \ar[u] \ar[d] & \textup{LocCoAlg}_{\GL \textup{-}\Gt}\ar[d] \ar[u] \\
     \FactCoAlg^{\textup{flat}}_{\QCoh}(\Conf_{\Gt}^{\ord} \Ab^1) \ar[r,"\sim"] & \textup{LocCoAlg}_{\Gt} 
     \end{tikzcd}
     \end{center} 
     commutes.
  \end{cor}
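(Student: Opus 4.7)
The plan is to obtain each of the three horizontal equivalences as a direct instance of Theorem \ref{thm:LocalisedCoproducts}, then verify commutativity of the square by functoriality (Lemma \ref{lem:FunctorialityConfigurationSpaces} and Corollary \ref{cor:FunctorialityConfigurationSpacesCoAlgebra}).

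First, I would identify the three moduli group stacks producing the three rows:
\begin{itemize}
\item $\Gb = \Bt\Tt_{\GL}$ with the diagonal hyperplane arrangement $\{x_i=x_j\}$, giving the top row;
\item $\Gb = \Bt\Tt_{\Pt_{\GL\textup{-}\Gt}} \simeq \Bt\Tt_{\GL} \times \Bt\Tt_\Gt$ with the parabolic hyperplane arrangement of \S\ref{sssec:Parabolic}, giving the middle row;
\item $\Gb = \Bt\Tt_\Gt$ with the hyperplane arrangement of $\Gt$, giving the bottom row.
\end{itemize}
In each case the defining functions of the hyperplanes match the localisation data in the definition of $\textup{LocCoAlg}_{(-)}$ (cf.\ the examples in \S\ref{ssec:OSpConfSpaces}--\S\ref{ssec:ConfigurationSpacesModuliRootData}), so Theorem \ref{thm:LocalisedCoproducts} supplies the horizontal equivalences $\Psi_\Gb$.

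Next I would construct the vertical arrows. The inclusions of groups $\Tt_\Gt \hookrightarrow \Tt_{\Pt_{\GL\textup{-}\Gt}} \hookleftarrow \Tt_{\GL}$ induce maps of moduli group stacks; by the explicit description of the parabolic hyperplane arrangement as the union of the linear and orthosymplectic hyperplanes with the additional $\GL\textup{-}\Gt$ mixed hyperplanes, these maps are \emph{strict} in the sense of \S\ref{ssec:ConfigurationSpacesModuliRootData}. Corollary \ref{cor:FunctorialityConfigurationSpacesCoAlgebra} then yields adjoint pairs of pushforward/pullback functors in both directions between the factorisation coalgebra categories; on the localised-coalgebra side, the corresponding functors are the obvious pushforwards (forgetting along $\Ht^\sbt(\Hb) \to \Ht^\sbt(\Gb)$) and pullbacks (base-change along the same map). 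These are the vertical arrows in the diagram.

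Finally, commutativity of the square amounts to the assertion that $\Psi_\Gb$ is natural in strict maps of moduli group stacks. Unpacking the proof of Theorem \ref{thm:LocalisedCoproducts}, the functor $\Psi_\Gb$ takes global sections of $\Bl$ over each connected component of $\Conf_\Gb\Ab^1$ and reads off the coproduct as the adjoint of $\Delta : (\cup j)^*\Bl \to j^*(\Bl\boxtimes \Bl)$; strictness of $f : \Gb \to \Hb$ ensures by Lemma \ref{lem:FunctorialityConfigurationSpaces} that $f$ preserves the chiral locus $\circ$ strictly, so the pullback of $\Delta_\Hb$ is precisely $\Delta_\Gb$ after restriction to $\Gb$. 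Combined with the fact that the rings of functions $\Sb_n^\Gb$ and $\Sb_n^\Hb$ agree up to base change along $\Ht^\sbt(\Hb) \to \Ht^\sbt(\Gb)$, this gives the required natural isomorphisms of both squares (for pushforwards, and their adjoints).

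The main obstacle I anticipate is purely bookkeeping: checking that the hyperplane arrangements on $\Bt\Tt_{\Pt_{\GL\textup{-}\Gt}}$ produced by the parabolic recipe of \S\ref{sssec:Parabolic} really do pull back along the inclusions of $\Bt\Tt_{\GL}$ and $\Bt\Tt_\Gt$ to give exactly the linear and orthosymplectic arrangements (so that the functorial maps are strict). Once this is verified---which reduces to the explicit formulae $z_i - z'_{i'},\; z_i \pm w'_{j'},\; w_j - w'_{j'}$ listed just above Proposition~\ref{prop:ConfActionConfSp}---the rest of the commutativity follows formally from the naturality of the equivalence $\Psi_\Gb$ in $\Gb$.
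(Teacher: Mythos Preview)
Your proposal is correct and matches the paper's approach exactly: the paper's proof is the one-line assertion that the horizontal equivalences come from Theorem~\ref{thm:LocalisedCoproducts} and the vertical functors (together with commutativity) come from Corollary~\ref{cor:FunctorialityConfigurationSpacesCoAlgebra} applied to the strict maps $\BGL \to \BP_{\GL\textup{-}\Gt} \leftarrow \BG$ (equivalently, on maximal tori, the inclusions you wrote). Your additional bookkeeping check on strictness of the parabolic hyperplane arrangement is the only thing the paper leaves implicit, and your argument for it is fine.
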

  \begin{proof}
    Follows from Theorem \ref{thm:LocalisedCoproducts}, which intertwines these functoriality morphisms with those of Corollary \ref{cor:FunctorialityConfigurationSpacesCoAlgebra}, which we apply to the strict maps $\BGL \to \BP_{\GL \textup{-}\Gt} \leftarrow \BG$.
  \end{proof}

As another example, we consider the pullback of a symplectic localised coalgebra to an ordered symplectic localised coalgebra, via
$$\Conf^{\ord}_{\Sp}\Ab^1 \ \to \ \Conf_{\Sp}\Ab^1$$
takes an unordered symplectic localised coalgebra
$$\Delta \ : \ B_{n+m} \ \to \ B_n \otimes B_m[(y_i \otimes 1 - 1 \otimes y_j)^{-1}]$$
to the ordered symplectic localised coalgebra $\tilde{B}_n= B_n \otimes_{\Ht^\sbt(\BSp_{2n})} \Ht^\sbt(\BGm^n)$, with structure map 
$$f^* \Delta \ : \ \tilde{B}_{n+m} \ \stackrel{\Delta}{\to} \ \tilde{B}_n \otimes \tilde{B}_m[(y_i \otimes 1 - 1 \otimes y_j)^{-1}] \ \hookrightarrow \ \tilde{B}_n \otimes \tilde{B}_m[(\sqrt{y}_i \otimes 1 \pm 1 \otimes \sqrt{y}_j)^{-1}]$$
where $\sqrt{y}_i \in \Ht^2(\BGm)$ is a generator.

\subsection{Localised bialgebras}

\subsubsection{} A \textit{$\Gb$-localised algebra} is a collection of modules 
$$B_n \ \in \ \Sb_n \Md$$
over the algebra $\Sb_n=\Ol(\Conf_{\Gb,n}\Ab^1)$ as $n$ varies over the connected components of $\Gb$, together with structure maps of $\Sb_{n+m}$-modules
\begin{equation}
\label{eqn:LocalisedProduct} m_{n,m} \ : \  B_n \otimes B_m \ \to \ B_{n+m}[\{1/f_\alpha\}], \hspace{15mm} 1 \ \in \  B_0
\end{equation}
for every $n,m$, satisfying associativity and unit conditions.

\subsubsection{} A \textit{$\Gb$-localised bialgebra} is a collection of modules $B_n$ with the structure of a $\Gb$-localised algebra and a $\Gb$-localised coalgebra, such that the following commutes:
\begin{center}
  \begin{tikzcd}[row sep = {30pt,between origins}, column sep = {20pt}]
    (B \otimes B \otimes B \otimes B)_{\loc(13,14,23,24)} \ar[r,"\sigma_{23}"] &(B \otimes B \otimes B \otimes B)_{\loc(12,14,32,34)} \ar[r,"m \otimes m"]  & (B \otimes B)_{\loc} \\ 
   (B \otimes B) \ar[u,"\Delta \otimes \Delta"]  \ar[rr,"m"] & & B \ar[u,"\Delta"] 
  \end{tikzcd}
  \end{center}
and that the (co)products and (co)units are compatible in the usual ways:
$$\Delta(1_B) \ = \ 1_B \otimes 1_B, \hspace{15mm} \epsilon \cdot m \ = \ \epsilon \otimes \epsilon, \hspace{15mm} \epsilon(1_B)\ = \ 1.$$
In an analogous way to how Theorem \ref{thm:LocalisedCoproducts} is proven, we have

  \begin{theorem} \label{thm:LocalisedBiAlgebras}
    Let $\Gb$ be a moduli group stack. There is a diagram of equivalences 
    \begin{center}
    \begin{tikzcd}[row sep = {30pt,between origins}, column sep = {20pt}]
      \FactCoAlg^{\textup{flat}}_{\QCoh}(\Conf_{\Gb}\Ab^1,ch_{\Gb}) \ar[r,"\sim"',"\Psi_\Gb"]&  \textup{LocCoAlg}_{\Gb}\\
      \FactBiAlg^{\textup{flat}}_{\QCoh}(\Conf_{\Gb}\Ab^1,ch_{\Gb}) \ar[r,"\sim"',"\Psi_\Gb"] \ar[u] \ar[d] &  \textup{LocBiAlg}_{\Gb} \ar[u] \ar[d] \\
      \FactAlg^{\textup{flat}}_{\QCoh}(\Conf_{\Gb}\Ab^1,ch_{\Gb}) \ar[r,"\sim"',"\Psi_\Gb"]&  \textup{LocAlg}_{\Gb}
    \end{tikzcd}
    \end{center}
    extending Theorem \ref{thm:LocalisedCoproducts}. In other words, there are equivalences between flat quasicoherent (co/bi/$\varnothing$)algebras and $\Gb$-localised (co/bi/$\varnothing$)algebras, compatible with forgetting the product or coproduct on a bialgebra.
  \end{theorem}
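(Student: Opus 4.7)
The plan is to bootstrap from Theorem \ref{thm:LocalisedCoproducts}, constructing the algebra equivalence by the formally dual argument and the bialgebra equivalence by combining the two. Recall that in the coalgebra case, a structure map $\Delta : (\cup j)^*\Bl \to j^*(\Bl \boxtimes \Bl)$ over the open locus $(\Conf_{\Gb,n}\Ab^1 \times \Conf_{\Gb,m}\Ab^1)_{\Gb\circ}$ yields, after taking (affine) global sections and using flatness of $\Bl$ to identify $\Gamma(j^*(\Bl \boxtimes \Bl))$ with $(B_n \otimes B_m) \otimes_{S_n \otimes S_m} S_{n,m}$, the adjoint of a localised coproduct $B_{n+m} \to B_n \otimes B_m[\{1/f_\alpha\}]$.

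For the algebra row, I would run the same argument with the arrow reversed: a factorisation algebra structure on $\Bl$ is a morphism $m : j^*(\Bl \boxtimes \Bl) \to (\cup j)^*\Bl$ over the same open locus, and taking global sections produces an $S_{n,m}$-linear map $(B_n \otimes B_m) \otimes_{S_n \otimes S_m} S_{n,m} \to B_{n+m} \otimes_{S_{n+m}} S_{n,m}$, equivalently a localised product $B_n \otimes B_m \to B_{n+m}[\{1/f_\alpha\}]$. Associativity and the unit axiom transport across this identification by naturality under the triple overlap correspondence and pullback along the unit section $1_\Gb : \pt \to \Conf_\Gb\Ab^1$, in exact parallel with the coalgebra argument.

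For the bialgebra row, I would identify the factorisation bialgebra compatibility with the hexagon defining $\textup{LocBiAlg}_\Gb$. The sheaf-level axiom is the commutativity, over the quadruply disjoint open locus inside $(\Conf_\Gb\Ab^1)^{\times 4}$ obtained by iterating $ch_\Gb$, of a square whose edges are $\Delta \boxtimes \Delta$, the symmetry $\sigma_{23}$, $m \boxtimes m$, and $\Delta \cdot m$. Taking global sections on the two intermediate open loci (before and after $\sigma_{23}$) produces respectively $(B^{\otimes 4})_{\loc(13,14,23,24)}$ and $(B^{\otimes 4})_{\loc(12,14,32,34)}$, so the sheaf-level diagram becomes the required algebraic compatibility. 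The vertical forgetful maps in the diagram of the theorem are matched tautologically because $\Psi_\Gb$ leaves the underlying quasicoherent sheaf unchanged.

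The main obstacle is matching the open loci in the bialgebra square to the indicated localisations $(B^{\otimes 4})_{\loc(13,14,23,24)}$ and $(B^{\otimes 4})_{\loc(12,14,32,34)}$. Concretely, one must verify that the unipotent hyperplanes $\oplus^*\Phi \smallsetminus \Phi \star \Phi$ removed at each of the two iterations of $ch_\Gb$ on $(\Conf_\Gb\Ab^1)^{\times 4}$ produce exactly these two index sets, one for each way of pairing up the four factors. This is a direct bookkeeping with the chiral structure, for which the key input is the associativity proved in Proposition \ref{prop:GChiralStructureConf}, together with the observation that the swap $\sigma_{23}$ precisely interchanges the two pairings. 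Once the loci match, the same flatness-plus-global-sections recipe as in the coalgebra and algebra rows promotes the sheaf-level square to the localised bialgebra pentagon, completing the equivalence.
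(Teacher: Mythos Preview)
Your proposal is correct and is essentially the same approach as the paper's, which in fact gives no detailed proof beyond the remark ``in an analogous way to how Theorem \ref{thm:LocalisedCoproducts} is proven''. You have supplied exactly the intended analogy: the formally dual argument for algebras, and for bialgebras the translation of the sheaf-level compatibility square over the iterated open locus into the defining hexagon for $\textup{LocBiAlg}_\Gb$, with the bookkeeping of unipotent hyperplanes handled via Proposition \ref{prop:GChiralStructureConf}.
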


\newpage 
\section{Orthosymplectic vertex and factorisation algebras} \label{sec:OrthosymplecticAlgebras}

\noindent In this section, we explain how the ordinary theory of vertex algebras can be viewed as being attached to $\GL$, then generalise it to other groups in section \ref{ssec:OrthosymplecticVertexAlgebras}. 

For instance, our main example in section \ref{sec:JL} will be dual to a vertex algebra acting on an $\OSpt$ vertex algebra; in Theorem \ref{thm:OrthosymplecticFactorisationAlgebras} we explain how to view this as vertex algebra attached to a parabolic. 

To justify these definitions, in section \ref{ssec:OrthosymplecticFactorisationAlgebras} we show an equivalence of generalised vertex algebras
$$\FactAlg(\Ran_\Gt\Ab^1) \ \simeq \ \textup{VAlg}_\Gt,$$
with \textit{factorisation algebras} over \textit{generalised Ran spaces}, where $\Ran_\Gt\Ab^1$ is defined by gluing together Cartan subalgebras of $\gk_n$. This reduces to Beilinson-Drinfeld's characterisation of vertex algebras as factorisation algebras, see \cite{Bu,BDr,FG,La2}

Finally, we relate the configuration spaces of last section to these vertex algebras. In Theorem \ref{thm:LocalisedToVertex} we will define a functor 
$$\Phi \ : \ \textup{LocCoAlg}_{\Gb}^{\Ga} \ \to \ \textup{VCoAlg}_\Gb$$
producing $\Gt$-vertex algebras from localised coalgebras over $\Gt$-configuration spaces.

\subsection{Recollections on vertex and factorisation algebras}

\subsubsection{} Recall that a (\textit{nonlocal}) \textit{vertex algebra} is a vector space $A$ with endomorphism $T$ together with a $T$-equivariant\footnote{Here, $T$ acts as $T-\partial_z$ on the right and a $T\otimes\id$ on the left, i.e. $T$-equivariance is $[T,Y(\alpha,z)]=\partial_z Y(\alpha, z)$.} map
$$Y \ :\ A\otimes A \ \to \ A((z))$$
such that
\begin{equation}\label{eqn:WeakAssociativity}
  Y(z)(\id\otimes Y(w))\ =\ Y(w)(Y(z-w)\otimes\id).
\end{equation}
See \cite{FBZ} or Appendix A of \cite{JKL} for details about how to expand Laurent series in \eqref{eqn:WeakAssociativity}.

A vertex algebra is called \textit{local} if for $n \gg 0$,
$$(z-w)^n\left(Y(\alpha,z)Y(\beta,w) - Y(\beta,w)Y(\alpha,z)\right) \ = \ 0 $$
which is equivalent to the condition $Y(\alpha,z)\beta = e^{zT}Y(\beta \otimes \alpha,-z)$, see for instance section 3.2 of \cite{FBZ}. A vertex algebra is \textit{unital} if there is an element $|0\rangle$ such that $Y(|0\rangle,z)=\id$ and $Y(\alpha,z)|0\rangle$ has no negative powers of $z$.

\subsubsection{} There is an equivalent definition of vertex algebras discovered by Beilinson and Drinfeld \cite{BDr}, as algebraic factorisation algebras over $\Ab^1$. Relating ordinary algebras in symmetric monoidal categories to topological factorisation algebras over $\Rb$ was achieved by Lurie in \cite{Lu}.

\subsubsection{Rough description} We first give an informal description of what factorisation algebras are in the variants which we will use them. Loosely speaking, a (\textit{linear}) \textit{factorisation algebra} on $\Cb$ is a sheaf $\Al$ (a D-module, constructible sheaf, etc.) with parallel transport maps on its fibres 
\begin{center}
 \centering
 \begin{minipage}{.5\textwidth} \centering
  
  \begin{tikzpicture}
    \begin{scope} 
      \filldraw[black, fill=black, fill opacity = 0.2, pattern=north west lines] (-6*0.45,0) -- (-1*0.45,2*0.45) --  (6*0.45,0) -- (1*0.45,-2*0.45) -- cycle;
  
        \node[right] at (7*0.4,0) {$\Cb$};
 
        \draw[-,decorate,decoration={snake,amplitude=.4mm,segment length=2mm,post length=1mm},line width = 3pt,white] (-1+0.1,0) to[out=10, in=-160] (-0.3,0.7-0.1) ;
        \draw[->,decorate,decoration={snake,amplitude=.4mm,segment length=2mm,post length=1mm}]  (-1+0.1,0) to[out=10, in=-160] (-0.3,0.7-0.1) ;
 
 
        \filldraw (-1,0) circle (1.5pt);
        \node[above] at (-1,0) {$\Al_x$};
  
        \filldraw (-0.3,0.7) circle (1.5pt);
        \node[above] at (-0.3,0.7) {$\Al_y$};
 
        \filldraw (1.5,-0.2) circle (1.5pt);
        \node[above] at (1.5,-0.2) {$\Al_z$};
 
     \end{scope}
   \end{tikzpicture}
 \end{minipage}%
 \begin{minipage}{.5\textwidth} \centering
 
\begin{equation}\label{eqn:ParallelTransport}
  ``\nabla_{\GL,\gamma} \ : \ \Al_x \otimes \Al_y \ \to \  \Al_y"
 \end{equation}
 \end{minipage}
 \end{center}
along any path $\gamma:x \to y$. Likewise, an \textit{orthosymplectic factorisation algebra} on $\Cb$ is a sheaf $\Bl$ with parallel transport maps
 
\begin{center}
  \centering
  \begin{minipage}{.5\textwidth} \centering
   
\begin{tikzpicture}

  \begin{scope} 
    \filldraw[black, fill=black, fill opacity = 0.2, pattern=north west lines] (-6*0.45,0) -- (-1*0.45,2*0.45) --  (6*0.45,0) -- (1*0.45,-2*0.45) -- cycle;

      \node[right] at (7*0.4,0) {$\Cb$};

      \filldraw (0.3,0.2) circle (1.5pt);
      \node[above] at (0.3,0.2) {$\Bl_{\overbar{v}}$};

      \filldraw (-0.3,-0.2) circle (1.5pt);
      \node[above] at (-0.3,-0.2) {$\Bl_{-\overbar{v}}$};

      \filldraw (2,0.1) circle (1.5pt);
      \node[above] at (2,0.1) {$\Bl_{\overbar{u}}$};

      \filldraw (-2,-0.1) circle (1.5pt);
      \node[above] at (-2,-0.1) {$\Bl_{-\overbar{u}}$};

      \draw[-,decorate,decoration={snake,amplitude=.4mm,segment length=2mm,post length=1mm},line width = 3pt,white] (2-0.1,0.1) to[out=-160, in=-20] (-0.3+0.1,-0.2) ;
      \draw[->,decorate,decoration={snake,amplitude=.4mm,segment length=2mm,post length=1mm}]  (2-0.1,0.1) to[out=-160, in=-20] (-0.3+0.1,-0.2) ;

   \end{scope}
 \end{tikzpicture}
  \end{minipage}%
  \begin{minipage}{.5\textwidth} \centering
  
 \begin{equation}\label{eqn:ParallelTransport}
   ``\nabla_{\Gt,\gamma} \ : \ \Bl_u \otimes \Bl_v \ \to \  \Bl_v"
  \end{equation}
  \end{minipage}
  \end{center}
along any path $\gamma:u \to \pm v$ in $\Cb$.\footnote{In other words it is a factorisation algebra on $\Cb/\pm$, see Proposition \ref{prop:SymplecticRanSpace}.} An \textit{orthosymplectic-linear factorisation algebra} is both the above data $\Al,\Bl$ together with parallel transport maps
\begin{center}
 \centering
 \begin{minipage}{.5\textwidth} \centering
  
\begin{tikzpicture}

  \begin{scope} 
    \filldraw[black, fill=black, fill opacity = 0.2, pattern=north west lines] (-6*0.45,0) -- (-1*0.45,2*0.45) --  (6*0.45,0) -- (1*0.45,-2*0.45) -- cycle;

      \node[right] at (7*0.4,0) {$\Cb$};

      \filldraw (0.3,0.2) circle (1.5pt);
      \node[above] at (0.3,0.2) {$\Bl_{\overbar{v}}$};

      \filldraw (-0.3,-0.2) circle (1.5pt);
      \node[above] at (-0.3,-0.2) {$\Bl_{-\overbar{v}}$};

      \filldraw (2,0.1) circle (1.5pt);
      \node[above] at (2,0.1) {$\Bl_{\overbar{u}}$};

      \filldraw (-2,-0.1) circle (1.5pt);
      \node[above] at (-2,-0.1) {$\Bl_{-\overbar{u}}$};

      \filldraw (-1,0) circle (1.5pt);
      \node[above] at (-1,0) {$\Al_x$};

      \filldraw (-0.3,0.7) circle (1.5pt);
      \node[above] at (-0.3,0.7) {$\Al_y$};

      \filldraw (1.5,-0.2) circle (1.5pt);
      \node[above] at (1.5,-0.2) {$\Al_z$};
      
      \draw[-,decorate,decoration={snake,amplitude=.4mm,segment length=2mm,post length=1mm},line width = 3pt,white] (-1-0.1,0)  to[out=-160, in=-20] (-2+0.1,-0.1) ;
      \draw[->,decorate,decoration={snake,amplitude=.4mm,segment length=2mm,post length=1mm}]  (-1-0.1,0)  to[out=-160, in=-20] (-2+0.1,-0.1) ;

   \end{scope}
 \end{tikzpicture}
 \end{minipage}%
 \begin{minipage}{.5\textwidth} \centering
\begin{equation}\label{eqn:ParallelTransportGLOSp}
  ``\nabla_{\Pt,\gamma} \ : \ \Al_x \otimes \Bl_{\overbar{u}} \ \to \  \Bl_{\overbar{u}}"
 \end{equation}
 \end{minipage}
 \end{center}
 for every path $\gamma:x\to \pm u$ in $\Cb$, which are linear over $\nabla_\GL$ and $\nabla_\Gt$. As a consequence, $\Bl_0$ is a factorisation module over $\Al$ at $0$.

 \subsubsection{} We now give more details. The \textit{ordered Ran space} is defined as
$$\Ran^{\ord} X \ = \ \cup_{n \ge 0}X^n,$$
where the union (or colimit in $\PreStk$) is taken along all diagonal maps $X^n \to X^m$; the \textit{Ran space} $\Ran X$ is defined as the colimit of the diagonal and permutation maps.

Both Ran spaces have a \textit{decomposition} structure (see Appendix \ref{sec:Decomposition}): a correspondence
\begin{equation}
 \label{fig:RanCh} 
 \begin{tikzcd}[row sep = {30pt,between origins}, column sep = {45pt, between origins}]
  & ( \Ran X \times \Ran X)_\circ \ar[rd,"j\cup"] \ar[ld,"j"']  & \\
 \Ran X \times \Ran X & & \Ran X
 \end{tikzcd}
\end{equation}
satisfying an associativity condition, and a unit condition for $\varnothing \in \Ran X$. This correspondence is defined as the locus of \textit{disjoint} pairs of finite subsets.\footnote{Thus the roof \eqref{fig:RanCh} is defined as the colimit of $(X^n\times X^m)_\circ$, the open locus of disjoint tuples.} We may (as in Appendix \ref{sec:Decomposition} or \cite{Bu,FG,La2}) define a \textit{factorisation algebra} as an element $\Al$ in an appropriate category of (co)sheaves together with a map 
$$m \ : \  j^*(\Al \boxtimes \Al) \ \stackrel{\sim}{\to} \ (\cup j)^*\Al$$
satisfying an associativity condition, and a section $1_\Al\in \Gamma(\Al)$ satisfying a unit condition. 

We convert this structure into a parallel transport map as in \eqref{eqn:ParallelTransport} by taking the boundary map $\delta$ of the Mayer-Vietoris sequence of (co)sheaves
$$\Al\vert_{X^2}\ \to \  \jmath_*\jmath^*(\Al\vert_X \boxtimes \Al\vert_X) \ \stackrel{\delta}{\to}\ \iota_*\Al\vert_X[1]$$ 
for the restriction along 
$$X \ \stackrel{\iota}{\to} \ X^2 \ \stackrel{\jmath}{\leftarrow} \ X^2 \smallsetminus  X.$$  
See e.g. \cite{Bu,La2} for ordinary vertex algebras or section \ref{ssec:OrthosymplecticFactorisationAlgebras} for orthosymplectic variants. 

Finally, we remark that in the topological case Lurie gives an equivalence $\FactAlg(\Ran \Rb^n,\textup{CoSh}_{\textup{const}}(\Cl)) \simeq \Eb_n \Ag(\Cl)$ in \cite[5.5.4.10]{Lu} between factorisatable constructible cosheaves over $\Ran \Rb^n$ valued in a symmetric monoidal $\infty-$category $\Cl$ the category of $\Eb_n$-algebras in $\Cl$. The notion of vertex algebra should thus be viewed as a holomorphic or algebraic analogue of $\Eb_2$-algebra.

\subsection{Vertex algebras of classical and parabolic type} \label{ssec:OrthosymplecticVertexAlgebras}

\subsubsection{} One can show \cite{AB,Jo} that the structure of a vertex algebra on $A$ is equivalent to a collection of maps
$$X_n \ : \ A \otimes \cdots \otimes A \ \to \ A[[z_1,\ldots ,z_n]][\{(z_i-z_j)^{-1}\}]$$
for $n\ge 0$ satisfying an associativity, commutativity and unit conditions which we will detail below. We will now make definitions for other groups $\Gt$ in the same way.

For the rest of this section, we set $\Gt\in \{\GL,\Ot_{\textup{odd}},\Sp\}$. Writing $u_i$ for the coordinates on its Cartan Lie algebra, we have
$$\Ol(\tk_{\gk_n,\circ}^\wedge)\ \simeq \ k[[u_1, \ldots ,u_n]][\{f_\alpha^{-1}\}]$$
where $f_\alpha \in \{u_i\pm u_j\}$ are functions defining unipotent hyperplanes as in section \ref{sssec:UnipotentRoots}, and where $\wedge$ denotes the formal neighbourhood at the origin.

\subsubsection{} \label{sssec:GVA} A (\textit{local}) \textit{$\Gt$-vertex algebra} is a vector space with involution $(B,\tau)$ and a collection of maps 
$$X_{n} \ = \ X_{\Gt,n} \ : \ B \otimes \cdots \otimes B \ \to \ B[[w_1,\ldots , w_n]][ \{f_\alpha^{-1}\}]$$
from the $n\ge 0$ fold tensor product, which satisfies an equivariance condition with respect to elements of the Weyl group $\sigma \in W$
\begin{equation}\label{fig:WeylEquivariance}
\begin{tikzcd}[row sep = {30pt,between origins}, column sep = {20pt}]
B \otimes \cdots  \otimes B \ar[d,"\tilde{\sigma}"] \ar[r,"X_n"]  & B[[w_1,\ldots , w_n]][ \{f_\alpha^{-1}\}] \ar[d,"\sigma"] \\ 
B \otimes \cdots  \otimes B \ar[r,"X_n"]  & B[[w_1,\ldots , w_n]][ \{f_\alpha^{-1}\}]
\end{tikzcd}
\end{equation}
and an associativity condition: 
\begin{equation}
  \label{eqn:GAssociativity} 
  X_r\cdot (X_{n_1} \otimes \cdots \otimes X_{n_r}) \ = \ X_{n_1 + \cdots + n_r}.
\end{equation}
Here, 
\begin{itemize}
 \item In \eqref{fig:WeylEquivariance} we have on the left the action of an element of the Weyl group $\tilde{\sigma} \in W_n\simeq (\Zb/2)^n\rtimes \Sk_n$ by permutations and the involution, and $\sigma$ is its image under the surjection 
 $$W_n \ \twoheadrightarrow \ \Zb/2\ \simeq \ \langle \tau\rangle$$
 acting trivially on $\Sk_n$ and the identity on each $\Zb/2$ factor. 
 \item The variables in \eqref{eqn:GAssociativity} are
 $$X_r(z_i)\cdot (X_{n_1}(u_j^{(1)}) \otimes \cdots \otimes X_{n_r}(u_j^{(r)})) \ = \ X_{n_1 + \cdots + n_r}(z_i+u_j^{(i)})$$
 where the $z_i+u_j^{(i)}$ are lexicographically ordered: 
 $$\left(z_1+u_1^{(1)}, \ z_1+ u_2^{(1)}, \ \ldots , z_1+u_{n_1}^{(1)}\Big| \ z_2+u_1^{(2)}, \ \ldots  \ \Big| \  \ldots , z_r+u^{(r)}_{n_r}\right).$$
 In the following we will suppress notation in the same way, with the variables always adding like this.
\end{itemize}
We define \textit{nonlocal} $\Gt$-vertex algebras by dropping the involution and $W$-equivariance from the definition.

\begin{prop} \label{prop:GVertexAlgebraExplicit}
  A $\Gt$-vertex algebra is equivalent to a pointed vector space with endomorphism $(B,|0\rangle,T)$, an involution $\tau$ with $\tau |0\rangle=|0\rangle$ and $\tau T=-T\tau$, and a $T$-equivariant\footnote{Here, $T$ acts as $T-\partial_z$ on the right and a $T\otimes\id$ on the left, i.e. $T$-equivariance is $[T,Y(\alpha,w)]=\partial_w Y(\alpha, w)$.} map 
  $$Y_B(w) \ : \ B \otimes B \ \to \ B((w))$$
  satisfying $W$-locality 
  \begin{align*}
    \left((w_1+w_2)(w_1-w_2)\right)^n \left( Y_B(\beta_1,w_1)Y_B(\beta_2,w_2)- Y_B(\beta_2,w_2)Y_B(\beta_1,w_1) \right)& \ = \ 0 \\
    Y_{B}(w)\cdot (\tau \otimes \id),  \hspace{5mm}  Y_{B}(w)\cdot (\id \otimes \tau) & \ = \   \tau \cdot Y_{B}(-w)
  \end{align*}
  for $n \gg 0$, such that $Y(|0\rangle,w)=\id$ and $Y(\beta,w)|0\rangle$ has no negative powers of $w$.
\end{prop}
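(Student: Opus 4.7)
The plan is to imitate the classical equivalence (for $\Gt=\GL$) between a vertex algebra in the $X_n$ formulation and the $(Y,|0\rangle,T)$ formulation, keeping careful track of the extra involution $\tau$ and the enlarged Weyl group $W_n=(\Zb/2)^n\rtimes\Sk_n$. Concretely, I would build two functors and check they are mutually inverse.

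First, from $X_\bullet$ to $(Y_B,|0\rangle,T,\tau)$. Set $|0\rangle=X_0(1)$, and define
$$Y_B(w) \ \defeq \ X_2(0,w) \ : \ B\otimes B \ \to \ B[[w]][f_\alpha^{-1}] \ \subseteq \ B((w)),$$
where the inclusion uses that the only unipotent hyperplanes in two variables with $z_1=0$ are $\pm w$. Extract $T$ as the linear part of translation in the first slot: the associativity \eqref{eqn:GAssociativity} applied to $X_1$ composed with $X_1$ shows $X_1(z)\beta\in B[[z]]$ is a $T$-module, giving $T\beta=\partial_z X_1(z)\beta|_{z=0}$. The identities $\tau|0\rangle=|0\rangle$ and $\tau T=-T\tau$ come from \eqref{fig:WeylEquivariance} for $n=0,1$ (where $\sigma=\tau$ in the $n=1$ case because $W_1=\Zb/2$ acts by $w\mapsto -w$). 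Translation-equivariance of $Y_B$ is the $n=2$ associativity with one factor being $X_1$. The vacuum axioms $Y_B(|0\rangle,w)=\id$ and regularity of $Y_B(\beta,w)|0\rangle$ follow from the unit part of \eqref{eqn:GAssociativity}. For $W$-locality, apply associativity to $X_3$ in two ways, grouping the first two or the last two variables; this realises
$$Y_B(\beta_1,w_1)Y_B(\beta_2,w_2)\hspace{3mm}\text{and}\hspace{3mm} Y_B(\beta_2,w_2)Y_B(\beta_1,w_1)$$
as two expansions of the same element of $B[[w_1,w_2]][f_\alpha^{-1}]$ in the regions $|w_1|>|w_2|$ and $|w_2|>|w_1|$ respectively, so their difference is annihilated by a power of $\prod f_\alpha=(w_1-w_2)(w_1+w_2)$ (up to unit scalars). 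The reflection identity $Y_B(w)(\tau\otimes\id)=\tau Y_B(-w)$ comes from \eqref{fig:WeylEquivariance} with $\tilde\sigma=(\tau,\id)\in W_2$, whose image in $\Zb/2$ is $\tau$ and which sends $w_1\mapsto -w_1$.

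Second, from $(Y_B,|0\rangle,T,\tau)$ to $X_\bullet$. Define
$$X_n(\beta_1,\ldots,\beta_n)(w_1,\ldots,w_n) \ \defeq \ Y_B(\beta_1,w_1)Y_B(\beta_2,w_2)\cdots Y_B(\beta_n,w_n)|0\rangle,$$
expanded in the region $|w_1|>|w_2|>\cdots>|w_n|$. By iterated application of $W$-locality, any two expansion orders differ by a (formal) function with poles only along $w_i\pm w_j=0$; Dong's lemma and induction on $n$ (exactly as in \cite[\S3.2]{FBZ} for the $\GL$ case) show the result lies in $B[[w_1,\ldots,w_n]][f_\alpha^{-1}]$. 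The associativity \eqref{eqn:GAssociativity} follows because both sides produce the same iterated composition after reassociating neighbouring $Y_B$'s, using translation-equivariance to absorb the outer translation by $z_i$. The $W_n$-equivariance \eqref{fig:WeylEquivariance} for the $\Sk_n$ part is the commutativity clause of locality, while for each $\Zb/2$ factor it is the reflection identity propagated through $Y_B$ using $\tau T=-T\tau$.

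Finally I would verify that these two constructions are mutually inverse. One direction is immediate from the definitions: starting from $X_\bullet$, the $Y_B$ built above recovers $X_2(0,w)$, and the iterated formula for $X_n$ agrees with $X_n$ via associativity of the original $X_\bullet$ applied to $X_1^{\otimes n}$. The other direction is the statement that the iteratively defined $X_n$ satisfies $X_2(0,w)=Y_B(w)$, which is tautological. The main obstacle is the well-definedness of the iterated $X_n$ as an element of $B[[w_1,\ldots,w_n]][f_\alpha^{-1}]$ with poles confined to the root hyperplanes of $\Gt$; this is the only place where the stronger $W$-locality (involving both $w_i-w_j$ and $w_i+w_j$) is actually used, and it is exactly calibrated so that the Dong-type induction produces no new singularities beyond $f_\alpha$.
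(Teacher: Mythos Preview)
Your proposal is correct and follows essentially the same route as the paper: extract $(|0\rangle,T,Y_B)$ from $X_0,X_1,X_2$, deduce the axioms from associativity and $W$-equivariance, and in the converse direction rebuild $X_n$ from iterated $Y_B$'s using the FBZ/Dong-type argument to control poles. The paper's proof is terser---it sets $Y_B(w)=X_2(w,0)$ rather than your $X_2(0,w)$, defers the locality derivation to \cite{AB}, and says the converse $X_n$ are ``determined by the associativity condition'' with associativity and commutativity following as in \cite[Thm.~3.2.1, 3.2.5]{FBZ}---but the content is the same; your write-up simply unpacks these references more explicitly.
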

\begin{proof}
  Given a $\Gt$-vertex algebra we set
  \begin{equation}\label{eqn:XToY}
    |0\rangle \ = \ X_0(1), \hspace{15mm} e^{uT} \ = \ X_1(u), \hspace{15mm} Y_B(w) \ = \ X_2(w,0)
  \end{equation}
  where the condition that $X_1(u)\cdot X_1(v)=X_1(u+v)$ implies that $X_1(u)$ is the exponential of an endomorphism. Compatibility of $|0\rangle,T,\tau$ with each other and $Y_B(w)$ follows from \eqref{eqn:GAssociativity} and $W$-locality follows as in \cite{AB}.

  Conversely, given the data in the Proposition, setting \eqref{eqn:XToY}, the other $X_n$ are then determined by the associativity condition. Associavity \eqref{eqn:GAssociativity} and commutativity follows from the $W$-locality condition as in the proof of associativity in Theorem 3.2.1 and 3.2.5 of \cite{FBZ}.
 \end{proof}

\subsubsection{Parabolic vertex algebras and $\Gt$-vertex modules} \label{sssec:PVA} We now write down the analogous definition attached to the standard family of parabolics 
\begin{center}
\begin{tikzcd}[row sep = {30pt,between origins}, column sep = {45pt, between origins}]
 &\Pt \ar[rd] \ar[ld]  & \\ 
 \GL \times \Gt& & \Gt
\end{tikzcd}
\end{center}
A \textit{$\Pt_{\GL \textup{-}\Gt}$-vertex algebra} or \textit{linear-orthosymplectic vertex algebra} $(A,B]$ is a local vertex algebra $A$, a $\Gt$-vertex algebra $B$, and a map
$$ X_{AB,n,m} \ : \ A^{\otimes n}\otimes B^{\otimes m} \ \to \ B[[z_1,\ldots ,z_{n}, w_1,\ldots ,w_m]][\{f_\alpha^{-1}\}]$$
for each $n,m \ge 0$, where $f_\alpha\in \{z_i\pm w_j\}$ are the unipotent hyperplanes for $\Pt_{\GL \textup{-}\Gt}$, which satisfies a linearity condition over $A$ and $B$: 
\begin{equation}\label{eqn:PAssociativity}
  X_{AB,r,s}\cdot (X_{A,n_1}\otimes \cdots \otimes X_{A,n_r}\otimes X_{B,m_1} \otimes \cdots  \otimes X_{B,m_s}) \ = \ X_{AB,n_1+ \cdots +n_r,m_1+ \cdots +m_s}
\end{equation}
where the suppressed variables are as in \eqref{eqn:GAssociativity}, and which satisfies $W$-locality: the diagram
\begin{equation}
  \label{fig:WeylEquivarianceP}
  \begin{tikzcd}[row sep = {30pt,between origins}, column sep = {20pt}]
  A^{\otimes n}\otimes B^{\otimes m} \ar[d,"\tilde{\sigma}"] \ar[r,"X_{AB,n,m}"]  &[10pt] B[[z_1,\ldots ,z_{n}, w_1,\ldots ,w_m]][\{f_\alpha^{-1}\}] \ar[d,"\sigma"] \\
  A^{\otimes n}\otimes B^{\otimes m} \ar[r,"X_{AB,n,m}"]  & B[[z_1,\ldots ,z_{n}, w_1,\ldots ,w_m]][\{f_\alpha^{-1}\}]
  \end{tikzcd} 
\end{equation}
commutes whenever $\tilde{\sigma}\in W_{P_{n,m}}\simeq \Sk_n\times W_m$ and its image under $\Sk_n \times W_m  \twoheadrightarrow  \Zb/2$ is denoted $\sigma$. By an argument as in Proposition \ref{prop:GVertexAlgebraExplicit}, we have the following result.

\begin{prop}
  A linear-orthosymplectic vertex algebra is equivalent to the data of a vertex algebra $A$, $\Gt$-vertex algebra $B$ (as in Proposition \ref{prop:GVertexAlgebraExplicit}), and a linear map
 $$Y_{AB}(z,w) \ : \ A \otimes B \ \to \ B(((z\pm w)^{-1}))$$ 
 satisfying linearity conditions over $A$ and $B$:
 \begin{align}
  \begin{split}
   Y_{AB}(z_1+z_2,w_2)\cdot (\id\otimes Y_{AB}(z_2,w_2))&\ =\ Y_{AB}(z_2,w_2)\cdot (Y_{A}(z_1)\otimes\id),\\
   Y_{AB}(z_1+z_2,w_1)\cdot (\id\otimes Y_{B}(z_2))&\ =\ Y_{B}(z_2)\cdot (Y_{AB}(z_1,w_1)\otimes\id),
  \end{split}
 \end{align}
 compatible with the involution
 \begin{equation}
  Y_{AB}(z,w)\cdot (\id \times \tau) \ = \ \tau\cdot Y_{AB}(z,-w),
 \end{equation}
 and such that $Y(|0_A\rangle,z,w)=\id$ and $Y(\alpha,z,w)|0_B\rangle$ has no negative powers of $z\pm w$.
\end{prop}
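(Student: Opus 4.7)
The plan is to mirror Proposition \ref{prop:GVertexAlgebraExplicit} in the parabolic setting. First I would extract, from a given linear-orthosymplectic vertex algebra, the binary data
\begin{equation*}
Y_{AB}(z,w)\ =\ X_{AB,1,1}(z,w),
\end{equation*}
together with $|0_A\rangle=X_{A,0}(1)$ and $|0_B\rangle=X_{B,0}(1)$; the vertex algebra structure on $A$ and the $\Gt$-vertex algebra structure on $B$ are recovered by Proposition \ref{prop:GVertexAlgebraExplicit}. The two cross-linearity identities for $Y_{AB}$ follow by decomposing the parabolic associativity \eqref{eqn:PAssociativity} for $X_{AB,2,1}$ in two ways---through $X_{A,2}$ followed by $X_{AB,1,1}$, or by iterating $X_{AB,1,1}$---and similarly for $X_{AB,1,2}$ paired with $X_{B,2}$. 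The $\tau$-compatibility is the instance of the $W$-equivariance \eqref{fig:WeylEquivarianceP} at the nontrivial generator of the $\Zb/2$-quotient of $W_{P_{1,1}}$. The vacuum conditions come from the $n=0$ and $m=0$ specialisations of parabolic associativity.

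For the converse direction, I would build the family $\{X_{AB,n,m}\}$ from $(Y_A,Y_B,Y_{AB})$ by iterated substitution: distinguishing a pair of indices $(1,1)$, apply $Y_{AB}(z_1,w_1)$ to fold in one $A$-slot and one $B$-slot, then reduce the remaining $A$-slots via $Y_A$ and the remaining $B$-slots via $Y_B$, using translation covariance to shift $z_i\to z_i-z_1$ and $w_j\to w_j-w_1$. The resulting element of $B[[z_i,w_j]][\{f_\alpha^{-1}\}]$ must be shown independent of the reduction order; this is precisely what the two linearity identities for $Y_{AB}$, combined with the locality of $A$ and the $W$-locality of $B$, guarantee, via the standard Cauchy--Jacobi type manipulation of formal Laurent series as in Theorem 3.2.1 of \cite{FBZ} and following the multivariable expansion conventions of Appendix A of \cite{JKL}. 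Parabolic associativity is then automatic from the construction, while $W_{P_{n,m}}$-equivariance splits as $\Sk_n$-locality for $A$, the $\Sk_m$-part of $W_m$-locality for $B$, and the $(\Zb/2)^m$-part handled by iterating the $\tau$-compatibilities of $Y_{AB}$ and $Y_B$.

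The main obstacle will be verifying well-definedness of the reconstructed $X_{AB,n,m}$. Two different associations of nested $Y_A$'s, $Y_B$'s, and $Y_{AB}$'s produce formal series with denominators drawn from the enlarged hyperplane arrangement
\begin{equation*}
\{z_i-z_j\}\cup\{w_i\pm w_j\}\cup\{z_i\pm w_j\},
\end{equation*}
and one must show they agree after expanding in a common domain. In the classical vertex-algebra case this reduces to an identity in $k[[z,w]][(z-w)^{-1}]$, but here one must simultaneously expand rational functions with both $\GL$-type and orthosymplectic-type poles, and the $\tau$-compatibility of $Y_{AB}$ is exactly what ensures that the reflections $w_j\mapsto -w_j$ involved in reorganising the $B$-expansions match up consistently. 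Once this multivariable expansion lemma is in hand, the remaining verifications are formal.
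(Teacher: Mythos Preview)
Your proposal is correct and matches the paper's approach exactly: the paper's proof consists of the single sentence ``By an argument as in Proposition \ref{prop:GVertexAlgebraExplicit}, we have the following result,'' and what you have written is precisely a careful unpacking of that argument in the parabolic setting. Your identification of the forward and converse directions, the derivation of the linearity conditions from \eqref{eqn:PAssociativity}, the $\tau$-compatibility from \eqref{fig:WeylEquivarianceP}, and the reconstruction of $X_{AB,n,m}$ via iterated substitution with well-definedness controlled by locality (following \cite{FBZ} and \cite{JKL}) are all in line with how Proposition \ref{prop:GVertexAlgebraExplicit} is proven and are exactly what the paper intends.
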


For $A$ a vertex algebra, an \textit{orthosymplectic vertex $A$-module} $B$ is the above data and conditions, except we only allow $m=1$ in the definition of $X_{AB,n,m}$

\subsubsection{} Likewise, a \textit{$\Pt$-vertex algebra} $(A_1|\cdots|A_{k}|B]$ for the family of standard parabolics 
\begin{center}
  \begin{tikzcd}[row sep = {30pt,between origins}, column sep = {45pt, between origins}]
   &\Pt \ar[rd] \ar[ld]  & \\ 
   \GL^{\times k} \times \Gt& & \Gt
  \end{tikzcd}
  \end{center}
is a collection $A_1,\ldots ,A_k$ of vertex algebras, a $\Gt$-vertex algebra $B$, and maps 
\begin{align*}
  X_{A_iA_{i+1},n,m} \ : \ A_{i}^{\otimes n} \otimes A_{i+1}^{\otimes m} &\ \to \ A_{i+1} [[z_1, \ldots  , z_{n+m}]][\{f_\alpha^{-1}\}] \\
  X_{A_kB,n,m} \ : \ A_{k}^{\otimes n} \otimes B^{\otimes m} &\ \to \ B [[z_1, \ldots  , z_{n},w_1, \ldots ,w_m]][\{f_\alpha^{-1}\}] 
\end{align*}
where $f_\alpha$ are the functions defining hyperplanes in $\Ol(\tk_{\glk_n}\oplus \tk_{\glk_m})$ and $\Ol(\tk_{\glk_n} \oplus\tk_{\gk_m})$ respectively, satisfying associativity and $W$-equivariance conditions.

\subsubsection{Gradings} If $\Gt$ is of classical type, we define a \textit{graded} $\Gt$-vertex algebra to be a collection of vector spaces $B_r$ for $r\ge 0$ and maps 
$$B_{r_1} \otimes \cdots  \otimes B_{r_n} \ \to \ B_{r_1+\cdots + r_n}[[w_1,\ldots ,w_n]][\{f_\alpha^{-1}\}]$$
satisfying the same conditions as in the definition of a $\Gt$-vertex algebra. Likewise, if $\Pt$ is standard parabolic type, then we define a \textit{graded} $\Pt$-vertex algebra to be a collection of vector spaces $A_{1,r},\ldots ,A_{k,r},B_r$ for $r\ge 0$ with graded analogues of the structure maps and conditions for a $\Pt$-vertex algebra.

\subsubsection{Dual notions} Each of the above definitions admits a dual version of (graded) $\Gt$-vertex coalgebra; we do not spell out the definition. We write $\textup{VAlg}_\Gt$ for the category of graded $\Gt$-vertex algebras.




\subsubsection{Nonsingular examples} If $A$ is a commutative algebra with derivation $T$, then 
$$Y(a,z)a' \ = \ (e^{z T}a)\cdot a'$$
makes $A$ into a vertex algebra.  These are called \textit{holomorphic}\footnote{As opposed to meromorphic.} because these structure maps have no polar terms.

\begin{prop} \label{prop:HolomorphicClassicalVertexAlgebra}
  Let $A$ be an associative algebra with derivation $T$. Then $X_0=1_A$ and 
  $$X_n(u_1,\ldots ,u_n)a_1 \otimes \cdots \otimes a_n \ = \ (e^{u_1T}a_1)\cdots (e^{u_nT}a_n)$$
  define a nonlocal $\Gt$-vertex algebra structure, for any $\Gt \in \{ \GL, \Ot_{\textup{odd}}, \Sp\}$ of classical type. If in addition $A$ is commutative with an algebra involution $\tau$ anticommuting with $T$, then this is a local $\Gt$-vertex algebra.
\end{prop}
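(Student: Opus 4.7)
The plan is to verify the axioms of a (nonlocal, then local) $\Gt$-vertex algebra from Definition \ref{sssec:GVA} by direct calculation, using the two algebraic inputs that $T$ is a derivation and, in the local case, that $\tau$ is an algebra involution anticommuting with $T$. First note that the proposed $X_n$ has image in $A[[u_1,\ldots,u_n]]$ with no denominators, so the question of localisation does not arise and it automatically lands in the ambient space of the definition.

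For the nonlocal case, the only content is the unit (which is automatic from $X_0=1_A$) and the associativity identity \eqref{eqn:GAssociativity}. I would expand the composite
$$X_r(z_1,\ldots,z_r)\cdot\bigl(X_{n_1}(u^{(1)})\otimes\cdots\otimes X_{n_r}(u^{(r)})\bigr)$$
on inputs $a_j^{(i)}$: the inner applications of $X_{n_i}$ produce the products $\prod_j e^{u_j^{(i)}T}a_j^{(i)}\in A$, and the outer $X_r$ applies $e^{z_iT}$ to the $i$-th such product. Since $T$ is a derivation, $e^{z_iT}$ is an algebra endomorphism, so it distributes across the product as
$$e^{z_iT}\!\Bigl(\prod_j e^{u^{(i)}_jT}a^{(i)}_j\Bigr)\;=\;\prod_j e^{(z_i+u^{(i)}_j)T}a^{(i)}_j,$$
and concatenating across $i=1,\ldots,r$ reproduces $X_{n_1+\cdots+n_r}$ evaluated at the lexicographically ordered shifts $(z_i+u_j^{(i)})$, as required.

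For the local case, assuming $A$ is commutative and $\tau T=-T\tau$, I would check the $W_n$-equivariance \eqref{fig:WeylEquivariance} component by component. The $\Sk_n$-part is immediate: permuting inputs simultaneously with variables reorders the commuting product $\prod e^{u_iT}a_i$ without change. For the $(\Zb/2)^n$-part, which matches the involution on a tensor factor with the sign flip of the corresponding variable together with the image-of-$\sigma$ action on coefficients, the essential identity is $\tau\cdot e^{uT}=e^{-uT}\cdot\tau$, the formal exponentiation of $\tau T=-T\tau$. Distributing $\tau$ over the product (allowed by commutativity and $\tau$ being an algebra homomorphism) reduces equivariance to this one relation.

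Equivalently, I could route through Proposition \ref{prop:GVertexAlgebraExplicit}: with $|0\rangle=1_A$, $e^{uT}=X_1(u)$, $Y_B(w)=X_2(w,0)$, the higher $X_n$ are determined by associativity, and one only has to verify for $Y_B$ the commutator bound (immediate from commutativity, with the polynomial prefactor in fact unnecessary) and the $\tau$-intertwining condition (again from $\tau e^{uT}=e^{-uT}\tau$). The main subtlety, and the one place where real care is needed, is the bookkeeping of the Weyl-group action on the codomain $B[[w_1,\ldots,w_n]][\{f_\alpha^{-1}\}]$, which combines the geometric permutation and sign flip of variables with the induced involution on coefficients; once the conventions are unwound, the identities above complete the proof without further computation.
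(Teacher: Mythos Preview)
Your proposal is correct and follows essentially the same approach as the paper: associativity \eqref{eqn:GAssociativity} via the fact that $e^{zT}$ is an algebra homomorphism together with $e^{zT}e^{uT}=e^{(z+u)T}$, the $\Sk_n$-equivariance from commutativity, and the $\Zb/2$-equivariance from the identity $\tau e^{uT}=e^{-uT}\tau$. If anything, you are slightly more explicit than the paper, which cites only $e^{u_1T}e^{u_2T}=e^{(u_1+u_2)T}$ for associativity and leaves the algebra-homomorphism property of $e^{zT}$ implicit.
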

\begin{proof}
  Since $T$ is a derivation, we have 
  $$e^{u_1T}\cdot e^{u_2T} \ = \  e^{(u_1+u_2)T}$$
  which implies the associativity condition \eqref{eqn:GAssociativity}, and hence $A$ forms a nonlocal $\Gt$-vertex algebra. In the second case, the diagram \eqref{fig:WeylEquivariance} commutes for any element in $\Sk_n  \subseteq W_n$ by commutativity and any element $\Zb/2  \subseteq W_n$ by
  $$X_n(u_1,\ldots ,u_n)a_1 \otimes \cdots \otimes \tau(a_i)\otimes \cdots \otimes a_n \ = \ (e^{u_1T}a_1) \cdots \tau(e^{-u_iT}a_i)\cdots (e^{u_nT}a_n)$$
  since as the involution anticommutes with the derivation we have $e^{uT}\tau = \tau e^{-uT}$.
\end{proof}

\begin{prop}
  Let
   $$A_1 \ \stackrel{f_1}{\to} \  A_2 \ \stackrel{f_2}{\to} \  \ldots \ \stackrel{f_{k-1}}{\to} \  A_k \ \stackrel{f_k}{\to} \ A_{k+1} \ = \  B$$
   be maps of commutative algebras with derivations $T_i,T_{k+1}=T_B$, the last having an involution $\tau_B$ anticommuting with $T_B$. Then
   $$X_{A_iA_j,n,m}(z_i,w_j) \otimes a_{i,n_i} \otimes \otimes a_{j,m_j} \ = \ \smprod (e^{z_iT_i}a_{i,n_i})\cdot \smprod(e^{w_jT_j}a_{j,m_j})$$
    for $1 \le i \le j \le k+1$ defines a $\Pt_{\GL^k\times \Gt}$-vertex algebra structure on $(A_1|\cdots |A_k|B]$.
\end{prop}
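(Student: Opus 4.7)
The plan is to verify the two defining properties of a $\Pt$-vertex algebra from section~\ref{sssec:PVA}: the linearity/associativity condition \eqref{eqn:PAssociativity} in its parabolic form, and the $W$-equivariance \eqref{fig:WeylEquivarianceP}. This generalises Proposition~\ref{prop:HolomorphicClassicalVertexAlgebra} to the parabolic setting, and the overall shape of the argument mirrors it.

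First I would fix the interpretation of the formula. Since each $f_l\colon A_l\to A_{l+1}$ is a map of commutative algebras with derivations, it intertwines the derivations, $f_lT_l=T_{l+1}f_l$, and hence also the exponentials, $f_l\cdot e^{uT_l}=e^{uT_{l+1}}\cdot f_l$. Thus for $i\le j$ the expression $\smprod(e^{z_iT_i}a_{i,n_i})\cdot\smprod(e^{w_jT_j}a_{j,m_j})$ is to be read inside $A_j$ (with $A_{k+1}=B$) by pushing each $A_i$-input through the composite $f_{j-1}\cdots f_i$. This in particular defines the required adjacent structure maps $X_{A_iA_{i+1},n,m}$ and $X_{A_kB,n,m}$; moreover, restricted to any single $A_l$, the formula recovers the type $A$ nonsingular vertex algebra of Proposition~\ref{prop:HolomorphicClassicalVertexAlgebra}, and restricted to $B$ alone it recovers the $\Gt$-vertex algebra structure on $B$ produced there.

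Next I would verify the linearity/associativity condition. The key algebraic inputs are that for a derivation $T$ on a commutative algebra one has $e^{uT}(ab)=(e^{uT}a)(e^{uT}b)$ and $e^{uT}\cdot e^{vT}=e^{(u+v)T}$. Together with commutativity of each $A_l$ and with the intertwining property of the $f_l$'s, the right-hand side of \eqref{eqn:PAssociativity} expands into the lexicographically ordered product of exponentials $e^{(z_i+u_j^{(i)})T_\bullet}$ applied to the individual inputs, which coincides on the nose with the value of $X_{A_iA_j,r,s}$ at the shifted variables $z_i+u_j^{(i)}$.

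Finally I would verify the $W$-equivariance \eqref{fig:WeylEquivarianceP}. The parabolic Weyl group is $W_{\Pt_{n,m}}\simeq \Sk_{n_1}\times\cdots\times\Sk_{n_k}\times W_m$. The $\Sk_{n_l}$-part is absorbed by the commutativity of $A_l$ (and of $B$), which permits arbitrary reordering of factors in the product formula. For a single sign flip in a $\Zb/2$-factor of $W_m$, acting on the $j$-th $B$-input by $w_j\mapsto -w_j$, the anticommutation $\tau_BT_B=-T_B\tau_B$ gives $\tau_B(e^{w_jT_B}b_j)=e^{-w_jT_B}\tau_B(b_j)$; since $\tau_B$ is an algebra involution it distributes across the product, reproducing the action of $\tilde{\sigma}$'s image under the surjection $W_m\twoheadrightarrow\Zb/2\simeq\langle\tau\rangle$. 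The only genuine obstacle is the book-keeping of indices and of the map $W_m\twoheadrightarrow\Zb/2$; no input beyond commutativity of the $A_l$'s, the intertwining $f_lT_l=T_{l+1}f_l$, and the relation $\tau_BT_B=-T_B\tau_B$ is used.
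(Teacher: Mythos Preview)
Your proposal is correct and follows exactly the approach the paper takes: the paper's own proof is the single line ``Proved as Proposition~\ref{prop:HolomorphicClassicalVertexAlgebra}'', and you have spelled out precisely that argument in the parabolic setting, using the derivation identity $e^{uT}e^{vT}=e^{(u+v)T}$ for associativity, commutativity for the $\Sk_{n_l}$-equivariance, and $\tau_B T_B=-T_B\tau_B$ for the $\Zb/2$-factors. Your added clarification about pushing inputs through the composite $f_{j-1}\cdots f_i$ via the intertwining $f_lT_l=T_{l+1}f_l$ is a useful point that the paper leaves implicit.
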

\begin{proof}
  Proved as Proposition \ref{prop:HolomorphicClassicalVertexAlgebra}. 
\end{proof}

\subsubsection{Example}  As an example of Proposition \ref{prop:HolomorphicClassicalVertexAlgebra} we take $A = \Cb[t]$ equipped and $T=\partial_t$ with structure map
$$Y(a(t),z)b(t)\ =\ (e^{z\partial_t }a(t))b(t) \ = \ a(t+z)b(t).$$
and involution $\tau(t)=-t$. We then have 
 $$\tau \left(a(t+z)b(t) \right) \ = \ a(-t+z)b(-t) \ = \ a(-(t-z)) b(-t) \ = \  Y(\tau a(t),-z)\tau b(t).$$

\subsection{Categories of finite sets} \label{ssec:FiniteSets}

\subsubsection{} We begin with the $\infty$-category $\FinSet$ of finite sets, likewise $\FinSet^{\surj}$ the category of finite sets where morphisms are surjections. We often use the notation $\FinSet=\FinSet_{\GL}$.

\subsubsection{} In what follows we will define categories of finite sets equipped with various extra data. Writing $\Dl_n$ for the $\infty$-category of all possible data attached to the finite set $[n]$, we ask that this extends to give a functor 
$$F_\Dl \ : \ \FinSet \ \to \ \Cat, \hspace{15mm} [n] \ \mapsto \ \Dl_n$$
and we then define the category of finite $\Dl$-sets to be the Grothendieck construction \cite[Thm. 3.2.0.1]{Lu}  applied to this functor: 
\begin{center}
\begin{tikzcd}[row sep = {30pt,between origins}, column sep = {20pt}]
  \FinSet_\Dl^{\surj} \ar[r] \ar[d] &\FinSet_\Dl \ar[d,"p"] & \\ 
 \FinSet^{\surj} \ar[r] & \FinSet \ar[r,"F_\Dl"] &\Cat
\end{tikzcd}
\end{center}
So the objects of $\FinSet_\Dl^{\surj}$ as the pullback, so its objects $([n],d_n)$ are pairs of finite sets $[n]$ with extra data $d_n \in \Dl_n$ and morphisms are surjections of sets $f:[n]\to [m]$ preserving this data: $F_\Dl(f):d_n \mapsto  d_m$.

If in addition $F_\Dl$ is (symmetric) monoidal, i.e. we have a compatible family of equivalences
$$\alpha_{n,m} \ : \ \Dl_{[n]\sqcup [m]}\ \simeq\ \Dl_{[n]}\times \Dl_{[m]}$$
(and $\alpha_{n,m}\simeq \alpha_{m,n}$) satisfying higher coherences, then $\FinSet_\Dl$ inherits a (symmetric) monoidal structure such that $p$ is (symmetric) monoidal.

\subsubsection{Remark} Since $p$ is coCartesian, we have a preferred (coCartesian) 1-morphism 
$$\tilde{f}\ : \ ([n],d_n) \ \to \ ([m],f(d_n)) $$
in $\FinSet_\Dl$ lifting any map $f:[n] \to [m]$ of finite sets. We often abuse notation and write $f$ for this 1-morphism.

\subsubsection{} \label{sssec:FiniteSetsEx} Thus we define the categories of 
\begin{itemize}
  \item  \textit{ordered finite sets} $\FinSet^{\ord}$ to be finite sets with a total order, or equivalently $\Dl_n$ is the category of embeddings of $[n]$
 \item \textit{orthosymplectic finite sets} $\FinSet_{\Gt}$ whose data is defined by
 $$\Dl_{\Sp,n} \ = \ \Dl_{\Ot_{\textup{odd}},n} \ = \ (\Bt \Zb/2)^n, \hspace{15mm} \Dl_{\Ot_{\textup{ev}},n}\ = \ (\Bt \Zb/2)^{n-1},$$
 \item \textit{linear-linear finite sets} $\FinSet_{\GL \textup{-}\GL} = \FinSet_{\Pt_{\GL^{\times 2}}}$ whose data is defined by partitions 
 $$\obj \Dl_n \ = \ \{[n]\simeq (n_1|n_2) \ : \ n= n_1+n_2\}$$
 and morphisms 
 $$\left( \begin{smallmatrix} 
  f_{11} & \\ 
  f_{21} & f_{22}
  \end{smallmatrix} \right) \ : \  (n_1|n_2) \ \to \ (m_1|m_2)$$
 are set maps $f:[n_1+n_2]\to [m_1+m_2]$ sending each each stratum of the partition to itself or a lower stratum,\footnote{To the right, i.e. with higher numbered index.}  giving maps $f_{ij}:[n_i]\to [m_j]$.
 \item \textit{Linear-$\cdots$-linear finite sets} $\FinSet_{\GL^{\times k}}=\FinSet_{\Pt_{\GL^{\times k}}}$ where 
  $$\obj \Dl_n \ = \ \{[n]\simeq (n_1|\cdots |n_k) \ : \ n= n_1+\cdots +n_k\}$$
  and morphisms 
  $$\left( \begin{smallmatrix} 
    f_{11} & & \\ 
    \vdots & \ddots & \\ 
    f_{1k} &\cdots & f_{kk}
    \end{smallmatrix} \right) \ : \  (n_1|\cdots |n_k) \ \to \ (m_1|\cdots |m_k)$$
    are set maps nonincreasing the stratification.
  \item \textit{Linear-orthosymplectic finite sets} $\FinSet_{\GL \textup{-}\Gt}= \FinSet_{\Pt_{\GL \times \Gt}}$ where $\obj \Dl_n$ consists of a partition $[n]\simeq (n_1|n_2)$ with orthosymplectic structures on $n$ and $n_2$ such that the map $[n_2]\hookrightarrow [n]$ respects the orthosymplectic structure, i.e. 
  $$\FinSet_{\GL \textup{-}\Gt} \ = \ \FinSet_{\GL\times \GL}\times_{\Fun(\sbt \to \sbt,\FinSet)} \Fun(\sbt \to \sbt,\FinSet_{\Gt})$$
  where $\Fun(\sbt \to \sbt,\FinSet_{(\Gt)})$ is the category of two finite ($\Gt$-)sets with a map between them.
  \item Likewise we define \textit{linear-$\cdots$-linear-orthosymplectic finite sets} $\FinSet_{\GL^{\times k-1}\textup{-}\Gt}=\FinSet_{\Pt_{\GL^{\times k-1}\times \Gt}}$ to be finite sets with partition $[n]\simeq (n_1|\cdots |n_k)$ with compatible orthosymplectic structures on $n$ and on $n_k$.
\end{itemize}
Finally, we define $\FinSet_{\Dl}^{\ord}$ to be the category of finite $\Dl$-sets with a total ordering which are required to refine the above partitions.\footnote{e.g. for $(n_1|\cdots|n_k)$, every element in $[n_1] \subseteq [n]$ less than $[n_2]\subseteq [n]$, and so on.}

\begin{lem} \label{lem:UnionMonoidalStructure}
  The above categories are all symmetric monoidal with product denoted 
  $$\cup_{\Dl} \ : \ \FinSet_{\Dl}\times \FinSet_{\Dl} \ \to \ \FinSet_{\Dl}$$ 
  and all ordered categories as monoidal with product denoted
  $$\cup_{\Dl}^{\ord} \ : \ \FinSet_{\Dl}^{\ord}\times \FinSet_{\Dl}^{\ord} \ \to \ \FinSet_{\Dl}^{\ord}$$
 such that the forgetful functor $\FinSet_\Dl^{\surj,\ord} \to \FinSet_\Dl^{\surj}$ is monoidal.
\end{lem}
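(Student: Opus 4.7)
My plan is to realize each category as the Grothendieck construction of a functor $F_\Dl\colon \FinSet \to \Cat$ and then import the monoidal structure from $(\FinSet,\sqcup)$ using the standard principle that the Grothendieck construction of a (symmetric) monoidal functor valued in $\Cat$ is itself (symmetric) monoidal, with respect to which the projection $p\colon \FinSet_\Dl \to \FinSet$ is (symmetric) monoidal (cf. \cite[Thm.~2.4.2.5]{Lu}). The surjective-morphism variants then inherit the structure since $\sqcup$ preserves surjections.

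The first step is to equip each $F_\Dl$ with a symmetric monoidal structure relative to $(\FinSet,\sqcup)$. Concretely, this amounts to producing natural equivalences
$$\alpha_{n,m}\colon \Dl_{[n]\sqcup[m]} \stackrel{\sim}{\to} \Dl_{[n]} \times \Dl_{[m]}$$
together with units and the usual higher coherences, compatible with the symmetry and associativity of disjoint union. For the cases listed in \ref{sssec:FiniteSetsEx} this is transparent:
\begin{itemize}
\item $F_\Gt$ for $\Gt \in \{\Sp, \Ot_{\textup{odd}}\}$ assigns $(\Bt\Zb/2)^n$, which is manifestly additive in $n$; the even orthogonal variant is obtained from the kernel of a parity map on $(\Bt\Zb/2)^n$ and is handled similarly after a small modification.
\item $F_{\GL^{\times k}}$ assigns to $[n]$ the set of ordered $k$-partitions of $[n]$, and a $k$-partition of a disjoint union is the same as a pair of $k$-partitions of the summands.
\item The linear-$\cdots$-linear-orthosymplectic case then follows immediately from the fiber product description in \ref{sssec:FiniteSetsEx}, since fiber products of symmetric monoidal categories over symmetric monoidal functors remain symmetric monoidal.
\end{itemize}
Applying the symmetric monoidal Grothendieck construction to each $F_\Dl$ then yields the symmetric monoidal structure $\cup_\Dl$ on $\FinSet_\Dl$.

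For the ordered variants, a total order on $[n]\sqcup[m]$ is not canonically determined by a pair of total orders (there are shuffles), so $F_\Dl^{\ord}$ cannot be made symmetric monoidal against disjoint union. Instead I would use \emph{concatenation}: place elements of $[n]$ before those of $[m]$, and, within each stratum of a (parabolic) partition, concatenate the two orderings stratum by stratum. This concatenation evidently preserves the refinement condition between the order and the partition, and gives $F_\Dl^{\ord}$ the structure of a monoidal (non-symmetric) functor, producing the operation $\cup_\Dl^{\ord}$. The forgetful functor then sends concatenation to disjoint union by simply discarding the chosen order, giving the required compatibility and completing the proof.

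The main obstacle is bookkeeping rather than conceptual difficulty: one must verify the pentagon and hexagon coherences in each of the six cases, and the only mildly subtle point is the even orthogonal case, where the parity constraint on sign changes means $(\Bt\Zb/2)^{n-1}$ does not naively split as a product and one must work with the kernel of the parity map from $(\Bt\Zb/2)^{n+m}$ instead.
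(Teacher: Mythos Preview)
Your proposal is correct and follows essentially the same approach as the paper: the paper sets up in \S\ref{ssec:FiniteSets} exactly the principle you invoke, namely that when $F_\Dl$ is (symmetric) monoidal via equivalences $\alpha_{n,m}:\Dl_{[n]\sqcup[m]}\simeq \Dl_{[n]}\times\Dl_{[m]}$, the Grothendieck construction $\FinSet_\Dl$ inherits a (symmetric) monoidal structure with $p$ monoidal, and then states the lemma without further proof. Your case-by-case verification of the $\alpha_{n,m}$ and your treatment of the ordered variants via concatenation simply fill in the details the paper leaves implicit; your flag about the even orthogonal case is a reasonable caution not addressed explicitly in the paper.
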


\subsubsection{Example} The category of symplectic finite sets has one isomorphism class of object $\pm[n]$ for each integer $n \ge 0$. The one-endomorphisms correspond to elements of the Weyl group 
$$\Hom(\pm [n],\pm[n])\ \simeq \ W_{\Sp_{2n}}$$
and the set of one-morphisms $\pm[n] \to \pm [m]$ is non-canonically identified with the product $W_{\Sp_{2m}}\times \Maps^{\surj}([n],[m])$. More generally, 

\begin{lem} \label{lem:GFiniteSetsMorphismsDescription}
  Let $\Gb$ be a group of classical type (i.e. $\GL, \Ot_{\textup{odd}}, \Ot_{\textup{ev}}, \Sp$) or a standard parabolic $\Pt=\Pt_{\GL^{\times k-1}\times \Gt}$. Then the objects of $\FinSet^{\surj,(\ord)}_{\Gb}$ are (resp. ordered) finite sets with a partition, the group of one-endomorphisms
  $$(n_1|\cdots| n_k) \ \stackrel{\sim}{\to} \ (n_1|\cdots|n_k)$$
  are identified with the Weyl group $W_\Gb$ (resp. is trivial), and the one morphisms  
  $$(n_1|\cdots| n_k) \ \twoheadrightarrow \ (m_1|\cdots|m_k)$$
  are compositions of one-endomorphisms and the order-preserving \emph{adjacency maps}
  $$\varphi_{i,j} \ : \ (n_1|\cdots|n_{i}+1|\cdots|n_k) \ \twoheadrightarrow \ (n_1|\cdots |n_i|\cdots|n_k)$$
  for $j=0,1,\ldots, n_i$, given by merging the $(j-1)$th and $j$th element in $[n_i]$, where the $-1$th is defined as the last element of $[n_{i-1}]$, and is injective on all other elements.
\end{lem}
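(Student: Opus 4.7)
The plan is to verify each of the three claims in turn, directly from the definition of $\FinSet_\Gb^{\surj,(\ord)}$ as a Grothendieck construction applied to the functor $F_{\Dl}:\FinSet \to \Cat$ described in section~\ref{sssec:FiniteSetsEx}. For the first claim, an object of $\FinSet^{\surj,(\ord)}_\Gb$ unpacks, by the definition of the Grothendieck construction, to a finite (ordered) set $[n]$ together with an element of $\Dl_n$. In the parabolic cases $\Gb=\Pt_{\GL^{\times k-1}\times\Gt}$ the fibre $\Dl_n$ is the set (or groupoid) of partitions $[n]\simeq (n_1|\cdots|n_k)$ together with orthosymplectic structures on $[n]$ and $[n_k]$ when $\Gt$ is present; for a single classical $\Gb$ the partition is trivial and $\Dl_n$ is just $(\Bt\Zb/2)^{\bullet}$ or a point. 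This reads off as the first item.

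For the second claim, $1$-endomorphisms of $(n_1|\cdots|n_k)$ in $\FinSet_\Gb$ consist of an identity surjection $[n]\to [n]$ of finite sets (there is no other surjection from a finite set to itself) together with an automorphism of the additional $\Dl_n$-data lying over it. An automorphism of a partition $[n]\simeq(n_1|\cdots|n_k)$ is a tuple of permutations $(\sigma_1,\ldots,\sigma_k)\in\Sk_{n_1}\times\cdots\times\Sk_{n_k}$; in the presence of an orthosymplectic structure on the last block $[n_k]$ (and on $[n]$, compatibly) the $\Sk_{n_k}$-factor is replaced by $W_{\Gt_{n_k}}$ by the computation carried out for $\FinSet_\Gt$. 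Assembling these factors gives precisely the Weyl group $W_\Gb$. In the ordered case, $\Dl_n$ carries in addition an embedding $[n]\hookrightarrow \Zb$ refining the partition, and no nontrivial permutation preserves it; hence all $1$-endomorphisms are trivial.

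For the third and most substantive claim, consider a morphism $f:(n_1|\cdots|n_k)\twoheadrightarrow (m_1|\cdots|m_k)$. The underlying surjection of finite sets factors (after relabeling within each block by a $W_\Gb$-element on the source) as a composition of elementary surjections $[n+1]\twoheadrightarrow [n]$ merging two consecutive elements. To fit the partitioned structure, we must arrange that each merging step takes place either within a single block $n_i$, or merges the last element of block $n_{i-1}$ with the first element of $n_i$ (so that the stratification is never raised); these are exactly the adjacency maps $\varphi_{i,j}$. The remaining freedom of how to reorder elements within blocks is absorbed by composing on source and target with elements of the Weyl group; for the orthosymplectic block this uses the full $W_{\Gt_{n_k}}$ (rather than just $\Sk_{n_k}$) because the orthosymplectic structure on $[n]$ must be sent to that on $[m]$. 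In the ordered variant, the total order forces each $\varphi_{i,j}$ to be the adjacency map named in the statement, and no nontrivial Weyl element is needed.

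The main technical point is the factorisation in the third paragraph: one has to check that the composite of adjacency maps together with a Weyl element really realises every allowed surjection, and that two such factorisations differ only by inserting adjacent commuting pairs (i.e.\ the relations in $\FinSet_\Gb$ are generated by the obvious ones). Both claims reduce to the classical fact that surjections of finite sets are generated by merges of consecutive elements, combined with the elementary observation that the stabiliser of a partition inside $\Sk_n$ is $\Sk_{n_1}\times\cdots\times\Sk_{n_k}$ and that the subgroup of $W_{\Gt_n}$ preserving the partition of $[n]$ together with the orthosymplectic structure on $[n_k]$ is $\Sk_{n_1}\times\cdots\times\Sk_{n_{k-1}}\times W_{\Gt_{n_k}}=W_\Gb$.
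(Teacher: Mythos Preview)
The paper states this lemma without proof, treating it as a direct consequence of the definitions in section~\ref{sssec:FiniteSetsEx}. Your proof plan is essentially the verification the paper omits: unpacking the Grothendieck construction of $\FinSet_\Gb$ and checking that automorphisms give the Weyl group while general surjections factor into adjacency merges and Weyl elements. The argument is correct and is the natural way to fill in the details; there is nothing to compare against in the paper itself.
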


\subsection{Orthosymplectic Ran spaces} \label{ssec:GeneralisedRanSpaces}

\subsubsection{} Recall that the ordinary \textit{Ran space} of the affine line is defined as a colimit of $\Ab^n$ over all diagonal and symmetric maps, which we suggestively denote by
\begin{center}
  \begin{tikzcd}[row sep = 20pt, column sep = 20pt]
    \Ran_{\GL} \Ab^1\ =\  \colim\bigg( &[-20pt] \tk_{\glk_1}\ar[r,std]\arrow[loop, distance=2em, in=50, out=130, looseness=5,"\Sk_1"] & \tk_{\glk_2}\ar[r,{yshift=3pt},std]\ar[r,{yshift=-3pt},std] \arrow[loop, distance=2em, in=50, out=130, looseness=5,"\Sk_2"] & \tk_{\glk_3} \arrow[loop, distance=2em, in=50, out=130, looseness=5,"\Sk_3"] \ar[r,{yshift=6pt},std] \ar[r,{yshift=0pt},std]\ar[r,{yshift=-6pt},std] & \cdots\ \ \bigg)
  \end{tikzcd}
  \end{center} 
  where the colimit is taken in $\PreStk$, see e.g. \cite{FG} for more details on the Ran space. We can likewise define the \textit{ordered Ran space} of the affine line as 
  \begin{center}
    \begin{tikzcd}[row sep = 20pt, column sep = 20pt]
      \Ran_{\GL}^{\ord} \Ab^1\ =\  \colim\bigg( &[-20pt] \tk_{\glk_1}\ar[r,std] & \tk_{\glk_2}\ar[r,{yshift=3pt},std]\ar[r,{yshift=-3pt},std]  & \tk_{\glk_3}  \ar[r,{yshift=6pt},std] \ar[r,{yshift=0pt},std]\ar[r,{yshift=-6pt},std] & \cdots\ \ \bigg)
    \end{tikzcd}
    \end{center} 


Let $\Gt \in \{\GL, \Ot_{\textup{ev}}, \Ot_{\textup{odd}},\Sp\}$ be a family of classical groups, and $\Pt=\Pt_{\GL^{\times k-1} \textup{-}\Gt}$ the standard family of parabolics. Let $\Gb$ denote either $\Gt$ or $\Pt$ and $X$ be a prestack with involution $\tau$. 

We define a functor 
$$X^{(-)}\ : \ \FinSet_{\Gb}^{\surj,\textup{op}} \ \to \ \PreStk$$
defined on objects by $I\mapsto X^I$ and on compositions of adjacency maps $\varphi:I \twoheadrightarrow J$ and isomorphisms $\psi:I \stackrel{\sim}{\to}I$  by 
$$\Delta_\varphi \ : \  X^I \ \to \ X^J, \hspace{15mm} \psi \ : \ X^I \ \stackrel{\sim}{\to} \ X^I,$$
compositions of the associated adjacency diagonal maps $\Delta_{i,j}=\Delta_{\varphi_{i,j}}$ and elements of the Weyl group $W_{\Gb}$, the symmetric group factors acting by permutations and the $\Zb/2$ factors acting by $\tau$ on the appropriate factor of $X^I$.

\begin{defn}
  The (\textup{ordered}) \textit{$\Gb$-Ran space} of $\Ab^1$ is the colimit
  $$\Ran_\Gt X \ = \ \colim_{I \in \FinSet_\Gb^{\surj,(\ord),\textup{op}}} X^I.$$
\end{defn}

\begin{prop}
  If $\Gb$ is of classical type or a standard parabolic, we have 
\begin{center}
  \begin{tikzcd}[row sep = 20pt, column sep = 20pt]
    \Ran_{\Gb} \Ab^1\ =\  \colim\bigg( &[-20pt] \tk_{\gk_1}\ar[r,std]\arrow[loop, distance=2em, in=50, out=130, looseness=5,"W_{\Gb_1}"] & \tk_{\gk_2}\ar[r,{yshift=3pt},std]\ar[r,{yshift=-3pt},std] \arrow[loop, distance=2em, in=50, out=130, looseness=5,"W_{\Gb_2}"] & \tk_{\gk_3} \arrow[loop, distance=2em, in=50, out=130, looseness=5,"W_{\Gb_3}"] \ar[r,{yshift=6pt},std] \ar[r,{yshift=0pt},std]\ar[r,{yshift=-6pt},std] & \cdots\ \ \bigg)
  \end{tikzcd}
  \end{center}
  where $\gk_n$ is the Lie algebra and $W_{\Gb_n}$ is the Weyl group of $\Gb_n$, and the horizontal maps are the adjacent diagonal maps.
\end{prop}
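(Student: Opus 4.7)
The plan is to compute the $\infty$-colimit defining $\Ran_\Gb \Ab^1$ by passing to a skeleton of $\FinSet_\Gb^{\surj,\textup{op}}$ and checking that the resulting diagram matches the one displayed. First I would invoke Lemma~\ref{lem:GFiniteSetsMorphismsDescription}: for $\Gb$ of classical type (resp.\ a standard parabolic $\Pt_{\GL^{\times k-1}\textup{-}\Gt}$), every object of $\FinSet_\Gb^{\surj}$ is isomorphic to one indexed by a non-negative integer $n$ (resp.\ by a tuple $(n_1|\cdots|n_k)$ of non-negative integers with $\sum_i n_i = n$). The automorphism group of any such object is the Weyl group $W_{\Gb_n}$, and every non-invertible morphism factors as a composition of an automorphism with adjacency surjections $\varphi_{i,j}$. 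Passing to opposites reverses the adjacency maps and leaves the automorphisms unchanged.

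Second, I would fix a skeleton $\Sl \subseteq \FinSet_\Gb^{\surj}$ containing one object per such isomorphism class. Since $\Sl \hookrightarrow \FinSet_\Gb^{\surj}$ is an equivalence of $\infty$-categories, and in particular cofinal, we may replace the indexing category by $\Sl^{\textup{op}}$ without changing the colimit of $X^{(-)}$. Evaluating $X^{(-)}$ with $X = \Ab^1$ on an object of total size $n$ yields $\Ab^n$, which we identify with the Cartan $\tk_{\gk_n}$ via the standard basis. Under this identification the Weyl group $W_{\Gb_n}$ acts by its usual action on $\tk_{\gk_n}$ (permuting coordinates, together with sign changes $x\mapsto -x$ coming from the $\Zb/2$ factors via the involution $\tau$), and each adjacency surjection $\varphi_{i,j}$ becomes a diagonal inclusion $\tk_{\gk_n}\hookrightarrow \tk_{\gk_{n+1}}$ that doubles the appropriate coordinate.

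Third, by direct inspection the resulting skeletal diagram is exactly the one displayed in the proposition: the self-loop at $\tk_{\gk_n}$ is the $W_{\Gb_n}$-action, and the horizontal arrows $\tk_{\gk_n}\to \tk_{\gk_{n+1}}$ are the reversed adjacency diagonals. Taking the colimit in $\PreStk$ of this skeletal diagram recovers $\Ran_\Gb \Ab^1$ by definition of the $\Gb$-Ran space.

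The main obstacle will be the parabolic case, where the skeleton has one object per partition of $n$ rather than per single integer. To match the displayed diagram one must collect all partitions of a fixed $n$ into the node labelled $\tk_{\gk_n}$ (interpreted as the disjoint union $\bigsqcup_{n_1+\cdots+n_k=n} \tk_{\pk_{n_1,\ldots,n_k}}$, where $\tk_{\pk_{n_1,\ldots,n_k}}$ decomposes as $\tk_{\glk_{n_1}}\oplus\cdots\oplus\tk_{\gk_{n_k}}$), then verify that the inter-block adjacency maps $\varphi_{i,0}$ induce the correct transitions between successive Cartans and that the parabolic Weyl group $W_{L_n}=\Sk_{n_1}\times\cdots\times \Sk_{n_{k-1}}\times W_{\Gt_{n_k}}$ acts by its standard action. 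In the classical type case ($k=1$) this reduces to the familiar identification of the Ran space with the colimit of symmetric powers, as in \cite{BDr,FG}.
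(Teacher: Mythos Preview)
Your proposal is correct and takes essentially the same approach as the paper: both rest on Lemma~\ref{lem:GFiniteSetsMorphismsDescription}, and the paper's proof is in fact the single sentence ``This follows from Lemma~\ref{lem:GFiniteSetsMorphismsDescription} describing the morphisms in $\FinSet^{\surj}_\Gb$.'' Your version simply unpacks the cofinality/skeleton argument and the parabolic bookkeeping that the paper leaves implicit (and later spells out in the remark on $\tk_{\pk_{n_1,\ldots,n_k}}$).
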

\begin{proof}
  This follows from Lemma \ref{lem:GFiniteSetsMorphismsDescription} describing the morphisms in $\FinSet^{\surj}_\Gb$.
\end{proof}

\subsubsection{Remark} In the classical case 
$$W_{\Sp_{2n}}\ \simeq\  W_{\Ot_{2n+1}}\ \simeq\ (\Zb/2)^n \rtimes \Sk_n, \hspace{15mm}  W_{\Ot_{2n}}\ \simeq\ (\Zb/2)^{n-1} \rtimes \Sk_n,$$
 where the symmetric group acts by permutation on the rank $n$ vector space $\tk_{\spk_{2n}}\simeq \tk_{\ok_{2n+1}}\simeq \tk_{\ok_{2n}}$ and the other factor acts by negation on each of the factors, where $(\Zb/2)^{n-1} \subseteq  (\Zb/2)^n$ is the subspace whose product is trivial.
 
 \subsubsection{Remark} In the parabolic case we have Weyl groups 
 $$W_\Pt \ = \  W_{\GL}\times \cdots \times W_{\GL}\times W_{\Gt}$$
 and Cartans $\tk_{\pk_{n_1, \ldots ,n_k}}=\tk_{\glk_{n_1}}\oplus\cdots \oplus \tk_{\glk_{n_{k-1}}} \oplus\tk_{\gk_{n_k}}$. For instance, the $\Pt_{\GL \textup{-}\GL}$ ordered Ran space is the colimit of $\Ab^{n,m}=\Ab^n \times \Ab^m$ over iterated compositions of the adjacency diagonal maps adjacent diagonal maps
 \begin{align*}
   (x_1,\ldots ,x_n| y_1, \ldots , y_m) & \ \mapsto \ (x_1,\ldots, x_i,x_i, \ldots ,x_n| y_1, \ldots , y_m), \\
    (x_1,\ldots ,x_n| y_1, \ldots , y_m) & \ \mapsto \ (x_1,\ldots, x_n|x_n, y_1, \ldots ,y_m), \\
    (x_1,\ldots ,x_n| y_1, \ldots , y_m) & \ \mapsto \ (x_1,\ldots, x_n| y_1, \ldots ,y_i,y_i, \ldots ,y_m).
 \end{align*}
which define maps from $\Ab^{n,m}$ to $\Ab^{n+1,m}, \Ab^{n,m+1}$, and $\Ab^{n,m+1}$ respetively.

\begin{prop}\label{prop:SymplecticRanSpace}
  If $\Gt=\Sp$ or $\Gt=\Ot_{\textup{odd}}$, we have
  $$\Ran_{\Gt}X \ \simeq \ \Ran(X/\tau).$$ 
\end{prop}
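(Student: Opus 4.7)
The plan is to factor the colimit defining $\Ran_{\Gt} X$ through the $\tau$-quotient in stages, using the Grothendieck-construction presentation of $\FinSet_{\Gt}^{\surj, \textup{op}}$. When $\Gt \in \{\Sp, \Ot_{\textup{odd}}\}$, recall from \ref{sssec:FiniteSetsEx} that $\Dl_{\Gt, n} = (\Bt \Zb/2)^n$, so the forgetful functor $p: \FinSet_{\Gt}^{\surj, \textup{op}} \to \FinSet^{\surj, \textup{op}}$ has groupoid fibres $(\Bt \Zb/2)^n$. By Lemma \ref{lem:GFiniteSetsMorphismsDescription}, the automorphism group of any object of rank $n$ in $\FinSet_{\Gt}^{\surj}$ is the full Weyl group $W_{\Gt_n} = (\Zb/2)^n \rtimes \Sk_n$.

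By the Fubini theorem for $\infty$-colimits indexed by a Grothendieck construction, one may compute the colimit in two stages:
\[
\Ran_{\Gt} X \ = \ \colim_{I \in \FinSet_{\Gt}^{\surj, \textup{op}}} X^I \ \simeq \ \colim_{[n] \in \FinSet^{\surj, \textup{op}}} \big( \colim_{(\Bt \Zb/2)^n} X^n \big).
\]
By the definition of the Weyl group action in \ref{ssec:GeneralisedRanSpaces}, the $i$-th factor of $(\Zb/2)^n$ acts on $X^n$ via $\tau$ on the $i$-th factor and trivially on all other factors. Since these actions on distinct factors are independent, the inner stacky quotient splits as a product:
\[
\colim_{(\Bt \Zb/2)^n} X^n \ = \ X^n / (\Zb/2)^n \ \simeq \ (X/\tau)^n.
\]
Substituting yields $\Ran_{\Gt} X \simeq \colim_{[n] \in \FinSet^{\surj, \textup{op}}} (X/\tau)^n = \Ran(X/\tau)$.

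The main step to check is that this equivalence is compatible with the transition maps on both sides: the adjacency diagonal $\Delta_{i,j}: X^n \to X^{n+1}$ must descend to the analogous diagonal $(X/\tau)^n \to (X/\tau)^{n+1}$, and $\Sk_n$-permutations must correspond to the permutation action on $(X/\tau)^n$. Both are automatic, since diagonal embeddings and factor-permutations commute with quotients by $\Zb/2$ acting independently on each factor. Alternatively, one could compare universal properties directly: a map from either side into a target $Y$ is equivalent to a compatible family of $W_{\Gt_n}$-equivariant maps $X^n \to Y$ intertwining the adjacency diagonals. The proof fails for $\Gt = \Ot_{\textup{ev}}$ precisely because $\Dl_{\Ot_{\textup{ev}}, n} = (\Bt \Zb/2)^{n-1}$ has one fewer $\Zb/2$ factor, so the inner quotient is not a product over all $n$ factors and does not yield $(X/\tau)^n$.
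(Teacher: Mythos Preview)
Your proof is correct and follows essentially the same approach as the paper: both invoke the Fubini theorem for colimits indexed by a Grothendieck construction (the paper cites this as the ``twisted Fubini colimit formula'' from \cite{PT}) to first take the quotient by the fibre groupoid $(\Bt\Zb/2)^n$, obtaining $(X/\tau)^n$, and then take the colimit over $\FinSet^{\surj,\textup{op}}$. Your version is more explicit about the inner quotient splitting as a product and about compatibility with the transition maps, and you helpfully note why the argument breaks for $\Ot_{\textup{ev}}$; the paper's remark following the proof records essentially the same observation about the short exact sequence $(\Zb/2)^n \to W_{\Gt_n} \to \Sk_n$.
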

\begin{proof}
This follows from the twisted Fubini colimit formula, e.g. Theorem 4.2 of \cite{PT}. Indeed, we have that 
$$\colim_{I\in \FinSet^{\surj,\textup{op}}_{\Sp}}(-) \ \simeq \ \colim_{I\in \FinSet^{\surj,\textup{op}}}(\colim_{x\in \Phi(I)}(-))$$
and applying this to the functor $X^{(-)}$ gives the result:
$$\Ran_{\Sp}X \ \simeq \ \colim_{I\in \FinSet^{\surj,\textup{op}}}(X/\tau)^I \ = \ \Ran(X/\tau)$$
since the colimit of $X$ with the arrows $\id,\tau$ is $X/\tau$.
\end{proof}

\subsubsection{Remark} 
The above proof should be viewed as a kind of multiplicativity for colimits under the (co)limit diagram of categories
\begin{equation}
  \label{fig:FinSetFibreDiagram}
  \begin{tikzcd}[row sep = {30pt,between origins}, column sep = {20pt}]
    \sqcup_{n \ge 0} \Bt \langle \pm \rangle^n \ar[r] \ar[d]  & \FinSet^{\surj}_{\Sp}\ar[d]  \\ 
    \sqcup_{n \ge 0}\, 1 \ar[r] & \FinSet^{\surj}
  \end{tikzcd}
\end{equation} 
which itself should be viewed as a categorification of the short exact sequence $\langle \pm\rangle^n \to W_{\Sp_{2n}}\to \Sk_n$. Indeed, short exact sequence induces the (co)limit diagram 
\begin{center}
\begin{tikzcd}[row sep = {30pt,between origins}, column sep = {20pt}]
 \Bt \langle \pm \rangle^n \ar[r] \ar[d]  & \Bt W_{\Sp_{2n}} \ar[d]  \\ 
 1 \ar[r]  & \Bt\Sk_n 
\end{tikzcd}
\end{center} 
which are subcategories of \eqref{fig:FinSetFibreDiagram}. Thus, the above Proposition and its proof should be viewed as a generalisation of the fact that $X^n/W_{\Sp_{2n}}=(X^n/\langle \pm\rangle^n)/\Sk_n$, which may also be proven using the twisted Fubini colimit Theorem.

\begin{lem}\label{lem:CupDecompositionStructure}
  There is a commutative monoid structure
 $$\cup \ : \  \Ran_\Gb\Ab^1 \times \Ran_\Gb\Ab^1 \ \to \  \Ran_\Gb\Ab^1.$$
 If $\Gb$ is classical type, there is an associative monoid structure
 $$\cup_{\ord} \ : \  \Ran_\Gb^{\ord}\Ab^1 \times \Ran_\Gb^{\ord}\Ab^1 \ \to \  \Ran_\Gb^{\ord}\Ab^1$$
  such that the natural map $\Ran_\Gb^{\ord}\Ab^1 \to \Ran_\Gb\Ab^1$ is a map of monoids.
\end{lem}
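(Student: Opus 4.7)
The plan is to transport the symmetric monoidal structure $\cup_{\Dl}$ on $\FinSet_{\Gb}^{\surj}$ (and monoidal structure $\cup_{\Dl}^{\ord}$ on $\FinSet_{\Gb}^{\surj,\ord}$) established in Lemma \ref{lem:UnionMonoidalStructure} through the colimit defining the Ran space. The key observation is that the assignment $I\mapsto X^I$ sends disjoint unions to products, i.e.\ there is a natural equivalence $X^{I\sqcup J}\simeq X^I\times X^J$ exhibiting $X^{(-)}:\FinSet_\Gb^{\surj,\textup{op}}\to \PreStk$ as a strong monoidal functor from $(\FinSet_\Gb^{\surj,\textup{op}},\cup_\Dl)$ to $(\PreStk,\times)$.

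First, I would note that in $\PreStk$ (computed objectwise in spaces) Cartesian products commute with small colimits in each variable, so
\[
\Ran_\Gb \Ab^1 \times \Ran_\Gb \Ab^1 \ \simeq \ \colim_{(I,J)\in (\FinSet_\Gb^{\surj,\textup{op}})^{\times 2}} \Ab^I \times \Ab^J \ \simeq \ \colim_{(I,J)} \Ab^{I\sqcup J}.
\]
The symmetric monoidal structure $\cup_{\Dl}$ provides a functor $(\FinSet_\Gb^{\surj,\textup{op}})^{\times 2}\to \FinSet_\Gb^{\surj,\textup{op}}$, $(I,J)\mapsto I\sqcup J$, whose composite with the canonical map into the colimit defines the desired map $\cup:\Ran_\Gb\Ab^1\times\Ran_\Gb\Ab^1\to \Ran_\Gb\Ab^1$. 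Associativity and unit (with $\varnothing \in \FinSet_\Gb$) for $\cup$ are inherited from the corresponding coherence isomorphisms of $\cup_\Dl$ on $\FinSet_\Gb^{\surj}$, and commutativity is inherited from the symmetry of $\cup_\Dl$ established in Lemma \ref{lem:UnionMonoidalStructure}. The same argument applied to $\FinSet_\Gb^{\surj,\ord}$ and its monoidal (but not symmetric) structure $\cup_\Dl^{\ord}$ yields $\cup_{\ord}$ as an associative monoid structure on $\Ran_\Gb^{\ord}\Ab^1$.

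Compatibility of the ordered and unordered products reduces to naturality: the forgetful functor $\FinSet_\Dl^{\surj,\ord}\to \FinSet_\Dl^{\surj}$ is monoidal by Lemma \ref{lem:UnionMonoidalStructure}, so the induced map on colimits $\Ran_\Gb^{\ord}\Ab^1\to \Ran_\Gb\Ab^1$ intertwines the two products by construction.

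The only subtlety I expect is justifying the exchange of products and colimits rigorously in the $\infty$-categorical setting, and checking that the higher coherences for the monoidal structure on colimits all come from those of $\cup_\Dl$. In the setting at hand this is standard because $\PreStk=\Fun(\Sch^{\textup{op}},\Spc)$ has all colimits computed objectwise in $\Spc$, and in $\Spc$ finite products distribute over colimits; at the level of coherences, one can alternatively phrase the construction via Day convolution for the monoidal structure on $\FinSet_\Gb^{\surj,(\ord)}$, which automatically upgrades strong monoidal functors into the presentable symmetric monoidal category $\PreStk$ to monoid objects on the colimit.
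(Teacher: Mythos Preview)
Your proposal is correct and follows exactly the approach the paper takes: the paper's proof is the single line ``Follows from Lemma \ref{lem:UnionMonoidalStructure},'' and you have simply spelled out how the (symmetric) monoidal structure on $\FinSet_\Gb^{\surj,(\ord)}$ transports to a monoid structure on the colimit via the strong monoidal functor $I\mapsto X^I$ and the distributivity of products over colimits in $\PreStk$. Your added remarks on Day convolution and coherence are correct but more detail than the paper requires.
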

\begin{proof}
 Follows from Lemma \ref{lem:UnionMonoidalStructure}.
\end{proof}

\begin{prop} \label{prop:ChiralDecompositionStructure}
  There is a commutative decomposition structure $ch_{\Gb}$
  \begin{center}
  \begin{tikzcd}[row sep = {30pt,between origins}, column sep = {65pt, between origins}]
   &(\Ran_\Gt\Ab^1 \times \Ran_\Gt\Ab^1 )_\circ \ar[ld,"j"'] \ar[rd,"\cup j"]  & \\ 
   \Ran_\Gt\Ab^1  \times \Ran_\Gt\Ab^1 & &\Ran_\Gt\Ab^1
  \end{tikzcd}
  \end{center}
  and there is an associative decomposition structure $ch_{\Gb}^{\ord}$
  \begin{center}
  \begin{tikzcd}[row sep = {30pt,between origins}, column sep = {65pt, between origins}]
   &(\Ran_\Gt^{\ord}\Ab^1 \times \Ran_\Gt^{\ord}\Ab^1 )_\circ \ar[ld,"j"'] \ar[rd,"\cup j"]  & \\ 
   \Ran_\Gt^{\ord}\Ab^1  \times \Ran_\Gt^{\ord}\Ab^1 & &\Ran_\Gt^{\ord}\Ab^1  
  \end{tikzcd}
  \end{center} 
  such that the natural map $\Ran_\Gt^{\ord}\Ab^1 \to \Ran_\Gt\Ab^1$ is a map of decomposition spaces.
\end{prop}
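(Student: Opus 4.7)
The plan is to construct the open substack $(\Ran_\Gb\Ab^1 \times \Ran_\Gb\Ab^1)_\circ$ as a colimit indexed by pairs in $\FinSet_\Gb^{\surj}$, then derive the associativity, commutativity, and forgetful compatibility from the monoidal structure on $\FinSet_\Gb$ established in Lemma \ref{lem:UnionMonoidalStructure}. First I would define, for each pair $(I,J) \in \FinSet_\Gb^{\surj} \times \FinSet_\Gb^{\surj}$, an open subscheme $(\Ab^I \times \Ab^J)_{\Gb,\circ} \subseteq \Ab^I \times \Ab^J \simeq \Ab^{I \sqcup_\Gb J}$ as the complement of the unipotent hyperplanes introduced in \ref{sssec:UnipotentRoots}: for $\Gt=\GL$ these are the diagonals $\{x_i = y_j\}$, for $\Gt \in \{\Ot_{\textup{odd}},\Sp\}$ the anti-diagonals $\{x_i \pm y_j = 0\}$, with the evident parabolic mixture for $\Pt$.

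Next I would verify that these open subschemes are functorial in $(I,J)$. The two operations producing morphisms in $\FinSet_\Gb^{\surj}$ are Weyl group actions and adjacency diagonal maps (Lemma \ref{lem:GFiniteSetsMorphismsDescription}). Under the Weyl group $W_{\Gb_n}$ the set of unipotent hyperplanes is invariant, and under an adjacency map $\Delta_\varphi$ the preimage of a unipotent hyperplane of the target remains a unipotent hyperplane of the source (since identifying two coordinates in the same factor does not introduce diagonals between distinct factors). Thus $(\Ab^{(-)} \times \Ab^{(-)})_{\Gb,\circ}$ extends to a functor on $(\FinSet_\Gb^{\surj} \times \FinSet_\Gb^{\surj})^{\textup{op}}$, whose colimit defines $(\Ran_\Gb\Ab^1 \times \Ran_\Gb\Ab^1)_\circ$ together with the inclusion $j$ and the map $\cup j$ obtained by postcomposing with the canonical map $\Ab^{I \sqcup J} \to \Ran_\Gb\Ab^1$ which agrees with $\cup$ by functoriality of colimits.

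Now the associativity and commutativity of the decomposition structure should follow formally from the corresponding properties of $\cup_\Gb$ in $\FinSet_\Gb$ (Lemma \ref{lem:CupDecompositionStructure}), plus the observation that both iterated open loci $((\Ab^I \times \Ab^J)_\circ \times \Ab^K)_\circ$ and $(\Ab^I \times (\Ab^J \times \Ab^K)_\circ)_\circ$ are canonically the common open $(\Ab^I \times \Ab^J \times \Ab^K)_{\Gb,\circ}$ cut out by requiring the three pairs of unipotent conditions to hold simultaneously; for commutativity the swap $\sigma_{IJ}$ identifies $(\Ab^I \times \Ab^J)_{\Gb,\circ}$ with $(\Ab^J \times \Ab^I)_{\Gb,\circ}$ because the unipotent hyperplane $\{x_i \pm y_j = 0\}$ is the same as $\{y_j \pm x_i = 0\}$. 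Compatibility of $ch_\Gb^{\ord}$ with $ch_\Gb$ under $\Ran_\Gb^{\ord}\Ab^1 \to \Ran_\Gb\Ab^1$ is then automatic since the open locus is defined by the same unipotent hyperplanes in both cases and the forgetful map of index categories $\FinSet_\Gb^{\surj,\ord} \to \FinSet_\Gb^{\surj}$ is monoidal.

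The main technical obstacle, in my view, is bookkeeping rather than content: one must check that for parabolic $\Gb = \Pt_{\GL^{\times k-1}\textup{-}\Gt}$ the unipotent hyperplanes cut out by the union $I \sqcup_\Pt J$ of two partitioned sets are precisely those listed in section \ref{ssec:OSpConfSpaces}, so that passing between the three descriptions (via $\Gb$-finite sets, via moduli group stacks, via generators and relations) is consistent. Once one verifies this identification on generators (adjacency maps and Weyl reflections of each Levi factor), the proposition reduces to the symmetric monoidality of $\cup_\Gb$ and a formal diagram chase for the colimit.
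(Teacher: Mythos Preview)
Your proposal is correct and follows essentially the same approach as the paper: define the roof as the colimit over pairs $(I,J)$ of the open loci $(\tk_{\gk_n}\times \tk_{\gk_m})_\circ$ obtained by removing the unipotent hyperplanes, then deduce associativity from the equality of the two iterated open loci (the paper simply cites the analogous argument for configuration spaces in Proposition~\ref{prop:GChiralStructureConf}). Your treatment is in fact more careful than the paper's own proof, which omits the functoriality check under Weyl and adjacency maps, the commutativity, and the ordered/unordered compatibility that you spell out.
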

\begin{proof}
   In both cases, the roof of the composition is defined as the colimit of 
   \begin{center}
   \begin{tikzcd}[row sep = {30pt,between origins}, column sep = {45pt, between origins}]
    &(\tk_{\gk_n}\times \tk_{\gk_m})_\circ \ar[rd] \ar[ld]  & \\ 
    \tk_{\gk_n}\times \tk_{\gk_m}& & \tk_{\gk_{n+m}}
   \end{tikzcd}
   \end{center}
   where $(\tk_{\gk_n}\times \tk_{\gk_m})_\circ=\tk_{\gk_n}\times \tk_{\gk_m}\smallsetminus \cup H_\alpha$ is the complement of the unipotent hyperplanes $\alpha \in \oplus^*\Phi \smallsetminus (\Phi \star \Phi)$. Associativity
   $$((\tk_{\gk_n}\times \tk_{\gk_m})_\circ \times \tk_{\gk_l})_\circ \ \simeq \ (\tk_{\gk_n}\times (\tk_{\gk_m}\times \tk_{\gk_l})_\circ)_\circ$$
    follows as in the Proposition \ref{prop:GChiralStructureConf}.
\end{proof}

We call the structures in Lemma \ref{lem:CupDecompositionStructure} and Proposition \ref{prop:ChiralDecompositionStructure} the \textit{$\star$-} and \textit{chiral decomposition structures} on the $\Gb$-Ran space. This should be compared to the analogous definition for configuration spaces in section \ref{ssec:ConfigurationSpacesModuliRootData}. For an account of decomposition algebras in the category of prestacks with correspodences, see \cite{La2}, where they are referred to as \textit{factorisation spaces}.

\begin{prop} \label{prop:FunctorialityRanSpaces}
  If $f:\Gb \to \Gb'$ is a map of moduli group stacks of classical type or standard parabolic type  , there is a lax map of decomposition spaces
  $$f \ : \ \Ran_\Gb \Ab^1 \ \to \ \Ran_{\Gb'}\Ab^1$$ 
  which is strict if $f$ is a strict map of moduli group stacks.
\end{prop}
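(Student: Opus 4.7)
The plan is to construct the map $f: \Ran_\Gb \Ab^1 \to \Ran_{\Gb'}\Ab^1$ as a colimit of a natural transformation between the indexing diagrams. First, a map of moduli group stacks $f: \Gb \to \Gb'$ of classical or standard parabolic type restricts on connected components to a compatible system of group homomorphisms $\Gb_n \to \Gb'_{\phi(n)}$ which are Levi-compatible and induce linear maps of Cartans $\tk_{\gk_n} \to \tk_{\gk'_{\phi(n)}}$ that intertwine the adjacency diagonal inclusions and are equivariant with respect to the induced homomorphism on Weyl groups. This data assembles (by Lemma \ref{lem:GFiniteSetsMorphismsDescription}) into a functor $\tilde f: \FinSet_\Gb^{\surj} \to \FinSet_{\Gb'}^{\surj}$ on the indexing categories together with a natural transformation $X^{(-)} \Rightarrow X^{\tilde f(-)}$ in $\PreStk$. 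Passing to colimits gives the desired map $f: \Ran_\Gb \Ab^1 \to \Ran_{\Gb'}\Ab^1$, and likewise in the ordered setting.

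Next I would check compatibility of $f$ with the $\star$-monoid structure $\cup$ of Lemma \ref{lem:CupDecompositionStructure}. Since $\tilde f$ respects the disjoint union symmetric monoidal structure on $\FinSet_{\Gb}^{\surj}$ (as $f$ is a morphism of associative monoid stacks, i.e. commutes with $\oplus$), the induced map on colimits intertwines $\cup_\Gb$ with $\cup_{\Gb'}$ up to canonical isomorphism. This gives a strict map of (commutative) monoid prestacks, and the analogous statement in the ordered case.

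For compatibility with the chiral decomposition structure, I would argue exactly as in Lemma \ref{lem:FunctorialityConfigurationSpaces}. The roof $(\Ran_\Gb\Ab^1 \times \Ran_\Gb\Ab^1)_\circ$ is a colimit of complements $(\tk_{\gk_n}\times \tk_{\gk_m})\smallsetminus \bigcup_{\alpha \in \Phi_{2,\Gb}} H_\alpha$ of the unipotent hyperplanes $\Phi_{2,\Gb} = \oplus^*\Phi_\Gb \smallsetminus (\Phi_\Gb \star \Phi_\Gb)$, and similarly for $\Gb'$. Because $\Phi_\Gb \subseteq f^*\Phi_{\Gb'}$, we obtain the inclusion $\Phi_{2,\Gb} \subseteq (f\times f)^*\Phi_{2,\Gb'}$ of unipotent hyperplane sets, hence the preferred map of correspondences
\[
\bigl(\Ran_\Gb\Ab^1 \times \Ran_\Gb\Ab^1\bigr) \times_{\Ran_{\Gb'}\Ab^1 \times \Ran_{\Gb'}\Ab^1} \bigl(\Ran_{\Gb'}\Ab^1 \times \Ran_{\Gb'}\Ab^1\bigr)_\circ \ \to \ \bigl(\Ran_\Gb\Ab^1 \times \Ran_\Gb\Ab^1\bigr)_\circ
\]
exhibiting $f$ as a lax map of decomposition spaces. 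When $f$ is strict, i.e. $\Phi_\Gb = f^*\Phi_{\Gb'}$, this inclusion of hyperplane sets is an equality, hence the above map is an isomorphism and $f$ is a strict map of decomposition spaces.

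The main obstacle is purely bookkeeping: ensuring that the functor $\tilde f$ on finite-$\Gb$-set categories is well-defined and monoidal in the parabolic case (where the partition structure must be respected by $f$) and that the Weyl-equivariance data is carried along coherently. Once $\tilde f$ is constructed, the remaining steps reduce to the colimit-level version of Lemma \ref{lem:FunctorialityConfigurationSpaces}, which we have already invoked for configuration spaces via Corollary \ref{cor:FunctorialityConfigurationSpacesCoAlgebra}.
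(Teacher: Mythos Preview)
Your proposal is correct and follows essentially the same approach as the paper, which simply states that the result follows by a similar argument to Lemma~\ref{lem:FunctorialityConfigurationSpaces}. You have spelled out the construction of the map as a colimit over the categories of finite $\Gb$-sets and then reduced the (lax) compatibility with the chiral structures to the hyperplane inclusion $\Phi_{2,\Gb}\subseteq (f\times f)^{*}\Phi_{2,\Gb'}$, exactly as in that lemma.
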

\begin{proof}
  Follows by a similar argument to Lemma \ref{lem:FunctorialityConfigurationSpaces}. 
\end{proof}

\begin{cor}
  If $\Gt$ is of classical type there are maps 
  $$f_{\GL,\Gt} \ : \ \Ran \Ab^1 \ \to \  \Ran_\Gt \Ab^1, \hspace{15mm} f_{\Gt, \GL} \ : \ \Ran_\Gt \Ab^1 \ \to \ \Ran_\GL \Ab^1$$
   of decomposition spaces, the first lax and the second strict. 
\end{cor}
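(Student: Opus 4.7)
The plan is to realise both maps as arising from Proposition \ref{prop:FunctorialityRanSpaces} applied to natural maps between the moduli group stacks $\BGL$ and $\BG$, then check strictness by computing pullbacks of hyperplane arrangements at the level of Cartan subalgebras.

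First I would construct $f_{\GL,\Gt}$ from the ``doubling'' map of moduli group stacks $\BGL \to \BG$ used already in Proposition \ref{prop:ConfActionConfSp}, whose functor of points is $V\mapsto (V\oplus V^\vee,\kappa)$ with $\kappa$ the canonical hyperbolic form. On Cartan subalgebras this is the embedding
$$\tk_{\glk_n}\ \hookrightarrow \ \tk_{\gk_{2n}}, \hspace{10mm} (x_1,\ldots,x_n)\ \mapsto\ (x_1,-x_1,\ldots,x_n,-x_n).$$
I would then check that this is a lax, but not strict, map of moduli group stacks: the linear hyperplane $x_i-x_j=0$ pulls back from an orthosymplectic root hyperplane $z_i-z_j=0$, showing $\Phi_{\BGL}\subseteq f^{-1}\Phi_{\BG}$; but the orthosymplectic hyperplane $z_i+z_j=0$ pulls back to $x_i+x_j=0$, which does not belong to $\Phi_{\BGL}$, so the containment is strict. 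Applying Proposition \ref{prop:FunctorialityRanSpaces} then yields a lax map $f_{\GL,\Gt}:\Ran\Ab^1\to\Ran_\Gt\Ab^1$ of decomposition spaces.

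Second, for $f_{\Gt,\GL}$ I would use the forgetful map $\BG\to\BGL$ induced on points by forgetting the bilinear form, coming from the subgroup inclusion $\Gt\hookrightarrow\GL_N$ (with $N=2n$ or $2n+1$). The corresponding map on Cartans is the standard embedding $\tk_{\gk_n}\hookrightarrow\tk_{\glk_N}$. Here I would verify that the pullback of the linear arrangement $\{x_i-x_j=0\}_{i,j\in[N]}$ is exactly the orthosymplectic arrangement: substituting the coordinates $(z_1,-z_1,\ldots,z_n,-z_n,(0))$ into $x_i-x_j$ produces precisely the roots $\pm z_i\pm z_j$ (and $\pm z_i$ in the odd case, and $\pm 2z_i$ in the symplectic case), so $\Phi_\BG=f^{-1}\Phi_\BGL$ and the map is strict. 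Proposition \ref{prop:FunctorialityRanSpaces} then delivers the strict map $f_{\Gt,\GL}:\Ran_\Gt\Ab^1\to\Ran_\GL\Ab^1$.

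There is essentially no serious obstacle, but the only subtle point to handle carefully is the case $\Gt=\Ot_{\textup{ev}}$ (type $D_n$), whose root arrangement does not include the hyperplanes $\pm 2z_i=0$; here the pullback of $\Phi_\BGL$ would strictly contain $\Phi_{\Bt\Ot_{2n}}$, so $f_{\Ot_{\textup{ev}},\GL}$ is only lax rather than strict. I would therefore restrict the strictness claim to $\Gt\in\{\GL,\Ot_{\textup{odd}},\Sp\}$, the same range of classical types under which the orthosymplectic Ran space was computed in Proposition \ref{prop:SymplecticRanSpace}, and note the type-$D$ case separately.
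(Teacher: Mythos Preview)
Your approach is essentially the same as the paper's: both apply Proposition \ref{prop:FunctorialityRanSpaces} to the natural maps of moduli group stacks $\BGL\to\BG$ (sending $V$ to a hyperbolic orthosymplectic space built from $V$) and $\BG\to\BGL$ (forgetting the form), and read off laxness/strictness from the induced maps on hyperplane arrangements. The paper states this in one sentence without the explicit Cartan-level verification you give.

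Your type-$D$ observation is sharper than the paper's treatment. The paper asserts strictness of $f_{\Gt,\GL}$ uniformly for all classical $\Gt$, but your computation is correct: for $\Gt=\Ot_{\textup{ev}}$ the pullback of the $\GL_{2n}$ diagonals along $\tk_{\ok_{2n}}\hookrightarrow\tk_{\glk_{2n}}$ produces the hyperplanes $z_i=0$ (from $x_i-x_{n+i}$), which are absent from the $D_n$ root arrangement. So the map is only lax in that case, and your restriction of the strictness claim to $\Gt\in\{\GL,\Ot_{\textup{odd}},\Sp\}$ is a genuine refinement of the paper's statement.
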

\begin{proof}
  These maps are induced by Proposition \ref{prop:FunctorialityRanSpaces} from the maps of groups
  $$\GL \ \to \ \Gt, \hspace{15mm} \Gt \ \to \ \GL,$$
  which on moduli stacks are given by 
  $$\BGL \ \to \ \BG, \hspace{15mm} \BG \ \to \ \BGL$$
  the first taking a vector bundle $V$ to $V, V \otimes (\Cb^2,\omega), V \otimes (\Cb^2, g), V \otimes(\Cb^2 , g) \oplus (\Cb,g)$, where $\omega,g$ are the standard symplectic and orthogonal forms, and the second taking vector bundle with bilinear form $(V,h)$ to $V$. The first class of maps is not strict and the second is.
\end{proof}

\begin{cor} \label{cor:ActionRanSpaces}
  There is a lax action of $(\Ran \Ab^1, ch)$ on $(\Ran_\Gt \Ab^1,ch)$ for $\Gt$ of classical type. 
\end{cor}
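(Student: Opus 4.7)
The plan is to construct the action correspondence by restricting the chiral decomposition structure on $\Ran_\Gt \Ab^1$ along the map $f_{\GL,\Gt}: \Ran \Ab^1 \to \Ran_\Gt \Ab^1$ supplied by the preceding Corollary. Concretely, I would define the roof
\begin{equation*}
(\Ran \Ab^1 \times \Ran_\Gt \Ab^1)_{\Gt \circ} \ \defeq \ (\Ran \Ab^1 \times \Ran_\Gt \Ab^1) \times_{\Ran_\Gt \Ab^1 \times \Ran_\Gt \Ab^1} (\Ran_\Gt \Ab^1 \times \Ran_\Gt \Ab^1)_\circ
\end{equation*}
along $f_{\GL,\Gt} \times \id$; pointwise this is the locus of pairs $((z_i),(w_j))$ with $z_i \ne \pm w_j$ for all $i,j$. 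The left leg is the evident forgetful projection and the right leg is the composition with $\cup: (\Ran_\Gt\Ab^1 \times \Ran_\Gt\Ab^1)_\circ \to \Ran_\Gt \Ab^1$ from Proposition \ref{prop:ChiralDecompositionStructure}.

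For the action axioms, I would read them off levelwise from the defining colimits in $\FinSet_\GL^{\surj}$ and $\FinSet_\Gt^{\surj}$: the unit condition for $\varnothing \in \Ran \Ab^1$ is immediate, while the associativity 2-morphism
\[
\big((\Ran \Ab^1 \times \Ran \Ab^1)_\circ \times \Ran_\Gt \Ab^1\big)_{\Gt \circ} \ \Longrightarrow \ \big(\Ran \Ab^1 \times (\Ran \Ab^1 \times \Ran_\Gt \Ab^1)_{\Gt \circ}\big)_{\Gt \circ}
\]
is an \emph{open inclusion} rather than an isomorphism, since on the left one only requires disjointness of the merged first two factors from the third, whereas on the right one requires disjointness of each individual factor from the third. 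This is precisely the failure of $f_{\GL,\Gt}$ to be strict (the linear diagonals $z_i = z_j$ do not pull back from $\Gt$-unipotent hyperplanes), and it is exactly what forces the action to be lax.

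I expect the routine bookkeeping to be the main nuisance; the only genuine content to check is the coherence pentagon for the 2-morphism above, which I would verify by computing both composites as fibre products of open subschemes inside $(\Ran \Ab^1)^{\times 2} \times \Ran_\Gt \Ab^1$ and checking that they agree as subfunctors, analogously to the configuration space proof of Proposition \ref{prop:GChiralStructureConf}. Alternatively, the whole structure can be repackaged as a semidirect product $(\Ran_\Gt \Ab^1, ch_\Gt) \rtimes (\Ran \Ab^1, ch) \simeq (\Ran_{\Pt_{\GL\textup{-}\Gt}} \Ab^1, ch_\Pt)$ (the Ran analogue of the last Proposition of section \ref{ssec:OSpConfSpaces}), from which the lax action is obtained by projection, which would be the most conceptual route.
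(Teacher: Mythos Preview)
Your proposal is correct and matches the paper's approach exactly: the paper treats this as immediate from the preceding corollary and then writes down the action explicitly as the pullback of the $\Gt$-chiral decomposition structure along $f_{\GL,\Gt}\times\id$, with roof the colimit of $\tk_{\glk_n}\times\tk_{\gk_m}\smallsetminus\pm\Delta_{ij}$ (i.e.\ $z_i\pm w_j\ne 0$), which is precisely your fibre product description. Your discussion of why the action is only lax and the alternative semidirect-product packaging go beyond what the paper spells out, but are consistent with it.
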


We write down the action in Corollary \ref{cor:ActionRanSpaces} explicitly in the case that $\Gt\in \{\Ot_{\textup{ev}},\Ot_{\textup{odd}},\Sp\}$. It is given by 
\begin{center}
\begin{tikzcd}[row sep = {30pt,between origins}, column sep = {65pt, between origins}]
 & (\Ran \Ab^1 \times \Ran_\Gt \Ab^1)_{\Gt\circ}\ar[rd,"\cup j_{\Gt}"] \ar[ld,"j"']  & \\ 
 \Ran \Ab^1 \times \Ran_\Gt \Ab^1 & & \Ran_\Gt \Ab^1
\end{tikzcd}
\end{center}
where the roof is defined as the colimit of
$$\tk_{\glk_n}\times \tk_{\gk_m} \smallsetminus \pm \Delta_{ij}$$
where for coordinates $z_i$ and $w_j$ on $\tk_{\glk_n}, \tk_{\gk_m}$, the antidiagonal $\pm \Delta_{ij}$ is given by $z_i\pm w_j \ne 0$. Indeed, this is the pullback of the $\Gt$-chiral decomposition structure along $f_{\GL, \Gt}\times \id$.

\subsection{Orthosymplectic vertex algebras as factorisation algebras} \label{ssec:OrthosymplecticFactorisationAlgebras}

\subsubsection{} It is well-known since e.g. \cite{BDr} that the category of vertex algebras is equivalent to the category of factorisation algebras on the Ran space of the affine line. In this section we give the analogue of this for orthosymplectic vertex algebras.

\subsubsection{} A \textit{factorisation} or \textit{orthogonal factorisation algebra} is then a D-module $\Al\in \Dl\Md(\Ran \Ab^1)$ or $\Bl \in\Dl\Md(\Ran (\Ab^1/\pm))$ with a map of D-modules 
 $$m_A \ : \ j_A^*(\Al \boxtimes \Al) \ \stackrel{\sim}{\to} \ (\cup j)_A^*\Al, \hspace{15mm} m_B \ : \ j_B^*(\Bl \boxtimes \Bl) \ \stackrel{\sim}{\to} \ (\cup j)_B^*\Bl$$
 satisfying associativity and unit axioms. 
 
 A \textit{linear-orthogonal factorisation algebra} is  D-module $\Al \Bl$ on $\Ran_{\Sp \textup{-} \GL}\Ab^1$ with a map of D-modules
$$m_{AB} \ : \ j_{AB}^*(\Al\Bl \boxtimes \Al\Bl) \ \stackrel{\sim}{\to} \ (\cup j)_{AB}^*\Al\Bl$$
satisfying associativity and unit axioms.

By the following Lemma, this should be thought of as a factorisation algebra acting on an orthogonal factorisation algebra:

\begin{lem}
There are maps of factorisation spaces
$$\Ran (\Ab^1) \ \stackrel{a}{\to} \ \Ran_{\Sp \textup{-} \GL}\Ab^1 \ \stackrel{b}{\leftarrow} \ \Ran (\Ab^1/\pm)$$
such that for any linear-orthogonal factorisation algebra $\Al\Bl$, the restrictions 
$$\Al \ \defeq \ a^! \Al\Bl, \hspace{15mm} \Bl \ \defeq \ b^! \Al\Bl$$
are factorisation and orthogonal factorisation algebras respectively.
\end{lem}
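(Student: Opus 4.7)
The plan is to construct $a$ and $b$ from the inclusions of Levi factors of the parabolic $\Pt_{\GL\textup{-}\Sp}$ and then check that everything is strict with respect to the chiral decomposition structures. First, I would describe $a$ and $b$ at the level of the defining colimits. Using the categories $\FinSet_\Gb^{\surj}$ from section \ref{ssec:FiniteSets} I take the inclusions
\begin{align*}
  i_{\GL} \ : \ \FinSet^{\surj} \ \hookrightarrow \ \FinSet_{\GL\textup{-}\Sp}^{\surj}, & \hspace{10mm} [n] \ \mapsto \ (n\vert 0), \\
  i_{\Sp} \ : \ \FinSet^{\surj}_{\Sp} \ \hookrightarrow \ \FinSet_{\GL\textup{-}\Sp}^{\surj}, & \hspace{10mm} [n] \ \mapsto \ (0\vert n),
\end{align*}
which fit into the setup of Proposition \ref{prop:FunctorialityRanSpaces} via the strict maps of moduli group stacks $\BGL \to \BP_{\GL\textup{-}\Sp} \leftarrow \BSp$ given by the inclusions of the two Levi factors. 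Taking colimits along $\tk_{\gk_\sbt}$ then produces
$$a \ : \ \Ran_{\GL} \Ab^1 \ \to \ \Ran_{\Pt_{\GL\textup{-}\Sp}}\Ab^1, \hspace{15mm} b \ : \ \Ran_{\Sp}\Ab^1 \ \to \ \Ran_{\Pt_{\GL\textup{-}\Sp}}\Ab^1, $$
and the identifications $\Ran_{\GL}\Ab^1 = \Ran \Ab^1$ and $\Ran_{\Sp}\Ab^1 \simeq \Ran(\Ab^1/\pm)$ (Proposition \ref{prop:SymplecticRanSpace}) provide the desired source and target.

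Next I would verify that $a$ and $b$ are \emph{strict} maps of decomposition spaces. This comes down to computing unipotent hyperplanes: pulled back to the $(n\vert 0) \times (m \vert 0)$ stratum, the hyperplanes $\varphi_{2,\Pt}$ appearing in the chiral decomposition on $\Ran_{\Pt}\Ab^1 \times \Ran_{\Pt}\Ab^1$ reduce to those of the form $z_i - z_j'$ (since the $\Sp$-coordinates $w,w'$ are absent); this is exactly $\varphi_{2,\GL}$. The analogous statement for $b$ gives $w_i \pm w_j' \ne 0$, i.e.\ $\varphi_{2,\Sp}$. In particular the disjoint loci pull back on the nose, so by (the Ran-space version of) Lemma \ref{lem:FunctorialityConfigurationSpaces} both $a$ and $b$ are strict maps of decomposition spaces.

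With strictness in hand, one deduces the last claim by base change. For strict $f$, the factorisation structure map $j^*(\Al\Bl \boxtimes \Al\Bl) \stackrel{\sim}{\to} (\cup j)^*\Al\Bl$ satisfies $(f\times f)^* j_{\Pt}^* = j_{\Gb}^* (f \times f)^*$ and $f^* (\cup j)_\Pt^* = (\cup j)_{\Gb}^* f^*$, so applying $(f\times f)^!$ and $f^!$ respectively produces a factorisation structure on $f^! \Al\Bl$ with matching associativity and unit data. Applied to $f = a$ and $f = b$ this shows that $\Al = a^!\Al\Bl$ carries the structure of a factorisation algebra over $\Ran \Ab^1$ and $\Bl = b^! \Al\Bl$ the structure of an orthogonal factorisation algebra over $\Ran(\Ab^1/\pm)$. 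The only potential subtlety is the check of strictness, but the chiral hyperplanes restrict in the obvious way so there is no loss.
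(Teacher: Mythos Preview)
Your proof is correct and follows essentially the same approach as the paper's: describe $a,b$ via the Levi inclusions $[n]\mapsto (n|0)$ and $[n]\mapsto (0|n)$, observe these preserve the chiral decomposition, and conclude that $!$-pullback preserves factorisation structures. The paper's proof is terser---it simply says ``$a$ sends $S$ to $T=\varnothing\subseteq S$'' and ``these maps preserve the decomposition structure''---while you spell out the strictness check via the unipotent hyperplanes and invoke Proposition~\ref{prop:FunctorialityRanSpaces}, but the content is the same.
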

\begin{proof}
 The the map $a$ sends $S$ to $T = \varnothing \subseteq S$ and the map $b$ sends $T$ to $\pm T \subseteq \pm T=S$ respectively. These maps preserve the decomposition structure on the spaces, and pulling back takes factorisation algebras to factorisation algebras.
\end{proof}

We may ask for these structures to be (partially) translation-equivariant, see section \ref{ssec:TranslationEquivariance}.

\begin{theorem} \label{thm:OrthosymplecticFactorisationAlgebras}
 There are equivalences of categories 
 \begin{center}
 \begin{tikzcd}[row sep = {30pt,between origins}, column sep = {20pt}]
   \FactAlg^{\textup{st}}_{\Ga}(\Ran \Ab^1) \ar[r,"\sim",std]&\textup{VA}  \\ 
   \FactAlg^{\textup{st}}_{\Ga}(\Ran_{\Sp \textup{-} \GL} \Ab^1)  \ar[r,"\sim",std]\ar[d] \ar[u]&\textup{VA}_{\GL\textup{-}\Ot} \ar[d] \ar[u]  \\ 
   \FactAlg^{\textup{st}}_{\Ga}(\Ran (\Ab^1/\pm)) \ar[r,"\sim",std]&\textup{VA}_\Ot
 \end{tikzcd}
 \end{center}
 where the vertical arrows are the forgetful functors. These lift to equivalences between categories of commutative and/or unital structures.
\end{theorem}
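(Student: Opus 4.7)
The plan is to reduce each row of the diagram to an appropriate version of the classical Beilinson--Drinfeld equivalence $\FactAlg_{\Ga}^{\textup{st}}(\Ran\Ab^1) \simeq \textup{VA}$, which is the first row and may be taken as known. Throughout I will use the presentation of $\Ran_\Gb \Ab^1$ as the colimit $\colim_{I \in \FinSet_\Gb^{\surj,\textup{op}}} (\Ab^1)^I$ from section \ref{ssec:GeneralisedRanSpaces}, together with Proposition \ref{prop:GVertexAlgebraExplicit}/\ref{sssec:PVA} which identify $\Gb$-vertex algebras via the data $(|0\rangle,T,\tau, Y)$ with $W$-locality.

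\textbf{Third row} ($\Ot$ case). By Proposition \ref{prop:SymplecticRanSpace} there is an identification $\Ran_\Gt \Ab^1 \simeq \Ran(\Ab^1/\pm)$ respecting the chiral decomposition structure. Hence $\FactAlg^{\textup{st}}_{\Ga}(\Ran_\Gt \Ab^1)$ is identified with $\Ga$-equivariant (strict) factorisation algebras on the Ran space of the quotient curve $\Ab^1/\pm$. Unpacking the BD-style reconstruction theorem over $\Ab^1/\pm$: the restriction of such a factorisation algebra to a single point $\bar z$ gives a vector space $B$ with an action of $\Ga = \Gm \rtimes \langle \tau \rangle$ (from translation equivariance and the $\pm$ identification), producing $T$ and $\tau$ with $\tau T = -T\tau$; the factorisation structure on two-point configurations gives the map $Y_B(w): B \otimes B \to B((w))$ as the boundary of the Mayer--Vietoris triangle along the diagonal and antidiagonal in $(\Ab^1/\pm)^2$; compatibility with $\tau$ on each factor yields precisely the relations $Y_B(w)\cdot(\tau\otimes\id) = Y_B(w)\cdot(\id\otimes\tau) = \tau\cdot Y_B(-w)$ of Proposition \ref{prop:GVertexAlgebraExplicit}; and higher associativity/$W$-locality are extracted from the three-point configurations, whose complement is exactly $\{w_i \pm w_j \ne 0\}$, matching the singular locus of a $\Gt$-vertex algebra.

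\textbf{Middle row} (linear-orthosymplectic case). Use Corollary \ref{cor:ActionRanSpaces} / section \ref{ssec:OSpConfSpaces}: the parabolic Ran space fits into a "semidirect-product" diagram $\Ran\Ab^1 \to \Ran_{\Sp\textup{-}\GL}\Ab^1 \leftarrow \Ran(\Ab^1/\pm)$ of decomposition spaces, and the chiral structure on the middle is generated by the two outer chiral structures plus the lax action of the linear Ran space on the orthogonal one. Given a factorisation algebra on $\Ran_{\Sp\textup{-}\GL}\Ab^1$, restriction along the two maps produces a factorisation algebra $A$ and an orthogonal factorisation algebra $B$, hence a vertex algebra and a $\Gt$-vertex algebra by the already-proven cases. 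The remaining data is a mixed factorisation structure over configurations $(z_i, w_j)$ with $z_i \pm w_j \ne 0$, which by the same Mayer--Vietoris argument produces the bilinear map $Y_{AB}(z,w) : A \otimes B \to B(((z\pm w)^{-1}))$. The two associativity conditions of section \ref{sssec:PVA} correspond to the two inclusions $\Ran\Ab^1 \times \Ran_{\Sp\textup{-}\GL}\Ab^1 \rightrightarrows \Ran_{\Sp\textup{-}\GL}\Ab^1$ (acting on the $A$-factor vs.\ on the $B$-factor), and the $\tau$-compatibility comes from the $W_P$-equivariance of the parabolic Ran space inherited from the $\Zb/2$-factor in its Weyl group.

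\textbf{Functors and full faithfulness.} In both directions, the functor is constructed: from a factorisation algebra, one extracts $(|0\rangle, T, \tau, Y_B, Y_{AB})$ as above; conversely, given a vertex algebra of the appropriate type, one uses the $\FinSet_\Gb^{\surj}$-colimit presentation of $\Ran_\Gb\Ab^1$ to define a D-module by setting its $!$-restriction to $\tk_{\gk_n}$ to be $B^{\otimes n}$ equipped with the flat connection generated by $T$ on each factor, together with the $W_{\Gb_n}$-action and the gluing maps induced by the $X_n$-operators along the adjacency diagonals. Associativity of the $X_n$ translates exactly to compatibility with the simplicial structure of $\FinSet_\Gb^{\surj}$, giving a well-defined object of $\Dl\Md(\Ran_\Gb\Ab^1)$. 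That these two assignments are mutually inverse is checked pointwise on $\FinSet_\Gb^{\surj,\textup{op}}$-cofinal diagrams, reducing to the classical BD statement after base change to a single Cartan.

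\textbf{Main obstacle.} The hard part is matching the singularities: a $\Gt$-vertex algebra has prescribed poles along the hyperplanes $w_i \pm w_j = 0$ rather than just the diagonals, and one must verify that the singularities of the factorisation product on $\Ran_\Gt\Ab^1$ (inherited from the complement $\circ$ in \ref{prop:ChiralDecompositionStructure}) are exactly these hyperplanes, not strictly more. This is where the use of $\Ga$-equivariance and the strictness condition enters: it forces the singularities to be of the type admissible for a $\Gb$-vertex algebra, and prevents spurious poles along $W$-orbit walls that are not unipotent hyperplanes in the sense of \ref{sssec:UnipotentRoots}. The verification is local on $\FinSet_\Gb^{\surj}$ and ultimately follows from the classical case applied after pullback to each stratum.
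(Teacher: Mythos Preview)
Your approach is essentially the same as the paper's: both extract the vertex operation $Y$ from the boundary map of the Mayer--Vietoris triangle on the two-point stratum $\Ab^2\smallsetminus\pm\Delta\hookrightarrow\Ab^2\hookleftarrow\pm\Delta$, use (partial) translation equivariance to trivialise the one-point restriction as $B\otimes\Ol_{\Ab^1}$, and read off associativity and $W$-locality from the three-point strata. The paper works directly on the bottom row and defers the other rows to the same argument (citing \cite{La2}), rather than taking the top row as a black box and reducing to it, but this is only a difference of presentation.

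Two small corrections. First, in your third-row discussion you write ``$\Ga=\Gm\rtimes\langle\tau\rangle$''; the equivariance group here is the additive group $\Ga$ (translations) together with the $\Zb/2$ from $\pm$, not the multiplicative group $\Gm$. Second, your ``main obstacle'' is overstated: the locus $\circ$ in Proposition~\ref{prop:ChiralDecompositionStructure} is \emph{defined} as the complement of exactly the unipotent hyperplanes $w_i\pm w_j=0$, so the singularities of $Y$ are contained in these by construction, and there is no separate matching step or risk of spurious $W$-orbit walls. The genuine content is rather in the converse direction, showing that the $X_n$ glue to a D-module on the colimit, which you address correctly in your ``functors and full faithfulness'' paragraph.
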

\begin{proof}
 We spell out the proof for the bottom row, with the rest following similarly. Let $\Bl$ be an orthogonal factoristaion algebra. As a D-module on $\Ran (\Ab^1/\pm)$ it is determined by its restrictions to $\Ab^n$, which we refer to as $\Bl_n$, plus compatible isomorphisms $\Delta_{\varphi}^*\Bl_m \simeq \Bl_n$ whenever $\varphi:\Ab^n \to \Ab^m$ is a compositition of diagonal embeddings and sign actions. 

 Let us consider the component of the Ran space chiral structure over $\Ab^2$: 
 \begin{center}
 \begin{tikzcd}[row sep = {30pt,between origins}, column sep = {45pt, between origins}]
  &(\Ab^1 \times \Ab^1 \smallsetminus  \pm \Delta) \ar[rd,"\cup j"] \ar[ld,"j"']  & \\ 
  \Ab^1 \times \Ab^1 & & \Ab^2
 \end{tikzcd}
 \end{center}
 Now, $\Bl_2$ is uniquely determined by its restrictions to the open $j^*\Bl_2$ and its closed complement $i^*\Bl_2$, along with a boundary map 
 $$\delta \ : \  j_*j^*\Bl_2 \ \to \ i_*i^*\Bl_2$$
 whose fibre is then $\Bl_2$. By the factorisation product on $\Bl$ we have an isomorphism $j^*(\Bl_1 \boxtimes\Bl_1)\stackrel{\sim}{\to}(\cup j)^*\Bl_2$. On the right, we need to use partial translation equivariance to conclude
 $$\Bl_1 \ = \ B \otimes \Ol_{\Ab^1}, \hspace{15mm} i^*\Bl_2 \ = \ \Bl\vert_{\pm \Delta} \ = \ B \otimes\Ol_{\pm \Delta}$$ 
 where the first factor is equipped with an involution $\tau + \textup{sgn}_z$ and the generating vector field acting via $T+\partial_z$. It follows that $\delta$ is a map 
  $$\Gamma(\delta) \ : \  B \otimes B [w_1,w_2,(w_1\pm w_2)^{-1}] \ \to \ B \otimes\delta_{\pm \Delta}$$
 Which by a variant of 9.2.4 of \cite{But}, one shows is equivalent to a map 
 $$Y(w_1,w_2) \ : \  B \otimes B \ \to \ B[[w_1,w_2]][(w_1+w_2)^{-1},(w_1-w_2)^{-1}] \ = \ B((w_1\pm w_2))$$
 satisfying the above conditions. The associativity of factorisation product on $\Bl$ implies weak associativity for $Y$, likewise for commutativity and unitality; the proof proceeds as in \cite{La2} for the $\GL$ and $\GL$-$\Ot$ case.
\end{proof}

The fibres of $\Al\Bl$ above $\Ab^{1,0},\Ab^{0,1}\simeq \Ab^1$ are
\begin{equation*}
 \label{fig:SymplecticFactorisationAlgebra} 
 \begin{tikzpicture}[scale=0.8]


   \begin{scope} 
     [xshift=-5cm]
     \draw[black, line width = 1.5pt] (-2,0) -- (2,0);
     \filldraw[fill=black] (1,0) circle (2pt);
     \node[above] at (1,0) {$A$};
     \end{scope}


   \begin{scope} 
    [xshift=0cm]
    \draw[black, line width = 1.5pt] (-2,0) -- (2,0);
    \filldraw[fill=black] (1,0) circle (2pt);
    \node[above] at (1,0) {$B$};
    \end{scope}
  \end{tikzpicture}
\end{equation*}
and the fibres above $\Ab^{2,0},\Ab^{1,1},\Ab^{0,2}\simeq \Ab^2$ are
\begin{equation*}
 \label{fig:SymplecticFactorisationAlgebra} 
 \begin{tikzpicture}[scale=0.8]


   \begin{scope} 
    [xshift=-15cm]
   \draw[black, fill=black, fill opacity = 0.2, pattern=north west lines] (-2,-2) rectangle (2,2);

    \draw[black, line width = 1.5pt] (-2,-2) -- (2,2);
    \filldraw[draw=black,fill=black] (-0.5,-0.5) circle (2pt);
   \filldraw[draw=black,fill=black] (0.3,1.3) circle (2pt);
    \node[above,xshift=-5pt] at (-0.5,-0.5) {$A$};
    \node[above,yshift=1pt] at (0.3,1.3) {$ A \otimes A$};
    \draw[->] (-2,2) -- (-2,2.4); 
    \node[left] at (-2,2.4) {$z_1$};
    \draw[->] (2,-2) -- (2.4,-2);
    \node[right] at (2.4,-2) {$z_2$};
    \end{scope}


   \begin{scope} 
    [xshift=-7.5cm]

  \draw[black, fill=black, fill opacity = 0.2, pattern=north west lines] (-2,-2) rectangle (2,2);

  \draw[black, line width = 1.5pt] (-2,-2) -- (2,2);
  \draw[black, line width = 1.5pt] (-2,2) -- (2,-2);
  \filldraw[draw=black,fill=black] (0,0) circle (2pt);
  \filldraw[draw=black,fill=black] (0.3,1.3) circle (2pt);
  \filldraw[fill=black] (-1,-1) circle (2pt);
  \filldraw[fill=black] (1.2,-1.2) circle (2pt);
  \node[above,yshift=4pt] at (0,0) {$B$};
  \node[above,xshift=-5pt] at (-1,-1) {$B$};
  \node[above,xshift=5pt] at (1.2,-1.2) {$B$};
  \node[above,yshift=1pt] at (0.3,1.3)  {$ A \otimes B$};
   \draw[->] (-2,2) -- (-2,2.4); 
   \node[left] at (-2,2.4) {$z$};
   \draw[->] (2,-2) -- (2.4,-2);
   \node[right] at (2.4,-2) {$w$};
    \end{scope}


  \draw[black, fill=black, fill opacity = 0.2, pattern=north west lines] (-2,-2) rectangle (2,2);

   \draw[black, line width = 1.5pt] (-2,-2) -- (2,2);
   \draw[black, line width = 1.5pt] (-2,2) -- (2,-2);
   \filldraw[draw=black,fill=black] (0,0) circle (2pt);
   \filldraw[draw=black,fill=black] (0.3,1.3) circle (2pt);
   \filldraw[fill=black] (-1,-1) circle (2pt);
   \filldraw[fill=black] (1.2,-1.2) circle (2pt);
   \node[above,yshift=4pt] at (0,0) {$B$};
   \node[above,xshift=-5pt] at (-1,-1) {$B$};
   \node[above,xshift=5pt] at (1.2,-1.2) {$B$};
   \node[above,yshift=1pt] at (0.3,1.3) {$ B \otimes B$};
   \draw[->] (-2,2) -- (-2,2.4); 
   \node[left] at (-2,2.4) {$w_1$};
   \draw[->] (2,-2) -- (2.4,-2);
   \node[right] at (2.4,-2) {$w_2$};
  \end{tikzpicture}
\end{equation*}
Notice that $\Al\Bl$ is locally constant along the hyperplanes  of $\ok_4$, which are the diagonal $z_1=z_2$ and the antidiagonal $z_1=-z_2$.

\subsubsection{Remark} 
The map $Y_B(w_1,w_2)$ defining the orthogonal vertex structure on $B$ should be thought of parallel transport along $w_1,w_2 \to 0$. We may of course also first parallel transport first along the path $w_1-w_2 \to 0$ then along $w_1+w_2 \to 0$, 
\begin{center}
 \begin{tikzpicture}[scale=0.8]
    
  \draw[black, fill=black, fill opacity = 0.2, pattern=north west lines] (-2,-2) rectangle (2,2);
   \draw[black, line width = 1.5pt] (-2,-2) -- (2,2);
   \draw[black, line width = 1.5pt] (-2,2) -- (2,-2);

   \draw[-,black, line width = 2pt] (0.3,1.3) -- (0.8,0.8) -- (0,0) -- cycle;
   \fill[fill opacity = 0.4, pattern=north west lines] (0.3,1.3) -- (0.8,0.8) -- (0,0) -- cycle;
   \draw[->,black,thick, shorten >=1pt, shorten <=1pt] (0.3,1.3) -- (0.8,0.8);
   \draw[->,black,thick, shorten >=3pt, shorten <=3pt] (0.3,1.3) -- (0,0);
   \draw[->,black,thick, shorten >=3pt, shorten <=3pt] (0.8,0.8) -- (0,0);

    \filldraw[fill=black] (0,0) circle (2pt);

  \end{tikzpicture}
  \end{center} 
If we write 
$$Y_-(w_1,w_2) \ : \ B \otimes B \ \to \ B((w_1\pm w_2))$$
for the gluing map along the diagonal $\Delta \smallsetminus  0 \hookrightarrow \Ab^2 \smallsetminus  -\Delta$, it follows from this that $ e^{-(w_1+w_2)T/2}\cdot Y_-(w_1,w_2)= Y_B(w_1,w_2)$, and similarly for the antidiagonal, and so these gluing maps in fact supply no extra data. See Appendix A of \cite{La2} for more details.

\subsubsection{} Finally, we note that if we fix $\Bl$ at the origin then it defines a factorisation module for $\Al$:

\begin{prop}
 Let $\Al\Bl$ be a linear-orthogonal factorisation algebra. Then there is a map 
 $$\Ran_0(\Ab^1) \ \stackrel{ab_0}{\to} \ \Ran_{\Sp \textup{-} \GL}\Ab^1$$
 such that the pullback $\Ml=(ab_0)^!\Al\Bl$ is a factorisation module for $\Al=a^!\Al\Bl$ at the origin. Its fibre at the origin is $M=B$.
\end{prop}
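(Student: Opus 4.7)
The plan is to build $ab_0$ by distinguishing the basepoint $0 \in \Ab^1$ and treating it as the (single, $\tau$-fixed) orthosymplectic point, while all other points of a finite subset are treated as linear points. Concretely, a point of $\Ran_0(\Ab^1)$ is a finite subset $S \ni 0$ of $\Ab^1$; I would send it to the object of $\Ran_{\Sp \textup{-}\GL}\Ab^1$ whose $\GL$-factor is $S \smallsetminus \{0\} \subset \Ab^1$ and whose $\Sp$-factor is the single point $\{0\} \subset \Ab^1$ (which is automatically $\tau$-fixed since $\tau(0)=0$). Functorially, this is obtained from the functor $\FinSet^{\surj,\textup{op}}_0 \to \FinSet^{\surj,\textup{op}}_{\GL\textup{-}\Sp}$ sending a finite pointed set $(I,\ast)$ to the two-step partition $(I\smallsetminus\{\ast\}\mid \{\ast\})$, and passing to colimits in $\PreStk$.

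Next I would verify that $ab_0$ is a (lax) map of decomposition spaces relative to the given chiral structures. For $S_1, S_2$ with $0 \in S_1$ and $S_1 \cap S_2 = \varnothing$, the image in the parabolic Ran space lands in the open locus where the $\GL$-$\GL$ and $\GL$-$\Sp$ unipotent hyperplanes $z_i - z_j'$, $z_i' \pm w_j$ are nonzero: the first two follow from $S_1,S_2$ being disjoint, and the condition $z_i' \pm w_j \ne 0$ with $w_j = 0$ reduces to $z_i' \ne 0$, i.e.\ points of $S_2$ missing $0 \in S_1$, which is exactly our disjointness hypothesis. This gives the compatibility square
\[
  (\Ran_0 \Ab^1 \times \Ran \Ab^1)_\circ \ \to \ (\Ran_{\GL\textup{-}\Sp}\Ab^1 \times \Ran_{\GL\textup{-}\Sp}\Ab^1)_\circ
\]
needed to make $\Ml \defeq (ab_0)^!(\Al\Bl)$ into a module over the pullback of $\Al\Bl$ along the (unital) inclusion $a : \Ran \Ab^1 \to \Ran_{\GL\textup{-}\Sp}\Ab^1$, i.e.\ over $\Al$.

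The fibre computation at the origin $\{0\} \in \Ran_0 \Ab^1$ is immediate: $ab_0$ takes $\{0\}$ to $(\varnothing \mid \{0\})$ in the parabolic Ran space, which under the identification in the proof of Theorem~\ref{thm:OrthosymplecticFactorisationAlgebras} has fibre $\Bl\vert_0 \simeq B$. So $M = B$ as required. The factorisation module axioms (associativity, unit) then follow by $!$-pullback of the associativity and unit axioms of the factorisation product on $\Al\Bl$ along $ab_0$, using that $ab_0$ is compatible with $\cup$ via its definition on pointed finite sets.

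The main obstacle I expect is purely bookkeeping: making the functor $(I,\ast)\mapsto (I\smallsetminus\{\ast\}\mid\{\ast\})$ sufficiently natural to descend through the colimits defining both Ran spaces, and checking that the subtle difference between the partially translation-equivariant structures on the two sides does not produce an extra obstruction at $w=0$. Both points should be manageable because $0$ is $\tau$-fixed, so on the $\Sp$-strand the infinitesimal symmetries $T$ and $\tau$ at $w=0$ act the same way in $\Ml$ as in the orthogonal factorisation algebra $\Bl = b^!(\Al\Bl)$, and hence the boundary map analysis from the proof of Theorem~\ref{thm:OrthosymplecticFactorisationAlgebras} applies verbatim to produce the module structure map $Y_{AB}(z,w)$ restricted to $w=0$.
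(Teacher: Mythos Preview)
Your proposal is correct and follows essentially the same approach as the paper: the map $ab_0$ sends $S\ni 0$ to the object with orthosymplectic part $\{0\}$ and linear part $S\smallsetminus\{0\}$ (in the paper's notation, ``$0\subseteq S$''), and the chiral structure on $\Ran_{\Sp\textup{-}\GL}\Ab^1$ pulls back to the module correspondence on $\Ran_0\Ab^1\times\Ran\Ab^1$. Your explicit verification of the hyperplane conditions and of the fibre at $\{0\}$ is more detailed than the paper's one-sentence justification, but the underlying argument is identical; the ``obstacles'' you flag are genuinely only bookkeeping and do not require any additional input.
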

\begin{proof}
 Recall that $\Ran_0\Ab^1$ parametrises finite subsets of $\Ab^1$ containing the origin. Thus the map $ab_0$ sends $S$ to $0 \subseteq S$. The chiral decomposition structure on $\Ran_{\Sp \textup{-} \GL}\Ab^1$ pulls back to 
 \begin{center}
 \begin{tikzcd}[row sep = {30pt,between origins}, column sep = {45pt, between origins}]
  & (\Ran_0\Ab^1 \times \Ran \Ab^1)_\circ \ar[ld] \ar[rd]  & \\ 
  \Ran_0 \Ab^1 \times \Ran \Ab^1 & & \Ran_0 \Ab^1 
 \end{tikzcd}
 \end{center}
 and so the factorisation algebra structure on $\Al\Bl$ pulls back to a factorisation $\Al$-module structure on $\Ml$.
\end{proof}

\subsection{Localised-to-vertex functor} \label{ssec:LocalisedToVertex}

\subsubsection{} In this section, we explain how to produce orthosymplectic vertex algebras from localised algebraic structures; see section \ref{ssec:Axiomatic} for an information sketch of the construction of our functor $\Phi$ of Theorem \ref{thm:LocalisedToVertex}. This expands on the work in \cite{JKL} for the $\GL$ case.

\subsubsection{} Let $\Gt$ be of classical or classical parabolic type. The group action
$$\Gm \times \Tt_\Gt \ \to \ \Tt_\Gt$$
by scaling the maximal torus induces an action of the translation group 
$$a \ : \ \Ga \times \Conf_{\Gt}^{\ord}\Ab^1 \ \to \ \Conf_{\Gt}^{\ord}\Ab^1.$$

\begin{theorem} \label{thm:LocalisedToVertex}
 There is a \textbf{$\Gt$-localised-to-vertex} functor 
 $$\varphi_{\Gt} \ : \ \textup{LocCoAlg}_{\Gt}^{\Ga} \ \to \ \textup{VertexCoAlg}_{\Gt}$$
 from partially translation-equivariant factorisation algebras on $\Conf_\Gt^{\ord}\Ab^1$ to the category of graded nonlocal $\Gt$-vertex coalgebras. 
\end{theorem}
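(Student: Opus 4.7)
The plan is to produce $\varphi_\Gt$ by interpreting the localised coproducts of a $\Ga$-equivariant localised coalgebra as the expansion of a vertex coproduct in a punctured formal neighbourhood at infinity along the translation direction, following the strategy for the $\GL$-case developed in \cite{JKL}.

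Given $B=(B_r,\Delta_{n_1,\ldots,n_k})\in \textup{LocCoAlg}_\Gt^{\Ga}$, first I would use the $\Ga$-equivariance of each $B_r$ to extract an infinitesimal derivation $T_r:B_r\to B_r$ (the generator of diagonal translation), and combine completion at a basepoint with $\Ga$-equivariance to write $\widehat{B}_r$ canonically as a formal-power-series tensor $B^V_r\otimes k[[w_1,\ldots,w_r]]$, taking the residual graded vector space $B^V_r$ as the graded component of the underlying space of the target vertex coalgebra, with derivation $T$ induced by the $T_r$ and the involution $\tau$ induced by the $\Zb/2$-action that is present on $\Conf_\Gt^{\ord}\Ab^1$ for $\Gt$ of classical type.

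Second, I would construct the vertex cooperation $\Delta^V_k:B^V_r\to (B^V_{r_1}\otimes\cdots\otimes B^V_{r_k})[[w_1,\ldots,w_k]][\{f_\alpha^{-1}\}]$ by pulling back $\Delta_{r_1,\ldots,r_k}$ along the translation map $a_z:\Conf_\Gt^{\ord}\Ab^1\to\Conf_\Gt^{\ord}\Ab^1$ and expanding the result in the punctured formal neighbourhood of $z=\infty$. Translation-invariant unipotent hyperplanes of the form $x_i-x_{i'}$ contribute honest poles in the coordinates $w_i$; the non-invariant hyperplanes of the form $x_i+x_{i'}$ that appear in the orthosymplectic cases translate to $x_i+x_{i'}+2z$ and expand into geometric series in $z^{-1}$ whose coefficients are polynomials in $x_i+x_{i'}$. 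After identifying both source and target completions with formal-series tensors as above, this procedure yields a well-defined map with exactly the analytic structure required by section \ref{ssec:OrthosymplecticVertexAlgebras}.

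Third, I would verify the axioms by transporting them to the configuration-space side: associativity \eqref{eqn:GAssociativity} follows from coassociativity of iterated localised coproducts together with the fact that iterated translations compose to a single translation, so the iterated expansions agree with the single one; $W$-equivariance \eqref{fig:WeylEquivariance} follows from the $W$-equivariance of the configuration space, where the $\Sk_n$ factor acts by permuting coordinates and the $\Zb/2$ factors act via the antipodal involution, which is compatible with the sign automorphism of $\Ga$ under which the translation expansion is covariant. Functoriality of $\varphi_\Gt$ is then immediate since any morphism of $\Ga$-equivariant localised coalgebras commutes with all the ingredients used above. The main obstacle is step two: making the expansion rigorous in the presence of both localisation and formal completion, which requires verifying that expanding at $z=\infty$ after pulling back along translation is well-defined on $(B_{r_1}\otimes\cdots\otimes B_{r_k})[\{f_\alpha^{-1}\}]$, is compatible with the $\Sb_r$-module structure, and factors through the completed tensor product in precisely the form predicted by the $\Gt$-vertex coalgebra axioms. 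I expect this to proceed by a filtered-colimit reduction to the case of finitely generated free modules, where the expansion becomes an elementary calculation.
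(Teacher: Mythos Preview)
Your overall strategy---translate by the $\Ga$-action, expand at infinity, and transport the factorisation axioms---is the same as the paper's. But the translation setup in step two is wrong, and this breaks the construction already in the $\GL$ case.

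You pull back along a \emph{single} diagonal translation $a_z:\Conf_\Gt^{\ord}\Ab^1\to\Conf_\Gt^{\ord}\Ab^1$ applied to the source of $\Delta_{r_1,\ldots,r_k}$. Under diagonal translation every off-diagonal hyperplane of $\GL$-type, namely $x_i^{(a)}-x_j^{(b)}$ with $a\ne b$, is invariant: $(x_i^{(a)}+z)-(x_j^{(b)}+z)=x_i^{(a)}-x_j^{(b)}$. These are \emph{all} the unipotent hyperplanes when $\Gt=\GL$, so nothing expands and you recover only the localised coproduct, not a vertex one. Your claim that ``translation-invariant unipotent hyperplanes of the form $x_i-x_{i'}$ contribute honest poles in the coordinates $w_i$'' cannot be right: the $w_i$ in the target are one per tensor factor, whereas the $x_i^{(a)}-x_j^{(b)}$ involve internal configuration coordinates, and there is no mechanism in your setup that converts the latter into the former.

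What the paper actually does is translate each factor of the \emph{target} by its own variable: one applies $a_{u_a}$ to the $a$-th tensor factor, so the off-diagonal hyperplane $f_\alpha=x_i^{(a)}\pm x_j^{(b)}$ becomes $f_\alpha + a_\alpha u_a + b_\alpha u_b$ with $(a_\alpha,b_\alpha)\ne(0,0)$, and then one defines a Taylor map $\Tay$ sending $1/f_\alpha\mapsto\sum_{n\ge 0}f_\alpha^n/(a_\alpha u_a+b_\alpha u_b)^{n+1}$. This is what produces the form $[[u_1,\ldots,u_k]][\{(u_a\pm u_b)^{-1}\}]$ required by the $\Gt$-vertex axioms, and it needs only the hypothesis that each $B_r$ is $\Ga$-equivariant (so that each factor can be translated independently). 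Relatedly, your step one is unnecessary: the paper takes the graded pieces of the vertex coalgebra to be $B_r$ itself, not a residual fibre $B^V_r$; the one-dimensional $\Ga$-equivariance cannot produce a trivialisation $B_r\simeq B^V_r\otimes k[[w_1,\ldots,w_r]]$ with $r$ independent variables.
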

\begin{proof}
Let $B$ be a localised $\Gt$-coalgebra, which by Theorem \ref{thm:LocalisedCoproducts} is equivalent to a flat quasicoherent factorisation coalgebra on the (un)ordered $\Gt$-configuration space. By pulling back along $\Conf_\Gt^{\ord}\Ab^1 \to \Conf_{\Gt}\Ab^1$, without loss of generality we have a partially transation equivariant factorisation coalgebra over the ordered configuration space.

Next, we show that if we restrict to the punctured formal disk at infinity 
$$\Dt^\times_{u^{-1}} \ \hookrightarrow \ \Ga$$
where $u$ is a coordinate on $\Ga$, then the translation action takes $(\Conf_\Gt^{\ord}\Ab^1)^2$ to the $\Gt$-disjoint locus:
\begin{equation} 
\begin{tikzcd}[row sep = {40pt,between origins}, column sep = {10pt}]
  \Dt_{u^{-1}_1}^\times \hat{\times}\,\Dt_{u^{-1}_2}^\times \hat{\times}\,(\Conf_\Gt^{\ord} \Ab^1\times \Conf_\Gt^{\ord} \Ab^1 )
  \ar[rd,"j"] \ar[d,dashed,"\Tay"]
  & \\
  (\Ga \times \Ga \times  \Conf_\Gt^{\ord} \Ab^1\times \Conf_\Gt^{\ord} \Ab^1)_{\Gt\circ} \ar[d] \ar[r] &   \ar[d,"a \times a"]  \Ga\times \Ga \times  \Conf_\Gt^{\ord} \Ab^1\times  \Conf_\Gt^{\ord} \Ab^1 
  \\ 
   (\Conf_\Gt^{\ord} \Ab^1 \times \Conf_\Gt^{\ord} \Ab^1)_{\Gt\circ} \ar[r] &\Conf_\Gt^{\ord} \Ab^1 \times \Conf_\Gt^{\ord} \Ab^1
\end{tikzcd}
\end{equation}
where the bottom square is defined as the pullback. The map $\Tay$ is a map between ind-affine schemes, so it is enough to define it on functions. Write $x_i,y_j$ for coordinate functions on the first and second $\Conf_\Gt^{\ord}\Ab^1$ factors, and $f_\alpha$ for the linear functions the complement of whose zero locus is $(\Conf_\Gt^{\ord}\Ab^1)^2_{\Gt\circ}$. In particular,
$$(a \times a)^*f_\alpha \ = \ f_\alpha + a_{\alpha}u_1 + b_{\alpha}u_2$$
 are the functions the complement of whose zero locus is $(\Ga \times \Conf_\Gt^{\ord}\Ab^1)^2_{\Gt\circ}$, where $a_\alpha,b_\beta$ do not both vanish. We then define $\Tay$ by sending $1/f_\alpha$ to its Taylor expansion in $u_1^{-1},u_2^{-1}$:
$$\frac{F(u_1,u_2,x_i,y_j)}{\prod_\alpha f_\alpha^r} \ \mapsto \  F(u_1,u_2,x_i,y_j)\cdot \prod_\alpha \left(\sum_{n \ge 0}\frac{f_\alpha^n}{(a_{\alpha}u_1 + b_{\alpha}u_2)^{n+1}} \right)^r.$$

Thus we take the factorisation coproduct 
$$\Delta \ : \ (\cup j)^* \Bl \ \to \  j^*(\Bl \boxtimes \Bl)$$
in  $\QCoh \left((\Conf^{\ord}_\Gt \Ab^1 \times \Conf^{\ord}_\Gt \Ab^1)_{\Gt\circ}\right)$ to the map of $k((u_1^{-1},u_2^{-1}))$-modules
$$\Gamma(\Tay^*\Delta) \ : \ \Gamma(\Tay^*(\cup j)^* \Bl) \ \to \  \Gamma(\Tay^*j^*(\Bl \boxtimes \Bl))$$
which defines the $\Gt$-vertex coproduct. The vertex axioms are inherited from the factorisation coalgebra axioms on $\Bl$ as in \cite{JKL}. 
\end{proof}

\begin{prop}
  For any map $\Gt \to \Gt'$ of classical or parabolic classical groups, the following diagram commutes: 
 \begin{center}
 \begin{tikzcd}[row sep = {30pt,between origins}, column sep = {20pt}]
   \textup{LocCoAlg}_{\Gt}^{\Ga} \ar[r,"\varphi_{\Gt}"] & \textup{VertexCoAlg}_{\Gt} \\ 
   \textup{LocCoAlg}_{\Gt'}^{\Ga}\ar[r,"\varphi_{\Gt'}"] \ar[u,"f_*"]  & \textup{VertexCoAlg}_{\Gt'} \ar[u,"f_*"]  
 \end{tikzcd}
 \end{center}
 If this map is strict, then there is a commutative diagram 
 \begin{center}
 \begin{tikzcd}[row sep = {30pt,between origins}, column sep = {20pt}]
   \textup{LocCoAlg}_{\Gt}^{\Ga} \ar[r,"\varphi_{\Gt}"] & \textup{VertexCoAlg}_{\Gt} \\ 
   \textup{LocCoAlg}_{\Gt'}^{\Ga}\ar[r,"\varphi_{\Gt'}"] \ar[u,"f^*"',<-]  & \textup{VertexCoAlg}_{\Gt'} \ar[u,"f^*"',<-]  
 \end{tikzcd}
 \end{center}
\end{prop}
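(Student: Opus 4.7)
The plan is to unwind both composites via the three-step construction of $\varphi_\Gt$ from Theorem \ref{thm:LocalisedToVertex} (passage to a flat quasicoherent factorisation coalgebra, pullback along $\Tay$, global sections) and verify each step is compatible with $f_*$ and, in the strict case, $f^*$. At the level of underlying graded vector spaces, both $f_*\cdot \varphi_\Gt$ and $\varphi_{\Gt'}\cdot f_*$ agree on $B$, since $\varphi_\Gt$ preserves each $B_n$ and $f_*$ is the identity on vector spaces (Corollary \ref{cor:FunctorialityConfigurationSpacesCoAlgebra}); in the strict case, $f^*$ is base change along $\Ht^\sbt(\Gt') \to \Ht^\sbt(\Gt)$ on both sides. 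The content of commutativity therefore lies entirely in the structure maps.

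For the pushforward square, I would compare the two composites applied to $\Delta_B$ as follows. The composite $f_*\cdot\varphi_\Gt$ first Taylor-expands $\Delta_B$ along $\Tay$ into $B_n \otimes B_m((u_1^{-1},u_2^{-1}))$ with coefficients polynomial in the $\Gt$-unipotent inverses $\{f_{\Gt,\alpha}^{-1}\}$, then postcomposes with inclusion into the larger ring with $\{f_{\Gt',\alpha}^{-1}\}$ inverted. Conversely, $\varphi_{\Gt'}\cdot f_*$ first enlarges the localisation and only then expands. Because $\Tay$ is defined termwise on the partial-fraction generators by the explicit formula
\[ \frac{1}{f_\alpha} \ \longmapsto\ \sum_{n\ge 0}\frac{f_\alpha^n}{(a_\alpha u_1 + b_\alpha u_2)^{n+1}} \]
from the proof of Theorem \ref{thm:LocalisedToVertex}, and because $\Phi_{\Gt,2} \subseteq (f\times f)^*\Phi_{\Gt',2}$ by Lemma \ref{lem:FunctorialityConfigurationSpaces}, the operations commute on each generator and hence on the entire localisation. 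The pushforward diagram thus reduces to commutativity of the square of algebra maps whose horizontal arrows are $\Tay$ and whose vertical arrows are inclusions of localisations, tensored with $B_n\otimes B_m$.

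For the pullback square, strictness of $f$ upgrades the inclusion $\Phi_{\Gt,2}\subseteq (f\times f)^*\Phi_{\Gt',2}$ to an equality, so the corresponding localisations agree after base change and $f^*$ really is a flat base change on the localised coproduct. Since $\Tay$ expands only in the translation coordinates $u_1^{-1},u_2^{-1}$ and is $\Ht^\sbt(\Gt')$-linear in the configuration variables $x_i,y_j$, it intertwines base change along $\Ht^\sbt(\Gt')\to \Ht^\sbt(\Gt)$, and the square commutes by naturality of base change. In both squares, the $\Ga$-equivariance required for $\varphi$ to be defined is preserved because the scaling action $\Ga\times \Tt_\Gt\to\Tt_\Gt$ is natural in $\Gt$, so any map $f$ of moduli group stacks automatically intertwines the translation actions on the two configuration spaces and the functors $f_*, f^*$ lift to the $\Ga$-equivariant categories.

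The main obstacle I anticipate is organisational: threading the internal gradings $(n,m)$, the hyperplane arrangements $\Phi_n,\Phi_m,\Phi_{n+m}$, the Taylor expansion at infinity, and the Weyl-equivariance condition \eqref{fig:WeylEquivariance} through the coassociativity of the coproduct, while keeping conventions consistent with sections \ref{ssec:OrthosymplecticConfigurationSpaces} and \ref{ssec:OrthosymplecticVertexAlgebras}. Once the data is aligned, both diagrams collapse to the two algebraic identities above (commutativity of $\Tay$ with inclusion of localisations and with base change), so the proof becomes essentially formal.
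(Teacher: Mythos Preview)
The paper states this proposition without proof, so there is no authors' argument to compare against. Your approach --- reducing both squares to the compatibility of the explicit $\Tay$ map from Theorem~\ref{thm:LocalisedToVertex} with (i) inclusion of localisations for $f_*$ and (ii) flat base change for $f^*$ --- is correct and is the natural way to establish the claim.

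One small imprecision worth tightening: after applying $\Tay$, the configuration-variable inverses $\{f_{\Gt,\alpha}^{-1}\}$ have been absorbed into the Laurent expansion, and what survives are poles in the translation variables of the form $(a_\alpha u_1 + b_\alpha u_2)^{-1}$, indexed by the same $\alpha$. So the comparison on the vertex side is between pole sets in $u_1,u_2$, not in $x_i,y_j$; but since $(a_\alpha u_1+b_\alpha u_2)$ is determined by $\alpha$ and the containment $\Phi_{2,\Gt}\subseteq (f\times f)^*\Phi_{2,\Gt'}$ from Lemma~\ref{lem:FunctorialityConfigurationSpaces} carries over directly to these linear forms, your argument goes through unchanged. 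You may also want to flag that the paper's diagram has an arrow-direction inconsistency: with $f:\Gt\to\Gt'$ as stated, the convention following Theorem~\ref{thm:LocalisedCoproducts} has $f_*:\textup{LocCoAlg}_\Gt\to\textup{LocCoAlg}_{\Gt'}$, so the vertical arrows should point downward. You have implicitly read it in the consistent direction.
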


\begin{cor}
 For $\Gt \in \{\GL, \Ot_{\textup{odd}}, \Ot_{\textup{ev}}, \Sp\}$ there is a commutative diagram of functors
 \begin{center}
 \begin{tikzcd}[row sep = {30pt,between origins}, column sep = {20pt}]
  \textup{LocCoAlg}^{\Ga} \ar[r,"\Phi"] & \textup{VertexCoAlg}_{\GL} \\ 
  \textup{LocCoAlg}_{\GL \textup{-}\Gt}^{\Ga}\ar[d] \ar[u]\ar[r,"\varphi_{\GL \textup{-}\Gt}"]  & \textup{VertexCoAlg}_{\GL \textup{-}\Gt}^{\Ga}\ar[u] \ar[d]  \\
 \textup{LocCoAlg}_{\Gt}^{\Ga} \ar[r,"\varphi_{\Gt}"] & \textup{VertexCoAlg}_{\Gt} 
 \end{tikzcd}
 \end{center} 
 in each row sending $\Bl \mapsto B_n = \Gamma(\Conf_{-,n}^{\ord}\Ab^1,\Bl)$.
\end{cor}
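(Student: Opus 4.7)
The plan is to deduce this corollary by combining Theorem \ref{thm:LocalisedToVertex} with the previous proposition's functoriality statement, applied to the strict maps of moduli group stacks
$$\BGL \ \to \ \BP_{\GL \textup{-}\Gt} \ \leftarrow \ \BG.$$
First I would observe that each horizontal arrow $\Phi$, $\varphi_{\GL \textup{-}\Gt}$, $\varphi_\Gt$ was constructed in the proof of Theorem \ref{thm:LocalisedToVertex} by a single uniform recipe: take the factorisation coalgebra $\Bl$, restrict its coproduct $\Delta$ along the map $\Tay$ into the punctured formal disc at infinity of $\Ga^{\times 2}$, and then take global sections. All three recipes share the same underlying vector spaces $B_n=\Gamma(\Conf_{-,n}^{\ord}\Ab^1,\Bl)$.

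Next I would analyze the two squares separately. The left vertical arrows are the pullback/pushforward functors between categories of localised coalgebras coming from the previous proposition. Because the maps $\BGL \to \BP_{\GL \textup{-}\Gt}$ and $\BG \to \BP_{\GL \textup{-}\Gt}$ are strict (the unipotent hyperplanes for $\Pt_{\GL \textup{-}\Gt}$ pull back exactly to the unipotent hyperplanes for $\GL$, respectively $\Gt$), Corollary \ref{cor:FunctorialityConfigurationSpacesCoAlgebra} gives both pushforward and pullback; on the level of vector spaces $B_n$ these functors act simply by tensoring along $\Ht^\sbt(\Bb\Pt)\to \Ht^\sbt(\BGL)$ or $\Ht^\sbt(\Bb\Pt)\to \Ht^\sbt(\BG)$. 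The right vertical arrows are the corresponding forgetful functors between $\Pt$-vertex coalgebras and $\GL$- or $\Gt$-vertex coalgebras, which act on the underlying vector spaces by the same base change along cohomology rings.

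The crux is then checking commutativity of each square at the level of structure maps. I would argue that the Taylor expansion map $\Tay$ used in Theorem \ref{thm:LocalisedToVertex} is natural with respect to strict maps of moduli group stacks: if $\Phi_\Gb \subseteq f^*\Phi_{\Gb'}$ with equality (strictness), then the ``off-diagonal'' loci $(\Conf^{\ord}_\Gb\Ab^1\times \Conf^{\ord}_\Gb\Ab^1)_{\Gb\circ}$ and $(\Conf^{\ord}_{\Gb'}\Ab^1\times \Conf^{\ord}_{\Gb'}\Ab^1)_{\Gb'\circ}$ are obtained from each other by pullback, and the two Taylor expansions $\Tay_\Gb$ and $\Tay_{\Gb'}$ coincide after the relevant base change because they invert the same linear functions $a_\alpha u_1+b_\alpha u_2$ with the same exponents. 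Hence $\Gamma(\Tay^*\Delta)$ is compatible with $f^*$, giving commutativity.

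The main obstacle I anticipate is bookkeeping around the middle row: one must be careful that the $\Pt$-vertex coalgebra recipe of section \ref{sssec:PVA}, with its two families of variables $z_i$ and $w_j$ and hyperplanes $z_i\pm w_j$, specializes correctly upon setting one family of variables to zero (recovering $\varphi_\Gt$ on $\BG$) or restricting to the linear factor (recovering $\Phi$ on $\BGL$). Once this is checked for a single pair of factors via an explicit computation of $\Tay$ on a typical element $1/(z_i\pm w_j)$, the general case follows from the associativity/linearity axioms \eqref{eqn:PAssociativity} and the uniform description of both sides as base change along cohomology rings.
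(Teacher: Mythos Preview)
Your proposal is correct and takes essentially the same approach as the paper: the corollary follows immediately from the preceding functoriality proposition applied to the strict maps $\BGL \to \BP_{\GL \textup{-}\Gt} \leftarrow \BG$, exactly as you outline. The paper states the corollary without an explicit proof block, so your write-up is more detailed than necessary, but the core idea (naturality of the $\Tay$ construction with respect to strict maps of moduli group stacks) is the intended one.
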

 
For instance, if we have an symplectic localised coalgebra for $\Gt=\Sp$:
$$\Delta \ : \  B_{n+m} \ \to \ B_n \otimes B_m [x_1^{-1},x_2^{-1},(x_1\pm x_2)^{-1}],$$
we compose with the pullbacks $\act_{w_1}^* \otimes \act_{w_2}^*$ to get 
$$\Delta(w_1,w_2) \ : \  B_{n+m} \ \to \ B_n \otimes B_m [(x_1+w_1)^{-1},(x_2+w_2)^{-1},(x_1 + w_1 \pm x_2 \pm w_2)^{-1}].$$
Taylor expanding this then gives an orthosymplectic vertex coproduct:
$$\Delta(w_1,w_2) \ : \  B_{n+m} \ \to \ B_n \otimes B_m ((w_1^{-1},w_2^{-1},(w_1 \pm w_2)^{-1})).$$

\newpage 
 \section{Orthosymplectic vertex Yetter-Drinfeld modules}  \label{sec:JL}

\noindent In \cite{Jo} Joyce discovered that the cohomology of moduli stacks of abelian categories have \textit{vertex coalgebra} structures, which was generalised to Borel-Moore and critical cohomology for certain Calabi-Yau-three categories in \cite{JKL,Li}. Then in \cite{La1,JKL,Li} it was shown that together with the CoHA, this forms a \textit{vertex quantum group} - the vertex analogue of quasitriangular bialgebra.

In this section we show that the critical cohomology of $\Ml^\tau$ forms a \textit{vertex comodule} for $\Ht^\sbt(\Ml,\varphi)$, and compatibly with this $\Ht^\sbt(\Ml^\tau, \varphi^\tau)$ itself has an \textit{orthosymplectic vertex coalgebra} structure; we interpret this as 
$$\Ht^\sbt(\SES^{\OSpt},\varphi) \ \simeq \ \Ht^\sbt(\Ml^\tau, \varphi^\tau) \oplus \Ht^\sbt(\Ml, \varphi)$$
having a $\Pt_{\GL \textup{-}\OSpt}$-vertex coalgebra structure, where $\Pt_{\GL \textup{-}\OSpt}$ is a linear-orthosymplectic parabolic. Then, we show that together with the CoHA module structure of Theorem \ref{thmX:CoHAVA}, $\Ht^\sbt(\Ml^\tau, \varphi^\tau)$ forms a \textit{twisted vertex Yetter-Drinfeld} module for the vertex quantum group $\Ht^\sbt(\Ml,\varphi)$.

\subsubsection*{Heuristics} The vertex structures on critical cohomology arise from the direct sum structure on the category and its moduli stack $\Ml$
\begin{center}
  \begin{tikzpicture}
    [rotate = 90]
    \begin{scope}
      [xshift=1cm]
      \draw (0,4) ellipse (0.5 and 0.25);
      \draw (-0.5,3.5) -- (-0.5,4);
      \draw (0.5,3.2) -- (0.5,4);
    \end{scope}
    \begin{scope}
      [xshift=-1cm]
      \draw (0,4) ellipse (0.5 and 0.25);
      \draw (-0.5,3.2) -- (-0.5,4);
      \draw (0.5,3.5) -- (0.5,4);
    \end{scope}
\fill[pattern=north east lines, pattern color=black!30]
(1-0.5,3.5) to[out=-90,in=90,looseness=0.8] (-0.5,0.5) to[out=90,in=-90,looseness=0.8]  (1-0.5,3.5) -- (1-0.5,4) arc (180:360:0.5 and 0.25)  (1+0.5,4) --(1+0.5,3.2) to[out=-90,in=90,looseness=0.8] (0.5,0.2) -- (0.5,0) -- (-0.5,0)  -- (-0.5,0.5) to[out=90,in=-90,looseness=0.8]  (1-0.5,3.5) -- cycle ;
  
\fill[pattern=north east lines, pattern color=black!30]
(-1-0.5,3.2) to[out=-90,in=90,looseness=0.8] (-0.5,0.2) to[out=90,in=-90,looseness=0.8]  (-1-0.5,3.2) -- (-1-0.5,4) arc (180:360:0.5 and 0.25)  (-1+0.5,4) --(-1+0.5,3.5) to[out=-90,in=90,looseness=0.8] (0.5,0.5) -- (0.5,0) -- (-0.5,0)  -- (-0.5,0.2) to[out=90,in=-90,looseness=0.8]  (-1-0.5,3.2) -- cycle ;

    \draw (-1-0.5,3.2) to[out=-90,in=90,looseness=0.8] (-0.5,0.2);
    \draw (-1+0.5,3.5) to[out=-90,in=90,looseness=0.8] (0.5,0.5);
    \draw (1-0.5,3.5) to[out=-90,in=90,looseness=0.8] (-0.5,0.5);
    \draw (1+0.5,3.2) to[out=-90,in=90,looseness=0.8] (0.5,0.2);

    \filldraw[white,opacity=1] (-0.49,0) rectangle (0.49,2);
    \draw[dashed] (0.5,0) arc (0:180:0.5 and 0.25);

    \fill[white,opacity=0.7] (-0.5,2) -- (-0.5,0) arc (180:360:0.5 and 0.25) -- (0.5,2) -- cycle;
    \fill[pattern=north east lines, pattern color=black!30] (-0.5,2) -- (-0.5,0) arc (180:360:0.5 and 0.25) -- (0.5,2) -- cycle;

    \draw (-0.5,0) arc (180:360:0.5 and 0.25);
    \draw (-0.5,0) -- (-0.5,0.2);
    \draw (0.5,0) -- (0.5,0.2);
    
     \node[] at (1,4.7) {$\Ml$};
     \node[] at (-1,4.7) {$\Ml$};
     \node[] at (0,-0.8) {$\Ml$};
\end{tikzpicture} 
\end{center}
This map is commutative and $\tau$-equivariant, so we get a commutative product $\oplus_{\OSpt}$ on $\Ml^\tau$ and an action $\oplus_{\GL \textup{-}\OSpt}$ of $\Ml$ on $\Ml^\tau$ by taking invariants of the above:
\begin{center}
  \centering
  \begin{minipage}{.5\textwidth} \centering
   
   \begin{tikzpicture}
     [rotate = 90]
     \begin{scope}
       [xshift=1cm]
       \draw (0,4) ellipse (0.5 and 0.25);
       \draw (-0.5,3.5) -- (-0.5,4);
       \draw (0.5,3.2) -- (0.5,4);
     \end{scope}
     \begin{scope}
       [xshift=-1cm]
       \draw (0,4) ellipse (0.5 and 0.25);
       \draw (-0.5,3.2) -- (-0.5,4);
       \draw (0.5,3.5) -- (0.5,4);
     \end{scope}
 \fill[pattern=north east lines, pattern color=black!30]
 (1-0.5,3.5) to[out=-90,in=90,looseness=0.8] (-0.5,0.5) to[out=90,in=-90,looseness=0.8]  (1-0.5,3.5) -- (1-0.5,4) arc (180:360:0.5 and 0.25)  (1+0.5,4) --(1+0.5,3.2) to[out=-90,in=90,looseness=0.8] (0.5,0.2) -- (0.5,0) -- (-0.5,0)  -- (-0.5,0.5) to[out=90,in=-90,looseness=0.8]  (1-0.5,3.5) -- cycle ;
   
 \fill[pattern=north east lines, pattern color=black!30]
 (-1-0.5,3.2) to[out=-90,in=90,looseness=0.8] (-0.5,0.2) to[out=90,in=-90,looseness=0.8]  (-1-0.5,3.2) -- (-1-0.5,4) arc (180:360:0.5 and 0.25)  (-1+0.5,4) --(-1+0.5,3.5) to[out=-90,in=90,looseness=0.8] (0.5,0.5) -- (0.5,0) -- (-0.5,0)  -- (-0.5,0.2) to[out=90,in=-90,looseness=0.8]  (-1-0.5,3.2) -- cycle ;
 
     \draw (-1-0.5,3.2) to[out=-90,in=90,looseness=0.8] (-0.5,0.2);
     \draw (-1+0.5,3.5) to[out=-90,in=90,looseness=0.8] (0.5,0.5);
     \draw (1-0.5,3.5) to[out=-90,in=90,looseness=0.8] (-0.5,0.5);
     \draw (1+0.5,3.2) to[out=-90,in=90,looseness=0.8] (0.5,0.2);
 
     \filldraw[white,opacity=1] (-0.49,0) rectangle (0.49,2);
     \draw[dashed] (0.5,0) arc (0:180:0.5 and 0.25);
     
     \draw[dashed] (-1.75,4+0.1) -- (1.75,4+0.1) -- (1.75,-0.1) -- (-1.75,-0.1) -- cycle;
     \draw[<->] (0.1,3.5+0.4) -- (-0.1,3.5-0.4);

     \fill[white,opacity=0.7] (-0.5,2) -- (-0.5,0) arc (180:360:0.5 and 0.25) -- (0.5,2) -- cycle;
     \fill[pattern=north east lines, pattern color=black!30] (-0.5,2) -- (-0.5,0) arc (180:360:0.5 and 0.25) -- (0.5,2) -- cycle;
 
     \draw (-0.5,0) arc (180:360:0.5 and 0.25);
     \draw (-0.5,0) -- (-0.5,0.2);
     \draw (0.5,0) -- (0.5,0.2);
     
      \node[] at (1,4.7) {$\Ml^\tau$};
      \node[] at (-1,4.7) {$\Ml^\tau$};
      \node[] at (0,-0.8) {$\Ml^\tau$};
 \end{tikzpicture}
 
  \end{minipage}%
  \begin{minipage}{.5\textwidth} \centering
  
    \begin{tikzpicture}[rotate = 90]
      \begin{scope}
        [xshift=1.5cm]
        \draw (0,4) ellipse (0.5 and 0.25);
        \draw (-0.5,3.5) -- (-0.5,4);
        \draw (0.5,3.2) -- (0.5,4);
      \end{scope}
      \begin{scope}
        [xshift=-1.5cm]
        \draw (0,4) ellipse (0.5 and 0.25);
        \draw (-0.5,3.2) -- (-0.5,4);
        \draw (0.5,3.5) -- (0.5,4);
      \end{scope}
    \fill[pattern=north east lines, pattern color=black!30]
    (1.5-0.5,3.5) to[out=-90,in=90,looseness=0.8] (-0.5,0.5) to[out=90,in=-90,looseness=0.8]  (1.5-0.5,3.5) -- (1.5-0.5,4) arc (180:360:0.5 and 0.25)  (1.5+0.5,4) --(1.5+0.5,3.2) to[out=-90,in=90,looseness=0.8] (0.5,0.2) -- (0.5,0) -- (-0.5,0)  -- (-0.5,0.5) to[out=90,in=-90,looseness=0.8]  (1.5-0.5,3.5) -- cycle ;
    
    \fill[pattern=north east lines, pattern color=black!30]
    (-1.5-0.5,3.2) to[out=-90,in=90,looseness=0.8] (-0.5,0.2) to[out=90,in=-90,looseness=0.8]  (-1.5-0.5,3.2) -- (-1.5-0.5,4) arc (180:360:0.5 and 0.25)  (-1.5+0.5,4) --(-1.5+0.5,3.5) to[out=-90,in=90,looseness=0.8] (0.5,0.5) -- (0.5,0) -- (-0.5,0)  -- (-0.5,0.2) to[out=90,in=-90,looseness=0.8]  (-1.5-0.5,3.2) -- cycle ;
    
      \draw (-1.5-0.5,3.2) to[out=-90,in=90,looseness=0.8] (-0.5,0.2);
      \draw (-1.5+0.5,3.5) to[out=-90,in=90,looseness=0.8] (0.5,0.5);
      \draw (1.5-0.5,3.5) to[out=-90,in=90,looseness=0.8] (-0.5,0.5);
      \draw (1.5+0.5,3.2) to[out=-90,in=90,looseness=0.8] (0.5,0.2);
    
      \filldraw[white,opacity=1] (-0.49,0) rectangle (0.49,2.4);
      \draw[dashed] (0.5,0) arc (0:180:0.5 and 0.25);
      \filldraw[white,opacity=0.5] (-0.49,0) rectangle (0.49,1.9);
    
    \fill[pattern=north east lines, pattern color=black!30] (-0.5,4) -- (-0.5,0) arc (180:360:0.5 and 0.25) -- (0.5,4) arc (360:180:0.5 and 0.25);
    
      \draw (0,4) ellipse (0.5 and 0.25);
      \draw (-0.5,0) arc (180:360:0.5 and 0.25);
      \draw (-0.5,2.35) -- (-0.5,4);
      \draw (0.5,2.35) -- (0.5,4);
      \draw (-0.5,0) -- (-0.5,0.2);
      \draw (0.5,0) -- (0.5,0.2);
    
      \draw[dashed] (-0.1,0-0.25+0.14) -- (0-0.1,4-0.25-0.14) -- (0+0.1,4+0.25+0.14) ;
      \draw[<->] (0.7,5.5) -- (-0.7,5.5);

      \node[] at (0,4.7) {$\Ml^\tau$};
      \node[] at (1.5,4.7) {$\Ml$};
      \node[] at (0,-0.8) {$\Ml^\tau$};
    \end{tikzpicture}
  \end{minipage}
  \end{center}
Thus roughly speaking, these structures should be viewed as attached to $\Zb/2$-equivariant topological cobordisms; see section \ref{sec:Cherednik} for more details.

\subsubsection*{Assumptions} To define the vertex structures, we do not need any smoothness assumptions on our stacks, only mild structures to exist. The main restriction is that we only consider vanishing cycle sheaves.

\begin{customass}{\textup{StkVA}}\label{ass:StkVA}
  $\Ml$ is a stack and $W:\Ml\to\Ab^1$ is a function, with a point and map 
  $$0 \ \in \  \Ml , \hspace{15mm}  \oplus \ : \  \Ml \times \Ml \ \to \  \Ml$$
  satisfying unit, commutativity and associativity axioms, also $W\vert_0=0$ and $\oplus^*W=W \boxplus W$. Second there is an action of the group object $\BGm$
  $$\act \ : \ \BGL \times \Ml \ \to \ \Ml$$
  compatible with the above structures: it acts trivially on $0$, has $\oplus \cdot \act  =(\act \times \act)\cdot \oplus$ and $\act^*W=0 \boxplus W$. Finally, let $\tau: \Ml \stackrel{\sim}{\to}\Ml$ be an involution leaving $W$ and $0$ invariant, and such that 
  \begin{equation}
    \label{eqn:TauEquivarianceOplusAct} 
    \tau \cdot \oplus \ = \ \oplus \cdot (\tau \times \tau), \hspace{15mm} \tau \cdot \act \ = \ \act \cdot (\iota \times \tau)
  \end{equation}
  where $\iota$ is an involution of $\BGL$ sending a vector bundle to its dual.
\end{customass}

As described in section \ref{sssec:VectEx}, there are two natural choices for the cocycle isomorphism of the involution $\iota$ on $\BGL$, namely $\alpha_{\Ot}=\ev$ and $\alpha_{\Sp}=-\ev$, which give us 
$$\BGL^\iota \ = \ \BO, \hspace{15mm} \BGL^\iota \ = \ \BSp$$
respectively. Thus, let us make a choice and denote by $\Gt\in \{\Ot,\Sp\}$ the associated sequence of classical groups.

\subsection{Orthosymplectic-linear algebras from direct sum} \label{ssec:HolomorphicJoyceLiu}

\subsubsection{} To begin with, the fixed stack $\Ml^\tau$ has a commutative monoid structure 
$$ \oplus_{\OSpt}  \ : \ \Ml^\tau \times \Ml^\tau \ \to \  \Ml^\tau$$
by including $\Ml^\tau \times \Ml^\tau\to (\Ml\times \Ml)^\tau$ and composing with the restriction of the direct sum map to $\tau$-invariants. Note that this is well-defined by the $\tau$-equivariance property \eqref{eqn:TauEquivarianceOplusAct} of the direct sum map.

\subsubsection{} Next, we have a map of commutative monoids
$$ \iota_{\GL \textup{-}\OSpt} \ : \ \Ml \ \to \ \Ml^\tau,$$
by taking invariants of the direct sum map for the involution $\sigma \cdot (\tau \times \tau)$ acting on $\Ml \times \Ml$. Note that this is well-defined because the direct sum map is commutative. 

\subsubsection{} Both of the above structures are $\tau$-equivariant for the action of $\tau$ on $\Ml$ and the trivial action on $\Ml^\tau$.

\subsubsection{} It follows that there is a module structure 
  $$\oplus_{\textup{\GL}\textup{-}\OSpt} \ : \ \Ml \times \Ml^\tau \ \to \ \Ml^\tau$$
  which is $\tau$-equivariant and commutes with the action of $\Ml^\tau$ on itself. We may equivalently define it as the composition $\oplus_{\OSpt}\cdot \iota_{\GL \textup{-}\OSpt} $, or as the fixed points of the triple direct sum map
  $$\Ml \times\Ml^\tau \ \simeq \ (\Ml \times\Ml \times \Ml)^{\Zb/2} \ \to \ \Ml^\tau$$
  under the involution sending $(a,b,c) \mapsto (\tau(c),\tau(b),\tau(a))$.

  \begin{prop}\label{prop:OplusStructureOSp}
    The critical cohomologies form two graded coalgebras and there is a graded coaction respecting both coproducts
    \begin{align}
      \begin{split} \label{eqn:OplusStructureOSp}
      \oplus^* \ : \ \Ht^\sbt(\Ml,\varphi)& \ \to \ \Ht^\sbt(\Ml,\varphi) \, \hat{\otimes}\, \Ht^\sbt(\Ml, \varphi), \\
      \oplus^*_{\OSpt} \ : \ \Ht^\sbt(\Ml^\tau,\varphi^\tau)& \ \to \ \Ht^\sbt(\Ml^\tau,\varphi^\tau) \, \hat{\otimes}\, \Ht^\sbt(\Ml^\tau,\varphi^\tau),\\
      \oplus^*_{\GL \textup{-} \OSpt}\ : \ \Ht^\sbt(\Ml^\tau,\varphi^\tau) &\ \to \ \Ht^\sbt(\Ml^\tau, \varphi^\tau)\, \hat{\otimes}\,\Ht^\sbt(\Ml,\varphi).
      \end{split}
    \end{align}
     All the above structures are $\tau$-equivariant: 
     $$(\tau \times \tau)\cdot \oplus^* \ = \ \oplus^* \cdot \tau, \hspace{15mm} (\id \times \tau)\cdot\oplus^*_{\GL \textup{-} \OSpt}\ = \ \oplus^*_{\GL \textup{-} \OSpt} .$$
  \end{prop}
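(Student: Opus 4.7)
The plan is to construct all three structure maps by $*$-pullback along the respective sum maps applied to critical cohomology, exploiting the compatibility $\oplus^*W = W\boxplus W$ (and its restriction to the fixed locus) together with a Thom-Sebastiani identification. First I would note that pulling back along $\oplus$ induces a map of sheaves $\oplus^*\varphi_W(\omega_\Ml) \to \varphi_{\oplus^*W}(\oplus^*\omega_\Ml)$, and since $\oplus^*W = W \boxplus W$, Thom-Sebastiani gives a canonical quasi-isomorphism $\varphi_{W \boxplus W}(\omega_{\Ml \times \Ml}) \simeq \varphi_W(\omega_\Ml) \boxtimes \varphi_W(\omega_\Ml)$. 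Taking global sections and composing with the Künneth map on a component-by-component basis (then completing over the grading by $\pi_0(\Ml)$, hence the use of $\hat{\otimes}$) produces the coproduct $\oplus^*$ on $\Ht^\sbt(\Ml,\varphi)$.

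Next I would carry out the same construction for the other two maps. The restriction $W^\tau \defeq W|_{\Ml^\tau}$ inherits the compatibility $\oplus_{\OSpt}^*W^\tau = W^\tau \boxplus W^\tau$ by pulling back $\oplus^*W = W\boxplus W$ along the fixed-locus inclusion $\Ml^\tau \times \Ml^\tau \hookrightarrow (\Ml \times \Ml)^\tau \to \Ml \times \Ml$. This gives $\oplus_{\OSpt}^*$ as a coproduct on $\Ht^\sbt(\Ml^\tau,\varphi^\tau)$. For the coaction, the analogous identity $\oplus_{\GL\textup{-}\OSpt}^* W^\tau = W \boxplus W^\tau$ holds by pulling back along $\Ml \times \Ml^\tau \hookrightarrow (\Ml\times\Ml\times\Ml)^{\Zb/2}$ and using $\oplus_{\GL\textup{-}\OSpt} = \oplus_{\OSpt} \cdot \iota_{\GL\textup{-}\OSpt}$ together with the commutativity of $\oplus$; this produces the coaction $\oplus_{\GL\textup{-}\OSpt}^*$.

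I would then verify the coalgebra, comodule and equivariance axioms. Coassociativity and cocommutativity of $\oplus^*$ and $\oplus_{\OSpt}^*$ follow by applying pullback-on-critical-cohomology to the associativity and commutativity diagrams for $\oplus$ and $\oplus_{\OSpt}$ granted by Assumption \ref{ass:StkVA} (and its fixed-locus consequences). The compatibility of the coaction with both coproducts reduces to two commutative diagrams of stacks: $\oplus_{\GL\textup{-}\OSpt} \cdot (\oplus \times \id) = \oplus_{\GL\textup{-}\OSpt} \cdot (\id \times \oplus_{\GL\textup{-}\OSpt})$, which comes from associativity of $\oplus$ restricted to the triple-invariant description of $\oplus_{\GL\textup{-}\OSpt}$, and $\oplus_{\OSpt} \cdot (\oplus_{\GL\textup{-}\OSpt} \times \oplus_{\GL\textup{-}\OSpt}) \simeq \oplus_{\GL\textup{-}\OSpt} \cdot (\oplus \times \oplus_{\OSpt})$, which follows from commutativity together with the fact that $\iota_{\GL\textup{-}\OSpt}$ is a map of commutative monoids. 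Finally, $\tau$-equivariance of both $\oplus^*$ and $\oplus_{\GL\textup{-}\OSpt}^*$ is pullback along the identities $\tau\cdot \oplus = \oplus \cdot (\tau\times\tau)$ and the tautological invariance $\tau \cdot \oplus_{\GL\textup{-}\OSpt} = \oplus_{\GL\textup{-}\OSpt}$ on $\Ml^\tau$.

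The one technical step that warrants care, rather than being routine, is ensuring Thom-Sebastiani is applied functorially enough to commute with all the associativity/commutativity/equivariance diagrams simultaneously; this is standard in the vertex-algebra literature for $\Ml$ (see \cite{Jo,Li,JKL}) but must be run through in the presence of the involution $\tau$. The cleanest way is to work throughout with the sheaf $\varphi_W(\omega_\Ml)$ as an object of the constructible derived category equipped with its monoidal-type structure from $\oplus$, and then to take $\tau$-invariants; this avoids checking coherences by hand and deduces everything from functoriality of vanishing cycles together with Assumption \ref{ass:StkVA}.
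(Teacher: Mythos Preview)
Your approach is essentially the paper's: pull back along the sum maps, use $\oplus^*W = W\boxplus W$ and Thom--Sebastiani, then deduce the axioms from the monoid and equivariance properties in Assumption \ref{ass:StkVA}. One correction is needed, however. Your claim that $\oplus_{\GL\textup{-}\OSpt}^* W^\tau = W \boxplus W^\tau$ is not quite right: since $\oplus_{\GL\textup{-}\OSpt}(a,b) = a \oplus b \oplus \tau(a)$, pulling back gives $W(a) + W^\tau(b) + W(\tau(a)) = 2W(a) + W^\tau(b)$, i.e.\ $\oplus_{\GL\textup{-}\OSpt}^* W^\tau = 2W \boxplus W^\tau$. (Equivalently, via your factorisation through $\iota_{\GL\textup{-}\OSpt}$, one has $\iota_{\GL\textup{-}\OSpt}^* W^\tau = W + \tau^* W = 2W$.) The paper makes exactly this computation and then invokes $\varphi_{2W} = \varphi_W$ to identify $\Ht^\sbt(\Ml,\varphi_{2W}) \simeq \Ht^\sbt(\Ml,\varphi_W)$, which is the missing step in your argument. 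With that fix your proof goes through.

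A second, smaller point: the $\tau$-equivariance of the coaction is not ``$\tau\cdot \oplus_{\GL\textup{-}\OSpt} = \oplus_{\GL\textup{-}\OSpt}$ on $\Ml^\tau$'' (trivially true on the target) but rather $\oplus_{\GL\textup{-}\OSpt}\cdot(\tau\times\id) = \oplus_{\GL\textup{-}\OSpt}$ on $\Ml\times\Ml^\tau$, which follows from commutativity of $\oplus$ applied to $a\oplus b\oplus\tau(a) \simeq \tau(a)\oplus b\oplus a$.
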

  \begin{proof}
    The first, which is originally due to \cite{JKL}, follows from the fact that $\oplus^*W=W \boxplus W$, functoriality of the vanishing cycles functor and the Thom-Sebastiani isomorphism; see \cite{JKL} for details. The coalgebra axioms follow from Proposition \ref{prop:OplusStructureOSp}. The second follows by the same argument applied to $\oplus^*_{\OSpt}W_{\OSpt} = W_{\OSpt} \boxplus W_{\OSpt}$ and the third by the fact that $\iota_{\OSpt}^*W_{\OSpt}=W+\tau^*W=2W$ and that we have $\varphi_{2W}=\varphi_W$. 
  \end{proof}

  \begin{cor}
    There is a $\tau$-equivariant map of graded coalgebras 
    $$\iota^*_{\GL \textup{-} \OSpt}\ : \ \Ht^\sbt(\Ml^\tau,\varphi^\tau)\ \to \ \Ht^\sbt(\Ml,\varphi).$$
  \end{cor}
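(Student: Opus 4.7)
The plan is to deduce this corollary directly from Proposition \ref{prop:OplusStructureOSp} together with the previously established properties of the map $\iota_{\GL\textup{-}\OSpt} : \Ml \to \Ml^\tau$. First I would recall from section \ref{ssec:HolomorphicJoyceLiu} that $\iota_{\GL\textup{-}\OSpt}$ is obtained by taking fixed points of $\oplus$ under $\sigma\cdot(\tau\times\tau)$, hence is a map of commutative monoids intertwining $\oplus$ on $\Ml$ with $\oplus_{\OSpt}$ on $\Ml^\tau$, and that it is $\tau$-equivariant (where $\tau$ acts trivially on $\Ml^\tau$). Second, as already observed in the proof of Proposition \ref{prop:OplusStructureOSp}, one has $\iota_{\GL\textup{-}\OSpt}^* W_{\OSpt} = W + \tau^*W = 2W$, and since $\varphi_{2W}=\varphi_{W}$, pullback along $\iota_{\GL\textup{-}\OSpt}$ induces a well-defined map
$$\iota^*_{\GL\textup{-}\OSpt} \ : \ \Ht^\sbt(\Ml^\tau,\varphi^\tau) \ \to \ \Ht^\sbt(\Ml,\varphi)$$
by functoriality of vanishing cycles.

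Next I would verify that this map respects the coalgebra structures. Applying the vanishing cycles functor and Thom--Sebastiani to the commuting square
\begin{center}
\begin{tikzcd}[row sep={30pt,between origins}, column sep={40pt,between origins}]
\Ml\times\Ml \ar[r,"\oplus"] \ar[d,"\iota_{\GL\textup{-}\OSpt}\times\iota_{\GL\textup{-}\OSpt}"'] & \Ml \ar[d,"\iota_{\GL\textup{-}\OSpt}"] \\
\Ml^\tau\times\Ml^\tau \ar[r,"\oplus_{\OSpt}"'] & \Ml^\tau
\end{tikzcd}
\end{center}
gives the desired identity $\oplus^*\cdot \iota^*_{\GL\textup{-}\OSpt} = (\iota^*_{\GL\textup{-}\OSpt}\otimes \iota^*_{\GL\textup{-}\OSpt})\cdot \oplus^*_{\OSpt}$. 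Compatibility with counits follows from the fact that $\iota_{\GL\textup{-}\OSpt}$ sends $0\in\Ml$ to $0\in\Ml^\tau$. The grading is preserved because $\iota_{\GL\textup{-}\OSpt}$ preserves connected components in the appropriate way (a rank-$n$ object of $\Ml$ maps to the orthosymplectic object $V\oplus \tau(V)$).

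Finally, $\tau$-equivariance of $\iota^*_{\GL\textup{-}\OSpt}$ follows by pullback from the identity $\iota_{\GL\textup{-}\OSpt}\cdot\tau = \iota_{\GL\textup{-}\OSpt}$ on stacks, which in turn follows from the construction of $\iota_{\GL\textup{-}\OSpt}$ via fixed points of $\sigma\cdot(\tau\times\tau)$: applying $\tau$ to the first factor is absorbed into the quotient. No step presents a real obstacle; the content is purely a matter of tracking the functorial properties of vanishing cycles along the monoid map $\iota_{\GL\textup{-}\OSpt}$ already set up in section \ref{ssec:HolomorphicJoyceLiu}.
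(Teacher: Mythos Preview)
Your proof is correct, but the paper takes a slightly slicker route: it simply composes the already-established coaction $\oplus^*_{\GL\textup{-}\OSpt}$ from Proposition~\ref{prop:OplusStructureOSp} with the counit of $\Ht^\sbt(\Ml^\tau,\varphi^\tau)$. Since $\oplus_{\GL\textup{-}\OSpt}$ restricted to $\Ml\times\{0\}$ is precisely $\iota_{\GL\textup{-}\OSpt}$, this produces the same map you construct, and the coalgebra and $\tau$-equivariance properties are then inherited automatically from those of the coaction rather than being reverified from the stack-level commuting square. Your approach is more direct and self-contained---it does not rely on having first packaged everything into a coaction---while the paper's approach illustrates the general principle that a counit applied to a bicomodule map yields a coalgebra morphism, getting all the compatibilities for free. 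Neither approach requires any idea the other lacks; they differ only in how much of Proposition~\ref{prop:OplusStructureOSp} is invoked as a black box.
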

\begin{proof}
  Follows by composing the coaction from Proposition \ref{prop:OplusStructureOSp} with the counit in $\Ht^\sbt(\Ml^\tau,\varphi^\tau)$. 
\end{proof}

\subsection{Nonsingular vertex structures} 

\subsubsection{} Recall the construction of a holomorphic vertex coproduct from \cite{Jo,JKL} works by considering the action 
$$\BGm \times \Ml \ \to \  \Ml,$$
showing that the generator $x \in \Ht^2(\BGm)$ thus induces a coderivation $T$ on $\Ht^\sbt(\Ml,\varphi)$, and defining 
$$\Delta_0(\alpha,z) \ = \ (e^{zT}\otimes \id)\cdot \oplus^*(\alpha).$$
To generalise to the orthosymplectic case, we need to consider the action of all $\BGL$:

\begin{lem} \label{lem:HopfCoactionAct}
 The map $\act$ induces a Hopf coaction 
$$\act^* \ : \ \Ht^\sbt(\Ml,\varphi) \ \to \ \Ht^\sbt(\Ml,\varphi) \otimes \Ht^\sbt(\BGL).$$
  for the Hopf structure on $\Ht^\sbt(\BGL)$ with its cup product, coproduct $\otimes^*$, and antipode $\tau$ given by dualising vector bundles. 
\end{lem}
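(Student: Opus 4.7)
The plan is to first verify that $\act^*$ is well-defined on critical cohomology by exploiting the linearity condition $\act^* W = 0 \boxplus W$, then to unpack the Hopf structure on $\Ht^\sbt(\BGL)$, and finally to derive the coaction axioms by pulling back the action axioms for $\act$.

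First I would use the equality $\act^* W = 0 \boxplus W$ together with Thom--Sebastiani and the compatibility of vanishing cycles with smooth pullback to produce an isomorphism
$$\act^*\varphi_W(\omega_\Ml) \ \simeq \  \omega_{\BGL} \boxtimes \varphi_W(\omega_\Ml)$$
in the appropriate derived category on $\BGL \times \Ml$, using that $\varphi_{0}$ acts as the identity on $\omega_{\BGL}$. Taking hypercohomology and invoking Künneth (valid since $\Ht^\sbt(\BGL)$ is free graded-commutative on Chern classes) then yields the desired map
$$\act^*\ :\ \Ht^\sbt(\Ml,\varphi)\ \to\  \Ht^\sbt(\Ml,\varphi) \, \hat\otimes\, \Ht^\sbt(\BGL).$$

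Next I would verify the Hopf algebra structure on $\Ht^\sbt(\BGL)$. Associativity and cocommutativity of the comultiplication are formal from $\otimes$ being symmetric monoidal; compatibility of $\otimes^*$ with the cup product (the bialgebra axiom) follows from $\otimes$ respecting the product structure up to the Eckmann--Hilton principle built into cohomology of classifying spaces. The antipode axiom $m \cdot (\tau^* \otimes \id) \cdot \otimes^* = \eta \cdot \epsilon$ can be checked after pulling back to the maximal torus $\BT \subseteq \BGL$, where generators $c = c_1(L)$ satisfy $\otimes^*(c) = c \otimes 1 + 1 \otimes c$ and $\tau^*(c) = -c$, giving the desired annihilation in positive degree.

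Finally, the coaction axioms come directly from pulling back the action diagrams for $\act$. Coassociativity $(\otimes^* \otimes \id) \cdot \act^* = (\id \otimes \act^*) \cdot \act^*$ is the cohomology of the identity $\act \cdot (\id_{\BGL} \times \act) = \act \cdot (\otimes \times \id_\Ml)$ expressing that $\act$ is an action for $(\BGL,\otimes)$, and the counit relation $(\id \otimes \epsilon) \cdot \act^* = \id$ follows from $\act \cdot (1_{\BGL} \times \id_\Ml) = \id_\Ml$ where $1_{\BGL}: \pt \to \BGL$ classifies the trivial line bundle.

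The main subtlety will be in justifying the Künneth isomorphism at the level of vanishing cycles and keeping track of completions: since $\Ht^\sbt(\BGL) = \prod_n \Ht^\sbt(\BGL_n)$ is not of finite type, the target must genuinely be the completed tensor product, and one should verify the resulting map factors through this completion. This is where the condition that $\act^* W$ has no $\BGL$-dependence is crucial, as it ensures the pullback $\act^*\varphi$ splits cleanly as an external product component-wise on each $\BGL_n$-stratum.
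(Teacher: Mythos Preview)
Your proposal is correct and follows essentially the same logic as the paper: derive the Hopf coaction by pulling back the action axioms for $\act$, using that $\BGL$ with $\otimes$, diagonal, and dualisation is a Hopf object. The paper's proof is a two-sentence appeal to this structure at the level of $\PreStk^{\textup{corr}}$ (``follows formally because $\act$ is an action for the monoid object with involution $\BGL$\ldots together with the diagonal map, it forms a Hopf algebra''), whereas you unpack the same argument explicitly: you justify well-definedness on critical cohomology via $\act^*W = 0 \boxplus W$ and Thom--Sebastiani, verify the Hopf axioms on $\Ht^\sbt(\BGL)$ by hand after restriction to the torus, and check coassociativity and counit diagrammatically. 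Your version is more self-contained and makes visible the role of the compatibility $\act^*W = 0 \boxplus W$ from Assumption~\ref{ass:StkVA}, which the paper leaves implicit here; conversely, the paper's formulation makes clear that nothing specific to $\BGL$ or to vanishing cycles is needed beyond the abstract Hopf-object-acting-on-an-object pattern.
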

\begin{proof}
  This follows formally because $\act$ is an action for the monoid object with involution $\BGL$. This is because, together with the diagonal map, it forms a Hopf algebra in $\PreStk^{\textup{corr}}$.
\end{proof}

\begin{cor} \label{lem:Coderivation}
  There is a degree $-2$ coderivation $T: \Ht^\sbt(\Ml,\varphi) \to \Ht^\sbt(\Ml,\varphi)$ satisfying $\tau T=-T\tau$. 
\end{cor}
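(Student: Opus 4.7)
The plan is to extract $T$ from the coaction of Lemma \ref{lem:HopfCoactionAct} by pairing against a degree $2$ element of $\Ht^\sbt(\BGL)$, and then to deduce both the coderivation property and the anticommutation with $\tau$ from the functorial compatibilities in Assumption \ref{ass:StkVA}.

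First I would fix the generator $x \in \Ht^2(\BGm) \subseteq \Ht^2(\BGL_1) \subseteq \Ht^\sbt(\BGL)$ (here $\BGL_1 \simeq \BGm$ sits inside $\BGL$ as the rank-one component). Pairing the Hopf coaction of Lemma \ref{lem:HopfCoactionAct} against $x$ defines
\[
  T \ = \ (\id \otimes \langle x, - \rangle) \cdot \act^* \ : \ \Ht^\sbt(\Ml,\varphi) \ \to \ \Ht^{\sbt-2}(\Ml,\varphi),
\]
which has degree $-2$ by construction.

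Next I would check the coderivation property. By the axioms of Assumption \ref{ass:StkVA}, the map $\act$ is compatible with $\oplus$ in the sense that $\oplus \cdot \act = (\act \times \act) \cdot (\oplus \times \id)$ after reordering factors; equivalently, at the level of critical cohomology, $\oplus^*$ is a map of $\Ht^\sbt(\BGL)$-comodules, where on the right we use the tensor product comodule structure. Pairing this identity against $x$ and using that $x$ is primitive in the Hopf algebra $\Ht^\sbt(\BGL)$ (that is, $\otimes^*(x) = x \otimes 1 + 1 \otimes x$, since $x = c_1$ of a tautological line bundle and $c_1(L_1 \otimes L_2) = c_1(L_1) + c_1(L_2)$), one obtains exactly the coderivation identity $\oplus^* \cdot T = (T \otimes \id + \id \otimes T) \cdot \oplus^*$. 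This is the same argument used in \cite{Jo,JKL} for the $\GL_1$-action; the only difference is that the coaction now factors through all of $\BGL$, which makes no difference since $x$ has support in the rank-one component.

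Finally I would verify $\tau T = -T\tau$. By \eqref{eqn:TauEquivarianceOplusAct} the map $\act$ intertwines $\tau$ on $\Ml$ with $\iota \times \tau$ on $\BGL \times \Ml$, where $\iota$ dualises vector bundles. Passing to critical cohomology gives $(\iota^* \otimes \tau) \cdot \act^* = \act^* \cdot \tau$. Now the key computation is that $\iota^*(x) = -x$: dualising a line bundle negates its first Chern class. Pairing the previous identity against $x$ and using $\langle x, \iota^*(-)\rangle = \langle \iota^*(x), -\rangle = -\langle x, -\rangle$ yields $\tau \cdot T = -T \cdot \tau$, as desired. The main (but minor) point to be careful about is that $x$ lives in $\Ht^\sbt(\BGm)$ inside $\Ht^\sbt(\BGL)$ and that $\iota$ restricts there to the inversion map on $\BGm$, so that the sign $\iota^*(x) = -x$ is unambiguous; everything else is formal from the Hopf coaction of Lemma \ref{lem:HopfCoactionAct}.
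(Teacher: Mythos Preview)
Your proof is correct and follows essentially the same approach as the paper: both define $T$ by pairing the Hopf coaction $\act^*$ against the primitive element $x = e(\gamma) \in \Ht^2(\BGm)$, deduce the coderivation property from primitivity, and obtain $\tau T = -T\tau$ from $\iota^*(x) = -x$ together with \eqref{eqn:TauEquivarianceOplusAct}. Your write-up is in fact slightly more explicit about why $x$ is primitive and why dualising negates it.
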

\begin{proof}
  Thus, writing $\gamma$ for the tautological line bundle over $\BGm$, the element $e(\gamma)\in \Ht^\sbt(\BGL)$ is primitive and so 
  $$T(\alpha) \ = \ \langle e(\gamma)^\vee, \act^*\alpha \rangle$$
  defines a coderivation as the Hopf action respects the cocommutative graded coproduct on $\Ht^\sbt(\Ml,\varphi)$. It anticommutes with $\tau$ because $\tau\cdot \act=\act \cdot (\tau \times \tau)$.
\end{proof}

So far this is identical to the usual argument constructing a holomorphic graded vertex coalgebra structure.

\subsubsection{} We now write down the orthosymplectic variant. For $\Gt \in \{\Ot, \Sp\}$ as in \ref{ass:StkVA} we have 
$$\act_{\Gt} \ : \ \BG \times\Ml^\tau \ \to \ \Ml^\tau$$
and hence a bialgebra coaction
$$\act_{\Gt}^* \ : \ \Ht^\sbt(\Ml^\tau,\varphi^\tau) \ \to \ \Ht^\sbt(\Ml^\tau,\varphi^\tau) \otimes \Ht^\sbt(\BG).$$
We get a degree $-4$ endomorphism by pairing with the dual to the Euler class $e(\El)\in \Ht^4(\Gt_2)$ of the tautological rank two bundle, however this will not be a coderivation. Indeed, $\BG_2$ is not a group stack. 

\subsubsection{} Instead, we need to pass to the maximal tori $\Tt  \subseteq \GL$ and $\Tt_{\Gt}\to \Gt$, and consider the actions 
$$\act \ : \ \BT \times \Ml \ \to \ \Ml, \hspace{15mm} \act_{\BT_\Gt} \ : \ \BT_\Gt \times \Ml^\tau \ \to \ \Ml^\tau.$$
Using these actions we get
\begin{prop} \label{prop:HolomorphicOrthosymplecticVertexStructure}
  There is the structure of a graded vertex coalgebra, (orthosymplectic) graded vertex coalgebra, and a graded vertex comodule structure respecting the vertex coproducts 
  \begin{align}
    \begin{split} \label{eqn:HolomorphicOrthosymplecticVertexStructure}
    \Delta_0(z) \ : \ \Ht^\sbt(\Ml,\varphi)& \ \to \ \Ht^\sbt(\Ml, \varphi) \, \hat{\otimes}\, \Ht^\sbt(\Ml, \varphi)[z]\\
    \Delta_{\OSpt,0}(w_1,w_2) \ : \ \Ht^\sbt(\Ml^\tau, \varphi^\tau)& \ \to \  \Ht^\sbt(\Ml^\tau, \varphi^\tau) \, \hat{\otimes}\, \Ht^\sbt(\Ml^\tau, \varphi^\tau)[w_1^2,w_2^2]\\
    \Delta_{\GL \textup{-}\OSpt,0}(z,w) \ : \ \Ht^\sbt(\Ml^\tau,\varphi^\tau) & \ \to \   \Ht^\sbt(\Ml^\tau, \varphi^\tau) \, \hat{\otimes}\, \Ht^\sbt(\Ml,\varphi)[z,w^2]
    \end{split}
  \end{align}
 All the above structures are $\tau$-equivariant:
  $${\Delta_0}(z)\cdot \tau \ = \  (\tau \boxtimes \tau) \cdot {\Delta_0}(-z), \hspace{15mm} {\Delta}_{\OSpt \textup{-}\GL,0}(z) \ = \  (\tau \boxtimes \id) \cdot {\Delta}_{\GL \textup{-}\OSpt,0}(-z).$$
  In the language of section \ref{sssec:PVA}, this forms a $\Pt_{\GL \textup{-}\Gt}$-vertex coalgebra.  
\end{prop}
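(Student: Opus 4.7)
The plan is to define all three vertex coproducts by the formula $\Delta(\alpha) = (\exp(\cdots)\otimes\id)\cdot\oplus^*(\alpha)$, where the translation operator in the exponent is built from the Hopf coactions of Lemma \ref{lem:HopfCoactionAct} pulled back along the maximal tori $\BT\subseteq\BGL$ and $\BT_\Gt\subseteq\BG$. Concretely, for the action $\act_\Tt: \BT\times\Ml\to\Ml$ indexed by $\ell$ copies of $\BGm$ one obtains commuting coderivations $T_1,\ldots,T_\ell$ on $\Ht^\sbt(\Ml,\varphi)$ by pairing $\act_\Tt^*$ with the duals of the Euler classes of the tautological line bundles on each $\BGm$-factor, as in Corollary \ref{lem:Coderivation}. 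Likewise $\act_{\Tt_\Gt}^*$ supplies commuting coderivations $S_1,\ldots,S_k$ on $\Ht^\sbt(\Ml^\tau,\varphi^\tau)$, and the Weyl group $W_\Gt\simeq(\Zb/2)^k\rtimes\Sk_k$ acts through $\tau\cdot\act_{\Tt_\Gt}=\act_{\Tt_\Gt}\cdot(\iota\times\tau)$; on coderivations this gives $\tau S_i=-S_i\tau$ for each $i$.

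Using these I define
\begin{align*}
\Delta_0(z)(\alpha) &\ \defeq\ (e^{zT}\otimes\id)\cdot\oplus^*(\alpha),\\
\Delta_{\OSpt,0}(w_1,w_2)(\beta) &\ \defeq\ (e^{w_1 S}\otimes e^{w_2 S})\cdot\oplus^*_{\OSpt}(\beta),\\
\Delta_{\GL\textup{-}\OSpt,0}(z,w)(\beta) &\ \defeq\ (e^{w S}\otimes e^{z T})\cdot\oplus^*_{\GL\textup{-}\OSpt}(\beta),
\end{align*}
where I understand $T,S$ as the single degree $-2$ and degree $-4$ coderivations obtained after passing from the ordered tori back to $\GL$ and $\Gt$ via $\iota_{\Gt\textup{-}\GL}^*$; the $W$-equivariance in section \ref{sssec:GVA} together with the relation $\tau S=-S\tau$ then lets me encode the orthosymplectic structure in the single variable $w^2$, matching the polynomial rings appearing on the right of \eqref{eqn:HolomorphicOrthosymplecticVertexStructure}. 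Polynomiality (rather than merely formal power series) follows from the standard argument of \cite{JKL}: the coproducts $\oplus^*,\oplus^*_{\OSpt},\oplus^*_{\GL\textup{-}\OSpt}$ are degreewise finite with respect to the rank grading, so only finitely many $T$- or $S$-exponent terms are nonzero on each graded piece.

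The associativity and $W$-equivariance axioms for a graded $\Pt_{\GL\textup{-}\Gt}$-vertex coalgebra (the dual of the conditions in section \ref{sssec:PVA}) reduce to three inputs: (i) the (co)associativity identities among $\oplus^*,\oplus^*_{\OSpt},\oplus^*_{\GL\textup{-}\OSpt}$ proved in Proposition \ref{prop:OplusStructureOSp}; (ii) the fact that the Hopf coactions $\act^*,\act_\Gt^*$ are compatible with direct sum via the Hopf diagrams $\oplus\cdot\act=(\act\times\act)\cdot\oplus$ from assumption \ref{ass:StkVA}, which gives $e^{zT}\cdot m_{\oplus}=m_\oplus\cdot(e^{zT}\otimes e^{zT})$ (and the analogous identity for $S$); and (iii) commutativity of the $\BT$- and $\BT_\Gt$-actions with one another in the $\GL\textup{-}\OSpt$ setting. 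Combining these yields the composition formulas $\Delta_{\OSpt,0}(w_1,w_2)\cdot(\Delta_{\OSpt,0}\otimes\id)$ etc.\ as required.

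The $\tau$-equivariance statements reduce to formal computation: since $\tau T=-T\tau$ and $\tau S=-S\tau$, and since $\oplus^*$ is $\tau$-equivariant and $\oplus^*_{\GL\textup{-}\OSpt}$ satisfies $(\id\times\tau)\cdot\oplus^*_{\GL\textup{-}\OSpt}=\oplus^*_{\GL\textup{-}\OSpt}$ by Proposition \ref{prop:OplusStructureOSp},
\[
\Delta_0(z)\cdot\tau\ =\ (e^{zT}\otimes\id)(\tau\otimes\tau)\oplus^*\ =\ (\tau\otimes\tau)(e^{-zT}\otimes\id)\oplus^*\ =\ (\tau\boxtimes\tau)\cdot\Delta_0(-z),
\]
and an analogous manipulation gives $\Delta_{\OSpt\textup{-}\GL,0}(z)=(\tau\boxtimes\id)\cdot\Delta_{\GL\textup{-}\OSpt,0}(-z)$. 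I expect the main obstacle to be bookkeeping of the variables $w_1,w_2$ versus the invariants under the Weyl group $\Zb/2$-negations so that the target genuinely lies in the polynomial ring $[w_1^2,w_2^2]$ and so that the associativity of the three $\Delta$'s assembles into the $\Pt_{\GL\textup{-}\Gt}$-vertex coalgebra structure of section \ref{sssec:PVA} rather than a weaker fragment.
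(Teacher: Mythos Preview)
Your overall strategy matches the paper's: compose the coproducts $\oplus^*,\oplus^*_{\OSpt},\oplus^*_{\GL\textup{-}\OSpt}$ from Proposition~\ref{prop:OplusStructureOSp} with translation operators coming from the torus actions, and deduce the vertex axioms from the compatibilities $(\act\times\act)\cdot\oplus=\oplus\cdot\act$ etc.

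There is however a genuine gap in how you build the orthosymplectic translation. You assert that $\act_{\Tt_\Gt}^*$ gives coderivations $S_i$, and then pass to a single degree $-4$ coderivation $S$ so that the translation is $e^{wS}$. The paper explicitly warns (in the paragraph immediately preceding the Proposition) that this does \emph{not} work: the degree $-4$ endomorphism obtained by pairing with $e(\El)\in\Ht^4(\Gt_2)$ ``will not be a coderivation. Indeed, $\BG_2$ is not a group stack.'' The same obstruction prevents the individual $\BGm$-factors of $\BT_\Gt$ from acting as group actions on $\Ml^\tau$: the inclusion $\BGm=\BT_{\Gt_2}\to\BG$ sending $L\mapsto L\oplus L^{-1}$ is not a monoid map for $\otimes$, since $(LL')\oplus(LL')^{-1}$ has rank $2$ while $(L\oplus L^{-1})\otimes_\Gt(L'\oplus L'^{-1})$ has rank $4$. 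So there is no $S$ with $\act_{\BT_{\Gt_2},w}^*=e^{wS}$, and your formulas $e^{w_1S}\otimes e^{w_2S}$ are not well-defined.

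The paper's fix is to abandon the exponential ansatz on the orthosymplectic side and simply define
\[
\Delta_{\GL\textup{-}\OSpt,0}(z,w)\ =\ (\act_{\BGm,z}^*\otimes\act_{\BT_{\Gt_2},w}^*)\cdot\oplus^*_{\GL\textup{-}\OSpt}
\]
using the full coaction $\act_{\BT_{\Gt_2},w}^*$ as an operator valued in $k[w]$. The dependence on $w^2$ then comes for free: the $\BT_{\Gt_2}$-action factors through $\BG_2$, so $\act_{\BT_{\Gt_2},w}^*$ factors through $\Ht^\sbt(\BG_2)\simeq k[w^2]\hookrightarrow k[w]$. Your suspicion that the $w^2$ bookkeeping is the main obstacle is thus close but understates the point: it is not a matter of Weyl-invariance bookkeeping, but that the exponential-of-coderivation form fails structurally and must be replaced by the raw coaction. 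Once you make this replacement, the rest of your argument (colinearity from the $\oplus$-$\act$ compatibilities, $\tau$-equivariance from $\tau\cdot\act=\act\cdot(\iota\times\tau)$) goes through exactly as the paper does it.
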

\begin{proof}
We compose the maps \eqref{eqn:OplusStructureOSp} with pullbacks via the rank one maximal tori $\BGm  \subseteq \BGL$ and $\BGm\simeq \BT_{\Gt_2}  \to \BG$ in both factors as appropriate, for instance we define 
$$\Delta_{\GL \textup{-}\OSpt,0}(z,w) \ = \ (\act_{\BGm,z}^* \otimes \act_{\BT_{\Gt_2},w}^*)\cdot \oplus^*_{\GL \textup{-}\OSpt}.$$
Colinearity follows from the colinearity of the structure maps \eqref{eqn:OplusStructureOSp} and the compatibility between the action maps and direct sum:
$$(\act\times \act)\cdot \oplus \ = \ \oplus \cdot \act,$$
$$ (\act_{\BGm}\times \act_{\BT_\Gt})\cdot \oplus_{\GL \textup{-}\OSpt} \ = \ \oplus_{\GL \textup{-}\OSpt}\cdot \act_{\BT_{\Gt}}, \hspace{5mm} (\act_{\BT_\Gt} \times \act_{\BT_{\Gt}})\cdot \oplus_{\OSpt} \ = \ \oplus_{\OSpt}\cdot \act_{\BT_{\Gt}}.$$
Finally, as the final two maps in \eqref{eqn:HolomorphicOrthosymplecticVertexStructure} factor through the maps 
$$k[w^2]\ \simeq \ \Ht^\sbt(\BG_2)\ \to \ \Ht^\sbt(\BGm)\ \simeq \ k[w]$$
in the $w$-variable, they are valued in $k[w^2] \subseteq k[w]$.
\end{proof}

If $q$ is a smooth morphism, we have an isomorphism 
$$\Ht^\sbt(\Ml,\varphi)\simeq\Ht^\sbt(\Ml^\tau, \varphi^\tau)\oplus\Ht^\sbt(\SES_3^\tau,\varphi^{\SES^\tau})$$
where $\varphi^{\SES^\tau}$ is the vanishing cycle sheaf of $W_{\SES}\vert_{\SES_3^\tau}$, and the above structures \eqref{eqn:HolomorphicOrthosymplecticVertexStructure} are equivalent to a holomorphic $\Pt$-vertex coalgebra structure $\act_{\BT_{\Pt}}^*\oplus_{\SES_3^\tau}^*$ induced by pulling back using the action of the maximal torus $\Tt$ inside the linear-orthosymplectic parabolic $\Pt$.

\subsection{Orthosymplectic Joyce-Liu vertex comodules} \label{ssec:OrthosymplecticJoyceLiu}

\subsubsection{} Assume $\Ml$ is a stack satisfying assumption \ref{ass:StkVA}, and is equipped with an associative $\tau$-antiequivariant correspondence $\SES$ as in section \ref{sec:OrthosymplecticStacks}; for this second assumption it is sufficient that $\Ml$ is the moduli stack of objects in an abelian category.

\begin{theorem}\label{thm:OrthosymplecticJoyceLiuLocalised}
  The maps 
  \begin{equation} \label{eqn:OrthosymplecticJoyceLiuLocalised}
    \Delta\ =\ e(\Nt_s)\cdot \oplus^*, \hspace{10mm} \Delta_{\GL \textup{-}\OSpt}\ =\ e(\Nt_{s_3}^\tau)\cdot \oplus_{\GL \textup{-}\OSpt}^*
  \end{equation}
  define a localised coproduct and orthosymplectic localised comodule, internal to the (localised) braided monoidal category $\Ht^\sbt(\Ml)\Md$ with (localised) braided module category $\Ht^\sbt(\Ml^\tau)\Md$ defined in Proposition \ref{prop:OrthosymplecticBraidedFactorisation}. They are braided cocommutative.
\end{theorem}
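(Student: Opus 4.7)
The plan is to use the equivalence of Theorem \ref{thm:LocalisedBiAlgebras} to translate the claim into a verification of factorisation coalgebra axioms for quasicoherent sheaves over the moduli configuration spaces $\Conf_{\Ml}\Ab^1 = \Spec \Ht^\sbt(\Ml)$ and $\Conf_{\Ml^\tau}\Ab^1 = \Spec \Ht^\sbt(\Ml^\tau)$, with hyperplane arrangements given by the zero and pole loci of the Euler classes $e(\Nt_s)$ and $e(\Nt_{s_3}^\tau)$ respectively. The pair $(\Ml,\Phi_\Ml)$ and $(\Ml^\tau,\Phi_{\Ml^\tau})$ form a moduli group stack and module, and the claimed braided module category $(\Ht^\sbt(\Ml)\Md, \Ht^\sbt(\Ml^\tau)\Md)$ is exactly the image of this data under Theorem \ref{thm:LocalisedBiAlgebras}.

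With this setup the verification has three parts. \textbf{(i) Coassociativity of $\Delta$:} applying $(\Delta \otimes \id)\cdot \Delta$ and $(\id \otimes \Delta)\cdot \Delta$ and pulling the Euler class prefactors through the $\oplus^*$ maps, the $\oplus^*$ part agrees by coassociativity of the direct sum (Proposition \ref{prop:OplusStructureOSp}), while the Euler class parts become $e((\oplus\times\id)^*\Nt_s)\cdot e(\Nt_{s,12})$ and $e((\id\times\oplus)^*\Nt_s)\cdot e(\Nt_{s,23})$. By Corollary \ref{cor:EulerGLHexagon}, both expressions equal $e(\Nt_{s,12}\oplus \Nt_{s,13}\oplus \Nt_{s,23})$ in the Grothendieck group, hence as Euler classes once we invert the appropriate unipotent hyperplanes. \textbf{(ii) Comodule axiom for $\Delta_{\GL\textup{-}\OSpt}$:} the same argument, now using Corollary \ref{cor:EulerGLOSpModuleHexagon} and the compatibility of $\oplus_{\GL\textup{-}\OSpt}$ with both $\oplus$ and $\oplus_{\OSpt}$ from section \ref{ssec:HolomorphicJoyceLiu}, gives the coassociativity of the coaction with respect to $\Delta$. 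Proposition \ref{prop:OSpNormalComplex} shows that the extra summand $\Kt_s$ in $\Nt_{s_3^\tau}$ only enters the coproduct on $\Ml^\tau$ factors and is preserved under the hexagon, so it does not obstruct the axiom.

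\textbf{(iii) Braided cocommutativity:} the braiding $\beta$ on $\Ht^\sbt(\Ml)\Md$ and the module braiding $\kappa$ on $\Ht^\sbt(\Ml^\tau)\Md$ are designed so that the symmetry obstruction of $\oplus^*$ is precisely the ratio $e(\sigma^*\Nt_s)/e(\Nt_s)$ and analogously for $\kappa$ using $(\id\times \tau)^*$ and $\Kt_s$. Since $\oplus$ is commutative and $\oplus_{\GL\textup{-}\OSpt}$ is $\tau$-equivariant (Proposition \ref{prop:OplusStructureOSp}), we have $\sigma^*\oplus^* = \oplus^*$ and $(\id\times \tau)^*\oplus_{\GL\textup{-}\OSpt}^* = \oplus_{\GL\textup{-}\OSpt}^*$. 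Multiplying through by the Euler classes and using the definitions of $\beta,\kappa$ gives $\beta\cdot\Delta = \Delta$ and $\kappa\cdot \Delta_{\GL\textup{-}\OSpt} = \Delta_{\GL\textup{-}\OSpt}$.

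The main technical obstacle will be bookkeeping the localisation: one must identify the unipotent hyperplanes of the $\Ml$-configuration space (section \ref{sssec:UnipotentRoots}) with the irreducible components of $e((\id\times\oplus)^*\Nt_s)/(e(\Nt_{s,12})e(\Nt_{s,13}))$, and do the same in the orthosymplectic case where the relevant ratio involves $\Kt_s$ and the $\tau$-twisted summand. Once this matching is made, everything reduces to the Grothendieck-group identities proved in Corollaries \ref{cor:EulerGLHexagon} and \ref{cor:EulerGLOSpModuleHexagon} and the compatibilities already verified for the underlying maps $\oplus^*$, $\oplus_{\OSpt}^*$, $\oplus_{\GL\textup{-}\OSpt}^*$ in section \ref{ssec:HolomorphicJoyceLiu}.
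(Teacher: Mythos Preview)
Your proposal is correct and follows essentially the same route as the paper. Both arguments rest on the same two inputs: the Grothendieck-group hexagon identities for $\Nt_s$ and $\Nt_{s_3}^\tau$ (Corollaries \ref{cor:EulerGLHexagon} and \ref{cor:EulerGLOSpModuleHexagon}), and the fact that braided cocommutativity is immediate from the definitions \eqref{eqn:DefinitionsBraidings} of $\beta,\kappa$ together with the commutativity and $\tau$-equivariance of $\oplus,\oplus_{\GL\textup{-}\OSpt}$. The only difference is packaging: the paper observes that $S=e(\Nt_s)$ and $T=e(\Nt_{s_3}^\tau)$ satisfy the Cherednik hexagon relations and then invokes the Cherednik--Borcherds twist construction of Proposition \ref{prop:OrthosymplecticBorcherdsTwistMain} as a black box, whereas you unpack that proposition by hand and verify the coassociativity and comodule axioms directly. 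Your detour through Theorem \ref{thm:LocalisedBiAlgebras} to the factorisation-coalgebra picture is unnecessary for this particular statement (the paper stays in the localised-coalgebra language throughout), and your invocation of Proposition \ref{prop:OSpNormalComplex} is not actually needed here---that decomposition becomes relevant only later in the Yetter--Drinfeld argument.
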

\begin{proof}The maps \eqref{eqn:OrthosymplecticJoyceLiuLocalised} are easily checked to be linear over $\Ht^\sbt(\Ml^{(\tau)})$ for the direct sum coproduct, e.g. the following commutes 
  \begin{center}
  \begin{tikzcd}[row sep = {30pt,between origins}, column sep = {20pt}]
  \Ht^\sbt(\Ml^\tau, \varphi^\tau) \ar[r,"\Delta_{\GL \textup{-}\OSpt}"] \ar[d,"\alpha"]   & \Ht^\sbt(\Ml, \varphi) \hatotimes \Ht^\sbt(\Ml^\tau, \varphi^\tau) \ar[d,"\oplus^*_{\GL \textup{-}\OSpt}\alpha"] \\ 
   \Ht^\sbt(\Ml^\tau, \varphi^\tau) \ar[r,"\Delta_{\GL \textup{-}\OSpt}"]& \Ht^\sbt(\Ml, \varphi) \hatotimes \Ht^\sbt(\Ml^\tau, \varphi^\tau) 
  \end{tikzcd}
  \end{center}
  commutes for any cohomology class $\alpha \in \Ht^\sbt(\Ml^\tau)$ acting by cup product on sheaf cohomology, and likewise for $\Delta$. It follows that \eqref{eqn:OrthosymplecticJoyceLiuLocalised} are internal to the  monoidal category $\Ht^\sbt(\Ml)$ and its module category $\Ht^\sbt(\Ml^{\tau})\Md$. That it is braided cocommutative for their braidings is immediate from their definitions \eqref{eqn:DefinitionsBraidings}.
  
 Next, notice that by Corollaries \ref{cor:EulerGLHexagon} and \ref{cor:EulerGLOSpModuleHexagon}
\begin{equation}
  \label{eqn:EulerCherednik} 
  S \ = \ e(\Nt_s), \hspace{15mm} T  \ = \ e(\Nt_{s_3}^\tau)
\end{equation}
satisfy the Cherednik hexagon relations.\footnote{See section \ref{sssec:OrthosymplecticModuleHexagonRelations} for a definition.} The Theorem follows by applying the Cherednik Borcherds twist construction of Proposition \ref{prop:OrthosymplecticBorcherdsTwistMain}.
\end{proof}

The statement about $\Delta$ is due to \cite{JKL}. We will now construct the background categories in which our vertex structures will live.

\begin{prop} \label{prop:OrthosymplecticBraidedFactorisation}
  The categories 
$$\Al \ = \ (\Ht^\sbt(\Ml),\cup)\Md \ \simeq \ \QCoh(\Conf_\Ml\Ab^1), \hspace{10mm} \Bl \ = \ (\Ht^\sbt(\Ml^\tau),\cup) \Md\ \simeq \ \QCoh(\Conf_{\Ml^\tau}\Ab^1)$$
have the structure of a a braided linear-orthosymplectic factorisation category over $\Conf_\Ml\Ab^1$ and $\Conf_{\Ml^\tau}\Ab^1$.
\end{prop}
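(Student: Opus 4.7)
The plan is to build the required structure directly from what has been developed in Sections \ref{sec:ModuliStacksRootData}--\ref{sec:Configuration}, treating $\Ml$ and $\Ml^\tau$ as a (parabolic) moduli group stack pair via direct sum and the $\tau$-equivariant $\GL$-$\OSpt$ parabolic action $\oplus_{\GL\text{-}\OSpt}\colon \Ml\times \Ml^\tau \to \Ml^\tau$. First, the identifications
\[
(\Ht^\sbt(\Ml),\cup)\Md \ \simeq \ \QCoh(\Conf_\Ml\Ab^1),\qquad (\Ht^\sbt(\Ml^\tau),\cup)\Md \ \simeq \ \QCoh(\Conf_{\Ml^\tau}\Ab^1)
\]
are tautological from $\Conf_\Ml\Ab^1=\Spec\Ht^\sbt(\Ml)$, $\Conf_{\Ml^\tau}\Ab^1=\Spec\Ht^\sbt(\Ml^\tau)$.

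Second, I would equip $\Ml$ with the hyperplane arrangement $\Phi$ coming from the zeros and poles of $e(\Tt_\Ml[-1])_{\loc}$ as in Section \ref{sec:ModuliStacksRootData}, so that $(\Ml,\Phi)$ becomes a (commutative) moduli group stack. Proposition \ref{prop:GChiralStructureConf} then yields a commutative chiral decomposition structure $(\Conf_\Ml\Ab^1\times\Conf_\Ml\Ab^1)_\circ\rightrightarrows \Conf_\Ml\Ab^1$, and the construction of Section \ref{sec:Configuration} upgrades this to a factorisation monoidal structure on $\QCoh(\Conf_\Ml\Ab^1)$. The linear-orthosymplectic action \eqref{fig:ConfActionConfSp} applied to $(\Ml,\Ml^\tau)$ (which exists because $\Ml\times\Ml^\tau \simeq (\Ml\times\Ml\times\Ml)^\tau$ and $\oplus_{\GL\text{-}\OSpt}$ is $\tau$-equivariant) endows $\QCoh(\Conf_{\Ml^\tau}\Ab^1)$ with a factorisation module structure over $\QCoh(\Conf_\Ml\Ab^1)$. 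This is the unbraided ``holomorphic'' linear-orthosymplectic factorisation category.

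Third, I would produce the braiding by the Cherednik-type Borcherds twist of Proposition \ref{prop:OrthosymplecticBorcherdsTwistMain}. Concretely, set
\[
S \ = \ e(\Nt_s)\ \in\ \Ht^\sbt(\SES)_{\loc},\qquad T \ = \ e(\Nt_{s_3}^\tau)\ \in\ \Ht^\sbt(\SES_3^\tau)_{\loc}
\]
as in \eqref{eqn:EulerCherednik}. Corollaries \ref{cor:EulerGLHexagon} and \ref{cor:EulerGLOSpModuleHexagon} assert precisely that $S$ and $T$ satisfy the type $A$ and type $B/C$ hexagon relations, so the Borcherds-twisted factorisation tensor product and module action carry the desired braiding $\beta$ and module braiding $\kappa$ compatible with it. Equivariance of the correspondences under $\tau$ ensures $\kappa$ intertwines as required.

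The main obstacle, really the only substantive one, is bookkeeping: checking that the Cherednik hexagon/pentagon coherences descend from the identities of Corollaries \ref{cor:EulerGLHexagon} and \ref{cor:EulerGLOSpModuleHexagon} on the level of Euler classes through the twist construction, and that this is compatible with the base factorisation structure coming from the chiral decomposition on $\Conf_{\Ml^{(\tau)}}\Ab^1$. Everything else -- commutativity of $\oplus$, $\tau$-equivariance, and the identification with modules over the cohomology ring -- is either formal or already established.
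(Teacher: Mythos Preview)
Your identification of the monoidal/module structure (from the direct-sum maps $\oplus^*$, $\oplus_{\GL\textup{-}\OSpt}^*$ on cohomology) and of the key ingredients for the braidings (Euler classes $e(\Nt_s)$, $e(\Nt_{s_3}^\tau)$ satisfying the hexagon relations of Corollaries \ref{cor:EulerGLHexagon} and \ref{cor:EulerGLOSpModuleHexagon}) is correct, but step three confuses two different twist constructions. Proposition \ref{prop:OrthosymplecticBorcherdsTwistMain} operates at the level of \emph{coalgebra objects} inside an already braided linear-orthosymplectic category: given $(A|B]$ and twisting data $S,T,S_B$, it produces new \emph{coproducts} $\Delta_A=S\cdot\Delta_{A,0}$ etc. It does not produce a braiding on the ambient category. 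For that you would want Propositions \ref{prop:RMatrixBraiding} and \ref{prop:LinearOrthosymplecticBraiding}. In the paper the Borcherds twist is invoked only in the proof of Theorem \ref{thm:OrthosymplecticJoyceLiuLocalised}, to build the Joyce--Liu coproduct, not here.

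The paper's proof of the Proposition is more direct: after recording the factorisation monoidal structure from $\oplus^*$ and the involution $\tau$ on $\Al$, it simply \emph{writes down} the braidings as quotients
\[
\beta \ = \ \frac{e(\sigma^*\Nt_s)}{e(\Nt_s)}\cdot\sigma,
\qquad
\kappa \ = \ \frac{e((\tau\times\id)^*\Nt_{s_3}^\tau)}{e(\Nt_{s_3}^\tau)}\cdot(\tau\times\id),
\]
on the localised category. Note these are the ratios $R=\sigma^*S/S$, $K=(\tau\times\id)^*T/T$, not $S$ and $T$ themselves. Even if you corrected the level confusion and twisted the symmetric braiding by $S=e(\Nt_s)$ via Proposition \ref{prop:RMatrixBraiding}, you would obtain $\beta'=S\cdot\sigma$, which is a valid braiding but not the one the paper sets up. The Proposition is meant to fix the \emph{specific} braiding for which the coproduct $\Delta=e(\Nt_s)\cdot\oplus^*$ of Theorem \ref{thm:OrthosymplecticJoyceLiuLocalised} is braided-cocommutative, and that forces the ratio $R$ rather than $S$.
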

\begin{proof}
Note that $\Al,\Bl$ are equivalent to the category of quasicoherent sheaves over $\Conf_\Ml\Ab^1$ and $\Conf_{\Ml^\tau}\Ab^1$ respectively. The linear-orthosymplectic (factorisation) monoidal structure is induced by the direct sum maps on cohomology: \eqref{eqn:OplusStructureOSp} for $W=0$.There is an involution 
$$\tau \ : \ \Al \ \stackrel{\sim}{\to} \ \Al$$
induced by the involution of the moduli stack. Finally, the  linear-orthosymplectic factorisation braiding is given by the endomorphisms of
\begin{equation}
  \label{eqn:DefinitionsBraidings} 
  \beta \ = \ \frac{e(\sigma^*\Nt_s)}{e(\Nt_s)}\cdot \sigma, \hspace{10mm} \kappa \ = \ \frac{e((\tau\times \id)^*\Nt_{s_3}^\tau)}{e(\Nt_{s_3}^\tau)}\cdot (\tau \times \id), 
\end{equation}
    of $(M_1 \otimes M_2)_{\loc}$, $(M_1 \otimes N)_{\loc}$, where $M_i \in \Al$, $N\in \Bl$ and $\loc$ denotes the localisation at cohomology classes such that the Euler classes are defined, define linear-orthosymplectic braidings. 
\end{proof}

By applying the functor \ref{thm:LocalisedToVertex} to Theorem \ref{thm:OrthosymplecticJoyceLiuLocalised}, we get a vertex structure

\begin{cor}\label{cor:OrthosymplecticJoyceLiuVertex}
  The maps
  $$\Delta(z) \ = \ e(\Nt_s,z)\cdot \act_{1,z}^*\oplus^*, \hspace{15mm} \Delta_{\GL \textup{-}\OSpt}(z,w) \ = \ e(\Nt_{s_3^\tau},z,w)\cdot (\act_{z}^{*}\times \act_{\Gt_2,w}^*)\oplus_{\GL \textup{-}\OSpt}^*$$
  define a orthosymplectic vertex comodule structure on $\Ht^\sbt(\Ml^\tau, \varphi^\tau)$ for the vertex coalgebra $\Ht^\sbt(\Ml, \varphi)$. These vertex coproducts are linear over $\Ht^\sbt(\Ml^{(\tau)})$ and braided local.
\end{cor}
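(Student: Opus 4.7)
The plan is to deduce this corollary directly from Theorem \ref{thm:OrthosymplecticJoyceLiuLocalised} by applying the localised-to-vertex functor of Theorem \ref{thm:LocalisedToVertex} for the moduli group stack $\Gb = \Pt_{\GL\textup{-}\Gt}$ (equivalently, applying $\Phi$ and $\varphi_\Gt$ simultaneously and checking compatibility, which is Theorem \ref{thmX:LocalisedToVertex} combined with the parabolic version of the functor). Concretely, first I would package the localised coproduct $\Delta$ and the orthosymplectic localised coaction $\Delta_{\GL\textup{-}\OSpt}$ from Theorem \ref{thm:OrthosymplecticJoyceLiuLocalised} as a braided cocommutative factorisation coalgebra/comodule $(\Ml,\Ml^\tau)$ on $\Conf_\Ml \Ab^1$ and $\Conf_{\Ml^\tau}\Ab^1$ in the sense of Theorem \ref{thmX:Vertex}.

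Next, I would check the translation-equivariance hypothesis required by Theorem \ref{thm:LocalisedToVertex}: the $\Ga$-action on $\Conf_{\Ml^{(\tau)}}\Ab^1$ used to construct $\varphi_\Gt$ comes from the rank-one torus inside $\GL$ and inside $\Tt_{\Gt_2}$, acting via the maps $\act$ and $\act_{\Gt}$ on $\Ml$ and $\Ml^\tau$ respectively. The compatibility $\oplus\cdot\act = (\act\times\act)\cdot\oplus$ and its parabolic analogue (the colinearity already exploited in Proposition \ref{prop:HolomorphicOrthosymplecticVertexStructure}) imply that the localised coproduct and coaction are equivariant for the diagonal $\Ga$-action, which is precisely the input required by $\Phi$ and $\varphi_{\GL\textup{-}\OSpt}$.

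Having verified equivariance, the vertex coproducts are obtained by restricting the localised coproducts to the formal neighbourhood of $z,w=\infty$ after translation, i.e.\ by pulling back along $\act^*$ and Taylor-expanding the rational factors $e(\Nt_s)$ and $e(\Nt_{s_3^\tau})$ in the formal variables. This is exactly the construction of $\Phi,\varphi_\Gt$ in the proof of Theorem \ref{thm:LocalisedToVertex}, and it yields the formulas
\[
\Delta(z) = e(\Nt_s,z)\cdot \act_{1,z}^*\oplus^*, \qquad \Delta_{\GL\textup{-}\OSpt}(z,w) = e(\Nt_{s_3^\tau},z,w)\cdot (\act_{z}^{*}\times \act_{\Gt_2,w}^*)\oplus_{\GL\textup{-}\OSpt}^*,
\]
where the Euler classes are the Taylor expansions of the braiding data of Proposition \ref{prop:OrthosymplecticBraidedFactorisation} in $z^{-1}$ and $(z\pm w)^{-1}$. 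The vertex comodule axioms (associativity, compatibility with the vertex coproduct on $\Ht^\sbt(\Ml,\varphi)$, and the orthosymplectic $W$-equivariance) all transport from the corresponding axioms on the localised side under the functor, as $\Phi$ is functorial. Finally, braided locality of the resulting vertex structure is immediate from the braided cocommutativity statement in Theorem \ref{thm:OrthosymplecticJoyceLiuLocalised} together with the parabolic description of the $\Pt_{\GL\textup{-}\Gt}$-vertex coalgebra structure (section \ref{sssec:PVA}).

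The only non-formal step I anticipate is bookkeeping the Taylor expansion of the root-hyperplane denominators $z\pm w$ coming from $\Nt_{s_3^\tau}$ (cf.\ Proposition \ref{prop:OSpNormalComplex}), to confirm that the resulting expansion lands in $\Ht^\sbt(\Ml,\varphi)\hatotimes \Ht^\sbt(\Ml^\tau,\varphi^\tau)(((z\pm w)^{-1}))$ with the correct involution-compatibility. This is essentially a careful unwinding of the $\Pt$-configuration space description from section \ref{ssec:OSpConfSpaces}, but should be the main computational obstacle; once done, the corollary is just the statement that $\Phi\circ\Psi_\Gb$ sends the structure of Theorem \ref{thm:OrthosymplecticJoyceLiuLocalised} to the claimed vertex comodule.
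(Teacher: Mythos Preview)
Your proposal is correct and follows essentially the same approach as the paper: the paper simply says ``By applying the functor \ref{thm:LocalisedToVertex} to Theorem \ref{thm:OrthosymplecticJoyceLiuLocalised}'' and then notes that ``the Corollary follows since $S(z),T(z,w)$ are given by putting back $S,T$ along the actions of $\BGm$ on $\Ml$ and of $\BT_{\Gt_2}$ on $\Ml^\tau$.'' Your write-up spells out the translation-equivariance check and the Taylor-expansion mechanics that the paper leaves implicit, but the argument is the same.
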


Here we have denoted the  \textbf{Joyce twists}
\begin{align}
  \begin{split}
   S(z) &\ = \ e(\Nt_s,z) \ \defeq \ \sum_{k \ge 0} z^{\rank \theta -k}c_k(\theta),\\
   T(z,w) &\ = \ e(\Nt_{s_3^\tau},z,w) \ \defeq \ \sum_{k \ge 0} z^{\rank\theta_{\GL \textup{-}\OSpt}/2 -k}w^{\rank\theta_{\GL \textup{-}\OSpt}/2-l}c_{k,l}(\theta_{\GL \textup{-}\OSpt})
  \end{split}
 \end{align}
which satisfy the spectral Cherednik reflection equation \cite{Che}. The Corollary follows since $S(z),T(z,w)$ are given by putting back $S,T$ along the the actions of $\BGm$ on $\Ml$ and of $\BT_{\Gt_2}$ on $\Ml^\tau$.

\subsection{Compatibility with the CoHA: twisted Yetter-Drinfeld vertex module}

\subsubsection{}Let $\Ml$ a stack satisfying Assumptions \ref{ass:StkVA} and \ref{ass:StkCoHAM}. In the main result of this section, Theorem \ref{thm:YD}, we show that the CoHA action and the Joyce-Liu vertex coaction of $\Ht^\sbt(\Ml, \varphi)$ on $\Ht^\sbt(\Ml^\tau, \varphi^\tau)$ are compatible: they form a twisted vertex Yetter-Drinfeld module.

See \ref{ssec:Heuristics} for an explanation of why we might a priori have expected this.

\subsubsection{} 
If $A$ is a bialgebra with comodule $M$ inside background orthosymplectic module category $(\Al,\Bl]$, as in Proposition \ref{prop:OrthosymplecticBraidedFactorisation}, then it is called a \textit{twisted Yetter-Drinfeld module} if the following two compositions agree:
\begin{center}
  \begin{tikzpicture}[scale = 0.9]

  \newcommand{\mydrawingsmall}{
  \draw[black, ultra thick] (-1.5-0.2,2) .. controls +(0,-0.75) and +(0,0.75) .. (0,0.2);

  \draw[black, line width=5pt] (0,-2) -- (0,2);
  }

  \mydrawingsmall

  \begin{scope}[xscale=-1]
      \mydrawingsmall
  \end{scope}

  \begin{scope}[yscale=-1]
      \mydrawingsmall
  \end{scope}

  \begin{scope}[xscale=-1, yscale=-1]
      \mydrawingsmall
  \end{scope}

  \fill[white,opacity=0.9] (0,-2) rectangle (1.5+0.8,2);
  \draw[black, line width=5pt] (0,-2) -- (0,2);

  \node[] at (3.5,0) {$=$};

  \node[] at (-4,-1.8) {$A \otimes M$};
  \node[] at (-4,0) {$M$};
  \node[] at (-4,1.8) {$A \otimes M$};

  \draw[->,shorten >=10pt, shorten <=10pt] (-4,-1.8) -- (-4,0);
  \draw[->,shorten >=10pt, shorten <=10pt] (-4,0) -- (-4,1.8);


  \begin{scope} 
   [xshift=7cm]

   \newcommand{\mydrawing}{%
   \draw[black, ultra thick] (-1.5-0.2,-2) -- (-1.5-0.2,-1);
   \draw[black, ultra thick] (-1.5-0.2,-2) .. controls +(0,0.25) and +(0,-0.25) .. (-1.5-0.7,-1.5);
   \draw[black, ultra thick] (-1.5-0.2,-2) .. controls +(0,0.25) and +(0,-0.25) .. (-1.5+0.3,-1.5);
   
   \draw[black, ultra thick] (-1.5-0.2,-1.5) -- (-1.5-0.2,-1);
   \draw[black, ultra thick] (-1.5-0.2,-1) -- (-1.5-0.2,-0.5);
   \draw[black, ultra thick] (-1.5+0.3,-1.5) -- (-1.5+0.3,-1);
   \draw[black, ultra thick] (-1.5-0.7,-1.5) -- (-1.5-0.7,0);
 
   \draw[white,line width=5pt] (-1.5+0.3,-1) .. controls +(0,1.5) and +(0,-1.5) .. (1.5-0.3,3-2);
   \draw[black, ultra thick] (-1.5+0.3,-1) .. controls +(0,1.5) and +(0,-1.5) .. (1.5-0.3,3-2);
   \draw[white,line width=5pt] (-1.5-0.2,-0.5) .. controls +(0,1) and +(0,-1) .. (0,2);
   \draw[black, ultra thick] (-1.5-0.2,-0.5) .. controls +(0,1) and +(0,-1) .. (0,2);

   \draw[white, line width=7pt] (0,-0.1) -- (0,0.1);
   }

   \mydrawing
 
   \begin{scope}[yscale=-1]
       \mydrawing
   \end{scope}

   \begin{scope}[xscale=-1]
    \mydrawing
\end{scope}

   \begin{scope}[xscale=-1, yscale=-1]
       \mydrawing
   \end{scope}

   \fill[white,opacity=0.9] (0,-2) rectangle (1.5+0.8,2);
   \draw[black, line width=5pt] (0,-2) -- (0,2);
 
   \node[] at (4,-1.8) {$A \otimes M$};
   \node[] at (4,0.6) {$A^{\otimes 3} \otimes A \otimes M$};
   \node[] at (4,-0.6) {$A^{\otimes 3} \otimes A \otimes M$};
   \node[] at (4,1.8) {$A \otimes M$};

   \draw[->,shorten >=10pt, shorten <=10pt] (4,-1.8) -- (4,-0.6);
    \draw[->,shorten >=10pt, shorten <=10pt] (4,-0.6) -- (4,0.6);
    \draw[->,shorten >=10pt, shorten <=10pt] (4,0.6) -- (4,1.8);

   \end{scope}

  \end{tikzpicture}
  \end{center}
or to be explicit, 
\begin{equation}
  \Delta_{M}\cdot m_M \ = \ (m_3 \otimes m_M) \cdot \beta_{34} \beta_{23}\kappa_4 \beta_{34} \cdot (\Delta_3 \otimes\Delta_M)
\end{equation}
where $(-)_M$ is the (co)action and $(-)_3$ is the threefold (co)product, $\beta$ is the braiding and $\kappa$ is the orthosymplectic braiding in $\El$.

\begin{theorem} \label{thm:YD}
  The the CoHA action and localised coaction of $\Ht^\sbt(\Ml,\varphi)$ makes $\Ht^\sbt(\Ml^\tau, \varphi^\tau)$ forms a $\tau$-twisted localised Yetter-Drinfeld module. 
\end{theorem}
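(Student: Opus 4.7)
The plan is to adapt the geometric proof of the vertex quasi-triangular bialgebra pentagon for $(\Ht^\sbt(\Ml,\varphi),m,\Delta)$ from \cite{JKL,La1,Li} by running the same argument $\Zb/2$-equivariantly. Recall that in the untwisted case the identity $\Delta\cdot m=(m\otimes m)\cdot\sigma_{23}\beta_{23}\cdot(\Delta\otimes\Delta)$ is obtained by realising both sides as pullback-pushforward through a common correspondence of five-step flags $\SES\times_\Ml\SES\times_\Ml\SES$, equipped with an Euler class twist by $\Nt_s$, with agreement of the two twists governed by the tangent-complex hexagon of section \ref{ssec:TangentBorcherds}. My plan is to take $\tau$-invariants of this whole picture.

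First I would construct the relevant $\tau$-equivariant ``big correspondence''. The natural involution on the five-step flag stack acts by reversing the flag and applying $\tau$, so its fixed locus parametrises symmetric flags $a\subseteq b\subseteq e\subseteq b^\perp\subseteq a^\perp$ with $e$ orthosymplectic and $a,b$ general in $\Ml$. This is the $\tau$-invariant part of the composition $\SES\times_\Ml\SES_3\times_\Ml\SES$ analogous to the length-five flag space used in the plain pentagon. Both sides of the Yetter-Drinfeld identity will arise as pullback-pushforward through this one correspondence: the LHS $\Delta_{\GL\textup{-}\OSpt}\cdot m_M$ corresponds to first enlarging $a\subseteq e$ (the CoHA action) and then quotienting to $b^\perp/b$ (the coaction), while the RHS corresponds to first extracting $b^\perp/b$ and then composing with $a$ after a braiding that carries $a$ past the central orthosymplectic strand.

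Second I would verify that the Euler class twists on both sides match. The LHS twist is the pullback of $e(\Nt_{s_3}^\tau)$, whereas the RHS twist assembles from $e(\Nt_s)$ on the two outer coproducts, $e(\Nt_{s_3}^\tau)$ on the inner coaction, and the braiding ratios $\beta$ and $\kappa$ of \eqref{eqn:DefinitionsBraidings}. Their agreement reduces, via Corollaries \ref{cor:EulerGLHexagon}--\ref{cor:EulerGLOSpModuleHexagon} and Proposition \ref{prop:OSpNormalComplex}, to the $K$-theoretic additivity of the (orthosymplectic) normal complex under the associators of $\SES$ and of $\SES_3^\tau$. Equivalently, one may first prove the YD identity in $\PreStk^{\corr}$ as an isomorphism of $\tau$-equivariant correspondences, and then observe that the induced Euler class corrections assemble into precisely the factors of $\beta_{34}\beta_{23}\kappa_4\beta_{34}$ prescribed in Theorem \ref{thmX:YD}; this mirrors how Theorem \ref{thm:OrthosymplecticJoyceLiuLocalised} itself was derived from the Cherednik hexagon.

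The main obstacle will be the correct bookkeeping of the $\tau$-twist in $\kappa$. When the outer slot $a$ is transported past the central orthosymplectic strand in the RHS composition, the flag identification inside the $\tau$-fixed locus silently applies $\tau$ -- because in a symmetric flag a subobject at position $i$ is identified with the $\tau$-dual of the quotient at the symmetric position -- and this is exactly the $\tau$-factor appearing in $\kappa=e((\tau\times\id)^*\Nt_{s_3}^\tau)/e(\Nt_{s_3}^\tau)\cdot(\tau\times\id)$. Making this precise at the level of correspondences rather than merely at the level of Euler classes is the step that distinguishes the $\kappa$-crossing of the thin braid across the heavy central strand in the diagram of Theorem \ref{thmX:YD} from the usual $\beta$-crossings away from it.
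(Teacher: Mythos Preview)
Your premise misremembers how the bialgebra pentagon is proved in \cite{JKL,La1,Li}: it is \emph{not} obtained by factoring both $\Delta\cdot m$ and $(m\otimes m)\cdot\beta_{23}\cdot(\Delta\otimes\Delta)$ through a single common roof. The coaction $\Delta=e(\Nt_s)\cdot\oplus^*$ is a Borcherds-twisted pullback, not a pull-push through an $\SES$-stack, so after base-change the LHS has roof $\SES\times_\Ml(\Ml\times\Ml)$ while the RHS has roof $\SES\times\SES$; these are related by a closed embedding $i$ (the split locus), and the proof proceeds by Atiyah--Bott localisation $i_*\tfrac{i^*}{e(\Nt_i)}=\id$ on the localised cohomology, with the residual Euler class $e(\Nt_i)$ together with the Joyce twists producing exactly the braiding factor. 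The paper's proof of Theorem~\ref{thm:YD} follows the same template: the two sides of \eqref{fig:YDConditionLocalised} come from the two correspondences in diagram~\eqref{fig:YDSpaceLevelDiagram}, with roofs $\SES^\tau\times_{\Ml^\tau}(\Ml\times\Ml^\tau)$ and $\SES_3\times\SES^\tau$ respectively, linked by the split-locus embedding $i$ described there; the entire content of the proof (the computation~$\dagger$ and the pictorial argument that follows) is the calculation of $e(\Nt_i)$ and its interaction with the Joyce twists $J^{\OSpt}, J_3\otimes J^{\OSpt}$, which produces precisely $\beta_{34}\beta_{23}\kappa_4\beta_{34}$ via the Cherednik hexagon relations.

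Your proposed roof $\SES_5^\tau$ of symmetric five-step flags is neither of these spaces, and neither side of the YD identity naturally factors through it: the LHS roof parametrises orthosymplectic extensions $a\subseteq e$ together with a \emph{splitting} $e\simeq c\oplus f\oplus c^*$ of the total object (coming from $\oplus_{\GL\textup{-}\OSpt}^*$), not a further isotropic subobject $b$. Your intuition that a silent $\tau$ appears when the outer strand crosses the centre is correct, but the mechanism is the normal-bundle computation of $\Nt_i$ rather than a flag identification. To repair your outline you would need to replace the ``common correspondence'' step with the base-change $+$ localisation step just described, after which the Euler-class bookkeeping you sketch in your second paragraph becomes exactly the computation the paper carries out.
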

\begin{proof}
  Following through the definitions, the twisted Yetter-Drinfeld condition is equivalent to the commutativity of the diagram 
  \begin{equation} \label{fig:YDConditionLocalised}
  \begin{tikzcd}[row sep = {15pt,between origins}, column sep = {20pt}]
   (A \otimes M)_{\loc} \ar[r,equals]  & (A \otimes M)_{\loc}\\
   & \\
    &  (A^{\otimes 3} \otimes A \otimes M)_{\loc} \ar[uu]  \\ 
    M \ar[uuu] &  \\
  & (A^{\otimes 3} \otimes A \otimes M)_{\loc} \ar[uu]  \\
  & \\
  A \otimes M \ar[uuu]  \ar[r,equals]  & A \otimes M \ar[uu] 
  \end{tikzcd}
  \end{equation}
  of modules over $\Ht^\sbt(\Ml^{\times 4} \times \Ml^\tau)$.

  To begin, we notice that both compositions arise from correspondences from $\Ml \times \Ml^\tau$ to itself, and there is a map $i$ between these correspondences: 
\begin{equation}\label{fig:YDSpaceLevelDiagram}
\begin{tikzcd}[row sep = {30pt,between origins}, column sep = {65pt, between origins}]
  & &\SES_{3} \times \SES^\tau \ar[d,dashed,"i"] \ar[rdd,bend left = 30,"p_3 \times p^{\OSpt}"]\ar[rdd,bend left = 30,-,ultra thick] \ar[lldd, bend right = 20," \sigma_{34} \sigma_{23}\tau_4 \sigma_{34} \cdot (q_3 \times q^{\OSpt})"'] \ar[lldd, bend right = 20,-,ultra thick]  & & \\
  & & \SES^\tau \times_{\Ml^\tau}(\Ml\times \Ml^\tau) \ar[rd,"\overbar{p}^{\OSpt}"'] \ar[ld,"\overbar{\oplus}_{\OSpt}"] & & \\
 \Ml^3 \times(\Ml\times \Ml^\tau) \ar[d,"\oplus_3\times \oplus_{\OSpt}"'] \ar[d,-,ultra thick]   & \SES^\tau \ar[rd,"p^{\OSpt}"'] \ar[rd,-,ultra thick] \ar[ld,"q^{\OSpt}"] \ar[ld,-,ultra thick]  & & \Ml\times \Ml^\tau \ar[ld,"\oplus_{\OSpt}"]\ar[ld,-,ultra thick] & \\ 
 \Ml \times\Ml^\tau & & \Ml^\tau & &  
 \end{tikzcd} 
\end{equation}
The lower correspondence gives rise to the left side of \eqref{fig:YDConditionLocalised} by pull-push-pull then multiplication by the Joyce twist, and the upper correspondence gives rise to the right side.

The map $i$ is defined by taking the triple extension and orthosymplectic extension
\begin{equation}
\begin{tikzcd}[row sep={30pt,between origins}, column sep={10pt}]
a_1 \ar[d,equals] \ar[r] & \ker \ar[r] \ar[d] & b \ar[d] &&& a_2 \ar[d,equals] \ar[r] & \ker \ar[r] \ar[d] & d \ar[d] \\ 
a_1 \ar[r] & c \ar[r] \ar[d] & \coker \ar[d] &&& a_2 \ar[r] & e \ar[r] \ar[d] & \coker \ar[d] \\ 
0 & a_3^* \ar[r,equals] & a_3^* &&& 0 & a_2^* \ar[r,equals] & a_2^*
\end{tikzcd}
\end{equation}
where $e\simeq e^*$ and $d \simeq d^*$, to 
\begin{equation}\label{fig:DirectSumSESSESOSp}
\begin{tikzcd}[row sep = {30pt,between origins}, column sep = {10pt}]
a_1 \oplus a_2 \oplus a_3 \ar[d,equals] \ar[r]  & \ker \ar[r] \ar[d] & b \oplus d \oplus b^*\ar[d]&[10pt] \\ 
a_1 \oplus a_2 \oplus a_3\ar[r]  &(c \oplus e \oplus c^*)\ar[r] \ar[d]  &\coker\ar[d]  \\
0  & a_3^* \oplus a_2^* \oplus a_1^* \ar[r,equals] & a_3^* \oplus a_2^* \oplus a_1^* &
\end{tikzcd}
\end{equation}
where the maps in \eqref{fig:DirectSumSESSESOSp} are induced by the orthosymplectic embeddings $a_1 \oplus a_3 \to c \oplus c^*$ and $a_2 \to x$. Note that this is just the action from Proposition \ref{prop:MActionOnMt} of $\Ml_\Cl$ on $\Ml_{\Cl}^\tau$ in the case where $\Cl$ is the stack of length three flags. 

We now apply torus localisation to compare the two sides of \eqref{fig:YDConditionLocalised}. We have 
\begin{align*}
  \textup{left side of }\eqref{fig:YDConditionLocalised} & \ = \    (J^{\OSpt}) \oplus^{\OSpt\, *}\cdot p^{\OSpt}_* q^{\OSpt\, *}\\
  & \ = \ (J^{\OSpt}) \cdot \overbar{p}^{\OSpt}_*\cdot  \overbar{\oplus}^{\OSpt\, *}\cdot q^{\OSpt\, *}\\
  & \ \stackrel{\star}{=} \ (J^{\OSpt}) \cdot \overbar{p}^{\OSpt}_* \cdot \left( i_*\frac{i^*}{e(\Nt_i)} \right)\cdot  \overbar{\oplus}^{\OSpt\, *}\cdot q^{\OSpt\, *}\\
  & \ =\ (p_3 \times p^{\OSpt})_* \cdot \left( \frac{1}{e(\Nt_i)}\right)\cdot  (q_3 \times q^{\OSpt})^* \cdot (\sigma_{34} \sigma_{23}\tau_4 \sigma_{34})^* \cdot (\oplus_3 \times \oplus^{\OSpt})^*\\
 & \ \stackrel{\dagger}{=} \ (p_3 \times p^{\OSpt})_* (q_3 \times q^{\OSpt})^* \cdot (\beta_{34} \beta_{23}\kappa_4 \beta_{34}) \cdot (J_3 \otimes J^{\OSpt})(\oplus_3 \times \oplus^{\OSpt})^*\\
 & \ = \  \textup{right side of }\eqref{fig:YDConditionLocalised} 
\end{align*}
where in $\star$ we used Atiyah-Bott torus localisation, see \cite{La3} for details, and 
$$\beta_{ij} \ = \ R_{ij}\cdot \sigma_{ij}, \hspace{15mm} \kappa_i \ = \ K_i \cdot \tau_i.$$

It remains to show $\dagger$, which reduces to proving an equality of localised cohomology classes on $\Ml^{\times 4}\times \Ml^\tau$, i.e. functions on
$$\Ol \left( ((\Conf_\Ml \Ab^1)^{\times 4}\times \Conf_{\Ml^\tau}\Ab^1)_\circ\right).$$
We need to show that the products of the Euler class factor $e(\Nt_i)$ and the Joyce twist $J^{(\OSpt)}$ factors give precisely the class $R_{34} R_{23} K_4 R_{34}$. We note that
$$e(\Nt_i) \ = \ e(\Nt_{p_3\times p^{\OSpt}}) \big/  i^* \overbar{\oplus}^{\OSpt\, *} e(\Nt_{p^{\OSpt}})$$
by the distinguished triangle for normal complexes of a composition. The proof now finishes by a computation, which we either do by hand using the Cherednik hexagon relations as in the shuffle case of section \ref{ssec:ShuffleYetDrin}. 

\end{proof}

\subsection{Graphical proof of Yetter-Drinfeld condition}

\begin{proof}[Second proof: pictoral] We may complete the crucial computation $\dagger$ of the first proof in another way, which makes the reason for the twisted Yetter-Drinfeld condition clearer.

  To set up notation, we write down the top correspondence in \eqref{fig:YDSpaceLevelDiagram} defines a morphism in the category $\PreStk^{\corr}$ of prestacks with morphisms correspondences, which we draw as follows

  \begin{center}
    \begin{tikzpicture}[scale = 0.9]

    \newcommand{\mydrawingsmall}{
    \draw[black, ultra thick] (-1.5-0.2,2) .. controls +(0,-0.75) and +(0,0.75) .. (0,0.2);
  
    \draw[black, line width=5pt] (0,-2) -- (0,2);
    }

    \mydrawingsmall
  
    \begin{scope}[xscale=-1]
        \mydrawingsmall
    \end{scope}
  
    \begin{scope}[yscale=-1]
        \mydrawingsmall
    \end{scope}
  
    \begin{scope}[xscale=-1, yscale=-1]
        \mydrawingsmall
    \end{scope}

    \fill[white,opacity=0.9] (0,-2) rectangle (1.5+0.8,2);
    \draw[black, line width=5pt] (0,-2) -- (0,2);
  
    \node[] at (3.5,0) {$\stackrel{i}{\leftarrow}$};
  
    \node[] at (-4,-1.8) {$A \otimes M$};
    \node[] at (-4,0) {$M$};
    \node[] at (-4,1.8) {$A \otimes M$};
  
    \draw[->,shorten >=10pt, shorten <=10pt] (-4,-1.8) -- (-4,0) node[midway,left] {\tiny $\SES^{\OSpt}$};
    \draw[->,shorten >=10pt, shorten <=10pt] (-4,0) -- (-4,1.8) node[midway,left] {\tiny $\oplus^{\OSpt}$};

  
    \begin{scope} 
     [xshift=7cm]

     \newcommand{\mydrawing}{%
     \draw[black, ultra thick] (-1.5-0.2,-2) -- (-1.5-0.2,-1);
     \draw[black, ultra thick] (-1.5-0.2,-2) .. controls +(0,0.25) and +(0,-0.25) .. (-1.5-0.7,-1.5);
     \draw[black, ultra thick] (-1.5-0.2,-2) .. controls +(0,0.25) and +(0,-0.25) .. (-1.5+0.3,-1.5);
     
     \draw[black, ultra thick] (-1.5-0.2,-1.5) -- (-1.5-0.2,-1);
     \draw[black, ultra thick] (-1.5-0.2,-1) -- (-1.5-0.2,-0.5);
     \draw[black, ultra thick] (-1.5+0.3,-1.5) -- (-1.5+0.3,-1);
     \draw[black, ultra thick] (-1.5-0.7,-1.5) -- (-1.5-0.7,0);
   
     \draw[white,line width=5pt] (-1.5+0.3,-1) .. controls +(0,1.5) and +(0,-1.5) .. (1.5-0.3,3-2);
     \draw[black, ultra thick] (-1.5+0.3,-1) .. controls +(0,1.5) and +(0,-1.5) .. (1.5-0.3,3-2);
     \draw[white,line width=5pt] (-1.5-0.2,-0.5) .. controls +(0,1) and +(0,-1) .. (0,2);
     \draw[black, ultra thick] (-1.5-0.2,-0.5) .. controls +(0,1) and +(0,-1) .. (0,2);

     \draw[white, line width=7pt] (0,-0.1) -- (0,0.1);
     }

     \mydrawing
   
     \begin{scope}[yscale=-1]
         \mydrawing
     \end{scope}

     \begin{scope}[xscale=-1]
      \mydrawing
  \end{scope}
  
     \begin{scope}[xscale=-1, yscale=-1]
         \mydrawing
     \end{scope}
  
     \fill[white,opacity=0.9] (0,-2) rectangle (1.5+0.8,2);
     \draw[black, line width=5pt] (0,-2) -- (0,2);
   
     \node[] at (4,-1.8) {$A \otimes M$};
     \node[] at (4,0.6) {$A^{\otimes 3} \otimes A \otimes M$};
     \node[] at (4,-0.6) {$A^{\otimes 3} \otimes A \otimes M$};
     \node[] at (4,1.8) {$A \otimes M$};
  
     \draw[->,shorten >=10pt, shorten <=10pt] (4,-1.8) -- (4,-0.6);
      \draw[->,shorten >=10pt, shorten <=10pt] (4,-0.6) -- (4,0.6);
      \draw[->,shorten >=10pt, shorten <=10pt] (4,0.6) -- (4,1.8);
  
      \draw[->,shorten >=10pt, shorten <=10pt] (4,-1.8) -- (4,-0.6) node[midway,right] {\tiny $\oplus_3 \times \oplus^{\OSpt}$};
      \draw[->,shorten >=10pt, shorten <=10pt] (4,-0.6) -- (4,0.6) node[midway,right] {\tiny $\sigma_{34}\sigma_{23}\tau_4\sigma_{34}$};
       \draw[->,shorten >=10pt, shorten <=10pt] (4,0.6) -- (4,1.8) node[midway,right] {\tiny $\SES_3 \times \SES^{\OSpt}$};
   
     \end{scope}
  
    \end{tikzpicture}
    \end{center}
The map $i$ becomes a two-morphism in $\PreStk^{\corr}$ between the two above morphisms. The euler class $e(\Nt_i)$ is a product of factors, which we represent by
\begin{center}
  \begin{tikzpicture}[scale = 0.9]

  \newcommand{\mydrawingsmall}{
  \draw[black, ultra thick] (-1.5-0.2,2) .. controls +(0,-0.75) and +(0,0.75) .. (0,0.2);

  \draw[black, line width=5pt] (0,-2) -- (0,2);
  }

  \mydrawingsmall

  \begin{scope}[xscale=-1]
      \mydrawingsmall
  \end{scope}

  \begin{scope}[yscale=-1]
      \mydrawingsmall
  \end{scope}

  \begin{scope}[xscale=-1, yscale=-1]
      \mydrawingsmall
  \end{scope}

  \fill[white,opacity=0.9] (0,-2) rectangle (1.5+0.8,2);
  \draw[black, line width=5pt] (0,-2) -- (0,2);

  \node[] at (3.5,0) {$\stackrel{i}{\leftarrow}$};

  \draw[->,decorate,decoration={snake,amplitude=.4mm,segment length=2mm,post length=1mm}]  (-1.5,1.6)  -- (-0.1,1.6);


  \begin{scope} 
   [xshift=7cm]

   \newcommand{\mydrawing}{%
   \draw[black, ultra thick] (-1.5-0.2,-2) -- (-1.5-0.2,-1);
   \draw[black, ultra thick] (-1.5-0.2,-2) .. controls +(0,0.25) and +(0,-0.25) .. (-1.5-0.7,-1.5);
   \draw[black, ultra thick] (-1.5-0.2,-2) .. controls +(0,0.25) and +(0,-0.25) .. (-1.5+0.3,-1.5);
   
   \draw[black, ultra thick] (-1.5-0.2,-1.5) -- (-1.5-0.2,-1);
   \draw[black, ultra thick] (-1.5-0.2,-1) -- (-1.5-0.2,-0.5);
   \draw[black, ultra thick] (-1.5+0.3,-1.5) -- (-1.5+0.3,-1);
   \draw[black, ultra thick] (-1.5-0.7,-1.5) -- (-1.5-0.7,0);
 
   \draw[white,line width=5pt] (-1.5+0.3,-1) .. controls +(0,1.5) and +(0,-1.5) .. (1.5-0.3,3-2);
   \draw[black, ultra thick] (-1.5+0.3,-1) .. controls +(0,1.5) and +(0,-1.5) .. (1.5-0.3,3-2);
   \draw[white,line width=5pt] (-1.5-0.2,-0.5) .. controls +(0,1) and +(0,-1) .. (0,2);
   \draw[black, ultra thick] (-1.5-0.2,-0.5) .. controls +(0,1) and +(0,-1) .. (0,2);

   \draw[white, line width=7pt] (0,-0.1) -- (0,0.1);
   }

   \mydrawing
 
   \begin{scope}[yscale=-1]
       \mydrawing
   \end{scope}

   \begin{scope}[xscale=-1]
    \mydrawing
\end{scope}

   \begin{scope}[xscale=-1, yscale=-1]
       \mydrawing
   \end{scope}

   \fill[white,opacity=0.9] (0,-2) rectangle (1.5+0.8,2);
   \draw[black, line width=5pt] (0,-2) -- (0,2);

  \draw[->,decorate,decoration={snake,amplitude=.4mm,segment length=2mm,post length=1mm}]  (-1.5-0.7,-1.5)  -- (-1.5-0.2-0.1,-1.5);
  \draw[->,decorate,decoration={snake,amplitude=.4mm,segment length=2mm,post length=1mm}]  (-1.5-0.2,-1.5)  -- (-1.5+0.3-0.1,-1.5);
  \draw[->,decorate,decoration={snake,amplitude=.4mm,segment length=2mm,post length=1mm}]  (-1.5-0.7,-1.3) -- (-1.5+0.3-0.1,-1.3);
  \draw[-,ultra thick,decorate,decoration={snake,amplitude=.4mm,segment length=2mm,post length=1mm}]  (-1.5-0.7,-1.5)  -- (-1.5-0.2-0.1,-1.5);
  \draw[-,ultra thick,decorate,decoration={snake,amplitude=.4mm,segment length=2mm,post length=1mm}]  (-1.5-0.2,-1.5)  -- (-1.5+0.3-0.1,-1.5);
  \draw[-,ultra thick,decorate,decoration={snake,amplitude=.4mm,segment length=2mm,post length=1mm}]  (-1.5-0.7,-1.3) -- (-1.5+0.3-0.1,-1.3);

  \draw[->,decorate,decoration={snake,amplitude=.4mm,segment length=2mm,post length=1mm}]  (-0.7,-0.8)  -- (-0.1,-0.8);
  \draw[-,ultra thick,decorate,decoration={snake,amplitude=.4mm,segment length=2mm,post length=1mm}]  (-0.7,-0.8)  -- (-0.1,-0.8);
   \end{scope}

  \end{tikzpicture}
  \end{center}
where as in the notation introduced in section \ref{sssec:OrthosymplecticModuleHexagonRelations} each horizontal wavy arrow represents a localised cohomology class on the space corresponding to that horizontal slice of the diagram. The thin wavy arrows represent $e(\Nt_s^{(\OSpt)})$ and the thick wavy arrows represent their inverses. Thus, $e(\Nt_i)$ is then the product of these factors, pulled back to $\Ml^{\times 4}\times \Ml^\tau$.

We may pull back the classes along $i$, allowing us to work just on the right hand side diagram (which we vertically stretch for convenience). To begin computing $e(\Nt_i)$, we use that by \ref{eqn:EulerCherednik} the Euler classes satisfy orthosymplectic module hexagon relations of section \ref{sssec:OrthosymplecticModuleHexagonRelations}:
 \begin{center}
   \begin{tikzpicture}[scale = 0.9]
 
   \newcommand{\mydrawing}{%
 
   \draw[black, ultra thick] (-1.5,-3) -- (-1.5,-2);
   \draw[black, ultra thick] (-1.5,-2.5) .. controls +(0,0.25) and +(0,-0.25) .. (-1.5-0.7,-2);
   \draw[black, ultra thick] (-1.5,-2.5) .. controls +(0,0.25) and +(0,-0.25) .. (-1.5+0.3,-2);
 
   \draw[black, ultra thick] (-1.5,-2) -- (-1.5,-1);
   \draw[black, ultra thick] (-1.5,-1) -- (-1.5,-0.5);
   \draw[black, ultra thick] (-1.5+0.3,-2) -- (-1.5+0.3,-1);
   \draw[black, ultra thick] (-1.5-0.7,-2) -- (-1.5-0.7,0);
 
   \draw[white,line width=5pt] (-1.5+0.3,-1) .. controls +(0,1.5) and +(0,-1.5) .. (1.5-0.3,3-2);
   \draw[black, ultra thick] (-1.5+0.3,-1) .. controls +(0,1.5) and +(0,-1.5) .. (1.5-0.3,3-2);
   \draw[white,line width=5pt] (-1.5,-0.5) .. controls +(0,1) and +(0,-1) .. (0,2);
   \draw[black, ultra thick] (-1.5,-0.5) .. controls +(0,1) and +(0,-1) .. (0,2);
 
   \draw[white, line width=5pt] (0,-0.5) -- (0,0.5);
   }

   \mydrawing
 
   \begin{scope}[xscale=-1]
       \mydrawing
   \end{scope}
 
   \begin{scope}[yscale=-1]
       \mydrawing
   \end{scope}
 
   \begin{scope}[xscale=-1, yscale=-1]
       \mydrawing
   \end{scope}

   \fill[white,opacity=0.9] (0,-3.1) rectangle (1.5+0.8,3.1);
   \draw[black, line width=2pt] (0,-3) -- (0,3);

   \node[left] at (-2.5,0) {$e(\Nt_i) \ = \ $};
 
   \draw[->,decorate,decoration={snake,amplitude=.4mm,segment length=2mm,post length=1mm}, shorten >=2pt, shorten <=2pt]  (-1.5-0.7,-1.6)  -- (-1.5,-1.6);
   \draw[->,decorate,decoration={snake,amplitude=.4mm,segment length=2mm,post length=1mm}, shorten >=2pt, shorten <=2pt]  (-1.5-0.7,-1.8)  -- (-1.5+0.3,-1.8);
   \draw[->,decorate,decoration={snake,amplitude=.4mm,segment length=2mm,post length=1mm}, shorten >=2pt, shorten <=2pt]  (-1.5,-2.0)  -- (-1.5+0.3,-2.0);
 
   \draw[-,decorate,decoration={snake,amplitude=.4mm,segment length=2mm,post length=1mm},ultra thick, shorten >=2pt, shorten <=2pt]  (-1.5-0.7,-1.6)  -- (-1.5,-1.6);
   \draw[-,decorate,decoration={snake,amplitude=.4mm,segment length=2mm,post length=1mm},ultra thick, shorten >=2pt, shorten <=2pt]  (-1.5-0.7,-1.8)  -- (-1.5+0.3,-1.8);
   \draw[-,decorate,decoration={snake,amplitude=.4mm,segment length=2mm,post length=1mm},ultra thick, shorten >=2pt, shorten <=2pt]  (-1.5,-2.0)  -- (-1.5+0.3,-2.0);

   \draw[->,decorate,decoration={snake,amplitude=.4mm,segment length=2mm,post length=1mm}, shorten >=2pt, shorten <=2pt]  (-0.45,-1.1)  --  (0,-1.1);

   \draw[-,decorate,decoration={snake,amplitude=.4mm,segment length=2mm,post length=1mm},ultra thick, shorten >=2pt, shorten <=2pt] (-0.45,-1.1)  --  (0,-1.1);

   
   \draw[->,decorate,decoration={snake,amplitude=.4mm,segment length=2mm,post length=1mm}, thick, shorten >=2pt, shorten <=2pt]  (-1.5,2.7)  -- (0,2.7);

   \node[] at (3.5,0) {$=$};

   \begin{scope} 
    [xshift=7cm]
    
   \newcommand{\mydrawinglong}{%
 
   \draw[black, ultra thick] (-1.5,-3.5) -- (-1.5,-2);
   \draw[black, ultra thick] (-1.5,-3.5) .. controls +(0,0.25) and +(0,-0.25) .. (-1.5-0.7,-3);
   \draw[black, ultra thick] (-1.5,-3.5) .. controls +(0,0.25) and +(0,-0.25) .. (-1.5+0.3,-3);
 
   \draw[black, ultra thick] (-1.5,-2) -- (-1.5,-1);
   \draw[black, ultra thick] (-1.5,-1) -- (-1.5,-0.5);
   \draw[black, ultra thick] (-1.5+0.3,-3) -- (-1.5+0.3,-1);
   \draw[black, ultra thick] (-1.5-0.7,-3) -- (-1.5-0.7,0);
 
   \draw[white,line width=5pt] (-1.5+0.3,-1) .. controls +(0,1.5) and +(0,-1.5) .. (1.5-0.3,3-2);
   \draw[black, ultra thick] (-1.5+0.3,-1) .. controls +(0,1.5) and +(0,-1.5) .. (1.5-0.3,3-2);
   \draw[white,line width=5pt] (-1.5,-0.5) .. controls +(0,1) and +(0,-1) .. (0,2);
   \draw[black, ultra thick] (-1.5,-0.5) .. controls +(0,1) and +(0,-1) .. (0,2);
 
   \draw[white, line width=5pt] (0,-0.5) -- (0,0.5);
   \draw[black, line width=2pt] (0,-3.5) -- (0,3.5);
 
   }
    
   \mydrawinglong
 
   \begin{scope}[xscale=-1]
       \mydrawinglong
   \end{scope}
 
   \begin{scope}[yscale=-1]
       \mydrawinglong
   \end{scope}
 
   \begin{scope}[xscale=-1, yscale=-1]
       \mydrawinglong
   \end{scope}

   \fill[white,opacity=0.9] (0,-3.6) rectangle (1.5+0.8,3.6);
   \draw[black, line width=2pt] (0,-3.5) -- (0,3.5);

   \draw[->,decorate,decoration={snake,amplitude=.4mm,segment length=2mm,post length=1mm}, shorten >=2pt, shorten <=2pt]  (-1.5-0.7,-1.6)  -- (-1.5,-1.6);
   \draw[->,decorate,decoration={snake,amplitude=.4mm,segment length=2mm,post length=1mm}, shorten >=2pt, shorten <=2pt]  (-1.5-0.7,-1.8)  -- (-1.5+0.3,-1.8);
   \draw[->,decorate,decoration={snake,amplitude=.4mm,segment length=2mm,post length=1mm}, shorten >=2pt, shorten <=2pt]  (-1.5,-2.0)  -- (-1.5+0.3,-2.0);
 
   \draw[-,decorate,decoration={snake,amplitude=.4mm,segment length=2mm,post length=1mm},ultra thick, shorten >=2pt, shorten <=2pt]  (-1.5-0.7,-1.6)  -- (-1.5,-1.6);
   \draw[-,decorate,decoration={snake,amplitude=.4mm,segment length=2mm,post length=1mm},ultra thick, shorten >=2pt, shorten <=2pt]  (-1.5-0.7,-1.8)  -- (-1.5+0.3,-1.8);
   \draw[-,decorate,decoration={snake,amplitude=.4mm,segment length=2mm,post length=1mm},ultra thick, shorten >=2pt, shorten <=2pt]  (-1.5,-2.0)  -- (-1.5+0.3,-2.0);

   \draw[->,decorate,decoration={snake,amplitude=.4mm,segment length=2mm,post length=1mm}, shorten >=2pt, shorten <=2pt]  (-0.45,-1.1)  --  (0,-1.1);
   \draw[-,decorate,decoration={snake,amplitude=.4mm,segment length=2mm,post length=1mm},ultra thick, shorten >=2pt, shorten <=2pt] (-0.45,-1.1)  --  (0,-1.1);

   \draw[->,decorate,decoration={snake,amplitude=.4mm,segment length=2mm,post length=1mm}, thick, shorten >=2pt, shorten <=2pt]  (-1.5-0.7,2.9)  -- (0,2.9);
   \draw[->,decorate,decoration={snake,amplitude=.4mm,segment length=2mm,post length=1mm}, thick, shorten >=2pt, shorten <=2pt]  (-1.5,2.7) -- (0,2.7);
   \draw[->,decorate,decoration={snake,amplitude=.4mm,segment length=2mm,post length=1mm}, thick, shorten >=2pt, shorten <=2pt]  (-1.5+0.3,2.5)  -- (0,2.5);
   \draw[->,decorate,decoration={snake,amplitude=.4mm,segment length=2mm,post length=1mm}, thick, shorten >=2pt, shorten <=2pt]  (-1.5-0.7,2.2)  -- (1.5-0.3,2.2);
   \draw[->,decorate,decoration={snake,amplitude=.4mm,segment length=2mm,post length=1mm}, thick, shorten >=2pt, shorten <=2pt]  (-1.5-0.7,2) -- (1.5,2);
   \draw[->,decorate,decoration={snake,amplitude=.4mm,segment length=2mm,post length=1mm}, thick, shorten >=2pt, shorten <=2pt]  (-1.5,1.8) -- (1.5-0.3,1.8);

    \end{scope}
 
   \end{tikzpicture}
   \end{center}
We cancel off pairs of thick and thin arrows (representing inverse cohomology classes), apply the Cherednik hexagon relations to the remaining arrows, then cancel off more pairs and drag arrows to the location of the the braid swaps (which we colour).

 \begin{center}
  \begin{tikzpicture}[scale = 0.9]

  \newcommand{\mydrawing}{%

  \draw[black, ultra thick] (-1.5,-3) -- (-1.5,-2);
  \draw[black, ultra thick] (-1.5,-2.5) .. controls +(0,0.25) and +(0,-0.25) .. (-1.5-0.7,-2);
  \draw[black, ultra thick] (-1.5,-2.5) .. controls +(0,0.25) and +(0,-0.25) .. (-1.5+0.3,-2);

  \draw[black, ultra thick] (-1.5,-2) -- (-1.5,-1);
  \draw[black, ultra thick] (-1.5,-1) -- (-1.5,-0.5);
  \draw[black, ultra thick] (-1.5+0.3,-2) -- (-1.5+0.3,-1);
  \draw[black, ultra thick] (-1.5-0.7,-2) -- (-1.5-0.7,0);

  \draw[white,line width=5pt] (-1.5+0.3,-1) .. controls +(0,1.5) and +(0,-1.5) .. (1.5-0.3,3-2);
  \draw[black, ultra thick] (-1.5+0.3,-1) .. controls +(0,1.5) and +(0,-1.5) .. (1.5-0.3,3-2);
  \draw[white,line width=5pt] (-1.5,-0.5) .. controls +(0,1) and +(0,-1) .. (0,2);
  \draw[black, ultra thick] (-1.5,-0.5) .. controls +(0,1) and +(0,-1) .. (0,2);

  \draw[white, line width=5pt] (0,-0.5) -- (0,0.5);
  \draw[black, line width=2pt] (0,-3) -- (0,3);

  }

  \newcommand{\mydrawinglong}{%

  \draw[black, ultra thick] (-1.5,-3.5) -- (-1.5,-2);
  \draw[black, ultra thick] (-1.5,-3.5) .. controls +(0,0.25) and +(0,-0.25) .. (-1.5-0.7,-3);
  \draw[black, ultra thick] (-1.5,-3.5) .. controls +(0,0.25) and +(0,-0.25) .. (-1.5+0.3,-3);

  \draw[black, ultra thick] (-1.5,-2) -- (-1.5,-1);
  \draw[black, ultra thick] (-1.5,-1) -- (-1.5,-0.5);
  \draw[black, ultra thick] (-1.5+0.3,-3) -- (-1.5+0.3,-1);
  \draw[black, ultra thick] (-1.5-0.7,-3) -- (-1.5-0.7,0);

  \draw[white,line width=5pt] (-1.5+0.3,-1) .. controls +(0,1.5) and +(0,-1.5) .. (1.5-0.3,3-2);
  \draw[black, ultra thick] (-1.5+0.3,-1) .. controls +(0,1.5) and +(0,-1.5) .. (1.5-0.3,3-2);
  \draw[white,line width=5pt] (-1.5,-0.5) .. controls +(0,1) and +(0,-1) .. (0,2);
  \draw[black, ultra thick] (-1.5,-0.5) .. controls +(0,1) and +(0,-1) .. (0,2);

  \draw[white, line width=5pt] (0,-0.5) -- (0,0.5);
  \draw[black, line width=2pt] (0,-3.5) -- (0,3.5);

  }
   
  \mydrawinglong

  \begin{scope}[xscale=-1]
      \mydrawinglong
  \end{scope}

  \begin{scope}[yscale=-1]
      \mydrawinglong
  \end{scope}

  \begin{scope}[xscale=-1, yscale=-1]
      \mydrawinglong
  \end{scope}

  \fill[white,opacity=0.9] (0,-3.6) rectangle (1.5+0.8,3.6);
  \draw[black, line width=2pt] (0,-3.5) -- (0,3.5);

  \node[left] at (-2.5,0) {$e(\Nt_i) \ = \ $};

  \draw[->,decorate,decoration={snake,amplitude=.4mm,segment length=2mm,post length=1mm}, shorten >=2pt, shorten <=2pt]  (-1.5-0.7,-1.6)  -- (-1.5,-1.6);
  \draw[->,decorate,decoration={snake,amplitude=.4mm,segment length=2mm,post length=1mm}, shorten >=2pt, shorten <=2pt]  (-1.5,-2.0)  -- (-1.5+0.3,-2.0);

  \draw[-,decorate,decoration={snake,amplitude=.4mm,segment length=2mm,post length=1mm},ultra thick, shorten >=2pt, shorten <=2pt]  (-1.5-0.7,-1.6)  -- (-1.5,-1.6);
  \draw[-,decorate,decoration={snake,amplitude=.4mm,segment length=2mm,post length=1mm},ultra thick, shorten >=2pt, shorten <=2pt]  (-1.5,-2.0)  -- (-1.5+0.3,-2.0);

   \draw[->,decorate,decoration={snake,amplitude=.4mm,segment length=2mm,post length=1mm}, shorten >=2pt, shorten <=2pt]  (-0.45,-1.1)  --  (0,-1.1);
   \draw[-,decorate,decoration={snake,amplitude=.4mm,segment length=2mm,post length=1mm},ultra thick, shorten >=2pt, shorten <=2pt] (-0.45,-1.1)  --  (0,-1.1);

  \draw[->,decorate,decoration={snake,amplitude=.4mm,segment length=2mm,post length=1mm}, thick, shorten >=2pt, shorten <=2pt]  (-1.5-0.7,2.9)  -- (0,2.9);
  \draw[->,decorate,decoration={snake,amplitude=.4mm,segment length=2mm,post length=1mm}, thick, shorten >=2pt, shorten <=2pt]  (-1.5,2.7) -- (0,2.7);
  \draw[->,decorate,decoration={snake,amplitude=.4mm,segment length=2mm,post length=1mm}, thick, shorten >=2pt, shorten <=2pt]  (-1.5+0.3,2.5)  -- (0,2.5);
  
   \draw[->,decorate,decoration={snake,amplitude=.4mm,segment length=2mm,post length=1mm}, thick, shorten >=2pt, shorten <=2pt]  (-1.5-0.7,2) -- (1.5,2);
   \draw[->,decorate,decoration={snake,amplitude=.4mm,segment length=2mm,post length=1mm}, thick, shorten >=2pt, shorten <=2pt]  (-1.5,1.8) -- (1.5-0.3,1.8);

  \node[] at (2.875,0) {$=$};

  \node[] at (8.625,0) {$=$};

 \newcommand{\mydrawinglonger}{%
   
 \draw[black, ultra thick] (-1.5,-3.5) -- (-1.5,-2);
 \draw[black, ultra thick] (-1.5,-3.5) .. controls +(0,0.25) and +(0,-0.25) .. (-1.5-0.7,-3);
 \draw[black, ultra thick] (-1.5,-3.5) .. controls +(0,0.25) and +(0,-0.25) .. (-1.5+0.3,-3);
 
 \draw[black, ultra thick] (-1.5,-2) -- (-1.5,-1);
 \draw[black, ultra thick] (-1.5,-1) -- (-1.5,-0.5);
 \draw[black, ultra thick] (-1.5+0.3,-3) -- (-1.5+0.3,-1);
 \draw[black, ultra thick] (-1.5-0.7,-3) -- (-1.5-0.7,0);

 \draw[white,line width=5pt] (-1.5+0.3,-1) .. controls +(0,1.5) and +(0,-1.5) .. (1.5-0.3,3-2);
 \draw[black, ultra thick] (-1.5+0.3,-1) .. controls +(0,1.5) and +(0,-1.5) .. (1.5-0.3,3-2);
 \draw[white,line width=5pt] (-1.5,-0.5) .. controls +(0,1) and +(0,-1) .. (-0.6,2);
 \draw[black, ultra thick] (-1.5,-0.5) .. controls +(0,1) and +(0,-1) .. (-0.6,2);

 \draw[black, ultra thick]  (-0.6,2) --  (-0.6,3);
 \draw[black, ultra thick] (-0.6,3) .. controls +(0,0.5) and +(0,-0.5) .. (0,3.5);

 \draw[white, line width=5pt] (0,-0.5) -- (0,0.5);
 \draw[black, line width=2pt] (0,-3.5) -- (0,3.5);

 }

\begin{scope} 
 [xshift=5.75cm]

 \mydrawinglonger

 \begin{scope}[xscale=-1]
     \mydrawinglonger
 \end{scope}

 \begin{scope}[yscale=-1]
     \mydrawinglonger
 \end{scope}

 \begin{scope}[xscale=-1, yscale=-1]
     \mydrawinglonger
 \end{scope}

   \fill[white,opacity=0.9] (0,-3.6) rectangle (1.5+0.8,3.6);
   \draw[black, line width=2pt] (0,-3.5) -- (0,3.5);

  \draw[->,decorate,decoration={snake,amplitude=.4mm,segment length=2mm,post length=1mm}, shorten >=2pt, shorten <=2pt]  (-1.5-0.7,-1.6)  -- (-1.5,-1.6);
  \draw[->,decorate,decoration={snake,amplitude=.4mm,segment length=2mm,post length=1mm}, shorten >=2pt, shorten <=2pt]  (-1.5,-2.0)  -- (-1.5+0.3,-2.0);

  \draw[-,decorate,decoration={snake,amplitude=.4mm,segment length=2mm,post length=1mm},ultra thick, shorten >=2pt, shorten <=2pt]  (-1.5-0.7,-1.6)  -- (-1.5,-1.6);
  \draw[-,decorate,decoration={snake,amplitude=.4mm,segment length=2mm,post length=1mm},ultra thick, shorten >=2pt, shorten <=2pt]  (-1.5,-2.0)  -- (-1.5+0.3,-2.0);

  \draw[->,decorate,decoration={snake,amplitude=.4mm,segment length=2mm,post length=1mm}, shorten >=2pt, shorten <=2pt]  (-0.6,-1.6)  -- (0,-1.6);
  \draw[-,decorate,decoration={snake,amplitude=.4mm,segment length=2mm,post length=1mm},ultra thick, shorten >=2pt, shorten <=2pt]  (-0.6,-1.6)  -- (0,-1.6);

  
   \draw[->,decorate,decoration={snake,amplitude=.4mm,segment length=2mm,post length=1mm}, thick, shorten >=2pt, shorten <=2pt]  (-1.5-0.7,3.0) -- (1.5,3.0);
   \draw[->,decorate,decoration={snake,amplitude=.4mm,segment length=2mm,post length=1mm}, thick, shorten >=2pt, shorten <=2pt]  (-1.5,2.85) -- (1.5-0.3,2.85);

   \draw[->,decorate,decoration={snake,amplitude=.4mm,segment length=2mm,post length=1mm}, thick, shorten >=2pt, shorten <=2pt]  (-1.5-0.7,2.65) -- (-0.5,2.65);
   \draw[->,decorate,decoration={snake,amplitude=.4mm,segment length=2mm,post length=1mm}, thick, shorten >=2pt, shorten <=2pt]  (-1.5-0.7,2.50) -- (0,2.50);
   \draw[->,decorate,decoration={snake,amplitude=.4mm,segment length=2mm,post length=1mm}, thick, shorten >=2pt, shorten <=2pt]  (-1.5-0.7,2.35) -- (0.6,2.35);

   \draw[->,decorate,decoration={snake,amplitude=.4mm,segment length=2mm,post length=1mm}, thick, shorten >=2pt, shorten <=2pt]  (-1.5,2.15) -- (-0.5,2.15);
   \draw[->,decorate,decoration={snake,amplitude=.4mm,segment length=2mm,post length=1mm}, thick, shorten >=2pt, shorten <=2pt]  (-1.5,2.0) -- (0,2.0);
   \draw[->,decorate,decoration={snake,amplitude=.4mm,segment length=2mm,post length=1mm}, thick, shorten >=2pt, shorten <=2pt]  (-1.5,1.85) -- (0.6,1.85);

   \draw[->,decorate,decoration={snake,amplitude=.4mm,segment length=2mm,post length=1mm}, thick, shorten >=2pt, shorten <=2pt]  (-1.5+0.3,1.65) -- (-0.6,1.65);
   \draw[->,decorate,decoration={snake,amplitude=.4mm,segment length=2mm,post length=1mm}, thick, shorten >=2pt, shorten <=2pt]  (-1.5+0.3,1.5) -- (0,1.5);
   \draw[->,decorate,decoration={snake,amplitude=.4mm,segment length=2mm,post length=1mm}, thick, shorten >=2pt, shorten <=2pt]  (-1.5+0.3,1.35) -- (0.8,1.35);
 \end{scope}

 \begin{scope} 
  [xshift=11.5cm]

  \mydrawinglonger
 
  \begin{scope}[xscale=-1]
      \mydrawinglonger
  \end{scope}
 
  \begin{scope}[yscale=-1]
      \mydrawinglonger
  \end{scope}
 
  \begin{scope}[xscale=-1, yscale=-1]
      \mydrawinglonger
  \end{scope}

    \fill[white,opacity=0.9] (0,-3.6) rectangle (1.5+0.8,3.6);
    \draw[black, line width=2pt] (0,-3.5) -- (0,3.5);
 
   \draw[->,decorate,decoration={snake,amplitude=.4mm,segment length=2mm,post length=1mm}, shorten >=2pt, shorten <=2pt]  (-1.5,-2.0)  -- (-1.5+0.3,-2.0);
 
   \draw[-,decorate,decoration={snake,amplitude=.4mm,segment length=2mm,post length=1mm},ultra thick, shorten >=2pt, shorten <=2pt]  (-1.5,-2.0)  -- (-1.5+0.3,-2.0);
 

 
  \draw[->,decorate,decoration={snake,amplitude=.4mm,segment length=2mm,post length=1mm}, thick, shorten >=2pt, shorten <=2pt]  (-1.5-0.7,3.0) -- (1.5,3.0);

  \draw[->,decorate,decoration={snake,amplitude=.4mm,segment length=2mm,post length=1mm}, thick, shorten >=2pt, shorten <=2pt]  (-1.5-0.7,2.50) -- (0,2.50);
  \draw[->,decorate,decoration={snake,amplitude=.4mm,segment length=2mm,post length=1mm}, thick, shorten >=2pt, shorten <=2pt]  (-1.5-0.7,2.35) -- (0.6,2.35);

  \draw[->,decorate,decoration={snake,amplitude=.4mm,segment length=2mm,post length=1mm}, thick, shorten >=2pt, shorten <=2pt]  (-1.5,1.85) -- (0.6,1.85);

  \draw[->,decorate,decoration={snake,amplitude=.4mm,segment length=2mm,post length=1mm}, thick, shorten >=2pt, shorten <=2pt]  (-1.5+0.3,1.35) -- (0.8,1.35);

   \draw[->,decorate,decoration={snake,amplitude=.4mm,segment length=2mm,post length=1mm}, thick,pastelred]  (-1.5+0.3,1.0)  to[out=45,in=180-45] (-0.9,0.9);
   \draw[->,decorate,decoration={snake,amplitude=.4mm,segment length=2mm,post length=1mm}, thick,pastelred]  (-1.5,0.4)  to[out=25,in=180-25] (-1.3,0.3);
   \draw[->,decorate,decoration={snake,amplitude=.4mm,segment length=2mm,post length=1mm}, thick,pastelred]  (-1.3,-0.2)  to[out=25,in=180-25] (-0.75,-0.25);

   \draw[->,decorate,decoration={snake,amplitude=.4mm,segment length=2mm,post length=1mm}, thick, shorten >=2pt, shorten <=2pt,pastelblue]  (-0.4,0.2) to[out=65,in=180] (0,0.44);

  \end{scope}
  \end{tikzpicture}
  \end{center}

    Likewise, if we denote by $J$ the product of the cohomology class arising from $J_3 \otimes J^{\OSpt}$ and the pullback of $J^{-1}$ to $\Ml^{\times 4}\times \Ml^\tau$, by reflection-symmetry of the twistedYetter-Drinfeld string diagram we have

    \begin{center}
      \begin{tikzpicture}[scale = 0.9]
    
      \begin{scope} 
       [yscale=-1]
   \newcommand{\mydrawing}{%
 
   \draw[black, ultra thick] (-1.5,-3) -- (-1.5,-2);
   \draw[black, ultra thick] (-1.5,-2.5) .. controls +(0,0.25) and +(0,-0.25) .. (-1.5-0.7,-2);
   \draw[black, ultra thick] (-1.5,-2.5) .. controls +(0,0.25) and +(0,-0.25) .. (-1.5+0.3,-2);
 
   \draw[black, ultra thick] (-1.5,-2) -- (-1.5,-1);
   \draw[black, ultra thick] (-1.5,-1) -- (-1.5,-0.5);
   \draw[black, ultra thick] (-1.5+0.3,-2) -- (-1.5+0.3,-1);
   \draw[black, ultra thick] (-1.5-0.7,-2) -- (-1.5-0.7,0);
 
   \draw[white,line width=5pt] (-1.5+0.3,-1) .. controls +(0,1.5) and +(0,-1.5) .. (1.5-0.3,3-2);
   \draw[black, ultra thick] (-1.5+0.3,-1) .. controls +(0,1.5) and +(0,-1.5) .. (1.5-0.3,3-2);
   \draw[white,line width=5pt] (-1.5,-0.5) .. controls +(0,1) and +(0,-1) .. (0,2);
   \draw[black, ultra thick] (-1.5,-0.5) .. controls +(0,1) and +(0,-1) .. (0,2);
 
   \draw[white, line width=5pt] (0,-0.5) -- (0,0.5);
   }

   \mydrawing
 
   \begin{scope}[xscale=-1]
       \mydrawing
   \end{scope}
 
   \begin{scope}[yscale=-1]
       \mydrawing
   \end{scope}
 
   \begin{scope}[xscale=-1, yscale=-1]
       \mydrawing
   \end{scope}

   \fill[white,opacity=0.9] (0,-3.1) rectangle (1.5+0.8,3.1);
   \draw[black, line width=2pt] (0,-3) -- (0,3);

   \draw[->,decorate,decoration={snake,amplitude=.4mm,segment length=2mm,post length=1mm}, shorten >=2pt, shorten <=2pt]  (-1.5-0.7,-1.6)  -- (-1.5,-1.6);
   \draw[->,decorate,decoration={snake,amplitude=.4mm,segment length=2mm,post length=1mm}, shorten >=2pt, shorten <=2pt]  (-1.5-0.7,-1.8)  -- (-1.5+0.3,-1.8);
   \draw[->,decorate,decoration={snake,amplitude=.4mm,segment length=2mm,post length=1mm}, shorten >=2pt, shorten <=2pt]  (-1.5,-2.0)  -- (-1.5+0.3,-2.0);
 
   \draw[-,decorate,decoration={snake,amplitude=.4mm,segment length=2mm,post length=1mm},ultra thick, shorten >=2pt, shorten <=2pt]  (-1.5-0.7,-1.6)  -- (-1.5,-1.6);
   \draw[-,decorate,decoration={snake,amplitude=.4mm,segment length=2mm,post length=1mm},ultra thick, shorten >=2pt, shorten <=2pt]  (-1.5-0.7,-1.8)  -- (-1.5+0.3,-1.8);
   \draw[-,decorate,decoration={snake,amplitude=.4mm,segment length=2mm,post length=1mm},ultra thick, shorten >=2pt, shorten <=2pt]  (-1.5,-2.0)  -- (-1.5+0.3,-2.0);

   \draw[->,decorate,decoration={snake,amplitude=.4mm,segment length=2mm,post length=1mm}, shorten >=2pt, shorten <=2pt]  (-0.45,-1.1)  --  (0,-1.1);

   \draw[-,decorate,decoration={snake,amplitude=.4mm,segment length=2mm,post length=1mm},ultra thick, shorten >=2pt, shorten <=2pt] (-0.45,-1.1)  --  (0,-1.1);

   
   \draw[->,decorate,decoration={snake,amplitude=.4mm,segment length=2mm,post length=1mm}, thick, shorten >=2pt, shorten <=2pt]  (-1.5,2.7)  -- (0,2.7); 
  \end{scope}

      \node[] at (3.5,0) {$=$};
      \node[] at (7,0) {$=$};
    
      \node[left] at (-2.5,0) {$J \ = \ $};

      \begin{scope} 
       [xshift=7cm]

     \newcommand{\mydrawinglonger}{%
       
     \draw[black, ultra thick] (-1.5,-3.5) -- (-1.5,-2);
     \draw[black, ultra thick] (-1.5,-3.5) .. controls +(0,0.25) and +(0,-0.25) .. (-1.5-0.7,-3);
     \draw[black, ultra thick] (-1.5,-3.5) .. controls +(0,0.25) and +(0,-0.25) .. (-1.5+0.3,-3);

     \draw[black, ultra thick] (-1.5,-2) -- (-1.5,-1);
     \draw[black, ultra thick] (-1.5,-1) -- (-1.5,-0.5);
     \draw[black, ultra thick] (-1.5+0.3,-3) -- (-1.5+0.3,-1);
     \draw[black, ultra thick] (-1.5-0.7,-3) -- (-1.5-0.7,0);
    
     \draw[white,line width=5pt] (-1.5+0.3,-1) .. controls +(0,1.5) and +(0,-1.5) .. (1.5-0.3,3-2);
     \draw[black, ultra thick] (-1.5+0.3,-1) .. controls +(0,1.5) and +(0,-1.5) .. (1.5-0.3,3-2);
     \draw[white,line width=5pt] (-1.5,-0.5) .. controls +(0,1) and +(0,-1) .. (-0.6,2);
     \draw[black, ultra thick] (-1.5,-0.5) .. controls +(0,1) and +(0,-1) .. (-0.6,2);
    
     \draw[black, ultra thick]  (-0.6,2) --  (-0.6,3);
     \draw[black, ultra thick] (-0.6,3) .. controls +(0,0.5) and +(0,-0.5) .. (0,3.5);

     \draw[white, line width=5pt] (0,-0.5) -- (0,0.5);
     \draw[black, line width=2pt] (0,-3.5) -- (0,3.5);
    
     }

     \mydrawinglonger
    
     \begin{scope}[xscale=-1]
         \mydrawinglonger
     \end{scope}
    
     \begin{scope}[yscale=-1]
         \mydrawinglonger
     \end{scope}
    
     \begin{scope}[xscale=-1, yscale=-1]
         \mydrawinglonger
     \end{scope}

       \fill[white,opacity=0.9] (0,-3.6) rectangle (1.5+0.8,3.6);
       \draw[black, line width=2pt] (0,-3.5) -- (0,3.5);

          \draw[->,decorate,decoration={snake,amplitude=.4mm,segment length=2mm,post length=1mm}, shorten >=2pt, shorten <=2pt]  (-1.5,2.0)  -- (-1.5+0.3,2.0);
    
          \draw[-,decorate,decoration={snake,amplitude=.4mm,segment length=2mm,post length=1mm},ultra thick, shorten >=2pt, shorten <=2pt]  (-1.5,2.0)  -- (-1.5+0.3,2.0);
        

        
         \draw[->,decorate,decoration={snake,amplitude=.4mm,segment length=2mm,post length=1mm}, thick, shorten >=2pt, shorten <=2pt]  (-1.5-0.7,-3.0) -- (1.5,-3.0);

         \draw[->,decorate,decoration={snake,amplitude=.4mm,segment length=2mm,post length=1mm}, thick, shorten >=2pt, shorten <=2pt]  (-1.5-0.7,-2.50) -- (0,-2.50);
         \draw[->,decorate,decoration={snake,amplitude=.4mm,segment length=2mm,post length=1mm}, thick, shorten >=2pt, shorten <=2pt]  (-1.5-0.7,-2.35) -- (0.6,-2.35);

         \draw[->,decorate,decoration={snake,amplitude=.4mm,segment length=2mm,post length=1mm}, thick, shorten >=2pt, shorten <=2pt]  (-1.5,-1.85) -- (0.6,-1.85);

         \draw[->,decorate,decoration={snake,amplitude=.4mm,segment length=2mm,post length=1mm}, thick, shorten >=2pt, shorten <=2pt]  (-1.5+0.3,-1.35) -- (0.8,-1.35);

          \draw[->,decorate,decoration={snake,amplitude=.4mm,segment length=2mm,post length=1mm}, thick,pastelred]  (-1.5+0.3,-1.0)  to[out=-45,in=-180+45] (-0.9,-0.9);
          \draw[->,decorate,decoration={snake,amplitude=.4mm,segment length=2mm,post length=1mm}, thick,pastelred]  (-1.5,-0.4)  to[out=-25,in=-180+25] (-1.3,-0.3);
          \draw[->,decorate,decoration={snake,amplitude=.4mm,segment length=2mm,post length=1mm}, thick,pastelred]  (-1.3,0.2)  to[out=-25,in=-180+25] (-0.75,0.25);

          \draw[->,decorate,decoration={snake,amplitude=.4mm,segment length=2mm,post length=1mm}, thick, shorten >=2pt, shorten <=2pt,pastelblue]  (-0.4,-0.2) to[out=-65,in=180] (0,-0.44);

       \end{scope}
     
      \end{tikzpicture}
      \end{center}
It follows that  as localised cohomology classes on $\Ml^{\times 4}\times \Ml^\tau$, the product $(J_3 \otimes J_3^{\OSpt})\big/ e(\Nt_i)_{\loc}\cdot J^{\OSpt}$ is equal to

 \begin{center}
  \begin{tikzpicture}[scale = 0.9]
   
 \newcommand{\mydrawinglong}{%
   
 \draw[black, ultra thick] (-1.5,-3.5) -- (-1.5,-2);
 \draw[black, ultra thick] (-1.5,-3.5) .. controls +(0,0.25) and +(0,-0.25) .. (-1.5-0.7,-3);
 \draw[black, ultra thick] (-1.5,-3.5) .. controls +(0,0.25) and +(0,-0.25) .. (-1.5+0.3,-3);

 \draw[black, ultra thick] (-1.5,-2) -- (-1.5,-1);
 \draw[black, ultra thick] (-1.5,-1) -- (-1.5,-0.5);
 \draw[black, ultra thick] (-1.5+0.3,-3) -- (-1.5+0.3,-1);
 \draw[black, ultra thick] (-1.5-0.7,-3) -- (-1.5-0.7,0);

 \draw[white,line width=5pt] (-1.5+0.3,-1) .. controls +(0,1.5) and +(0,-1.5) .. (1.5-0.3,3-2);
 \draw[black, ultra thick] (-1.5+0.3,-1) .. controls +(0,1.5) and +(0,-1.5) .. (1.5-0.3,3-2);
 \draw[white,line width=5pt] (-1.5,-0.5) .. controls +(0,1) and +(0,-1) .. (-0.6,2);
 \draw[black, ultra thick] (-1.5,-0.5) .. controls +(0,1) and +(0,-1) .. (-0.6,2);

 \draw[black, ultra thick]  (-0.6,2) --  (-0.6,3);
 \draw[black, ultra thick] (-0.6,3) .. controls +(0,0.5) and +(0,-0.5) .. (0,3.5);

 \draw[white, line width=5pt] (0,-0.5) -- (0,0.5);
 \draw[black, line width=2pt] (0,-3.5) -- (0,3.5);

 }
   
  \mydrawinglong

  \begin{scope}[xscale=-1]
      \mydrawinglong
  \end{scope}

  \begin{scope}[yscale=-1]
      \mydrawinglong
  \end{scope}

  \begin{scope}[xscale=-1, yscale=-1]
      \mydrawinglong
  \end{scope}

  \fill[white,opacity=0.9] (0,-3.6) rectangle (1.5+0.8,3.6);
  \draw[black, line width=2pt] (0,-3.5) -- (0,3.5);

          \draw[->,decorate,decoration={snake,amplitude=.4mm,segment length=2mm,post length=1mm}, shorten >=2pt, shorten <=2pt]  (-1.5,2.0)  -- (-1.5+0.3,2.0);
    
          \draw[-,decorate,decoration={snake,amplitude=.4mm,segment length=2mm,post length=1mm},ultra thick, shorten >=2pt, shorten <=2pt]  (-1.5,2.0)  -- (-1.5+0.3,2.0);
        

        
         \draw[->,decorate,decoration={snake,amplitude=.4mm,segment length=2mm,post length=1mm}, thick, shorten >=2pt, shorten <=2pt]  (-1.5-0.7,-3.0) -- (1.5,-3.0);

         \draw[->,decorate,decoration={snake,amplitude=.4mm,segment length=2mm,post length=1mm}, thick, shorten >=2pt, shorten <=2pt]  (-1.5-0.7,-2.50) -- (0,-2.50);
         \draw[->,decorate,decoration={snake,amplitude=.4mm,segment length=2mm,post length=1mm}, thick, shorten >=2pt, shorten <=2pt]  (-1.5-0.7,-2.35) -- (0.6,-2.35);

         \draw[->,decorate,decoration={snake,amplitude=.4mm,segment length=2mm,post length=1mm}, thick, shorten >=2pt, shorten <=2pt]  (-1.5,-1.85) -- (0.6,-1.85);

         \draw[->,decorate,decoration={snake,amplitude=.4mm,segment length=2mm,post length=1mm}, thick, shorten >=2pt, shorten <=2pt]  (-1.5+0.3,-1.35) -- (0.8,-1.35);

          \draw[->,decorate,decoration={snake,amplitude=.4mm,segment length=2mm,post length=1mm}, thick,pastelred]  (-1.5+0.3,-1.0)  to[out=-45,in=-180+45] (-0.9,-0.9);
          \draw[->,decorate,decoration={snake,amplitude=.4mm,segment length=2mm,post length=1mm}, thick,pastelred]  (-1.5,-0.4)  to[out=-25,in=-180+25] (-1.3,-0.3);
          \draw[->,decorate,decoration={snake,amplitude=.4mm,segment length=2mm,post length=1mm}, thick,pastelred]  (-1.3,0.2)  to[out=-25,in=-180+25] (-0.75,0.25);

          \draw[->,decorate,decoration={snake,amplitude=.4mm,segment length=2mm,post length=1mm}, thick, shorten >=2pt, shorten <=2pt,pastelblue]  (-0.4,-0.2) to[out=-65,in=180] (0,-0.44);

        \draw[->,decorate,decoration={snake,amplitude=.4mm,segment length=2mm,post length=1mm}, shorten >=2pt, shorten <=2pt]  (-1.5,-2.0)  -- (-1.5+0.3,-2.0);
        

        \draw[->,decorate,decoration={snake,amplitude=.4mm,segment length=2mm,post length=1mm}, thick, shorten >=2pt, shorten <=2pt]  (-1.5-0.7,3.0) -- (1.5,3.0);
        \draw[-,decorate,decoration={snake,amplitude=.4mm,segment length=2mm,post length=1mm},ultra thick, shorten >=2pt, shorten <=2pt]  (-1.5-0.7,3.0) -- (1.5,3.0);

        \draw[->,decorate,decoration={snake,amplitude=.4mm,segment length=2mm,post length=1mm}, thick, shorten >=2pt, shorten <=2pt]  (-1.5-0.7,2.50) -- (0,2.50);
        \draw[-,decorate,decoration={snake,amplitude=.4mm,segment length=2mm,post length=1mm},ultra thick, shorten >=2pt, shorten <=2pt]  (-1.5-0.7,2.50) -- (0,2.50);

        \draw[->,decorate,decoration={snake,amplitude=.4mm,segment length=2mm,post length=1mm}, thick, shorten >=2pt, shorten <=2pt]  (-1.5-0.7,2.35) -- (0.6,2.35);
        \draw[-,decorate,decoration={snake,amplitude=.4mm,segment length=2mm,post length=1mm},ultra thick, shorten >=2pt, shorten <=2pt]  (-1.5-0.7,2.35) -- (0.6,2.35);

        \draw[->,decorate,decoration={snake,amplitude=.4mm,segment length=2mm,post length=1mm}, thick, shorten >=2pt, shorten <=2pt]  (-1.5,1.85) -- (0.6,1.85);
        \draw[-,decorate,decoration={snake,amplitude=.4mm,segment length=2mm,post length=1mm},ultra thick, shorten >=2pt, shorten <=2pt]  (-1.5,1.85) -- (0.6,1.85);

        \draw[->,decorate,decoration={snake,amplitude=.4mm,segment length=2mm,post length=1mm}, thick, shorten >=2pt, shorten <=2pt]  (-1.5+0.3,1.35) -- (0.8,1.35);
        \draw[-,decorate,decoration={snake,amplitude=.4mm,segment length=2mm,post length=1mm},ultra thick, shorten >=2pt, shorten <=2pt]  (-1.5+0.3,1.35) -- (0.8,1.35);

        \draw[->,decorate,decoration={snake,amplitude=.4mm,segment length=2mm,post length=1mm}, thick,pastelred]  (-1.5+0.3,1.0)  to[out=45,in=180-45] (-0.9,0.9);
        \draw[-,decorate,decoration={snake,amplitude=.4mm,segment length=2mm,post length=1mm},ultra thick,pastelred]  (-1.5+0.3,1.0)  to[out=45,in=180-45] (-0.9,0.9);
        \draw[->,decorate,decoration={snake,amplitude=.4mm,segment length=2mm,post length=1mm}, thick,pastelred]  (-1.5,0.4)  to[out=25,in=180-25] (-1.3,0.3);
        \draw[-,decorate,decoration={snake,amplitude=.4mm,segment length=2mm,post length=1mm},ultra thick,pastelred]  (-1.5,0.4)  to[out=25,in=180-25] (-1.3,0.3);
        \draw[->,decorate,decoration={snake,amplitude=.4mm,segment length=2mm,post length=1mm}, thick,pastelred]  (-1.3,-0.2)  to[out=25,in=180-25] (-0.75,-0.25);
        \draw[-,decorate,decoration={snake,amplitude=.4mm,segment length=2mm,post length=1mm},ultra thick,pastelred]  (-1.3,-0.2)  to[out=25,in=180-25] (-0.75,-0.25);

        \draw[->,decorate,decoration={snake,amplitude=.4mm,segment length=2mm,post length=1mm}, thick, shorten >=2pt, shorten <=2pt,pastelblue]  (-0.4,0.2) to[out=65,in=180] (0,0.44);
        \draw[-,decorate,decoration={snake,amplitude=.4mm,segment length=2mm,post length=1mm},ultra thick, shorten >=2pt, shorten <=2pt,pastelblue]  (-0.4,0.2) to[out=65,in=180] (0,0.44);

  \node[left] at (-2.5,0) {$J/e(\Nt_i) \ = \ $};

  \node[] at (2.875,0) {$=$};

  \node[] at (8.625,0) {$=$};

  \begin{scope} 
   [xshift=5.75cm] 
   
  \mydrawinglong

  \begin{scope}[xscale=-1]
      \mydrawinglong
  \end{scope}

  \begin{scope}[yscale=-1]
      \mydrawinglong
  \end{scope}

  \begin{scope}[xscale=-1, yscale=-1]
      \mydrawinglong
  \end{scope}

  \fill[white,opacity=0.9] (0,-3.6) rectangle (1.5+0.8,3.6);
  \draw[black, line width=2pt] (0,-3.5) -- (0,3.5);

          \draw[->,decorate,decoration={snake,amplitude=.4mm,segment length=2mm,post length=1mm}, shorten >=2pt, shorten <=2pt]  (-1.5,2.0)  -- (-1.5+0.3,2.0);
    
          \draw[-,decorate,decoration={snake,amplitude=.4mm,segment length=2mm,post length=1mm},ultra thick, shorten >=2pt, shorten <=2pt]  (-1.5,2.0)  -- (-1.5+0.3,2.0);


         \draw[->,decorate,decoration={snake,amplitude=.4mm,segment length=2mm,post length=1mm}, thick, shorten >=2pt, shorten <=2pt]  (-1.5,-1.85) -- (0.6,-1.85);

         \draw[->,decorate,decoration={snake,amplitude=.4mm,segment length=2mm,post length=1mm}, thick, shorten >=2pt, shorten <=2pt]  (-1.5+0.3,-1.35) -- (0.8,-1.35);

          \draw[->,decorate,decoration={snake,amplitude=.4mm,segment length=2mm,post length=1mm}, thick,pastelred]  (-1.5+0.3,-1.0)  to[out=-45,in=-180+45] (-0.9,-0.9);
          \draw[->,decorate,decoration={snake,amplitude=.4mm,segment length=2mm,post length=1mm}, thick,pastelred]  (-1.5,-0.4)  to[out=-25,in=-180+25] (-1.3,-0.3);
          \draw[->,decorate,decoration={snake,amplitude=.4mm,segment length=2mm,post length=1mm}, thick,pastelred]  (-1.3,0.2)  to[out=-25,in=-180+25] (-0.75,0.25);

          \draw[->,decorate,decoration={snake,amplitude=.4mm,segment length=2mm,post length=1mm}, thick, shorten >=2pt, shorten <=2pt,pastelblue]  (-0.4,-0.2) to[out=-65,in=180] (0,-0.44);

        \draw[->,decorate,decoration={snake,amplitude=.4mm,segment length=2mm,post length=1mm}, shorten >=2pt, shorten <=2pt]  (-1.5,-2.0)  -- (-1.5+0.3,-2.0);
        

        \draw[->,decorate,decoration={snake,amplitude=.4mm,segment length=2mm,post length=1mm}, thick, shorten >=2pt, shorten <=2pt]  (-1.5,1.85) -- (0.6,1.85);
        \draw[-,decorate,decoration={snake,amplitude=.4mm,segment length=2mm,post length=1mm},ultra thick, shorten >=2pt, shorten <=2pt]  (-1.5,1.85) -- (0.6,1.85);

        \draw[->,decorate,decoration={snake,amplitude=.4mm,segment length=2mm,post length=1mm}, thick, shorten >=2pt, shorten <=2pt]  (-1.5+0.3,1.35) -- (0.8,1.35);
        \draw[-,decorate,decoration={snake,amplitude=.4mm,segment length=2mm,post length=1mm},ultra thick, shorten >=2pt, shorten <=2pt]  (-1.5+0.3,1.35) -- (0.8,1.35);

        \draw[->,decorate,decoration={snake,amplitude=.4mm,segment length=2mm,post length=1mm}, thick,pastelred]  (-1.5+0.3,1.0)  to[out=45,in=180-45] (-0.9,0.9);
        \draw[-,decorate,decoration={snake,amplitude=.4mm,segment length=2mm,post length=1mm},ultra thick,pastelred]  (-1.5+0.3,1.0)  to[out=45,in=180-45] (-0.9,0.9);
        \draw[->,decorate,decoration={snake,amplitude=.4mm,segment length=2mm,post length=1mm}, thick,pastelred]  (-1.5,0.4)  to[out=25,in=180-25] (-1.3,0.3);
        \draw[-,decorate,decoration={snake,amplitude=.4mm,segment length=2mm,post length=1mm},ultra thick,pastelred]  (-1.5,0.4)  to[out=25,in=180-25] (-1.3,0.3);
        \draw[->,decorate,decoration={snake,amplitude=.4mm,segment length=2mm,post length=1mm}, thick,pastelred]  (-1.3,-0.2)  to[out=25,in=180-25] (-0.75,-0.25);
        \draw[-,decorate,decoration={snake,amplitude=.4mm,segment length=2mm,post length=1mm},ultra thick,pastelred]  (-1.3,-0.2)  to[out=25,in=180-25] (-0.75,-0.25);

        \draw[->,decorate,decoration={snake,amplitude=.4mm,segment length=2mm,post length=1mm}, thick, shorten >=2pt, shorten <=2pt,pastelblue]  (-0.4,0.2) to[out=65,in=180] (0,0.44);
        \draw[-,decorate,decoration={snake,amplitude=.4mm,segment length=2mm,post length=1mm},ultra thick, shorten >=2pt, shorten <=2pt,pastelblue]  (-0.4,0.2) to[out=65,in=180] (0,0.44);

   \end{scope}

   \begin{scope} 
    [xshift=11.5cm] 
    
   \mydrawinglong
 
   \begin{scope}[xscale=-1]
       \mydrawinglong
   \end{scope}
 
   \begin{scope}[yscale=-1]
       \mydrawinglong
   \end{scope}
 
   \begin{scope}[xscale=-1, yscale=-1]
       \mydrawinglong
   \end{scope}

   \fill[white,opacity=0.9] (0,-3.6) rectangle (1.5+0.8,3.6);
   \draw[black, line width=2pt] (0,-3.5) -- (0,3.5);


           \draw[->,decorate,decoration={snake,amplitude=.4mm,segment length=2mm,post length=1mm}, thick,pastelred]  (-1.5+0.3,-1.0)  to[out=-45,in=-180+45] (-0.9,-0.9);
           \draw[->,decorate,decoration={snake,amplitude=.4mm,segment length=2mm,post length=1mm}, thick,pastelred]  (-1.5,-0.4)  to[out=-25,in=-180+25] (-1.3,-0.3);
           \draw[->,decorate,decoration={snake,amplitude=.4mm,segment length=2mm,post length=1mm}, thick,pastelred]  (-1.3,0.2)  to[out=-25,in=-180+25] (-0.75,0.25);

           \draw[->,decorate,decoration={snake,amplitude=.4mm,segment length=2mm,post length=1mm}, thick, shorten >=2pt, shorten <=2pt,pastelblue]  (-0.4,-0.2) to[out=-65,in=180] (0,-0.44);

         \draw[->,decorate,decoration={snake,amplitude=.4mm,segment length=2mm,post length=1mm}, thick,pastelred]  (-1.5+0.3,1.0)  to[out=45,in=180-45] (-0.9,0.9);
         \draw[-,decorate,decoration={snake,amplitude=.4mm,segment length=2mm,post length=1mm},ultra thick,pastelred]  (-1.5+0.3,1.0)  to[out=45,in=180-45] (-0.9,0.9);
         \draw[->,decorate,decoration={snake,amplitude=.4mm,segment length=2mm,post length=1mm}, thick,pastelred]  (-1.5,0.4)  to[out=25,in=180-25] (-1.3,0.3);
         \draw[-,decorate,decoration={snake,amplitude=.4mm,segment length=2mm,post length=1mm},ultra thick,pastelred]  (-1.5,0.4)  to[out=25,in=180-25] (-1.3,0.3);
         \draw[->,decorate,decoration={snake,amplitude=.4mm,segment length=2mm,post length=1mm}, thick,pastelred]  (-1.3,-0.2)  to[out=25,in=180-25] (-0.75,-0.25);
         \draw[-,decorate,decoration={snake,amplitude=.4mm,segment length=2mm,post length=1mm},ultra thick,pastelred]  (-1.3,-0.2)  to[out=25,in=180-25] (-0.75,-0.25);
 
         \draw[->,decorate,decoration={snake,amplitude=.4mm,segment length=2mm,post length=1mm}, thick, shorten >=2pt, shorten <=2pt,pastelblue]  (-0.4,0.2) to[out=65,in=180] (0,0.44);
         \draw[-,decorate,decoration={snake,amplitude=.4mm,segment length=2mm,post length=1mm},ultra thick, shorten >=2pt, shorten <=2pt,pastelblue]  (-0.4,0.2) to[out=65,in=180] (0,0.44);

    \end{scope}

  \end{tikzpicture}
  \end{center}
  and so the middle endomorphism of $(\Ht^\sbt(\Ml,\varphi)^{\otimes 4} \otimes \Ht^\sbt(\Ml^\tau,\varphi^\tau))_{\loc}$ is
  $$R_{34}\sigma_{34}\cdot R_{23}\sigma_{23}\cdot  K_{45}\tau_4 \cdot R_{34}\sigma_{34} \ = \ \beta_{34}\cdot \beta_{23}\cdot \kappa_{4}\cdot \beta_{34}$$
  which finishes the proof of Yetter-Drinfeld condition that \eqref{fig:YDConditionLocalised} commutes.
\end{proof}

  \begin{cor}
    Consider the vertex bialgebra structure on $\Ht^\sbt(\Ml,\varphi)$ coming from the Joyce-Liu vertex coproduct and CoHA product. Then its action and vertex coaction on $\Ht^\sbt(\Ml^\tau,\varphi^\tau)$ makes it into a vertex Yetter-Drinfeld module, i.e. the diagram 
    \begin{equation} \label{fig:YDConditionVertex}
      \begin{tikzcd}[row sep = {20pt,between origins}, column sep = {10pt}]
       (\Ht^\sbt(\Ml,\varphi) \otimes \Ht^\sbt(\Ml^\tau,\varphi^\tau))((z,w)) \ar[<-,r,"\iota"]  & (\Ht^\sbt(\Ml,\varphi) \otimes \Ht^\sbt(\Ml^\tau,\varphi^\tau))((z_1,\ldots ,z_4,w))\\
       & \\
        &  (\Ht^\sbt(\Ml,\varphi)^{\otimes 3} \otimes \Ht^\sbt(\Ml,\varphi) \otimes \Ht^\sbt(\Ml^\tau,\varphi^\tau))((z_1,\ldots ,z_4,w)) \ar[uu]  \\ 
        M \ar[uuu] &  \\
      & (\Ht^\sbt(\Ml,\varphi)^{\otimes 3} \otimes \Ht^\sbt(\Ml,\varphi) \otimes \Ht^\sbt(\Ml^\tau,\varphi^\tau))((z_1,\ldots ,z_4,w)) \ar[uu,"\beta(z_2-z_3)\kappa(z_3)\beta(z_2-z_3)"]  \\
      & \\
      \Ht^\sbt(\Ml,\varphi) \otimes \Ht^\sbt(\Ml^\tau,\varphi^\tau) \ar[uuu]  \ar[r,equals]  & \Ht^\sbt(\Ml,\varphi) \otimes \Ht^\sbt(\Ml^\tau,\varphi^\tau) \ar[uu] 
      \end{tikzcd}
      \end{equation}
    commutes, where the map $\iota$ sends $z_i \mapsto  z,$ $w_i  \mapsto w$.
  \end{cor}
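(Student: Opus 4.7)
The plan is to deduce the vertex Yetter--Drinfeld condition from its localised counterpart, Theorem \ref{thm:YD}, by applying the localised-to-vertex functor $\Phi$ of Theorem \ref{thm:LocalisedToVertex}, extended to bialgebras and modules via Theorem \ref{thm:LocalisedBiAlgebras}. Concretely, $\Ht^\sbt(\Ml,\varphi)$ is a $\GL$-localised bialgebra over $\Conf_\Ml \Ab^1$ with product $m$ coming from the CoHA (Theorem \ref{thm:coham}) and coproduct $\Delta = e(\Nt_s)\cdot \oplus^*$ from the Joyce--Liu structure (Theorem \ref{thm:OrthosymplecticJoyceLiuLocalised}), while $\Ht^\sbt(\Ml^\tau,\varphi^\tau)$ is a $\GL$-$\Gt$-localised module-comodule over $\Conf_{\Ml^\tau}\Ab^1$ with action $a$ and coaction $\Delta_{\GL\textup{-}\OSpt} = e(\Nt_{s_3}^\tau)\cdot \oplus_{\GL \textup{-}\OSpt}^{*}$. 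Theorem \ref{thm:YD} says these four maps satisfy the twisted Yetter--Drinfeld identity \eqref{fig:YDConditionLocalised} in the category $\Ht^\sbt(\Ml)\Md$ acting on $\Ht^\sbt(\Ml^\tau)\Md$, with braidings $\beta, \kappa$ built from Euler classes of normal complexes as in \eqref{eqn:DefinitionsBraidings}.

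First I would verify that the $\Ga$-equivariance required to apply $\Phi$ and $\varphi_{\GL\textup{-}\Gt}$ is present. This comes from the actions $\act:\BGL\times \Ml \to \Ml$ and $\act_{\Gt}:\BG\times \Ml^\tau \to \Ml^\tau$ in Assumption \ref{ass:StkVA}, restricted to the maximal tori $\BGm \subseteq \BGL$ and $\BT_{\Gt_2} \to \BG$; these induce the translation actions on the configuration spaces $\Conf_\Ml\Ab^1$ and $\Conf_{\Ml^\tau}\Ab^1$ used in Theorem \ref{thm:LocalisedToVertex}, and the linear-orthosymplectic compatibility $(\act \times \act_\Gt)\cdot \oplus_{\GL\textup{-}\OSpt}=\oplus_{\GL\textup{-}\OSpt}\cdot \act_\Gt$ from Proposition \ref{prop:HolomorphicOrthosymplecticVertexStructure} ensures that the coaction is $\Ga$-equivariant. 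The CoHA product is also translation-equivariant because the correspondence $\SES$ and its proper and smooth maps are $\BGm$-equivariant, as in \cite{JKL}.

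Next I would apply the $\Pt_{\GL \textup{-}\Gt}$ version of $\varphi$ (which fits into Corollary \ref{corX:OSpGLVertex}) to the entire Yetter--Drinfeld diagram \eqref{fig:YDConditionLocalised}. Since $\varphi$ is constructed by pulling back along the translation actions and taking Taylor expansions at infinity of the localised structure maps (see the proof of Theorem \ref{thm:LocalisedToVertex}), and since this construction is monoidal for the linear-orthosymplectic factorisation structure, it sends:
\begin{itemize}
\item the localised product $m$ to the CoHA vertex product $Y_A(z)$,
\item the localised Joyce--Liu coproduct $\Delta$ to the vertex coproduct $\Delta(z)$ of Corollary \ref{cor:OrthosymplecticJoyceLiuVertex},
\item the CoHA module action to its vertex module analogue,
\item the orthosymplectic localised coaction $\Delta_{\GL\textup{-}\OSpt}$ to the orthosymplectic vertex coaction $\Delta_{\GL\textup{-}\OSpt}(z,w)$,
\item the braidings $\beta,\kappa$ built from Euler classes to their power series images $\beta(z-w),\kappa(z)$ acting on the tensor product of vertex modules.
\end{itemize}
The commutativity of \eqref{fig:YDConditionVertex} is then the image under $\varphi_{\GL\textup{-}\Gt}$ of the commuting diagram \eqref{fig:YDConditionLocalised}, with the identification $\iota$ arising because translating all the $z_i, w_i$ by a common variable is the operation built into the definition of the Taylor expansion map $\Tay$.

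The main obstacle is to check that the Taylor-expansion functor $\varphi_{\GL\textup{-}\Gt}$ genuinely preserves the twisted Yetter--Drinfeld identity, i.e. that each of the four maps appearing in \eqref{fig:YDConditionLocalised}---and in particular the braidings $\beta_{34}, \beta_{23}, \kappa_4, \beta_{34}$ which are ratios of Euler classes of the normal complexes---has a well-defined Taylor expansion in the variables $z_i^{-1}, w^{-1}$ compatible with substitution. For the product and coproduct this is part of Theorem \ref{thm:LocalisedBiAlgebras}; for the action and coaction it follows from the same argument as Theorem \ref{thm:LocalisedToVertex} applied to the $\Pt_{\GL\textup{-}\Gt}$-configuration space; for the braidings, compatibility with Taylor expansion reduces to the fact that the Euler class factors $e(\sigma^*\Nt_s)/e(\Nt_s)$ and $e((\tau \times \id)^*\Nt_{s_3}^\tau)/e(\Nt_{s_3}^\tau)$ are rational in the $\BGm$- and $\BT_{\Gt_2}$-equivariant parameters so can be expanded in $(z-w)^{-1}$ and $(z\pm w)^{-1}$ respectively. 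Once all four structure maps and both braidings are transported through $\varphi_{\GL\textup{-}\Gt}$, the two sides of \eqref{fig:YDConditionVertex} become the Taylor expansions of the two sides of \eqref{fig:YDConditionLocalised}, which coincide by Theorem \ref{thm:YD}, and the corollary follows.
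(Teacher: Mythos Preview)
Your approach is correct and matches the paper's: the corollary is obtained from Theorem \ref{thm:YD} by applying the localised-to-vertex functor $\varphi_{\GL\textup{-}\Gt}$ of Theorem \ref{thm:LocalisedToVertex}, using that this functor preserves the CoHA action and localised coaction (Theorem \ref{thm:LocalisedBiAlgebras}). The paper states the corollary without a separate proof, treating it as an immediate consequence of this transport; your sketch simply spells out the equivariance and Taylor-expansion checks that justify the transport, which is entirely in line with how Corollary \ref{cor:OrthosymplecticJoyceLiuVertex} is derived from Theorem \ref{thm:OrthosymplecticJoyceLiuLocalised}.
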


\subsection{Final remarks: a twisted bialgebra module structure}
\label{ssec:FinalRemarks}
\subsubsection{} We suspect that the twisted Yetter-Drinfeld module is the most natural structure on $\Ht^\sbt(\Ml^\tau, \varphi^\tau)$, however we note that it is \textit{almost} a bialgebra module for $\Ht^\sbt(\Ml,\varphi)$. 

Recall that $M$ is a bialgebra module for bialgebra $A$ if it has an action and coaction with
$$\Delta_M(a \cdot m) \ = \ \Delta(a)\cdot \Delta_M(m)$$
where on the right hand side, we used the braiding of the background category to permute the second and third factors of $\Delta(a)\otimes \Delta_M(m)$ before multiplying. 

Instead, we get a \textit{twisted bialgebra module} structure: the following commutes
\begin{center}
  \begin{tikzcd}[row sep = {-30pt,between origins}, column sep = {30pt}]
  A \otimes M \ar[r,"m_M"] \ar[d,"\Delta \otimes \Delta_M"]   &    M \ar[dd,"\Delta_M"] \\ 
  A \otimes A \otimes A \otimes M \ar[d,"R_{2\tau(3)}\cdot R_{23}\cdot \sigma_{23}"]  &\\
  A \otimes A \otimes A \otimes M \ar[r, "m \otimes m_M"]  & M \otimes M
  \end{tikzcd}
  \end{center}
where $R_{2\tau(3)}=(\id\times \tau)R_{23}(\id\times \tau)$. 

\begin{prop}
  The CoHA and COHM satisfy the above, for $R=S_{23}/S_{32}$.
\end{prop}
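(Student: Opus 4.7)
The plan is to deduce this identity directly from Theorem \ref{thm:YD} by coassociativity, collapsing the threefold coproduct $\Delta_3$ appearing on the right of the twisted Yetter--Drinfeld relation to a twofold coproduct $\Delta$, and tracking what happens to the braid $\beta_{34}\beta_{23}\kappa_4\beta_{34}$ under this reduction. Writing $\Delta_3=(\Delta\otimes\id)\Delta$ in Theorem \ref{thm:YD}, the middle $\Delta$ sits between the outer $\Delta$ and the braiding. I would first move it past the braidings using the fact that $\Delta$ is a (braided) coalgebra morphism, i.e.\ $\beta_{ij}(\Delta\otimes\id)=(\id\otimes\Delta)\beta_{ij}$ up to insertion of further $R$-matrices counting the new crossings. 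After this commutation, the inner $\Delta$ sits immediately before $m_3\otimes m_M$, where $m_3=m\cdot(m\otimes\id)$. The composition $m\cdot\Delta$ is not the identity, but the two inner strands it produces can be absorbed into an extra factor of $R_{23}$ in the braiding via the usual quasitriangular braided-bialgebra compatibility $\Delta\cdot m = (m\otimes m)\beta_{23}(\Delta\otimes\Delta)$. This rearrangement collapses the four $A$-strands to three and produces precisely the pattern $R_{2\tau(3)}R_{23}\sigma_{23}$, where the $\tau$-twist on the third position is inherited from the $\kappa_4=K_4\tau_4$ factor that acted on the fourth strand before the collapse.

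An alternative, more geometric route is to run a space-level argument parallel to that of Theorem \ref{thm:YD}. Both sides of the identity arise from correspondences out of $\Ml\times\Ml^\tau$ to itself: the left-hand side from $\SES_3^\tau\times_{\Ml^\tau}(\Ml\times\Ml^\tau)$, and the right-hand side from a pullback of $\SES\times \SES_3^\tau$ along the direct sum maps. There is a natural morphism $i$ between these correspondences, coming from the fact that a linear SES $(a\subseteq c)$ together with an orthosymplectic $3$-step flag $(a'\subseteq e'\subseteq \tau(a')^\perp\subseteq e'\oplus \tau(a'))$ can be combined into a new orthosymplectic $3$-step flag on $c\oplus e'\oplus c^*$. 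Applying Atiyah--Bott torus localisation to $i$ reduces the identity to an equality of localised cohomology classes on $\Ml^{\times 3}\times\Ml^\tau$, namely that the ratio of Joyce twists $(J_2\otimes J^{\OSpt})/e(\Nt_i)$ equals $R_{2\tau(3)}R_{23}$. By Proposition \ref{prop:OSpNormalComplex} we have the decomposition $[\Nt_{s_3^\tau}]=[\Nt_s]|_{\Ml^\tau\times\Ml}+[\Kt_s]$, and the $\Kt_s$ summand corresponds precisely to the extra $\tau$-twisted factor $R_{2\tau(3)}$, while the $\Nt_s$ summand yields the ordinary $R_{23}$; the $\sigma_{23}$ comes from the geometric permutation built into $i$.

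The main obstacle is the careful bookkeeping of positions, signs, and the $\tau$-twist in the spectral Cherednik hexagon relations $ \eqref{eqn:EulerCherednik}$, so that the residual classes line up correctly after cancellations. This can be verified pictorially using the string-diagram conventions of the second proof of Theorem \ref{thm:YD}: the relevant diagram here has only three thin $A$-strands rather than four, and the same cancellation pattern of Euler-class factors (pairing a thin wavy arrow with its thick inverse and dragging the remaining arrows to the braid swaps) reduces the Joyce-twist product to exactly the two arrows representing $R_{23}$ and $R_{2\tau(3)}$, the latter carrying the $\tau$-twist originating from the $\kappa$-crossing.
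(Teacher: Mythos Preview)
Your second, geometric route is essentially the paper's own proof. The paper sets up precisely the correspondence diagram \eqref{fig:BigDiagramStack} with $\SES_{12}\times\SES_{34}^\tau$ mapping via $i$ into $\SES^\tau\times_{\Ml^\tau}(\Ml\times\Ml^\tau)$, applies torus localisation, computes $e(\Nt_i)$ from the distinguished triangle of normal complexes and the hexagon relations for $S,T$, and finds that the residual factor is $U=\dfrac{S_{23}}{S_{32}}\cdot\dfrac{S_{2\tau(3)}}{S_{3\tau(2)}}\cdot\sigma_{23}=R_{23}R_{2\tau(3)}\sigma_{23}$. Your attribution of the $\tau$-twisted factor to the $\Kt_s$-summand of Proposition~\ref{prop:OSpNormalComplex} is a slightly different bookkeeping than the paper's direct expansion of $e(\sigma_{23}^*(\oplus\times\oplus_{\OSpt})^*\Nt_{s_3^\tau})$, but the strategy and the computation are the same. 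The paper also gives a pictorial version that matches your final paragraph.

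Your first approach---deducing the twisted bialgebra module condition directly from Theorem~\ref{thm:YD}---is a genuinely different idea that the paper does not pursue, and as written it has a gap. You correctly observe that both conditions share the left-hand side $\Delta_M\cdot m_M$, so the task reduces to equating the two right-hand sides. But the ``collapse'' you sketch is not justified: moving the inner $\Delta$ in $\Delta_3=(\Delta\otimes\id)\Delta$ past $\beta_{34}\beta_{23}\kappa_4\beta_{34}$ and then cancelling it against one factor of $m_3$ via the bialgebra axiom $\Delta\cdot m=(m\otimes m)\beta_{23}(\Delta\otimes\Delta)$ does not obviously yield the specific pattern $R_{2\tau(3)}R_{23}\sigma_{23}$. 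The delicate point is how $\kappa_4=K_{45}\tau_4$ interacts with the strand you are trying to eliminate; the $\tau$-twist sits on the fourth $A$-strand, and collapsing strands 2 and 3 does not straightforwardly relocate it to produce $R_{2\tau(3)}$. You would at minimum need braided cocommutativity of $\Delta$ and a careful string-diagram computation to make this work, and even then you are presupposing the braided bialgebra axiom on $A$, which the paper establishes by the same localisation technique---so this route does not avoid the geometric argument, it only relocates it.
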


\begin{proof}[First proof: algebraic]
  The Theorem will follow from applying torus localisation the diagram 
  \begin{equation}
     \label{fig:BigDiagramStack}
  \begin{tikzcd}[row sep = {30pt,between origins}, column sep = {65pt, between origins}]
   & &\SES_{12} \times\SES_{34}^\tau \ar[d,dashed,"i"] \ar[rdd,bend left = 30,"p \times p^{\OSpt}"] \ar[lldd, bend right = 20,"q \times q^{\OSpt}"']   & & \\
   & & \SES^\tau \times_{\Ml^\tau}(\Ml\times \Ml^\tau) \ar[rd,"\overbar{p}^{\OSpt}"] \ar[ld] & & \\
  \Ml^2_{12} \times(\Ml_3\times \Ml^\tau_4) \ar[d,"\sigma_{23}\cdot (\oplus \times \oplus_{\OSpt})"']  & \SES^\tau \ar[rd,"p^{\OSpt}"] \ar[ld] & & \Ml\times \Ml^\tau \ar[ld,"\oplus_{\OSpt}"] & \\ 
  \Ml \times\Ml^\tau & & \Ml^\tau & &  
  \end{tikzcd}
  \end{equation}
  As an aside for the sake of concreteness, the pullback parameterises extension squares
  \begin{equation}\label{fig:ExtensionSquareProofCoHAVA}
    \begin{tikzcd}[row sep = {30pt,between origins}, column sep = {10pt}]
    a \ar[d,equals] \ar[r]  & \ker \ar[r] \ar[d] & b\ar[d]&[10pt] \\ 
    a\ar[r]  &(e_1 \oplus e_2 \oplus e_1^*)\ar[r] \ar[d]  &\coker\ar[d]  \\
    0  & a^* \ar[r,equals] &a^* &
    \end{tikzcd}
  \end{equation}
  where $e_2 \simeq e_2^*$ and $b\simeq b^*$, plus the data of the split factors $e_1,e_2$. Its images under the above various maps are 
  \begin{center}
  \begin{tikzcd}[row sep = {30pt,between origins}, column sep = {45pt, between origins}]
    & & \textup{\eqref{fig:ExtensionSquareProofCoHAVA}} \ar[ld,|->] \ar[rd,|->]   & \\
   & \textup{\eqref{fig:ExtensionSquareProofCoHAVA}} \ar[ld,|->] \ar[rd,|->]  & &(e_1,e_2) \ar[ld,|->] \\ 
   (a,b)& & e_1 \oplus e_2 \oplus e_1^*&
  \end{tikzcd}
  \end{center}
  where under the left map the split factors are forgotten.  As a second aside, the split locus $\SES \times \SES^\tau$ then parametrises pairs
  \begin{equation}\label{fig:SplitExtensionSquareProofCoHAVA}
    \begin{tikzcd}[row sep = {30pt,between origins}, column sep = {10pt}]
    &&a_2 \ar[d,equals] \ar[r]  & \ker \ar[r] \ar[d] & b_2\ar[d]&[10pt] \\ 
    a_1 \to e_1 \to b_1 &&a_2\ar[r]  & e_2 \ar[r] \ar[d]  &\coker\ar[d]  \\
    &&0  & a^*_2 \ar[r,equals] &a^*_2 &
    \end{tikzcd}
  \end{equation}
  Both asides will only be useful in the proof insofar as they help us compute the normal bundle to $p \times p^{\OSpt}$. Continuing on - using torus localisation (see \cite{La3} for full details), the above diagram reduces to 
  \begin{equation}
    \label{fig:LocalisedBialgebraDiagram} 
    \begin{tikzcd}[row sep = {30pt,between origins}, column sep = {40pt}]
    (A_1\otimes A_2)\otimes (A_3\otimes M_4)_{\loc}\ar[r,"U",dashed]  &[55pt] A_1\otimes A_2\otimes A_3\otimes M_{4,\loc} \ar[r,"m \otimes m_{\OSpt}"]  &[-10pt] A\otimes M_{\loc} \\ 
    (A\otimes A)\otimes (A\otimes M)_{\loc}\ar[u,"e\otimes e^{\OSpt}"]  \ar[r," \frac{1}{e(\Nt_i)_{\loc}}\cdot \sigma_{23}"]  & A\otimes A\otimes A\otimes M_{\loc} \ar[r,"m \otimes m_{\OSpt}"] \ar[u,"(\oplus\times\, \oplus_{\OSpt})^*e^{\OSpt}"] & A\otimes M_{\loc} \ar[u,"e^{\OSpt}"] \\
    A\otimes M \ar[rr] \ar[u,"\oplus^*\otimes\, \oplus_{\OSpt}^*"]  & & M \ar[u,"\oplus_{\OSpt}^*"]  
    \end{tikzcd}
  \end{equation}
The normal complex is computed as a fibre
\begin{equation}\label{eqn:NormalComplexComputationProof}
  (s\times s^{\OSpt})^*\Nt_i \ = \ (s\times s^{\OSpt})^*\fib \left( \Nt_{p \times p^{\OSpt}} \to i^* \Nt_{\overbar{p}^{\OSpt}}\right) \ = \ \fib\left(\Nt_s\boxplus\Nt_{s_3^\tau} \to \sigma_{23}^*(\oplus\times\,\oplus_{\OSpt}^*)\Nt_{s_3^\tau}\right)[1]
\end{equation}
 and so its localised Euler class is a quotient 
 \begin{align*}
  \frac{1}{e(\Nt_i)_{\loc}} &\ \stackrel{\ddagger}{=} \ \frac{e(\Nt_s\boxplus\Nt_{s_3^\tau})_{\loc}}{e(\sigma_{23}^*(\oplus\times\,\oplus_{\OSpt}^*)\Nt_{s_3^\tau})_{\loc}} \ \stackrel{\star}{=} \ \frac{S_{12}T_{34}}{\sigma_{23}^*\left(S_{13}T_{14}S_{1\tau(3)} S_{23}T_{24}S_{2\tau(3)}S_{1\tau(2)}\right)}\\
  & \ \stackrel{\dagger}{=} \ \frac{S_{12}T_{34}}{S_{12}T_{14}S_{1\tau(2)} S_{32}T_{34}S_{3\tau(2)}S_{1\tau(3)}} \\
  & \ = \  \frac{1}{T_{14}S_{1\tau(2)} S_{32}S_{3\tau(2)}S_{1\tau(3)}} .
 \end{align*}
We thus conclude that the diagram commutes if and only if 
\begin{align*}
  U \cdot S_{12}T_{34} &\ = \  \left(S_{13}T_{14}S_{1\tau(3)} S_{23}T_{24}S_{2\tau(3)}S_{1\tau(2)}\right)\cdot  \frac{1}{T_{14}S_{1\tau(2)} S_{32}S_{3\tau(2)}S_{1\tau(3)}}\\
  & \ = \    \left(S_{13} S_{23}T_{24}S_{2\tau(3)}\right)\cdot  \frac{1}{ S_{32}S_{3\tau(2)}}\\
  & \ = \   \frac{S_{23}}{S_{32}}\cdot \frac{S_{2\tau(3)}}{ S_{3\tau(2)}} \cdot  S_{13}T_{24} 
\end{align*}
For instance, 
$$U  \ = \  \frac{S_{23}}{S_{32}}\cdot \frac{S_{2\tau(3)}}{ S_{3\tau(2)}} \cdot \sigma_{23}$$ suffices, which is precisely the Theorem. 
 \end{proof}

\begin{proof}[Second proof: pictoral] As in the proof of Theorem \ref{thm:YD}, the localised Euler class is 
\begin{center}
\begin{tikzpicture}[scale = 1.2]
  \newcommand{\mydrawing}{%
  \draw[black,ultra thick] (0,1) .. controls +(0,0.25) and +(0,-0.25) .. (0+0.3,1.5);
  \draw[black,ultra thick] (0+0.3,1.5) -- (0+0.3,2);
  \draw[black,ultra thick] (0+0.3,2.5) -- (0+0.3,3);
  \draw[black,ultra thick] (+0.3,3) .. controls +(0,0.25) and +(0,-0.25) .. (0,3.5);
  \draw[black,ultra thick] (0,3.5) -- (0,4);

  \draw[black,ultra thick] (1.2+0,0.5) -- (1.2+0,1);
  \draw[black,ultra thick] (1.2,1) .. controls +(0,0.25) and +(0,-0.25) .. (1.2-0.3,1.5);
  \draw[black,ultra thick] (1.2,1) .. controls +(0,0.25) and +(0,-0.25) .. (1.2+0.3,1.5);
  \draw[black,ultra thick] (1.2-0.3,1.5) -- (1.2-0.3,2);
  \draw[black,ultra thick] (1.2-0.3,2.5) -- (1.2-0.3,3);
  \draw[black,ultra thick] (1.2+0.3,1.5) -- (1.2+0.3,3);
  \draw[black,ultra thick] (1.2-0.3,2) .. controls +(0,0.25) and +(0,-0.25) .. (0+0.3,2.5);
  \draw[white,line width = 5pt] (+0.3,2).. controls +(0,0.25) and +(0,-0.25) .. (1.2-0.3,2.5);
  \draw[black,ultra thick] (+0.3,2).. controls +(0,0.25) and +(0,-0.25) .. (1.2-0.3,2.5);
  \draw[black,ultra thick] (1.2+0.3,3) .. controls +(0,0.25) and +(0,-0.25) .. (1.2,3.5);
  \draw[black,ultra thick] (1.2-0.3,3) .. controls +(0,0.25) and +(0,-0.25) .. (1.2,3.5);
  \draw[black,ultra thick] (1.2,3.5) -- (1.2,4);

  \draw[black,line width = 2pt]  (0,0.5) -- (0,4);
  }

  \mydrawing

  \begin{scope} 
   [xscale=-1] 
    \mydrawing
   \end{scope}
  
   \node[left] at (-1.8,2.25) {$e(\Nt_i)_{\loc} \ \stackrel{\ddagger}{=} \ $};

   \fill[white,opacity=0.9] (0,0.4) rectangle (1.6,4.1);
   \draw[black,line width = 2pt]  (0,0.5) -- (0,4);

   \draw[->,decorate,decoration={snake,amplitude=.4mm,segment length=2mm,post length=1mm}] (-1.2,3.8) -- (0,3.8);
    
   \draw[-,decorate,decoration={snake,amplitude=.4mm,segment length=2mm,post length=1mm},ultra thick]  (-1.2-0.3,1.7)  -- (-1.2+0.3,1.7);
   \draw[->,decorate,decoration={snake,amplitude=.4mm,segment length=2mm,post length=1mm}]   (-1.2-0.3,1.7)  -- (-1.2+0.3,1.7);

   \draw[-,decorate,decoration={snake,amplitude=.4mm,segment length=2mm,post length=1mm},ultra thick]  (0-0.3,1.7)  -- (0,1.7);
   \draw[->,decorate,decoration={snake,amplitude=.4mm,segment length=2mm,post length=1mm}]  (0-0.3,1.7)  -- (0,1.7);;

   \node[] at (2,2.25) {$\stackrel{\star}{=}$};
   \node[] at (6,2.25) {$\stackrel{\dagger}{=}$};

   \begin{scope} 
    [xshift=4cm] 

    \mydrawing
  \begin{scope} 
   [xscale=-1] 
    \mydrawing
   \end{scope}

   \fill[white,opacity=0.9] (0,0.4) rectangle (1.6,4.1);
   \draw[black,line width = 2pt]  (0,0.5) -- (0,4);

   \draw[->,decorate,decoration={snake,amplitude=.4mm,segment length=2mm,post length=1mm}] (-1.2-0.3,2.4) -- (-0.3,2.4);
    \draw[->,decorate,decoration={snake,amplitude=.4mm,segment length=2mm,post length=1mm}] (-1.2-0.3,2.5) -- (0,2.5);
    \draw[->,decorate,decoration={snake,amplitude=.4mm,segment length=2mm,post length=1mm}] (-1.2-0.3,2.6) -- (0.3,2.6);
    \draw[->,decorate,decoration={snake,amplitude=.4mm,segment length=2mm,post length=1mm}] (-1.2+0.3,2.7) -- (-0.3,2.7);
    \draw[->,decorate,decoration={snake,amplitude=.4mm,segment length=2mm,post length=1mm}] (-1.2+0.3,2.8) -- (0,2.8);
    \draw[->,decorate,decoration={snake,amplitude=.4mm,segment length=2mm,post length=1mm}] (-1.2+0.3,2.9) -- (0.3,2.9);
    \draw[->,decorate,decoration={snake,amplitude=.4mm,segment length=2mm,post length=1mm}] (-1.2-0.3,3.0) -- (1.2-0.3,3.0);
    
   \draw[-,decorate,decoration={snake,amplitude=.4mm,segment length=2mm,post length=1mm},ultra thick]  (-1.2-0.3,1.7)  -- (-1.2+0.3,1.7);
   \draw[->,decorate,decoration={snake,amplitude=.4mm,segment length=2mm,post length=1mm}]   (-1.2-0.3,1.7)  -- (-1.2+0.3,1.7);

   \draw[-,decorate,decoration={snake,amplitude=.4mm,segment length=2mm,post length=1mm},ultra thick]  (0-0.3,1.7)  -- (0,1.7);
   \draw[->,decorate,decoration={snake,amplitude=.4mm,segment length=2mm,post length=1mm}]  (0-0.3,1.7)  -- (0,1.7);;

    \end{scope}

    \begin{scope} 
      [xshift=8cm] 
  
      \mydrawing
    \begin{scope} 
     [xscale=-1] 
      \mydrawing
     \end{scope}

   \fill[white,opacity=0.9] (0,0.4) rectangle (1.6,4.1);
   \draw[black,line width = 2pt]  (0,0.5) -- (0,4);

     \draw[->,decorate,decoration={snake,amplitude=.4mm,segment length=2mm,post length=1mm}] (-1.2-0.3,2.5) -- (0,2.5);
     \draw[->,decorate,decoration={snake,amplitude=.4mm,segment length=2mm,post length=1mm}] (-1.2-0.3,2.6) -- (0.3,2.6);
     \draw[->,decorate,decoration={snake,amplitude=.4mm,segment length=2mm,post length=1mm}] (-1.2+0.3,2.7) -- (-0.3,2.7);
     \draw[->,decorate,decoration={snake,amplitude=.4mm,segment length=2mm,post length=1mm}] (-1.2+0.3,2.9) -- (0.3,2.9);
     \draw[->,decorate,decoration={snake,amplitude=.4mm,segment length=2mm,post length=1mm}] (-1.2-0.3,3.0) -- (1.2-0.3,3.0);
      \end{scope}
 \end{tikzpicture}
 \end{center}

Likewise, the Joyce twists in the diagram induce the cohomology class
\begin{center}
  \begin{tikzpicture}[scale = 1.2]
    \newcommand{\mydrawing}{%
    \draw[black,ultra thick] (0,1) .. controls +(0,0.25) and +(0,-0.25) .. (0+0.3,1.5);
    \draw[black,ultra thick] (0+0.3,1.5) -- (0+0.3,2);
    \draw[black,ultra thick] (0+0.3,2.5) -- (0+0.3,3);
    \draw[black,ultra thick] (+0.3,3) .. controls +(0,0.25) and +(0,-0.25) .. (0,3.5);
    \draw[black,ultra thick] (0,3.5) -- (0,4);

    \draw[black,ultra thick] (1.2+0,0.5) -- (1.2+0,1);
    \draw[black,ultra thick] (1.2,1) .. controls +(0,0.25) and +(0,-0.25) .. (1.2-0.3,1.5);
    \draw[black,ultra thick] (1.2,1) .. controls +(0,0.25) and +(0,-0.25) .. (1.2+0.3,1.5);
    \draw[black,ultra thick] (1.2-0.3,1.5) -- (1.2-0.3,2);
    \draw[black,ultra thick] (1.2-0.3,2.5) -- (1.2-0.3,3);
    \draw[black,ultra thick] (1.2+0.3,1.5) -- (1.2+0.3,3);
    \draw[black,ultra thick] (1.2-0.3,2) .. controls +(0,0.25) and +(0,-0.25) .. (0+0.3,2.5);
    \draw[white,line width = 5pt] (+0.3,2).. controls +(0,0.25) and +(0,-0.25) .. (1.2-0.3,2.5);
    \draw[black,ultra thick] (+0.3,2).. controls +(0,0.25) and +(0,-0.25) .. (1.2-0.3,2.5);
    \draw[black,ultra thick] (1.2+0.3,3) .. controls +(0,0.25) and +(0,-0.25) .. (1.2,3.5);
    \draw[black,ultra thick] (1.2-0.3,3) .. controls +(0,0.25) and +(0,-0.25) .. (1.2,3.5);
    \draw[black,ultra thick] (1.2,3.5) -- (1.2,4);

    \draw[black,line width = 2pt]  (0,0.5) -- (0,4);
    }
  
    \mydrawing
  
    \begin{scope} 
     [xscale=-1] 
      \mydrawing
     \end{scope}
    
     \node[left] at (-1.8,2.25) {$J \ = \ $};
     \node[] at (2,2.25) {$=$};
     \node[] at (6,2.25) {$=$};

     \begin{scope} 
      [xshift=4cm] 
      \mydrawing
    \begin{scope} 
     [xscale=-1] 
      \mydrawing
     \end{scope}
      \end{scope}
  
      \begin{scope} 
        [xshift=8cm] 
        \mydrawing
      \begin{scope} 
       [xscale=-1] 
        \mydrawing
       \end{scope}
        \end{scope}

   \fill[white,opacity=0.9] (0,0.4) rectangle (1.6,4.1);
   \draw[black,line width = 2pt]  (0,0.5) -- (0,4);

    \begin{scope} 
     [yscale=-1,yshift=-4.5cm] 
   \draw[->,decorate,decoration={snake,amplitude=.4mm,segment length=2mm,post length=1mm}] (-1.2,3.8) -- (0,3.8);
    
   \draw[-,decorate,decoration={snake,amplitude=.4mm,segment length=2mm,post length=1mm},ultra thick]  (-1.2-0.3,1.7)  -- (-1.2+0.3,1.7);
   \draw[->,decorate,decoration={snake,amplitude=.4mm,segment length=2mm,post length=1mm}]   (-1.2-0.3,1.7)  -- (-1.2+0.3,1.7);
   \draw[-,decorate,decoration={snake,amplitude=.4mm,segment length=2mm,post length=1mm},ultra thick]  (0-0.3,1.7)  -- (0,1.7);
   \draw[->,decorate,decoration={snake,amplitude=.4mm,segment length=2mm,post length=1mm}]  (0-0.3,1.7)  -- (0,1.7);;

      \begin{scope} 
       [xshift=4cm]

   \fill[white,opacity=0.9] (0,0.4) rectangle (1.6,4.1);
   \draw[black,line width = 2pt]  (0,0.5) -- (0,4);

       \draw[->,decorate,decoration={snake,amplitude=.4mm,segment length=2mm,post length=1mm}] (-1.2-0.3,2.4) -- (-0.3,2.4);
       \draw[->,decorate,decoration={snake,amplitude=.4mm,segment length=2mm,post length=1mm}] (-1.2-0.3,2.5) -- (0,2.5);
       \draw[->,decorate,decoration={snake,amplitude=.4mm,segment length=2mm,post length=1mm}] (-1.2-0.3,2.6) -- (0.3,2.6);
       \draw[->,decorate,decoration={snake,amplitude=.4mm,segment length=2mm,post length=1mm}] (-1.2+0.3,2.7) -- (-0.3,2.7);
       \draw[->,decorate,decoration={snake,amplitude=.4mm,segment length=2mm,post length=1mm}] (-1.2+0.3,2.8) -- (0,2.8);
       \draw[->,decorate,decoration={snake,amplitude=.4mm,segment length=2mm,post length=1mm}] (-1.2+0.3,2.9) -- (0.3,2.9);
       \draw[->,decorate,decoration={snake,amplitude=.4mm,segment length=2mm,post length=1mm}] (-1.2-0.3,3.0) -- (1.2-0.3,3.0);
       
      \draw[-,decorate,decoration={snake,amplitude=.4mm,segment length=2mm,post length=1mm},ultra thick]  (-1.2-0.3,1.7)  -- (-1.2+0.3,1.7);
      \draw[->,decorate,decoration={snake,amplitude=.4mm,segment length=2mm,post length=1mm}]   (-1.2-0.3,1.7)  -- (-1.2+0.3,1.7);
   
      \draw[-,decorate,decoration={snake,amplitude=.4mm,segment length=2mm,post length=1mm},ultra thick]  (0-0.3,1.7)  -- (0,1.7);
      \draw[->,decorate,decoration={snake,amplitude=.4mm,segment length=2mm,post length=1mm}]  (0-0.3,1.7)  -- (0,1.7);;
       \end{scope}

       \begin{scope} 
        [xshift=8cm] 

   \fill[white,opacity=0.9] (0,0.4) rectangle (1.6,4.1);
   \draw[black,line width = 2pt]  (0,0.5) -- (0,4);

     \draw[->,decorate,decoration={snake,amplitude=.4mm,segment length=2mm,post length=1mm}] (-1.2-0.3,2.5) -- (0,2.5);
     \draw[->,decorate,decoration={snake,amplitude=.4mm,segment length=2mm,post length=1mm}] (-1.2-0.3,2.6) -- (0.3,2.6);
     \draw[->,decorate,decoration={snake,amplitude=.4mm,segment length=2mm,post length=1mm}] (-1.2+0.3,2.7) -- (-0.3,2.7);
     \draw[->,decorate,decoration={snake,amplitude=.4mm,segment length=2mm,post length=1mm}] (-1.2+0.3,2.9) -- (0.3,2.9);
     \draw[->,decorate,decoration={snake,amplitude=.4mm,segment length=2mm,post length=1mm}] (-1.2-0.3,3.0) -- (1.2-0.3,3.0);
        \end{scope}

     \end{scope}
   \end{tikzpicture}
   \end{center}
Notice that this is the rotation of $e(\Nt_i)_{\loc}$ about the vertical axis, and so the reason the quotient does not fully cancel is because of the braidings. Thus the quotient is 

\begin{center}
  \begin{tikzpicture}[scale = 1.2]
    \newcommand{\mydrawing}{%
    \draw[black,ultra thick] (0,1) .. controls +(0,0.25) and +(0,-0.25) .. (0+0.3,1.5);
    \draw[black,ultra thick] (0+0.3,1.5) -- (0+0.3,2);
    \draw[black,ultra thick] (0+0.3,2.5) -- (0+0.3,3);
    \draw[black,ultra thick] (+0.3,3) .. controls +(0,0.25) and +(0,-0.25) .. (0,3.5);
    \draw[black,ultra thick] (0,3.5) -- (0,4);

    \draw[black,ultra thick] (1.2+0,0.5) -- (1.2+0,1);
    \draw[black,ultra thick] (1.2,1) .. controls +(0,0.25) and +(0,-0.25) .. (1.2-0.3,1.5);
    \draw[black,ultra thick] (1.2,1) .. controls +(0,0.25) and +(0,-0.25) .. (1.2+0.3,1.5);
    \draw[black,ultra thick] (1.2-0.3,1.5) -- (1.2-0.3,2);
    \draw[black,ultra thick] (1.2-0.3,2.5) -- (1.2-0.3,3);
    \draw[black,ultra thick] (1.2+0.3,1.5) -- (1.2+0.3,3);
    \draw[black,ultra thick] (1.2-0.3,2) .. controls +(0,0.25) and +(0,-0.25) .. (0+0.3,2.5);
    \draw[white,line width = 5pt] (+0.3,2).. controls +(0,0.25) and +(0,-0.25) .. (1.2-0.3,2.5);
    \draw[black,ultra thick] (+0.3,2).. controls +(0,0.25) and +(0,-0.25) .. (1.2-0.3,2.5);
    \draw[black,ultra thick] (1.2+0.3,3) .. controls +(0,0.25) and +(0,-0.25) .. (1.2,3.5);
    \draw[black,ultra thick] (1.2-0.3,3) .. controls +(0,0.25) and +(0,-0.25) .. (1.2,3.5);
    \draw[black,ultra thick] (1.2,3.5) -- (1.2,4);

    \draw[black,line width = 2pt]  (0,0.5) -- (0,4);
    }
  
    \mydrawing
  
    \begin{scope} 
     [xscale=-1] 
      \mydrawing
     \end{scope}
    
     \node[left] at (-1.8,2.25) {$J/e(\Nt_i)_{\loc} \ = \ $};

   \fill[white,opacity=0.9] (0,0.4) rectangle (1.6,4.1);
   \draw[black,line width = 2pt]  (0,0.5) -- (0,4);

     \draw[-,ultra thick,decorate,decoration={snake,amplitude=.4mm,segment length=2mm,post length=1mm}] (-1.2+0.3,2.5) -- (-0.3,2.5);
     \draw[-,ultra thick,decorate,decoration={snake,amplitude=.4mm,segment length=2mm,post length=1mm}] (-1.2+0.3,2.9) -- (0.3,2.9);
     \draw[->,decorate,decoration={snake,amplitude=.4mm,segment length=2mm,post length=1mm}] (-1.2+0.3,2.5) -- (-0.3,2.5);
     \draw[->,decorate,decoration={snake,amplitude=.4mm,segment length=2mm,post length=1mm}] (-1.2+0.3,2.9) -- (0.3,2.9);

     \draw[->,decorate,decoration={snake,amplitude=.4mm,segment length=2mm,post length=1mm}]  (-0.3,2.7) -- (1.2-0.3,2.7);

     \draw[->,decorate,decoration={snake,amplitude=.4mm,segment length=2mm,post length=1mm}] (-1.2+0.3,2.0) -- (-0.3,2.0);

   \end{tikzpicture}
   \end{center}
which is precisely the Yang-Baxter matrix $S_{23}S_{2\tau(3)}/S_{32}S_{3\tau(2)}$, if we number the four strands going left to right and below the swap. 

\end{proof}

\subsubsection{Remark} \label{sssec:FinalRemarksBialg} For completeness, here is the analogous proof from \cite{La1,Li} that $\Ht^\sbt(\Ml,\varphi)$ forms a vertex bialgebra. In that case for the associated diagram we have 
\begin{center}
  \begin{tikzpicture}[scale = 1.2]

  \newcommand{\mydrawing}{%
  \draw[black,ultra thick] (0,0.5) -- (0,1);
  \draw[black,ultra thick] (0,1) .. controls +(0,0.25) and +(0,-0.25) .. (0-0.3,1.5);
  \draw[black,ultra thick] (0,1) .. controls +(0,0.25) and +(0,-0.25) .. (0+0.3,1.5);
  \draw[black,ultra thick] (0-0.3,1.5) -- (0-0.3,3);
  \draw[black,ultra thick] (0+0.3,1.5) -- (0+0.3,2);
  \draw[black,ultra thick] (0+0.3,2.5) -- (0+0.3,3);
  \draw[black,ultra thick] (+0.3,3) .. controls +(0,0.25) and +(0,-0.25) .. (0,3.5);
  \draw[black,ultra thick] (-0.3,3) .. controls +(0,0.25) and +(0,-0.25) .. (0,3.5);
  \draw[black,ultra thick] (0,3.5) -- (0,4);

  \draw[black,ultra thick] (1.2+0,0.5) -- (1.2+0,1);
  \draw[black,ultra thick] (1.2,1) .. controls +(0,0.25) and +(0,-0.25) .. (1.2-0.3,1.5);
  \draw[black,ultra thick] (1.2,1) .. controls +(0,0.25) and +(0,-0.25) .. (1.2+0.3,1.5);
  \draw[black,ultra thick] (1.2-0.3,1.5) -- (1.2-0.3,2);
  \draw[black,ultra thick] (1.2-0.3,2.5) -- (1.2-0.3,3);
  \draw[black,ultra thick] (1.2+0.3,1.5) -- (1.2+0.3,3);
  \draw[black,ultra thick] (1.2-0.3,2) .. controls +(0,0.25) and +(0,-0.25) .. (0+0.3,2.5);
  \draw[white, line width = 5pt] (+0.3,2).. controls +(0,0.25) and +(0,-0.25) .. (1.2-0.3,2.5);
  \draw[black,ultra thick] (+0.3,2).. controls +(0,0.25) and +(0,-0.25) .. (1.2-0.3,2.5);
  \draw[black,ultra thick] (1.2+0.3,3) .. controls +(0,0.25) and +(0,-0.25) .. (1.2,3.5);
  \draw[black,ultra thick] (1.2-0.3,3) .. controls +(0,0.25) and +(0,-0.25) .. (1.2,3.5);
  \draw[black,ultra thick] (1.2,3.5) -- (1.2,4);
  } 
  
  \mydrawing

  \node[white] at (-2.0,2.25) {a};
  \node[left] at (-0.8,2.25) {$e(\Nt_i)_{\loc} \ = \ $};
  \node[] at (2.1,2.25) {$=$};
  \node[] at (5.2,2.25) {$=$};

    \draw[->,decorate,decoration={snake,amplitude=.4mm,segment length=2mm,post length=1mm}] (0,3.8) -- (1.2,3.8);
    
    \draw[-,decorate,decoration={snake,amplitude=.4mm,segment length=2mm,post length=1mm},ultra thick]  (1.2-0.3,1.7)  -- (1.2+0.3,1.7);
    \draw[-,decorate,decoration={snake,amplitude=.4mm,segment length=2mm,post length=1mm},ultra thick]  (0-0.3,1.7)  -- (0+0.3,1.7);
    \draw[->,decorate,decoration={snake,amplitude=.4mm,segment length=2mm,post length=1mm}]  (1.2-0.3,1.7)  -- (1.2+0.3,1.7);
    \draw[->,decorate,decoration={snake,amplitude=.4mm,segment length=2mm,post length=1mm}]  (0-0.3,1.7)  -- (0+0.3,1.7);

    \begin{scope} 
      [xshift=3cm]
      \mydrawing

    \draw[-,decorate,decoration={snake,amplitude=.4mm,segment length=2mm,post length=1mm},ultra thick]  (1.2-0.3,1.7)  -- (1.2+0.3,1.7);
    \draw[-,decorate,decoration={snake,amplitude=.4mm,segment length=2mm,post length=1mm},ultra thick]  (0-0.3,1.7)  -- (0+0.3,1.7);
    \draw[->,decorate,decoration={snake,amplitude=.4mm,segment length=2mm,post length=1mm}]  (1.2-0.3,1.7)  -- (1.2+0.3,1.7);
    \draw[->,decorate,decoration={snake,amplitude=.4mm,segment length=2mm,post length=1mm}]  (0-0.3,1.7)  -- (0+0.3,1.7);
    
    \draw[->,decorate,decoration={snake,amplitude=.4mm,segment length=2mm,post length=1mm}]  (+0.3,2.6)  -- (1.2-0.3,2.6);
    \draw[->,decorate,decoration={snake,amplitude=.4mm,segment length=2mm,post length=1mm}]  (-0.3,2.7)  -- (1.2+0.3,2.7);
    \draw[->,decorate,decoration={snake,amplitude=.4mm,segment length=2mm,post length=1mm}]  (0+0.3,2.8)  -- (1.2+0.3,2.8);
    \draw[->,decorate,decoration={snake,amplitude=.4mm,segment length=2mm,post length=1mm}]  (0-0.3,2.9)  -- (1.2-0.3,2.9);
     \end{scope}

     \begin{scope} 
      [xshift=6cm]
      \mydrawing  

    \draw[->,decorate,decoration={snake,amplitude=.4mm,segment length=2mm,post length=1mm}]  (+0.3,2.6)  -- (1.2-0.3,2.6);
    \draw[->,decorate,decoration={snake,amplitude=.4mm,segment length=2mm,post length=1mm}]  (0-0.3,2.7)  -- (1.2+0.3,2.7);

     \end{scope}

  \end{tikzpicture}
  \end{center}
  and 
  \begin{center}
    \begin{tikzpicture}[scale = 1.2,yscale=-1]
  
      \newcommand{\mydrawing}{%
      \draw[black,ultra thick] (0,0.5) -- (0,1);
      \draw[black,ultra thick] (0,1) .. controls +(0,0.25) and +(0,-0.25) .. (0-0.3,1.5);
      \draw[black,ultra thick] (0,1) .. controls +(0,0.25) and +(0,-0.25) .. (0+0.3,1.5);
      \draw[black,ultra thick] (0-0.3,1.5) -- (0-0.3,3);
      \draw[black,ultra thick] (0+0.3,1.5) -- (0+0.3,2);
      \draw[black,ultra thick] (0+0.3,2.5) -- (0+0.3,3);
      \draw[black,ultra thick] (+0.3,3) .. controls +(0,0.25) and +(0,-0.25) .. (0,3.5);
      \draw[black,ultra thick] (-0.3,3) .. controls +(0,0.25) and +(0,-0.25) .. (0,3.5);
      \draw[black,ultra thick] (0,3.5) -- (0,4);

      \draw[black,ultra thick] (1.2+0,0.5) -- (1.2+0,1);
      \draw[black,ultra thick] (1.2,1) .. controls +(0,0.25) and +(0,-0.25) .. (1.2-0.3,1.5);
      \draw[black,ultra thick] (1.2,1) .. controls +(0,0.25) and +(0,-0.25) .. (1.2+0.3,1.5);
      \draw[black,ultra thick] (1.2-0.3,1.5) -- (1.2-0.3,2);
      \draw[black,ultra thick] (1.2-0.3,2.5) -- (1.2-0.3,3);
      \draw[black,ultra thick] (1.2+0.3,1.5) -- (1.2+0.3,3);
      \draw[black,ultra thick] (1.2-0.3,2) .. controls +(0,0.25) and +(0,-0.25) .. (0+0.3,2.5);
      \draw[white, line width = 5pt] (+0.3,2).. controls +(0,0.25) and +(0,-0.25) .. (1.2-0.3,2.5);
      \draw[black,ultra thick] (+0.3,2).. controls +(0,0.25) and +(0,-0.25) .. (1.2-0.3,2.5);
      \draw[black,ultra thick] (1.2+0.3,3) .. controls +(0,0.25) and +(0,-0.25) .. (1.2,3.5);
      \draw[black,ultra thick] (1.2-0.3,3) .. controls +(0,0.25) and +(0,-0.25) .. (1.2,3.5);
      \draw[black,ultra thick] (1.2,3.5) -- (1.2,4);
      } 

      \mydrawing

      \node[white] at (-2.0,2.25) {a};
    \node[left] at (-0.8,2.25) {$J \ = \ $};
    \node[] at (2.1,2.25) {$=$};
    \node[] at (5.2,2.25) {$=$};

      \draw[->,decorate,decoration={snake,amplitude=.4mm,segment length=2mm,post length=1mm}] (0,3.8) -- (1.2,3.8);
      
      \draw[-,decorate,decoration={snake,amplitude=.4mm,segment length=2mm,post length=1mm},ultra thick]  (1.2-0.3,1.7)  -- (1.2+0.3,1.7);
      \draw[-,decorate,decoration={snake,amplitude=.4mm,segment length=2mm,post length=1mm},ultra thick]  (0-0.3,1.7)  -- (0+0.3,1.7);
      \draw[->,decorate,decoration={snake,amplitude=.4mm,segment length=2mm,post length=1mm}]  (1.2-0.3,1.7)  -- (1.2+0.3,1.7);
      \draw[->,decorate,decoration={snake,amplitude=.4mm,segment length=2mm,post length=1mm}]  (0-0.3,1.7)  -- (0+0.3,1.7);

      \begin{scope} 
        [xshift=3cm]
\mydrawing

      \draw[-,decorate,decoration={snake,amplitude=.4mm,segment length=2mm,post length=1mm},ultra thick]  (1.2-0.3,1.7)  -- (1.2+0.3,1.7);
      \draw[-,decorate,decoration={snake,amplitude=.4mm,segment length=2mm,post length=1mm},ultra thick]  (0-0.3,1.7)  -- (0+0.3,1.7);
      \draw[->,decorate,decoration={snake,amplitude=.4mm,segment length=2mm,post length=1mm}]  (1.2-0.3,1.7)  -- (1.2+0.3,1.7);
      \draw[->,decorate,decoration={snake,amplitude=.4mm,segment length=2mm,post length=1mm}]  (0-0.3,1.7)  -- (0+0.3,1.7);
      
      \draw[->,decorate,decoration={snake,amplitude=.4mm,segment length=2mm,post length=1mm}]  (+0.3,2.6)  -- (1.2-0.3,2.6);
      \draw[->,decorate,decoration={snake,amplitude=.4mm,segment length=2mm,post length=1mm}]  (-0.3,2.7)  -- (1.2+0.3,2.7);
      \draw[->,decorate,decoration={snake,amplitude=.4mm,segment length=2mm,post length=1mm}]  (0+0.3,2.8)  -- (1.2+0.3,2.8);
      \draw[->,decorate,decoration={snake,amplitude=.4mm,segment length=2mm,post length=1mm}]  (0-0.3,2.9)  -- (1.2-0.3,2.9);

       \end{scope}

       \begin{scope} 
        [xshift=6cm]
\mydrawing

      \draw[->,decorate,decoration={snake,amplitude=.4mm,segment length=2mm,post length=1mm}]  (+0.3,2.6)  -- (1.2-0.3,2.6);
      \draw[->,decorate,decoration={snake,amplitude=.4mm,segment length=2mm,post length=1mm}]  (0-0.3,2.7)  -- (1.2+0.3,2.7);

       \end{scope}
  
    \end{tikzpicture}
    \end{center}
It follows that

\begin{center}
  \begin{tikzpicture}[scale = 1.2,yscale=-1]

    \newcommand{\mydrawing}{%
    \draw[black,ultra thick] (0,0.5) -- (0,1);
    \draw[black,ultra thick] (0,1) .. controls +(0,0.25) and +(0,-0.25) .. (0-0.3,1.5);
    \draw[black,ultra thick] (0,1) .. controls +(0,0.25) and +(0,-0.25) .. (0+0.3,1.5);
    \draw[black,ultra thick] (0-0.3,1.5) -- (0-0.3,3);
    \draw[black,ultra thick] (0+0.3,1.5) -- (0+0.3,2);
    \draw[black,ultra thick] (0+0.3,2.5) -- (0+0.3,3);
    \draw[black,ultra thick] (+0.3,3) .. controls +(0,0.25) and +(0,-0.25) .. (0,3.5);
    \draw[black,ultra thick] (-0.3,3) .. controls +(0,0.25) and +(0,-0.25) .. (0,3.5);
    \draw[black,ultra thick] (0,3.5) -- (0,4);

    \draw[black,ultra thick] (1.2+0,0.5) -- (1.2+0,1);
    \draw[black,ultra thick] (1.2,1) .. controls +(0,0.25) and +(0,-0.25) .. (1.2-0.3,1.5);
    \draw[black,ultra thick] (1.2,1) .. controls +(0,0.25) and +(0,-0.25) .. (1.2+0.3,1.5);
    \draw[black,ultra thick] (1.2-0.3,1.5) -- (1.2-0.3,2);
    \draw[black,ultra thick] (1.2-0.3,2.5) -- (1.2-0.3,3);
    \draw[black,ultra thick] (1.2+0.3,1.5) -- (1.2+0.3,3);
    \draw[black,ultra thick] (1.2-0.3,2) .. controls +(0,0.25) and +(0,-0.25) .. (0+0.3,2.5);
    \draw[white, line width = 5pt] (+0.3,2).. controls +(0,0.25) and +(0,-0.25) .. (1.2-0.3,2.5);
    \draw[black,ultra thick] (+0.3,2).. controls +(0,0.25) and +(0,-0.25) .. (1.2-0.3,2.5);
    \draw[black,ultra thick] (1.2+0.3,3) .. controls +(0,0.25) and +(0,-0.25) .. (1.2,3.5);
    \draw[black,ultra thick] (1.2-0.3,3) .. controls +(0,0.25) and +(0,-0.25) .. (1.2,3.5);
    \draw[black,ultra thick] (1.2,3.5) -- (1.2,4);
    } 

    \mydrawing

  \node[left] at (-0.8,2.25) {$J/e(\Nt_i) \ = \ $};

  \draw[->,decorate,decoration={snake,amplitude=.4mm,segment length=2mm,post length=1mm}]  (+0.3,2.6)  -- (1.2-0.3,2.6);

  \draw[-,decorate,decoration={snake,amplitude=.4mm,segment length=2mm,post length=1mm},ultra thick] (+0.3,1.9) --  (1.2-0.3,1.9) ;
  \draw[->,decorate,decoration={snake,amplitude=.4mm,segment length=2mm,post length=1mm}] (+0.3,1.9) --  (1.2-0.3,1.9) ;

  \end{tikzpicture}
  \end{center}
is multiplication by the $R$-matrix $S_{23}/S_{32}$.

\subsection{Orthosymplectic-linear vertex structures} 

\subsubsection{} The above considered orthosymplectic vertex module structure arose from the Borcherds twist coming from 
\begin{center}
\begin{tikzcd}[row sep = {30pt,between origins}, column sep = {45pt, between origins}]
 &\SES \ar[rd] \ar[ld]  & & & & \SES_3^\tau\ar[rd] \ar[ld] & \\
 \Ml \times \Ml& & \Ml & & \Ml \times \Ml^\tau & & \Ml^\tau
\end{tikzcd}
\end{center}
In this section, we will define a different vertex structure by \textit{restricting} the Borcherds twist given by the Ext complex $\Nt_s=\Ext$. It will be an orthosymplectic-linear vertex algebra as opposed to just an orthosymplectic module for a vertex algebra, and is attached to 
\begin{center}
\begin{tikzcd}[row sep = {30pt,between origins}, column sep = {45pt, between origins}]
 &\SES \ar[rd] \ar[ld]  & & & & \SES\times_{\Ml^{\times 2}}{\Ml^\tau}^{\times 2}\ar[rd] \ar[ld] & &&& \SES\times_{\Ml^{\times 3}}{\Ml^\tau}^{\times 3} \ar[rd] \ar[ld] & \\
 \Ml \times \Ml& & \Ml & & \Ml \times \Ml^\tau & & \Ml^\tau & & \Ml^\tau \times \Ml^\tau && \Ml^\tau
\end{tikzcd}
\end{center}
In other words, we consider stacks parametrising short exact sequences whose terms are required to carry orthosymplectic structure (but the morphisms need not respect this).

\subsubsection{} Again, we fix background linear-orthosymplectic braided monoidal factorisation category 
$$\Al \ = \ (\Ht^\sbt(\Ml),\cup) \Md , \hspace{15mm} \Bl \ = \ (\Ht^\sbt(\Ml^\tau),\cup) \Md$$
this time using the braidings 
$$\beta \ = \ \frac{e(\sigma^*\Nt_s)}{e(\Nt_s)}\cdot \sigma, \hspace{10mm} \kappa \ = \ \frac{e((\tau\times \id)^*\Nt_{s})}{e(\Nt_{s})}\cdot (\tau \times \id), \hspace{10mm} \beta_{\OSpt} \ = \ \frac{e(\sigma^*\Nt_{s})}{e(\Nt_{s})}\cdot \sigma$$
where in each we have pulled back the localised cohomology class $e(\Nt_s)$ to $\Ml^\tau$ if necessary. 
    \begin{prop}
      The maps 
      \begin{equation}
        \tilde{\Delta} \ = \ e(\Nt_s)\cdot \oplus^*, \hspace{10mm} \tilde{\Delta}_{\GL \textup{-}\OSpt} \ = \ e(\Nt_{s})\cdot \oplus_{\GL \textup{-}\OSpt}^*, \hspace{10mm} \tilde{\Delta}_{\OSpt} \ = \ e(\Nt_{s})\cdot \oplus_{\OSpt}^* 
      \end{equation}
      define an orthosymplectic-linear localised coproduct internal to $(\Al,\Bl]$.
    \end{prop}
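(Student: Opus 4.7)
The plan is to adapt the proof of Theorem \ref{thm:OrthosymplecticJoyceLiuLocalised}, exploiting the simplifying feature that all three twists use the single normal complex $\Nt_s$ pulled back along the appropriate maps, rather than involving the $\tau$-equivariant variant $\Nt_{s_3}^\tau$. Geometrically this reflects the fact that the correspondences $\SES\times_{\Ml^{\times 2}}(\Ml^\tau)^{\times 2}$ and $\SES\times_{\Ml^{\times 2}}(\Ml\times\Ml^\tau)$ are base changes of the single correspondence $\SES\to\Ml\times\Ml$ along $\iota_{\GL\textup{-}\OSpt}^{\times 2}$ and $\id\times\iota_{\GL\textup{-}\OSpt}$ respectively, so that the relevant relative cotangent complexes are simply pullbacks of $\Nt_s$.

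As a first step, I would verify that $\tilde{\Delta}$, $\tilde{\Delta}_{\GL\textup{-}\OSpt}$ and $\tilde{\Delta}_{\OSpt}$ are linear over $\Ht^\sbt(\Ml)$ and $\Ht^\sbt(\Ml^\tau)$ with respect to the direct sum coactions. This is immediate from naturality of Euler classes under pullback, and it guarantees that the three maps are internal to the linear-orthosymplectic braided monoidal factorisation category $(\Al,\Bl]$ described at the start of the section.

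Next, the coassociativity of $\tilde{\Delta}$, $\tilde{\Delta}_{\OSpt}$ and the compatibility (module/coassociativity) between $\tilde{\Delta}_{\GL\textup{-}\OSpt}$ and each of these would be deduced, by naturality of pullback, from the hexagon identity for $\Nt_s$ on $\Ml$ supplied by Corollary \ref{cor:EulerGLHexagon} together with the corresponding statement of Theorem \ref{thm:OrthosymplecticJoyceLiuLocalised} for $\tilde{\Delta}$. Braided cocommutativity with respect to the braidings $\beta$, $\kappa$, $\beta_{\OSpt}$ defined at the start of the section is essentially tautological, since these braidings are exactly the Euler-class ratios that force the cocommutativity diagrams to commute.

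The main technical point, and the principal obstacle, is to package all of this data as a genuine restriction of the $\Ml$-level Cherednik Borcherds twist so that Proposition \ref{prop:OrthosymplecticBorcherdsTwistMain} can be applied uniformly. Concretely, one must check that the triple $(\beta,\kappa,\beta_{\OSpt})$ satisfies the orthosymplectic-linear Cherednik hexagon relations as pullbacks of the single hexagon for $\Nt_s$ on $\Ml$; the absence of any $\Nt_{s_3}^\tau$-contribution makes this considerably simpler than the verification in Theorem \ref{thm:OrthosymplecticJoyceLiuLocalised}. Once this identification is in place, the Cherednik Borcherds twist construction produces the desired orthosymplectic-linear localised coproduct on the pair $(\Ht^\sbt(\Ml,\varphi),\Ht^\sbt(\Ml^\tau,\varphi^\tau)]$ internal to $(\Al,\Bl]$.
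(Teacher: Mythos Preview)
Your proposal is correct and follows essentially the same approach the paper implicitly intends: the proposition is stated without proof precisely because it is a direct adaptation of the argument for Theorem \ref{thm:OrthosymplecticJoyceLiuLocalised}, with the simplification you identify that all three twists are pullbacks of the single Euler class $e(\Nt_s)$, so the linear-orthosymplectic hexagon relations all reduce via naturality to the ordinary hexagon for $\Nt_s$ from Corollary \ref{cor:EulerGLHexagon}, and Proposition \ref{prop:OrthosymplecticBorcherdsTwistMain} then applies.
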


If we define the \textit{linear-orthosymplectic} Joyce twists as 
\begin{align*}
  \tilde{S}(z) &\ = \ e(\Nt_s,z) \ \defeq \ \sum_{k \ge 0} z^{\rank \Nt_s -k}c_k(\Nt_s),\\
  \tilde{T}(z,w) &\ = \ e(\Nt_{s_3^\tau},z,w) \ \defeq \ \sum_{k \ge 0} z^{\rank\Nt_s/2 -k}w^{\rank\Nt_s/2-l}c_{k,l}(\Nt_s)\\ 
  \tilde{S}_{\OSpt}(w_1,w_2)& \ = \ e(\Nt_s,w_1,w_2) \ \defeq \ \sum_{k \ge 0} w_1^{\Nt_s/2 -k}w_2^{\rank \Nt_2/2-l}c_{k,l}(\Nt_s)
\end{align*}
 then 

\begin{cor}
      The maps 
      \begin{align} \label{eqn:OrthosymplecticJoyceLiuVertex}
        \tilde{\Delta}(z)&\ =\ e(\Nt_s,z)\cdot \act_{1,z}^*\oplus^*,\\
        \tilde{\Delta}_{\GL \textup{-}\OSpt}(z,w)&\ =\ e(\Nt_s,z,w)\cdot (\act_{1,z}\times \act_{\Gt,2,w})^*\oplus_{\GL \textup{-}\OSpt}^*\\
        \tilde{\Delta}_{\OSpt}(w_1,w_2) & \ = \ e(\Nt_s,w_1,w_2)\cdot \act_{\Gt,1,w_1}^*\oplus^*_{\OSpt}
      \end{align}
    define a local linear-orthosymplectic vertex coalgebra structure on $\Ht^\sbt(\Ml, \varphi)$ and $\Ht^\sbt(\Ml^\tau, \varphi^\tau)$ internal to $\El$.
    \end{cor}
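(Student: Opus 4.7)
The plan is to deduce this Corollary from the preceding Proposition by applying the localised-to-vertex functors of Theorem \ref{thm:LocalisedToVertex}. The Proposition gives us a linear-orthosymplectic localised coproduct
\[
\tilde\Delta,\quad \tilde\Delta_{\GL\textup{-}\OSpt},\quad \tilde\Delta_{\OSpt}
\]
internal to $(\Al,\Bl]$, i.e.\ a flat quasicoherent factorisation coalgebra on the linear-orthosymplectic configuration space built from $\Conf_\Ml\Ab^1$ and $\Conf_{\Ml^\tau}\Ab^1$, via the identifications of section~\ref{sec:Configuration}. The additional data in the Corollary, relative to the Proposition, are the translation actions $\act$ on $\Ml$ and $\act_{\Gt}$ on $\Ml^\tau$, and this is precisely the $\Ga$-equivariance required to feed the factorisation coalgebra into the functor $\Phi$ (and its variants $\varphi_{\GL\textup{-}\OSpt}$, $\varphi_{\OSpt}$) of section~\ref{ssec:LocalisedToVertex}.

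First I would check the $\Ga$-equivariance. The actions $\act:\BGm\times\Ml\to\Ml$ and $\act_{\Gt,2}:\Bt \Tt_{\Gt_2}\times\Ml^\tau\to\Ml^\tau$ intertwine the direct sum maps $\oplus$, $\oplus_{\GL\textup{-}\OSpt}$, $\oplus_{\OSpt}$ by Assumption~\ref{ass:StkVA} and its $\tau$-equivariant refinement \eqref{eqn:TauEquivarianceOplusAct}; they also preserve the potentials and hence commute with the vanishing cycle pullbacks producing $\tilde\Delta^{(\cdot)}$. They moreover lift to an action on the short exact sequence stacks, which identifies $\act^*\Nt_s$ with the pullback of $\Nt_s$ along $\act\times\act$ (and similarly for $\act_{\Gt,2}$). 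Consequently the Joyce twists $\tilde S(z)$, $\tilde T(z,w)$ and $\tilde S_{\OSpt}(w_1,w_2)$ are exactly the pullbacks of $e(\Nt_s)$ along these actions, so $\Ga$-equivariance of the underlying localised coproduct is automatic.

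Next I would apply the linear-orthosymplectic variant of Theorem \ref{thm:LocalisedToVertex}: the functor $\varphi_{\Pt_{\GL\textup{-}\Gt}}$ converts a $\Ga$-equivariant factorisation coalgebra on $\Conf^{\ord}_{\GL\textup{-}\Gt}\Ab^1$ into a $\Pt_{\GL\textup{-}\Gt}$-vertex coalgebra. Compatibility of $\varphi_{(-)}$ with pullback/pushforward along $\BGL\to \BP_{\GL\textup{-}\Gt}\leftarrow \BG$ (cf.\ the commuting square after Theorem \ref{thm:LocalisedToVertex}) ensures that the resulting vertex coproducts restrict to the ones induced by $\tilde\Delta$ on the $\Al$-factor and by $\tilde\Delta_{\OSpt}$ on the $\Bl$-factor, with $\tilde\Delta_{\GL\textup{-}\OSpt}(z,w)$ providing the intertwining datum. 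Comparing the explicit recipe of Theorem \ref{thm:LocalisedToVertex}, which sends $1/f_\alpha$ to its Taylor expansion in the translation variables, with formulas \eqref{eqn:OrthosymplecticJoyceLiuVertex}, we see that the output is exactly the triple of maps $\tilde\Delta(z)$, $\tilde\Delta_{\GL\textup{-}\OSpt}(z,w)$, $\tilde\Delta_{\OSpt}(w_1,w_2)$ claimed.

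Finally, locality and braided cocommutativity follow from the braided cocommutativity statement of the Proposition (with respect to the refined braidings $\beta$, $\kappa$, $\beta_{\OSpt}$ built from $\Nt_s$ alone): under $\varphi_{\Pt_{\GL\textup{-}\Gt}}$ braided cocommutativity over $\Conf_{\GL\textup{-}\Gt}^{\ord}\Ab^1$ translates into the $W$-equivariance/locality conditions of section~\ref{sssec:PVA}, exactly as in the proof of Proposition \ref{prop:GVertexAlgebraExplicit}. The only step requiring genuine verification, and what I expect to be the main obstacle, is that the new twists $\tilde S,\tilde T,\tilde S_{\OSpt}$ — unlike the original $S,T$ of section \ref{ssec:OrthosymplecticJoyceLiu}, which were tuned via Corollaries \ref{cor:EulerGLHexagon}, \ref{cor:EulerGLOSpModuleHexagon} to satisfy the Cherednik hexagon relations — satisfy the hexagon relations governing the linear-orthosymplectic braided monoidal category $(\Al,\Bl]$; since all three twists are built from the same universal class $e(\Nt_s)$ pulled back in the appropriate way, this should reduce to the distinguished triangle for $\Nt_{s_3}$ and its $\tau$-equivariant refinement from Proposition \ref{prop:OSpNormalComplex}, now used for the module-over-algebra compatibilities rather than the module-Yetter-Drinfeld ones.
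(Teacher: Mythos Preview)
Your approach is correct and matches the paper's: the Corollary follows from the preceding Proposition by applying the localised-to-vertex functor $\varphi_{\Pt_{\GL\textup{-}\Gt}}$ of Theorem~\ref{thm:LocalisedToVertex}, exactly as Corollary~\ref{cor:OrthosymplecticJoyceLiuVertex} follows from Theorem~\ref{thm:OrthosymplecticJoyceLiuLocalised}. The explicit formulas arise because the Joyce twists $\tilde S(z),\tilde T(z,w),\tilde S_{\OSpt}(w_1,w_2)$ are the pullbacks of $e(\Nt_s)$ along the torus actions, as you note.

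One comment: your final paragraph worrying about whether $\tilde S,\tilde T,\tilde S_{\OSpt}$ satisfy the linear-orthosymplectic hexagon relations is misplaced as a concern for the \emph{Corollary}. That verification is the content of the preceding \emph{Proposition}, which already asserts that the localised coproducts $\tilde\Delta,\tilde\Delta_{\GL\textup{-}\OSpt},\tilde\Delta_{\OSpt}$ are well-defined internal to $(\Al|\Bl]$. Once the Proposition is granted, the Corollary is a direct application of the functor, with no further hexagon identities to check. If you want to flag the hexagon verification as the real work, that belongs in (a proof of) the Proposition, not here.
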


    \subsubsection{Example} When $\Ml=\BGL$ is the the moduli stack of finite-dimensional vector spaces and $\Ml= \BSp$, the coaction on cohomology is
    $$\oplus^*_{\GL \textup{-}\OSpt} \ : \ \Ht^\sbt(\BSp) \ \to \ \Ht^\sbt(\BSp) \otimes \Ht^\sbt(\BGL), \hspace{15mm} y_i \ \mapsto \ y_i \otimes 1 - 1 \otimes x_i^2$$
    where $y_i \in \Ht^4(\BSp_2)$ is a symplectic chern root, i.e. the Euler class of the tautological bundle $\El_{\Sp}$, and $x_i \in \Ht^2(\BGL_1)$ is a chern root. Indeed, we have 
    $$\oplus^*_{\Sp \textup{-}\GL} \El_{\Sp} \ = \ \El_{\Sp} \boxplus \left( \El \oplus \El^\vee\right).$$
    The above choice of twist then gives
    \begin{align*}
      T (z,w) & \ = \ \act_{1,z}^*\act_{2,w}^*e(\Nt_s)\\
       & \ = \act_{1,z}^*\act_{2,w}^*e(\El_{\Sp}^\vee \boxtimes(\El \oplus\El^\vee))\\
      & \ = \ \act_{1,z}^*\act_{2,w}^*\prod_{i,j} (y_i \otimes 1 - 1 \otimes x_j^2)(-y_i \otimes 1 - 1 \otimes x_j^2)\\
      & \ = \ \prod_{i,j} (\pm_1\sqrt{y}_i \otimes 1 \pm_1 w \pm_2 1 \otimes x_j \pm_2 z)
    \end{align*}
    where $\sqrt{y}_i$ is a square root in $\Ht^2(\BT_{\Sp_2})$ and 
    where we take the product over all sign choices $\pm_1,\pm_2$. Note that the poles of the above lie along $z\pm w =0$. Likewise 
    \begin{align*}
     S(z_1,z_2)& \ = \ \prod_{i,j}(x_i \otimes 1 + z_1 - 1 \otimes x_j - z_2),\\
     S_{\OSpt}(w_1,w_2) & \ = \ \prod_{i,j}(\pm_1\sqrt{y}_i \otimes 1 \pm_1 w_1 \pm_2 1 \otimes \sqrt{y}_j \pm_2 w_2).
    \end{align*}

\newpage
\section{Example: perverse-coherent sheaves} \label{sec:PervCoh}

\noindent In this section, we will consider orthosymplectic perverse-coherent sheaves on smooth algebraic surfaces. The derived duality functor 
$$\El\ \mapsto\ \Dt \El$$
on the category $D^b(\Coh(X))$ does not preserve the heart of coherent sheaves. For example, if $k_{\mathrm{pt}}$ is a skyscraper sheaf supported at a point then $\Dt k_{\mathrm{pt}} = k_{\mathrm{pt}}[\dim X]$. 

On the other hand, when $X$ is a surface with no coherent sheaves of pure dimension 1 then if we tilt the heart $\Coh(X)$ using the torsion pair $(\Tl, \Fl)$ of dimension zero sheaves to obtain the tilted heart 
\[ \PervCoh(X) = \langle \Fl, \Tl[1]\rangle \] 
then $\PervCoh(X)$ is preserved under $\Dt$.

We then consider examples of specific surfaces. To begin with, we spell out what our formulas for ADHM quivers give for framed perverse-coherent sheaves on $\Pb^2$. Next, we write down the same results for framed orthosymplectic perverse-coherent sheaves on ALE spaces. 

\subsection{Examples} 

We compute some examples of orthosymplectic objects in $D^b(\Coh(X))$ in general. 

\subsubsection{} 
For instance, if $\El$ is a vector bundle, then an orthosymplectic structure same as a fibrewise orthogonal form  
$$\El\otimes \El\ \to\ \Ol.$$
There are no orthosymplectic structures on shifted vector bundles $\El[n]$ unless $n=0$, because $\El[n]$ and $\El^*[-n]$ are not abstractly isomorphic. Instead the relevant involution in this case is 
$$\El\ \mapsto\ \El^*[2n].$$

\subsubsection{} For instance, let $\El_D$ is the restriction of a vector bundle to the zero locus $D$ of a regular function $x$. We have a free Koszul resolution 
$$\Ol\ \stackrel{x}{\to}\ \Ol\ \to\ \Ol_D.$$
Note that there cannot be an isomorphism 
$$(\Ol\ \stackrel{x}{\to}\ \Ol)\ \simeq\ (\Ol\ \stackrel{x}{\to}\ \Ol)^*\ =\ (\Ol\ \stackrel{x}{\to}\ \Ol)[1]$$
because the left hand side has tor-amplitude in $[-1,0]$, while the right hand side has tor-amplitude in $[0,1]$. By a similar argument there is no orthosymplectic structure on $\El_D$ unless $\El=0$. There are only orthosymplectic structures with respect to the involution
$$\El\ \mapsto\ \El^*[-1].$$

\subsubsection{} In codimension two, consider the common vanishing locus $V$ of $x,y$ such that they form a regular sequence. The Koszul resolution in this case is 
$$\Ol \ \stackrel{y,-x}{\rightrightarrows}\ \Ol^2\ \stackrel{x,y}{\rightrightarrows}\ \Ol\ \to\ \Ol_V$$
\begin{center}
\begin{tikzcd}[row sep = {13pt,between origins}, column sep = {20pt},arrows = std]
 & \Ol \ar[rd,"x"]& & \\ 
\Ol\ar[ru,"y"]\ar[rd,"-x"'] & \oplus &\Ol \ar[r] & \Ol_V \\
 & \Ol\ar[ru,"y"']& &
\end{tikzcd}
\end{center}
We thus have a Koszul resolution 
\begin{center}
\begin{tikzcd}[row sep = {13pt,between origins}, column sep = {20pt},arrows = std]
 & \El \ar[rd,"x"]& & \\ 
\El\ar[ru,"y"]\ar[rd,"-x"'] & \oplus &\El \ar[r] & \El_V \\
 & \El\ar[ru,"y"']& &
\end{tikzcd}
\end{center}
so that $\El_V^*[2]$ is 
\begin{center}
\begin{tikzcd}[row sep = {13pt,between origins}, column sep = {20pt},arrows = std]
 & \El^* \ar[rd,"y^*"]& \\ 
\El^*\ar[ru,"x^*"]\ar[rd,"y^*"'] & \oplus &\El^*  \\
 & \El^*\ar[ru,"-x^*"']&
\end{tikzcd}
\end{center}

If we apply a shift, the complex resolving $\Cb_0[-1]$ is concentrated in degrees $[-1,1]$, and likewise for the dual complex resolving $(\Cb_0[1])^*$, which is 
\begin{center}
\begin{tikzcd}[row sep = {10pt,between origins}, column sep = {20pt},arrows = std]
 & \Ol \ar[rd,"y"]& \\ 
\Ol\ar[ru,"x"]\ar[rd,"y"'] & &\Ol \\
 & \Ol\ar[ru,"-x"']& 
\end{tikzcd}
\end{center}
In particular, a pairing $\Cb_0[-1]\simeq (\Cb_0[-1])^*$ is the same thing as a symmetric pairing on $\Ol\oplus \Ol$ and antisymmetric pairings on the outer $\Ol$'s.

\subsubsection{Example} Let $\Al=\Coh(X)^\heartsuit$ be the category of coherent sheaves on any $X$. An orthosymplectic structure on a coherent sheaf $\El$ is the same as a fibrewise bilinear (i.e. symplectic) form 
$$ \El\otimes\El\ \to\ \Ol.$$
However, an orthosymplectic structure on $\El[1]$ is the same as a fibrewise \textit{antisymmetric} bilinear (i.e. orthogonal) form 
$$\El\wedge \El\ \to\ \Ol.$$
One can likewise write down the functor of points for $\Ml_{\Coh(X)}^{\OSpt}$.

\begin{lem}\label{defn:OSp}
  The stack of orthosymplectic coherent sheaves is the substack
  $$\Ml^{\textup{\OSpt}} \ \subseteq\ \Hom(\Ul\otimes\Ul,\Ol)$$
  cut out by the non-degenerate symmetric bilinear forms. Here, $\Ul$ is the universal coherent sheaf on $\Ml\times X$, and $\Hom$ refers to the pushforward of the Hom sheaf along $\Ml\times X\to \Ml$.
\end{lem}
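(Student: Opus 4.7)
The plan is to realise this as a direct application of Proposition~\ref{lem:fixed_is_osp} to $\Cl=\Coh(X)^{\heartsuit}$ with the antiinvolution $\theta(-)=\Hom_{\Ol_X}(-,\Ol_X)$, and to translate the resulting condition into the language of global sections of the Hom sheaf on $\Ml\times X$. So first I would spell out the functor of points: a $T$-point of $\Ml^{\OSpt}$ is a $T$-flat coherent sheaf $\El$ on $T\times X$ together with an isomorphism $\varphi:\El\xrightarrow{\sim}\El^{\vee}$ such that the cocycle $\theta(\varphi)\cdot\varphi=\alpha_{\El}$ holds, where $\alpha$ is the canonical evaluation identifying $\El^{\vee\vee}\simeq\El$.

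Next, I would use tensor-Hom adjunction, which applies inside $\Coh(X)$ since $\Ol_X$ is the monoidal unit and $\El\mapsto\Hom(\El,-)$ is right adjoint to $\El\otimes-$. This gives a natural bijection
\[ \Hom(\El,\El^{\vee})\ \simeq\ \Hom(\El\otimes\El,\Ol_X), \qquad \varphi\ \longleftrightarrow\ \kappa, \]
under which $\kappa=\ev\circ(\varphi\otimes\id)$. Relative over $T$ this bijection is natural in $T$, hence globalises to an identification of Hom stacks; the universal sheaf $\Ul$ on $\Ml\times X$ furnishes the stacky Hom
\[ \Hom_{\Ml}(\Ul\otimes\Ul,\Ol)\ \longrightarrow\ \Ml \]
whose fibre over $[\El]$ is $\Hom_{\Ol_X}(\El\otimes\El,\Ol_X)$, and the isomorphism~$\varphi$ corresponds to a $T$-point of this stack.

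Two conditions remain to translate. Invertibility of $\varphi$ is an open condition on the Hom stack and corresponds, via the adjunction, to non-degeneracy of $\kappa$ (the induced map $\El\to\El^{\vee}$ being an isomorphism). The cocycle identity $\theta(\varphi)\cdot\varphi=\alpha_{\El}$ becomes, after tracing through the adjunction and using that $\alpha$ is the evaluation pairing, precisely the equality $\kappa\circ\sigma=\kappa$, i.e.\ $\kappa$ is symmetric under the swap braiding on $\Coh(X)$; this is the step that requires a small diagram-chase, but it follows formally from the definition of $\alpha$ as the canonical map $\El^{\vee\vee}\simeq\El$. Combining these identifications, $\Ml^{\OSpt}$ is the substack of $\Hom(\Ul\otimes\Ul,\Ol)$ consisting of non-degenerate symmetric bilinear forms on $\Ul$, as claimed.

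The main obstacle is bookkeeping rather than a mathematical hurdle: ensuring that the natural transformation $\alpha$ chosen in the definition of the antiinvolution on $\Coh(X)^{\heartsuit}$ matches the honest evaluation pairing used implicitly in the statement, so that the cocycle condition becomes symmetry (rather than antisymmetry, which would appear if one instead took $\alpha=-\ev$ or worked with the shifted heart as in the preceding discussion). Once this sign convention is fixed, every other step is a formal consequence of Proposition~\ref{lem:fixed_is_osp} and the adjunction.
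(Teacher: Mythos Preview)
The paper does not supply a proof for this lemma; it is stated immediately after the example computing orthosymplectic structures on $\Coh(X)^\heartsuit$ and is evidently meant to follow from that discussion together with Proposition~\ref{lem:fixed_is_osp}. Your proposal makes exactly this argument explicit: you invoke Proposition~\ref{lem:fixed_is_osp} to rewrite the cocycle condition as braided symmetry, and you globalise via tensor--Hom adjunction and the universal sheaf. This is the intended route, and your sketch is correct.

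One small remark: you correctly flag the sign bookkeeping around $\alpha$ as the only nontrivial point. Note that the paper's own preceding example contains what appears to be a labelling slip (``bilinear (i.e.\ symplectic)'' for the unshifted case versus ``antisymmetric \ldots\ orthogonal'' for the shifted case), so your care here is warranted. With the convention $\alpha=\ev$ fixed, the symmetry in the lemma statement is what drops out, and your argument goes through without further obstruction.
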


\subsection{Compactification of stack of classical type bundles}

\subsubsection{Stability on vertial wall} 
Now we restrict to case of abelian subcategories of the $D^b(\Coh(X))$ which are preserved under $\Dt$ which will allow for the construction of proper moduli stacks of orthosyplectic objects. 

Let $(S,H)$ be a polarized smooth projective surface.

Given a class $B \in N^1(S)_{\Rb}$ define $\ch^B = e^{-B} \ch$ and $\mu_B(E)$ the twisted slope of $E \in K_{num}(S)$ , which is the slope of the formal class $[E\otimes \Ll_{-B}] \in K_{num}(S)_{\Rb}$ with chern class $\ch_B(E)$. 

Let $\alpha > 0$ and $\beta\in \Rb$ be fixed and consider the Bridgeland stability conditions $\sigma_{\alpha, \beta}$ with heart $\Coh^\beta(S) = \langle \Fl_\beta[1], \Tl_\beta\rangle$ where $\Tl_\beta$ is generated by slope semistable sheaves $F$ of slope $\mu_B(F) > \beta$ and $\Fl_\beta$ is genrated by slope semistable sheaves of slope $\mu_B(F) \le \beta$. 

Let $\sigma_{\alpha,\beta} \in \operatorname{Stab}^\dagger(S)$ denote a Bridgeland stability condition on the vertical wall from \cite{Ta} whose heart $\Pl(0,1]$ is $\Coh^\beta(S)$ and whose stability function is 
\[ Z_{\alpha, \beta}(E) = -\int_X(e^{-\beta H + i \alpha H} \ch^B(E)).\]

Given $v$, we fix $\alpha> 0$ and 
\[ \beta = \beta_0 = \frac{ H\cdot \ch_1^B(v)}{H^2 \ch_0(v)} \] 
where the phases of (shifts of) dimension zero sheaves agree with those of sheaves of class $v = (r,0,d)$. Let $\sigma = \sigma_{\alpha, \beta_0}(v)$ denote the resulting stability condition.

 The stack $M_{\sigma}(v)$ is preserved under the derived duality functor $\Dt: \Ml_{\Al_\sigma}(r, 0, d) \to \Ml_{\Al_\sigma}(r, 0, d)$, giving rise to an action of $\Zb/2$ on $M_{\sigma}(v)$.   

\begin{defn} 
Let $(\El,\eta) \in \Al$ denote an orthosymplectic object. An \emph{isotropic} subobject $\Ll$ is an object such that the induced map $\Ll\otimes \Ll\to \El\otimes \El \to \Ol_S$ vanishes. 
\end{defn} 

Let $\Ml_{\sigma}(r,0,d)^{\Dt}$ denote the stack of orthosymplectic objects in $\Ml_{\sigma}(r,0,d)$
and $\Ml_{\sigma}(r,0,d)^{[1] \circ \Dt\circ [1] }$ the set of shifted orthosymplectic objects. Concretely we have an idenitifcation of $T$-points of $\Mt_{\sigma}(r,0,d)^{\Dt}$ with 
\[ \Ml_{\sigma}(r,0,d)^{\Dt}(T) = \{ (\El, \eta)  \mid \El \in \Al_{\sigma,T}, \eta: \El \xrightarrow{\sim} \Dt(\El) \text{ antisymmetric } \} \] and an identification  
\[ \Ml_{\sigma}(r,0,d)^{[1]\circ\Dt\circ[1]}(T) = \{ (\El, \eta)  \mid \El \in \Al_{\sigma, T}, \eta: \El \xrightarrow{\sim} \Dt(\El) \text{ symmetric }. \} \]

\begin{defn} 
An orthosymplectic object $\El\in M_{\sigma}(v)^{\Dt} $ is $\sigma$-(semi)stable if for every isotropic subobject $\Ll \to \El$ we have $\varphi(\Ll) < (\le) \varphi(\El)$.
\end{defn} 
Let \[\Ml_{\sigma}(v)^{\Dt,{ss}} \subset \Ml_{\sigma}(v)^{\Dt}\] denote the substack of semistable objects. This is an open substack becau destabilizing sequences are preserved under specialization on account of the fact that the specialization of an isotropic subobject is isotropic. 

For classical type vector bundles, we likewise have notions of $\mu$-(semi)stable $\Gt$-bundles where destabilizing subbundles are required to be isotropic. 

\begin{prop} \label{prop:bridgeland_polystable}
    If $E$ is a $\sigma$-polystable orthosympleictic object in $\Ml_\sigma(r,0,d)^{\Dt}$ then $E$ is isomorphic to 
    \[ E \ \simeq \ (\oplus_{i = 1}^k  G_i \oplus G_i^\vee) \bigoplus H \bigoplus \Ol_Y[-1] \] where $\Gt_i$ are $\mu$-stable locally free sheaves, $H$ is a $\mu$-stable locally free symplectic sheaf and $\Ol_Y[-1]$ is an orthosymplectic torsion sheaf, so that $H^0(\Ol_Y)$ carries an orthogonal form. 
\end{prop}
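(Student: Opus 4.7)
The plan is to exploit the polystability decomposition together with Schur's lemma for $\sigma$-stable objects on the vertical wall. Since $E$ is $\sigma$-polystable, we may write $E \simeq \bigoplus_{i \in I} E_i^{\oplus n_i}$ with $E_i$ pairwise non-isomorphic $\sigma$-stable objects of common phase $\varphi(E)=\varphi(v)$. The derived dual $\Dt$ preserves the heart $\Al_\sigma$ at the vertical wall and preserves phases, so it induces an involution $\iota: I \to I$ with $\Dt(E_i) \simeq E_{\iota(i)}$ and $n_i = n_{\iota(i)}$. Writing $\eta: E \to \Dt E$ as a block matrix with components $\eta_{ij}: E_i^{\oplus n_i} \to \Dt(E_j)^{\oplus n_j} \simeq E_{\iota(j)}^{\oplus n_j}$, Schur's lemma for stable objects of equal phase forces $\eta_{ij}=0$ unless $\iota(j)=i$.

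Next I would analyze the orbits of $\iota$ on $I$. For a length-two orbit $\{i,\iota(i)\}$ with $i \neq \iota(i)$, the nondegenerate block between $E_i^{\oplus n_i}$ and $E_{\iota(i)}^{\oplus n_i}$ can be diagonalised by change of basis, producing $n_i$ copies of a hyperbolic summand $G \oplus G^\vee$ with $G=E_i$. For a fixed point $i=\iota(i)$, Schur yields $\Hom(E_i,\Dt E_i) \simeq k$, so the restricted form has a canonical parity. The defining antisymmetry constraint $\eta^\vee = \alpha^{-1}(\eta)$, combined with the sign $\alpha_{-1,-1}(\El)_j = (-1)^j$ recorded in Section \ref{ssec:W_sign_invrariance}, then forces two mutually exclusive possibilities: either $E_i$ is a torsion-free locally free sheaf carrying an honestly antisymmetric form (a symplectic vector bundle, contributing to the factor $H$), or $E_i$ is a shift $\Ol_Z[-1]$ of a zero-dimensional sheaf whose shifted antisymmetric form corresponds under $[1]$ to a symmetric (orthogonal) form on $H^0(\Ol_Z)$.

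Finally I would invoke the classification of $\sigma$-stable objects at the vertical wall from \cite{Ta}: under the standing hypothesis that $S$ carries no pure one-dimensional coherent sheaves of the relevant Chern class, the $\sigma$-stable objects in $\Al_\sigma$ of phase $\varphi(v)$ are exactly the $\mu$-stable torsion-free sheaves of slope $\beta_0$ together with shifts $\Ol_Z[-1]$ of zero-dimensional subschemes. Aggregating the contributions from the orbit analysis of $\iota$ then assembles $E$ into the stated form, with $H$ the sum of all symplectic fixed-point summands and $\Ol_Y[-1]$ the union of the torsion fixed-point summands equipped with the induced orthogonal form on $H^0(\Ol_Y)$. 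The main obstacle will be the sign bookkeeping for shifted pairings that rules out orthogonal locally free summands and symplectic torsion summands in $\Ml^{\Dt}$ (as opposed to $\Ml^{[1]\circ\Dt\circ[1]}$); once these signs are pinned down using the cocycle identity for $\Dt$, the rest of the argument reduces to the orbit calculus above.
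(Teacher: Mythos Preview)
The paper states this proposition without proof, so there is no argument to compare against. Your outline is the natural route and is essentially how such classification results are proven: decompose the polystable object into stables, observe that $\Dt$ permutes the isotypic pieces, and analyse the orbits of the induced involution using Schur's lemma together with the classification of $\sigma$-stable objects on the vertical wall from \cite{Ta}.

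Two points deserve more care. First, your cross-reference is off: the sign computation $\alpha_{-1,-1}(\El)_i=(-1)^i$ is recorded in the perfect-complexes example, not in Section~\ref{ssec:W_sign_invrariance}. More substantively, you should explain why the torsion-free stable summands $G_i$ and $H$ are automatically \emph{locally free}. On a surface, a $\mu$-stable torsion-free sheaf $F$ that is not locally free has $\Dt F$ with nontrivial $H^1$ (the sheaf $\Extl^1(F,\Ol_S)$ supported on the singular locus of $F$), so $\Dt F$ fails to lie in the heart $\Al_\sigma$ as a single object; hence such $F$ cannot occur as a summand of a self-dual polystable object. This step is implicit in your appeal to \cite{Ta} but is exactly what forces local freeness in the statement. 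Second, at a fixed point $i=\iota(i)$ you should handle multiplicities $n_i>1$: the induced form on the multiplicity space $k^{n_i}$ has sign opposite to the intrinsic sign of the generator of $\Hom(E_i,\Dt E_i)\simeq k$, and in the ``wrong-sign'' case the summand $E_i^{\oplus n_i}$ contributes hyperbolic pairs $E_i\oplus E_i^\vee$ rather than a copy of $H$ or $\Ol_Y[-1]$. Once these two refinements are in place, your orbit calculus yields the claimed decomposition.
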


A sign shifted result holds for $\Ml_\sigma(r,0,d)^{\Dt}$. We expect that $\Ml_\sigma(r,0,d)^{\Dt}$ and $\Ml_\sigma(r,0,d)^{[1]\circ \Dt\circ [1]}$ are $\Theta$-reducive and $S$-complete and so admit good moduli spaces $M_\sigma(r,0,d)^{\Dt}$ and $M_\sigma(r,0,d)^{[1]\circ \Dt\circ [1]}$. Proposition \ref{prop:bridgeland_polystable} therefore provides a description of the closed points of $M_\sigma(r,0,d)^{\Dt}$ and $M_\sigma(r,0,d)^{[1]\circ \Dt\circ [1]}$ which agree with the closed points of the Uhlenbeck compactificaiton of $\Gt$-bundles for classical type $\Gt$. 

\subsection{Action of dimension zero CoHA}

\subsubsection{} 
Let $ \Hl_{\Sp(2r)} =  \Ht^{\BM}_\sbt(\bigsqcup_d \Ml_{\Al_{\sigma}}(2r,0,d)^{\Dt},ss)$ and let $\Hl_{\Ot(k)} =  \Ht^{\BM}_\sbt(\bigsqcup_d \Ml_{\Al_{\sigma}}(2r,0,d)^{[1]\circ \Dt\circ[1]},ss).$ Let $(\Hl_{0}, \star)$ denote the cohomological Hall algebra of dimension zero sheaves. 

In the context of moduli of orthosymplectic perverse coherent sheaves, our construction provides the following. 

\begin{theorem} \label{thm:Surface}
The isotropic extension correspondence gives $\Hl_{G}$ the structure of a left $\Hl_{0}$-module for $G \in \{ \Sp(2r), \Ot(2r), \Ot(2r + 1) \}$. 
\end{theorem}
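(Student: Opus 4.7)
The plan is to realise this as a direct instance of the orthosymplectic CoHA module construction of Theorem \ref{thm:coham}, applied to the pair $(\Ml,\tau)$ where $\Ml = \bigsqcup_d \Ml_{\Al_\sigma}(2r,0,d)^{(\Dt \textup{ or } [1]\Dt[1])}$ sits inside the full Bridgeland moduli stack, with $\Ml_0 = \bigsqcup_e \Ml_{\Al_\sigma}(0,0,e)$ the substack of dimension-zero objects (which is $\tau$-stable since $\Dt$ sends dimension-zero sheaves to shifts of dimension-zero sheaves). First I would verify Assumption \ref{ass:StkCoHAM} for the ambient Bridgeland moduli stack: the correspondence $\SES$ of short exact sequences in $\Al_\sigma$ carries the desired involution $\tau_\SES$ induced by derived duality (section \ref{ssec:OSpCorrespondences}), $q$ is quasi-smooth because $\Al_\sigma$ is the heart of a stability condition on a smooth surface (Ext's vanish above degree $2$), and $p$ is proper on the locus where the middle term has bounded Gieseker-type invariants.

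Next I would specialise the general orthosymplectic correspondence \eqref{fig:OSpCorr} to the $\Ml_0$-module structure: the relevant subcorrespondence is
\[
\begin{tikzcd}[row sep = {30pt,between origins}, column sep = {45pt, between origins}]
& \SES_3^{\tau,\textup{iso}} \ar[rd,"p^{\textup{iso}}"] \ar[ld,"q^{\textup{iso}}"'] & \\
\Ml_0 \times \Ml^{\tau,\textup{ss}} & & \Ml^{\tau,\textup{ss}}
\end{tikzcd}
\]
parametrising orthosymplectic extensions $0 \to L \to E \to E/L \to L^* \to 0$ with $L \in \Ml_0$ and $E, E/L$ orthosymplectic semistable. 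The key analytic content is to check that semistability is preserved under isotropic extension by dimension-zero sheaves: if $E/L$ is $\sigma$-semistable orthosymplectic and $L$ is a shift of a dimension-zero sheaf (so $\varphi(L) = \varphi(E)$ on the vertical wall by the choice of $\beta_0$), then any destabilising isotropic $\Ll \subseteq E$ must interact with the filtration $L \subseteq E$ in a way that produces a destabilising isotropic subobject of $E/L$, contradicting semistability. This is the analogue of the standard fact for framed/rank $r$ moduli, using that the phase of $L$ equals that of $E$ and that isotropy is hereditary.

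With semistability preserved, $p^{\textup{iso}}$ becomes proper on the semistable locus (proper restriction of a proper map in the ambient stack), and $q^{\textup{iso}}$ inherits quasi-smoothness from Proposition \ref{prop:InheritedProperties} applied to the threefold flag correspondence $\SES_3$. The action on Borel-Moore homology is then defined by the pull--push $p^{\textup{iso}}_* \cdot q^{\textup{iso},*}$, with the associativity and unit axioms inherited from the threefold flag correspondence exactly as in the proof of Theorem \ref{thm:coham}, and compatibility with the product on $\Hl_0$ coming from $\Ml_0$ being a $\tau$-invariant subalgebra of $\bigsqcup_v \Ml_{\Al_\sigma}(v)$ on which $\tau$ acts trivially on $K$-class (so the left and right action via $\Delta_{12}$ and $\Delta_{23}$ coincide up to $\tau$, as in section \ref{ssec:OSpCorrespondences}).

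The main obstacle will be the verification that isotropic extension by dimension-zero objects preserves $\sigma$-semistability in the orthosymplectic sense: this requires carefully matching phases on the vertical wall and checking that the isotropy condition interacts well with the subobjects coming from the filtration, and unlike the ordinary (non-orthosymplectic) case one cannot simply invoke the standard Jordan--H\"older argument but must use Proposition \ref{prop:bridgeland_polystable} to decompose polystable orthosymplectic objects and track which summands can contribute destabilising isotropic subobjects. Once this is established, the module structure follows formally from Theorem \ref{thm:coham} and properness of $p^{\textup{iso}}$ on the semistable locus.
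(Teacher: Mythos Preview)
Your plan is essentially the paper's: reduce to the general orthosymplectic CoHA module construction (Theorem \ref{thm:coham}) and then check that the isotropic-extension correspondence restricts to the semistable locus, which amounts to showing that isotropic extension by dimension-zero objects preserves $\sigma$-semistability. You have correctly identified both the key issue and the key reason---that $\sigma$ lies on the vertical wall where dimension-zero sheaves share the phase of all objects of class $(2r,0,d)$---but you overestimate the work required for what you call the ``main obstacle''. The paper dispatches it in a single sentence: since the phases agree, an isotropic extension by a dimension-zero object cannot destabilise either term, so semistability of $E$ and of $L^\perp/L$ are equivalent. There is no need to invoke Proposition \ref{prop:bridgeland_polystable} or to track summands through a Jordan--H\"older--type decomposition; once this is observed, properness of the restricted $p$ is immediate as the base change of the ambient proper map along the open inclusion of semistable objects.
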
 
\begin{proof} 
We only need to show that properness of $p$ is preserved when we pass to the open substack $\Ht^{\BM}_\sbt(\bigsqcup_d \Ml_{\Al_{\sigma}}(2r,0,d)^{\Dt},ss)$ of semistable orthosympleictc perverse coherent sheaves.  But because $\sigma$ lies on the wall in which dimension zero sheaves have the same phase as all objects in $\Ml_{\Al_{\sigma}}(2r,0,d)^{\Dt}$ it is impossible for isotropic extensions by dimension zero sheaves to destabilize terms in the extension. 
\end{proof}

\subsection{ADHM construction of orthosymplectic instantons}

\subsubsection{Orthsymplectic tautological complex} \label{sssec:OSpTautologicalComplex} Consider the category $\Al$ of perverse-coherent sheaves on $\Ab^2$, or equivalently representations of the ADHM quiver
\begin{center}
  \begin{tikzpicture}[rotate=90]
    \node[circle, draw, inner sep=4pt] (dot1) at (0,0) {};
    \node[circle, draw, inner sep=4pt, pattern=north west lines, pattern color=black!30] (dot2) at (0,2) {};
  
    \draw[->, in=-10, out=-80, looseness=12] (dot1) to (dot1);
    \draw[->, out=-100, in=-170, looseness=12] (dot1) to (dot1);
  
  \draw[->] (0.1,0.3) -- (0.1,1.7);
  \draw[<-] (-0.1,0.3) -- (-0.1,1.7);
  \end{tikzpicture}
\end{center}
We have the following taulogical perfect complex over $\Ml$. Let $\mathcal V$ and $\mathcal W$ be the tautological vector bundles on $\Ml^{\OSpt, fr}$, and let 
\[ C^\bullet = [\mathcal V \to \mathcal V\oplus \mathcal V \oplus  \mathcal W \to \mathcal V ] \] 
be the tautological complex. 

\subsubsection{Linear tautological complex} 
In the non-orthosympletic case, let $C^\bullet_{\GL}$ be the similar complex on $\Ml^{\textup{fr}}$. Then the coefficients of 
\[ \varphi(w) := \frac{c_{-1/w}(C^\bullet_{\GL})}{c_{-1/w}(\hbar\otimes C^\bullet_{\GL})} \]
act via the loop cartan generators of the extended CoHA.

\subsubsection{} We now want to write the analogue for the orthosympletic case.

Let $\Ml_0$ be the stack of zero-dimensional torsion sheaves. On $\Ml_0$ we have the same complex $C^\bullet_{\GL}$ with $V = 0$. Define  
\[ C^{\bullet}_{\OSpt} = C^{\bullet}_{\GL} \oplus \Dt(C^{\bullet}_{\GL}), \] 
where we have implicitly restricted to $\Ml^{OSp,fr}_0$.

The action of the the CoHA on orthsymplectic modules does not extend to the usual extended CoHA but rather to the algebra extended by the action of coefficients of 
\[ \varphi(w) := \frac{c_{-1/w}(C^\bullet_{\OSpt})}{c_{-1/w}(\hbar\otimes C^\bullet_{\OSpt})}. \]

\newpage
\section{Shuffle modules and Yetter-Drinfeld condition} \label{sec:TwistedYangians} 

\noindent In this section, we will give shuffle formulas for the CoHA action and vertex coaction on $\Ht^\sbt(\Ml^{\textup{\OSpt}},\varphi)$ for quivers using equivariant torus localisation. Using an intertwining map from the orthosymplectic CoHA action in the cases of a quiver with potential and for the preprojective algebra we give shuffle formulas for the orthosymplctic action in these cases.

We then give evidence for a conjecture relating these structures to twisted Yangians.

\subsubsection{Remarks on the relationship to $\iota$-Hall algebras and $\iota$-quiver representation} 

Twisted Yangians and other $\iota$-quantum groups arising in geometric representation theory are coideal subalgebras $\Il \subseteq \Al$ of Hopf algebras $\Al$, satsifying $\Delta(\Il) \subset \Il \otimes \Al$. Dually, the orthosymplectic representations in this paper are instead representations of bialgebras and therefore correspond to representations of a \emph{quotient} of a (vertex) bialgebra $\Al$. 

While we don't resolve the discrepancy between our narrative the more familiar one, we now make a brief remark in Lemma \ref{lem:HopfRadical} on algebra ideals that arise in the presence of a coideal subalgebra and a Hopf pairing.

To set up notation for Lemma \ref{lem:HopfRadical}, let $(H, \cdot, \Delta, S, \epsilon ,1)$ be a Hopf algebra with a Hopf pairing $\langle\ ,\ \rangle$, i.e. a bilinear pairing satisfying
\[ \langle f \cdot g, h \rangle = \langle f\otimes g, \Delta h\rangle \] 
with the pairing extended to $H \otimes H$ multiplicatively. Note that a Hopf pairing is equivalent to a choice of quasitriangular structure on the Hopf algebra $H$. Suppose $T \subset H$ is a left coideal subalgebra so that $\Delta|_{T} : T \to H \otimes T$ gives the coideal structure.  Finally, write $K$ for the right radical of $T$, so that 
\[ K = \{ f \in H \ :\  \langle t, f \rangle = 0\ \forall t \in T \}\]

\begin{lem} \label{lem:HopfRadical}
  The quotient $M = H/K$ has the structure of an $H$-module naturally dual to $T$. 
\end{lem}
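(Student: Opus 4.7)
The plan is to show first that $K$ is a left ideal of $H$, which gives $M=H/K$ the structure of a left $H$-module by left multiplication, and then verify that the induced pairing $T\times M\to k$ realises $M$ as a (module-theoretic) dual of $T$.

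First I would prove $K$ is a left ideal. Given $h\in H$ and $f\in K$, I must check $\langle t, h\cdot f\rangle = 0$ for every $t\in T$. Using the Hopf pairing axiom dual to the given one (pairing coproduct on the left against product on the right, which follows from the quasitriangular interpretation of $\langle\ ,\ \rangle$), I compute
\[
  \langle t, h\cdot f\rangle \ = \ \langle \Delta t, h\otimes f\rangle \ = \ \sum \langle t_{(1)}, h\rangle\,\langle t_{(2)}, f\rangle.
\]
Because $T$ is a left coideal subalgebra, $\Delta t\in H\otimes T$, so each $t_{(2)}\in T$, and hence $\langle t_{(2)},f\rangle = 0$ by definition of $K$. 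This shows $hf\in K$, so $K$ is a left ideal and $M=H/K$ carries a left $H$-module structure induced from left multiplication.

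Next I would observe that the Hopf pairing descends to a well-defined bilinear form
\[
  \overline{\langle\ ,\ \rangle} \ : \ T\times M\ \to\ k, \qquad (t, \overline{f}) \ \mapsto \ \langle t, f\rangle,
\]
well-defined precisely by the definition of $K$ as the right radical of $T$, and non-degenerate on the $M$-side by construction. This realises $M$ as a quotient of $H$ naturally paired with $T$; if one works in a setting where Hopf pairings are assumed non-degenerate on both sides (or passes to the appropriate graded/topological completion), this pairing identifies $M$ with the linear dual of $T$.

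Finally I would check the module-comodule compatibility, which is the content of ``naturally dual''. The computation above can be read as
\[
  \overline{\langle t, h\cdot f\rangle} \ = \ \sum \langle t_{(1)}, h\rangle\,\overline{\langle t_{(2)}, f\rangle},
\]
which is precisely the statement that the left $H$-action on $M$ is dual, under $\overline{\langle\ ,\ \rangle}$, to the left $H$-coaction $\Delta|_T:T\to H\otimes T$ giving $T$ its coideal subalgebra structure. The main (and essentially only) obstacle is the compatibility of the two variants of the Hopf pairing axiom under the antipode; this is standard but should be stated explicitly, as the proof above uses the variant $\langle a, bc\rangle = \langle \Delta a, b\otimes c\rangle$ rather than the one written in the setup of the lemma.
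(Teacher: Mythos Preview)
Your proof is correct and follows essentially the same route as the paper: show $K$ is a left ideal via $\langle t, h\cdot f\rangle = \langle \Delta t, h\otimes f\rangle$ together with $\Delta t\in H\otimes T$, then observe the pairing descends to $T\times M$. Your write-up is in fact more complete than the paper's, which stops after the left-ideal computation and the one-line remark that the pairing descends; you additionally spell out the module--comodule duality and correctly flag that the argument uses the variant $\langle a,bc\rangle=\langle\Delta a,b\otimes c\rangle$ of the Hopf pairing axiom rather than the one displayed in the setup.
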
 
\begin{proof} 
The radical  $K$ is a left $H$-ideal because 
\[ \langle t, h \cdot f \rangle = \langle \Delta t , h\otimes f \rangle =\langle t_1 \otimes t_2 , h\otimes f \rangle = 0\]
since $t_2\in T$. The pairing $\langle-,-\rangle$ induces the pairing between $T$ and $M$. 
\end{proof}

\subsection{Notation: equivariant quivers with potential}

\subsubsection{} 

Let $Q=(Q_0,Q_1)$ be a quiver with an involution $\tau$; any such may be drawn as e.g.
\begin{center}
  \begin{tikzpicture}
  \begin{scope} 
   [rotate=-90] 
   \draw[dashed] (0,-3.5) -- (0,3.5);
   \begin{scope} 
   \clip (0,0) circle (2.2);
   \filldraw[] (0,0) circle (1.5pt);
   \filldraw[] (1,1) circle (1.5pt);
   \filldraw[] (-1,1) circle (1.5pt);
   \filldraw[] (0,1.3) circle (1.5pt);
   \filldraw[] (0,-1.3) circle (1.5pt);
   \draw[->] (0.1,0.1) -- (0.9,0.9);
   \draw[->] (-0.1,0.1) -- (-0.9,0.9);
   \draw[->] (0,0.14) -- (0,1.16);
   \draw[->] (0.07,-0.14) to[out=-90+20,in=90-20] (0.07,-1.16);
   \draw[->] (-0.07,-0.14) to[out=-90-20,in=90+20] (-0.07,-1.16);
   \draw[->] (0.1+0.5,1.4+0.7) -- (0.1,1.4);
   \draw[->] (-0.1-0.5,1.4+0.7) -- (-0.1,1.4);
   \draw[->,looseness=10] (1,1+0.14) to[out=80,in=10] (1+0.14,1);
   \draw[->,looseness=10] (-1,1+0.14) to[out=80,in=180-10] (-1-0.14,1);
   \draw[->,looseness=10]  (-0.1,-1.3-0.1) to[out=-135+10,in=-45-10]  (0.1,-1.3-0.1);
 
    \end{scope}
    \node[] at (0.9,2.5) {\rotatebox{145}{$\cdots$}};
    \node[] at (-0.9,2.5) {\rotatebox{215}{$\cdots$}};
  \draw[<->,thick] (0.5,4) -- (-0.5,4);
  \draw[<->,thick,white] (0.5,-4) -- (-0.5,-4);
   \end{scope}
  
   \end{tikzpicture}
   \end{center}
where $\tau$-fixed nodes and edges are drawn along the reflection axis. In addition, let $\mathrm{wt}:Q_1 \to \Zb^r$ be a weighting of the edges the conditions in \cite{YZ}. Note that these conditions are preserved under $\tau$. Finally, let $W\in \Cb Q/[\Cb Q,\Cb Q]$ be a potential, which is invariant under the involution, i.e. 
$$\tau^*W\ =\ W.$$

\subsubsection{Remark} One may view the above as a groupoid-valued quiver $Q/\tau=(Q_0/\tau,Q_1/\tau)$, i.e. a pair of groupoids with source and target maps. This raises interesting questions about groupoid quivers which are not global quotients, which we will not address.

\subsubsection{Unbalanced involutions} \label{ssec:unbalanced_involutions}

Forseeing applications to folding of non-simply laced quivers, and serving as the most efficient way to write down contributions from arrows even in the balanced case, we study a generalization of of the involution $\tau$. We let $Q^{\textup{op}}$ denote the opposite quiver and consider an isomorphism 
\[ \theta: Q \to Q^{\textup{op}}.\] The composition 
\[ \tau = \Dt\circ \theta: {\Rep Q}\to {\Rep Q}\]
defines an weak involution. 

All of the balanced involutions $\theta : Q\to Q$ arise via this construction and the involutive balancing isomorphism $b:Q \simeq Q^{\textup{op}}$ so we assume in the balanced case a fixed balancing isoorphism and define $\theta = \tau\circ b$.

\subsubsection{Representations} Consider a quiver $Q  = (Q_0, Q_1)$ which arises as the double or triple of a quiver and an involution $\tau$. Assume $Q^\tau$ is either bipartite or the Jordan quiver so that $\tau$ defines an duality map on $Q$, and define a sign map $\textup{sgn}: Q_0^\tau \to \Zb/2$. We may pull back any quiver representation along the involution and compose with dualisation  and define an involution on $\Rep Q$:
$$\tau \ :\ \Ml_{\Rep Q}\ \stackrel{\sim}{\to}\ \Ml_{\Rep Q}, \hspace{15mm} V \ \mapsto \ (\tau^* V)^\vee.$$
We define a \textit{representation} of a $\tau$-equivariant quiver $Q$ to be a point in the fixed stack $\Ml_{\Rep Q}^\tau$. Note that $\tau^*\tr W=\tr W$ and so we may define the invariant the vanishing cycle functor as $\varphi_W^\tau= \varphi_{\tr W \vert_{\Ml^\tau}}$.

  As a stack, we have 
  $$ \Ml_{\Rep Q} \ = \ \prod_{i \stackrel{e}{\to} j} \Hom(\El_i,\El_j) \ = \ \prod_{d \in \Nb^{|Q_0|}, \, e} \Hom(\Cb^{d_i},\Cb^{d_j})/ \smprod_i \GL_{d_i}$$
   as the total space of a vector bundle over $\prod_{i\in Q_0}\BGL$, where again $\El_i$ is the tautological vector bundle over the $i$th factor.

   \begin{equation}\label{fig:QuotientQuiver}
    \begin{tikzpicture}
    \begin{scope} 
      [rotate=-90]
      \draw[dashed] (0,-2.2) -- (0,2.5);
      \begin{scope} 
      \clip (0,0) circle (2.2);
      \filldraw[] (0,0) circle (2.5pt);
      \filldraw[] (-1,1) circle (1.5pt);
      \filldraw[] (0,1.3) circle (2.5pt);
      \filldraw[] (0,-1.3) circle (2.5pt);
      \draw[->] (-0.1,0.1) -- (-0.9,0.9);
      \draw[-,line width = 1.5pt] (0,0.14) -- (0,1.16);
      \draw[->,thick] (0,0.14) -- (0,1.16);
      \draw[->] (-0.07,-0.14) to[out=-90-20,in=90+20] (-0.07,-1.16);
      \draw[->] (-0.1-0.5,1.4+0.7) -- (-0.1,1.4);
      \draw[->,looseness=10] (-1,1+0.14) to[out=80,in=180-10] (-1-0.14,1);
      \draw[-,line width = 1.5pt,looseness=10]  (-0.1,-1.3-0.1) to[out=-135+10,in=-45-10]  (0.1,-1.3-0.1);
      \draw[->,thick,looseness=10]  (-0.1,-1.3-0.1) to[out=-135+10,in=-45-10]  (0.1,-1.3-0.1);
    
       \end{scope}
       \node[] at (-0.9,2.5) {\rotatebox{215}{$\cdots$}};
    \end{scope}
     \end{tikzpicture}
     \end{equation}

   The stack of $\tau$-equivariant representations is thus
  $$\Ml_{\Rep Q}^\tau \ = \ \prod_{i/\tau \stackrel{e}{\to} j/\tau} \Hom(\El_{i/\tau},\El_{j/\tau})^\tau$$
  where the product is over $\tau$-orbits of edges $e$. Noteice that 
  $$\Ml^\tau_{\Rep Q} \ = \ \Rep(Q)^{\OSpt}/ \smprod_{i}\Gt_i \ $$
  where $\Gt_i$ is the disjoint union over a family of classical groups $\Gt_{i,0} \subseteq \Gt_{i,1} \subseteq \cdots$ attached to every node $i\in Q_0/\tau$, where the classical sequence is the $\GL(k)$ sequence if the node $i$ is not $\tau$-fixed and is one of the sequences $\Ot(2k), \Ot(2k+1), \Spt(2k)$ depending on $\textup{sgn}(i)$ and a choice of dimension parity if $\textup{sgn}(i) = 0$. 
  
   



   We likewise define for an unbalanced involution from \ref{ssec:unbalanced_involutions} the stack $\Ml^\tau_{\Rep Q}$.

   \subsubsection{Euler classes and extensions} Let $\Lt_{i,a,b}$ denote the Levi and $\Pt_{i,a,b}$ the parabolic of a pair of classical groups $\Gt_{i,a} \subset \Gt_{i,b}$, so that $\Lt_{i,a,b} \simeq \Gt_{i,a} \times \GL((b -a)/2)\subset \Gt_{i,b}$.   The moduli stack of extensions of a $\tau$-equivariant representation by an ordinary representation is then 
   $$\Ml_{\SES}^\tau \ = \ \Rep_{\SES}(Q)^{\OSpt}/\Pt $$
   where $\Rep_{\SES}(Q)^{\OSpt}$ is the stack of $\tau$-invariant 3-step exact sequences of representations of $Q$.

   We now give an equation for the equivariant Euler class to the tangent complex of the projection $q:\Ml_{\SES}^\tau\to \Ml^\tau\times \Ml$. Given an arrow $e \in Q_1/\tau$ let ${}_e Q$ denote the quiver ${}_e Q = (Q_1, \{e , \overline{(\theta(e))}\}) \subseteq Q$ consisting of $e$ and the opposite of $\theta(e)$. The quiver ${}_e Q$ is the smallest quiver with unbalanced involution containing $e$ and all the vertices of $Q$. 

   \begin{lem}
     The equivariant Euler class of $\Tt_q$ is  
     \begin{equation}
       \label{eqn:EulerClassSimple} 
       e(\Tt_q) \ = \ \prod_{e\in Q_1/\tau}e( \Rep_{\SES}({}_e Q)^{\OSpt} ).
     \end{equation}
   \end{lem}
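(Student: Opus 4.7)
The plan is to identify $\Tt_q$ explicitly as a sum over $\tau$-orbits of edges, then invoke multiplicativity of Euler classes over direct sums of perfect complexes.

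First I would unpack the local structure of $q$ at the level of the ambient (non-orthosymplectic) stacks. Recall that $\Ml_{\Rep Q} = \prod_{e \in Q_1} \Hom(\El_{s(e)}, \El_{t(e)}) / \prod_i \GL_{d_i}$ is a vector bundle over $\prod_i \BGL_{d_i}$, and similarly $\Ml_{\SES(Q)}$ is a vector bundle over $\prod_i \Bt P_{d_i',d_i''}$ whose fiber parametrizes the edge maps compatible with the chosen filtration. The projection $q_{\GL}: \Ml_{\SES(Q)} \to \Ml_{\Rep Q} \times \Ml_{\Rep Q}$ is quasismooth, and because everything is assembled edge-by-edge, its relative tangent complex decomposes as $\Tt_{q_{\GL}} = \bigoplus_{e \in Q_1} \Tt_{q_{\GL,e}}$ where $q_{\GL,e}$ is the corresponding projection for the one-edge subquiver built from $e$.

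Next, I would pass to $\tau$-fixed points using the identification \eqref{eqn:RelativeCotangentFixedPoints}: for any $\tau$-equivariant morphism of stacks $f$, we have $\Tt_{f^\tau} = (\iota^*\Tt_f)^\tau$. Since $q$ is the $\tau$-fixed-point morphism of the $\tau$-equivariant $q_{\GL\text{-}\GL\text{-}\GL}: \Ml_{\SES_3}\to \Ml^{\times 3}$ (or rather of its threefold version as in section~\ref{ssec:OSpCorrespondences}), and since the involution $\tau$ acts by permuting arrows via $\theta$ and dualizing, the decomposition by edges is $\tau$-equivariant after grouping each $e$ with $\overline{\theta(e)}$. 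Taking $\tau$-invariants commutes with the direct sum decomposition, giving
\[
\Tt_q \ \simeq \ \bigoplus_{e \in Q_1/\tau} \Tt_{q_e},
\]
where $q_e$ is exactly the projection for the quiver ${}_eQ$ with its induced unbalanced involution, and the summand on the right side at an orbit $\{e, \overline{\theta(e)}\}$ equals $\big(\Tt_{q_{\GL,e}} \oplus \Tt_{q_{\GL,\overline{\theta(e)}}}\big)^\tau$.

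Finally, multiplicativity of equivariant Euler classes over direct sums of perfect complexes yields
\[
e(\Tt_q) \ = \ \prod_{e \in Q_1/\tau} e(\Tt_{q_e}) \ = \ \prod_{e \in Q_1/\tau} e\bigl(\Rep_{\SES}({}_eQ)^{\OSpt}\bigr),
\]
where the last equality just re-expresses $\Tt_{q_e}$ as the equivariant tangent complex of the one-edge orthosymplectic extension stack. The main subtlety will be a careful case analysis when $\tau(e) = e$ or $\theta$-orbits of length one: there the involution acts on a single $\Hom$ factor by a composition of transpose and sign, and one must verify that the resulting invariants still match the definition of $\Rep_{\SES}({}_eQ)^{\OSpt}$. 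Since ${}_eQ$ was defined so as to absorb this data, the match is essentially tautological, but it is the step where care is required.
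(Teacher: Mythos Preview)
Your overall strategy—decompose the relative tangent complex edge-by-edge, pass to $\tau$-fixed points via the relative cotangent formula \eqref{eqn:RelativeCotangentFixedPoints}, then use multiplicativity of Euler classes—is sound in spirit, but the decomposition $\Tt_{q_{\GL}} = \bigoplus_{e \in Q_1} \Tt_{q_{\GL,e}}$ is not correct as written. The map $q$ (and each $q_{\GL,e}$) covers the base map $\overbar{q}\colon \Bt\Pt \to \Bt\Gt \times \BGL$, whose relative tangent complex is the shifted nilpotent radical $\nk[1]$; this piece depends only on the vertices, not on the edges. Since each one-edge quiver ${}_eQ$ retains all vertices of $Q$, every $\Tt_{q_e}$ contains this same vertex contribution, and your direct sum over $e \in Q_1/\tau$ over-counts it by a factor of $|Q_1/\tau|$. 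At the level of Euler classes this produces spurious powers of $e(Q_0^{\OSpt})$.

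The paper handles exactly this by introducing the commutative square with vertical projections $\pi_{\SES}\colon \Ml_{\SES}^\tau \to \Bt\Pt$ and $\pi^\tau \times \pi\colon \Ml^\tau \times \Ml \to \Bt\Gt \times \BGL$, and using that these vertical maps are cohomology isomorphisms to write $e(\Tt_q) = e(\Tt_{\overbar{q}}) \cdot e(\pi_{\SES}) \cdot e(\Tt_{\pi^\tau} \boxplus \Tt_\pi)^{-1}$. Only the fibre part—the last two factors, coming from the representation-space vector bundles—decomposes cleanly over edge orbits. To repair your argument, first separate base and fibre contributions (which is what the square accomplishes), and then apply your direct-sum decomposition and the fixed-point formula to the fibre piece alone; your $\tau$-equivariant grouping of edges into orbits $\{e,\overline{\theta(e)}\}$ is correct for that piece and matches the paper's factorisation.
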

   \begin{proof}
    We consider the commutative diagram
     \begin{center}
     \begin{tikzcd}[row sep = {30pt,between origins}, column sep = {20pt}]
      \Ml_{\SES}^\tau \ar[r,"q"] \ar[d,"\pi_{\SES}"]  &\Ml^\tau \times \Ml \ar[d,"\pi^\tau \times \pi"] \\
      \Bt \Pt \ar[r,"\overbar{q}"] & \Bt \Gt \times \BGL
     \end{tikzcd}
     \end{center} 
     where the vertical maps are projections to the base of tautological vector bundls. Note that all maps are isomorphisms on cohomology, and so we may without ambiguity write 
     $$e(\Tt_q) \ = \ e(\Tt_{\overbar{q}})\cdot \left(e(\pi_{\SES})\cdot e(\Tt_{\pi^\tau}\boxplus \Tt_{\pi})^{-1}\right).$$
     This euler class factors over the contributions of orbits of edges, which agree with the contributions from ${}_e Q$. 
   \end{proof}

   \subsubsection{Explicit edge contribtuions} 
   For a dimension vector $\db = (d_i)_{i \in Q_0/\sigma}$ for the orthosymplectic quiver, we define 
  $$\Gt_\db \ = \  \prod_{i \in Q_0/ \sigma} \Gt_{i,d_i}.$$ Likewise define $\Lt_{\db',\db''}$ and $\Pt_{\db',\db''}$, with $i,a,b$ suppressed from the notation. We will let $\GL_{\db'} = \prod_{i \in Q_0} \GL(d_i')$ denote the general linear factor of $\Lt_{\db', \db''}$.  We may thus succinctly write
   $$\Ml^{\OSpt}_{\db} \ = \ \coprod_{\db}\Rep_{\db}^{\OSpt}(Q)/ \Gt_{\db}, \hspace{7.5mm}  \Ml_{\db'} = \coprod_{\db'}\Rep_{\db}(Q)/\GL_{\db'}, \hspace{7.5mm} \Ml_{\SES,\db,\db'}^{\OSpt} \ = \ \coprod_{\db,\db'}\Rep_{\db', \db''}^{\OSpt}(Q)/P_{\db', \db''} $$
   where $\Rep_{\db', \db''}^{\OSpt}(Q) \subset \Rep_{\db}^{\OSpt}(Q)$ consists of those representations $(V,e)$ where the first $\db'$ corrdinate subspaces form an isotropic subrepresentation $(V', e')$ and the qutotient by the first $\db''$ coordinate subspaces forms a quotient respresentation isomorphic to $\tau(V', e') = (V_{\theta(i)}^{'\vee}, \pm \theta(e')^\vee)$.

   Let $a$ be an arrow in $Q_1$. Fix $\db'$ for the ordinary quiver and $\db''$ for the orthsymplectic quiver, and an identification  $\rho: T_{\db'} \times T_{\db''} \xrightarrow{\sim} T_{\db}$. Let $x_{(1)}$ be the coordinates on $T_{\db'}$ and $x_{(2)}$ on $T_{\db''}$.  Consider the arrow contribution associated to 
   $ {}_aQ $ so that 
   \begin{equation*} 
   e(Q_{1}^{\OSpt})_{\db', \db''} := \prod_{\underline a \in Q_1/ \tau} e({}_aQ)
   \end{equation*} 
   where the class $e({}_a Q)$ depends on the type of the orbit of $a$ based on the following cases, which we assume to always hold for the arrow $a$. Notice that we draw the diagram for the actual extensions while the normal bundle to the space of extensions is the complement of this and the diagonal in the space of all maps, so the edge contributions have opposite torus weights than the drawn arrows. 
   \begin{itemize} 
    \item[Case O:] In this case $\mathbf{s(a), t(a)} \not\in Q_0^\theta$. In an orthosymplectic extension the isotropic subobject is a pair $(a\xrightarrow{f}b)$ and $(c \xrightarrow{g} d)$ where $a = V_{ s(a)}, b = V_{t(a)}$, $c = t(\theta(a)), d = s(\theta(a))$ while the orthosymplectic quotient is a pair as above but with $x = a, y = b$ and $c = y^\vee, d= x^\vee$ with $g = f^\vee$. 
    The diagram of an extension is given by 
\[\begin{tikzcd}
	a & b && c & d \\
	x & y && {y^\vee} & {x^\vee} \\
	{d^\vee} & {c^\vee} && {b^\vee} & {a^\vee}
	\arrow[from=1-1, to=1-2]
	\arrow[from=1-4, to=1-5]
	\arrow["\eta", from=2-1, to=1-2]
	\arrow[from=2-1, to=2-2]
	\arrow["{\xi^\vee}", from=2-4, to=1-5]
	\arrow[from=2-4, to=2-5]
	\arrow["\mu"{pos=0.3}, from=3-1, to=1-2]
	\arrow["\xi"', from=3-1, to=2-2]
	\arrow[from=3-1, to=3-2]
	\arrow["{\mu^\vee}"{pos=0.2}, from=3-4, to=1-5]
	\arrow["{\eta^\vee}"', from=3-4, to=2-5]
	\arrow[from=3-4, to=3-5]
\end{tikzcd}\]
    
    so that the contribution of the arrow is determined by the content of $\eta, \xi$ and $\mu$ and thus given by 
    \begin{align} e({}_a Q)_{\db', \db''} &=     
        \prod_{\substack{1\le i \le \db'_{t(a)} \\ 1 \le j \le \db''_{s(a)} }} 
        \left( -x_{t(a), (1), i} + x_{s(a), (2), j} - \mathrm{wt}(a) \right)\nonumber \\
        &\prod_{\substack{1\le j \le \db'_{\theta(s(a))} \\ 1 \le i \le \db'_{t(a)} }} 
         \left( -x_{t(a), (1), i} - x_{\theta(s(a)), (1), j } - \mathrm{wt}(a) \right) \label{eq:type0} \\
       & \prod_{\substack{ 1\le j \le \db'_{\theta(s(a))} \\ 1 \le i \le \db''_{t(a)} }} 
         \left( -x_{t(a), (2), i} - x_{\theta(s(a)), (1), j} - \mathrm{wt}(a) \right)\nonumber
      \end{align}
    \item[Case I:] In this case $s(a) = t(a) \in Q_0^\theta, \theta(a) = a$ so that 
\begin{equation} \label{eq:caseI}e({}_a Q)_{\db', \db''} = \prod_{\alpha \in R(P_{\db', \db''}/ L_{\db', \db''})} (-\alpha(\rho(x_{(1)}, x_{(2)})) - \mathrm{wt}(a)). \end{equation} 
 
\item[Case II:] In this case we have $s(a) \neq t(a)$ with $\theta(s(a)) = t(a)$ and $\theta(a) = \overline{a}$. An orthosymplectic extension with isotropic subobject $c = (c_1 \xrightarrow{f} c_2)$ and orthosymplecitc quotient $(x \xrightarrow{\phi} x^\vee)$ is represented by the following diagram 
\[\begin{tikzcd}
	{c_1} & {c_2} \\
	x & {x^\vee} \\
	{\theta(c_1)^\vee} & {\theta(c_2)^\vee}
	\arrow["f", from=1-1, to=1-2]
	\arrow["\eta", from=2-1, to=1-2]
	\arrow["\phi", from=2-1, to=2-2]
	\arrow["\mu"{pos=0.2}, from=3-1, to=1-2]
	\arrow["{\eta^\vee}"'{pos=0.7}, from=3-1, to=2-2]
	\arrow["{f^\vee}", from=3-1, to=3-2]
\end{tikzcd}.\]
 If the parity of $s(a)$ is $p \in \Zb/2$ then we must have that $\phi^\vee = -(-1)^p \alpha_{t(a)}\phi$ and by the bipartite assumption on $Q$ we have $\textup{sgn}(t(a)) = p+1$ so that $\phi^{\vee} = (-1)^p \phi$. 

   so that the arrow contribution is 
   \begin{equation}\label{eq:typeII} e({}_a Q)_{\db', \db''} =      \prod_{\substack{1\le i \le \db'_{t(a)} \\ 1 \le j \le \db'_{s(a)} }} (-x_{t(a), (1), i} + x_{s(a), (2), i}  - \mathrm{wt}(a)) \\ 
     \prod_{1 \le i \le (\lneq) j \le \db'_{t(a)}} -x_{t(a),(1),i} - x_{t(a),(1),j} - \mathrm{wt}(a) \end{equation} 
     where the second product ranges over $i \le j$ or $i\lneq j$  depending on the parity of $s(a)$. 

   \item[Case III:] In this case we have 
   $s(a) \neq t(a)$
    with
     $s(a) \not\in Q_0^\theta$ but $t(a) \in Q_0^\theta$.
      Then $s(\theta(a)) = t(a)$ 
      and $t(\theta(a)) = \theta(s(a))$
       so we have a directed $A_2$ subquiver containing all the edges of ${}_aQ$.  An orthosymplectic extension with isotropic subobject 
       $c = (c_1 \xrightarrow{f} c_2\xrightarrow{g}c_3)$ 
       and orthosymplecitc quotient 
       $(x \xrightarrow{\phi} y \xrightarrow{\phi^\vee} x^\vee)$ 
       is represented by the following diagram 
\begin{center}
\begin{tikzcd}[row sep = {30pt,between origins}, column sep = {20pt}]
  {c_1} & {c_2} & {c_3} \\
	x & y & {x^\vee} \\
	{\theta(c_1)^\vee} & {\theta(c_2)^\vee} & {\theta(c_3)^\vee}
	\arrow[from=1-1, to=1-2]
	\arrow[from=1-2, to=1-3]
	\arrow[from=2-1, to=1-2]
	\arrow[from=2-1, to=2-2]
	\arrow[from=2-2, to=1-3]
	\arrow[from=2-2, to=2-3]
	\arrow[from=3-1, to=1-2]
	\arrow[from=3-1, to=2-2]
	\arrow[from=3-1, to=3-2]
	\arrow[from=3-2, to=1-3]
	\arrow[from=3-2, to=2-3]
	\arrow[from=3-2, to=3-3]
\end{tikzcd}
\end{center}
which as a 9$\times$9 block matrix can be expressed as 
\[
\begin{blockarray}{cccccccccc}
& c_3 & c_2 & c_1 & x^\vee & y & x & \theta(c_3)^\vee & \theta(c_2)^\vee & \theta(c_1)^\vee \\
\begin{block}{c(ccc|ccc|ccc)}
  c_3    & & g&  & & \xi & & & \mu^\vee & \\
  c_2    & &  & f& & & \eta & & &  \mu  \\
  c_1    & &  &  & & & & & &  \\
  \cmidrule(lr){2-10}
  x^\vee & &  &  & & \phi^\vee& & & \eta^\vee &  \\
  y      & &  &  & & & \phi & & &  \xi^\vee \\
  x      & &  &  & & & & & & \\
  \cmidrule(lr){2-10}
  \theta(c_3)^\vee & &  &  & & & & & g^\vee & \\
  \theta(c_2)^\vee & &  &  & & & & & &f^\vee\\
  \theta(c_1)^\vee  & &  &  & & & & & &\\
\end{block}
\end{blockarray}
 \]

     so that the arrow contribution is 
     \begin{align} e({}_a Q)_{\db', \db''} &=     
       \prod_{\substack{1\le i \le \db'_{t(a)} \\ 1 \le j \le \db''_{s(a)} }} 
       \left( -x_{t(a), (1), i} + x_{s(a), (2), j} - \mathrm{wt}(a) \right)\nonumber \\
       &\prod_{\substack{1\le i \le \db'_{t(a)} \\ 1 \le j \le \db'_{\theta(s(a))} }} 
        \left( -x_{t(a), (1), i} - x_{\theta(s(a)), (1), j } - \mathrm{wt}(a) \right) \label{eq:typeIII} \\
      & \prod_{\substack{ 1\le j \le \db'_{\theta(s(a))} \\ 1 \le i \le \db''_{t(a)} }} 
        \left( -x_{\theta(s(a)), (1), i} + x_{t(a), (2), j} - \mathrm{wt}(a) \right)\nonumber
     \end{align}
   from the contributions of $\eta$, $\mu$ and $\xi$ respectively. 
\item[Case III':] In this case we have 
$s(a) \neq t(a)$
 with
  $s(a) \in Q_0^\theta$ but $t(a) \not\in Q_0^\theta$.
   Then $s(\theta(a)) = s(a)$ 
   and $t(\theta(a)) = \theta(t(a))$. In this case $\overline{\theta(a)}$ is a Case III arrow so that 
   \[ e({}_a Q)_{\db', \db''} =
    e({}_{\overline{\theta(a)}}Q)_{\db', \db''}\] is calculated from \eqref{eq:typeIII}.

    \item[Case IV:] In this case we have $s(a), t(a) \in Q_0^\theta$ with opposite parity. Then $\theta(a) = \overline{a}$ and an orthosymplectic extensions consists of an isotropic subobject with objets $c = V_{s(a)}, d  = V_{t(a)}$ arrows $f: c\to d$ and $g: d\to c$, and an orthosymplectic quotient of a pair $(x,y)$ with a map $\phi:x\to y$ such that $\rho(\overline{a}) = \phi^\times$ is the adjoint of $\phi$ so that $\langle -, \phi -\rangle_y = \langle \phi^\times - ,- \rangle_x$. The diagram of the orthosympplectic extension is given by 
    \begin{center}
    \begin{tikzcd}[row sep = {30pt,between origins}, column sep = {20pt}]
      c & d \\
      x & y \\
      {c^\vee} & {d^\vee}
      \arrow[shift right, from=1-1, to=1-2]
      \arrow[shift right, from=1-2, to=1-1]
      \arrow[from=2-1, to=1-2]
      \arrow[shift right=2, from=2-1, to=2-2]
      \arrow[from=2-2, to=1-1]
      \arrow[from=2-2, to=2-1]
      \arrow[from=3-1, to=1-2]
      \arrow[from=3-1, to=2-2]
      \arrow[shift right, from=3-1, to=3-2]
      \arrow[from=3-2, to=1-1]
      \arrow[from=3-2, to=2-1]
      \arrow[shift right, from=3-2, to=3-1]
    \end{tikzcd}
    \end{center}
     and corresponds to the block matrix 
\[
\begin{blockarray}{ccccccc}
& d & c & y & x & d^\vee & c^\vee \\
\begin{block}{c(cc|cc|cc)}
  d &  &f&&\eta&& \mu \\
  c &  g&&\xi&&\mu^\vee &\\
  \cmidrule(lr){2-7}
  y     &&&&\phi&&\xi^\vee \\
  x     &&&\phi^\times&&\eta^\vee& \\
  \cmidrule(lr){2-7}
  d^\vee&&&&&&g^\vee\\
  c^\vee&&&&&f^\vee&\\
\end{block}
\end{blockarray}
 \]
 so that the arrow contribution is 
 \begin{align} e({}_a Q)_{\db', \db''} &=     
   \prod_{\substack{1\le i \le \db'_{t(a)} \\ 1 \le j \le \db''_{s(a)} }} 
   \left( -x_{t(a), (1), i} + x_{s(a), (2), j} - \mathrm{wt}(a) \right)\nonumber \\
   &\prod_{\substack{1\le i \le \db'_{t(a)} \\ 1 \le j \le \db'_{s(a)} }} 
    \left( -x_{t(a), (1), i} - x_{s(a), (1), j } - \mathrm{wt}(a) \right) \label{eq:typeIV} \\
  & \prod_{\substack{ 1\le j \le \db'_{s(a)} \\ 1 \le i \le \db''_{t(a)} }} 
    \left( -x_{s(a), (1), i} + x_{t(a), (2), j} - \mathrm{wt}(a) \right)\nonumber
 \end{align}
from the contributions of $\eta$, $\mu$ and $\xi$ respectively. 
    \end{itemize}

  \subsection{Shuffle formulas orthosymplectic CoHAs}

  \subsubsection{}
  We first compute the action on the orthosymplectic COHM in the case that the potential is $W=0$.  Via localisation, the cohomological Hall algebra $\Ht^{\sbt}(\Ml)$ of a quiver has a formula given in \cite{KS,BDa,YZ} using the identification 
  \begin{equation} \label{eq:cohomology_is_invariants} \Ht^\sbt(\Ml_{\db}) \simeq \Ht^\sbt(\BGL_{\db}) = k[x_{i, k}]^{W_{\GL_{\db}}}_{i \in Q_0, k = 1, \ldots, \db_i} \end{equation}
given $f \in  \Ht^\sbt(\Ml_{\db}), g\in \Ht^\sbt(\Ml_{\db'})$ we have  

\[ f\star g = \sum_{\sigma \in W(\GL_{\db' + \db''})} \frac{ e(Q_1)_{\db', \db''}(\sigma(x_{(1)}), \sigma(x_{(2)}))}{e(Q_0)_{\db', \db''}(\sigma(x_{(1)}), \sigma(x_{(2)}))} f(\sigma(x_1))g(\sigma(x_2)) \] 
where $\sigma \in W \GL_{\db}$ acts on the variables as in \ref{eq:cohomology_is_invariants} under the identification of the maximal torus of $\GL_{\db}$ with that of $\GL_{\db'}\times \GL_{\db''}$. We will omit the argument of $e(Q_a)$ when it is clear.

\begin{theorem} \label{thm:ShuffleNoPotential}
  The action of $\Ht^{\sbt}(\Ml)$ on $\Ht^{\sbt}(\Ml^{\OSpt})$ is given by 
  \begin{equation}
    \label{eqn:COHMVect} 
    f(x_{(1)})\cdot g(x_{(2)}) \ = \  \sum_{\tilde{\sigma}\in W_{\Gt_{n+m}}} \frac{e(Q_1^{\OSpt})}{e(Q_0^{\OSpt})}f(\sigma(x_{(1)}))g(\sigma(x_{(2)}))
  \end{equation}
  where the equivariant Euler class is
  \begin{equation}
    e(Q^{\OSpt}_0) \ =\   \prod_{\alpha \in R(P_{\db', \db''}/L_{\db', \db''})} \alpha (x_{(1)}, x_{(2)}), \hspace{15mm}  e(Q_1^{\OSpt}) \ =\  \prod_{a \in Q_1} e(Q_a)_{\db', \db''}
  \end{equation}
  as above. 
\end{theorem}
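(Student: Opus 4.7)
The plan is to evaluate the pull-push formula defining the action along the correspondence
\[
\Ml \times \Ml^{\tau} \ \xleftarrow{q^{\tau}_3} \ \SES^{\tau}_3 \ \xrightarrow{p^{\tau}_3} \ \Ml^{\tau}
\]
of Proposition \ref{prop:MActionOnMt} by passing to torus-fixed points, following the same strategy used for the ordinary CoHA shuffle formula in \cite{KS,BDa,YZ}. The action on cohomology is $a = p^{\tau}_{3,*} \cdot q^{\tau,*}_3 $ with an appropriate virtual fundamental class compensating for quasismoothness of $q^{\tau}_3$. Using the presentations
\[
\Ml_{\db'} \simeq \Rep_{\db'}(Q)/\GL_{\db'}, \qquad \Ml^{\OSpt}_{\db''} \simeq \Rep^{\OSpt}_{\db''}(Q)/\Gt_{\db''}, \qquad \SES^{\tau}_{3,\db',\db''} \simeq \Rep^{\OSpt}_{\db',\db''}(Q)/P_{\db',\db''}
\]
the problem reduces, after identifying $\Ht^{\sbt}(\BG) \simeq k[x]^{W_G}$ and using the Kunneth-type isomorphism of Theorem~\ref{thm:coham}, to computing an induction-type pushforward along $\Bt P_{\db',\db''} \to \Bt\Gt_{\db''}$ of a class which is the product of pullbacks of $f,g$ from $\Bt \Lt_{\db',\db''} \simeq \Bt \GL_{\db'} \times \Bt\Gt_{\db'''}$ with an Euler class of the bundle of extension data.

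First I would analyse $q^{\tau,*}_3$. The pullback factors through the forgetful map $\Bt P_{\db',\db''} \to \Bt \Lt_{\db',\db''}$, which is an isomorphism on cohomology (so simply plugs the tautological Chern roots $x_{(1)}, x_{(2)}$ of $\GL_{\db'}$ and $\Gt_{\db''}$ into $f \boxtimes g$), together with the inclusion of the linear subspace $\Rep^{\OSpt}_{\db', \db''}(Q) \hookrightarrow \Rep^{\OSpt}_{\db}(Q)$ cut out as the kernel of a linear projection. The virtual fundamental class (equivalently, the quasismoothness correction) contributes the equivariant Euler class of this projection, which decomposes edge by edge as the product $\prod_{a \in Q_1/\tau} e({}_a Q)_{\db',\db''} = e(Q^{\OSpt}_1)$, with explicit factors given in \eqref{eq:type0}--\eqref{eq:typeIV} according to the six orbit types of an edge under $\tau$.

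Next I would compute $p^{\tau}_{3,*}$, i.e. the proper pushforward along $\Bt P_{\db',\db''} \to \Bt \Gt_{\db}$. By the classical Atiyah--Bott/Weyl integration formula for a parabolic, this pushforward sends a class to the sum over Weyl-group cosets
\[
\alpha \ \mapsto \ \sum_{\sigma \in W_{\Gt_{\db}}/W_{\Lt_{\db', \db''}}} \sigma \Bigl( \frac{\alpha}{\prod_{\alpha \in R(\pk_{\db',\db''}/\lk_{\db',\db''})} \alpha(x_{(1)},x_{(2)})} \Bigr),
\]
with denominator precisely $e(Q^{\OSpt}_0)$. Combining this with the expression obtained for $q^{\tau,*}_3$ gives the claimed shuffle formula \eqref{eqn:COHMVect}, once one chooses coset representatives so that the sum becomes the indicated sum over $\tilde{\sigma} \in W_{\Gt_{n+m}}$ and over shuffles refining the split $\db = \db' \sqcup \db''$.

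The main obstacle is bookkeeping, not any single hard step. It lies in verifying that the equivariant Euler class $e(\Tt_{q^{\tau}_3})$ of the normal complex of $q^{\tau}_3$ genuinely breaks into the product \eqref{eqn:EulerClassSimple}, with each edge factor agreeing with the explicit formulas of Cases~O, I, II, III, III', IV. This requires decomposing $\Rep^{\OSpt}_{\db',\db''}(Q)$ orbit-by-orbit under $\tau$ and keeping careful track of the signs and symmetry conditions on the maps appearing in each diagram, especially for $\tau$-fixed nodes of definite parity, where the antisymmetry constraint introduces the $i \le j$ versus $i < j$ index range distinction in \eqref{eq:typeII} and \eqref{eq:typeIV}. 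Once this edge-by-edge computation is made and identified with the combinatorial formulas above, the shuffle formula \eqref{eqn:COHMVect} follows immediately.
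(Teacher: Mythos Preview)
Your proposal is correct and is precisely the argument the paper has in mind: the paper's proof is the single line ``Follows from a near-identical proof to Theorem~2 of \cite{KS},'' and what you have written is an explicit unpacking of that reference in the orthosymplectic setting, with the parabolic induction pushforward supplying $e(Q_0^{\OSpt})^{-1}$ and the edge-by-edge Euler class of Lemma~\eqref{eqn:EulerClassSimple} supplying $e(Q_1^{\OSpt})$. There is nothing to add.
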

\begin{proof}
  Follows from a near-identical proof to Theorem 2 of \cite{KS}. 
\end{proof}

\subsubsection{Orthsymplectic COHM with potential}  We now relate orthosymplecitc CoHA modules for the CoHA of a quiver with potential to those without potential, allowing the study of modules for the CoHA with $W \neq 0$ using the above shuffle module formula. 
 
Apply the natural transformation for the vanishing cycle transformation and pullback by the closed locus $i:\Ml\hookrightarrow \tilde{\Ml}$, giving the map of sheaves
$$\varphi_Wk[-1] \ \to \ i_*i^*k \ \leftarrow\ k.$$
This, and the orthosymplectic analogue, gives maps on cohomology
\begin{align*}
   \xi\ :\ \Ht^\sbt(\Ml,\varphi_W) &\ \to \ \Ht^\sbt(\Ml) \ \stackrel[\raisebox{2pt}{$\sim$}]{i^*}{\leftarrow} \ \Ht^\sbt(\tilde{\Ml})\\
   \xi^{\OSpt}\ :\ \Ht^\sbt(\Ml^{\OSpt},\varphi_{W^{\OSpt}}) &\ \to \ \Ht^\sbt(\Ml^{\OSpt}) \ \stackrel[\raisebox{2pt}{$\sim$}]{i^*}{\leftarrow} \ \Ht^\sbt(\tilde{\Ml}^{\OSpt}).
\end{align*} 

\begin{prop} \label{prop:Intertwine}
The maps $\xi, \xi^{\OSpt}$ intertwine the actions of the CoHA with $W \neq 0$  on the orthosymplectic CoHA module with those with trivial potential, so the following diagram commutes where the horizontal arrows are the action map. 
\begin{center}
  \begin{tikzcd}[row sep = {30pt,between origins}, column sep = {20pt}]
    {\Ht^\bullet(\Ml, \underline{\Qb})\otimes \Ht^\bullet(\Ml^{\OSpt}, \underline{\Qb})} & {\Ht^\bullet(\Ml^{\OSpt}, \underline{\Qb})} \\
    {\Ht^\bullet(\Ml,\varphi_W)\otimes \Ht^\bullet(\Ml^{\OSpt},\varphi_{W^\OSpt})} & {\Ht^\bullet(\Ml^{\OSpt}, \varphi_{W^{\OSpt}})}
    \arrow[from=1-1, to=1-2]
    \arrow["{\xi\otimes\xi^{\OSpt}}", from=2-1, to=1-1]
    \arrow[from=2-1, to=2-2]
    \arrow["{\xi^{\OSpt}}", from=2-2, to=1-2]
  \end{tikzcd}
  \end{center}
  
\end{prop}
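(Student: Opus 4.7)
The plan is to compare the two actions at the level of sheaves on the correspondence $\SES_3^\tau$ and reduce the statement to naturality of the comparison map $\varphi_W \to i_*i^* \underline{\Qb}$ with respect to the smooth pullback and proper pushforward operations used to build the CoHA action.

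First I would recall that both actions are constructed from the same correspondence
\begin{center}
\begin{tikzcd}[row sep = {30pt,between origins}, column sep = {45pt, between origins}]
 & \SES_3^\tau \ar[rd,"p_3^\tau"] \ar[ld,"q_3^\tau"'] & \\
\Ml \times \Ml^\tau & & \Ml^\tau
\end{tikzcd}
\end{center}
via the composition $p^\tau_{3,*} \circ \Dl^\tau \circ q_3^{\tau,*}$, where $\Dl^\tau$ is either the vanishing-cycle map of \eqref{eqn:JoyceVanishing} in the $W\ne 0$ case, or the tautological identity in the $W=0$ case (with only the fundamental class shift). Let $i:\Ml \hookrightarrow \tilde\Ml$ and $i^\tau:\Ml^\tau\hookrightarrow \tilde\Ml^\tau$ be the inclusions of the (respective) critical loci into smooth models; assumption~\ref{ass:StkCoHAM} guarantees that the correspondence $\SES$ and its $\tau$-invariant variants lift $\tau$-equivariantly to $\tilde\Ml$, with $\tilde q, \tilde p$ still quasismooth and proper respectively.

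Next I would construct a natural transformation of correspondence-level sheaves fitting into the commutative square
\begin{center}
\begin{tikzcd}[row sep = {20pt}, column sep = {20pt}]
q_3^{\tau,*}(\varphi_W \boxtimes \varphi_{W^{\OSpt}}) \ar[r,"\Dl^\tau"] \ar[d] & p_3^{\tau,!} \varphi_{W^{\OSpt}}[2d] \ar[d] \\
\tilde q_3^{\tau,*}(\underline{\Qb} \boxtimes \underline{\Qb}) \ar[r,"\tilde \Dl^\tau"] & \tilde p_3^{\tau,!} \underline{\Qb}[2d]
\end{tikzcd}
\end{center}
The vertical maps come from the comparison $\varphi_W \to i_*i^* \underline{\Qb}$ (and its orthosymplectic version), combined with proper base change along the Cartesian squares defined by the closed immersions of critical loci into their smooth models. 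Commutativity of the square is exactly naturality of vanishing cycles under smooth pullback (on the left) and under proper pushforward combined with the fundamental class map (on the right). Since $q_3^\tau$ is quasismooth and $p_3^\tau$ is proper (Proposition \ref{prop:InheritedProperties}), both arguments apply.

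Applying sheaf cohomology to this commutative square, together with $\tilde q_3^{\tau,*}$ and $\tilde p^\tau_{3,*}$, produces exactly the desired commutative square of Proposition \ref{prop:Intertwine}. The ``$W=0$ on $\tilde\Ml$'' version of the action on top is precisely the action constructed in Theorem \ref{thm:coham} applied to the vanishing-cycles-free situation, while the bottom is the CoHA-with-potential action.

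The main technical obstacle is producing the right-hand vertical map $p_3^{\tau,!}\varphi_{W^{\OSpt}} \to \tilde p_3^{\tau,!}\underline{\Qb}$ canonically: one has to check that the natural map $\varphi_{W^{\OSpt}} \to (i^\tau)_*(i^\tau)^*\underline{\Qb}$ is compatible with the fundamental class maps $q^*\omega \to q^!\omega[2d_q]$ used to construct $\Dl$ and $\tilde\Dl$, on both the smooth stack $\tilde\Ml^\tau$ and the critical locus $\Ml^\tau$. This is standard for smooth pullback, but on the $p^!$ side one must use proper base change for the diagram $\SES_3^\tau \hookrightarrow \widetilde{\SES}_3^\tau \to \tilde\Ml^\tau$ and the fact that $i^\tau$ is the pullback of $i$-like closed immersions from the target, which follows from $q_3^{\tau,*}W = p_3^{\tau,*}W^{\OSpt}$ on invariants. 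Once this compatibility is in hand, the diagram commutes by naturality, and the same argument applied to $\SES$ (giving $\xi$) shows that the whole square of Proposition \ref{prop:Intertwine} commutes.
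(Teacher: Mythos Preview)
Your proposal is correct and is precisely the argument the paper has in mind: the paper's own proof is the single sentence ``The argument in \cite{BDa} carries over exactly to the present situation,'' and your sketch unpacks that argument in the orthosymplectic setting, verifying sheaf-level commutativity on the correspondence $\SES_3^\tau$ via naturality of the comparison map $\varphi_W\to i_*i^*\underline{\Qb}$ under the quasismooth pullback and proper pushforward used to build the action.
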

\begin{proof} 
  The argument in \cite{BDa} carries over exactly to the present situation. 
\end{proof} 

Via the dimensional reduction isomorphisms from Proposition \ref{prop:DimensionalReduction}, we have an isomorphisms
$$
\Ht^{\BM}_\bullet(\Ml_{\Pi}) \ \stackrel[\raisebox{2pt}{$\sim$}]{\textup{dr}}{\to} \ \Ht^\bullet(\Ml,\varphi_W ) \hspace{15mm} 
\Ht^{\BM}_\bullet(\Ml_{\Pi}^{\OSpt}) \ \stackrel[\raisebox{2pt}{$\sim$}]{\textup{dr}}{\to} \ \Ht^\bullet(\Ml^{\OSpt},\varphi_W^{\OSpt} ).
$$
We obtain an intertwining map 
\begin{cor} \label{cor:preproj_intertwiner}
The composition of dimensional reduction isomorphisms and $\xi^{(\OSpt)}$ in \eqref{eq:preproj_intertwiner} intertwine the action of $\Ht^{\BM}_\bullet( \Ml_{\Pi_Q})$ on $\Ht^{\BM}_\bullet(\Ml_{\Pi_Q}^{\OSpt})$ with those of $\Ht^\bullet(\Ml)$ on $\Ht^{\bullet}(\Ml^{\OSpt})$:
\begin{equation}\label{eq:preproj_intertwiner}
\begin{tikzcd}[row sep = {30pt,between origins}, column sep = {30pt}]
  {\Ht^{\BM}_\bullet(\Ml_{\Pi_Q}^\OSpt)} \ar[r,"\xi \cdot \textup{dr}"]  &[20pt] {\Ht^\bullet(\Ml^{\OSpt})} \\
	{\Ht^{\BM}_\bullet (\Ml_{\Pi_Q})\otimes \Ht^{\BM}_{\bullet}(\Ml_{\Pi_Q}^\OSpt)} \ar[r,"\xi \cdot \textup{dr} \otimes \xi^{\OSpt} \cdot \textup{dr}^{\OSpt}"] \ar[u]  & {\Ht^\bullet (\Ml)\otimes \Ht^{\bullet}(\Ml^\OSpt)}\ar[u] 
\end{tikzcd}
\end{equation}
commutes.
\end{cor}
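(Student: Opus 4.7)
The proof will be a diagram chase stacking two commutative squares vertically. The bottom square is precisely the content of Proposition \ref{prop:Intertwine}, which expresses that $\xi$ and $\xi^{\OSpt}$ intertwine the CoHA action with potential on the orthosymplectic module with the action without potential. Thus the entire content of the corollary reduces to the top square, which asserts that the dimensional reduction isomorphisms $\textup{dr}$ and $\textup{dr}^{\OSpt}$ from Proposition \ref{prop:DimensionalReduction} commute with the CoHA action and module correspondences.

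The plan for this top square is to observe that the dimensional reduction map $\pi: \Ml_{\Rep \Jac(Q^{(3)},W)} \to \Ml_{\Pi_Q}$ lifts to a $\Zb/2$-equivariant map of short exact sequence correspondences. Concretely, forgetting the self-loop data in a length-three flag of representations of $Q^{(3)}$ yields a length-three flag of representations of $\Pi_Q$, giving a map
\[
\pi_{\SES_3^\tau} \ : \ \SES_3^\tau(\Rep \Jac(Q^{(3)},W)) \ \to \ \SES_3^\tau(\Pi_Q)
\]
compatible with $q_3^\tau, p_3^\tau$. Since $\pi_{\SES_3^\tau}$ is itself a smooth affine fibration with $W_{\SES}$ linear and $\Zb/2$ acting linearly in the fibres, Proposition \ref{prop:DimensionalReduction} (i.e.\ Corollary A.9 of \cite{Da}) applies equally at each vertex of the module correspondence diagram.

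First I would apply base change along the Cartesian diagrams
\[
\begin{tikzcd}[row sep={25pt,between origins}, column sep={20pt}]
\SES_3^\tau(\Jac) \ar[r,"q_3^\tau"] \ar[d,"\pi_{\SES_3^\tau}"'] & \Ml(\Jac)\times \Ml^\tau(\Jac) \ar[d,"\pi\times \pi^\tau"] \\
\SES_3^\tau(\Pi_Q) \ar[r,"q_3^\tau"] & \Ml(\Pi_Q)\times \Ml^\tau(\Pi_Q)
\end{tikzcd}
\]
and likewise for $p_3^\tau$, to compare the pullback and pushforward steps. Second I would invoke the naturality of the Joyce map $\Dl^\tau$ of \eqref{eqn:JoyceVanishing} under dimensional reduction: both the fundamental class and the vanishing cycle natural transformations are compatible with restriction along a smooth fibration whose potential is linear in the fibres. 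Combining these gives commutativity of the top square.

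The main obstacle is verifying that the comparison map $\Dl^\tau$ is natural under $\pi_{\SES_3^\tau}$, which requires checking that the tor-amplitude and quasismoothness hypotheses from Proposition \ref{prop:InheritedProperties} are preserved under dimensional reduction (so that both sides of \eqref{eqn:JoyceVanishing} are well-defined in parallel) and that the $\Zb/2$-equivariant linearity in fibres is respected throughout. Given the setup of Section \ref{sec:OrthosymplecticStacks}, this verification is routine but technical; once established, stacking with the Proposition \ref{prop:Intertwine} square immediately yields the commutative diagram \eqref{eq:preproj_intertwiner}.
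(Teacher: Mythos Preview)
Your approach is correct and matches the paper's reasoning: the paper states this corollary without an explicit proof, treating it as immediate from stacking the dimensional reduction isomorphisms of Proposition~\ref{prop:DimensionalReduction} (whose compatibility with the CoHA and module correspondences is the standard argument from \cite{Da}, carried over verbatim to the $\Zb/2$-fixed setting) against the square of Proposition~\ref{prop:Intertwine}. Your two-square decomposition and the verification that $\pi_{\SES_3^\tau}$ is again a smooth affine fibration with linear potential in the fibres is exactly the content the paper leaves implicit; one small caution is that the square you draw for $q_3^\tau$ need not be literally Cartesian, but this is irrelevant since the dimensional reduction comparison only requires the compatibility of affine-fibration structures, not a base-change identity.
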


\subsection{Action on orthosymplectic framed moduli stacks in $\Sp$, $\Ot_\textup{ev}$ cases }

\subsubsection{} In this section, we give a formula for the action of the preprojective CoHA on the Borel-Moore homology of the stack of $\Gt$-orthosymplectic framed quiver representations.

\subsubsection{} 
Given the sequence of groups and a framing vector $\wb \in Q_0/\sigma^{\mathbf{Z}_{\ge 0}}$ for the groups $\Gt_{\db}$ we consider the dual framing groups $\Gt_{\wb} = \prod_{i \in Q_0/\sigma} G_{\textup{fr}, w_i}$ where $\Gt_{\textup{fr}, w_i}$ preserves a form of the opposite parity from $\Gt^i_{d_i}$, as in \cite{Na1}.

We consider the framed moduli  
\[ \Ml^{\OSpt}_{\db, \wb} = \Rep_{(\db,1)}^{\OSpt}(Q^{\textup{fr}}_\wb)/G_{\db}, \qquad \Ml_{\db'} = \Rep_{(\db,1)}(Q^{\textup{fr}}_{\wb}) \]
where $Q^{\textup{fr}}_{\wb}$ is the framed quiver. Let $\Ml^{\OSpt}_{\wb} = \bigsqcup_{\db} \Ml^{\OSpt}_{\db, \wb} $. The orthosymplectic extension correspondence is given by the quotient 
\[ \Rep_{(\db',1), (\db'',1)}^{\OSpt}(Q^{\textup{fr}}_{\wb})/P_{\db', \db''} \]

We also consider the preprojective example.
\begin{lem}
  There is an action of $\Ht^{\BM}_\sbt(\Ml_{\Pi_Q})$ on $\Ht^{\BM}_\sbt(\Ml_{\Pi_{{Q}^{\textup{fr}}}}^{\OSpt}(w))$.
\end{lem}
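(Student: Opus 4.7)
The plan is to mimic the construction of the action of the CoHA on orthosymplectic COHAMs from Section~\ref{sec:OrthosymplecticStacks}, adapted to the framed setting, then dimensionally reduce.

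First, I would construct a correspondence analogous to \eqref{fig:OSpCorr} but for framed representations. Write $Q^{(3),\textup{fr}}_{\wb}$ for the tripled framed quiver, equipped with its natural potential $W$ and involution $\tau$ extending the one on $Q^{(3)}$ (trivial or diagonal on framing nodes, preserving the duality structure of section~\ref{ssec:QuiverIntro}). The key observation is that the framing nodes must lie in the ``middle'' of an orthosymplectic extension: a framed orthosymplectic representation $e$ with framing vector $\wb$ should admit isotropic sub-extensions by \emph{unframed} representations $a$ of the non-framed quiver $Q^{(3)}$, giving
\[
  0 \ \to \ a \ \to \ e \ \to \ e/a \ \to \ \tau(a) \ \to \ 0
\]
with $e/a$ again carrying framing $\wb$. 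This defines
\[
\begin{tikzcd}[row sep={30pt,between origins}, column sep={45pt,between origins}]
  & \SES_{3,\wb}^\tau \ar[rd,"p_\wb"]\ar[ld,"q_\wb"'] & \\
  \Ml_{\Rep Q^{(3)}} \times \Ml_{\Rep Q^{(3),\textup{fr}}_\wb}^\tau & & \Ml_{\Rep Q^{(3),\textup{fr}}_\wb}^\tau
\end{tikzcd}
\]
obtained by taking $\tau$-invariants of the natural threefold-extension correspondence on the framed moduli stack (exactly as in the proof of Proposition~\ref{prop:MActionOnMt}), where the involution on $\SES_3$ is adjusted so that the framing lives on the middle term only.

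Next I would verify that this setup satisfies assumption~\ref{ass:StkCoHAM}. Associativity and unit axioms follow by the same argument as in Proposition~\ref{prop:MActionOnMt}; $\tau$-equivariance is automatic since the framing nodes are $\tau$-compatible by construction. Properness of $p_\wb$ and quasismoothness of $q_\wb$ are inherited from the unframed case via Proposition~\ref{prop:InheritedProperties}, since the forgetful map from framed to unframed extensions is affine. The potential $W$ is still $\tau$-invariant and satisfies $q_\wb^*(W\boxplus W_\wb) = p_\wb^*W_\wb$, where $W_\wb$ denotes the restriction of $W$ to the framed moduli. Thus Theorem~\ref{thm:coham} applied to this correspondence produces an action
\[
  \Ht^\sbt(\Ml_{\Rep Q^{(3)}},\varphi_W)\otimes \Ht^\sbt(\Ml^\tau_{\Rep Q^{(3),\textup{fr}}_\wb},\varphi_W^\tau) \ \to \ \Ht^\sbt(\Ml^\tau_{\Rep Q^{(3),\textup{fr}}_\wb},\varphi_W^\tau).
\]

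Finally, I would apply the dimensional reduction isomorphism of Proposition~\ref{prop:DimensionalReduction} to both sides. The projection $\pi: \Ml_{\Rep Q^{(3),\textup{fr}}_\wb} \to \Ml_{\Rep \Pi_{Q^{\textup{fr}}_\wb}}$ that sends a tripled framed representation to its preprojective quotient is a $\Zb/2$-equivariant smooth affine fibration, $W$ is linear in the $\omega_i$-fibres, and $\tau$ acts linearly. The same holds unframed, giving the isomorphisms
\[
  \Ht^\sbt(\Ml_{\Rep Q^{(3)}},\varphi_W) \ \simeq \ \Ht^{\BM}_\sbt(\Ml_{\Pi_Q}), \hspace{8mm}
  \Ht^\sbt(\Ml^\tau_{\Rep Q^{(3),\textup{fr}}_\wb},\varphi_W^\tau) \ \simeq \ \Ht^{\BM}_\sbt(\Ml^{\OSpt}_{\Pi_{Q^{\textup{fr}}_\wb}}(\wb)).
\]
Transporting the action across these isomorphisms produces the desired action.

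The main obstacle I expect is verifying the $\tau$-equivariance compatibility condition for the framed correspondence: one must choose the involution on the framing nodes (and the sign/duality structure on framing arrows) so that isotropic sub-extensions by unframed subrepresentations are preserved by $\tau$, while the framing of $e$ descends compatibly to $e/a$. This is a combinatorial condition on the duality structure of $Q^{\textup{fr}}_\wb$, and should follow from the dual framing group choice $\Gt_\wb = \prod G_{\textup{fr},w_i}$ (with opposite parity from $\Gt_{d_i}$) already imposed in the statement.
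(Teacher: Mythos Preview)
Your approach is correct but considerably more elaborate than the paper's two-sentence argument. The paper simply observes that the inclusion of quivers $Q \hookrightarrow Q^{\textup{fr}}$ induces an algebra homomorphism $\Ht^{\BM}_\sbt(\Ml_{\Pi_Q}) \to \Ht^{\BM}_\sbt(\Ml_{\Pi_{Q^{\textup{fr}}}})$ (an unframed representation is a framed one with framing dimension~$0$), and then restricts the already-established action of the framed CoHA on $\Ht^{\BM}_\sbt(\Ml_{\Pi_{Q^{\textup{fr}}}}^{\OSpt})$ along this map; the action preserves framing dimension, so one may restrict to the component with framing $\wb$. Your mixed correspondence $\SES^\tau_{3,\wb}$ is exactly the component of the general orthosymplectic extension correspondence for $Q^{\textup{fr}}$ in which the isotropic subobject has framing~$0$, so you recover the same action after more work.

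One technical wrinkle: you invoke Theorem~\ref{thm:coham} and Assumption~\ref{ass:StkCoHAM}, but those are formulated for a \emph{single} stack $\Ml$ with involution, producing an action of $\Ht^\sbt(\Ml,\varphi)$ on $\Ht^\sbt(\Ml^\tau,\varphi^\tau)$. Your setup has two different stacks on the two sides of the correspondence, so the theorem does not literally apply. The cleanest fix is precisely the paper's: take $\Ml$ to be the full framed moduli stack, apply the theorem there, and restrict. Alternatively one can re-run the pull--push argument for your mixed correspondence directly, which works but duplicates effort. Your anticipated ``main obstacle'' about $\tau$-compatibility of the framing is a non-issue: the duality structure on $Q^{\textup{fr}}_\wb$ is already chosen to make the whole framed quiver orthosymplectic, so the general theory handles it without further checks.
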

\begin{proof}
   The inclusion of quivers $Q \hookrightarrow Q^{\textup{fr}}$ gives a map on algebras  $\Ht^{\BM}_\sbt(\Ml_{\Pi_Q}) \to \Ht^{\BM}_\sbt(\Ml_{\Pi_Q^{\textup{fr}}})$, which acts on $\Ht^{\BM}_\sbt(\Ml_{\Pi_{{Q}^{\textup{fr}}}}^{\OSpt})$. This action preserves the framing dimension vector $w$.
\end{proof}
Now using Corollary \ref{cor:preproj_intertwiner} gives a shuffle algebra formula for the action of action of the cohomological Hall algebra $\Ht^{\BM}_{\sbt}(\Ml_{\Pi_Q})$ on $\Ht(\Ml_{\Pi_{{Q}^{\textup{fr}}_{\wb}}}^{\OSpt})$. Note is that the $\tau$-fixed points of the stack of representations of the framed quiver impose a pairing on the framing space of opposite pairing of the pairing at the node.

\subsubsection{Edge contributions} 

We calculate the contributions from the framing edges explicitly.

Let $a$ be an arrow in $Q_1$. Fix $\db'$ for the ordinary quiver and $\db''$ for the orthsymplectic quiver, and an identification  $\rho: T_{\db'} \times T_{\db''} \xrightarrow{\sim} T_{\db}$. Let $x_{(1)}$ be the coordinates on $T_{\db'}$ and $x_{(1)}$ on $T_{\db''}$. Pick coorditnates $u_{i,j}, j = 1, \ldots, \wb_i$ on the framing torus at node $i$.

When the node $i$ in the orthosymplectic quiver is not in $Q_0^{\sigma}$ the contribution to the normal bundle of $\Rep_{(\db',1), (\db'',1)}^{\OSpt}(Q^{\textup{fr}}_{\wb})$ in $\Rep_{(\db,1)}(Q^{\textup{fr}}_{\wb})$ is 
\begin{align*} 
  e(Q_{\wb}^{\textup{fr}, i}) & = \prod_{j =1 }^{\wb_i}\prod_{k = 1}^{\db'_i} (u_j - x_{i, k} - \mathrm{wt}(a_{i,j}))\prod_{j =1 }^{\wb_i}\prod_{k = 1}^{\db'_i} (-u_j + x_{\sigma(i), k} - \mathrm{wt}(\overline{a_{i,j}}) ) \\ 
  & = \prod_{j =1 }^{\wb_i}\prod_{k = 1}^{\db'_i} (u_j - x_{i, k} - \mathrm{wt}(a_{i,j}))\prod_{j =1 }^{\wb_i}\prod_{k = 1}^{\db'_i} (-u_j + x_{\sigma(i), k} + \mathrm{wt}(a_{i,j}) ) 
\end{align*}
where $a_{i,j}$ is the $j$th framing arrow from $i$ to $w_i$ and $\overline{a_{i,j}}$ is the opposite way which are assumed to have opposite weights.  

If the $i$th vertex is in $Q_0^\sigma$ the contribution is 
\[ e(Q_{\wb}^{\textup{fr}, i})  = \prod_{j =1 }^{\wb_i}\prod_{k = 1}^{\db'_i} (-u_j - x_{i, k} + \mathrm{wt}(a_{i,j})) (u_j - x_{i, k} + \mathrm{wt}(a_{i,j})). \]

\begin{prop} \label{prop:COHMVectFraming}
The action of $\Al_{Q}$ on $\Ht^\sbt(\Ml^{\OSpt}_{\wb})$ is given by

\begin{equation}
  \label{eqn:COHMVect_framing} 
  f(x_{(1)})\cdot g(x_{(2)}) \ = \  \sum_{\tilde{\sigma}\in W_{\Gt_{n+m}}} \frac{e(Q_1^{\OSpt})\prod_{i \in Q_0}e(Q_{\wb}^{\textup{fr}, i}) }{e(Q_0^{\OSpt})}f(\sigma(x_{(1)}))g(\sigma(x_{(2)})).
\end{equation}
\end{prop}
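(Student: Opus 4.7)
The plan is to reduce Proposition \ref{prop:COHMVectFraming} to Theorem \ref{thm:ShuffleNoPotential} by treating the framing vertices as additional nodes of an enlarged quiver whose dimension vector is fixed to $1$ at the framing nodes. Concretely, I would work with the framed quiver $Q^{\textup{fr}}_{\wb}$ from section \ref{ssec:QuiverIntro} and consider the moduli stack $\Ml_{Q^{\textup{fr}}_{\wb},(\db,\mathbf{1})}$, inside which $\Ml^{\OSpt}_{\db,\wb}$ sits as the fixed locus for the involution which acts on the framing nodes via the chosen orthosymplectic form on the framing space.

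The action on $\Ht^\sbt(\Ml^{\OSpt}_{\wb})$ arises from the restriction of the orthosymplectic extension correspondence
\[
\begin{tikzcd}[row sep={30pt,between origins}, column sep={45pt, between origins}]
 & \SES_{3,\wb}^{\OSpt} \ar[rd,"p"]\ar[ld,"q"'] & \\
\Ml \times \Ml^{\OSpt}_{\wb} & & \Ml^{\OSpt}_{\wb}
\end{tikzcd}
\]
where $\SES_{3,\wb}^{\OSpt}=\Rep_{(\db',1),(\db'',1)}^{\OSpt}(Q^{\textup{fr}}_{\wb})/P_{\db',\db''}$, and where the extension $a\subseteq e\subseteq \tau(a)$ is required to respect the framing (so the framing piece is ``concentrated in the middle orthosymplectic layer'', i.e. the subobject $a$ has zero framing component). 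This is precisely the restriction to fixed framing dimension vectors of the correspondence for the enlarged quiver.

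The key step is then to compute the equivariant Euler class $e(\Nt_q)$. By the argument of section \ref{sec:OrthosymplecticStacks} applied to the enlarged quiver, this factors as a product of edge contributions over $Q_1/\tau$ (giving $e(Q_1^{\OSpt})$) divided by root contributions from $\Pt_{\db',\db''}/\Lt_{\db',\db''}$ (giving $e(Q_0^{\OSpt})$), plus an extra contribution coming from each framing edge. Since the framing dimension vector is $\wb$ at all stages and ``sits on the quotient side'' (i.e. corresponds to the middle orthosymplectic layer but never appears in the isotropic subobject $a$), the framing arrows out of node $i$ only contribute maps between $\{u_{i,j}\}$ and the $x_{(1)}$-coordinates; splitting into cases according to whether $i\in Q_0^\tau$ or not, exactly as in the arrow-contribution computation of cases I--IV of section \ref{sec:TwistedYangians}, yields the formulas for $e(Q_{\wb}^{\textup{fr},i})$ stated just before the Proposition.

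Finally, pushing forward along $p$ and pulling back along $q$ using Atiyah--Bott torus localization (as in \cite{KS,YZ} and in Theorem~\ref{thm:ShuffleNoPotential}) produces the Weyl-group sum over $W_{\Gt_{n+m}}=W(\Pt_{\db',\db''})\backslash W(\Gt_{\db'+\db''})$, with the integrand being the product of the pulled-back class $f(\sigma x_{(1)})g(\sigma x_{(2)})$ times $e(Q_1^{\OSpt})\prod_i e(Q_{\wb}^{\textup{fr},i})\big/e(Q_0^{\OSpt})$, giving the stated formula \eqref{eqn:COHMVect_framing}. The main obstacle is a careful bookkeeping of which framing roots appear with which sign under the parity conditions at $\tau$-fixed nodes: this is exactly the sign subtlety that distinguishes the two displayed formulas for $e(Q_{\wb}^{\textup{fr},i})$, and one must check it against the orthosymplectic structure on the framing spaces (which is of opposite parity to that at the quiver node, as specified in \cite{Na1}). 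Once this is verified, the associativity of the action and compatibility with the unit follow automatically from the corresponding properties of Theorem \ref{thm:ShuffleNoPotential} applied to the enlarged quiver.
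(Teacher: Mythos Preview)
Your proposal is correct and follows essentially the same approach as the paper: the paper's proof is a one-liner stating that the result follows from Theorem~\ref{thm:ShuffleNoPotential} together with the explicit framing-edge contributions $e(Q_{\wb}^{\textup{fr},i})$ computed immediately before the proposition. Your plan of enlarging the quiver by framing nodes held at fixed dimension and then invoking the unframed shuffle formula plus the edge-by-edge Euler class bookkeeping is exactly this reduction, spelled out in more detail.
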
 
\begin{proof}
  This follows from Theorem \ref{thm:ShuffleNoPotential} and the above calculations.  
\end{proof}

By combining the action with Corollary \ref{cor:preproj_intertwiner} we obtain the following relation between the preprojective and plain cohomology actions:

\begin{cor} \label{cor:preproj_w_intertwiner}
There is an intertwining map 
\begin{center}
\begin{tikzcd}[row sep = {30pt,between origins}, column sep = {20pt}]
	{\Ht^\sbt (\Ml)\otimes \Ht^\sbt(\Ml^\OSpt_\wb)} && {\Ht^\sbt(\Ml^{\OSpt}_{\wb})} \\
	{\Ht^{\BM}_\bullet (\Ml_{\Pi_Q})\otimes \Ht^{\BM}_{\bullet}(\Ml_{\Pi_{Q_{\wb}^{\textup{fr}}}}^\OSpt)} && {\Ht^{\BM}_\bullet(\Ml_{\Pi_{Q_{\wb}^{\textup{fr}}}}^\OSpt)}
	\arrow[from=1-1, to=1-3]
	\arrow[from=2-1, to=1-1]
	\arrow[from=2-1, to=2-3]
	\arrow[from=2-3, to=1-3]
\end{tikzcd}
\end{center}
where the action on the first row is given by \eqref{eqn:COHMVect_framing}. 
\end{cor}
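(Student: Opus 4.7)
The plan is to bootstrap from Corollary \ref{cor:preproj_intertwiner} applied not to $Q$ itself but to the framed quiver $Q^{\textup{fr}}_{\wb}$, exploiting the fact that the action of $\Ht^{\BM}_\bullet(\Ml_{\Pi_Q})$ on the framed preprojective side factors through $\Ht^{\BM}_\bullet(\Ml_{\Pi_{Q^{\textup{fr}}_{\wb}}})$ by the lemma preceding Proposition \ref{prop:COHMVectFraming}, and similarly the action on $\Ht^\sbt(\Ml_{\wb}^{\OSpt})$ factors through $\Ht^\sbt(\Ml_{\wb})$ via the inclusion $Q \hookrightarrow Q^{\textup{fr}}_{\wb}$. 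Concretely, I would assemble the diagram as the outer rectangle of
\begin{center}
\begin{tikzcd}[row sep=20pt, column sep=15pt]
\Ht^\sbt(\Ml)\otimes \Ht^\sbt(\Ml^{\OSpt}_{\wb}) \ar[r] & \Ht^\sbt(\Ml_{\wb})\otimes \Ht^\sbt(\Ml^{\OSpt}_{\wb}) \ar[r] & \Ht^\sbt(\Ml^{\OSpt}_{\wb}) \\
\Ht^{\BM}_\bullet(\Ml_{\Pi_Q})\otimes \Ht^{\BM}_\bullet(\Ml_{\Pi_{Q^{\textup{fr}}_{\wb}}}^{\OSpt}) \ar[r] \ar[u] & \Ht^{\BM}_\bullet(\Ml_{\Pi_{Q^{\textup{fr}}_{\wb}}})\otimes \Ht^{\BM}_\bullet(\Ml_{\Pi_{Q^{\textup{fr}}_{\wb}}}^{\OSpt}) \ar[r] \ar[u] & \Ht^{\BM}_\bullet(\Ml_{\Pi_{Q^{\textup{fr}}_{\wb}}}^{\OSpt}) \ar[u]
\end{tikzcd}
\end{center}
where the vertical arrows are the compositions $\xi \cdot \textup{dr}$ (with their framed analogues) of Proposition \ref{prop:Intertwine} together with dimensional reduction. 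The right square is precisely Corollary \ref{cor:preproj_intertwiner} applied to the quiver $Q^{\textup{fr}}_{\wb}$ (rather than $Q$), so it commutes automatically.

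The remaining work is to verify that the left square commutes. This amounts to checking two compatibilities: first, that the map $\Ht^{\BM}_\bullet(\Ml_{\Pi_Q})\to \Ht^{\BM}_\bullet(\Ml_{\Pi_{Q^{\textup{fr}}_{\wb}}})$ coming from the inclusion $Q\hookrightarrow Q^{\textup{fr}}_{\wb}$ intertwines dimensional reduction and $\xi$ with the unframed--to--framed map $\Ht^\sbt(\Ml)\to \Ht^\sbt(\Ml_{\wb})$; and second, that the action of the image element agrees on both sides. Both follow from functoriality: $\xi$ and the dimensional reduction isomorphisms of Proposition \ref{prop:DimensionalReduction} are compatible with the restriction-of-framing map because the smooth affine fibration and linear $\Zb/2$-action used to set up dimensional reduction restrict to those of $Q$ under $Q\hookrightarrow Q^{\textup{fr}}_{\wb}$, and similarly the natural transformation defining $\xi$ is natural in the underlying correspondence.

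Having established the diagram, the stated formula for the action on the right column is Proposition \ref{prop:COHMVectFraming}, so the composition through the top row is given by the shuffle formula \eqref{eqn:COHMVect_framing}, and the intertwining follows. I expect the main obstacle to be purely bookkeeping: correctly matching the edge contributions $e(Q_{\wb}^{\textup{fr},i})$ from framing arrows at orthosymplectic nodes with the contributions arising on the preprojective side after dimensional reduction. The subtle point is that at a node $i \in Q_0^\sigma$, the framing space itself carries a form of opposite parity, so what from the orthosymplectic side looks like an arrow to a vertex with dual parity (a Case IV edge in the classification of section on edge contributions) is exactly what the framed preprojective model produces; once this is matched arrow-class by arrow-class using the identities \eqref{eq:type0}--\eqref{eq:typeIV} applied to $Q^{\textup{fr}}_{\wb}$, the diagram closes.
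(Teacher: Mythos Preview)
Your proposal is correct and takes essentially the same approach as the paper, which does not spell out a proof but simply says the corollary follows ``by combining the action with Corollary \ref{cor:preproj_intertwiner}'' together with the preceding lemma that the $\Ht^{\BM}_\bullet(\Ml_{\Pi_Q})$-action factors through $Q\hookrightarrow Q^{\textup{fr}}_{\wb}$. Your factored rectangle makes this explicit, and the final paragraph on matching edge contributions is more than the paper requires: once Corollary \ref{cor:preproj_intertwiner} is applied to $Q^{\textup{fr}}_{\wb}$ and Proposition \ref{prop:COHMVectFraming} supplies the top-row formula, the diagram closes without any further arrow-by-arrow bookkeeping.
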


\subsection{Coaction} \label{ssec:coaction_shuffle}
\subsubsection{} 

Let $\Al_Q$ denote the CoHA and $\Hl^{\OSpt}$ the orthosymplectic module. More generally, $\Hl^{\OSpt}_{\pmb w}$ the framed orthosymplectic module. 

Given an orthosymplecitc dimension vector $\db$ it corresponds to a $\tau$-invariant dimension vector for regular representations for $Q$. Given $\db \in \pi_0(\Ml)$ an ordinary dimension vector we let $\theta(\db)$ denote the connected component of $\tau((V_i, \rho))$ for $(V_i, \rho) \in \Ml_{\db}$. 

The localised coaction $\Delta_L$ from Section \ref{ssec:OrthosymplecticJoyceLiu} is a Borcherds twist of the pullback along the map $\oplus^{\OSpt}$. The Borcherds twist (arising essentially from localised convolution with respect to the OSp extension diagram read the opposite way ) does not change the connected components. Thus we have a decomposition 
\[ \Delta_L = \sum_{(\db', \db'') \in \delta_{\db}} (\Delta_L)_{\db'} \] 
where \[\delta_{\db} = \{( \db' , \db'') \mid \db' + \theta(\db') + \db'' = \db \} \] 
is the set of classes of potential orthosymplectic decompositions of an element of $\Ml^{\OSpt}_{\db}$. 

We may think of $\Delta_L$ as localised convolution along the orthosymplectic extension correspondence with extensions going the opposite way. Thus if the orthosymplectic extension correspondence has maps  
\[ (a,x) \mapsfrom ( a \to y \to y/a \to \tau(a))\mapsto y \] the maps defining the coproduct are 
\[ (a,x) \mapsfrom (\tau(a) \to y \to y/\tau(a) \to a)\mapsto y. \]

Given $f(x_{\db}) \in \Ht^\bullet(\Ml^{\OSpt}) \simeq k[\tk_{\db}]^{W^{\OSpt}_{\db}}$ and a decomposition $(\db', \db'') \in \Delta_{\db}$ we write 
\[ f(x_{\db'}\otimes x_{\db''}) \in k[\tk_{\db'}]^{W_{\db'}}\otimes k[\tk_{\db''}]^{W^{\OSpt}_{\db'}}\] 
for the function pulled back along the map $\tk_{\db'}\times \tk_{\db''} \to \tk_{\db}$. 

The maps 
\[ \Delta_L \ : \  \Hl^{\OSpt}_{\wb} \ \to \  (\Al_Q\hat{\otimes}\Hl^{\OSpt}_{\wb})_{\loc}, \hspace{15mm}  \Delta_R \ : \  \Hl^{\OSpt}_{\wb} \ \to \  (\Hl^{\OSpt}_{\wb}\hat{\otimes}\Al_Q)_{\loc} ,\] 
given on $f \in \Hl^{\OSpt}_{\wb, k}$ by
\begin{equation}\label{eq:coactionL} 
\Delta_L(f) \ = \ 
 \sum_{(\db' , \db'') \in \delta_{\db}}  \frac{f(x_{\db'} \otimes x_{\db''})}{e(Q_0^{\OSpt})_{\db', \db''}/e(Q^{ \OSpt}_1)_{\db', \db''}}, \hspace{15mm}  \Delta_R(f) = \sum_{(\db' , \db'') \in \delta_{\db} }  \frac{f(x_{\db'} \otimes x_{\db''})}{e(Q_0^{\OSpt})_{\db'', \db'}/e(Q^{\OSpt}_1)_{\db'', \db'}}.
\end{equation}
define left and right coactions. We will only use the left coaction.

\subsection{Loop Cartan generators} 

\subsubsection{Extended shuffle algebra} 

For this section we will restrict to the case of the doubled quiver underlying the preprojective algebra and assume that the edge weights satisfy one of the cases of Assumption 5.7 of \cite{YZ}. For another account of bosonisation, also see \cite{JKL}.

Letting $\Al = \Ht^\sbt(\Ml)$ denote the shuffle algebra, the extended shuffle algebra $\Al^{ext}$ is the algebra $\Al\otimes \Al^0$ where $\Al^0$ is generated by the coefficients of the series 
\[ \phi_i(w) = \sum_{n \ge 0 } \varphi_{i,n} w^{-n} \]
and the algebra structure is determined by the relation for $ f \in \Ht^\sbt(\Ml_{\db})$ given by 
\begin{equation}\label{eq:loop_cartan_commutator}\phi_i(w) f \varphi_{i}(w)^{-1} = f \cup \varphi_{i, \db}(w) \end{equation}
where 
\begin{equation} \label{eq:phi}
\varphi_{i, \db}(w) = \frac{ [e(Q_1)_{\delta_i, \db} /e(Q_0)_{\delta_i, \db}](w, z_{(1)})}{[e(Q_1)_{\db, \delta_i} /e(Q_0)_{\db, \delta_i}](z_{(1)}, w)}
\end{equation} 
where $z_{(1)} \in \Ht^\sbt(\Ml_{\db})$ is the set of variables under the equivalence \eqref{eq:cohomology_is_invariants} and $\delta_i$ is dimension vector $1$ at vertex $i$, $0$ elsewhere. The expansion in \eqref{eq:phi} is given in terms of negative powers of $w$. 

Owing to our assumption on edge weights we have an identification 
\begin{equation} 
\varphi_{i, \db} \ = \ \frac{c_{-1/w}(C_i)}{c_{-1/w}(\hbar C_i)}
\end{equation} 
of $\Phi$ with the ratio of total chern classes of the tuatological complex of section \ref{sssec:OSpTautologicalComplex} at the $i$th node.

Likewise, the action on $\Ht^\bullet(\Ml_\wb)$ is given by 
\begin{equation} \label{eq:phi_action}\phi_i (w) \gamma \ = \ \frac{c_{-1/w}(C_{i, \wb})}{c_{-1/w}(\hbar C_{i, \wb})}\cup \gamma. \end{equation}
We have a coproduct on $\Al^0$ defined on generating series by 
\begin{equation}\label{eq:extended_coproduct} \Delta(\phi_i(w)) \ = \  \phi_i(w) \otimes \phi_i(w). \end{equation}

\subsubsection{Orthosymplectic loop cartan generators} 

In the orthosymplectic case only a subalgebra $\Al^{0'}$ of $\Al^0$ acts on the framed orthosymplectic modules $\Ht^\bullet(\Ml_\wb)$.

We introduce additional generating series 
\[ \psi_i(w) = \sum_{n\ge 0} \psi_{i,n} w^{-n} \] 
for $i \in Q_0/\sigma$ and define an action on $\Ht^\bullet(\Ml_\wb^{\OSpt})$ so that for $\gamma \in \Ht^\bullet(\Ml_\wb^{\OSpt})$ we have 
\begin{equation} \label{eq:psi_action}
\psi_i(w) \gamma \ = \  \frac{c_{-1/w}(C_{i, \wb})}{c_{-1/w}(\hbar C_{i, \wb})}\cup \gamma.
\end{equation} 

Choose a fixed representative $i$ for each class $i \in Q_0/\sigma$. 
There is a coaction 
\begin{equation} \label{eq:loop_cartan_coaction}
\Delta \ : \  \Al^{0'} \ \to \  \Al^0 \otimes \Al^{0'}
\end{equation} 
defined on generators by the equation
\begin{equation} \label{eq:loop_cartan_coaction_gens}
  \Delta(\psi_{i}(w)) \ = \  \varphi_{i}(w) \varphi_{\sigma(i)}(-w)^{-1} \otimes \psi_i(w).
  \end{equation} 

Via equations \eqref{eq:phi_action} and \eqref{eq:psi_action} the coaction on $\Al^{0'}$ can be thought of as being induced by the coaction $\Delta: \Ht^\bullet(\BG)\to \Ht^\bullet(\BGL)\otimes \Ht^\bullet(\BG)$ given by pullback along the map $\oplus^{\OSpt}$ where $\GL$ is the limit of groups $\GL_{\db}$ over dimension vectors $\db$ and $\Gt$ is the limit of $\Gt_{\db}$ along orthosymplectic dimension vectors $\db$.  

The action of the first tensor factor of \eqref{eq:loop_cartan_coaction_gens} acts as an expansion of cup product with the class 
\[ \frac{c_{-1/w}(C_{i, \wb} \oplus \hbar \tau^*(C_{i, \wb}))}{c_{-1/w}(\hbar C_{i, \wb} \oplus \hbar \tau^*(\hbar C_{i, \wb}))}.\]

We give an explicit formula for the OSP-extended preprojective CoHA $ H_\bullet^{\BM}(\Ml_{\Pi_Q})\otimes \Al^{0'}$, in which formula
\eqref{eq:loop_cartan_commutator} still holds with cup product replaced with the action of cohomology on Borel-Moore homology. We study this using Corollary \ref{cor:preproj_w_intertwiner}. 

For the framed module $\Ht^\sbt(\Ml^{\OSpt}_\wb)$ the action of the loop cartan generators on the vacuum $|\wb\rangle \in \Ht^\sbt(\Ml_{0}^{\OSpt})$ which is given by 
\begin{equation} 
\psi_{i}(w) |\mathbf{w}\rangle = \begin{cases} \prod_{j = 1}^{w_i} \frac{(w-u_i)(w + u_i  ) }{(w-u_i - \hbar)(w+u_i-\hbar ) } |\mathbf{w}\rangle \text{ if } i \in Q_0^\sigma \\ 
  \prod_{j = 1}^{w_i} \frac{w-u_i}{w-u_i - \hbar} |\mathbf{w}\rangle\text{ if  } i \not\in Q_0^\sigma \end{cases} 
\end{equation}

\begin{prop} 
  There is an action of $\Al^{0'}\otimes \Al$ on $\Ht^\bullet(\Ml^{\OSpt}(\wb))$ for any $\wb$. 
  \end{prop}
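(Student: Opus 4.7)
The plan is to build the action in the style of a Radford--Majid bosonisation, using the coaction \eqref{eq:loop_cartan_coaction} to convert the CoHA-action into the commutation relations between $\Al^{0'}$ and $\Al$ on $\Ht^\bullet(\Ml^{\OSpt}(\wb))$. First, I would equip $\Al^{0'}\otimes \Al$ with its bosonisation algebra structure: the relation on generators is forced by demanding that the natural extension of \eqref{eq:loop_cartan_commutator} hold, i.e.\ for $f\in\Ht^\sbt(\Ml_\db)$,
\begin{equation*}
  \psi_i(w)\,f \ = \ f\cup \varphi_{i,\db}(w)\varphi_{\sigma(i),\db}(-w)^{-1}\,\psi_i(w),
\end{equation*}
with the right-hand side expanded as a formal power series in $w^{-1}$. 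Associativity of the resulting product is automatic from the coassociativity of \eqref{eq:loop_cartan_coaction} (which reduces to that of the underlying pullback coaction $\Ht^\sbt(\BG)\to\Ht^\sbt(\BGL)\otimes\Ht^\sbt(\BG)$ along $\oplus^{\OSpt}$), together with the fact that $\Al^0$ is already known to act compatibly on $\Al$ by \eqref{eq:loop_cartan_commutator}.

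Second, I would define the action on $\Ht^\bullet(\Ml^{\OSpt}(\wb))$ as the composite: $\Al$ acts by the framed orthosymplectic shuffle formula of Proposition \ref{prop:COHMVectFraming}, and $\Al^{0'}$ acts by cup product with the tautological class in \eqref{eq:psi_action}. Verifying that this gives a module for the bosonisation reduces to checking that for $f\in\Al$ and $\gamma\in\Ht^\bullet(\Ml^{\OSpt}(\wb))$,
\begin{equation*}
  \psi_i(w)\cdot(f\cdot\gamma)\ =\ \bigl(f\cup\varphi_{i,\db}(w)\varphi_{\sigma(i),\db}(-w)^{-1}\bigr)\cdot\bigl(\psi_i(w)\cdot\gamma\bigr).
\end{equation*}
This is a local computation on the orthosymplectic extension correspondence \eqref{fig:OSpCorr}: both sides arise from pulling back the tautological class $c_{-1/w}(C_{i,\wb})/c_{-1/w}(\hbar C_{i,\wb})$ along the two projections $q^{\OSpt}$ and $p^{\OSpt}$ and comparing.

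Third, the key geometric input is the decomposition of the tautological complex along the orthosymplectic extension $0\to a\to e\to e/a\to\tau(a)\to 0$: on the correspondence we have the equality of $K$-theory classes
\begin{equation*}
  [p^{\OSpt,*}C_{i,\wb}]\ =\ [q^{\OSpt,*}(C^{(1)}_{i,\db'}\oplus C^{(2)}_{i,\wb}\oplus \hbar\,\tau^*C^{(1)}_{i,\db'})],
\end{equation*}
which, after taking the ratio of chern classes in $-1/w$, is precisely the factor $\varphi_{i,\db}(w)\varphi_{\sigma(i),\db}(-w)^{-1}$ from \eqref{eq:loop_cartan_coaction_gens} acting on the first factor and $\psi_i(w)$ acting on the second. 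Combined with the shuffle push-pull formula \eqref{eqn:COHMVect_framing}, this yields the required commutation relation and hence the module structure.

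The main obstacle is the bookkeeping in this last step: one must match signs and square roots $\pm u_i$ at the nodes $i\in Q_0^\sigma$ (where the orthosymplectic tautological bundle contributes both a weight and its negative) against the specific form of the coaction \eqref{eq:loop_cartan_coaction_gens}, which involves the composite $\varphi_i(w)\varphi_{\sigma(i)}(-w)^{-1}$. Once this identification of chern class contributions on $\Ml^{\OSpt}_{\SES,\db',\db''}$ is established, compatibility of the two actions is immediate, and the vacuum formula for $\psi_i(w)|\mathbf{w}\rangle$ follows from the rank and chern roots of $C_{i,\wb}$ over $\Ml^{\OSpt}_{0,\wb}$.
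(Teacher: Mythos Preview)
Your proposal is correct and follows essentially the same strategy as the paper: define the $\Al^{0'}$-action by cup product with the tautological ratio \eqref{eq:psi_action}, and verify the commutation relation \eqref{eq:bosonised_commutation_OSP} with the shuffle $\Al$-action. The paper's own proof is terser---it splits into the cases $i\notin Q_0^\sigma$ (where the usual argument applies verbatim) and $i\in Q_0^\sigma$ (where it writes the explicit orthosymplectic formula $\psi_i(w)g = \text{(framing factor)}\cdot\frac{[e(Q_1)^{\OSpt}_{\delta_i,\db}/e(Q_0)^{\OSpt}_{\delta_i,\db}](w,z)}{[e(Q_1)^{\OSpt}_{\db,\delta_i}/e(Q_0)^{\OSpt}_{\db,\delta_i}](z,w)}\cup g$ and declares the rest a straightforward computation)---whereas you supply the geometric reason that computation succeeds, namely the $K$-theory splitting of the tautological complex along the orthosymplectic extension.
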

  
\begin{proof} 
The proof for the vertices which are not in $Q_0^\sigma$ is identical to the usual argument. For $i \in Q_0^\sigma$ we note that if $\varphi_{i}(w)$ acts on $g \in \Ht^\sbt(\Ml^{\OSpt}_{\db}(\wb))$ via 

\[ \psi_i(w) g = \prod_{j = 1}^{w_i} \frac{(w-u_i)(w + u_i  ) }{(w-u_i - \hbar)(w+u_i-\hbar ) }\frac{ [e(Q_1)^{\OSpt}_{\delta_i, \db} /e(Q_0)^{\OSpt}_{\delta_i, \db}](w, z_{(1)})}{[e(Q_1)^{\OSpt}_{\db, \delta_i} /e(Q_0)^{\OSpt}_{\db, \delta_i}](z_{(1)}, w)} \cup g \] 
and it is a straightforward coputation that this gives an action of $\Al\otimes \Al^{0'}$.
\end{proof} 

We will occasionally identify $\Ht^\bullet(\BG)$ as a subalgebra of $\Ht^{\bullet}(\BGL)$ via pullback along the map $ \BGL \to BG$ defined by $E\mapsto E \oplus \tau^*(E)$.

\subsection{Module bosonisation} 

\subsubsection{} 
We recall the Radford-Majid bosonisation construction \cite{Mdb}. One version of this construction starts with a bialgebra object $(B,m,\Delta)$ in a strict braided monoidal category $\Cl = H\Md$  with braiding given by 
\[ \beta(v\otimes w) = \sigma(\Rl^{(1)} \cdot  v \otimes \Rl^{(2)}\cdot w). \] Here $H\Md$ consists of of representations of a bialgbera object $H$ in $k\Md$. The output of the bosonisaiton construction is a new bialgebra $B\#  H$ in $k\Md$ with
its usual symmetric monoidal structure. 

In the Hopf algebra literature elements $b\otimes h$ of $B\# H \simeq B\otimes H$ are denoted $b\# h$.

Multiplication in $B\# H$ is given by the formula 
\[ b_1 \# h_1 \cdot b_2 \# h_2 = b_1 h_1^{(1)} \cdot b_2 \# h_1^{(2)} h_2. \]

The bosonised coproduct is defined by 
\[ \Delta(b\# h) = b_{(1)} \# \Rl^{(2)} h_1 \otimes \Rl^{(1)}  \cdot b_{(2)} \# h_{(2)}. \]

\subsubsection{bosonised coproduct of the shuffle algebra}

We adopt the $\#$ notation for the orthosymplectic shuffle module generalization of bosonisation owing to the added complexity of the coaction in the underlying briaded module category, which adds a layer of complexity over the simplicty of the equation \eqref{eq:extended_coproduct} which gives rise to \eqref{eq:loop_cartan_commutator}.

The extended shuffle algebra $\Al\# \Al^0$ has multiplication induced by \eqref{eq:loop_cartan_commutator} and bosoinzed (localised) coproduct extended mutliplicatively from

\begin{equation}\label{eq:boson_coaction} 
  \Delta(f\# 1) \ = \ 
   \sum_{\db' + \db'' = \db}  \frac{f_{(1)}(x_{\db'}) \# \prod_{i \in Q_0} \varphi_{i}(x_{\db''_i})\otimes  f_{(2)}(x_{\db''}) }{e(Q_0^{\OSpt})_{\db', \db''}/e(Q^{ \OSpt}_1)_{\db', \db''}}
  \end{equation}
where for a set of variables $x_{A} = x_{A_1}, \ldots x_{A_k}$ we have $\varphi_{i}(x_A) = \prod_{j = 1}^k \phi_i(x_{A_j})$ and $x_{\db''_i}$ ranges over the variables at the $i$th vertex of $Q$. We have also adopted a variant of Sweedler's notation where we implicitly sum of monomials for the pullback of $f$ along $\iota: \tk_{\db'}\times \tk_{\db''} \to \tk_{\db}$ where $f(x_{\db})\circ \iota = \sum_{i} f^i_{(1)}(x_{\db'})\otimes f^i_{(2)}(\db'')$.

We are now ready to define the orthosymplectic extended shuffle algebra. 

Consider 
\begin{equation} \label{eq:osp_ext_shuffle} 
  \Al^{ext'} = \Al \# \Al^{0'} := \Al \otimes \Al^{0'}
\end{equation} 
with multiplication defined using the coaction \eqref{eq:loop_cartan_coaction_gens} on $\Al^{0'}$ by the formula
\[(b_1 \# h_1) \cdot (b_2 \# h_2) = b_1 h_1^{(1)} \cdot b_2 \# h_1^{(2)} h_2. \] 
Explicitly this gives rise to the commutation relations for $f \in \Al_{\db}$ given by 
\begin{equation}\label{eq:bosonised_commutation_OSP} \psi_i(w) f \psi_{i}(w)^{-1} = \varphi_{i,\db}(w) \cup \varphi_{\sigma(i), \db}(-w)^{-1} \cup f \end{equation}  
where we have ommitted $\#$ from the notation. 

The bosonised coaction on $\Al^{ext'}$ is defined as $\Delta: \Al^{ext'} \to (\Al^{ext} \widehat{\otimes} \Al^{ext'})_{loc}$ extended mutliplicatively from \eqref{eq:boson_coaction} and \eqref{eq:loop_cartan_coaction_gens} on subalgebras $\Al\# 1$ and $1 \# \Al^{0'}$. We now define a bosonised coaction:

\begin{prop}
  There is a coassociative coaction
  \begin{equation} \label{eq:boson_OSp_coaction_module}
  \Delta^b_L: \Hl^{\OSpt}_{\wb} \to (\Al^{ext'}\widehat{\otimes} \Hl_{\wb}^{\OSpt})_{loc}
  \end{equation} 
\end{prop}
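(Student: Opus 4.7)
The plan is to mimic the bosonisation formula \eqref{eq:boson_coaction} for the algebra coproduct, but adapted to the comodule setting. Concretely, for $g\in\Hl^{\OSpt}_{\wb,\db}$, I would define
\[
\Delta^b_L(g) \ = \ \sum_{(\db',\db'')\in \delta_\db} \frac{g_{(1)}(x_{\db'}) \# \prod_{i\in Q_0}\psi_i(x_{\db''_i}) \ \otimes \ g_{(2)}(x_{\db''})}{e(Q_0^{\OSpt})_{\db',\db''}/e(Q_1^{\OSpt})_{\db',\db''}},
\]
where $g_{(1)}\otimes g_{(2)}$ denotes the pullback of $g$ along $\tk_{\db'}\times\tk_{\db''}\to \tk_\db$. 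The heuristic is that $\psi_i$ is dual to the Cartan data produced by the action of $\prod_i\psi_i(x_{\db''_i})$ on $\Hl^{\OSpt}_\wb$ via \eqref{eq:psi_action}; this is precisely what compensates for the discrepancy between the left coaction $\Delta_L$ of $\Al$ on $\Hl^{\OSpt}_\wb$ (equation \eqref{eq:coactionL}) and the bosonised product structure of $\Al^{ext'}$.

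My first step would be to check that $\Delta^b_L$ is well-defined as a map into $(\Al^{ext'}\hatotimes \Hl^{\OSpt}_\wb)_{\loc}$. Well-definedness up to the Weyl group symmetries follows from the fact that $e(Q_0^{\OSpt})/e(Q_1^{\OSpt})$ is the ratio of Euler classes already producing a $W_{\db'}\times W^{\OSpt}_{\db''}$-invariant expression (as in the proof of Theorem \ref{thm:ShuffleNoPotential}), and that $\psi_i(x_{\db''_i})$ is a symmetric function of the $x_{\db''_i}$'s by \eqref{eq:phi_action}. Consistency with the existing coactions follows from the fact that forgetting the $\psi$-insertions and the $\#$-structure recovers $\Delta_L$ from \eqref{eq:coactionL}, and that the coaction on $\Al^{0'}$ itself is recovered in the appropriate specialisation of \eqref{eq:loop_cartan_coaction_gens}.

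Next, I would verify coassociativity:
\[
(\Delta^{ext'}\otimes \id)\cdot \Delta^b_L \ = \ (\id \otimes \Delta^b_L)\cdot \Delta^b_L.
\]
Here $\Delta^{ext'}$ is the bosonised coproduct on $\Al^{ext'}$. The computation splits into three pieces: (i) coassociativity of the original shuffle coaction $\Delta_L$; (ii) coassociativity of the Cartan coaction \eqref{eq:loop_cartan_coaction_gens}, i.e. $\Delta(\psi_i(w))=\varphi_i(w)\varphi_{\sigma(i)}(-w)^{-1}\otimes \psi_i(w)$; and (iii) the fact that the commutation relation \eqref{eq:bosonised_commutation_OSP} together with the ordinary bosonised coproduct formula for $\Al^{ext}$ precisely rearranges the nested Euler factors coming from iterating $\Delta^b_L$ into the single Euler factor appearing on the other side. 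Algebraically this is exactly the Radford-Majid cocycle identity transported to the braided module category setting.

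The main obstacle will be step (iii): verifying that, after one iterates $\Delta^b_L$, the braiding factors produced by moving the $\psi_i$'s past the shuffle generators via \eqref{eq:bosonised_commutation_OSP} combine with the Cartan coaction \eqref{eq:loop_cartan_coaction_gens} into the correct product of Euler factors over a three-step decomposition $\db=\db'+\db''+\db'''$ (with $\db''$ orthosymplectic). The cleanest way to handle this is to re-derive the formula from the geometric origin: $\Delta_L$ arises from the Borcherds-twisted pullback along $\oplus_{\GL\textup{-}\OSpt}^*$ of Corollary \ref{cor:OrthosymplecticJoyceLiuVertex}, and the $\psi_i,\varphi_i$ insertions are simply the Chern polynomial packaging of the tautological complexes on the extension correspondence. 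Coassociativity then follows from the associativity of the three-step orthosymplectic extension correspondence $\SES_3^\tau$ of Proposition \ref{prop:MActionOnMt}, together with the splitting of normal complexes given in Corollary \ref{cor:EulerGLOSpModuleHexagon}, which ensures that the product of Euler classes over a three-step decomposition factorises correctly into the required nested expression.
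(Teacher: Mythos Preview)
Your proposal is correct and follows essentially the same approach as the paper: define $\Delta^b_L$ by the explicit bosonised formula, which is exactly what the paper does in \eqref{eq:bosonised_shuffle_coaction_w}. One small indexing slip: the product of Cartan insertions should be over $i\in Q_0/\sigma$ (where $\psi_i$ is defined), not over all of $Q_0$; beyond this, your discussion of well-definedness and coassociativity actually goes further than the paper's proof, which simply records the formula and leaves the verification implicit.
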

\begin{proof}
  We define it by the formula
  \begin{equation}
  \label{eq:bosonised_shuffle_coaction_w}
  \Delta^b_L(f) = \sum_{(\db', \db'') \in \delta_{\db}}\frac{f_{(1)}(x_{\db'})\# \prod_{i \in Q_0/\sigma} \psi_i(x_{\db''_i}) \otimes f_{(2)}(x_{\db''})}{e(Q_0^{\OSpt})_{\db', \db''}/ e(Q_1^{\OSpt})_{\db', \db''}}. 
  \end{equation} 
\end{proof}

\subsubsection{Shapovalov form}

Consider the form on $\Ht^{\bullet}(M^{\OSpt}(w))$ given by 
\[ (f, g) = \frac{ 1 }{v!}\int_{C} \frac{ f(z) g(z)  \prod_{\alpha\in R(G)} \alpha(z)}{ e(\Rep(Q)^{\OSpt}_{\wb}) } dz \] 

a direct analogue of [Negut 4.31] where $C$ is an appropriate choice of integration contour close to the locus where $|z_i| = 1$ for all $i$. 

We expect $(f,g)$ to serve as a Shapovalov form for the action of a version of the Drinfeld double for the quotient of $\Al\otimes \Al^{0'}$ in its representation on $\Hl^\OSpt_{\wb}$ at generic values of the framing weights $u_j$.

\subsection{Example: the Jordan quiver}  
\subsubsection{} 
Let $Q$ be the Jordan quiver, and $\Ml$ the stack of representations of the tripled quiver. 
The CoHA $\Ht^\bullet(\Ml)$ is isomorphic to 
\[ \bigoplus_{d \ge 0} k[ x_1, \ldots, x_d]^{W_{\GL_d}} \] 
with shuffle produt 
\begin{multline*} 
   f \cdot g = \sum_{\pi \in W_{\GL_{d' + d''}}} f(x_{\pi(1)}, \ldots, x_{\pi(d')})g(x_{\pi(d'+1)},\ldots, x_{\pi(d' + d'')})\times \\ \prod_{\substack{i = 1\\j=1}}^{d', d''}\frac{(x_{\pi(d'+j) } - x_{\pi(i)} + t_1 )(x_{\pi(d'+j) } - x_{\pi(i)} + t_2 )(x_{\pi(d'+j) } - x_{\pi(i)} - t_1 - t_2) }{x_{\pi(d'+j) } - x_{\pi(i)}}. \end{multline*}

It is known that the the preprojective CoHA is generated by elements in degree $d = 1$, and that in this degree the morphism \eqref{eq:preproj_intertwiner} is an isomorphism. 

\subsubsection{Even orthogonal instantons} 
We focus on the framed module for the preprojective CoHA corresponding to even orthogonal instantons where the gauge group of the holomorphic symplectic quotient is of the form $\Spt(2k)$. Using Corollary \ref{cor:preproj_w_intertwiner} we study using the quiver without potential.  The weights of the three loops are $t_1, t_2, -t_1-t_2$

With framing dimension $w = 2r$ and weights $u_1,\ldots u_r, -u_1, \ldots, -u_r$ by Proposition \ref{prop:COHMVectFraming} the action on $\Ht^\bullet(\Ml^{\OSpt}_w)$ for $f \in \Al_d'$ and $g \in \Ht^\bullet(\Ml^{\OSpt}_{w, d''})$ is given by 
\begin{multline} 
f\cdot g = \sum_{\varepsilon \in \mathbf Z/2\mathbf Z^{d' + d''}} \sum_{\pi \in \Sigma_{d' + d''}}  f(\varepsilon_{\pi(1)}x_{\pi(1)}, \ldots, \varepsilon_{\pi(d')}x_{\pi(d')})g(\varepsilon_{\pi(d'+1)}x_{\pi(d'+1)},\ldots, \varepsilon_{\pi(d'+d'')}x_{\pi(d' + d'')})\times \\ 
\prod_{\substack{i = 1\\ j = 1}}^{d', d''} \frac{\prod_{\xi \in \{ t_1, t_2, -t_1 -t_2\}} (-(\varepsilon_{\pi(d'+j)}x_{\pi(d'+j) })^2 + (\varepsilon_{\pi(i)}x_{\pi(i)} - \xi )^2) }{-(\varepsilon_{\pi(d'+j)}x_{\pi(d'+j) })^2 + (\varepsilon_{\pi(i)}x_{\pi(i)})^2} \times \\
\prod_{1 \le i \le k \le d'} \frac{\prod_{\xi \in \{ t_1, t_2, -t_1 -t_2\}} (\varepsilon_{\pi(i)}x_{\pi(i) } + \varepsilon_{\pi(k)}x_{\pi(k)} + \xi ) }{\varepsilon_{\pi(i)}x_{\pi(i) } + \varepsilon_{\pi(k)}x_{\pi(k)}} \times 
\prod_{\substack{i = 1\\ k = 1}}^{d', r} (-u_k - \varepsilon_{\pi(i)}x_{\pi(i)} )(u_k - \varepsilon_{\pi(i)}x_{(\pi(i))}). 
\end{multline}

\subsubsection{Symplectic instantons, odd instanton number} 

With framing dimension $w = 2r$ and framing weights $u_1,\ldots u_r, -u_1, \ldots, -u_r$ the gauge group of the holomorphic symplectic quotient is of the form $O(2k + 1)$ and by Proposition \ref{prop:COHMVectFraming} the action on $\Ht^\bullet(\Ml^{\OSpt}_w)$ for $f \in \Al_d'$ and $g \in \Ht^\bullet(\Ml^{\OSpt}_{w, d''})$ is given by 
\begin{multline} 
f\cdot g = \sum_{\varepsilon \in \mathbf Z/2\mathbf Z^{d' + d''}} \sum_{\pi \in \Sigma_{d' + d''}}  f(\varepsilon_{\pi(1)}x_{\pi(1)}, \ldots, \varepsilon_{\pi(d')}x_{\pi(d')})g(\varepsilon_{\pi(d'+1)}x_{\pi(d'+1)},\ldots, \varepsilon_{\pi(d'+d'')}x_{\pi(d' + d'')})\times \\ 
\prod_{\substack{i = 1\\ j = 1}}^{d', d''} \frac{\prod_{\xi \in \{ t_1, t_2, -t_1 -t_2\}} (-(\varepsilon_{\pi(d'+j)}x_{\pi(d'+j) })^2 + (\varepsilon_{\pi(i)}x_{\pi(i)} - \xi )^2) }{-(\varepsilon_{\pi(d'+j)}x_{\pi(d'+j) })^2 + (\varepsilon_{\pi(i)}x_{\pi(i)})^2} \times \prod_{1 \le i \le d'}\frac{\prod_{\xi \in \{ t_1, t_2, -t_1 -t_2\}} (\varepsilon_{\pi(i)}x_{\pi(i) } + \xi ) }{\varepsilon_{\pi(i)}x_{\pi(i) }} \times  \\
\prod_{1 \le i < k \le d'} \frac{\prod_{\xi \in \{ t_1, t_2, -t_1 -t_2\}} (\varepsilon_{\pi(i)}x_{\pi(i) } + \varepsilon_{\pi(k)}x_{\pi(k)} + \xi ) }{\varepsilon_{\pi(i)}x_{\pi(i) } + \varepsilon_{\pi(k)}x_{\pi(k)}} \times 
\prod_{\substack{i = 1\\ k = 1}}^{d', r} (-u_k - \varepsilon_{\pi(i)}x_{\pi(i)} )(u_k - \varepsilon_{\pi(i)}x_{(\pi(i))}). 
\end{multline}

\subsubsection{Symplectic instantons, even instanton number} 

With framing dimension $w = 2r$ and weights $u_1,\ldots u_r, -u_1, \ldots, -u_r$ by Proposition \ref{prop:COHMVectFraming} the action on $\Ht^\bullet(\Ml^{\OSpt}_w)$ for $f \in \Al_d'$ and $g \in \Ht^\bullet(\Ml^{\OSpt}_{w, d''})$ is given by 
\begin{multline} 
f\cdot g = \sum_{\varepsilon \in \mathbf Z/2\mathbf Z^{d' + d''}} \sum_{\pi \in \Sigma_{d' + d''}}  f(\varepsilon_{\pi(1)}x_{\pi(1)}, \ldots, \varepsilon_{\pi(d')}x_{\pi(d')})g(\varepsilon_{\pi(d'+1)}x_{\pi(d'+1)},\ldots, \varepsilon_{\pi(d'+d'')}x_{\pi(d' + d'')})\times \\ 
\prod_{\substack{i = 1\\ j = 1}}^{d', d''} \frac{\prod_{\xi \in \{ t_1, t_2, -t_1 -t_2\}} (-(\varepsilon_{\pi(d'+j)}x_{\pi(d'+j) })^2 +( \varepsilon_{\pi(i)}x_{\pi(i)} - \xi)^2 ) }{-(\varepsilon_{\pi(d'+j)}x_{\pi(d'+j) })^2 + (\varepsilon_{\pi(i)}x_{\pi(i)})^2} \times  \\
\prod_{1 \le i < k \le d'} \frac{\prod_{\xi \in \{ t_1, t_2, -t_1 -t_2\}} (\varepsilon_{\pi(i)}x_{\pi(i) } + \varepsilon_{\pi(k)}x_{\pi(k)} + \xi ) }{\varepsilon_{\pi(i)}x_{\pi(i) } + \varepsilon_{\pi(k)}x_{\pi(k)}} \times 
\prod_{\substack{i = 1\\ k = 1}}^{d', r} (-u_k - \varepsilon_{\pi(i)}x_{\pi(i)} )(u_k - \varepsilon_{\pi(i)}x_{(\pi(i))}). 
\end{multline}

\subsection{Example: folded linear quivers} 

\subsubsection{} 

Consider a quiver $Q$ with $2n+1$ nodes with the folding $\sigma$ fixing the central node. 

The image of the preprojective algebra under the morphism in \eqref{eq:preproj_intertwiner} is genreated by $x_{i,1}^a, a \ge 0$  in degrees $\delta_i$ 
in 
\[ \Ht^\bullet(\Ml) \simeq \bigoplus_{\db} k[x_{i,1}, \ldots, x_{i, d_i}]_{i =1, \ldots, 2n+1} ^{W_{\GL_\db}}.\] 

We consider even orthogonal framing at the fixed node. 
The module $\Ht^\bullet(\Ml^{\OSpt}_{\wb})$ is isomorphic to 
\[ \bigoplus_{\db\in \mathbf{Z}_{\ge 0}^{n}} k[x_{i, 1}, \ldots, x_{i, d_i}]_{i = 1, \ldots, n}^{W_{\Gt_\db}}. \]

Write elements of $W_{\Gt_{\db}}$ as pairs $(\varepsilon, \pi) \in (\mathbf{Z}/2)^{d_n} \ltimes \Sigma_{\db}$. Given $\varepsilon \in (\mathbf{Z}/ 2)^{d_n}$ let $x_{n, k}^\varepsilon$ denote $(-1)^\varepsilon_k x_{n, k}$. The action of $x_{i, 1}^a$ on $g \in \Ht^\bullet(\Ml^{\OSpt}_\wb)$ in degree $\db - \delta_i$ if $i < n-1$ is given by the usual shuffle algebra action on the left, and if $i> n+1$ is given by the usual shuffle algebra action of $(-1)^ax_{i -n, 1}^a$ on the right. When $i = n$ the action of $x_{n,1}^a$ is given by 
\begin{multline} 
x_{n, 1}^a \cdot g = \sum_{(\varepsilon, \pi) \in W_{\Gt_{\db}}} x_{n, 1}^{\varepsilon a} g((\varepsilon, \pi) x)\prod_{j = 2}^{d_n} \frac{x^\varepsilon_{n, \pi(j)} - x^\varepsilon_{n, \pi(1)} - t_1 - t_2 }{x^\varepsilon_{n, \pi(j)} - x^\varepsilon_{n, \pi(1)}}\times \\ 
\prod_{1 \le i \le k \le d_n} \frac{(x^\varepsilon_{n,\pi(i)} + x^\varepsilon_{n,\pi(k)} -t_1-t_2 ) }{(x^\varepsilon_{n,\pi(i)} + x^\varepsilon_{n,\pi(k)})} \times 
\prod_{\substack{i = 1\\ k = 1}}^{d_n, r} (-u_k - x^\varepsilon_{n,\pi(i)} )(u_k - x^\varepsilon_{n,\pi(i)})\\
\prod_{ 1 \le j \le d_{n-1}} 
   \left( x^\varepsilon_{n,{\pi(1)}} - x^\varepsilon_{n-1, \pi(j)} +t_1\right) 
   \prod_{1 \le j \le d_{n-1} } 
    \left( x^\varepsilon_{n,\pi(1)} + x^\varepsilon_{n-1, \pi(j) } +t_1 \right) \\
    \prod_{ 1 \le j \le d_{n-1}} 
    \left( x^\varepsilon_{n,{\pi(1)}} + x^\varepsilon_{n-1, \pi(j)} +t_2\right) 
    \prod_{1 \le j \le d_{n-1} } 
     \left( x^\varepsilon_{n,\pi(1)} - x^\varepsilon_{n-1, \pi(j) } +t_2 \right).
\end{multline} 

A similar formula holds for the action of $x_{n-1}^a$.

\subsection{Shuffle module twisted Yetter-Drinfeld condition} 
\label{ssec:ShuffleYetDrin}

\subsubsection{} 
In this section we give a direct proof using shuffle algebras of the twisted Yetter-Drinfeld compatibility between the action and left coaction of the CoHA on a framed orthosymplectic module. In order to declutter the notation of the proof we introduce shorthand notation for the localisation contributions of $e(Q_0^\OSpt)_{\db', \db''}(x_{(1)}, x_{(2)}),$ of $e(Q_1^\OSpt)_{\db', \db''}(x_{(1)}, x_{(2)}),$ of  $e(Q_0)_{\db', \db''}(x_{(1)}, x_{(2)}),$ and of $e(Q_0)_{\db', \db''}(x_{(1)}, x_{(2)})$.
We assume that there are no type II orbits of arrows in $Q_1$. 

If $A$ variables are indices for $x_{(1)}$ and $B$ variables are indices for $x_{(2)}$ then 
\begin{alignat*}{1}
(A_1\ldots A_k | B_1\ldots B_\ell)&:= e(Q_1)/e(Q_0)(x_{A_1}\cdots x_{A_k}, x_{B_1} \cdots x_{B_\ell}) \\ 
\langle A | \tau(A) \rangle &:= e(Q_1^{\tau})/e(Q_0^\tau)(x_A, \tau(x_A)) \\ 
[A_1\ldots A_k | B_1, \ldots B_k, B_\ell]&:= e(Q_1^\OSpt)/e(Q_0^\OSpt)(x_{A_1}\cdots x_{A_k}, x_{B_1} \cdots x_{B_\ell}) 
\end{alignat*}
with the convention that the last variables occuring in $[\cdots | \cdots]$ correspond to the self dual object, i.e. which already lie in a torus for a classical type group. 

The ordinary and orthosymplectic hexagon relations (section \ref{sssec:OrthosymplecticModuleHexagonRelations}) therefore become 
\begin{alignat*}{2} 
(A_1 A_2 | B_1) &= (A_1|B_1)(A_2|B_1) & (A_1 | B_1 B_2) &= (A_1 | B_1)(A_1|B_2) \\ 
[A_1A_2|B_1] &= (A_1 |\tau(A_2))[A_1|B_1][A_2|B_1]  & [A|B] &= \langle  A |\tau(A)\rangle (A|B) \\ 
[A_1 | B_1 B_2] &= (A_1 | B_1) (A_1|\tau(B_1)) [A_1| B_2]. & & 
\end{alignat*} 
We also have the relation $ (A|B) = (\tau(B)|\tau(A))$ arising from the fact that $\tau$ is an antiequivalence.

\begin{theorem} \label{thm:braided_YD_shuffle}
The modules in Theorem \ref{thm:ShuffleNoPotential} and Corollary \ref{cor:preproj_w_intertwiner} are $\tau$-twisted Yetter-Drinfeld modules with respect to the coaction in \ref{ssec:coaction_shuffle}.
\end{theorem}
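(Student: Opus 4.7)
The plan is to mirror the pictorial computation in the second proof of Theorem \ref{thm:YD} at the level of shuffle formulas, replacing Atiyah--Bott torus localisation by the explicit Weyl group averages that define the shuffle product and the Borcherds twist ratios that define the coaction. Fix $f \in \Al_{\db_1}$ and $g \in \Hl^{\OSpt}_{\wb, \db_2}$, and label the variables produced by the various splittings by four groups $A$, $\tau(A)$, $B$, $C$, corresponding to the four tensor factors $\Al \otimes \Al \otimes \Al \otimes \Hl^{\OSpt}$ appearing in the Yetter--Drinfeld diagram: $A$, $\tau(A)$ arise from splitting $f$ via the coproduct $\Delta$ on $\Al$, while $B$, $C$ arise from splitting $g$ via the orthosymplectic coaction $\Delta_L$.

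First I would expand both sides using Theorem \ref{thm:ShuffleNoPotential} and \eqref{eq:coactionL}. The action--then--coaction side $\Delta_L \circ (f \cdot g)$ is a Weyl group sum for $W_{\Gt_{\db_1 + \db_2}}$ of shuffle factors $[A\tau(A) \mid BC]$, decomposed further by $\Delta_L$ into summands indexed by decompositions $(\db', \db'') \in \delta_{\db_1+\db_2}$, and weighted by the localised Euler quotient $[A\tau(A) \mid BC]/[A\tau(A) \mid BC]_0$ where the subscript $0$ denotes the $e(Q^\OSpt_0)$ denominator. The coaction--then--action side first splits $f \mapsto f_{(1)}(x_A) \otimes f_{(2)}(x_{\tau(A)})$ and $g \mapsto g_{(1)}(x_B) \otimes g_{(2)}(x_C)$, applies the twisted braiding $\beta_{34}\beta_{23}\kappa_4\beta_{34}$ which reorders the tensor factors into the order $A, B, \tau(A), C$ and inserts the rational $R$-matrix factors arising from the monodromy, then applies the shuffle product and orthosymplectic action.

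Next I would use the hexagon relations stated before the theorem,
\begin{align*}
[A_1 A_2 \mid B] &= (A_1 \mid \tau(A_2))\, [A_1 \mid B]\, [A_2 \mid B], \\
[A \mid B_1 B_2] &= (A \mid B_1)(A \mid \tau(B_1))\, [A \mid B_2], \\
[A \mid B] &= \langle A \mid \tau(A) \rangle \, (A \mid B),
\end{align*}
to expand both the $[A\tau(A) \mid BC]$ factor appearing on the left and the $[AB \mid C]$ factor appearing on the right into products of the atomic factors $(X \mid Y)$, $\langle X \mid \tau(X) \rangle$, together with $[B \mid C]$ and $[A \mid C]$. After this expansion and cancellation of the common $[B\mid C]$, $\langle A\mid\tau(A)\rangle$, and $\langle B\mid\tau(B)\rangle$ factors, the difference between the two sides reduces to the quotient
\[
\frac{(A \mid B)\,(A \mid \tau(B))}{(B \mid A)\,(\tau(B) \mid A)},
\]
which is exactly the Cherednik R-matrix $R_{23}R_{2\tau(3)}/R_{32}R_{3\tau(2)}$ identified in the second proof of Theorem \ref{thm:YD} as realising the braiding composition $\beta_{34}\beta_{23}\kappa_4\beta_{34}$ (with $R_{XY} = (X \mid Y)$ and $K_{XY} = (X \mid \tau(Y))$). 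This identification follows directly from the definition \eqref{eqn:DefinitionsBraidings} of $\beta, \kappa$, since the Euler class ratios defining the braidings on $\Ht^\sbt(\Ml)\Md$ and $\Ht^\sbt(\Ml^\tau)\Md$ agree with these atomic shuffle factors after pullback to tori.

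The main obstacle will be matching the ranges of summation on the two sides: the left side is a single Weyl group sum for $W_{\Gt_{\db_1+\db_2}}$ stratified by the image decomposition, while the right side is an iterated sum over smaller Weyl groups coming from the two shuffle products after splitting. The reconciliation is that $W_{\Gt_{\db_1+\db_2}}/W_{\Gt_{\db_1}} \times W_{\Gt_{\db_2}}$ parametrises precisely the shuffle positions, and the insertion of the $\sigma_{23}$ transposition in the twisted braiding is what permutes the $\tau(A)$ and $B$ blocks to match the standard Levi ordering on the right side; this is a purely combinatorial verification using Lemma \ref{lem:GFiniteSetsMorphismsDescription}. Once this bookkeeping is complete, the framed case of Corollary \ref{cor:preproj_w_intertwiner} follows identically after including the framing edge contribution $\prod_i e(Q_\wb^{\textup{fr}, i})$, which is inert under the coaction (as the framing node has dimension vector $\wb$ rather than being split), and the preprojective case follows by applying the intertwining map of Proposition \ref{prop:Intertwine}.
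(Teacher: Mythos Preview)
Your overall plan—expand both sides via the shuffle and coaction formulas, apply the hexagon relations for the factors $(-\mid -)$ and $[-\mid -]$, cancel, and identify the residual ratio with the required braiding—is exactly the paper's strategy. However, the specific computation you outline is for the wrong compatibility condition.

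The $\tau$-twisted Yetter--Drinfeld condition reads
\[
\Delta_M \cdot m_M \;=\; (m_3 \otimes m_M)\cdot \beta_{34}\beta_{23}\kappa_4\beta_{34}\cdot (\Delta_3 \otimes \Delta_M),
\]
where $\Delta_3 = \Delta^2$ is the \emph{threefold} coproduct on $\Al$. Thus $f$ must be split into \emph{three} pieces and $g$ into two, giving five variable groups in total—the paper labels them $C,D,E,F,G$ (with $C,D,E$ coming from $f$ and $F,G$ from $g$). You instead split $f$ via the binary $\Delta$ into only two pieces $A,\tau(A)$, yielding four groups. With that bookkeeping, the residual ratio you compute,
\[
\frac{(A\mid B)(A\mid \tau(B))}{(B\mid A)(\tau(B)\mid A)} \;=\; \frac{S_{23}}{S_{32}}\cdot\frac{S_{2\tau(3)}}{S_{3\tau(2)}},
\]
is precisely the factor governing the \emph{twisted bialgebra module} condition of section~\ref{ssec:FinalRemarks}, not the Yetter--Drinfeld condition. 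The genuine YD braiding $\beta_{34}\beta_{23}\kappa_4\beta_{34}$ produces \emph{four} ratio factors, three of $R$-type and one of $K$-type; the paper's computation finds
\[
\frac{(E\mid F)}{(F\mid E)}\cdot\frac{(D\mid F)}{(F\mid D)}\cdot\frac{(E\mid \tau(D))}{(\tau(D)\mid E)}\cdot\frac{[D\mid G]}{[\tau(D)\mid G]},
\]
the last factor (involving the orthosymplectic bracket $[-\mid -]$) being the $\kappa$ contribution, which is entirely absent from your answer. To repair the argument you must track all five blocks through the hexagon expansions; the Weyl-group reindexing you mention in the final paragraph is then also more delicate, since the sign component of $W_{\Gt}$ interacts with the partition of the $f$-variables into the three pieces (this is why the paper further refines one block into $A_1^+$ and $A_1^-$).
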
 
\begin{proof}  
The $\tau$-twisted compatibility for the action and coaction from Corollary \ref{cor:preproj_w_intertwiner} will follow from that corollary and the compatibility for the no potential case. For simplicity we prove the non-type $D$ case whose prove is very similar.  

Firt we calculate $\Delta_L(f(x_{(1)})\cdot g(x_{(2)}))$ for $f\in \Al_{\db}$ and $g \in \Ht^\bullet(\Ml^{\OSpt})_{\db'}$ let $(\Zb/2)^\ell$ denote the product $\Gamma_s\times \Gamma_v$ of the sign factor of $W_{\Gt_{\db + \db'}}$ with the group  $\Gamma_v$ that sends variables $x$ for vertex $i$ to corresponding variable $\tau(x)$ for vertex $\theta(i)$ supposing $\theta(i) \neq i$. Then let $W'$ the qotient of $W_{\Gt_{\db + \db'}}$ by $\Gamma_s$.

we have that the $\db, \db'$-component of the coaction is
\begin{align*} 
    \Delta_L(f(x_{(1)})\cdot g(x_{(2)}))_{\db, \db'} &= \\ 
    & \Delta_L(\sum_{\varepsilon \in (\Zb/2)^\ell}\sum_{\sigma \in W'}  \sigma(f(\varepsilon x_{(1)}) g(\varepsilon x_{(2)}) \frac{e(Q_{1, \db,\db'}^{\OSpt})}{e(Q_{0,\db,\db'}^\OSpt)})) \\ 
    &= \sum_{A_1, A_2,A_3, B_1, B_2}\sum_{\varepsilon \in (\Zb/2)^\ell}\sum_{\sigma \in W'}  \sigma(f(\varepsilon x_{A_1} \otimes \varepsilon x_{A_2} \otimes x_{A_3} ) g(\varepsilon x_{B_1} \otimes x_{B_2}) \frac{e(Q_{1, \db,\db'}^{\OSpt})}{e(Q_{0,\db,\db'}^\OSpt)} \\ 
    & \frac{e(Q_0^{\OSpt})(\varepsilon x_{A_1}  \varepsilon x_{A_2}  \varepsilon x_{B_1}, x_{A_3} x_{B_2} )}{e(Q_1^{\OSpt})((\varepsilon x_{A_1}  \varepsilon x_{A_2}  \varepsilon x_{B_1}, x_{A_3} x_{B_2} ))})
\end{align*}
where the outer sum on the last line is over partitions of the variables into the terms in the coaction, $A_1$ consists of those variables on $\tk_{\db}$ that lift to themselves under the splitting of the torus in $\tk_{\db + \tau(\db) + \db'} = \tk_{\db + \tau(\db) + \db'}$ while $A_2$ consists of variables which lift to variables in $\tk_{\tau(\db)}$. Thus $\epsilon$ only acts on the $A_1$ variables.  Given $\varepsilon \in \Gamma_s$ let $A_1^+$ and $A_1^-$ respectively denote the variables with index $i$ in $A_1$ where $\varepsilon_i$ is $0$ or $1$ respectively. Then let $\ell_\epsilon$ denote the number of minuses. We define new sets of variables $C = A_1^+, D = \epsilon A_1^- \sqcup A_2, E = A_3, F = B_1, G = B_2$. In these variables the above formula becomes 
\begin{multline*}  \Delta_L(f\cdot g)_{\db, \db'} = \sum_{A_1, A_2, A_3, B_1, B_2} \sum_{\epsilon, \sigma} \sigma(f \cdot g \frac{ [C D E | F G]}{[C \tau(D) F |  E G]})   \\  
  =\sum_{A_1, A_2, A_3, B_1, B_2} \sum_{\epsilon, \sigma} \sigma(f\cdot g \\ 
  \frac{ [CD|G](C|\tau(E))(D|\tau(E))(CD|F)(CD|\tau(F))(E|F)(E|\tau(F))[E|G] }{[C\tau(D)|G](C|\tau(F))(\tau(D)|\tau(F))(C\tau(D)|E)(C\tau(D)|\tau(E))(F|E)(F|\tau(E)) [F|G]}  )\\ 
  =   \sum_{A_1, A_2, A_3, B_1, B_2} \sum_{\epsilon, \sigma} \sigma(f\cdot g \\ 
  \frac{ (C|\tau(D))[D|G] (C|\tau(E))(D|\tau(E))(E|F)(E|\tau(F))(CD|F)(CD|\tau(F))[E|G] }{(C|D)[\tau(D)|G](C|\tau(F))(\tau(D)|\tau(F))(F|E)(F|\tau(E))(C\tau(D)|E)(C\tau(D)|\tau(E)) [F|G]}  )\\ 
  =   \sum_{A_1, A_2, A_3, B_1, B_2} \sum_{\epsilon, \sigma} \sigma(f\cdot g \\ 
  \frac{ (C|\tau(D))[D|G] (C|\tau(E))(D|\tau(E))(E|F)(C|F)(D|F)(D|\tau(F))[E|G] }{(C|D)[\tau(D)|G](\tau(D)|\tau(F))(F|E)(C\tau(D)|E)(C|\tau(E))(\tau(D)|\tau(E)) [F|G]}  )\\
  =  \sum_{A_1, A_2, A_3, B_1, B_2} \sum_{\epsilon, \sigma} \sigma(f\cdot g \\ 
  \frac{ (C|\tau(D))[D|G] (D|\tau(E))(E|F)(C|F)(D|F)(D|\tau(F))[E|G] }{(C|D)[\tau(D)|G](\tau(D)|\tau(F))(F|E)(C|E)(\tau(D)|E)(\tau(D)|\tau(E)) [F|G]}  ) \\ 
  =  \sum_{A_1, A_2, A_3, B_1, B_2} \sum_{\epsilon, \sigma} \sigma(f\cdot g \\ 
  \frac{ (C|\tau(D))[D|G] (E|\tau(D))(E|F)(C|F)(D|F)(F|\tau(D))[E|G] }{(C|D)[\tau(D)|G](F|D)(F|E)(C|E)(\tau(D)|E)(E|D) [F|G]}  )
\end{multline*} 

The coefficient of the right hand side of the $\tau$-twisted Yetter Drinfeld condition without the braiding factors, is the $(\db, \db')$ coefficent of $m_{143}\times m_{25}\circ \tau_3\Delta^2(f)\otimes \Delta_L(g)$ under the obvious identification is given by 
\begin{equation*} 
  \sum_{\epsilon', \sigma'} \sigma' (f\cdot g \frac{ (C|\tau(D))(C|F)(F|\tau(D))[E|G]}{(C|D)(C|E)(E|D)[F|G]})
\end{equation*}

These expressions agree up to the product of braiding factors 
\[ \frac{(E|F)}{(F|E)}\frac{(D|F)}{(F|D)}\frac{(E|\tau(D))}{(\tau(D)|E)} \frac{[D|G]}{[\tau(D)|G]}\] 
which is exactly the braidings required for the $\tau$-twisted Yetter-Drinfeld condition to hold in our braided module category. 
\end{proof}

\appendix

\newpage 
\section{The Cherednik reflection equation and generalisations} \label{sec:Cherednik}

\noindent In this section we show how solutions to the \textit{Cherednik reflection} and \textit{Yang-Baxter equations}
\begin{align*}
  T(w)_{2} S(z+w)_{2\tau(1)}T(z)_1 S(z-w)_{12} &\ = \ S(z-w)_{\tau(2)\tau(1)} T(z)_1 S(z+w)_{1\tau(2)}T(w)_2\\
  S_{12}(z)S_{13}(z-w)S_{23}(w) &\ = \ S_{23}(w)S_{13}(z-w)S_{12}(z)
\end{align*}
arise from braided monoidal \textit{linear-orthosymplectic} categories. For a large set of $\Gt$ we make a definition of (braided monoidal) $\Gt$-categories, generalising usual definition of category for $\Gt=\GL$ and orthosymplectic category for $\Gt=\OSpt$.

Finally, we will develop the theory of \textit{$\Gb$-Borcherds twists}. This allows us to produce many examples of interesting $\Gb$-algebraic structures starting from simpler ones, and is what we will use in section \ref{sec:JL}.

\subsection{Background on braidings} 

\subsubsection{} 
We recall the connection between configuration spaces and braided monoidal categories. Recall the \textit{braid group} is defined as
$$ B \ = \ \sqcup_{n \ge 0} B_n \ = \ \pi_1\left( (\Conf \Ab^1)_\circ \right),$$
where the subscript $\circ$ denotes the open locus of disjoint pairs of points. The $n$th piece of $\Bl = \sqcup_{n \ge 0} \Bl_n $ is generated by elements $\sigma_1,\ldots,\sigma_{n-1}$ subject to the relations
\begin{center}
  \begin{tikzpicture}[scale=0.7]
  \begin{knot}[
    clip width=5,
    flip crossing=1,
    flip crossing=2,
    flip crossing=3,
    flip crossing=4,
    flip crossing=5,
    flip crossing=6,
  ]
  \strand[black,ultra thick] (0,0) .. controls +(0,1.5) and +(0,-1.5) .. (2,3);
  \strand[black,ultra thick] (1,0) .. controls +(0,1) and +(0,-1) .. (0,1.5);
  \strand[black,ultra thick] (0,1.5) .. controls +(0,1) and +(0,-1) .. (1,3);
  \strand[black,ultra thick] (2,0) .. controls +(0,1) and +(0,-1) .. (0,3);
  \node[] at (3.5,1.5) {$=$};
  \node[] at (14,1.5) {$\sigma_i \sigma_{i+1}\sigma_i \ = \ \sigma_{i+1}\sigma_i \sigma_{i+1},$};
  
  \strand[black,ultra thick] (5,0) .. controls +(0,1.5) and +(0,-1.5) .. (7,3);
  \strand[black,ultra thick] (6,0) .. controls +(0,1) and +(0,-1) .. (5+2,1.5);
  \strand[black,ultra thick] (5+2,1.5) .. controls +(0,1) and +(0,-1) .. (6,3);
  \strand[black,ultra thick] (7,0) .. controls +(0,1) and +(0,-1) .. (5,3);
  
  \end{knot}
  \end{tikzpicture}
  \end{center}
and the othe pairs $\sigma_i,\sigma_j$ commute if $i,j$ are not adjacent. A braided monoidal category $\Cl$ is then a monoidal category with a compatible collection of isomorphisms
$$\beta_b \ : \ c_1 \otimes \cdots \otimes c_n \ \stackrel{\sim}{\to} \ c_{b(1)} \otimes \cdots \otimes c_{b(n)}, \hspace{15mm} b \ \in \  \Bl$$
for each tuple of objects and braid. 

We may attempt to make this definition for any generalised configuration space, defining 
$$ BW_\Gb \ = \ \sqcup_{\alpha} BW_{\Gb,\alpha} \ = \ \pi_1\left( (\Conf_{\Gb} \Ab^1)_\circ \right)$$
where $\circ$ means we remove all $\Gb$-hyperplanes, where $\Gb$ is a moduli group stack as in section \ref{sec:ModuliStacksRootData}. Also see \cite{Ha} for more discussion about braid groups attached to root systems; it is an intereting question what $BW_{\Ml}$ is for various moduli stacks $\Ml$.

\subsubsection{Orthosymplectic braids} 
For instance, the braid group of $\Gb=\BSp$ is the fundamental group of 
$$(\Conf_{\Sp}\Ab^1)_\circ \ = \ \sqcup_{n \ge 0} (\Ab^1//\pm)^n_\circ \ = \ \sqcup_{n \ge 0} (\Ab^1//\pm \smallsetminus 0)^n \smallsetminus \Delta$$
where we have removed diagonal and zero hyperplanes. Thus $BW_n$ is generated by $\gamma,\sigma_1,\ldots,\sigma_{n-1}$ subject to the above relations in $B_n$ and the \textit{symplectic hexagon relation}
\begin{center}
  \begin{tikzpicture}[scale=0.7,yscale=0.6]
  \draw[white,line width=5pt] (-1.5,6) .. controls +(0,-3) and +(0,3) .. (1.5,0);
  \draw[black,ultra thick] (-1.5,6) .. controls +(0,-3) and +(0,3) .. (1.5,0);
  \draw[white,line width=5pt] (1.5,6) .. controls +(0,-3) and +(0,3) .. (-1.5,0);
  \draw[black,ultra thick] (1.5,6) .. controls +(0,-3) and +(0,3) .. (-1.5,0);

  \draw[white,line width=5pt] (1,3) .. controls +(0,-1.5) and +(0,1.5) .. (-0.5,0);
  \draw[black,ultra thick] (1,3) .. controls +(0,-1.5) and +(0,1.5) .. (-0.5,0);
  \draw[white,line width=5pt] (0.5,6) .. controls +(0,-1.5) and +(0,1.5) .. (1,3);
  \draw[black,ultra thick] (0.5,6) .. controls +(0,-1.5) and +(0,1.5) .. (1,3);

  \draw[white,line width=5pt] (-1,3) .. controls +(0,-1.5) and +(0,1.5) .. (0.5,0);
  \draw[black,ultra thick] (-1,3) .. controls +(0,-1.5) and +(0,1.5) .. (0.5,0);
  \draw[white,line width=5pt] (-0.5,6) .. controls +(0,-1.5) and +(0,1.5) .. (-1,3);
  \draw[black,ultra thick] (-0.5,6) .. controls +(0,-1.5) and +(0,1.5) .. (-1,3);
  
  \node[] at (3,3) {$=$};
  \node[] at (14,3) {$\gamma \sigma^{-1}_1 \gamma \sigma_1 \ = \ \sigma^{-1}_1 \gamma \sigma_1 \gamma,$};
  
  \draw[white,line width=5pt] (6-1.5,6) .. controls +(0,-3) and +(0,3) .. (6+1.5,0);
  \draw[black,ultra thick] (6-1.5,6) .. controls +(0,-3) and +(0,3) .. (6+1.5,0);
  \draw[white,line width=5pt] (6+1.5,6) .. controls +(0,-3) and +(0,3) .. (6-1.5,0);
  \draw[black,ultra thick] (6+1.5,6) .. controls +(0,-3) and +(0,3) .. (6-1.5,0);

  \draw[white,line width=5pt] (6-1,3) .. controls +(0,-1.5) and +(0,1.5) .. (6-0.5,0);
  \draw[black,ultra thick] (6-1,3) .. controls +(0,-1.5) and +(0,1.5) .. (6-0.5,0);
  \draw[white,line width=5pt] (6+0.5,6) .. controls +(0,-1.5) and +(0,1.5) .. (6-1,3);
  \draw[black,ultra thick] (6+0.5,6) .. controls +(0,-1.5) and +(0,1.5) .. (6-1,3);

  \draw[white,line width=5pt] (6+1,3) .. controls +(0,-1.5) and +(0,1.5) .. (6+0.5,0);
  \draw[black,ultra thick] (6+1,3) .. controls +(0,-1.5) and +(0,1.5) .. (6+0.5,0);
  \draw[white,line width=5pt] (6-0.5,6) .. controls +(0,-1.5) and +(0,1.5) .. (6+1,3);
  \draw[black,ultra thick] (6-0.5,6) .. controls +(0,-1.5) and +(0,1.5) .. (6+1,3);

\filldraw[white,opacity=0.9,draw=none] (1.6,-0.5)  rectangle (0,6.5);
\filldraw[white,opacity=0.9,draw=none] (1.6+6,-0.5)  rectangle (6,6.5);

\draw[line width = 2.5pt] (6,-0) -- (6,6);
\draw[line width = 2.5pt] (0,-0) -- (0,6);
  \end{tikzpicture}
\end{center} 
where $\gamma$ is generated by a loop around the origin in the first factor. 
\begin{center}
  \begin{tikzpicture}
    \begin{scope} 
      \filldraw[draw=none, fill=black, fill opacity = 0.2, pattern=north west lines] (-6*0.45,0) -- (-1*0.45,2*0.45) --  (6*0.45,0) -- (1*0.45,-2*0.45) -- cycle;
      \filldraw[draw=none, fill=white, fill opacity = 0.8] (-6*0.45,0) -- (-1*0.45,2*0.45) --  (2.5*0.45,1*0.45) -- (-2.5*0.45,-1*0.45) -- cycle;
      \draw[dashed] (2.5*0.45,1*0.45) -- (-2.5*0.45,-1*0.45);
      
      \filldraw[black, fill=none] (-6*0.45,0) -- (-1*0.45,2*0.45) --  (6*0.45,0) -- (1*0.45,-2*0.45) -- cycle;
  
      \filldraw[black, fill=white] (0,0) circle (1.5pt);

        \node[right] at (7*0.4,0) {$\Cb$};
  
        \begin{scope} 
         [xshift=1.4cm,yshift=-0.25cm] 
         \draw[-,decorate,decoration={snake,amplitude=.4mm,segment length=2mm,post length=1mm},line width = 3pt,white] (-1,0.1) to[out=120, in=180,looseness=20] (-1-0.1,0) ;
         \draw[->,decorate,decoration={snake,amplitude=.4mm,segment length=2mm,post length=1mm}]  (-1,0.1) to[out=120, in=180,looseness=20] (-1-0.1,0) ;
   
         \filldraw (-1,0) circle (1.5pt);
         \end{scope}
  
        \begin{scope} 
         \draw[-,decorate,decoration={snake,amplitude=.4mm,segment length=2mm,post length=1mm},line width = 3pt,white] (1,-0.2+0.1) to[out=45, in=180-45] (1.5,-0.2+0.1) ;
         \draw[->,decorate,decoration={snake,amplitude=.4mm,segment length=2mm,post length=1mm}]  (1,-0.2+0.1) to[out=45, in=180-45] (1.5,-0.2+0.1) ;
         
         \draw[-,decorate,decoration={snake,amplitude=.4mm,segment length=2mm,post length=1mm},line width = 3pt,white] (1.5,-0.2-0.1) to[out=-45, in=-180+45] (1,-0.2-0.1) ;
         \draw[->,decorate,decoration={snake,amplitude=.4mm,segment length=2mm,post length=1mm}]  (1.5,-0.2-0.1) to[out=-45, in=-180+45] (1,-0.2-0.1) ;
      
         \filldraw (1.5,-0.2) circle (1.5pt);
         \filldraw (1,-0.2) circle (1.5pt); 
         \end{scope}
  
     \end{scope}
   \end{tikzpicture}
  \end{center}

\subsubsection{Linear-linear braids} When $\Gb=\BP_{\GL \textup{-}\GL}$ is given by a standard parabolic with Levi $\GL \times \GL$, the $\Gb$-configuration space is the configuration space of two-coloured points on $\Ab^1$.

Thus its braid group $B_{n,m}$ is generated by commuting subgroups $B_n,B_m$ corresponding to braids between points of the same colour, and an element $\delta$ corresponding to a loop of a point of one colour around a point of the other.
\begin{center}
\begin{tikzpicture}
  \begin{scope} 
    \filldraw[black, fill=black, fill opacity = 0.2, pattern=north west lines] (-6*0.45,0) -- (-1*0.45,2*0.45) --  (6*0.45,0) -- (1*0.45,-2*0.45) -- cycle;

      \node[right] at (7*0.4,0) {$\Cb$};

      \draw[-,decorate,decoration={snake,amplitude=.4mm,segment length=2mm,post length=1mm},line width = 3pt,white] (-1,0.1) to[out=90, in=0,looseness=20] (-1+0.1,0) ;
      \draw[->,decorate,decoration={snake,amplitude=.4mm,segment length=2mm,post length=1mm}]  (-1,0.1) to[out=90, in=0,looseness=20] (-1+0.1,0) ;

      \filldraw[draw=black,fill=white] (-1,0) circle (1.5pt);

      \filldraw (-0.7,0.3) circle (1.5pt);

      \begin{scope} 
       [xshift=-0.4cm]
       \draw[-,decorate,decoration={snake,amplitude=.4mm,segment length=2mm,post length=1mm},line width = 3pt,white] (0.5,-0.2+0.1) to[out=45, in=180-45] (1.5,-0.2+0.1) ;
       \draw[->,decorate,decoration={snake,amplitude=.4mm,segment length=2mm,post length=1mm}]  (0.5,-0.2+0.1) to[out=45, in=180-45] (1.5,-0.2+0.1) ;
       
       \draw[-,decorate,decoration={snake,amplitude=.4mm,segment length=2mm,post length=1mm},line width = 3pt,white] (1.5,-0.2-0.1) to[out=-45, in=-180+45] (0.5,-0.2-0.1) ;
       \draw[->,decorate,decoration={snake,amplitude=.4mm,segment length=2mm,post length=1mm}]  (1.5,-0.2-0.1) to[out=-45, in=-180+45] (0.5,-0.2-0.1) ;

       \filldraw (1.5,-0.2) circle (1.5pt);
       \filldraw (0.5,-0.2) circle (1.5pt); 
       \end{scope}

   \end{scope}
 \end{tikzpicture}
\end{center}
Note that as before, $\sigma_1,\delta$ satisfy the symplectic hexagon relation.

\subsubsection{Linear-orthosymplectic braids} Finally, when $\Gb=\BP_{\GL \textup{-}\Sp}$, the braid group $BW_{n,m}$ is generated by $B_{n}, B_{\Sp,m}$ along with an element $\delta$ corresponding to a loop of a $\GL$ around an $\Sp$ point. Notice that in this group the elements $\gamma,\delta$ are different.

\subsubsection{Remark} Due to  Lurie \cite[5.5.4.10]{Lu} we have an equivalence
$$\Eb_2 \textup{-Cat} \ \simeq\ \FactAlg^{\textup{const}.}(\Ran \Cb,\Cat)$$
between $\Eb_2$-categories (which are essentially braided monoidal categories, see e.g. \cite[7.0.2]{CF}) and factorisable constructible cosheaves of categories on the topological space $\Ran \Cb$. Thus, to define $\Gb$-braided monoidal category for moduli group stack $\Gb$, we just need to define a topological space $\Ran_\Gb\Cb$ with a factorisation structure; this is done in section \ref{sec:OrthosymplecticAlgebras} for the orthosymplectic case. 

We will merely use this as a way to guess the concrete definitions below, and leave the relation between the two definitions a conjecture generalising \cite[7.0.2]{CF}:
 \begin{conj}
 Let $(\El,\otimes)$ be a $\Gt$-monoidal dg category with t-structure for $\Gt$ as in section \ref{ssec:GMonBr}. Then this induces a $\Eb_1$-factorisation cosheaf of categories $\tilde{\El}$ on $\Ran \Cb$, and there is an equivalence between the category
 $$\{\Gt \textup{-braidings of }(\El^\heartsuit,\otimes)\} $$
 and the category of lifts of $\tilde{\El}$ along
 $$\oblv \ : \ \Eb_2\FactAg^{\textup{const}.}(\Ran_{\Gt} \Cb,\Cat) \ \to \ \Eb_1\FactAg^{\textup{const}.}(\Ran_{\Gt} \Cb).$$
 \end{conj}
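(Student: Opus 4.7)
The plan is to mimic the Costello--Gwilliam/Francis identification of braided monoidal categories with locally constant factorisation cosheaves on $\Ran\Cb$, but carried out over the generalised Ran space $\Ran_{\Gt}\Cb$ of section \ref{ssec:GeneralisedRanSpaces}. The key observation is that the $\Gt$-braidings on $(\El^{\heartsuit},\otimes)$ are governed precisely by the fundamental groupoid of $(\Conf_{\Gt}\Cb)_{\circ}$, which is the generalised braid group $BW_{\Gt}$ appearing at the start of this appendix. Since $\Ran_{\Gt}\Cb$ is built by gluing these configuration spaces along the diagonal and Weyl-group identifications of section \ref{ssec:GeneralisedRanSpaces}, an $\Eb_2$-lift of $\tilde{\El}$ is exactly a compatible family of $BW_{\Gt,n}$-actions on $n$-fold tensor products, which is what a $\Gt$-braiding records.

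First I would construct the forward functor. Given a $\Gt$-braiding, the coherence data (namely the hexagon/reflection relations from section \ref{sssec:OrthosymplecticModuleHexagonRelations}) assembles, by a Dunn--Lurie additivity style argument adapted to $\Ran_{\Gt}\Cb$, into an action of the operad of disks in $\Ran_{\Gt}\Cb$ on the underlying $\Eb_1$-cosheaf $\tilde{\El}$. Concretely, over each stratum $\tk_{\gk_n}/W_n$ one produces monodromy using $\beta$ and $\kappa$ along paths generating $BW_{\Gt,n}$; compatibility with the diagonal/adjacency maps of Lemma \ref{lem:GFiniteSetsMorphismsDescription} follows from the associativity and unit axioms of the $\Gt$-monoidal structure. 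Next I would construct the inverse functor by restricting an $\Eb_2$-lift to the open strata $\tk_{\gk_{n,\circ}}$ and extracting monodromies along the standard generators of $BW_{\Gt,n}$; the fact that these monodromies satisfy the hexagon/reflection relations comes from the compatibility of the factorisation structure with the codimension-one strata in $\Ran_{\Gt}\Cb$ (the analogue of the classical ``braid from disks sliding past one another'' picture).

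Once both functors are in place, the equivalence of categories reduces to showing (i) that the two functors are mutually inverse on underlying data, which follows from the fact that $BW_{\Gt,n}$ surjects onto and is generated by the monodromies we use, and (ii) that each structural axiom on one side maps isomorphically to one on the other, which is a case-by-case check indexed by the types of generators of $BW_{\Gt}$ (ordinary braids $\sigma_i$, reflections $\gamma$, and parabolic-type generators $\delta$). The equivalence of the underlying $\Eb_1$-data with the monoidal structure on $(\El,\otimes)$ is given by the hypothesis that we are lifting $\tilde{\El}$.

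The hard part will be making everything genuinely coherent at the $\infty$-categorical level: constructing the $\Gt$-analogue of the Lurie additivity theorem $\Eb_1\otimes\Eb_1\simeq \Eb_2$ for $\Ran_{\Gt}\Cb$, which requires a careful analysis of the stratification of $\Ran_{\Gt}\Cb$ (in particular, that the codimension-one strata are indexed by pairs of ``colliding'' $\Gt$-orbits and that the normal bundle data matches the Cherednik hexagon relations of section \ref{sssec:OrthosymplecticModuleHexagonRelations}). For the classical type cases treated in the body of the paper, the stratification is explicit enough that this reduces to known results about type B/C/D braid groups, but for general moduli group stacks $\Gb$ as in section \ref{sec:ModuliStacksRootData}, one would need a more conceptual proof via the theory of stratified $\infty$-topoi. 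Granting such an additivity result, the conjecture follows by unwinding the definitions.
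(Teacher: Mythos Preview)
The statement you are attempting to prove is explicitly left as an open conjecture in the paper: the authors write ``We will merely use this as a way to guess the concrete definitions below, and leave the relation between the two definitions a conjecture generalising \cite[7.0.2]{CF}.'' There is no proof in the paper to compare against.

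Your sketch correctly identifies the natural strategy and the main obstruction. The forward and inverse functors you describe are the right ones at the level of heuristics, and your observation that monodromies along generators of $BW_{\Gt,n}$ should recover the braiding data $\beta,\kappa$ is exactly the intuition the authors use to motivate their definitions in section \ref{sssec:GBrMon}. However, your final paragraph gives the game away: you write ``Granting such an additivity result, the conjecture follows by unwinding the definitions,'' but a $\Gt$-analogue of Dunn--Lurie additivity for $\Ran_{\Gt}\Cb$ is precisely what is not known and what makes this a conjecture rather than a theorem. The case $\Gt=\GL$ is the cited result \cite[7.0.2]{CF}; for general $\Gt$ one would need, at minimum, a theory of locally constant factorisation algebras on the stratified spaces $\Ran_{\Gt}\Cb$ with the correct relationship to an operad of $\Gt$-disks, and this is not available in the literature. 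Your sketch is a fair account of how a proof ought to go, but it does not close the gap, and as written it is a restatement of the conjecture with the hard step isolated rather than a proof.
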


\subsection{$\Gt$-braided monoidal and braided module categories} \label{ssec:GMonBr}

\subsubsection{} In this section, we define analogues of braided monoidal categories for various groups $\Gt$, specialising to the usual notion when $\Gt=\GL$. These will form symmetric monoidal 2-categories $\textup{BrCat}_\Gt$, satisfying
$$\textup{BrCat}_{\Gt_1} \times \textup{BrCat}_{\Gt_2} \ \simeq \ \textup{BrCat}_{\Gt_1 \times \Gt_2}$$
and with unit denoted $\triv_\Gt$.

\subsubsection{Ordinary braidings} A (\textit{linear}) \textit{monoidal category} $\Al$ is a category with object $1_\Al$ a functor
$$\otimes_\Al \ : \ \Al \times \Al \ \to \ \Al$$
satisfying an associativity and unit condition 
$$\otimes_\Al\cdot (\otimes_\Al\times \id) \ \simeq \ \otimes_\Al\cdot (\id\times \otimes_\Al), \hspace{15mm} 1_\Al\otimes_\Al(-) \ \simeq\ (-)\otimes_\Al 1_\Al\ \simeq \ \id,$$
with these isomorphisms satisfying coherence conditions. 
A \textit{braiding} is a natural isomorphism $\beta_\Al  :  \otimes_\Al  \stackrel{\sim}{\to}  \sigma^*\otimes_\Al$ satisfying the hexagon and unit relations, and is called \textit{symmetric} if $\beta_\Al\cdot \sigma^*\beta_\Al = \id$.

\begin{lem} \label{lem:GLMonoidal}
 If $\Al$ is monoidal, then for any map $f:[m]\to [n]$ we have a functor
  $$\otimes_\Al^f \ : \ \Al^{\times m} \ \to \ \Al^{\times n}$$ 
  which together satisfy an associativity condition 
  \begin{equation}
    \label{eqn:HigherAssociativityCondition} 
    \otimes_\Al^f\cdot (\otimes_{\Al}^{\Gt_1}\times \cdots \times \otimes_{\Al}^{\Gt_m}) \ = \ \otimes_{\Al}^{f\cdot g}
  \end{equation}
  for any surjection $g: [l]\twoheadrightarrow  [m]$ where we write $\Gt_i=g\vert_{g^{-1}(i)}$. 
\end{lem}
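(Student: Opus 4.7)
The plan is to construct $\otimes_\Al^f$ for each map $f:[m]\to[n]$ by grouping the input tuple according to the fibers of $f$ and then tensoring within each fiber. Explicitly, define $\otimes_\Al^f : \Al^{\times m} \to \Al^{\times n}$ on objects by
$$(\otimes_\Al^f)(a_1,\ldots,a_m)_k \ = \ \bigotimes_{j \in f^{-1}(k)} a_j \hspace{10mm} \text{for } k \in [n],$$
with the convention that the empty tensor product (when $f^{-1}(k)$ is empty) equals $1_\Al$. The fiber $f^{-1}(k)$ inherits its ordering from $[m]$, and a bracketing convention (say, left-associated) is fixed once and for all. Mac Lane's coherence theorem guarantees that any other bracketing is canonically isomorphic via the associator and unitor of $\Al$, so $\otimes_\Al^f$ is well-defined up to a unique coherent isomorphism. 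On morphisms one applies $\otimes$ componentwise.

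To verify \eqref{eqn:HigherAssociativityCondition}, I would compare the two sides slotwise. Given $g:[l]\twoheadrightarrow[m]$ with restrictions $g_i = g|_{g^{-1}(i)}$, both
$$\otimes_\Al^f \cdot (\otimes_{\Al}^{g_1} \times \cdots \times \otimes_{\Al}^{g_m}) \hspace{10mm} \text{and} \hspace{10mm} \otimes_\Al^{f\cdot g}$$
 produce, in the $k$th output slot, a tensor product of the $a_j$ indexed by
$$(f\cdot g)^{-1}(k) \ = \ g^{-1}(f^{-1}(k)).$$
The left-hand side computes this in two stages: first tensoring the $a_j$ within each fiber $g^{-1}(i)$ for $i \in f^{-1}(k)$, then tensoring those partial products together over $i \in f^{-1}(k)$. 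The right-hand side computes the same tensor product in a single stage. That these agree is precisely the statement that re-bracketing is coherent, i.e. Mac Lane's coherence theorem for the associator and unitor of $\Al$.

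The main obstacle is notational rather than conceptual: whether \eqref{eqn:HigherAssociativityCondition} should be read as a literal equality of functors or as equality up to coherent isomorphism. In the latter reading (which is the natural one, since $\otimes_\Al$ is associative only up to coherent isomorphism), the argument above supplies the comparison isomorphism, and coherence of these comparison isomorphisms across compositions follows formally from coherence in $\Al$. This is the standard packaging of a monoidal category as a (lax) functor from an appropriate category of finite sets with surjections to $\Cat$, sending $[n] \mapsto \Al^{\times n}$ and $f \mapsto \otimes_\Al^f$; in the symmetric case this upgrades to a symmetric monoidal functor and drops the order-preservation requirement on $f$.
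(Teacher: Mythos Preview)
Your proof is correct. The paper takes a slightly different route: rather than defining $\otimes_\Al^f$ directly by tensoring over the fibers, it factors an arbitrary map $f$ as a composite $[m]\stackrel{i}{\to}[m']\stackrel{\mu}{\to}[n']\stackrel{s}{\to}[n]$ of an order-preserving injection, a permutation, and an order-preserving surjection, and then defines $\otimes_\Al^f$ piecewise---units for $i$, the swap in $\Cat$ for $\mu$, and iterated adjacent binary tensor products for $s$. Both definitions unwind to the same functor and both reduce the associativity condition to coherence in $\Al$, so the difference is presentational. Your fiberwise formula is arguably cleaner for this lemma in isolation; the paper's decomposition is chosen with the next lemma in mind, where one must attach braiding data to each generator of the relative braid group, and working with adjacent contractions $s_i$ makes that bookkeeping easier. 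One small correction to your closing remark: the lemma already handles arbitrary maps $f$ (not just order-preserving ones), since permutations of the factors use only the symmetry of the cartesian product in $\Cat$, not any braiding internal to $\Al$.
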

\begin{proof}
  We write $f$ as a composition of an order-preserving injection, a permutation, and an order-preserving surjection:
  \begin{equation}
  \label{eqn:FactorisationOfMap} 
  [m] \ \stackrel{i}{\to} \ [m'] \ \stackrel{\mu}{\to} \ [n'] \ \stackrel{s}{\to} \ [n].
  \end{equation}
  We define $\otimes_\Al^i$ as a product of $\id$ and $1_\Al$ factors, define $\otimes_\Al^\mu:\Al^{\times n}\stackrel{\sim}{\to}\Al^{\times n}$ as the associatied permutation, and writing $s$ as a composition of maps $s_i:[l]\twoheadrightarrow [l-1]$ given by contracting the $i,i+1$th elements, we define $\otimes^s_\Al$ as the composition of $\otimes_\Al^{s_i}= \id \times (\otimes_\Al)_{i,i+1}\times \id$. Independence of this definition on the choice of expression and \eqref{eqn:HigherAssociativityCondition} follow from the associativity and unit condition on $\otimes_\Al$.
\end{proof}

We now recall the notion of relative braid. Let $\FinSet_\Dl$ be a category of finite sets with extra structure as in section \ref{ssec:FiniteSets}. Given any morphism
$$f \ : \ ([n],d_n) \ \to \ ([m],d_m)$$
 we define the \textit{relative Weyl group} $W_f$ the relative automorphism group of $f$, i.e. the isomorphisms $\sigma$:
\begin{center}
\begin{tikzcd}[row sep = {30pt,between origins}, column sep = {20pt}]
 ([n],d_n)\ar[r,"f"]\ar[d,"\sigma","\wr"'] & ([m],d_m)\ar[d,equals] \\
 ([n],d_n)\ar[r,"f"] & ([m],d_m) 
\end{tikzcd}
\end{center}
 We may likewise define relative braid groups $BW_f$, either as in the introduction or alternatively like in section \ref{ssec:FiniteSets} define it as relative isomorphisms in a category $\FinSet_\Dl^{\textup{br}}$ of finite sets equipped with an embedding into the two-disk $\Dt^2$ and a compatible $\Dl$-structure.

 In the current case of trivial extra data $\FinSet_\varnothing=\FinSet$, we have 
 $$W_f \ = \   \smprod_{i \in [m]}\Sk_{f^{-1}(i)}, \hspace{15mm} B_f \ = \ \smprod_{i \in [m]}B_{f^{-1}(i)}$$
 and in general for a surjection $W_f$ and $BW_f$ is a product over appropriate Weyl or braid groups of preimages.

\begin{lem} \label{lem:GLBraided}
   If $\Al$ is braided monoidal, then for any map $f:[m]\to [n]$ of finite sets we have a natural isomorphism
\begin{center}
  \begin{tikzcd}[row sep = 20pt, column sep = 30pt]
  \Al^{\times m}\ar[r,"\otimes_{\Al}\cdot \sigma"{xshift=0pt},bend left = 35, ""{name=U,inner sep=0pt,below,xshift=0pt}]\ar[r,"\otimes_{\Al}"{xshift=-0pt},bend right = 35,swap, ""{name=D,inner sep=1pt}]&\Al^{\times n} 
    \arrow[Rightarrow, from=U, to=D, "\beta^f_\sigma", shorten <= 5pt, shorten >= 5pt] 
    \arrow[Rightarrow, from=U, to=D, swap,"\wr", shorten <= 5pt, shorten >= 5pt]
  \end{tikzcd}
  \end{center} 
    for any relative braid $\sigma\in B_f$ (acting on $\Al^{\times m}$ via its projection to $\Sk_f$), such that is compatible with multiplication in the relative braid group
    \begin{equation}
      \label{eqn:BetaSigmaCompatibility} 
      \beta_{\sigma_1}^f\cdot \beta_{\sigma_2}^f \ = \ \beta_{\sigma_1\cdot \sigma_2}^f
    \end{equation}
    and compatible with \eqref{eqn:HigherAssociativityCondition}:
    \begin{equation}
      \label{eqn:BetaFCompatibility} 
      \beta^{f\cdot g}_{\sigma \star \mu} \ = \ \beta^f_\sigma\cdot (\beta^{\Gt_1}_{\mu_1} \times \cdots \times\beta_{\Gt_m}^{\mu_m})
    \end{equation}
    where $f,g$ are composable functions with $\Gt_i=g\vert_{g^{-1}(i)}$, and $\mu_i$ is a permutation of $\Gt^{-1}(i)$. If it is symmetric, $\beta_{(-)}^f$ factors through the surjection $BW_f\to\Sk_f$.
\end{lem}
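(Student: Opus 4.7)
The plan is to build $\beta^f_\sigma$ by extending the construction of $\otimes^f_\Al$ from the proof of Lemma \ref{lem:GLMonoidal}, exploiting the observation that a relative braid acts fiberwise on an iterated tensor product. First I would reduce to the case where $f$ is an order-preserving surjection. Using the factorisation \eqref{eqn:FactorisationOfMap}, $f = s\cdot \mu \cdot i$, one checks that order-preserving injections and permutations have trivial relative braid groups (any element of $BW_f$ must fix $f$ strictly, which is automatic on the injective and bijective pieces). Thus only the surjective factor $s$ contributes relative braids, in which case $BW_f \simeq \prod_{j \in [n]} B_{f^{-1}(j)}$ and
$$\otimes^f_\Al(A_1,\ldots,A_m) \ = \ \Bigl( \bigotimes_{k \in f^{-1}(1)} A_k, \, \ldots,\, \bigotimes_{k \in f^{-1}(n)} A_k\Bigr).$$

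Second, for each $r\ge 0$, iterating the hexagon axiom gives the standard action of $B_r$ by natural automorphisms on the $r$-fold tensor product functor $\otimes^r_\Al : \Al^{\times r} \to \Al$, sending a generator $\sigma_i$ to the braiding of the $i$th and $(i+1)$th factors. For $\sigma = (\sigma_j)_{j \in [n]} \in BW_f$ I would define $\beta^f_\sigma$ to be the product over $j\in [n]$ of these fiberwise braidings $\beta_{\sigma_j}$ acting on $\bigotimes_{k \in f^{-1}(j)} A_k$. The compatibility with multiplication \eqref{eqn:BetaSigmaCompatibility} is then immediate from the homomorphism property of the standard braid action, applied one fiber at a time.

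The main obstacle is the associativity compatibility \eqref{eqn:BetaFCompatibility}. Given composable $[l] \stackrel{g}{\to} [m] \stackrel{f}{\to} [n]$, with $\sigma \in BW_f$ and $\mu_i \in BW_{g^{-1}(i)}$ for $i \in [m]$, the combined element $\sigma \star \mu \in BW_{f\cdot g}$ acts on each fiber
$(f\cdot g)^{-1}(j) = \bigsqcup_{i \in f^{-1}(j)} g^{-1}(i)$
via the cabled braid obtained by inserting an $|g^{-1}(i)|$-strand cable at each position $i \in f^{-1}(j)$ of the braid $\sigma_j$, and then applying $\mu_i$ inside the $i$th cable. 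The required equality is precisely the cabling identity for the braid action on iterated tensor products in a braided monoidal category, which one proves by induction on the length of a braid word in $\sigma_j$ using the naturality of $\beta_\Al$ and the hexagon axiom to push each $\beta_\Al$-crossing past a tensor product. Finally, if $\beta_\Al$ is symmetric then each $\beta_{\sigma_j}$ depends only on the image of $\sigma_j$ in $\Sk_{f^{-1}(j)}$, so $\beta^f_{(-)}$ factors through the surjection $BW_f \twoheadrightarrow W_f$, giving the last claim.
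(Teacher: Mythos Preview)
Your proposal is correct and follows essentially the same route as the paper: reduce via the factorisation \eqref{eqn:FactorisationOfMap} to order-preserving surjections (since injections and isomorphisms have trivial relative braid groups), then define $\beta^f_\sigma$ fiberwise using the standard braid action on iterated tensor products generated by $\sigma_{i,i+1}\mapsto \id\times\beta_{i,i+1}\times\id$. You are more explicit than the paper about the verification of \eqref{eqn:BetaFCompatibility} via cabling and about the symmetric case, but the underlying construction is the same.
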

\begin{proof}
  Again, let us write $f$ as a composition \eqref{eqn:FactorisationOfMap}, we reduce to the case that $f=s$ is an order-preserving surjection; indeed $BW_f$ is trivial for injections and isomorphisms so we define $\beta^f_1$ the trivial natural isomorphism. 

  Since $s$ is a composition of adjacent contractions $s_i$, it suffices to define $\beta^{s_i}_{\sigma_{i,i+1}}=\id \times (\beta)_{i,i+1}\times \id$, thus defining $\beta^f_\sigma$ for every $f$ and $\sigma\in BW_f$. 
\end{proof}

\subsubsection{Generalisations}  \label{sssec:GBrMon} We make Lemmas \ref{lem:GLMonoidal} and \ref{lem:GLBraided} into definitions. See the following sections for explicit examples.

Recall section \ref{ssec:FiniteSets} for details on category of finite $\Gt$-sets for $\Gt$ has classical or standard parabolic type; then a map 
$$f \ : \ ([m_1|\cdots | m_k], d_{m_i}) \ \to \ ([n_1|\cdots| n_k],d_{n_i})$$
 of braided finite $\Gt$-sets is equivalent to an entry path in $\Conf_\Gt\Cb$.

A \textit{$\Gt$-braided monoidal category} $\Al$ consists of a category $\Al_c$ for every simple colour $c$, and 
\begin{itemize}
 \item a functor 
 $$\otimes^{f}_{\Al} \ : \  \Al_{c_1}^{\times m_1}\times \cdots \times \Al^{\times m_k}_{c_k} \ \to \ \Al_{c_1}^{\times n_1}\times \cdots \times \Al_{c_k}^{\times n_k}$$
 for every map $f$ which together satisfy an associativity condition 
 \begin{equation}
   \label{eqn:HigherAssociativityCondition} 
   \otimes_\Al^f\cdot (\otimes_{\Al}^{\Gt_{(1)}}\times \cdots \times \otimes_{\Al}^{\Gt_{(m)}}) \ = \ \otimes_{\Al}^{f\cdot g}
 \end{equation}
 for any composable $\Gt$ where we write $\Gt_{(i)}=g\vert_{g^{-1}(i)}$, and such that if $f=\sigma_{i,i+1}$ is a simple braiding of elements with colour $c$ then
 \begin{equation}
  \otimes^{\sigma_{i,i+1}}_\Al \ = \ (\sigma_{\Al_c})_{i,i+1}
 \end{equation}
  is the induced by the swap map $\Al_c\times \Al_c \stackrel{\sim}{\to} \Al_c\times \Al_c$,
 \item a natural transformation 
  \begin{center}
    \begin{tikzcd}[row sep = 20pt, column sep = 30pt]
    \Al_{c_1}^{\times m_1}\times \cdots \times \Al_{m_k}^{c_k}\ar[r,"\otimes_{\Al}^{f_1}"{xshift=0pt},bend left = 35, ""{name=U,inner sep=0pt,below,xshift=0pt}]\ar[r,"\otimes_{\Al}^{f_2}"{xshift=-0pt},bend right = 35,swap, ""{name=D,inner sep=1pt}]&\Al_{c_1}^{n_1}\times \cdots \times \Al_{c_k}^{n_k} 
      \arrow[Rightarrow, from=U, to=D, "\beta^\eta", shorten <= 5pt, shorten >= 5pt] 
      \arrow[Rightarrow, from=U, to=D, swap,"\wr", shorten <= 5pt, shorten >= 5pt]
    \end{tikzcd}
    \end{center} 
    for every homotopy of braids $\eta:f_1\to f_2$, which is compatible with horizontal and vertical composition of 2-morphisms:
    \begin{equation}
      \beta^{f_1,f_2}_\eta \cdot \beta^{f_1,f_2}_\eta \ = \ \beta^{f_1,f_2}_{\eta\cdot \eta'}, \hspace{15mm}  \beta^{f_1,f_2}_{\eta_f}\cdot \beta^{\Gt_1,g_2}_{\eta_g} \ = \ \beta^{f_1g_1, f_2 g_2}_{\eta_f \star \eta_g} 
    \end{equation}
    for any $\Gt_i$ composable with $f_i$. 
\end{itemize}

\subsubsection{Orthosymplectic braidings} We spell out the above definition in the orthosymplectic case $\Gt=\OSpt$, giving a simpler description analogous to the usual definition of braided monoidal category.

\begin{prop} \label{prop:OrthosymplecticBraided}
  An orthosymplectic braided monoidal category $[\Bl]$ is equivalent to a category $\Bl$ with 
  \begin{equation}
    \label{eqn:BFunctors} 1_\Bl \ \in \ \Bl, \hspace{15mm} \kappa \ : \ \Bl \ \stackrel{\sim}{\to} \ \Bl, \hspace{15mm} \otimes_\Bl \ : \ \Bl \times \Bl \ \to \ \Bl,
  \end{equation}
  satisfying the axioms for a monoidal category as well as $\kappa\cdot \otimes_\Bl \simeq \otimes_\Bl \cdot (\kappa \times \id)\simeq \otimes_{\Bl}\cdot (\id \times \kappa)$ and $\kappa(1_\Bl)\simeq 1_\Bl$, secondly a natural isomorphism
\begin{equation}
  \label{eqn:BNatTrans} 
  \begin{tikzcd}[row sep = 20pt, column sep = 30pt]
  \Bl\times\Bl\ar[r,"\otimes_{\Bl}\cdot \sigma"{xshift=-10pt},bend left = 35, ""{name=U,inner sep=0pt,below,xshift=0pt}]\ar[r,"\otimes_{\Bl}"{xshift=-0pt},bend right = 35,swap, ""{name=D,inner sep=1pt}]&\Bl 
  \arrow[Rightarrow, from=U, to=D, "\beta", shorten <= 5pt, shorten >= 5pt] 
  \arrow[Rightarrow, from=U, to=D, swap,"\wr", shorten <= 5pt, shorten >= 5pt]
  \end{tikzcd}
  \end{equation} 
   which satisfies the hexagon
  $$\beta_{b_1 \otimes_\Bl b_2,b_3} \ = \ \beta_{b_1,b_3} \cdot \beta_{b_2,b_3}, \hspace{15mm} \beta_{b_1,b_2 \otimes_\Bl b_3} \ = \ \beta_{b_1,b_2} \cdot \beta_{b_1,b_2\otimes_\Bl b_3}$$
  and symplectic hexagon relations
  \begin{equation} \label{eqn:SymplecticBraidRelation}
    \kappa_{b_1}\beta_{b_1,\kappa(b_2)}^{-1} \kappa_{b_2}\beta_{b_1,b_2} \ = \ \beta_{\kappa(b_1),\kappa(b_2)}^{-1}\kappa_{b_2}\beta_{\kappa(b_1),b_2}\kappa_{b_1},
  \end{equation}
  and is compatible with $1_\Bl$.  It is symmetric if  $\beta\cdot \sigma^*\beta$ is the identity.
\end{prop}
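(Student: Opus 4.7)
The plan is to prove the equivalence by mimicking the standard coherence theorem for braided monoidal categories, using the fact (Lemma \ref{lem:GFiniteSetsMorphismsDescription}) that morphisms in $\FinSet^{\surj}_{\OSpt}$ are generated by adjacency contractions and elements of the orthosymplectic Weyl group, and that the orthosymplectic braid group $BW_n$ admits a presentation by standard braids $\sigma_i$ together with the loop generators $\gamma_i$ subject to the usual braid relations and the symplectic hexagon $\gamma \sigma_1^{-1} \gamma \sigma_1 = \sigma_1^{-1} \gamma \sigma_1 \gamma$ drawn earlier in this section. One direction restricts the large packet of data $\otimes^f_\Bl, \beta^\eta$ to a small generating subset; the other direction extends a small data set consistently to all $f$ and $\eta$.

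For the forward direction, given $[\Bl]$ in the sense of section \ref{sssec:GBrMon}, I would extract the data \eqref{eqn:BFunctors} as follows: $\Bl$ itself is the category attached to the unique nontrivial simple colour; the unit $1_\Bl$ is the value of $\otimes_\Bl^{\varnothing \hookrightarrow [1]}$; the tensor product $\otimes_\Bl$ is $\otimes_\Bl^{[2]\twoheadrightarrow[1]}$ coming from the unique adjacency contraction; and $\kappa$ is $\otimes_\Bl^{\tau}$ where $\tau\in W_{\OSpt_1}\simeq \Zb/2$ acts on the single element. The compatibilities $\kappa\cdot \otimes_\Bl \simeq \otimes_\Bl\cdot (\kappa\times \id)\simeq \otimes_\Bl\cdot (\id \times \kappa)$ are instances of the associativity axiom \eqref{eqn:HigherAssociativityCondition}, applied to the appropriate factorisations of the composite $[2]\to [1]$ followed by $\tau$. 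The braiding $\beta$ is taken to be $\beta^{\sigma_1}$ for the standard generator, and the usual hexagon relations translate the axiom \eqref{eqn:BetaSigmaCompatibility} under the embedding $B_2 \hookrightarrow BW_{\OSpt, 2}$.

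For the backward direction, I would build $\otimes^f_\Bl$ and $\beta^\eta$ by decomposing each $f$ and each $\eta$ into the generators above, and assigning $\otimes_\Bl$ or $\kappa$ to each generator of $f$, and $\beta$ or $\kappa$ to each generator of $\eta$. The functor $\otimes^f_\Bl$ is then the evident iterated composition built out of associators, the swap, $\kappa$, and $\otimes_\Bl$; the natural transformation $\beta^\eta$ is likewise a composition of $\beta$'s and $\kappa$'s. Both depend a priori on the chosen decomposition, and independence of the choice is exactly what needs to be verified.

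The main obstacle is this well-definedness: one must show that the stated relations on $(\Bl, 1_\Bl, \kappa, \otimes_\Bl, \beta)$ imply invariance under all relations in a presentation of $BW_{\OSpt, n}$ (relative over every target Weyl group element). The purely $\GL$-type relations reduce, via MacLane and Joyal--Street coherence, to the ordinary hexagon, associativity and unit axioms, which are assumed; the relation $\kappa^2\simeq \id$ is encoded in the cocycle structure hidden in the compatibility $\kappa\cdot \otimes_\Bl(\kappa\times \id)\simeq \otimes_\Bl$; and the single genuinely new relation, the symplectic hexagon, translates, by tracing source and target through the corresponding string diagram of $\kappa$-nodes and $\beta$-crossings, into \eqref{eqn:SymplecticBraidRelation}. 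A direct verification that both sides of \eqref{eqn:SymplecticBraidRelation} take $b_1\otimes b_2$ to $\kappa(b_1)\otimes \kappa(b_2)$ by conjugate zig-zag paths then matches the word equality $\gamma \sigma_1^{-1}\gamma \sigma_1 = \sigma_1^{-1}\gamma \sigma_1 \gamma$. Once well-definedness is established, functoriality of the two assignments and naturality in $[\Bl]$ are straightforward, and the symmetric case follows by the usual reduction to the pure braid group.
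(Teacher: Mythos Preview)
Your proposal is correct and takes essentially the same approach as the paper's proof: both extract the generating data $(1_\Bl,\kappa,\otimes_\Bl,\beta)$ from the functors $\otimes^f_\Bl$ attached to the basic maps $\pm[0]\to\pm[1]$, $(-1)\colon\pm[1]\to\pm[1]$, $\pm[2]\to\pm[1]$ in $\FinSet_{\OSpt}$ and the generator of $BW_{\pm[2]\to\pm[1]}$, then rebuild the full package from these generators using the presentation of the orthosymplectic braid group. Your treatment of well-definedness in the converse direction (invoking Mac Lane--Joyal--Street coherence for the type-$A$ part and matching \eqref{eqn:SymplecticBraidRelation} to the word relation $\gamma\sigma_1^{-1}\gamma\sigma_1=\sigma_1^{-1}\gamma\sigma_1\gamma$) is more explicit than the paper's, which simply asserts that the relations follow from identifications of morphisms in $\FinSet_{\OSpt}$ and from \eqref{eqn:BetaFCompatibility}; one small imprecision is your aside that $\kappa^2\simeq\id$ follows from the compatibility $\kappa\cdot\otimes_\Bl\simeq\otimes_\Bl\cdot(\kappa\times\id)$, which it does not, and indeed in the braided (non-symmetric) case $\kappa$ corresponds to the loop $\gamma$ of infinite order so no such relation is imposed.
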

\begin{proof}
  The functors \eqref{eqn:BFunctors} arise from $\otimes^{f}_{\Bl}$ for the maps $f\in\FinSet_{\OSpt}$
  $$\pm [0] \ \stackrel{i}{\to} \ \pm [1], \hspace{10mm} \pm [1] \ \stackrel[\raisebox{2pt}{$\sim$}]{-1}{\to} \  \pm [1], \hspace{10mm} \pm [2] \ \ \stackrel{s}{\to} \  \ \pm [1],$$
  respectively,  where the first and third are coCartesian lifts of the associated maps of finite sets. The relations follow from the identifications of 1-morphisms in $\FinSet_{\OSpt}$:
  $$(-1) \cdot i \ = \ i, \hspace{15mm} s\cdot (i\times i) \ = \ i, $$
  $$(-1)\cdot s \ = \ s\cdot ((-1)\times \id) \ = \ s\cdot (\id \times (-1)), \hspace{15mm} s \cdot (\id \times s) \ = \ s\cdot (s\times \id).$$
  Conversely, any such data gives rise to a unique functor $\otimes^{f}_{\Bl}$ for every map $f$.  The natural isomorphisms \eqref{eqn:BNatTrans} arise from the generators of the relative braid group
  $$BW_{\pm [2] \to \pm [1]} \ \simeq \ \Zb \sigma.$$
   The hexagon relations, symplectic hexagon relations and compatibility with $1_\Bl$ follow from \eqref{eqn:BetaFCompatibility}.
\end{proof}

\subsubsection{Parabolic braided monoidal categories}  
Likewise we may define \textit{linear-orthosymplectic} braided monoidal categories, and



\begin{prop}\label{prop:GLOSpBrMon}
  A linear-orthosymplectic braided monoidal category $(\Al|\Bl]$ is equivalent to a pair of categories $\Al,\Bl$ with 
  \begin{equation}
    \begin{split} \label{eqn:ABFunctors}
       1_\Al \ \in \ \Al, \hspace{10mm} 1_\Bl \ \in \ \Bl, \hspace{25mm} \\
       \lambda \ : \ \Al \times\Bl \ \stackrel{\sim}{\to} \ \Al \times \Bl,\hspace{10mm}   \kappa \ : \ \Bl \ \stackrel{\sim}{\to} \ \Bl \hspace{15mm}  \\
    \otimes_{\Al} \ : \ \Al \times \Al  \ \to \ \Al \hspace{12mm} \textcolor{white}{(\otimes_{\Al\Bl} \ : \ \Al \times \Bl   \ \to \ \Bl)} \\
    \otimes_{\Al\Bl} \ : \ \Al \times \Bl   \ \to \ \Bl  \hspace{15mm}  \otimes_{\Bl} \ : \ \Bl \times \Bl  \ \to \ \Bl\\
  \end{split}
  \end{equation}
  such that $\otimes_{\Al},(\otimes_{\Bl},\kappa)$ are (orthosymplectic) monoidal products with units $1_\Al,1_\Bl$ and $\otimes_{\Al\Bl}$ is linear over $\otimes_{\Al},\otimes_{\Bl}$, such that 
  $$\kappa\cdot \otimes_{\Al\Bl} \ \simeq \ \otimes_{\Al\Bl}\cdot (\id \times \kappa), \hspace{15mm} 1_\Al \otimes_{\Al \Bl}(-)\ \simeq \ \id_\Bl$$
  and secondly a collection of natural isomorphisms
  \begin{equation}
    \label{eqn:BNatTransGLOSp} 
    \begin{tikzcd}[row sep = 20pt, column sep = 30pt]
     \Al\times\Al\ar[r,"\otimes_{\Al}\cdot \sigma"{xshift=-10pt},bend left = 35, ""{name=UL,inner sep=0pt,below,xshift=0pt}]\ar[r,"\otimes_{\Al}"{xshift=-0pt},bend right = 35,swap, ""{name=DL,inner sep=1pt}]&\Al 
    &&
    \Al\times\Bl\ar[r,"\otimes_{\Al\Bl}\cdot \lambda"{xshift=-10pt},bend left = 35, ""{name=UR,inner sep=0pt,below,xshift=0pt}]\ar[r,"\otimes_{\Al\Bl}"{xshift=-0pt},bend right = 35,swap, ""{name=DR,inner sep=1pt}]&\Bl
    &&
    \Bl\times\Bl\ar[r,"\otimes_{\Bl}\cdot \sigma"{xshift=-10pt},bend left = 35, ""{name=U,inner sep=0pt,below,xshift=0pt}]\ar[r,"\otimes_{\Bl}"{xshift=-0pt},bend right = 35,swap, ""{name=D,inner sep=1pt}]&\Bl 
      \arrow[Rightarrow, from=U, to=D, "\beta_\Bl", shorten <= 5pt, shorten >= 5pt] 
      \arrow[Rightarrow, from=U, to=D, swap,"\wr", shorten <= 5pt, shorten >= 5pt]
      \arrow[Rightarrow, from=UL, to=DL, "\beta_\Al", shorten <= 5pt, shorten >= 5pt] 
      \arrow[Rightarrow, from=UL, to=DL, swap,"\wr", shorten <= 5pt, shorten >= 5pt]
      \arrow[Rightarrow, from=UR, to=DR, "\eta_{\Al\Bl}", shorten <= 5pt, shorten >= 5pt] 
      \arrow[Rightarrow, from=UR, to=DR, swap,"\wr", shorten <= 5pt, shorten >= 5pt]
    \end{tikzcd}
    \end{equation} 
  such that $\beta_\Al$ and $(\beta_\Bl,\kappa)$ satisfy the (orthosymplectic) hexagon relations, and $\beta_\Al,\eta_{\Al\Bl},\beta_\Bl$ satsfies the symplectic hexagon relations for $\otimes_{\Al\Bl}$:\footnote{Here we have abused notation and denoted $\lambda=\tau \times \tau'$.}
  \begin{equation}\label{eqn:EtaBetaSymplecticHexagonRelations}
    \beta_{\tau(a),\tau(a')}^{-1}\cdot \eta_{a,\tau'(b)}\cdot \beta_{\tau(a'),\tau'(b)}\cdot \eta_{a',b} \ = \ \eta_{a',\tau'(b)}\cdot \beta_{a',\tau(a)}^{-1}\cdot \eta_{a,b}\cdot \beta_{a,a'}.
  \end{equation}
\end{prop}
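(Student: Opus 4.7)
The plan is to mirror the strategy of Proposition \ref{prop:OrthosymplecticBraided}, identifying a small set of generators and relations in $\FinSet_{\GL\text{-}\OSpt}$ (and in the relative braid groups $BW_f$ of its morphisms) that produce exactly the structure listed in \eqref{eqn:ABFunctors}, \eqref{eqn:BNatTransGLOSp} and \eqref{eqn:EtaBetaSymplecticHexagonRelations}. Concretely, I will use the description of morphisms in $\FinSet^{\surj}_{\Pt_{\GL\text{-}\OSpt}}$ from Lemma~\ref{lem:GFiniteSetsMorphismsDescription}: every $1$-morphism is a composition of adjacency contractions $\varphi_{i,j}$ together with elements of the Weyl group $W_{\Pt}=\Sk_n\times W_{\Sp_{2m}}$.

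First I will produce the data \eqref{eqn:ABFunctors} from $\otimes^{f}_{(\Al|\Bl]}$ applied to the following specific maps: the unit injections $i_\Al\colon (\varnothing|\varnothing)\to (1|\varnothing)$ and $i_\Bl\colon (\varnothing|\varnothing)\to(\varnothing|\pm 1)$ produce $1_\Al$ and $1_\Bl$; the $W_\Pt$-elements $(-1)\colon \pm 1 \stackrel{\sim}{\to}\pm 1$ and $(-1)\times (-1)\colon \pm 1\times\pm 1\stackrel{\sim}{\to}\pm 1\times \pm 1$ produce $\kappa$ and the outer involution (which, by Lemma \ref{lem:GFiniteSetsMorphismsDescription} and the product structure of $W_\Pt$, necessarily splits as $\tau\times \tau'$ because the linear and orthosymplectic factors of $W_\Pt$ act independently); the adjacency surjections $s_\Al\colon (2|\varnothing)\to (1|\varnothing)$, $s_{\Al\Bl}\colon (1|\pm 1)\to (\varnothing|\pm 1)$ and $s_\Bl\colon (\varnothing|\pm 2)\to (\varnothing|\pm 1)$ produce $\otimes_\Al$, $\otimes_{\Al\Bl}$ and $\otimes_\Bl$, respectively. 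The compatibilities among these functors ($\kappa\cdot\otimes_{\Al\Bl}\simeq \otimes_{\Al\Bl}\cdot(\id\times\kappa)$, $1_\Al\otimes_{\Al\Bl}(-)\simeq \id$, associativity and units) follow from the analogous equalities of compositions in $\FinSet_{\Pt}$ together with the associativity/unit condition \eqref{eqn:HigherAssociativityCondition} of the abstract definition.

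Next, I produce the natural transformations \eqref{eqn:BNatTransGLOSp} from the generators of the relative braid groups $BW_{s_\Al}$, $BW_{s_{\Al\Bl}}$, and $BW_{s_\Bl}$. Geometrically these are the paths in $(\Conf_{\Pt}\Ab^1)_\circ$ corresponding to (i) swapping two linear points, (ii) swapping two orthosymplectic points around the diagonal, and (iii) looping a linear point around an orthosymplectic one; they produce $\beta_\Al$, $\beta_\Bl$ and $\eta_{\Al\Bl}$ respectively, via \eqref{eqn:BetaSigmaCompatibility}. The standard hexagon relations for $\beta_\Al$ and the orthosymplectic hexagon for $(\beta_\Bl,\kappa)$ arise by applying \eqref{eqn:BetaFCompatibility} to the compositions $s_\Al\cdot (s_\Al\times\id)=s_\Al\cdot(\id\times s_\Al)$ and the analogous equalities for $s_\Bl$, exactly as in the proof of Proposition \ref{prop:OrthosymplecticBraided}.

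The main obstacle will be verifying the mixed symplectic hexagon relation \eqref{eqn:EtaBetaSymplecticHexagonRelations}. The plan is to identify it with the defining relation of the type B/C braid group acting on the configuration of two linear points and one orthosymplectic orbit $\{b,\tau'(b)\}$: consider the subgroup of $BW_{s}$ for the surjection $s\colon (2|\pm 1)\to(\varnothing|\pm 1)$ generated by $\beta_\Al$ on the linear points, $\eta_{\Al\Bl}$ between each linear point and the orthosymplectic one, and $\kappa$ on the orthosymplectic point. In this configuration the two sides of \eqref{eqn:EtaBetaSymplecticHexagonRelations} correspond to two homotopic paths that shuffle a linear point past $b$ and $\tau'(b)$ in opposite orders, i.e.\ the standard type C braid relation between $\sigma_1$ (swap of the two linear points) and the generator that loops the outer linear point around $\{b,\tau'(b)\}$. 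Once this identification is made, all remaining relations among $\beta_\Al,\beta_\Bl,\eta_{\Al\Bl},\kappa$ follow from those already present in Proposition \ref{prop:OrthosymplecticBraided} together with \eqref{eqn:BetaFCompatibility}. Conversely, given the explicit data satisfying the stated relations, one recovers the abstract $\otimes^f_{(\Al|\Bl]}$ and coherence 2-morphisms by decomposing an arbitrary $f$ as a composition of our generators and then invoking uniqueness (independence of factorisation) exactly as in Lemma \ref{lem:GLMonoidal} and Lemma \ref{lem:GLBraided}.
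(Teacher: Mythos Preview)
Your plan is correct and matches the paper's own proof, which lists essentially the same generating morphisms $(0|0]\to(1|0]$, $(0|0]\to(0|1]$, $(-1)\colon(0|1]\stackrel{\sim}{\to}(0|1]$, $(2|0]\twoheadrightarrow(1|0]$, $(1|1]\twoheadrightarrow(0|1]$, $(0|2]\twoheadrightarrow(0|1]$ and then defers the braid relations to ``a similar argument as in the proof of Proposition~\ref{prop:OrthosymplecticBraided}''. Your treatment is in fact more explicit than the paper's, particularly in identifying \eqref{eqn:EtaBetaSymplecticHexagonRelations} with the type~B/C braid relation on the configuration $(2|\pm1)$; one small caution is your derivation of $\lambda$ from a $W_\Pt$-element of $(1|\pm1)$, since $W_{\Pt_{1,1}}\simeq\Zb/2$ only sees the orthosymplectic factor, so the $\tau$-component on $\Al$ should instead come from the orthosymplectic structure carried by the full set $[n]$ in the definition of $\FinSet_{\GL\text{-}\OSpt}$ (the paper itself is terse on this point and simply records $\lambda=\tau\times\tau'$ as an abuse of notation).
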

\begin{proof}
  The functors \eqref{eqn:ABFunctors} arise from $\otimes^{f}_{(\Al,\Bl]}$ for the maps $f\in\FinSet_{\GL \textup{-}\OSpt}$ given by
  $$(0|0] \ \to \ (1|0], \hspace{10mm} (0|0] \ \to \ (0|1], \hspace{10mm} (-1) \ : \ (0|1] \ \stackrel{\sim}{\to} \ (0|1],$$
  \begin{align*}
    (2|0]& \ \twoheadrightarrow\ (1|0] & &  \\
    (1|1]& \ \twoheadrightarrow \ (1|0]  & & (0|2] \ \twoheadrightarrow \ (0|1]
  \end{align*}
   and the compatibility conditions from the relations between their iterated compositions. This uniquely determines all the $\otimes_{(\Al|\Bl]}^f$ since any map $f$ is a composition of the above maps with permutations. The braid relations follow by a similar argument as in the proof of Proposition \ref{prop:OrthosymplecticBraided}.
\end{proof}

Likewise, we define a \textit{linear-linear braided monoidal category} $(\Al|\Bl)$ as the same data and conditions as Proposition \ref{prop:GLOSpBrMon} without $\kappa$, similarly for a parabolic $\Pt$ with Levi $\GL^{\times k}\times \Gt$ where $\Gt$ is of classical type.

\subsubsection{Braided module categories} Let $(\Al,\Bl]$ be a linear-orthosymplectic braided monoidal category or $(\Al,\Bl)$ be a linear-linear braided monoidal category.

\begin{cor}
 $\Bl$ is a (lax) braided module category for $\Al$: there is a natural isomorphism 
\begin{center}
  \begin{tikzcd}[row sep = 20pt, column sep = 30pt]
  \Al\times\Bl\ar[r,"\otimes_{\Al\Bl}\cdot \lambda"{xshift=-10pt},bend left = 35, ""{name=U,inner sep=0pt,below,xshift=0pt}]\ar[r,"\otimes_{\Al\Bl}"{xshift=-0pt},bend right = 35,swap, ""{name=D,inner sep=1pt}]&\Bl 
    \arrow[Rightarrow, from=U, to=D, "\eta", shorten <= 5pt, shorten >= 5pt] 
    \arrow[Rightarrow, from=U, to=D, swap,"\wr", shorten <= 5pt, shorten >= 5pt]
  \end{tikzcd}
  \end{center} 
satisfying the hexagon relations with respect to $\otimes_\Al$.
\end{cor}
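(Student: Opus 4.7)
The plan is to extract the required braided module structure directly from the data packaged in Proposition~\ref{prop:GLOSpBrMon} (and its linear-linear analogue). First, I would take the action of $\Al$ on $\Bl$ to be $\otimes_{\Al\Bl}$ itself. The necessary associativity of this action, namely $a_1 \otimes_{\Al\Bl}(a_2 \otimes_{\Al\Bl} b) \simeq (a_1 \otimes_\Al a_2) \otimes_{\Al\Bl} b$, is already supplied as part of the definition (``$\otimes_{\Al\Bl}$ is linear over $\otimes_\Al$''), and the unit axiom follows from $1_\Al \otimes_{\Al\Bl}(-)\simeq \id_\Bl$. The braiding-like natural transformation of the module category is then taken to be $\eta\defeq\eta_{\Al\Bl}$ (in the linear-linear case, the analogous data built in the same way).

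Next I would verify the module hexagon, i.e.\ that $\eta_{a_1\otimes_\Al a_2,\,b}$ factors, up to the associator of the action, as the composite of $\eta_{a_2,b}$ followed by $\eta_{a_1,\,a_2\otimes_{\Al\Bl} b}$ with an appropriate insertion of $\beta_\Al$. My plan is to apply the compatibility \eqref{eqn:BetaFCompatibility} from section~\ref{sssec:GBrMon} to two different factorisations of the surjection $(2|1] \twoheadrightarrow (0|1]$ in $\FinSet_{\GL\textup{-}\OSpt}$: one factoring through $(1|1]$ by merging the two $\Al$-entries first, and the other factoring through $(1|1]$ by merging one $\Al$-entry into $\Bl$ first and then merging in the second. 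Both are $1$-morphisms in $\FinSet_{\GL\textup{-}\OSpt}$ with the same source and target, and they are linked by a homotopy in the relative braid groupoid; applying the functors $\otimes^{(-)}_{(\Al|\Bl]}$ and the $2$-morphisms $\beta^{(-)}$ to this homotopy converts it into precisely the desired hexagon equation.

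The main obstacle I anticipate is bookkeeping: in the linear-orthosymplectic case the twist $\lambda$ has to be propagated correctly through the merging step, so that the resulting reflection factor on the $\Bl$-side lines up with the braiding $\beta_\Al$ on the $\Al$-side. This compatibility is exactly what the symplectic hexagon relation \eqref{eqn:EtaBetaSymplecticHexagonRelations} encodes, so the verification should reduce to a direct rewriting of that identity in the form demanded by the module hexagon. In the purely linear-linear case the twist is trivial and the statement collapses to the classical hexagon of a braided monoidal category applied to the parabolic structure, so no extra ingredient is required there. The ``(lax)'' qualifier in the statement reflects the fact that we do not require $\eta$ to be an isomorphism of $\Al$-module categories, only a natural transformation, so we do not need to invert anything in the verification.
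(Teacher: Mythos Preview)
Your approach is correct and, in fact, more detailed than the paper's own treatment: the paper states this corollary without proof, treating it as an immediate unpacking of the data in Proposition~\ref{prop:GLOSpBrMon}. Your plan to extract $\otimes_{\Al\Bl}$ as the action, $\eta_{\Al\Bl}$ as the module braiding, and to derive the hexagon by comparing the two factorisations of $(2|1]\twoheadrightarrow(0|1]$ through $(1|1]$ via the compatibility \eqref{eqn:BetaFCompatibility} is exactly the right way to make this explicit.

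One small correction: your reading of ``(lax)'' is off. The diagram in the statement already shows $\eta$ as a natural \emph{isomorphism} (the $\wr$ annotation), so laxness is not about $\eta$ failing to be invertible. Rather, the parenthetical reflects that the full linear-orthosymplectic structure carries more data (the twist $\lambda$, the compatibility with $\kappa$ and $\beta_\Bl$) than a strict braided module category in the sense of \cite{Br}; forgetting down to just $(\Al,\Bl,\otimes_{\Al\Bl},\eta)$ one recovers the strict notion only when $\lambda$ is trivial, as the paper remarks immediately after the corollary. This does not affect your argument, but you should drop the last sentence of your proposal.
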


This recovers the notion \cite{Br} of strict braided module category when $\lambda$ is trivial. In terms of factorisation algebras, one expects the above is induced by pulling back $\Bl$ along $0 \to \Cb/\pm$.

\subsection{Cherednik and Yang-Baxter equations} 

\label{ssec:CherednikSetup}

\subsubsection{} We are now able to give a uniform description of matrices satisfying the Cherednik reflection \cite{Che} or the Yang-Baxter \cite{ES} equations.

\subsubsection{Yang Baxter twists} We now introduce \textit{Yang-Baxter twists} $S$ of braided monoidal categories $\Al$:

\begin{prop} \label{prop:RMatrixBraiding}
Let $(\Al,\otimes_\Al,\beta)$ be a braided monoidal category, and let 
$$S_{a,a'} \ : \ a \otimes_\Al a' \ \stackrel{\sim}{\to} \ a \otimes_\Al a'$$
be maps in $\Al$ natural in the variables $a,a'$. Then $S\cdot \beta$ defines a braiding if and only if $S$ satisfies the hexagon relations
\begin{equation}
  \label{eqn:SHexagonRelations} 
  S_{a \otimes a', a''} \ = \ S_{13}S_{23}, \hspace{15mm}  S_{a,a' \otimes a''} \ = \ S_{12}S_{13}
\end{equation}
and if $S_{1,a}=S_{a,1}=\id$. 
\end{prop}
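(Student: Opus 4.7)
The plan is to verify the braiding axioms for $\beta' := S \cdot \beta$ by reducing them to the corresponding axioms for $\beta$ together with the stated hypotheses on $S$. First, $\beta'_{a,a'} : a \otimes_\Al a' \to a' \otimes_\Al a$ is automatically a natural isomorphism, since both $\beta_{a,a'}$ and $S_{a,a'}$ are natural isomorphisms. The unit conditions $\beta'_{1,a} = \id$ and $\beta'_{a,1} = \id$ are then immediate from the corresponding conditions on $\beta$ and the assumption $S_{1,a} = S_{a,1} = \id$.

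The core of the proof is verifying the two hexagon relations for $\beta'$. Substituting $\beta' = S \cdot \beta$ into
\[
\beta'_{a \otimes a', a''} \ = \ (\beta'_{a,a''} \otimes \id) \circ (\id \otimes \beta'_{a',a''}),
\]
one uses the hexagon relation for $\beta$ on the right-hand side, together with naturality of $\beta$ applied to the endomorphisms $S_{a,a''}$ and $S_{a',a''}$, to move all copies of $S$ past $\beta$. After cancelling the common factor of $\beta_{a \otimes a', a''}$ (an isomorphism), the identity collapses to
\[
S_{a \otimes a', a''} \ = \ S_{13} \circ S_{23}
\]
as endomorphisms of $a \otimes a' \otimes a''$. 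The second hexagon reduces analogously to $S_{a, a' \otimes a''} = S_{12} \circ S_{13}$. Conversely, if $\beta'$ is a braiding, then $S = \beta' \cdot \beta^{-1}$ inherits both hexagon relations by running the same diagram chase in reverse, and the unit conditions for $S$ follow from those for $\beta$ and $\beta'$.

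The main obstacle is purely combinatorial bookkeeping: one must check that, in reducing the hexagon for $\beta'$ to an identity in $S$ alone, the subscripts line up as $S_{13} \circ S_{23}$ rather than $S_{23} \circ S_{13}$. This ordering is forced by the direction in which naturality of $\beta$ is applied (the outer copy of $\beta$ in the hexagon decomposition sees $S_{a,a''}$ first), so the precise form of \eqref{eqn:SHexagonRelations} is an artefact of the shape of the hexagon diagram together with the convention for composing $S$ on top of $\beta$. Once this is unwound carefully on a three-string braid diagram, both directions of the equivalence follow from a single coherence check.
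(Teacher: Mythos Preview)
Your proof is correct and is precisely the ``direct computation'' that the paper invokes in its first sentence. The paper, however, does not actually write out this computation; instead it offers a more structural reformulation: a braiding on $(\Al,\otimes_\Al)$ is a 2-isomorphism $\beta : \otimes_\Al \Rightarrow \otimes_\Al \cdot \sigma$ satisfying hexagon and unit constraints, and $S$ is a 2-isomorphism $\otimes_\Al \Rightarrow \otimes_\Al$; vertically composing these produces a new braiding exactly when $S$ itself satisfies the same hexagon and unit constraints. This 2-categorical phrasing has the advantage that it immediately generalises to the orthosymplectic and parabolic settings treated just afterwards (Propositions~\ref{prop:RMatrixBraidingOSp} and~\ref{prop:LinearOrthosymplecticBraiding}), whereas your explicit diagram chase would need to be redone in each case. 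Conversely, your argument makes transparent exactly where naturality of $\beta$ is used to commute $S$ past $\beta$ and why the ordering $S_{13}S_{23}$ (rather than $S_{23}S_{13}$) appears, which the paper leaves implicit.
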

\begin{proof}
  This follows by a direct computation. Alternatively, viewing an ordinary braiding of a monoidal category $(\Al,\otimes_\Al)$ as a 2-isomorphism 
  \begin{center}
    \begin{tikzcd}[row sep = 20pt, column sep = 30pt]
    \Al\times\Al\ar[r,"\otimes_{\Al}"{xshift=-10pt},bend left = 35, ""{name=UR,inner sep=0pt,below,xshift=0pt}]\ar[r,"\otimes_{\Al}\cdot \sigma_{\Cat}"{xshift=-0pt},bend right = 35,swap, ""{name=DR,inner sep=1pt}]&\Al
      \arrow[Rightarrow, from=UR, to=DR, "\beta", shorten <= 5pt, shorten >= 5pt] 
      \arrow[Rightarrow, from=UR, to=DR, swap,"\wr", shorten <= 5pt, shorten >= 5pt]
    \end{tikzcd}
    \end{center} 
  satisfying the hexagon and unit relations, it is clear that we get a new braiding if we vertically compose with another 2-isomorphism 
  \begin{center}
    \begin{tikzcd}[row sep = 20pt, column sep = 30pt]
    \Al\times\Al\ar[r,"\otimes_{\Al}"{xshift=-10pt},bend left = 35, ""{name=U,inner sep=0pt,below,xshift=0pt}]\ar[r,"\otimes_{\Al}"{xshift=-0pt},bend right = 35,swap, ""{name=D,inner sep=1pt}]&\Al 
      \arrow[Rightarrow, from=U, to=D, "S", shorten <= 5pt, shorten >= 5pt] 
      \arrow[Rightarrow, from=U, to=D, swap,"\wr", shorten <= 5pt, shorten >= 5pt]
    \end{tikzcd}
    \end{center} 
  which also satisfies the hexagon and unit relations; this is precisely the data of $S$.
\end{proof}
Moreover, if $\beta_1,\beta_2$ are any two braidings of $(\Al,\otimes_\Al)$, then $S  =  \beta_2\cdot \beta_1^{-1}$ satisfies the hexagon relations for $\beta_1$. Thus loosely speaking, Yang-Baxter $S$-matrices may be viewed as automorphisms in the category of braidings on a fixed monoidal category. 

\begin{cor}
   $S$ satisfies the Yang-Baxter-equation 
$$S_{12}S_{13}S_{23}  \ = \ S_{23}S_{13}S_{12}.$$ 
\end{cor}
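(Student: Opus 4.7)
The plan is to deduce the corollary directly from Proposition~\ref{prop:RMatrixBraiding}. Since $S\cdot\beta$ is itself a braiding on $(\Al,\otimes_\Al)$, it satisfies the braid relation on any triple tensor product,
$$(S\beta)_{12}\,(S\beta)_{23}\,(S\beta)_{12} \;=\; (S\beta)_{23}\,(S\beta)_{12}\,(S\beta)_{23}.$$
The standard R--matrix Yang--Baxter equation is what this identity becomes once one separates the $S$-factors from the $\beta$-factors. In particular, $\beta$ already satisfies the same braid relation by itself, so after extracting the $\beta$ contribution on each side it will cancel and leave only a relation purely in $S$.

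Concretely, I would commute each $\beta_{i,i+1}$ past the $S$-factors to its left using naturality of $S$, which translates to $\beta_{i,i+1}^{-1}\,S_{j,k}\,\beta_{i,i+1}=S_{\pi(j),\pi(k)}$ where $\pi$ is the transposition of $i$ and $i+1$; the non-adjacent case $\beta_{12}^{-1}S_{23}\beta_{12}=S_{13}$ is exactly what produces an $S_{13}$ factor in the final equation. After collecting, the $\beta$-blocks on either side reduce to $\beta_{12}\beta_{23}\beta_{12}$ and $\beta_{23}\beta_{12}\beta_{23}$ respectively, which coincide by the braid relation for $\beta$; cancelling them leaves precisely $S_{12}S_{13}S_{23}=S_{23}S_{13}S_{12}$.

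An alternative (and in many respects cleaner) route is to work directly from the hexagon relations \eqref{eqn:SHexagonRelations}: apply naturality of $S$ in each argument to the morphisms $S_{a,b}\otimes\id_c$ and $\id_a\otimes S_{b,c}$, which (combining with the two hexagons) yields the two "half" Yang--Baxter identities $S_{12}S_{13}S_{23}=S_{23}S_{12}S_{13}$ and $S_{13}S_{23}S_{12}=S_{12}S_{13}S_{23}$; together with the indexing bookkeeping supplied by $\beta$ they assemble to the full equation.

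The main obstacle in either approach is the combinatorial bookkeeping: keeping track of how $\beta$ renumbers the subscripts of $S$ under conjugation, so that the three $S$-factors surviving on each side really are labelled $(12,13,23)$ and $(23,13,12)$ respectively. This is ultimately the statement that the braid relation for $\beta$ and the R--matrix Yang--Baxter equation for $S$ are two equivalent encodings of the braid relation for the combined braiding $S\cdot\beta$, which is precisely the content of Proposition~\ref{prop:RMatrixBraiding}.
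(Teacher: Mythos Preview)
Your proposal is correct; the paper states this corollary without proof, as an immediate consequence of Proposition~\ref{prop:RMatrixBraiding}, and your primary approach via the braid relation for the new braiding $S\cdot\beta$ is exactly the standard way to make that consequence explicit. One small caution on your alternative route: the two ``half'' identities $S_{12}S_{13}S_{23}=S_{23}S_{12}S_{13}$ and $S_{13}S_{23}S_{12}=S_{12}S_{13}S_{23}$ obtained from naturality and the hexagons alone do not combine purely algebraically to give $S_{12}S_{13}S_{23}=S_{23}S_{13}S_{12}$, so the extra input from $\beta$ you allude to is genuinely needed, and this route ultimately collapses back into the first.
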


\subsubsection{Remark} To be precise, by equation \eqref{eqn:SHexagonRelations} we mean 
$$S\cdot (\otimes_\Bl \times\id) \ = \ S_{13}S_{23} , \hspace{15mm}  S\cdot (\id \times \otimes_\Bl) \ = \ S_{12}S_{13}$$
where we use the notation $S_{12}=S \otimes\id$, $S_{23}=\id \otimes S$ and $S_{13}= (\id \times \beta)S_{12}(\id \times \beta)^{-1}$.

\subsubsection{Orthosymplectic Yang-Baxter twists} We now write down the orthosymplectic version of the Yang-Baxter equation, using Proposition \ref{prop:OrthosymplecticBraided} as a definition of orthosymplectic braided monoidal category.

\begin{prop} \label{prop:RMatrixBraidingOSp}
  Let $(\Bl,\otimes_\Bl,\kappa,\beta)$ be an orthosymplectic braided monoidal category, and let 
  \begin{center}
    \begin{tikzcd}[row sep = 20pt, column sep = 30pt]
    \Bl\times\Bl\ar[r,"\otimes_{\Bl}"{xshift=-10pt},bend left = 35, ""{name=U,inner sep=0pt,below,xshift=0pt}]\ar[r,"\otimes_{\Bl}"{xshift=-0pt},bend right = 35,swap, ""{name=D,inner sep=1pt}]&\Bl 
      \arrow[Rightarrow, from=U, to=D, "S", shorten <= 5pt, shorten >= 5pt] 
      \arrow[Rightarrow, from=U, to=D, swap,"\wr", shorten <= 5pt, shorten >= 5pt]
    \end{tikzcd}
    \end{center} 
    be a natural transformation. Then $S\cdot \beta$ defines a braiding if and only if $S$ satisfies the hexagon
  \begin{equation}
    \label{eqn:SHexagonRelationsOSp} 
    S_{b \otimes b', b''} \ = \ S_{b,b''}S_{b',b''} , \hspace{15mm}  S_{b,b' \otimes b''} \ = \ S_{b,b'}S_{b,b''}
    \end{equation}
    and symplectic hexagon relations
    \begin{equation} \label{eqn:SSymplecticBraidRelation}
      S_{b_1,\kappa(b_2)}^{-1} S_{b_1,b_2} \ = \ S_{\kappa(b_1),\kappa(b_2)}^{-1}S_{\kappa(b_1),b_2},
    \end{equation}
  and if $S_{1,b}=S_{b,1}=\id$. 
  \end{prop}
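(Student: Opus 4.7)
The plan is to mirror the argument of Proposition \ref{prop:RMatrixBraiding}, using the characterisation of an orthosymplectic braided monoidal category given in Proposition \ref{prop:OrthosymplecticBraided}: namely, a braiding on $(\Bl,\otimes_\Bl,\kappa)$ is a 2-isomorphism $\otimes_\Bl \cdot \sigma_{\Cat} \Rightarrow \otimes_\Bl$ satisfying the hexagon, the symplectic hexagon \eqref{eqn:SymplecticBraidRelation}, and the unit condition. A natural transformation $S:\otimes_\Bl \Rightarrow \otimes_\Bl$ may be vertically composed with $\beta$ to form a new candidate braiding $S\cdot\beta$, and the question is precisely which conditions on $S$ make $S\cdot\beta$ satisfy these three axioms.

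First I would handle the ordinary hexagon and unit: since $\beta$ already satisfies them, a direct computation (identical to the one in the proof of Proposition \ref{prop:RMatrixBraiding}) shows that the hexagon for $S\cdot\beta$ decomposes into the hexagon for $\beta$ plus the conditions \eqref{eqn:SHexagonRelationsOSp} on $S$, and likewise the unit condition for $S\cdot\beta$ is equivalent to $S_{1,b}=S_{b,1}=\id$ given the unit condition on $\beta$. This step is essentially formal naturality bookkeeping and is independent of $\kappa$.

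The main step, and the only new content beyond the $\GL$ case, is to show that the symplectic hexagon \eqref{eqn:SymplecticBraidRelation} for $S\cdot\beta$ is equivalent (given the hexagons and unit) to the symplectic hexagon \eqref{eqn:SSymplecticBraidRelation} for $S$. Substituting $S\cdot\beta$ for $\beta$ on both sides of \eqref{eqn:SymplecticBraidRelation} produces an identity with four factors of $\beta$ and four of $S$ interleaved with $\kappa$; I would then use naturality of $S$ with respect to $\kappa$ (which lets $\kappa$ slide past $S$) and the known symplectic hexagon for $\beta$ to cancel all $\beta$-factors, leaving \eqref{eqn:SSymplecticBraidRelation}. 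Conversely, starting from \eqref{eqn:SSymplecticBraidRelation} and the symplectic hexagon for $\beta$, the same manipulation reassembles into the symplectic hexagon for $S\cdot\beta$.

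The main obstacle is purely diagrammatic: keeping track of how $S_{\kappa(b_1),\kappa(b_2)}$, $S_{\kappa(b_1),b_2}$, $S_{b_1,\kappa(b_2)}$, and $S_{b_1,b_2}$ interact with $\beta$ and $\kappa$. Once one establishes (via naturality of $S$ in each slot and the fact that $\kappa$ is a monoidal automorphism with $\kappa\cdot\otimes_\Bl \simeq \otimes_\Bl\cdot(\id\times\kappa)\simeq \otimes_\Bl\cdot(\kappa\times\id)$) that $S$ commutes in the appropriate sense with insertions of $\kappa$, the rest is a purely formal cancellation. As in Proposition \ref{prop:RMatrixBraiding}, the cleanest way to present this is to view $S$ as an automorphism in the space of braidings on the fixed orthosymplectic monoidal category $(\Bl,\otimes_\Bl,\kappa)$: then the claim is simply that the ordinary and symplectic hexagon conditions are stable under vertical composition by such automorphisms precisely when $S$ itself satisfies \eqref{eqn:SHexagonRelationsOSp}, \eqref{eqn:SSymplecticBraidRelation}, and the unit condition.
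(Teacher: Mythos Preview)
Your proposal is correct and follows exactly the approach the paper takes: the paper's own proof consists of the single sentence ``The proof is near-identical to that of Proposition~\ref{prop:RMatrixBraiding},'' and you have simply spelled out what that near-identical argument looks like, including the one genuinely new ingredient (the symplectic hexagon step) that the paper leaves implicit.
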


The proof is near-identical to that of Proposition \ref{prop:RMatrixBraiding}.

\subsubsection{Cherednik twists}

\begin{prop}\label{prop:LinearOrthosymplecticBraiding} Let $[\Al|\Bl)$ be a linear-orthosymplectic braided monoidal category, and let
  \begin{equation}
    \label{eqn:BNatTransGLOSp} 
    \begin{tikzcd}[row sep = 20pt, column sep = 30pt]
     \Al\times\Al\ar[r,"\otimes_{\Al}\cdot \sigma"{xshift=-10pt},bend left = 35, ""{name=UL,inner sep=0pt,below,xshift=0pt}]\ar[r,"\otimes_{\Al}"{xshift=-0pt},bend right = 35,swap, ""{name=DL,inner sep=1pt}]&\Al 
    &&
    \Al\times\Bl\ar[r,"\otimes_{\Al\Bl}\cdot \lambda"{xshift=-10pt},bend left = 35, ""{name=UR,inner sep=0pt,below,xshift=0pt}]\ar[r,"\otimes_{\Al\Bl}"{xshift=-0pt},bend right = 35,swap, ""{name=DR,inner sep=1pt}]&\Bl
    &&
    \Bl\times\Bl\ar[r,"\otimes_{\Bl}\cdot \sigma"{xshift=-10pt},bend left = 35, ""{name=U,inner sep=0pt,below,xshift=0pt}]\ar[r,"\otimes_{\Bl}"{xshift=-0pt},bend right = 35,swap, ""{name=D,inner sep=1pt}]&\Bl 
      \arrow[Rightarrow, from=U, to=D, "S_\Bl", shorten <= 5pt, shorten >= 5pt] 
      \arrow[Rightarrow, from=U, to=D, swap,"\wr", shorten <= 5pt, shorten >= 5pt]
      \arrow[Rightarrow, from=UL, to=DL, "S", shorten <= 5pt, shorten >= 5pt] 
      \arrow[Rightarrow, from=UL, to=DL, swap,"\wr", shorten <= 5pt, shorten >= 5pt]
      \arrow[Rightarrow, from=UR, to=DR, "T_{\Al\Bl}", shorten <= 5pt, shorten >= 5pt] 
      \arrow[Rightarrow, from=UR, to=DR, swap,"\wr", shorten <= 5pt, shorten >= 5pt]
    \end{tikzcd}
    \end{equation} 
  be natural isomorphisms. Then $S_\Al\cdot \beta_\Al,  T\cdot \eta_{\Al\Bl}, S_{\Bl}\cdot \beta_{\Bl}$ define another orthosymplectic-linear braiding on $[\Al|\Bl)$ if and only if $S$ satisfies the hexagon relations \eqref{eqn:SHexagonRelations}, $S_\Bl,\kappa$ satisfy the symplectic hexagon relations \eqref{eqn:SHexagonRelationsOSp}, \eqref{eqn:SSymplecticBraidRelation}, and $T$ satisfies the Cherednik hexagon relations with respect to $S$ and $S_\Bl$:\footnote{Here we have abused notation and denoted $\lambda=\tau \times \tau'$; the general condition for non-diagonal $\lambda$ is similar but cumbersome to write.} 
    \begin{align}
      T_{a,\tau'(b)}\cdot S_{a,\tau(a')}^{-1}\cdot T_{a',b}\cdot S_{a,a'}& \ = \ S_{\tau(a),\tau(a')}^{-1}\cdot T_{a',\tau'(b)}\cdot S_{\tau(a),a'}\cdot T_{a,b}\\
      T_{\tau(a),b'}\cdot S_{\Bl,\tau'(b),b'}^{-1}\cdot T_{a,b}\cdot S_{\Bl,b,b'} &\ = \  S_{\Bl,\tau'(b),\tau'(b')}^{-1}\cdot T_{\tau(a),b}\cdot S_{\Bl,b,\tau'(b')}\cdot T_{a,b'}.
    \end{align}
    
\end{prop}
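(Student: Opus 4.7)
The plan is to extend the strategy of Propositions \ref{prop:RMatrixBraiding} and \ref{prop:RMatrixBraidingOSp} to the linear-orthosymplectic setting, by viewing the three natural transformations $S$, $T$, $S_\Bl$ as 2-isomorphisms in a 2-category of tensor functors, vertically pre-composed with the given braiding data $\beta_\Al$, $\eta_{\Al\Bl}$, $\beta_\Bl$. As in those propositions, the data of an $(\Al|\Bl)$-braiding as described in Proposition \ref{prop:GLOSpBrMon} consists precisely of three 2-isomorphisms satisfying the hexagon relations for $\otimes_\Al$, the orthosymplectic hexagon relations for $(\otimes_\Bl,\kappa)$, and the mixed symplectic hexagon relation \eqref{eqn:EtaBetaSymplecticHexagonRelations}. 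Vertical composition with any triple $(S,T,S_\Bl)$ manifestly produces another such triple of 2-isomorphisms; what must be checked is precisely when the hexagon relations are preserved.

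First I would dispatch the two ``pure'' components by invoking Propositions \ref{prop:RMatrixBraiding} and \ref{prop:RMatrixBraidingOSp} directly: $S_\Al\cdot\beta_\Al$ is a braiding on $(\Al,\otimes_\Al)$ iff $S$ satisfies \eqref{eqn:SHexagonRelations} and the unit condition, while $S_\Bl\cdot\beta_\Bl$ (together with the unchanged $\kappa$) is an orthosymplectic braiding on $(\Bl,\otimes_\Bl,\kappa)$ iff $S_\Bl$ satisfies \eqref{eqn:SHexagonRelationsOSp}, \eqref{eqn:SSymplecticBraidRelation} and the unit condition. This leaves only the middle piece: the compatibility of $T\cdot\eta_{\Al\Bl}$ with $\otimes_\Al$ and $\otimes_\Bl$ (expressing that $\Bl$ is a braided module category over $\Al$ for the new braidings), and the mixed symplectic hexagon \eqref{eqn:EtaBetaSymplecticHexagonRelations} with $S_\Al\cdot\beta_\Al$ in place of $\beta_\Al$ and $S_\Bl\cdot\beta_\Bl$ in place of $\beta_\Bl$.

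The bilinearity of $T$ over $\otimes_\Al$ and $\otimes_\Bl$ is the linear-orthosymplectic analogue of the hexagon relations for an $R$-matrix and unpacks, by the same 2-categorical argument as in Proposition \ref{prop:RMatrixBraiding}, to the ordinary hexagon relations for $T$ with respect to the new tensor structures. The key calculation will be the mixed symplectic hexagon: substituting $S_\Al\cdot\beta_\Al$ for $\beta_\Al$, $T\cdot\eta_{\Al\Bl}$ for $\eta_{\Al\Bl}$, and $S_\Bl\cdot\beta_\Bl$ for $\beta_\Bl$ in \eqref{eqn:EtaBetaSymplecticHexagonRelations} and using naturality, the $\beta_\Al$, $\eta_{\Al\Bl}$, $\beta_\Bl$ factors cancel on the two sides by the original identity \eqref{eqn:EtaBetaSymplecticHexagonRelations}, leaving exactly the stated Cherednik hexagon relations for $T$ against $S$ and $S_\Bl$. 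Conversely, if these Cherednik relations hold, the new 2-isomorphisms satisfy \eqref{eqn:EtaBetaSymplecticHexagonRelations} by the same cancellation read in reverse.

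The main obstacle will be the bookkeeping in the mixed symplectic hexagon: one must carefully separate the ``old'' contributions (which cancel by the hypothesis that $\beta_\Al,\eta_{\Al\Bl},\beta_\Bl$ already define an $(\Al|\Bl)$-braiding) from the ``new'' contributions involving $S,T,S_\Bl$, making sure that the naturality squares used to move $S$ past $\eta$ or $T$ past $\beta$ line up with the correct tensorands. A graphical computation in the string-diagram calculus for $(\Al|\Bl)$-braided categories, as in the pictorial proof of Theorem \ref{thm:YD}, would be the cleanest way to present this, with the $\kappa$-strand drawn as a reflecting wall. Once this diagrammatic cancellation is in place, the equivalence between the asserted conditions and the braiding property is automatic.
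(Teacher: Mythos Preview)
Your proposal is correct and matches the paper's approach: the paper does not write out a proof of this proposition, evidently regarding it as a straightforward extension of Propositions \ref{prop:RMatrixBraiding} and \ref{prop:RMatrixBraidingOSp} (the latter already being dismissed as ``near-identical'' to the former). Your plan of invoking those two propositions for the pure $\Al$ and $\Bl$ components, then reducing the mixed symplectic hexagon \eqref{eqn:EtaBetaSymplecticHexagonRelations} for the twisted data to the Cherednik hexagon relations for $S,T,S_\Bl$ via naturality and cancellation of the original $\beta_\Al,\eta_{\Al\Bl},\beta_\Bl$ factors, is exactly the intended argument.
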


\begin{cor} \label{cor:LinearOrthosymplecticBraiding}
  Let $\Al$ be a braided monoidal category with braided module category $\Bl$. Then postcomposing the braiding $\beta$ and module category braiding $\eta$ by the natural transformations 
    $$S_{a,a'} \ : \ a \otimes a' \ \stackrel{\sim}{\to} \ a \otimes a', \hspace{15mm} T_{a,b} \ : \ a \otimes b \ \stackrel{\sim}{\to} \ a \otimes b$$
    gives a new braided module category structure if and only if $S$ satisfied the hexagon relation and if $S,T$ satify the \textbf{finite Cherednik reflection equation} 
    $$T_{a',b} S_{a',a} T_{a,b} S_{a,a'} \ = \ S_{a',a} T_{a,b} S_{a,a'} T_{a',b}.$$
\end{cor}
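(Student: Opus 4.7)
The plan is to deduce this corollary as a direct specialization of Proposition~\ref{prop:LinearOrthosymplecticBraiding} to the linear-linear setting in which the involutions $\tau,\tau'$ and the orthosymplectic data $(\kappa,S_\Bl)$ are absent. A braided module category $(\Al,\Bl,\eta)$ is precisely this reduced data: it is the $(\Al|\Bl)$-structure of Proposition~\ref{prop:GLOSpBrMon} with $\kappa=\id$, $\lambda=\id$, and no $S_\Bl$. Under this identification the two twists in the corollary play the role of the two twists $(S,T)$ in Proposition~\ref{prop:LinearOrthosymplecticBraiding}.

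First, I would unpack what it means for $(S\cdot\beta,\, T\cdot\eta)$ to define a new braided module category structure on $(\Al,\Bl)$. By Proposition~\ref{prop:RMatrixBraiding}, $S\cdot\beta$ is a new braiding on $\Al$ if and only if $S$ satisfies the hexagon relations \eqref{eqn:SHexagonRelations}; this accounts for the first condition. It then remains to analyse when $T\cdot\eta$ satisfies the module-category hexagon relations with respect to the new braiding $S\cdot\beta$, which is the only residual constraint.

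Next I would invoke the (first) Cherednik hexagon relation of Proposition~\ref{prop:LinearOrthosymplecticBraiding}; with $\tau=\tau'=\id$ it becomes
\[
T_{a,b}\cdot S_{a,a'}^{-1}\cdot T_{a',b}\cdot S_{a,a'} \ = \ S_{a,a'}^{-1}\cdot T_{a',b}\cdot S_{a,a'}\cdot T_{a,b}.
\]
Multiplying on the left by $S_{a,a'}$ and using the $S$-hexagon \eqref{eqn:SHexagonRelations} together with naturality of $S$ to re-express the resulting $S$-conjugates in terms of $S_{a',a}$ (so that the two sides acquire the symmetric ``reflection'' shape with alternating $S_{a',a}$ and $S_{a,a'}$), one obtains the stated finite Cherednik reflection equation
\[
T_{a',b}\, S_{a',a}\, T_{a,b}\, S_{a,a'} \ = \ S_{a',a}\, T_{a,b}\, S_{a,a'}\, T_{a',b}.
\]
The second Cherednik hexagon of the proposition becomes vacuous since $S_\Bl$ is absent, so no further condition arises.

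The main obstacle, which is quite mild, is the bookkeeping in the above rewriting: one must verify that the $S$-hexagon is exactly the identity needed to transpose $S_{a,a'}^{-1}$ past the intermediate $T$ and reintroduce it as $S_{a',a}$ on the opposite side, and this is a short diagrammatic computation performed on three strands in $\Al\times\Al\times\Bl$. The converse direction (that the stated reflection equation suffices) is the same manipulation read backwards, so no additional input is required.
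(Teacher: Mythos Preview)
Your overall strategy—specialize Proposition~\ref{prop:LinearOrthosymplecticBraiding} to the case where the orthosymplectic data $\kappa,\tau,\tau',S_\Bl$ are trivial—is the same route the paper takes (the corollary is stated without proof, as an immediate consequence). The reduction of the $S$-condition to Proposition~\ref{prop:RMatrixBraiding} and the observation that the second Cherednik hexagon becomes vacuous are both fine.

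The gap is in your rewriting step. With $\tau=\tau'=\id$, the first Cherednik hexagon of the Proposition reads
\[
T_{a,b}\,S_{a,a'}^{-1}\,T_{a',b}\,S_{a,a'}\;=\;S_{a,a'}^{-1}\,T_{a',b}\,S_{a,a'}\,T_{a,b},
\]
i.e.\ $T_1$ commutes with $S_{12}^{-1}T_2S_{12}$. The target reflection equation says $T_2$ commutes with $S_{21}T_1S_{12}$. These are not the same identity, and the tools you invoke do not bridge them: neither the hexagon relations \eqref{eqn:SHexagonRelations} (which concern $S$ on tensor products, not the relation between $S_{12}$ and $S_{21}$) nor naturality of $S$ produce any identity linking $S_{a,a'}^{-1}$ and $S_{a',a}$. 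So the ``short diagrammatic computation'' you allude to cannot proceed as described.

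What actually yields the form in the corollary is to argue directly: write out the braided-module hexagon for $\eta'=T\cdot\eta$ with respect to $\beta'=S\cdot\beta$, and cancel it against the original hexagon for $\eta,\beta$. Because the module hexagon involves both $\beta_{a,a'}$ and its inverse (equivalently $\beta_{a',a}$), the residual condition on $S,T$ naturally contains both $S_{a,a'}$ and $S_{a',a}$, and is precisely the stated reflection equation. This is still a specialization-in-spirit of the Proposition, but the bookkeeping must track the two distinct braidings $\beta_{a,a'}$ and $\beta_{a',a}$ rather than attempt to convert one $S$ into the other after the fact.
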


\subsection{Linear-orthosymplectic algebras and Borcherds twists}

\subsubsection{Yang-Baxter Borcherds twists}\label{ssec:BorcherdsTwists} We first recap the notion of a Borcherds twist of algebraic structures, see \cite{An,La2}. Let $(A,\Delta_{A,0})$ be a bialgebra in braided monoidal category $\Al$. This induces a monoidal forgetful functor 
$$(A \Md , \otimes_\Al) \ \to \ (\Al, \otimes_\Al).$$

\begin{prop}
  If $S\in A \otimes A$ be a Yang-Baxter twist of $\Al$ which is \emph{quasitriangular} for $A$: 
  $$S\cdot \sigma(\Delta_{A,0}(a)) \ : \   \Delta_{A,0}(a)\cdot S,$$
  then $(A,\Delta_A=S\cdot \Delta_{A,0})$ defines a new bialgebra in $\Al$.
\end{prop}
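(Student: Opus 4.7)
The plan is to verify the two bialgebra axioms for the twisted coproduct $\Delta_A = S \cdot \Delta_{A,0}$: coassociativity, and compatibility with the product (i.e.\ that $\Delta_A$ is an algebra homomorphism in $\Al$). The unit and counit compatibilities will follow from the assumption that $S_{1,a} = S_{a,1} = \id$ built into the definition of Yang-Baxter twist (Proposition \ref{prop:RMatrixBraiding}), together with the fact that $\Delta_{A,0}$ is already counital.

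First I would verify coassociativity. Applying $\Delta_A$ to both tensor factors of $\Delta_A(a)$ and using that $\Delta_{A,0}$ is already coassociative, both $(\Delta_A \otimes \id)\Delta_A(a)$ and $(\id \otimes \Delta_A)\Delta_A(a)$ reduce, after stripping the common prefactor $(\Delta_{A,0} \otimes \Delta_{A,0})\Delta_{A,0}(a)$, to an equality of compositions built out of $S_{12}$, $S_{13}$, $S_{23}$. The hexagon relations \eqref{eqn:SHexagonRelations} tell us how $S$ behaves under the coproduct in each factor, and using them to unpack $(\Delta_{A,0} \otimes \id)(S)$ and $(\id \otimes \Delta_{A,0})(S)$ reduces the desired equality to the Yang-Baxter equation $S_{12}S_{13}S_{23} = S_{23}S_{13}S_{12}$ of the corollary following Proposition \ref{prop:RMatrixBraiding}. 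Diagrammatically this is completely standard; the only care needed is to keep track of the braidings $\beta$ that intervene when moving strands past each other, and these cancel correctly because $S$ is assumed natural in both arguments.

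Next I would verify that $\Delta_A$ is an algebra map. Writing $\Delta_A(ab) = S \cdot \Delta_{A,0}(ab) = S \cdot \Delta_{A,0}(a)\cdot \Delta_{A,0}(b)$ and comparing with $\Delta_A(a)\cdot \Delta_A(b) = S\cdot \Delta_{A,0}(a)\cdot S\cdot \Delta_{A,0}(b)$, the two sides differ by the insertion of an $S$ between $\Delta_{A,0}(a)$ and $\Delta_{A,0}(b)$. The quasitriangularity hypothesis $S\cdot \sigma(\Delta_{A,0}(a)) = \Delta_{A,0}(a)\cdot S$ is precisely what allows us to commute $S$ past $\Delta_{A,0}(a)$ (in the correct braided sense, where $\sigma$ is the swap implemented by $\beta$ of $\Al$). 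Applying this once eliminates the discrepancy. Again one must be careful to interpret $\sigma$ as the braiding of the underlying braided monoidal category $\Al$, but all the natural transformations used are ambient to $\Al$ so no new input is needed.

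The main subtle point, and the one I would write out most carefully, is the interaction between $S$ and the ambient braiding $\beta$ of $\Al$ when expanding $(\Delta_{A,0} \otimes \id)(S)$: because $S$ lives in $A \otimes A$ and the hexagon \eqref{eqn:SHexagonRelations} is formulated using $S_{13} = (\id \times \beta) S_{12}(\id\times \beta)^{-1}$, the braidings must all cancel cleanly in the coassociativity check. This is routine but is the step where one could easily make a sign or ordering error. The unit and counit conditions are immediate from $S_{1,a} = S_{a,1} = \id$.
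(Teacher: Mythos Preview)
The paper does not prove this proposition; it states it as a recap of the Borcherds twist and defers to \cite{An,La2}. Your coassociativity argument (hexagon relations reducing to Yang--Baxter) is the standard one and is fine.

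Your multiplicativity step, however, does not go through as written. You claim that $\Delta_A(ab)=S\cdot\Delta_{A,0}(a)\cdot\Delta_{A,0}(b)$ and $\Delta_A(a)\cdot\Delta_A(b)=S\cdot\Delta_{A,0}(a)\cdot S\cdot\Delta_{A,0}(b)$ differ by ``the insertion of an $S$'' and that quasitriangularity ``eliminates the discrepancy''. But quasitriangularity says $\Delta_{A,0}(a)\cdot S = S\cdot\sigma(\Delta_{A,0}(a))$; commuting the middle $S$ leftward gives $S^{2}\cdot\sigma(\Delta_{A,0}(a))\cdot\Delta_{A,0}(b)$, not $S\cdot\Delta_{A,0}(a)\cdot\Delta_{A,0}(b)$. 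You have accumulated an $S$ and twisted a factor, not removed anything. In the simplest test case ($\Al$ symmetric, $\Delta_{A,0}$ cocommutative so $\sigma\Delta_{A,0}=\Delta_{A,0}$) your two sides read $S\cdot X$ versus $S^{2}\cdot X$, which agree only if $S=1$.

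What the references actually establish is that $\Delta_A$ is multiplicative with respect to the \emph{twisted} braiding $S\cdot\beta$ on $A\Md$, not the original $\beta$ of $\Al$; equivalently, the bialgebra compatibility for $(m,\Delta_A)$ carries the $R$-matrix $S_{12}/S_{21}$ in the middle swap. This is precisely the computation the paper carries out later in \S\ref{sssec:FinalRemarksBialg}, where the discrepancy $J/e(\Nt_i)$ between the two routes is shown to equal that $R$-matrix rather than $1$. (Note also that the orthosymplectic analogue, Proposition~\ref{prop:OrthosymplecticBorcherdsTwistMain}, only claims a \emph{coalgebra} after twisting.) Your sketch would need to track this change of ambient braiding rather than assert the discrepancy vanishes.
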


\subsubsection{Remark} We can repeat the construction of Yang-Baxter twists for 2-categories other than $\Cat$. One expects that the Borcherds twist construction arises from taking the 2-category $\Cat_{\obj}$ of pairs $(\Al,a)$ of a category and object.

\subsubsection{Orthosymplectic comodules} Let $\Bl$ be a braided module category for $\Al$. Then we can define a \textit{orthosymplectic module} for bialgebra $A$ as a vector space with involution $B$ with a coaction 
$$\Delta_{AB} \ : \ B \ \to \  A \otimes B$$
colinear over $A$ and with 
$$\Delta_{AB}\cdot \kappa \ = \ (\id \otimes\kappa)\cdot \Delta_{AB}.$$
It is \textit{braided cocommutative} if $\beta\cdot \Delta_A=\Delta_A$ and $(\tau\times \id)\cdot \Delta_{AB}=\Delta_{AB}$.

\subsubsection{Linear-orthosymplectic coalgebras} The pair $(A|B]$ form a \textit{linear-orthosymplectic coalgebra} if in addition there is a coproduct 
$$\Delta_B \ : \ B \ \to \ B \otimes B$$
such that $\Delta_B \cdot \kappa = (\kappa \times \id)\cdot \Delta_B=(\id \times \kappa)\cdot \Delta_B$ and $\Delta_{AB}$ is colinear over $B$. It is \textit{braided cocommutative} if in addition $\beta_\Bl\cdot \Delta_B=\Delta_B$.

One may make the same definitions in arbitrary linear-orthosymplectic braided monoidal categories $[\Al,\Bl)$. Following \cite{La2}, we may likewise define linear-orthosymplectic localised and vertex coalgebras.

\subsubsection{Cherednik Borcherds twists} Let us fix a background linear-orthosymplectic braided monoidal category $(\Al|\Bl]$, and let $(A|B]$ be a linear-orthosymplectic bialgebra such that the induced map 
$$((A\Md|B\Md], \otimes_{(\Al|\Bl]}) \ \to \ ((\Al|\Bl], \otimes_{(\Al|\Bl]})$$
is linear-orthosymplectic monoidal. Now let 
$$S \ \in \ A \otimes A, \hspace{15mm} T \ \in \ A \otimes B, \hspace{15mm} S_B \ \in \ B \otimes B$$
 be linear-orthosymplectic Cherednik twists of $[\Al|\Bl)$ for which the coproducts $\Delta_{A,0},\Delta_{AB,0},\Delta_{B,0}$ are \textit{quasitriangular}: 
$$S\cdot \sigma(\Delta_{A,0}(a)) =  \Delta_{A,0}(a)\cdot S, \hspace{5mm} T\cdot \lambda\cdot(\Delta_{AB,0}(b)) =  \Delta_{AB,0}(b)\cdot T, \hspace{5mm} S_B\cdot \sigma(\Delta_{B,0}(b)) =  \Delta_{B,0}(b)\cdot S_B.$$
Then it follows from the graphical description

\begin{prop}
  \label{prop:OrthosymplecticBorcherdsTwistMain} 
  $(A|B]$ is a linear-orthosymplectic coalgebra in $(\Al|\Bl]$ for the coproducts
  $$\Delta_A \ = \ S\cdot \Delta_{A,0}, \hspace{15mm} \Delta_{AB} \ = \ T\cdot \Delta_{AB,0}, \hspace{15mm} \Delta_B \ = \ S_B\cdot \Delta_{B,0}.$$
  If $B$ is merely an orthosymplectic comodule for bialgebra $A$, then $\Delta_A,\Delta_{AB}$ define a new orthosymplectic comodule structure.
\end{prop}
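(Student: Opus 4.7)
The plan is to deduce the proposition from the Cherednik-Borcherds twist principle, namely that vertically composing the comultiplication 2-morphisms with the Cherednik twists $S, T, S_B$ preserves all the coalgebra/comodule axioms, precisely because the Cherednik hexagon relations of section \ref{ssec:CherednikSetup} encode exactly the compatibility needed.

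First I would package the Cherednik twists as data defining an auxiliary object. Applying Corollary \ref{cor:LinearOrthosymplecticBraiding} (and Propositions \ref{prop:RMatrixBraiding}, \ref{prop:RMatrixBraidingOSp}, \ref{prop:LinearOrthosymplecticBraiding}), the triple $(S,T,S_B)$ defines a new linear-orthosymplectic braiding on $(\Al|\Bl]$, call it $(\beta',\eta',\beta'_\Bl)$. The quasitriangularity hypotheses
\[
 S\cdot \sigma(\Delta_{A,0}(a))\ =\ \Delta_{A,0}(a)\cdot S,\quad T\cdot \lambda(\Delta_{AB,0}(b))\ =\ \Delta_{AB,0}(b)\cdot T,\quad S_B\cdot \sigma(\Delta_{B,0}(b))\ =\ \Delta_{B,0}(b)\cdot S_B
\]
are precisely the statements that $\Delta_{A,0},\Delta_{AB,0},\Delta_{B,0}$, considered as maps in $(\Al|\Bl]$, are already colinear for the \emph{twisted} braiding: the map $\Delta_A=S\cdot\Delta_{A,0}$ lands in $A\otimes A$ viewed as an $A$-comodule using $\beta'$, and similarly for the others. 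Hence the twisted coproducts are morphisms in the correct bialgebra module categories.

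Next I would verify coassociativity of $\Delta_A$, which is the standard Borcherds twist argument: one expands $(\Delta_A\otimes\id)\Delta_A = S_{12}(\Delta_{A,0}\otimes\id)S\Delta_{A,0}$, commutes the inner $S$ past the outer $\Delta_{A,0}$-factor using quasitriangularity, and reassembles using coassociativity of $\Delta_{A,0}$ and the hexagon relation $S_{a\otimes a',a''}=S_{13}S_{23}$ of \eqref{eqn:SHexagonRelations}. Coassociativity of $\Delta_B$ proceeds identically using the orthosymplectic hexagon \eqref{eqn:SHexagonRelationsOSp} together with the symplectic hexagon \eqref{eqn:SSymplecticBraidRelation} to handle $\kappa$-compatibility ($\Delta_B\cdot\kappa=(\kappa\otimes\id)\Delta_B=(\id\otimes\kappa)\Delta_B$).

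The main substance of the argument, and what I expect to be the hard part, is verifying the mixed compatibilities for $\Delta_{AB}$: both its $A$-colinearity (i.e.\ that $(\Delta_A\otimes\id)\Delta_{AB}=(\id\otimes\Delta_{AB})\Delta_{AB}$) and its $B$-colinearity (i.e.\ $(\id\otimes \Delta_B)\Delta_{AB}=(\Delta_{AB}\otimes\id)\Delta_{AB}$ appropriately braided). These are exactly where the Cherednik hexagon relations from Proposition \ref{prop:LinearOrthosymplecticBraiding},
\begin{align*}
T_{a,\tau'(b)}\cdot S_{a,\tau(a')}^{-1}\cdot T_{a',b}\cdot S_{a,a'}& \ = \ S_{\tau(a),\tau(a')}^{-1}\cdot T_{a',\tau'(b)}\cdot S_{\tau(a),a'}\cdot T_{a,b},\\
T_{\tau(a),b'}\cdot S_{\Bl,\tau'(b),b'}^{-1}\cdot T_{a,b}\cdot S_{\Bl,b,b'} &\ = \  S_{\Bl,\tau'(b),\tau'(b')}^{-1}\cdot T_{\tau(a),b}\cdot S_{\Bl,b,\tau'(b')}\cdot T_{a,b'},
\end{align*}
 intervene: after substituting $\Delta_{AB}=T\cdot\Delta_{AB,0}$ and expanding, the obstruction to colinearity assembles into precisely the two sides of these reflection equations, and cancels. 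A graphical string-diagram computation, analogous to the twisted Yetter-Drinfeld picture in the proof of Theorem \ref{thm:YD}, makes this manageable. Finally, the stated orthosymplectic comodule case (dropping $\Delta_B$) follows from the argument above by simply omitting the $S_B$-pieces: only $S,T$ and the first Cherednik hexagon are used, and all relevant compatibilities are retained.
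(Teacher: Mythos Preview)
Your proposal is correct and follows essentially the same approach as the paper, which gives no explicit proof beyond the sentence ``Then it follows from the graphical description'' preceding the proposition. Your plan is a faithful and more detailed unwinding of that implicit argument: verify each coalgebra axiom by inserting the twists, commuting past the untwisted coproducts via quasitriangularity, and cancelling the obstruction using the (orthosymplectic/Cherednik) hexagon relations.
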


\subsection{Graphical representation of Cherednik hexagon relations} 

\subsubsection{} 
Let $A,B$ be a linear-orthosymplectic coalgebra. Let us consider endomorphisms 
$$S \ : \ A \otimes A \ \to \  A \otimes  A, \hspace{15mm}  T \ : \  A \otimes B \ \to \  A \otimes B, \hspace{15mm} S_B \ : \ B \otimes B \ \to \ B \otimes B.$$
We will represent the coproducts $\Delta_{A,0},\Delta_{AB,0},\Delta_{B,0}$ as the following string diagrams, and postcompositions by $S$, $T$ and $S_B$ as 
\begin{center}
\begin{tikzpicture}

\draw[black,ultra thick] (0,-1) -- (0,1);
\draw[black,thick] (0,0) .. controls +(0,0.5) and +(0,-0.5) .. (0.5,1);
\draw[black,thick] (0,0) .. controls +(0,0.5) and +(0,-0.5) .. (-0.5,1);

\node[] at (-1,0) {$\Delta_{AB}$};

\draw[->,decorate,decoration={snake,amplitude=.4mm,segment length=2mm,post length=1mm}, shorten >=0pt, shorten <=0pt] (-0.4,0.8) -- (0,0.8);

\begin{scope} 
  [xshift=-5cm]
  
\draw[black,ultra thick] (0,-1) -- (0,1);
\draw[black, thick] (1,-1) -- (1,0);
\draw[black,thick] (1,0) .. controls +(0,0.5) and +(0,-0.5) .. (1.5,1);
\draw[black,thick] (1,0) .. controls +(0,0.5) and +(0,-0.5) .. (0.5,1);
\draw[black, thick] (-1,-1) -- (-1,0);
\draw[black,thick] (-1,0) .. controls +(0,0.5) and +(0,-0.5) .. (-1.5,1);
\draw[black,thick] (-1,0) .. controls +(0,0.5) and +(0,-0.5) .. (-0.5,1);

\draw[->,decorate,decoration={snake,amplitude=.4mm,segment length=2mm,post length=1mm}, shorten >=0pt, shorten <=0pt] (-1.4,0.8) -- (-0.6,0.8);

\node[] at (-2,0) {$\Delta_A$};
 \end{scope}

 \begin{scope} 
  [xshift=5cm]
  
\draw[black,ultra thick] (0,-1) -- (0,0);
\draw[black,ultra thick] (0,0) .. controls +(0,0.5) and +(0,-0.5) .. (0.25,1);
\draw[black,ultra thick] (0,0) .. controls +(0,0.5) and +(0,-0.5) .. (-0.25,1);

\node[] at (-1,0) {$\Delta_{B}$};

\draw[->,decorate,decoration={snake,amplitude=.4mm,segment length=2mm,post length=1mm}, shorten >=0pt, shorten <=0pt] (-0.2,0.8) -- (0.2,0.8);
 \end{scope}
\end{tikzpicture}
\end{center}

\subsubsection{} \label{sssec:OrthosymplecticModuleHexagonRelations} Assuming just that $B$ is an orthosymplectic comodule for $A$ and omitting $S_B$ from the above data, the \textbf{orthosymplectic module} (or \textbf{Cherednik}) \textbf{hexagon relations} for $S$ and $T$ are:
\begin{center}
  \begin{tikzpicture}

    \draw[ thick] (-1,-1) -- (-1,0);
    \draw[black,thick] (-1,0) .. controls +(0,0.5) and +(0,-0.5) .. (-0.5,1);
    \draw[black,thick] (-1,0) .. controls +(0,0.5) and +(0,-0.5) .. (-1.5,1);
    \draw[ thick] (-2,-1) -- (-2,1);

    \draw[->,decorate,decoration={snake,amplitude=.4mm,segment length=2mm,post length=1mm}, shorten >=2pt, shorten <=2pt] (-2,-0.5) -- (-1,-0.5);
    \draw[->,decorate,decoration={snake,amplitude=.4mm,segment length=2mm,post length=1mm}, shorten >=2pt, shorten <=2pt] (-2,0.5) -- (-1.3,0.5);
    \draw[->,decorate,decoration={snake,amplitude=.4mm,segment length=2mm,post length=1mm}, shorten >=2pt, shorten <=2pt] (-2,0.7) -- (-0.6,0.7);

    \node[] at (-1.5,-1.5) {$(\id \times {\Delta_{A,0}})\cdot S \ = \  S_{12}S_{13}\cdot (\id \times {\Delta_{A,0}})$};
    \node[] at (-1.5,0) {$\rotatebox{90}{$=$}$};

\begin{scope} 
[xshift=8cm] 
\draw[ thick] (-1,-1) -- (-1,0);
\draw[black,thick] (-1,0) .. controls +(0,0.5) and +(0,-0.5) .. (-0.5,1);
\draw[black,thick] (-1,0) .. controls +(0,0.5) and +(0,-0.5) .. (-1.5,1);
\draw[ thick] (0,-1) -- (0,1);

\draw[->,decorate,decoration={snake,amplitude=.4mm,segment length=2mm,post length=1mm}, shorten >=2pt, shorten <=2pt] (-1,-0.5) -- (0,-0.5);
\draw[->,decorate,decoration={snake,amplitude=.4mm,segment length=2mm,post length=1mm}, shorten >=2pt, shorten <=2pt] (-1.4,0.7) -- (0,0.7);
\draw[->,decorate,decoration={snake,amplitude=.4mm,segment length=2mm,post length=1mm}, shorten >=2pt, shorten <=2pt] (-0.7,0.5) -- (0,0.5);

\node[] at (-0.5,-1.5) {$( {\Delta_{A,0}}\times \id)\cdot S \ = \  S_{23}S_{13}\cdot (\id \times {\Delta_{A,0}})$};
\node[] at (-0.5,0) {$\rotatebox{90}{$=$}$};
\end{scope}


\begin{scope} 
[yshift=-3.5cm,xshift=-0.5cm] 

\draw[line width=2.5pt] (-1,-1) -- (-1,1);
\draw[black,thick] (-1,0) .. controls +(0,0.5) and +(0,-0.5) .. (-0.5,1);
\draw[black,thick] (-1,0) .. controls +(0,0.5) and +(0,-0.5) .. (-1.5,1);
\draw[ thick] (-2,-1) -- (-2,1);
\draw[ thick] (0,-1) -- (0,1);

\draw[->,decorate,decoration={snake,amplitude=.4mm,segment length=2mm,post length=1mm}, shorten >=2pt, shorten <=2pt] (-2,-0.5) -- (-1,-0.5);
\draw[->,decorate,decoration={snake,amplitude=.4mm,segment length=2mm,post length=1mm}, shorten >=2pt, shorten <=2pt] (-2,0.5) -- (-1.3,0.5);
\draw[->,decorate,decoration={snake,amplitude=.4mm,segment length=2mm,post length=1mm}, shorten >=2pt, shorten <=2pt] (-2,0.7) -- (-1,0.7);
\draw[->,decorate,decoration={snake,amplitude=.4mm,segment length=2mm,post length=1mm}, shorten >=2pt, shorten <=2pt] (-2,0.9) -- (-0.45,0.9);

\node[] at (-0.5,-1.5) {$(\id \times {\Delta_{AB,0}})\cdot T \ = \  S_{12}T_{13}S_{1\tau(2)}\cdot (\id \times {\Delta_{AB,0}})$};
\node[] at (-1.5,0) {$\rotatebox{90}{$=$}$};

\begin{scope} 
 [xshift=9cm]

\draw[line width=2.5pt] (-1,-1) -- (-1,1);
\draw[black,thick] (-2,0) .. controls +(0,0.5) and +(0,-0.5) .. (-1.5,1);
\draw[black,thick] (0,0) .. controls +(0,0.5) and +(0,-0.5) .. (-0.5,1);
\draw[ thick] (-2,-1) -- (-2,1);
\draw[ thick] (0,-1) -- (0,1);

\draw[->,decorate,decoration={snake,amplitude=.4mm,segment length=2mm,post length=1mm}, shorten >=2pt, shorten <=2pt] (-2,-0.5) -- (-1,-0.5);
\draw[->,decorate,decoration={snake,amplitude=.4mm,segment length=2mm,post length=1mm}, shorten >=2pt, shorten <=2pt] (-1.5,0.9) -- (-1,0.9);
\draw[->,decorate,decoration={snake,amplitude=.4mm,segment length=2mm,post length=1mm}, shorten >=2pt, shorten <=2pt] (-2,0.7) -- (-0.4,0.7);
\draw[->,decorate,decoration={snake,amplitude=.4mm,segment length=2mm,post length=1mm}, shorten >=2pt, shorten <=2pt] (-2,0.5) -- (-1,0.5);

\node[] at (-1,-1.5) {$({\Delta_{A,0}} \times \id)\cdot T \ = \  T_{13}S_{1\tau(2)}T_{23}\cdot ({\Delta_{A,0}} \times \id)$};
\node[] at (-1.5,0) {$\rotatebox{90}{$=$}$};

 \end{scope}

\end{scope}

\end{tikzpicture}
\end{center}

\subsubsection{} Finally, if $\tau$ is an involution on $A$, the above $S,T$ are called \textbf{$\tau$-equivariant} if 

\begin{center}
  \begin{tikzpicture}

    \draw[line width=2.5pt] (-1.5,-1) -- (-1.5,1);
    \draw[ thick] (-2.1,-1) -- (-2.1,1);
    \draw[ thick] (-2.5,-1) -- (-2.5,1);
    \draw[ thick] (-0.9,-1) -- (-0.9,1);
    \draw[ thick] (-0.5,-1) -- (-0.5,1);

    \draw[->,decorate,decoration={snake,amplitude=.4mm,segment length=2mm,post length=1mm}, shorten >=2pt, shorten <=2pt] (-2.5,0) -- (-2,0);
    \draw[->,decorate,decoration={snake,amplitude=.4mm,segment length=2mm,post length=1mm}, shorten >=2pt, shorten <=2pt] (-0.9,0) -- (-0.5,0);

    \node[] at (-1.5,-1.5) {$(\tau \otimes \tau) \cdot S \ = \  S^{\textup{op}}\cdot (\tau \otimes \tau)$};
    \node[] at (-1.5,0) {$\rotatebox{0}{$=$}$};

\begin{scope} 
[xshift=9cm]

\draw[line width=2.5pt] (-1.5,-1) -- (-1.5,1);
\draw[ thick] (-2.2,-1) -- (-2.2,1);
\draw[ thick] (-0.8,-1) -- (-0.8,1);

\draw[->,decorate,decoration={snake,amplitude=.4mm,segment length=2mm,post length=1mm}, shorten >=2pt, shorten <=2pt] (-2.2,0) -- (-1.5,0);
\draw[->,decorate,decoration={snake,amplitude=.4mm,segment length=2mm,post length=1mm}, shorten >=2pt, shorten <=2pt] (-0.8,0) -- (-1.5,0);

\node[] at (-1.5,-1.5) {$(\tau \otimes \id) \cdot T \ = \ T \cdot (\tau \otimes \id)$};
\node[] at (-1.5,0.1) {$\rotatebox{0}{$=$}$};
\end{scope}

  \end{tikzpicture}
  \end{center}

\subsubsection{} 
The \textbf{linear-orthosymplectic hexagon relations} for $S,T_B,S_B$ are:

\begin{center}
  \begin{tikzpicture}

    \draw[ thick] (-1,-1) -- (-1,0);
    \draw[black,thick] (-1,0) .. controls +(0,0.5) and +(0,-0.5) .. (-0.5,1);
    \draw[black,thick] (-1,0) .. controls +(0,0.5) and +(0,-0.5) .. (-1.5,1);
    \draw[ thick] (-2,-1) -- (-2,1);

    \draw[->,decorate,decoration={snake,amplitude=.4mm,segment length=2mm,post length=1mm}, shorten >=2pt, shorten <=2pt] (-2,-0.5) -- (-1,-0.5);
    \draw[->,decorate,decoration={snake,amplitude=.4mm,segment length=2mm,post length=1mm}, shorten >=2pt, shorten <=2pt] (-2,0.5) -- (-1.3,0.5);
    \draw[->,decorate,decoration={snake,amplitude=.4mm,segment length=2mm,post length=1mm}, shorten >=2pt, shorten <=2pt] (-2,0.7) -- (-0.6,0.7);

    \node[] at (-1.5,-1.5) {$(\id \times {\Delta_{A,0}})\cdot S \ = \  S_{12}S_{13}\cdot (\id \times {\Delta_{A,0}})$};
    \node[] at (-1.5,0) {$\rotatebox{90}{$=$}$};

\begin{scope} 
[xshift=8cm] 
\draw[ thick] (-1,-1) -- (-1,0);
\draw[black,thick] (-1,0) .. controls +(0,0.5) and +(0,-0.5) .. (-0.5,1);
\draw[black,thick] (-1,0) .. controls +(0,0.5) and +(0,-0.5) .. (-1.5,1);
\draw[ thick] (0,-1) -- (0,1);

\draw[->,decorate,decoration={snake,amplitude=.4mm,segment length=2mm,post length=1mm}, shorten >=2pt, shorten <=2pt] (-1,-0.5) -- (0,-0.5);
\draw[->,decorate,decoration={snake,amplitude=.4mm,segment length=2mm,post length=1mm}, shorten >=2pt, shorten <=2pt] (-1.4,0.7) -- (0,0.7);
\draw[->,decorate,decoration={snake,amplitude=.4mm,segment length=2mm,post length=1mm}, shorten >=2pt, shorten <=2pt] (-0.7,0.5) -- (0,0.5);

\node[] at (-0.5,-1.5) {$( {\Delta_{A,0}}\times \id)\cdot S \ = \  S_{23}S_{13}\cdot (\id \times {\Delta_{A,0}})$};
\node[] at (-0.5,0) {$\rotatebox{90}{$=$}$};
\end{scope}


\begin{scope} 
[yshift=-3.5cm,xshift=-0.5cm] 

\draw[line width=2.5pt] (-1,-1) -- (-1,1);
\draw[black,thick] (-1,0) .. controls +(0,0.5) and +(0,-0.5) .. (-0.5,1);
\draw[black,thick] (-1,0) .. controls +(0,0.5) and +(0,-0.5) .. (-1.5,1);
\draw[ thick] (-2,-1) -- (-2,1);
\draw[ thick] (0,-1) -- (0,1);

\draw[->,decorate,decoration={snake,amplitude=.4mm,segment length=2mm,post length=1mm}, shorten >=2pt, shorten <=2pt] (-2,-0.5) -- (-1,-0.5);
\draw[->,decorate,decoration={snake,amplitude=.4mm,segment length=2mm,post length=1mm}, shorten >=2pt, shorten <=2pt] (-2,0.5) -- (-1.3,0.5);
\draw[->,decorate,decoration={snake,amplitude=.4mm,segment length=2mm,post length=1mm}, shorten >=2pt, shorten <=2pt] (-2,0.7) -- (-1,0.7);
\draw[->,decorate,decoration={snake,amplitude=.4mm,segment length=2mm,post length=1mm}, shorten >=2pt, shorten <=2pt] (-2,0.9) -- (-0.45,0.9);

\node[] at (-0.5,-1.5) {$(\id \times {\Delta_{AB,0}})\cdot T \ = \  S_{12}T_{13}\cdot (\id \times {\Delta_{AB,0}})$};
\node[] at (-1.5,0) {$\rotatebox{90}{$=$}$};

\begin{scope} 
 [xshift=9cm]

\draw[line width=2.5pt] (-1,-1) -- (-1,1);
\draw[black,thick] (-2,0) .. controls +(0,0.5) and +(0,-0.5) .. (-1.5,1);
\draw[black,thick] (0,0) .. controls +(0,0.5) and +(0,-0.5) .. (-0.5,1);
\draw[ thick] (-2,-1) -- (-2,1);
\draw[ thick] (0,-1) -- (0,1);

\draw[->,decorate,decoration={snake,amplitude=.4mm,segment length=2mm,post length=1mm}, shorten >=2pt, shorten <=2pt] (-2,-0.5) -- (-1,-0.5);
\draw[->,decorate,decoration={snake,amplitude=.4mm,segment length=2mm,post length=1mm}, shorten >=2pt, shorten <=2pt] (-1.5,0.9) -- (-1,0.9);
\draw[->,decorate,decoration={snake,amplitude=.4mm,segment length=2mm,post length=1mm}, shorten >=2pt, shorten <=2pt] (-2,0.7) -- (-0.4,0.7);
\draw[->,decorate,decoration={snake,amplitude=.4mm,segment length=2mm,post length=1mm}, shorten >=2pt, shorten <=2pt] (-2,0.5) -- (-1,0.5);

\node[] at (-1,-1.5) {$({\Delta_{A,0}} \times \id)\cdot T \ = \  T_{13}T_{23}\cdot ({\Delta_{A,0}} \times \id)$};
\node[] at (-1.5,0) {$\rotatebox{90}{$=$}$};

 \end{scope}

\end{scope}

\begin{scope} 
[yshift=-7cm,xshift=-0.5cm]

\draw[line width=2.5pt] (-1,-1) -- (-1,0);
\draw[black,line width=2.5pt] (-1,0) .. controls +(0,0.5) and +(0,-0.5) .. (-0.75,1);
\draw[black,line width=2.5pt] (-1,0) .. controls +(0,0.5) and +(0,-0.5) .. (-1.25,1);
\draw[ thick] (-2,-1) -- (-2,1);
\draw[ thick] (0,-1) -- (0,1);

\draw[->,decorate,decoration={snake,amplitude=.4mm,segment length=2mm,post length=1mm}, shorten >=2pt, shorten <=2pt] (-2,-0.5) -- (-1,-0.5);
\draw[->,decorate,decoration={snake,amplitude=.4mm,segment length=2mm,post length=1mm}, shorten >=2pt, shorten <=2pt] (-2,0.7) -- (-1.2,0.7);
\draw[->,decorate,decoration={snake,amplitude=.4mm,segment length=2mm,post length=1mm}, shorten >=2pt, shorten <=2pt] (-2,0.9) -- (-0.7,0.9);

\node[] at (-0.5,-1.5) {$(\id \times {\Delta_{B,0}})\cdot T \ = \  T_{12}T_{13}$};
\node[] at (-1.5,0) {$\rotatebox{90}{$=$}$};
 \end{scope}


\begin{scope} 
[yshift=-10.5cm]

\draw[ line width=2.5pt] (-1,-1) -- (-1,0);
\draw[black,line width=2.5pt] (-1,0) .. controls +(0,0.5) and +(0,-0.5) .. (-0.75,1);
\draw[black,line width=2.5pt] (-1,0) .. controls +(0,0.5) and +(0,-0.5) .. (-1.25,1);
\draw[ line width=2.5pt] (-2,-1) -- (-2,1);

\draw[->,decorate,decoration={snake,amplitude=.4mm,segment length=2mm,post length=1mm}, shorten >=2pt, shorten <=2pt] (-2,-0.5) -- (-1,-0.5);
\draw[->,decorate,decoration={snake,amplitude=.4mm,segment length=2mm,post length=1mm}, shorten >=2pt, shorten <=2pt] (-2,0.5) -- (-1.1,0.5);
\draw[->,decorate,decoration={snake,amplitude=.4mm,segment length=2mm,post length=1mm}, shorten >=2pt, shorten <=2pt] (-2,0.7) -- (-0.8,0.7);

\node[] at (-1.5,-1.5) {$(\id \times {\Delta_{B,0}})\cdot S_B \ = \  S_{B,12}S_{B,13}\cdot (\id \times {\Delta_{B,0}})$};
\node[] at (-1.5,0) {$\rotatebox{90}{$=$}$};

\begin{scope} 
[xshift=8cm] 
\draw[ line width=2.5pt] (-1,-1) -- (-1,0);
\draw[black,line width=2.5pt] (-1,0) .. controls +(0,0.5) and +(0,-0.5) .. (-0.75,1);
\draw[black,line width=2.5pt] (-1,0) .. controls +(0,0.5) and +(0,-0.5) .. (-1.25,1);
\draw[ line width=2.5pt] (0,-1) -- (0,1);

\draw[->,decorate,decoration={snake,amplitude=.4mm,segment length=2mm,post length=1mm}, shorten >=2pt, shorten <=2pt] (-1,-0.5) -- (0,-0.5);
\draw[->,decorate,decoration={snake,amplitude=.4mm,segment length=2mm,post length=1mm}, shorten >=2pt, shorten <=2pt] (-1.2,0.7) -- (0,0.7);
\draw[->,decorate,decoration={snake,amplitude=.4mm,segment length=2mm,post length=1mm}, shorten >=2pt, shorten <=2pt] (-0.8,0.5) -- (0,0.5);

\node[] at (-0.5,-1.5) {$( {\Delta_{B,0}}\times \id)\cdot S_B \ = \  S_{B,23}S_{B,13}\cdot (\id \times {\Delta_{B,0}})$};
\node[] at (-0.5,0) {$\rotatebox{90}{$=$}$};
\end{scope}
 \end{scope}


\begin{scope} 
 [yshift=-14cm]

 \draw[ line width=2.5pt] (-1,-1) -- (-1,0);
\draw[line width=2.5pt] (-1,-1) -- (-1,1);
\draw[black,thick] (-1,0) .. controls +(0,0.5) and +(0,-0.5) .. (-0.5,1);
\draw[black,thick] (-1,0) .. controls +(0,0.5) and +(0,-0.5) .. (-1.5,1);
\draw[line width=2.5pt] (-2,-1) -- (-2,1);

 \draw[->,decorate,decoration={snake,amplitude=.4mm,segment length=2mm,post length=1mm}, shorten >=2pt, shorten <=2pt] (-2,-0.5) -- (-1,-0.5);
 \draw[->,decorate,decoration={snake,amplitude=.4mm,segment length=2mm,post length=1mm}, shorten >=2pt, shorten <=2pt] (-2,0.5) -- (-1.2,0.5);
 \draw[->,decorate,decoration={snake,amplitude=.4mm,segment length=2mm,post length=1mm}, shorten >=2pt, shorten <=2pt] (-2,0.7) -- (-1.1,0.7);
 \draw[->,decorate,decoration={snake,amplitude=.4mm,segment length=2mm,post length=1mm}, shorten >=2pt, shorten <=2pt] (-2,0.9) -- (-0.5,0.9);
 
 \node[] at (-1.5,-1.5) {$(\id \times {\Delta_{AB,0}})\cdot S_B \ = \  S_{B,12}S_{B,13}\cdot (\id \times {\Delta_{AB,0}})$};
 \node[] at (-1.5,0) {$\rotatebox{90}{$=$}$};
 
 \begin{scope} 
 [xshift=8cm] 
 \draw[ line width=2.5pt] (-1,-1) -- (-1,0);
 \draw[black,line width=2.5pt] (-1,0) .. controls +(0,0.5) and +(0,-0.5) .. (-0.75,1);
 \draw[black,line width=2.5pt] (-1,0) .. controls +(0,0.5) and +(0,-0.5) .. (-1.25,1);
 \draw[ line width=2.5pt] (0,-1) -- (0,1);

 \draw[->,decorate,decoration={snake,amplitude=.4mm,segment length=2mm,post length=1mm}, shorten >=2pt, shorten <=2pt] (-1,-0.5) -- (0,-0.5);
 \draw[->,decorate,decoration={snake,amplitude=.4mm,segment length=2mm,post length=1mm}, shorten >=2pt, shorten <=2pt] (-1.2,0.7) -- (0,0.7);
 \draw[->,decorate,decoration={snake,amplitude=.4mm,segment length=2mm,post length=1mm}, shorten >=2pt, shorten <=2pt] (-0.8,0.5) -- (0,0.5);

 \node[] at (-0.5,-1.5) {$( {\Delta_{AB,0}}\times \id)\cdot S_B \ = \  S_{B,23}S_{B,13}\cdot (\id \times {\Delta_{AB,0}})$};
 \node[] at (-0.5,0) {$\rotatebox{90}{$=$}$};
 \end{scope}
 \end{scope}

  \end{tikzpicture}
  \end{center}

\newpage

\section{Partial translation equivariance} \label{ssec:TranslationEquivariance}
 
\subsubsection{Definition} If $\Gt$ is a group acting on a space $\Xl$ and $\Ul \subseteq \Xl$ is an open subspace, then although $\Gt$ does not act on $\Ul$ we have a correspondence 
\begin{center}
\begin{tikzcd}[row sep = {30pt,between origins}, column sep = {45pt, between origins}]
 & (G \times \Ul)_\circ \ar[ld,"j"']  \ar[rd,"a"]  & \\ 
G \times \Ul & & \Ul
\end{tikzcd}
\end{center}
where $(G \times \Ul)_\circ = (G \times \Ul)\times_{\Xl} \Ul$ is the open locus of points $(g,u)$ with $g(u)\in \Ul$. This satisfies a lax associativity condition: that taking the pullbacks of
\begin{center}
\begin{tikzcd}[row sep = {30pt,between origins}, column sep = {45pt, between origins}]
 & G \times (G \times \Ul)_\circ \ar[ld,"\id \times j"'] \ar[rd,"\id \times a"] && (G \times \Ul)_\circ \ar[ld,"j"']  \ar[rd,"a"]  & &
 &[10pt] G \times G \times \Ul \ar[ld,equals] \ar[rd,"m \times \id"] && (G \times \Ul)_\circ \ar[ld,"j"']  \ar[rd,"a"]  & \\ 
G \times G \times \Ul&&G \times \Ul & & \Ul&
G \times G \times \Ul&&G \times \Ul & & \Ul
\end{tikzcd}
\end{center}
there is a map from the left to the right pullback; the left pullback classifies points $(g_1,g_2,u)$ with $\Gt_2(u)\in \Ul$ and $\Gt_1(g_2(u))\in \Ul$ and the right pullback classifies points $(g_1,g_2,u)$ with  with $\Gt_1(g_2(u))\in \Ul$.

\begin{lem}
Viewing $\Gt$ as a group object in $\Stk^{\corr}$, the above correspondence gives a lax group action of $\Gt$ on $\Ul$.
\end{lem}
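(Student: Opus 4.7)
The plan is to verify the three pieces of structure required of a lax monoid action on $\Ul$ in $\Stk^{\corr}$: (i) the action correspondence, (ii) a lax associator 2-morphism, and (iii) a unit 2-morphism, and then check the two coherence diagrams (pentagon and unit triangles). Since $\Gt$ acts strictly on $\Xl$, each ingredient will be inherited from that action and the observation that openness is preserved under pullback.

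First I would fix notation: write $\Ul_\circ^{(n)} \defeq (\Gt^n \times \Ul)_\circ \defeq \Gt^n \times_{\Xl}\cdots\times_{\Xl}\Ul$ for the open substack of $\Gt^n\times \Ul$ where every partial action $g_i\cdots g_n(u)$ lies in $\Ul$; this is well-defined because $\Ul \subseteq \Xl$ is open and each condition is an open condition. The action correspondence is the span with apex $\Ul^{(1)}_\circ$, and the two compositions appearing in the statement both admit canonical maps \emph{from} $\Ul^{(2)}_\circ$: the composition $a\cdot(\id\times a)$ is computed by the pullback with apex exactly $\Ul^{(2)}_\circ$, while $a\cdot(m\times \id)$ is computed by the pullback with apex $\Gt^2 \times_\Xl \Ul$, which classifies $(g_1,g_2,u)$ with $g_1g_2(u)\in \Ul$. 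The open inclusion $\Ul^{(2)}_\circ \hookrightarrow \Gt^2\times_\Xl \Ul$ (adding the condition $g_2(u)\in \Ul$) is the desired lax associator $\alpha$; it is a map of correspondences because both legs factor through $j$ and $a$ in a compatible way, using only strict associativity of the $\Gt$-action on $\Xl$.

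Next I would construct the lax unit 2-morphism: the point $1_\Gt : \pt \to \Gt$ gives a map $\Ul \to \Ul^{(1)}_\circ$ (since $1_\Gt$ preserves $\Ul$ trivially), and the composition of this with both $j$ and $a$ recovers the identity of $\Ul$, again by strict unitality of the action on $\Xl$. This provides the unit 2-morphism $\eta : \id_\Ul \Rightarrow a\cdot(1_\Gt\times \id)$ as an isomorphism.

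The main verification — and the only step with any content — is the pentagon axiom for $\alpha$ and its compatibility with $\eta$. This amounts to a comparison of four open subschemes of $\Gt^3\times \Ul$ together with their maps to $\Ul$ via the various triple compositions. The key observation is that each of these opens is cut out by a subset of the three conditions $\{g_3(u)\in \Ul,\ g_2g_3(u)\in \Ul,\ g_1g_2g_3(u)\in\Ul\}$, and the two legs of the pentagon produce the two ways of including the smallest such open (where all three conditions hold) into the opens cut out by fewer conditions; commutativity then reduces to strict associativity on $\Xl$ together with functoriality of the pullback. The hard part, if any, will be bookkeeping the higher coherences in the $\infty$-categorical setting; but since all the 2-morphisms are open inclusions (hence monomorphisms) between pullbacks of strictly associative/unital data, all higher coherences are automatic. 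This completes the proof.
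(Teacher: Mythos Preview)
Your proposal is correct and follows the same approach as the paper, which states the lemma without a separate proof after having sketched the lax associator in the preceding paragraph as the open inclusion between the two pullbacks. Your additional detail on the unit and your argument that the coherence diagrams commute automatically because all the 2-morphisms in play are open inclusions (hence monomorphisms) goes beyond what the paper spells out, but is in the same spirit and fills in exactly what is needed.
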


We can thus define $\Gt$-equivariant coherent sheaves, D-modules, etc. on $\Ul$.

\subsubsection{} Loosely speaking a partially $\Gt$-equivariant sheaf is a sheaf $\El$ of the appropriate type with isomorphisms
$$\varphi_g \ : \ \El_u \ \stackrel{\sim}{\to} \ \El_{g(u)}$$
whenever $g(u)\in \Ul$, satisfying a cocycle condition $\varphi_{\Gt_1g_2}\simeq \varphi_{\Gt_1}\cdot \varphi_{\Gt_2}$ over the locus of points $(g_1,g_2,u)$ with $\Gt_2(u)\in \Ul$ and $\Gt_1(g_2(u))\in \Ul$.

\newpage 
 \section{Decomposition spaces, categories and algebras} \label{sec:Decomposition}

 \subsubsection{Definitions} We recall definitions from \cite{La2} for convenience. Fix $\Xl$ an element in an appropriate category of spaces, for instance we will now take the $\infty$-category $\PreStk$ of prestacks. Form the $(\infty,2)$-category $\PreStk^{\corr}$ whose objects are prestacks, one-morphisms are correspondences, and two-morphisms are maps between correspondences. 

 \subsubsection{}  A \textit{decomposition space} is an element 
 $$\Xl \ \in \ \Alg(\PreStk^{\corr}),$$
 in other words it is a prestack $\Xl$ together with \textit{multiplication} and \textit{unit} correspondences 
 \begin{center}
 \begin{tikzcd}[row sep = {30pt,between origins}, column sep = {45pt, between origins}]
  & \Ml \ar[rd] \ar[ld] & && \El \ar[rd] \ar[ld] &\\
  \Xl \times \Xl & & X  &\pt & & \Xl
  \end{tikzcd}
  \end{center}
  satisfying associativity and unit conditions as maps in $\PreStk^{\corr}$, plus higher coherences.

  \subsubsection{} Likewise, we may form the category $\PreStk_{\Cat}^{\corr}$ whose objects are prestacks $\Xl$ equipped with a sheaf of categories $\Cl \in \ShvCat(\Xl)$. A \textit{decomposition category} is an element
  $$(\Xl,\Cl) \ \in \ \Alg(\PreStk_{\Cat}^{\corr})$$
  and a section $A\in \Gamma(\Xl,\Cl)$ is a \textit{decomposition algebra} on $\Cl$ if it defines an algebra 
  $$(\Xl,\Cl,A) \ \in \ \Alg(\PreStk_{\Cat,\textup{sec}}^{\corr})$$
  in the category $\PreStk_{\Cat,\textup{sec}}^{\corr}$ of prestacks equipped with a sheaf of categories and a section.

  \subsubsection{}   We can likewise consider $\Eb_n\Ag(\PreStk^{\corr}_{(\Cat),(\textup{sec})})$, giving \textit{$\Eb_n$-decomposition spaces}, \textit{categories}, and \textit{algebras}.

\newpage


\begin{thebibliography}{1600}
  \bibitem[AB]{AB} \href{https://arxiv.org/abs/0706.1528}{Anguelova, I.I. and Bergvelt, M.J., 2009. HD-Quantum vertex algebras and bicharacters. Communications in Contemporary Mathematics, 11(06), pp.937-991.} 
  \bibitem[AF1]{AF1} \href{https://arxiv.org/abs/1206.5522}{Ayala, D. and Francis, J., 2015. Factorization homology of topological manifolds. Journal of Topology, 8(4), pp.1045-1084.} 
  \bibitem[AF2]{AF2} \href{https://arxiv.org/abs/1903.10961}{Ayala, D. and Francis, J., 2020. A factorization homology primer. In Handbook of homotopy theory (pp. 39-101). Chapman and Hall/CRC.} 
  \bibitem[AFT]{AFT} \href{https://arxiv.org/abs/1409.0848}{Ayala, D., Francis, J. and Tanaka, H.L., 2017. Factorization homology of stratified spaces. Selecta Mathematica, 23(1), pp.293-362.} 
  \bibitem[AHLH]{AHLH}
  Alper, Jarod, Halpern-Leistner, D. , and  Heinloth, J. \textit{Existence of moduli spaces for algebraic stacks.} Inventiones mathematicae 234.3 (2023): 949-1038.
  \bibitem[AKLPR]{AKLPR} \href{https://arxiv.org/abs/2207.01652}{Aranha, D., Khan, A.A., Latyntsev, A., Park, H. and Ravi, C.H.A.R.A.N.Y.A., 2022. Virtual localisation revisited. arXiv preprint arXiv:2207.01652.  } 
  \bibitem[An]{An} \href{https://arxiv.org/abs/0708.3708}{Anguelova, I.I., 2007. Super-bicharacter construction of quantum vertex algebras. arXiv preprint arXiv:0708.3708.} 
  \bibitem[BDa]{BDa} \href{https://arxiv.org/abs/2312.14008}{Botta, T.M. and Davison, B., 2023. \textit{Okounkov's conjecture via BPS Lie algebras.} arXiv preprint arXiv:2312.14008.} 
  \bibitem[BDr]{BDr} \href{https://math.uchicago.edu/~drinfeld/langlands/chiral/}{Beilinson, A. and Drinfeld, V.G., 2004. Chiral algebras (Vol. 51). American Mathematical Soc..} 
  \bibitem[BFN]{BFN} \href{https://arxiv.org/abs/1406.2381}{Braverman, A., Finkelberg, M. and Nakajima, H., 2014. \textit{Instanton moduli spaces and $\mathscr{W}$-algebras.} arXiv preprint arXiv:1406.2381.}
  \bibitem[BLS]{BLS} \href{https://arxiv.org/abs/1402.2214}{Barvels, A., Lentner, S. and Schweigert, C., 2015. Partially dualized Hopf algebras have equivalent Yetter–Drinfel'd modules. Journal of Algebra, 430, pp.303-342.} 
  \bibitem[Br]{Br} \href{https://arxiv.org/abs/1209.0417}{Brochier, A., 2013. Cyclotomic associators and finite type invariants for tangles in the solid torus. Algebraic $\&$ Geometric Topology, 13(6), pp.3365-3409.} 
  \bibitem[BR]{BR} \href{https://arxiv.org/abs/1401.2143}{Belliard, S. and Regelskis, V., 2017. \textit{Drinfeld J presentation of twisted Yangians}. SIGMA. Symmetry, Integrability and Geometry: Methods and Applications, 13, p.011.} 
  \bibitem[BS]{BS} \href{https://arxiv.org/abs/1903.03601}{Bittleston, R. and Skinner, D., 2019. Gauge theory and boundary integrability. Journal of High Energy Physics, 2019(5), pp.1-53.} 
  \bibitem[Bu1]{Bu1} \href{https://arxiv.org/abs/2302.00038}{Bu, C., 2023. Enumerative invariants in self-dual categories. I. Motivic invariants. arXiv preprint arXiv:2302.00038.} 
  \bibitem[Bu2]{Bu2} \href{https://arxiv.org/abs/2309.00056}{Bu, C., 2023. Enumerative invariants in self-dual categories. II. Homological invariants. arXiv preprint arXiv:2309.00056.} 
  \bibitem[But]{But} \href{https://www.math.utoronto.ca/dbutson/Butson_Dylan_W_202111_PhD_thesis.pdf}{Butson, D.W., 2021. \textit{Equivariant localisation in Factorization Homology and Vertex Algebras from
  Supersymmetric Gauge Theory} (Doctoral dissertation, University of Toronto (Canada))} 
  \bibitem[CF]{CF}
  \href{https://arxiv.org/abs/2111.14606}{Chen, L. and Fu, Y., 2021. \textit{An extension of the Kazhdan-Lusztig equivalence.} arXiv preprint arXiv:2111.14606.}
  \bibitem[CG]{CG} \href{}{Costello, K. and Gwilliam, O., \textit{Factorization algebras in quantum field theory. Volume 1}. Cambridge: Cambridge University Press (2017).}  
  \bibitem[Che]{Che} \href{}{Cherednik, I.V., \textit{Factorizing particles on a half line and root systems}, Theor. Math. Phys., 61 (1984), p. 977.}  
\bibitem[Cho]{Cho} \href{https://arxiv.org/abs/1606.00707}{Choy, J., 2018. \textit{Geometry of Uhlenbeck partial compactification of orthogonal instanton spaces and the K-theoretic Nekrasov partition functions.} Advances in Mathematics, 330, pp.763-809.}
\bibitem[Da]{Da} \href{https://arxiv.org/abs/1311.7172}{Davison, B., 2013. The critical CoHA of a self dual quiver with potential. arXiv preprint arXiv:1311.7172.} 
\bibitem[DPS]{DPS} \href{https://arxiv.org/abs/2207.08926}{Diaconescu, D.E., Porta, M. and Sala, F., 2022. \textit{Cohomological Hall algebras and their representations via torsion pairs.} arXiv preprint arXiv:2207.08926.}
\bibitem[DW]{DW} \href{https://eudml.org/doc/284957}{Derksen, H. and Weyman, J., 2002. Generalized quivers associated to reductive groups. In Colloquium Mathematicum (Vol. 94, pp. 151-173). Instytut Matematyczny Polskiej Akademii Nauk.} 
\bibitem[ES]{ES} \href{}{Etingof, P. and Schiffmann, O. \textit{Lectures on quantum groups}. Boston, MA: International Press (1998; Zbl 1105.17300)} 
\bibitem[FBZ]{FBZ} \href{}{Frenkel, E. and Ben-Zvi, D., 2004. Vertex algebras and algebraic curves (No. 88). American Mathematical Soc..} 
\bibitem[FG]{FG} \href{https://arxiv.org/abs/1103.5803}{Francis, J. and Gaitsgory, D., 2012. Chiral koszul duality. Selecta Mathematica, 18(1), pp.27-87.} 
\bibitem[Ga]{Ga} \href{https://arxiv.org/abs/1909.09775}{Gaitsgory, D., 2021. On factorization algebras arising in the quantum geometric Langlands theory. Advances in Mathematics, 391, p.107962.} 
\bibitem[GTW]{GTW} \href{https://arxiv.org/abs/1403.5251}{Gautam, S., Laredo, V.T. and Wendlandt, C., 2021. The meromorphic R-matrix of the Yangian. In Representation Theory, Mathematical Physics, and Integrable Systems: In Honor of Nicolai Reshetikhin (pp. 201-269). Cham: Springer International Publishing.} 
\bibitem[Ha]{Ha} \href{https://math.berkeley.edu/~mhaiman/ftp/icm-2006/comb-mac-chered.pdf}{Haiman, M., 2006. Cherednik algebras, Macdonald polynomials and combinatorics. In International Congress of Mathematicians (Vol. 3, pp. 843-872). Zürich: Eur. Math. Soc..} 
\bibitem[HM]{HM} \href{https://arxiv.org/abs/hep-th/9510182}{Harvey, J.A. and Moore, G., 1996. Algebras, BPS states, and strings. Nuclear Physics B, 463(2-3), pp.315-368.} 
\bibitem[HZ]{HZ} \href{https://arxiv.org/abs/2301.03545}{Halbig, S. and Zorman, T., 2023. \textit{Duality in monoidal categories}. arXiv preprint arXiv:2301.03545.} 
\bibitem[HMMS]{HMMS} \href{https://arxiv.org/abs/2209.05429}{Hausel, T., Mellit, A., Minets, A. and Schiffmann, O., 2022. \textit{$ P= W $ via $ H_2$.} arXiv preprint arXiv:2209.05429.}
\bibitem[JKL]{JKL} \href{}{Jindal, S., Kaubrys, S. and  Latyntsev, A. \textit{Joyce vertex algebras with potential and Drinfeld coproducts on Yangians}, to appear.} 
\bibitem[Jo]{Jo} \href{https://people.maths.ox.ac.uk/joyce/hall.pdf}{Joyce, D., 2018. \textit{Ringel–Hall style vertex algebra and Lie algebra structures on the homology of moduli spaces.} Online, in progress.}
\bibitem[Ko]{Ko} \href{https://arxiv.org/abs/q-alg/9709040}{Kontsevich, M., 2003. \textit{Deformation quantization of Poisson manifolds.} Letters in Mathematical Physics, 66, pp.157-216.}
\bibitem[KS]{KS} \href{https://arxiv.org/abs/1006.2706}{
 Kontsevich, M. and Soibelman, Y.,  2011. \textit{Cohomological Hall algebra, exponential Hodge structures and motivic Donaldson--Thomas invariants.} 
  Communications in Number Theory and Physics,  5, 231--252 
}
\bibitem[La1]{La1} \href{https://arxiv.org/abs/2110.14356}{Latyntsev, A., 2021. Cohomological Hall algebras and vertex algebras. arXiv preprint arXiv:2110.14356.} 
\bibitem[La2]{La2} \href{https://arxiv.org/abs/2312.07274}{Latyntsev, A., 2023. Factorisation quantum groups. arXiv preprint arXiv:2312.07274.} 
\bibitem[La3]{La3} \href{}{Latyntsev, A., \textit{Virtual Euler classes for Artin stacks}, to appear.} 
\bibitem[Le]{Le} \href{https://arxiv.org/abs/math/0103228}{Letzter, G., 2002. Coideal Subalgebras and Quantum Symmetric Pairs. In New Directions in Hopf Algebras (pp. 117-165). Cambridge University Press.} 
\bibitem[Li]{Li} \href{https://arxiv.org/abs/math/0511344}{Li, H., 2007. A smash product construction of nonlocal vertex algebras. Communications in Contemporary Mathematics, 9(05), pp.605-637.} 
\bibitem[Lu]{Lu} \href{https://people.math.harvard.edu/~lurie/papers/HA.pdf}{Lurie, J., 2016. \textit{Higher algebra} (2017). Preprint, available at \url{http://www.math.harvard.edu/~lurie}.}
\bibitem[LS]{LS} \href{https://arxiv.org/abs/2310.19867}{Choi, Y., Lu, D.C. and Sun, Z., 2024. Self-duality under gauging a non-invertible symmetry. Journal of High Energy Physics, 2024(1), pp.1-55.} 
\bibitem[LV]{LV} \href{https://arxiv.org/abs/0808.0457}{Lambrechts, P. and Volić, I., 2014. Formality of the little $N$-disks operad (Vol. 230, No. 1079). American Mathematical Society.} 
\bibitem[LW]{LW} \href{https://arxiv.org/abs/1901.11446}{Lu, M. and Wang, W., 2019. Hall algebras and quantum symmetric pairs I: foundations. arXiv preprint arXiv:1901.11446.} 
\bibitem[Ma]{Ma} \href{}{Majid, S., 2000. Foundations of quantum group theory. Cambridge university press.} 
\bibitem[Mdb]{Mdb} {Majid, S 1997. New quantum groups by double-bosonisation. Czechoslovak Journal of Physics 47, no. 1 79-90.}
\bibitem[Mo]{Mo} \href{}{Molev, A., 2007. Yangians and classical Lie algebras (No. 143). American Mathematical Soc..} 
\bibitem[MMSV]{MMSV} \href{https://arxiv.org/abs/2311.13415}{Mellit, A., Minets, A., Schiffmann, O. and Vasserot, E., 2023. Coherent sheaves on surfaces, COHAs and deformed $ W_ {1+\infty} $-algebras. arXiv preprint arXiv:2311.13415.} 
\bibitem[Na1]{Na1} \href{https://arxiv.org/abs/1503.03676}{Nakajima, H., 2015. Towards a mathematical definition of Coulomb branches of $3 $-dimensional $\mathcal N= 4$ gauge theories, I. arXiv preprint arXiv:1503.03676.} 
\bibitem[Na2]{NakajimaADHM}\href{https://arxiv.org/abs/1801.06286}{Nakajima, H., 2018. \textit{Instantons on ALE spaces for classical groups.} arXiv preprint arXiv:1801.06286.}
\bibitem[Ne]{Ne} \href{https://arxiv.org/abs/1504.06525}{Negut, A., 2015. Quantum algebras and cyclic quiver varieties. Columbia University.} b
\bibitem[Pr]{Pr} \href{https://arxiv.org/abs/1910.07997}{Procházka, T., 2020. On even spin ${\mathcal {W}} _ {\infty} $. Journal of High Energy Physics, 2020(6), pp.1-35.} 
\bibitem[PS]{PS} \href{https://arxiv.org/abs/1903.07253}{Porta, M. and Sala, F., 2022. \textit{Two-dimensional categorified Hall algebras.} Journal of the European Mathematical Society, 25(3), pp.1113-1205.}
\bibitem[PT]{PT} \href{https://arxiv.org/abs/2006.10890}{Peschke, G. and Tholen, W., 2020. Diagrams, fibrations, and the decomposition of colimits. arXiv preprint arXiv:2006.10890.}
\bibitem[Ro]{Ro} \href{https://perso.univ-rennes1.fr/matthieu.romagny/articles/group_actions.pdf}{Romagny, M., 2005. Group actions on stacks and applications. Michigan Mathematical Journal, 53(1), pp.209-236.
} 
\bibitem[Sh]{Sh} \href{https://arxiv.org/abs/2305.07739}{Shapiro, I., 2024. Hopf-cyclic coefficients in the braided setting. Journal of Noncommutative Geometry.} 
\bibitem[Ta]{Ta} {Tajakka, T., 2023. \textit{Uhlenbeck compactification as a Bridgeland moduli space.} International Mathematics Research Notices, 6, p. 4952--4997. }
\bibitem[TV]{TV} \href{https://arxiv.org/abs/math/0503269}{Toën, B. and Vaquié, M., 2007. \textit{Moduli of objects in dg-categories}. In Annales scientifiques de l'Ecole normale supérieure (Vol. 40, No. 3, pp. 387-444).} 
\bibitem[We]{We} \href{https://arxiv.org/abs/1810.12021}{Weelinck, T.A.N., 2020. \textit{Equivariant factorization homology of global quotient orbifolds.} Advances in Mathematics, 366, p.107072.}
\bibitem[Yo]{Yo} \href{https://arxiv.org/abs/1603.05401}{Young, M.B., 2020. Representations of cohomological Hall algebras and Donaldson-Thomas theory with classical structure groups. Communications in Mathematical Physics, 380(1), pp.273-322.} 
\bibitem[YZ]{YZ} \href{https://arxiv.org/abs/1604.01865}{Yang, Y. and Zhao, G., 2018. Cohomological Hall algebras and affine quantum groups. Selecta Mathematica, 24(2), pp.1093-1119.} 
\bibitem[Zub]{Zub} { Zubkov, A.N., 2005. Invariants of mixed representations of quivers I. Journal of Algebra and its Applications, 4(03), pp.245-285.}
\end{thebibliography}
\end{document}